\newtheorem{cor}[subsubsection]{Corollary}
\newtheorem{lem}[subsubsection]{Lemma}
\newtheorem{prop}[subsubsection]{Proposition}
\newtheorem{conj}[subsubsection]{Conjecture}
\newtheorem{thm}[subsubsection]{Theorem}
\newtheorem{mainthm}{Main Theorem}
\theoremstyle{remark}
\theoremstyle{definition}
\numberwithin{equation}{section}
\newcommand{\thmref}[1]{Theorem~\ref{#1}}
\newcommand{\mainthmref}[1]{Main Theorem~\ref{#1}}
\newcommand{\secref}[1]{Sect.~\ref{#1}}
\newcommand{\lemref}[1]{Lemma~\ref{#1}}
\newcommand{\propref}[1]{Proposition~\ref{#1}}
\newcommand{\corref}[1]{Corollary~\ref{#1}}
\newcommand{\conjref}[1]{Conjecture~\ref{#1}}
\newcommand{\nc}{\newcommand}
\nc{\renc}{\renewcommand}
\nc{\ssec}{\subsection}
\nc{\sssec}{\subsubsection}
\nc{\on}{\operatorname}
\nc\ol{\overline}
\nc\ul{\underline}
\nc\wt{\widetilde}
\nc\tboxtimes{\wt{\boxtimes}}
\nc{\wh}{\widehat}
\nc{\mc}{\mathcal}
\nc{\CM}{{\mathcal M}}
\nc{\CN}{{\mathcal N}}
\nc{\CF}{{\mathcal F}}
\nc{\D}{{\mathcal D}}
\nc{\CQ}{{\mathcal Q}}
\nc{\CY}{{\mathcal Y}}
\nc{\CX}{{\mathcal X}}
\nc{\CG}{{\mathcal G}}
\nc{\CE}{{\mathcal E}}
\nc{\CC}{{\mathcal C}}
\nc{\CO}{{\mathcal O}}
\renc{\CC}{{\mathcal C}}
\nc{\CT}{{\mathcal T}}
\nc{\CK}{{\mathcal K}}
\nc{\CS}{{\mathcal S}}
\nc{\CH}{{\mathcal H}}
\nc{\CU}{{\mathcal U}}
\nc{\CV}{{\mathcal V}}
\nc{\CA}{{\mathcal A}}
\nc{\CB}{{\mathcal B}}
\nc{\CW}{{\mathcal W}}
\nc{\CL}{{\mathcal L}}
\nc{\CP}{{\mathcal P}}
\nc{\CI}{{\mathcal I}}
\nc{\CJ}{{\mathcal J}}
\nc{\CR}{{\mathcal R}}
\nc{\CZ}{{\mathcal Z}}
\nc{\BA}{{\mathbb{A}}}
\nc{\BC}{{\mathbb{C}}}
\nc{\BG}{{\mathbb{G}}}
\nc{\BM}{{\mathbb{M}}}
\nc{\BN}{{\mathbb{N}}}
\nc{\BP}{{\mathbb{P}}}
\nc{\BR}{{\mathbb{R}}}
\nc{\BZ}{{\mathbb{Z}}}
\nc{\BV}{{\mathbb{V}}}
\nc{\BW}{{\mathbb{W}}}
\nc{\BS}{{\mathbb{S}}}
\nc{\BD}{{\mathbb{D}}}
\nc{\BQ}{{\mathbb{Q}}}
\nc{\BL}{{\mathbb{L}}}
\renc{\BW}{{\mathbb{W}}}
\nc{\fa}{{\mathfrak{a}}}
\nc{\fb}{{\mathfrak{b}}}
\nc{\fg}{{\mathfrak{g}}}
\nc{\fq}{{\mathfrak{q}}}
\nc{\fgl}{{\mathfrak{gl}}}
\nc{\fh}{{\mathfrak{h}}}
\nc{\fj}{{\mathfrak{j}}}
\nc{\fm}{{\mathfrak{m}}}
\nc{\fl}{{\mathfrak{l}}}
\nc{\fn}{{\mathfrak{n}}}
\nc{\fu}{{\mathfrak{u}}}
\nc{\fp}{{\mathfrak{p}}}
\nc{\fr}{{\mathfrak{r}}}
\nc{\fs}{{\mathfrak{s}}}
\nc{\fsl}{{\mathfrak{sl}}}
\nc{\hsl}{{\widehat{\mathfrak{sl}}}}
\nc{\hgl}{{\widehat{\mathfrak{gl}}}}
\nc{\hg}{{\widehat{\mathfrak{g}}}}
\nc{\hb}{{\widehat{\mathfrak{b}}}}
\nc{\hn}{{\widehat{\mathfrak{n}}}}
\nc{\fA}{{\mathfrak{A}}}
\nc{\fB}{{\mathfrak{B}}}
\nc{\fO}{{\mathfrak{O}}}
\nc{\fD}{{\mathfrak{D}}}
\nc{\fE}{{\mathfrak{E}}}
\nc{\fF}{{\mathfrak{F}}}
\nc{\fG}{{\mathfrak{G}}}
\nc{\fK}{{\mathfrak{K}}}
\nc{\fL}{{\mathfrak{L}}}
\nc{\fC}{{\mathfrak{C}}}
\nc{\fM}{{\mathfrak{M}}}
\nc{\fN}{{\mathfrak{N}}}
\nc{\fH}{{\mathfrak{H}}}
\nc{\fP}{{\mathfrak{P}}}
\nc{\fU}{{\mathfrak{U}}}
\nc{\fV}{{\mathfrak{V}}}
\nc{\fZ}{{\mathfrak{Z}}}
\nc{\fz}{{\mathfrak{z}}}
\nc{\bc}{{\mathbf{c}}}
\nc{\ba}{{\mathbf{a}}}
\nc{\bd}{{\mathbf{d}}}
\nc{\bh}{{\mathbf{h}}}
\nc{\be}{{\mathbf{e}}}
\nc{\bj}{{\mathbf{j}}}
\nc{\bn}{{\mathbf{n}}}
\nc{\bp}{{\mathbf{p}}}
\nc{\bg}{{\mathbf{g}}}
\nc{\bq}{{\mathbf{q}}}
\nc{\bs}{{\mathbf{s}}}
\nc{\bbt}{{\mathbf{t}}}
\nc{\bu}{{\mathbf{u}}}
\nc{\bv}{{\mathbf{v}}}
\nc{\bx}{{\mathbf{x}}}
\nc{\by}{{\mathbf{y}}}
\nc{\bw}{{\mathbf{w}}}
\nc{\bA}{{\mathbf{A}}}
\nc{\bK}{{\mathbf{K}}}
\nc{\bB}{{\mathbf{B}}}
\nc{\bC}{{\mathbf{C}}}
\nc{\bD}{{\mathbf{D}}}
\nc{\bF}{{\mathbf{F}}}
\nc{\bH}{{\mathbf{H}}}
\nc{\bM}{{\mathbf{M}}}
\nc{\bN}{{\mathbf{N}}}
\nc{\bV}{{\mathbf{V}}}
\nc{\bW}{{\mathbf{W}}}
\nc{\bL}{{\mathbf{L}}}
\nc{\bU}{{\mathbf{U}}}
\nc{\bX}{{\mathbf{X}}}
\nc{\bY}{{\mathbf{Y}}}
\nc{\bI}{{\mathbf{I}}}
\nc{\bZ}{{\mathbf{Z}}}
\nc{\bS}{{\mathbf{S}}}
\nc{\sA}{{\mathsf{A}}}
\nc{\sB}{{\mathsf{B}}}
\nc{\sC}{{\mathsf{C}}}
\nc{\sD}{{\mathsf{D}}}
\nc{\sF}{{\mathsf{F}}}
\nc{\sH}{{\mathsf{H}}}
\nc{\sG}{{\mathsf{G}}}
\nc{\sK}{{\mathsf{K}}}
\nc{\sM}{{\mathsf{M}}}
\nc{\sO}{{\mathsf{O}}}
\nc{\sQ}{{\mathsf{Q}}}
\nc{\sR}{{\mathsf{R}}}
\nc{\sP}{{\mathsf{P}}}
\nc{\sV}{{\mathsf{V}}}
\nc{\sZ}{{\mathsf{Z}}}
\nc{\sfp}{{\mathsf{p}}}
\nc{\sr}{{\mathsf{r}}}
\nc{\sg}{{\mathsf{g}}}
\nc{\sk}{{\mathsf{k}}}
\nc{\ssf}{{\mathsf{f}}}
\nc{\ssh}{{\mathsf{h}}}
\nc{\sse}{{\mathsf{e}}}
\nc{\sfb}{{\mathsf{b}}}
\nc{\sfc}{{\mathsf{c}}}
\nc{\sd}{{\mathsf{d}}}
\nc{\Av}{\on{Av}}
\nc{\act}{\on{act}}
\nc{\Hom}{\on{Hom}}
\nc{\Ho}{\on{Ho}}
\nc{\End}{\on{End}}
\nc{\Lie}{\on{Lie}}
\nc{\Loc}{\on{Loc}}
\nc{\IC}{\on{IC}}
\nc{\Aut}{\on{Aut}}
\nc{\rk}{\on{rk}}
\nc{\Sh}{\on{Sh}}
\nc{\Perv}{\on{Perv}}
\nc{\pos}{{\on{pos}}}
\nc{\Conv}{\on{Conv}}
\nc{\Sph}{\on{Sph}}
\nc{\Sym}{\on{Sym}}
\nc{\Rep}{\on{Rep}}
\nc{\RepH}{{\mc R}ep(H)}
\nc{\Fun}{\on{Fun}}
\nc{\Res}{\on{Res}}
\nc{\Ind}{\on{Ind}}
\nc{\Id}{\on{Id}}
\nc{\id}{\on{id}}
\renc{\mod}{\on{--mod}}
\nc{\crit}{{\on{crit}}}
\nc{\reg}{{\on{reg}}}
\nc{\nilp}{{\on{nilp}}}
\nc{\ord}{\on{ord}}
\nc{\nil}{\wt{\on{reg}}}
\nc{\mb}{\mathbf}
\nc{\ren}{\on{ren}}
\nc{\res}{\on{res}}
\nc{\RS}{{\on{RS}}}
\nc{\Dist}{\on{Dist}}
\nc{\semiinf}{{\frac{\infty}{2}}}
\nc{\semiinfi}{{\frac{\infty}{2}+i}}
\nc{\semiinfb}{{\frac{\infty}{2}+\bullet}}
\nc{\torsemiinf}{{\overset{\semiinf}\otimes}}
\nc{\Hitch}{\on{Hitch}}
\nc{\hl}{\overset{\leftarrow}h}
\nc{\hr}{\overset{\rightarrow}h}
\nc\Dh{\widehat{\D}}
\nc{\Gr}{\on{Gr}}
\nc{\Fl}{{\on{Fl}}}
\nc{\Flt}{\wt{\Fl}{}}
\nc{\Pic}{\on{Pic}}
\nc{\Bun}{\on{Bun}}
\nc{\bDR}{\mathbf {DR}}
\nc{\uV}{\underline{V}}
\nc{\arrowtimes}{\underset{\to}\times}
\nc{\arrowotimes}{\underset{\to}\otimes}
\nc{\hattimes}{\widehat\otimes}
\nc{\larrowtimes}{\overset{\leftarrow}\otimes}
\nc{\shriektimes}{\overset{!}\otimes}
\nc{\startimes}{\overset{*}\otimes}
\nc{\sCliff}{\mathsf {Cliff}}
\nc{\sSpin}{\mathsf {Spin}}
\nc{\one}{{\mathbf{1}}}
\nc\Spec{\on{Spec}}
\nc{\Pro}{\on{Pro}}
\nc{\Coh}{\on{Coh}}
\nc{\QCoh}{\on{QCoh}}
\nc{\uHom}{\underline{\on{Hom}}}
\nc{\RHom}{\on{RHom}}
\nc{\uRHom}{\underline{\on{RHom}}}
\nc{\CHom}{{\mathcal Hom}}
\nc{\uCHom}{\underline{{\mathcal Hom}}}
\nc{\uCRHom}{\underline{{\mathcal R}{\mathcal Hom}}}
\nc{\cg}{\check \fg}
\nc{\cT}{\check {T}}
\nc{\Op}{{\on{Op}}}
\nc{\nOp}{{\on{Op}^{\nilp}}}
\nc{\rOp}{{\on{Op}^{\reg}}}
\nc{\nMOp}{\on{MOp}^{\nilp}_{\cg}}
\nc{\tg}{\wt{\check \fg}}
\nc{\cn}{\check \fn}
\nc{\tn}{\wt{\cn}}
\nc{\cG}{{{\check G}}}
\nc{\cB}{{{\check B}}}
\nc{\cb}{\check \fb}
\nc{\MOp}{\on{MOp}_{\check \cg}}
\nc{\cN}{\check\CN}
\nc{\tN}{\wt{\check{\CN}}}
\nc{\dIsom}{{\mathsf{Isom}}_{\Op}}
\nc{\disom}{{\mathsf{isom}}_{\Op}}
\nc{\Kdv}{{\mathsf{Isom}}_{\rOp}}
\nc{\kdv}{{\mathsf{isom}}_{\rOp}}
\nc{\Isom}{{\mathsf{Isom}}_{\on{Op}_\fg}}
\nc{\isom}{{\mathsf{isom}}_{\on{Op}_\fg}}
\nc{\wcosta}{j_{\wt{w},*}}
\nc{\wsta}{j_{\wt{w},!}}
\nc{\wcost}{j_{w,*}}
\nc{\wst}{j_{w,!}}
\nc{\epsi}{{\mathbf e}^\psi}
\nc{\epsip}{{\mathbf e}^{\psi'}}
\nc{\Ppi}{{\mathbf \Pi}}
\nc{\hCO}{{\hat{\CO}}}
\nc{\hCK}{{\hat{\CK}}}
\nc{\CPreg}{\CP_{G,\on{Op}^\reg}}
\nc{\CPBreg}{\CP_{B,\on{Op}^\reg}}
\nc{\CPnilp}{\CP_{G,\on{Op}^\nilp}}
\nc{\CPBnilp}{\CP_{B,\on{Op}^\nilp}}
\nc{\CPla}{\CP_{G,\on{Op}_{\cla}}}
\nc{\CPBla}{\CP_{B,\on{Op}_{\cla}}}
\nc{\Cat}{\hg_\crit\mod^{I,m}_\nilp}
\nc{\Catf}{{}^f\hg_\crit\mod^{I,m}_\nilp}
\nc{\DCat}{\bD^f(\hg_\crit\mod_\nilp)^{I^0}}
\nc{\DCatf}{{}^f \bD^f(\hg_\crit\mod_\nilp)^{I^0}}
\nc{\Catr}{\hg_\crit\mod^{I,m}_\reg}
\nc{\Catrf}{{}^f\hg_\crit\mod^{I,m}_\reg}
\nc{\DCatr}{\bD^f(\hg_\crit\mod_\reg)^{I^0}}
\nc{\DCatrf}{{}^f \bD^f(\hg_\crit\mod_\reg)^{I^0}}
\nc{\CatFl}{\bD^f(\fD(\Fl^{\on{aff}}_G)_\crit\mod)}
\nc{\ppart}{(\!(t)\!)}
\nc{\crho}{{\check\rho}}
\nc{\cla}{{\check\lambda}}
\nc{\cmu}{{\check\mu}}
\nc{\cnu}{{\check\nu}}
\nc{\cLambda}{\check\Lambda}
\nc{\fInd}{\mathsf{Ind}}
\nc{\psHom}{{\mathcal PsHom}}
\nc{\Vect}{\on{Vect}}
\renc{\Bar}{\on{Bar}}
\nc{\ua}{\underset{\to}}
\nc{\Heart}{\on{Heart}}
\nc{\Comp}{{\mathbf{Comp}}}
\nc{\DGCat}{{\mathbf {DGCat}}}
\nc{\TrCat}{{\mathbf {TrCat}}}
\nc{\TrMod}{{\mathbf {Tr}}{\mathbf {mod}}}
\nc{\DGMod}{{\mathbf {DG}}{\mathbf {mod}}}
\nc{\DGMonCat}{{\mathbf{DGMonCat}}}
\nc{\TrMonCat}{{\mathbf{TrMonCat}}}
\nc{\obC}{{\overset{\circ}\bC}{}}
\nc{\obA}{{\overset{\circ}\bA}{}}
\nc{\sT}{{\mathsf T}}
\nc{\sS}{{\mathsf S}}
\begin{document}

\title[D-modules on the affine flag variety]
{D-modules on the affine flag variety and representations of affine Kac-Moody algebras} 

\author[Edward Frenkel]{Edward Frenkel$^1$}
\thanks{$^1$Supported by DARPA and AFOSR through the grant FA9550-07-1-0543.}

\address{Department of Mathematics, University of California,
Berkeley, CA 94720, USA}

\email{frenkel@math.berkeley.edu}

\author[Dennis Gaitsgory]{Dennis Gaitsgory$^2$}
\thanks{$^2$Supported by NSF grant 0600903.}

\address{Department of Mathematics, Harvard University,
Cambridge, MA 02138, USA}

\email{gaitsgde@math.harvard.edu}

\date{December 2007; Revised: July 2009. Version: Sept. 29, 2009}

\maketitle

\setcounter{tocdepth}{1}

\tableofcontents

\newpage

\section*{Introduction}

\ssec{}   \label{first paragraph}

Let $\hg$ be the affine Kac-Moody algebra corresponding to a finite-dimensional
semi-simple Lie algebra $\fg$. Let $\hg_\crit\mod$ denote the category of 
(continuous) $\hg$-modules at the critical level (see \cite{FG2} for the precise 
definition).

\medskip

It is often the case in representation theory that in order to gain a good understanding
of a category of modules of some sort, one has to reinterpret it in more geometric terms,
by which we mean either as the category of D-modules on an algebraic 
variety, or as the category of quasi-coherent sheaves on some (usually, different) 
algebraic variety. This is what we do in this paper for a certain subcategory 
of $\hg_\crit\mod$, thereby proving two conjectures from \cite{FG2}.

\medskip

There is another angle under which this paper can be viewed: the results concerning
$\hg_\crit\mod$ fit into the framework of the geometric local Langlands correspondence.
We refer the reader to the introduction to \cite{FG2} where this view point is
explained in detail.

\ssec{Localization}

Let us first describe the approach via D-modules. This pattern is known as {\it localization},
a prime example of which is the equivalence of \cite{BB} between the category of 
$\fg$-modules with a given central character and the category of (twisted) D-modules 
on the flag variety $G/B$.

\medskip

The affine analog of $G/B$ is the affine flag scheme $G\ppart/I$, where $I\subset G\ppart$
is the Iwahori subgroup. By taking sections of (critically twisted) D-modules, we obtain a
functor $$\Gamma_\Fl: \fD(\Fl^{\on{aff}}_G)_\crit\mod \to \hg_\crit\mod.$$

However, as in the finite-dimensional case, one immediately observes that the 
$\hg_\crit$-modules that one obtains in this way are not arbitrary, but belong to
a certain subcategory singled out by a condition on the action of the center 
$Z(\wt{U}(\hg)_\crit)$, where $\wt{U}(\hg)_\crit$ is the (completed, reduced) universal 
enveloping algebra at the critical level. 

\medskip

Namely, $\fZ_\fg:=Z(\wt{U}(\hg)_\crit)$ is a topological commutative algebra,
which according to \cite{FF}, admits the following explicit description in terms of 
the Langlands dual group $\cG$: the ind-scheme $\Spec(\fZ_\fg)$ is isomorphic 
to the ind-scheme $\Op(\D^\times)$ of $\cG$-opers on the formal punctured disc. 
This ind-scheme of opers was introduced in \cite{BD}, and it 
contains a closed subscheme denoted $\nOp$ which
corresponds to opers with a nilpotent singularity, introduced in \cite{FG2}.

\medskip

It is rather straightforward to see that the image
of the functor $\Gamma_\Fl$ lands in the subcategory $\hg_\crit\mod_\nilp\subset
\hg_\crit\mod$ consisting of modules, whose support over $\Spec(\fZ_\fg)\simeq \Op(\D^\times)$
is contained in $\nOp$. Thus, we can consider $\Gamma_\Fl$ as a functor
\begin{equation}  \label{Gamma nilp}
\fD(\Fl^{\on{aff}}_G)_\crit\mod \to \hg_\crit\mod_\nilp.
\end{equation}

\medskip

We should remark that it is here that the assumption that we work at the critical
level becomes crucial:

\medskip

For any level
$\kappa$ one can consider the corresponding functor 
$\Gamma_\Fl: \fD(\Fl^{\on{aff}}_G)_\kappa\mod \to \hg_\kappa\mod$, and it is again relatively easy to see that this functor cannot be 
essentially surjective. However, in the non-critical case the image
category is much harder to describe: we cannot do this by imposing a condition on
the action of the center $Z(\wt{U}(\hg)_\kappa)$ (as we did in the finite-dimensional
case, or in the affine case at the critical level) since the latter is essentially trivial.
This fact prevents one from proving (or even formulating)
a localization type equivalence in the non-critical case.

\ssec{Non-exactness}

Returning to the analysis of the functor \eqref{Gamma nilp} we observe two phenomena
that distinguish the present situation from the finite-dimensional case of \cite{BB}. 

\medskip

First, unlike the case of the finite-dimensional flag variety,
the functor $\Gamma_{\Fl}$ is not exact (and cannot be made exact by any additional
twisting). This compels us to leave the hopes of staying within the realm of abelian categories, 
and pass to the corresponding derived ones. I.e., from now on we will be considering 
the derived functor of $\Gamma_\Fl$, denoted by a slight abuse of notation by the same
character 
\begin{equation}  \label{Gamma nilp der}
\Gamma_\Fl:\bD^b(\fD(\Fl^{\on{aff}}_G)_\crit\mod) \to \bD^b(\hg_\crit\mod_\nilp).
\end{equation}

The necessity do work with triangulated categories as opposed to abelian ones
accounts for many of the technical issues in this paper, and ultimately, for
its length. 

\medskip

That said, we should remark that in \secref{new t structure} we define a new t-structure
on the category $\bD^b(\fD(\Fl^{\on{aff}}_G)_\crit\mod)$ and make a conjecture that in this
new t-structure the functor $\Gamma_\Fl$ is exact. 

\ssec{Base change}

The second new phenomenon present in the case of affine Kac-Moody
algebras is that the (derived) functor 
$\Gamma_{\Fl}$ is not fully faithful. The reason is very simple: the center of the category 
$\fD(\Fl^{\on{aff}}_G)_\crit\mod$ is essentially trivial, whereas that of $\hg_\crit\mod_\nilp$ is the algebra
of functions on the scheme $\nOp$. 

\medskip

I.e., by setting the level to critical we have gained
the center, which allows to potentially describe the image of $\Gamma_\Fl$, but we
have gained too much: instead of just one central character as in the finite-dimensional
case, we obtain a $\nOp$-worth of those.

\medskip

However, the non-fully faithfulness of $\Gamma_\Fl$ can be accounted for. 

\medskip

Let $\tN:=T^*(\on{Fl}^{\cG})$ be the Springer variety corresponding to the Langlands dual group, where $\Fl^\cG$ denotes the flag variety of $\cG$.
\footnote{We emphasize that $\Fl^\cG$ denotes the finite-dimensional flag
variety $\cG/\cB$ of the Langlands dual group $\cG$, and in this paper we
will consider quasi-coherent sheaves on it. It should not be confused with the affine
flag scheme $G\ppart/I$ of $G$, denoted $\Fl^{\on{aff}}_G$, on which we will consider D-modules.}
Consider the stack $\tN/\cG$. A crucial piece of structure is that the 
monoidal triangulated category $\bD^b(\Coh(\tN/\cG))$ (i.e., the 
$\cG$-equivariant derived category of coherent sheaves on $\tN$) acts on the
triangulated category $\bD^b(\fD(\Fl^{\on{aff}}_G)_\crit\mod)$. This action was constructed
in the paper \cite{AB}, and we denote it here by $\star$.

\medskip

Another observation is that there is a natural map $\fr_\nilp:\nOp\to \tN/\cG$,
and we show that these structures are connected as follows. For
$\CF\in \bD^b(\fD(\Fl^{\on{aff}}_G)_\crit\mod)$, $\CM\in \bD^b(\Coh(\tN/\cG))$ we have
a canonical isomorphism
$$\Gamma_\Fl(\CF\star \CM)\simeq \Gamma_\Fl(\CF)\underset{\CO_{\nOp}}\otimes 
\fr_\nilp^*(\CM).$$
I.e., the effect of acting on $\CF$ by $\CM$ and then taking sections 
is the same as that of first taking sections and then tensoring over the algebra
of functions on $\nOp$ by the pull-back of $\CM$ by means of $\fr_\nilp$.

\medskip

This should be viewed as a categorical analog of the following situation in
linear algebra. Let $V_1$ be a vector space, acted on by an algebra $A_1$
by endomorphisms (i.e., $V_1$ is a $A_1$-module). Let $(V_2,A_2)$ be
another such pair; let $r_A:A_1\to A_2$ be a homomorphism of algebras,
and $r_V:V_1\to V_2$ a map of vector spaces, compatible with the actions.

In this case, we obtain a map $A_2\underset{A_1}\otimes V_1\to V_2$.

\medskip

We would like to imitate this construction, where instead of vector spaces
we have categories: 
$$V_1\mapsto \bD^b(\fD(\Fl^{\on{aff}}_G)_\crit\mod),\,\, V_2\mapsto 
 \bD^b(\hg_\crit\mod_\nilp),$$
instead of algebras we have monoidal categories
$$A_1\mapsto \bD^b(\Coh(\tN/\cG)),\,\, A_2\mapsto \bD^b(\Coh(\nOp)),$$
and instead of maps we have functors:
$$r_A\mapsto \fr^*_\nilp,\,\, r_V\mapsto \Gamma_\Fl.$$

\medskip

Therefore, it is a natural idea to try to define a categorical tensor product
\begin{equation} \label{base change intr}
\bD^b(\Coh(\nOp))\underset{\bD^b(\Coh(\tN/\cG))}\otimes \bD^b(\fD(\Fl^{\on{aff}}_G)_\crit\mod),
\end{equation}
which can be viewed as a base change of $\bD^b(\fD(\Fl^{\on{aff}}_G)_\crit\mod)$ with
respect to the morphism $\fr_\nilp:\nOp\to \tN/\cG$, and a functor from 
\eqref{base change intr} to $\bD^b(\hg_\crit\mod_\nilp)$, denoted 
$\Gamma_{\Fl,\nOp}$, 
compatible with the action of $\bD^b(\Coh(\nOp))$. Unlike $\Gamma_\Fl$, the new
functor $\Gamma_{\Fl,\nOp}$ has a chance of being an equivalence. 

\ssec{Localization results}

The above procedure of taking the tensor product can indeed be carried out, and
is the subject of Part III of this paper. I.e., we can define the category in 
\eqref{base change intr} as well as the functor $\Gamma_{\Fl,\nOp}$. 

\medskip

We conjecture that $\Gamma_{\Fl,\nOp}$ is an 
equivalence, which would be a complete localization result in the context of $\hg_\crit\mod$.
Unfortunately, we cannot prove it at the moment. We do prove, however, that 
$\Gamma_{\Fl,\nOp}$ is fully faithful; this is one of the four main results of this
paper, \mainthmref{Gamma nilp fully faithful}.  

\medskip

In addition, we prove that $\Gamma_{\Fl,\nOp}$ does induce an equivalence
between certain subcategories on both sides, 
namely the subcategories consisting of Iwahori-monodromic objects. This is the second
main result of the present paper, \mainthmref{main}. 
The Iwahori-monodromic subcategory on the RHS, denoted $\bD^b(\hg_\crit\mod_\nilp)^{I^0}$,
can be viewed as a critical level version of the category $\CO$. Thus, \mainthmref{main},
provides a localization description at least for this subcategory. 

\medskip

We should remark that our inability to prove the fact that $\Gamma_{\Fl,\nOp}$
is an equivalence for the ambient categories
stems from our lack of any explicit information about objects of 
$\hg_\crit\mod$ other than the Iwahori-monodromic ones. 

\medskip

The above results that relate the categories $\bD^b(\fD(\Fl^{\on{aff}}_G)_\crit\mod)$ and
$\bD^b(\hg_\crit\mod_\nilp)$ have analogues, when instead of $\Fl^{\on{aff}}_G$ 
we consider the affine Grassmannian $\Gr^{\on{aff}}_G$, and instead of
$\nOp\subset \Op(\D^\times)$ we consider the sub-scheme of regular opers
$\rOp \subset \Op(\D^\times)$. These results will be recalled below in 
\secref{intr rel to gr}.

\ssec{The quasi-coherent picture} Let us now pass to the description of a 
subcategory of $\hg_\crit\mod_\nilp$ in terms of quasi-coherent sheaves, mentioned
in \secref{first paragraph}. The subcategory in question, or rather its triangulated
version, is $\bD^b(\hg_\crit\mod_\nilp)^{I^0}$ that has appeared above.

\medskip

In \cite{FG2} we proposed (see Conjecture 6.2 of {\it loc. cit.}) 
that $\bD^b(\hg_\crit\mod_\nilp)^{I^0}$
should be equivalent to the category $\bD^b(\QCoh(\nMOp))$, where $\nMOp$ is the
scheme classifying {\it Miura opers with a nilpotent singularity}, introduced in \cite{FG2}, Sect. 3.14.

\medskip

By definition, $\nMOp$ is the fiber product
$$\nOp\underset{\tN/\cG}\times \check{\on{St}}/\cG,$$
where $\check{\on{St}}$ is the Steinberg scheme. In other words,
$\nMOp$ is the moduli space of pairs: an oper $\chi$ on the formal punctured
disc $\D^\times$ with a nilpotent singularity, and its reduction to the Borel subgroup
$\cB\subset \cG$ as a local system. 

\medskip

The motivation for the above conjecture was that for any point $\wt\chi\in \nMOp$
one can attach a specific object $\BW_{\wt\chi}\in \hg_\crit\mod_\nilp^{I^0}$,
called the Wakimoto module, and the conjecture can be viewed as saying that
any object of $\bD^b(\hg_\crit\mod_\nilp)^{I^0}$ is canonically a "direct integral"
of Wakimoto modules.

\medskip

In this paper we prove this conjecture by combining our \mainthmref{main} and
one of the main results of Bezrukavnikov's theory (Theorem 4.2 of \cite{Bez}),
which provides an equivalence
\begin{equation} \label{Bezr eq}
\bD^b(\fD(\Fl^{\on{aff}}_G)_\crit\mod)\simeq \bD^b(\QCoh(\check{\on{St}}/\cG)).
\end{equation}
The proof is obtained by essentially base-changing both sides of \eqref{Bezr eq}
with respect to $\fr_\nilp$. This is the third main result of this
paper, \mainthmref{Miura}.

\medskip

We refer the reader to the introduction to \cite{FG2} for the explanation
how the above corollary can be viewed as a particular case of the 
local geometric Langlands correspondence.

\medskip

As a corollary we obtain the following result: let $\chi\in \nOp$ be an oper
with a nilpotent singularity.
On the one hand, let us consider $\bD^b(\hg_\crit\mod_{\chi})^{I^0}$,
which is the full subcategory of $\bD^b(\hg_\crit\mod_\chi)$--the
derived category of $\hg$-modules with central character given by $\chi$,
consisting of $I^0$-integrable objects. 

\medskip

On the other hand, let $n$ be an element of $\cn$, whose image in 
$\cn/\cB\simeq \tN/\cG$ equals that of $\fr_\nilp(\chi)$. Let $\on{Spr}_{n}$
be the derived Springer fiber over $n$, i.e., the Cartesian product
$\on{pt}\underset{\cg}\times \tN$,
taken in the category of DG-schemes. We obtain:

\medskip

\noindent{\bf Corollary.}
{\it There is an equivalence of categories
$$\bD^b(\hg_\crit\mod_{\chi})^{I^0}\simeq \bD^b(\QCoh(\on{Spr}_{n})).$$}

\medskip

In this paper we do not prove, however, that the functor thus obtained is compatible 
with the Wakimoto module construction. Some particular cases of this assertion 
have been established in \cite{FG5}. We expect that the general case could be
established by similar, if more technically involved, methods. 

\ssec{Relation to the affine Grassmannian}  \label{intr rel to gr}

The main representation-theoretic ingredient in the proof of the main results
of this paper is the fully faithfulness assertion, \mainthmref{Gamma nilp fully faithful}. Its proof
is based on comparison between the functors $\Gamma_\Fl$ and 
$\Gamma_{\Fl,\nOp}$ mentioned above, and the corresponding functors
when the affine flag scheme $\Fl^{\on{aff}}_G$ is replaced by the affine Grassmannian $\Gr^{\on{aff}}_G$.

\medskip

Let us recall that in \cite{FG4} we considered the category 
$\hg_\crit\mod_\reg$, corresponding to modules whose
support over $\Spec(\fZ_\fg)\simeq \Op(\D^\times)$ is contained
in the subscheme $\rOp$ of regular opers. 

\medskip

We also considered the category $\fD(\Gr^{\on{aff}}_G)_\crit\mod$ and a functor
$\Gamma_{\Gr}:\fD(\Gr^{\on{aff}}_G)_\crit\mod\to \hg_\crit\mod_{\reg}$, which
by contrast with the case of $\Fl^{\on{aff}}_G$, was exact. In addition,
the category $\fD(\Gr^{\on{aff}}_G)_\crit\mod$ was naturally acted upon by
$\Rep^{f.d.}(\cG)\simeq \Coh(\on{pt}/\cG)$, and we considered the base
changed category
$$\Coh(\rOp)\underset{\Coh(\on{pt}/\cG)}\otimes \fD(\Gr^{\on{aff}}_G)_\crit\mod,$$
and the functor $\Gamma_{\Gr,\rOp}$ from it to $\hg_\crit\mod_\reg$.

\medskip

In {\it loc. cit.} it was shown that on the level of derived categories,
the corresponding functor
\begin{multline*}
\Gamma_{\Gr,\rOp}:\bD^b\left(
\Coh(\rOp)\underset{\Coh(\on{pt}/\cG)}\otimes \fD(\Gr^{\on{aff}}_G)_\crit\mod\right)\simeq  \\
\simeq
\bD^b(\Coh(\rOp))\underset{\bD^b(\Coh(\on{pt}/\cG))}\otimes \bD^b(\fD(\Gr^{\on{aff}}_G)_\crit\mod)\to
\bD^b(\hg_\crit\mod_\reg)
\end{multline*}
was fully faithful.

\medskip

The relation between the $\Fl^{\on{aff}}_G$ and the $\Gr^{\on{aff}}_G$ pictures is provided by our
fourth main result, \mainthmref{thm relation to Grassmannian}. This theorem asserts
that the base-changed category
$$\bD^b(\Coh(\on{pt}/\cB))\underset{\bD^b(\Coh(\tN/\cG))}\otimes \bD^b(\fD(\Fl^{\on{aff}}_G)_\crit\mod)$$
with respect to the map
$$\on{pt}/\cB\simeq \on{Fl}^{\cG}/\cG\hookrightarrow \tN/\cG,$$
given by the 0-section $\on{Fl}^{\cG}\hookrightarrow \tN$
is essentially equivalent to the base-changed category
$$\bD^b(\Coh(\on{pt}/\cB))\underset{\bD^b(\Coh(\on{pt}/\cG))}\otimes \bD^b(\fD(\Gr^{\on{aff}}_G)_\crit\mod).$$

This equivalence makes it possible to write down a precise relationship between the functors
$\Gamma_{\Fl,\nOp}$ and $\Gamma_{\Gr,\rOp}$ (see \thmref{rel to aff gr and sections}),
and deduce \mainthmref{Gamma nilp fully faithful} from the fully-faithfulness of
$\Gamma_{\Gr,\rOp}$ (see \secref{proof of Gamma nilp ff}).

\ssec{Contents}

This paper consists of four parts: 

\medskip

In Part I we perform the representation-theoretic and geometric constructions
and formulate the main results. 

\medskip

In \secref{AB action naive} we show how the constructions of the paper \cite{AB}
define an action of the triangulated category $\bD^b(\Coh(\tN/\cG))$ on
$\bD^b(\fD(\Fl^{\on{aff}}_G)_\crit\mod)$. In fact, the action at the level of triangulated 
categories comes naturally from a finer structure at a DG (differential graded)
level. The latter fact allows to introduce the category \eqref{base change intr},
which is one of the main players in this paper.

\medskip

In \secref{new t structure} we introduce a new t-structure on the category
$\bD^b(\fD(\Gr^{\on{aff}}_G)_\crit\mod)$. It will turn out that this t-structure has 
a better behavior than the usual one with respect to the constructions that we perform
in this paper.

\medskip

In \secref{functor to the critical level} we combine some results of \cite{FG3}
and \cite{FG4} and show how the functor $\Gamma_\Fl$ extends to a functor
$$\Gamma_{\Fl,\nOp}:
\bD^b(\Coh(\nOp))\underset{\bD^b(\Coh(\tN/\cG))}\otimes \bD^b(\fD(\Fl^{\on{aff}}_G)_\crit\mod)\to
\bD^b(\hg_\crit\mod_\nilp),$$
We formulate the first main result of this paper,
\mainthmref{Gamma nilp fully faithful} that asserts that the functor $\Gamma_{\Fl,\nOp}$
is fully faithful. As was explained above, the latter result is as
close as we are currently able to get to {\it localization} at the critical level. 
We also formulate \conjref{main conj} to the effect that the functor $\Gamma_{\Fl,\nOp}$
is an equivalence.

\medskip

In \secref{Iwahori monodromic} we consider the Iwahori-monodromic subcategories
in the framework of \conjref{main conj}. We formulate the second main
result of this paper, \mainthmref{main}, which asserts that the functor $\Gamma_{\Fl,\nOp}$
induces an equivalence
$$\bD^b(\Coh(\nOp))\underset{\bD^b(\Coh(\tN/\cG))}\otimes \bD^b(\fD(\Fl^{\on{aff}}_G)_\crit\mod)^{I^0}
\to\bD^b(\hg_\crit\mod_\nilp)^{I^0}.$$
In addition, we formulate \mainthmref{Miura}, which sharpens the
description of the category $\bD^b(\hg_\crit\mod_\nilp)^{I^0}$ in terms of quasi-coherent sheaves
on the scheme of Miura opers, proposed in \cite{FG2}.

\medskip

In \secref{relation to Grassmannian} we formulate 
\mainthmref{thm relation to Grassmannian}, which essentially
expresses the category $\bD^b(\fD(\Gr^{\on{aff}}_G)_\crit\mod)$
in terms of $\bD^b(\fD(\Fl^{\on{aff}}_G)_\crit\mod)$ and the action of 
$\bD^b(\Coh(\tN/\cG))$ on it. More precisely, we construct a functor
\begin{multline}   \label{intr Ups}
\Upsilon:
\bD^b(\Coh(\on{pt}/\cB))\underset{\bD^b(\Coh(\tN/\cG))}\otimes \bD^b(\fD(\Fl^{\on{aff}}_G)_\crit\mod)\to \\
\bD^b(\Coh(\on{pt}/\cB))\underset{\bD^b(\Coh(\on{pt}/\cG))}\otimes \bD^b(\fD(\Gr^{\on{aff}}_G)_\crit\mod),
\end{multline}
which will turn out to be "almost" an equivalence.

\medskip

In \secref{compat with sect} we formulate a theorem that connects 
the functors $\Gamma_{\Fl,\nOp}$ and $\Gamma_{\Gr,\rOp}$ by
means of the functor $\Upsilon$ of \secref{relation to Grassmannian}.

\bigskip

In Part II we prove the theorems formulated in Part I, assuming a number
of technical results, which will be the subject of Parts III and IV.

\medskip

In \secref{ups adj} we prove a number of adjunction properties related to 
the functor $\Upsilon$ of \secref{relation to Grassmannian}, and reduce the fully 
faithfulness result of \mainthmref{thm relation to Grassmannian} to a certain isomorphism 
in $\bD^f(\fD(\Gr^{\on{aff}}_G)_\crit\mod)$, given by \thmref{thm zero section}.

\medskip

In \secref{zero section} we prove \thmref{thm zero section}. We give two
proofs, both of which use Bezrukavnikov's theory. One proof uses
some still unpublished results of \cite{Bez}, while another proof
uses only \cite{AB}.

\medskip

In \secref{t structure and Grass} we study the interaction between the
new t-structure on $\bD^f(\fD(\Fl^{\on{aff}}_G)_\crit\mod)$ and the usual
t-structure on $\bD^f(\fD(\Gr^{\on{aff}}_G)_\crit\mod)$.

\medskip

In \secref{calc of endomorphisms} we use the results of \secref{t structure and Grass}
to complete the proof of \thmref{thm zero section}.

\medskip

In \secref{turning Ups} we show how to modify the LHS of \eqref{intr Ups} and the
functor $\Upsilon$ to turn it into an equivalence. We should remark that the
results of this section are not needed for the proofs of the main theorems of
this paper.

\medskip

In \secref{Ups and Gamma} we prove the theorem announced in \secref{compat with sect}
on how the functor $\Upsilon$ intertwines between the functors $\Gamma_{\Fl,\nOp}$
and $\Gamma_{\Gr,\rOp}$.

\medskip

In \secref{proof of Gamma nilp ff} we use the fact that $\Upsilon$ is fully faithful
to deduce fully faithfulness of the functor $\Gamma_{\Fl,\nOp}$, which
is our \mainthmref{Gamma nilp fully faithful}.

\medskip

In \secref{proof of I eq} we prove \mainthmref{main} and \mainthmref{Miura}.

\bigskip

In Part III we develop the machinery used in Parts I and II that has to
do with the notion of tensor product of (triangulated) categories over a 
(triangulated) monoidal category. As is often the case in homotopy
theory, the structure of triangulated category is not rigid enough for
the constructions that we perform. For that reason we will have to 
deal with DG categories  rather than with the
triangulated ones. \footnote{Our decision to work in the framework of DG categories rather
than in a better behaved world of quasi-categories stems from two reasons. One is the fact
that we have not yet learnt the latter theory well enough to apply it. The other is that 
we are still tempted to believe that when working with linear-algebraic objects over a field of 
characteristic zero, on can construct a homotopy-theoretic framework based on DG categories, 
which will avoid some of the combinatorial machinery involved in dealing with simplicial
sets when proving foundational results on quasi-categories.}

\medskip 
 
In \secref{DG reminder} we recall the basics of DG categories and
their relation to triangulated categories. Essentially, we summarize some of the results
of \cite{Dr}. 

\medskip

In \secref{homotopy monoidal categories and actions} we review the notion of
homotopy monoidal structure on a DG category. Our approach amounts to considering
a pseudo-monoidal structure, which yields a monoidal structure on the homotopy level;
this idea was explained to us by A.~Beilinson. We should note that one could consider
a more flexible definition following the prescription of \cite{Lu}; however, as was explained to us
by J.~Lurie, the two approaches are essentially equivalent.
 
\medskip

\secref{tensor products of categories} deals with the tensor product 
of categories, which is a central object for all the constructions in Part I. 
Given two DG categories $\bC_1$ and $\bC_2$ acted on by a monoidal DG category 
$\bA$ on the left and on the right, respectively, we define a new DG category 
$\bC_1\underset\bA\otimes \bC_2$. This construction was explained to us by J.~Lurie.
It essentially consists of taking the absolute tensor product $\bC_1\otimes \bC_2$ 
and imposing the isomorphisms $(\bc_1\cdot \ba)\otimes \bc_2\simeq
\bc_1\otimes (\ba\cdot \bc_2)$, where $\bc_i\in \bC_i$ and $\ba\in \bA$. 

\medskip

In \secref{adjunctions and tightness} we study the properties of the tensor
product construction which can be viewed as generalizations of the projection
formula in algebraic geometry.

\medskip

In \secref{t-structures: a reminder} we recollect some facts related to the notion 
of t-structure on a triangulated category. 

\medskip

In \secref{tensor products and t-structures} we study how the tensor product construction
interacts with t-structures. In particular, we study the relationship between tensor products
at the triangulated and abelian levels.

\medskip

In \secref{categories over stacks} we apply the constructions of Sects. 
\ref{homotopy monoidal categories and actions}, 
\ref{tensor products of categories}, \ref{adjunctions and tightness},
\ref{t-structures: a reminder} and \ref{tensor products and t-structures}
in the particular case when the monoidal triangulated category is the
perfect derived category of coherent sheaves on an algebraic stack.
In this way we obtain the notion of triangulated category over a stack,
and that of base change with respect to a morphism of stacks.

\bigskip

Part IV is of technical nature: we discuss the various triangulated categories
arising in representation theory.

\medskip

\secref{a renormalization procedure} contains a crucial ingredient
needed to make the constructions in Part I work. It turns out that when dealing with
infinite-dimensional objects such as $\Fl^{\on{aff}}_G$ or $\hg$, the usual triangulated categories
associated to them, such as the derived category $\bD(\fD(\Fl^{\on{aff}}_G)_\crit\mod)$
of D-modules in the former case, and the derived category $\bD(\hg\mod)$ of 
$\hg$-representations in the latter case, are not very convenient to work with. 
The reason is that these categories have too few compact objects. 
We show how to modify such categories "at $-\infty$" (i.e., keeping the corresponding
$\bD^+$ subcategories unchanged), so that the resulting categories are still
co-complete (i.e., contain arbitrary direct sums), but become compactly generated.

\medskip

In \secref{rep theory} we apply the discussion of \secref{a renormalization procedure}
in the two examples mentioned above, i.e., $\bD(\hg_\crit\mod)$ and
$\bD(\fD(\Fl^{\on{aff}}_G)_\crit\mod)$, and study the resulting categories, denoted
$\bD_{ren}(\hg_\crit\mod)$ and $\bD_{ren}(\fD(\Fl^{\on{aff}}_G)_\crit\mod)$, respectively.

\medskip

Having developed the formalism of monoidal actions, tensor products,
and having defined the desired representation-theoretic categories equipped
with DG models, in \secref{DG model for the AB action} we upgrade to the DG
level the constructions from Part I, which were initially carried out at the triangulated 
level.

\medskip

In \secref{the I-equivariant situation} we show that imposing the condition of
$I$-monodromicity (which in our case coincides with that of $I^0$-equivariance)
survives the manipulations of Sects. \ref{rep theory} and \ref{DG model for the AB action}.

\ssec{Notation}   \label{notation}

Notation and conventions in this paper follow closely those of \cite{FG2}. 

\medskip

We fix $G$ to be a semi-simple simply connected group over a ground field,
which is algebraically closed and has characteristic zero. We shall denote by
$\cLambda$ the lattice of coweights corresponding to $G$ and by 
$\cLambda^+$ the semi-group of dominant co-weights.

\medskip

We let $\cG$ denote the Langlands dual group of $G$. Let $\cg$ be the Lie
algebra of $\cG$. Let $\cB\subset \cN$ be the Borel subgroup and its unipotent
radical, and $\cb\subset \cn$ be their Lie algebras, respectively. 

\medskip

Let $\on{Fl}^{\cG}$ be the flag variety of $\cG$, thought of as a scheme parameterising  
Borel subalgebras in $\cg$. For $\cla\in \cLambda$ we denote by $\CL^\cla$
the corresponding line bundle on $\on{Fl}^{\cG}/\cG\simeq \on{pt}/\cB$. Our
normalization is such that $\CL^\cla$ is ample if $\cla$ is dominant,
and $\Gamma(\on{Fl}^{\cG},\CL^\cla)=V^\cla$, the irreducible representation
of highest weight $\cla$.

\medskip

We denote by $\tg$ Grothendieck's alteration. This is the 
{\it tautological} sub-bundle in the trivial vector bundle $\on{Fl}^{\cG}\times \cg$. 
Let $\tN\subset \tg$ be the Springer resolution, i.e., it is the variety of pairs
$\{(x\in \cg,\cb'\in \on{Fl}^{\cG})\, |\, x\in \cn'\}$. We denote by $\pi$ the natural projection
$\tN\to \on{Fl}^{\cG}$, and by $\iota$ the zero section $\on{Fl}^{\cG}\to \tN$.

\medskip

When discussing opers or Miura opers, we will mean these objects with respect to
the group $\cG$ (and never $G$), so the subscript "$\cG$" will be omitted. 

\medskip

We will consider the affine Grassmannian $\Gr^{\on{aff}}_G:=G\ppart/G[[t]]$ and
the affine flag scheme $\Fl^{\on{aff}}_G:=G\ppart/I$. We will denote by $p$ the natural 
projection $\Fl^{\on{aff}}_G\to \Gr^{\on{aff}}_G$.

\medskip

For an element $\wt{w}$ in the affine Weyl group, we shall denote by $j_{\wt{w},!}$
(resp., $j_{\wt{w},*}$) the corresponding standard (resp., co-standard) $I$-equivariant
objects of $\fD(\Fl^{\on{aff}}_G)\mod$. For $\cla\in \cLambda$ we denote by $J_\cla\in 
\fD(\Fl^{\on{aff}}_G)\mod$ the corresponding Mirkovic-Wakimoto D-module, which is characterized
by the property that $J_{\cla}=j_{\cla,*}$ for $\cla\in \cLambda^+$, 
$J_{\cla}=j_{\cla,!}$ for $\cla\in -\cLambda^+$ and $J_{\cla_1+\cla_2}=J_{\cla_1}\star 
J_{\cla_2}$.

\medskip

The geometric Satake equivalence (see \cite{MV}) defines a functor from the 
category $\Rep(\cG)$ to that of $G[[t]]$-equivariant objects in 
$\fD(\Gr^{\on{aff}}_G)\mod$. We denote this functor by
$V\mapsto \CF_V$. The construction of \cite{Ga} defines for every $V$ as above
an object $Z_V\in \fD(\Fl^{\on{aff}}_G)\mod$, which is $I$-equivariant, and is {\it central}, a 
property that will be reviewed in the sequel.

\ssec{Acknowledgments} We would like to thank R.~Bezrukavnikov for teaching 
us how to work with the geometric Hecke algebra, 
i.e., Iwahori-equivariant sheaves on $\Fl^{\on{aff}}_G$. In particular he has explained to us
the theory, 
developed by him and his collaborators, of the relationship between this category
and that of coherent sheaves on geometric objects related to the Langlands dual group,
such as the Steinberg scheme $\check{\on{St}}$.

\medskip

We would like to thank J.~Lurie for explaining to us how to resolve a multitude of issues 
related to homotopy theory of DG categories, triangulated categories and t-structures. 
This project could not have been completed without his help.

\medskip

We would like to thank A.~Neeman for helping us prove a key result in
\secref{a renormalization procedure}. 

\medskip

Finally, we would like to thank A.~Beilinson for numerous illuminating discussions 
related to this paper. 

\newpage



\centerline{\bf \large Part I: Constructions}

\bigskip

\section{The Arkhipov-Bezrukavnikov action}   \label{AB action naive}

Let $\bD^f(\fD(\Fl^{\on{aff}}_G)_\crit\mod)$ be the bounded derived category
of finitely generated critically twisted D-modules on $\Fl^{\on{aff}}_G$.
It is well-defined since $\Fl^{\on{aff}}_G$ is a strict ind-scheme of ind-finite
type.

\medskip

The goal of this section is to endow $\bD^f(\fD(\Fl^{\on{aff}}_G)_\crit\mod)$
with a structure of triangulated category over the stack $\tN/\cG$
(see  \secref{over} for the precise definition of what this means). I.e., we will make 
the triangulated monoidal category 
of perfect complexes on $\tN/\cG$ act on $\bD^f(\fD(\Fl^{\on{aff}}_G)_\crit\mod)$.
The action must be understood in the sense of triangulated categories,
equipped with DG models (see \secref{DG models for action}).
In the present section, we will perform the construction at
the triangulated level only, and refer
the reader to \secref{AB action: model}, where it is upgraded to the DG
level.

\ssec{}

Let $\bD^{perf}(\on{Coh}(\tN/\cG))$ be the perfect derived category 
on the stack $\tN/\cG$, as introduced in \secref{perf on stack}, i.e.,
$$\bD^{perf}(\on{Coh}(\tN/\cG)):=\Ho\left(\bC^b(\on{Coh}^{loc.free}(\tN/\cG))\right)/
\Ho\left(\bC^b_{acycl}(\on{Coh}^{loc.free}(\tN/\cG))\right),$$
where $\bC^{b}(\on{Coh}^{loc.free}(\tN/\cG)$ is the DG category of
bounded complexes of locally free coherent sheaves on $\tN/\cG$,
and $\bC^{b}_{acycl}(\on{Coh}^{loc.free}(\tN/\cG)$ is the subcategory
of acyclic complexes. The former has a natural structure of
DG monoidal category, and the latter is a monoidal ideal,
making the quotient $\bD^{perf}(\on{Coh}(\tN/\cG))$ into a
triangulated monoidal category. 

\medskip

In order to define the
action of $\bD^{perf}(\on{Coh}(\tN/\cG))$ on $\bD^{perf}(\on{Coh}(\tN/\cG))$,
following \cite{AB}, we will use a different realization of 
$\bD^{perf}(\on{Coh}(\tN/\cG))$ as a quotient of an explicit triangulated 
monoidal category $\Ho\left(\bC^b(\on{Coh}^{free}(\tN/\cG))\right)$ by a 
monoidal ideal.

\medskip

Namely, let $\bC^b(\on{Coh}^{free}(\tN/\cG))$ be the monoidal DG subcategory of
$\bC^b(\on{Coh}^{loc.free}(\tN/\cG))$, consisting of complexes, whose terms 
are direct sums of coherent sheaves of the form $\pi^*(\CL^\cla)\otimes V$, 
where $V$ is a finite dimensional representation of $\cG$, and 
$\CL^\cla$ is the line bundle on
$\on{Fl}^{\cG}/\cG\simeq \on{pt}/\cB$ corresponding to 
$\cla\in \cLambda$, and $\pi$ denotes the projection $\tN\to \on{Fl}^{\cG}$. 
(We remind that our normalization is that for $\cla\in \cLambda^+$,
$\Gamma(\on{Fl}^{\cG},\CL^\cla)\simeq V^\cla$, 
the representation of highest weight $\cla$.)

\medskip

Let $\bC^b_{acycl}(\on{Coh}^{free}(\tN/\cG))$ be the monoidal ideal
$$\bC^b(\on{Coh}^{free}(\tN/\cG))\cap 
\bC^b_{acycl}(\on{Coh}^{loc.free}(\tN/\cG)).$$ 
By \cite{AB}, Lemma 20, the natural functor 
\begin{multline*}
\Ho\left(\bC^b(\on{Coh}^{free}(\tN/\cG))\right)/
\Ho\left(\bC^b_{acycl}(\on{Coh}^{free}(\tN/\cG))\right)\to \\
\to \Ho\left(\bC^b(\on{Coh}^{loc.free}(\tN/\cG))\right)/
\Ho\left(\bC^b_{acycl}(\on{Coh}^{loc.free}(\tN/\cG))\right)
\bD^{perf}(\on{Coh}(\tN/\cG))
\end{multline*}
is an equivalence. 

\ssec{}

We claim, following \cite{AB}, that there exists a natural action of 
$\Ho\left(\bC^b(\on{Coh}^{free}(\tN/\cG))\right)$ on $\bD^f(\fD(\Fl^{\on{aff}}_G)_\crit\mod)$,
with $\Ho\left(\bC^b_{acycl}(\on{Coh}^{free}(\tN/\cG))\right)$ acting trivially.

\sssec{}   \label{functor sF}

First, we construct a DG monoidal functor $\sF$ from 
$\bC^b(\on{Coh}^{free}(\tN/\cG))$ to a DG monoidal subcategory of the 
DG category of finitely generated $I$-equivariant D-modules on $\Fl^{\on{aff}}_G$.
The functor $\sF$ will have the property that if $\CF_1$ and $\CF_2$
appear as terms of complexes of some $\sF(\CM^\bullet_1)$
and $\sF(\CM^\bullet_2)$, respectively, for $\CM^\bullet_1,
\CM^\bullet_2\in \bC^b(\on{Coh}^{free}(\tN/\cG))$, the convolution
$\CF_1\star \CF_2\in \bD^f(\fD(\Fl^{\on{aff}}_G)_\crit\mod)$ is acyclic
off cohomological degree $0$.

\medskip

\noindent{\it Remark.} In \secref{new t structure} a new t-structure 
on (an ind-completion of) $\bD^f(\fD(\Fl^{\on{aff}}_G)_\crit\mod)$ will be defined, and 
we will prove that the functor $\sF$ is exact in this t-structure, see \corref{F exact}.

\medskip

The functor $\sF$ is characterized uniquely by the following conditions:

\begin{itemize}

\item For $\cla\in \cLambda$, $\sF(\pi^*(\CL^\cla)):=J_\cla$.
The isomorphism $\CL^{\cla_1}\otimes \CL^{\cla_2}\simeq
\CL^{\cla_1+\cla_2}$ goes under $\sF$ to the natural isomorphism
$J_{\cla_1}\star J_{\cla_2}\simeq J_{\cla_1+\cla_2}$.

\medskip

\item For $V\in \Rep^{f.d.}(\cG)$, $\sF(\CO_{\tN/\cG}\otimes V):=Z_V$,
where $Z_V$ is the corresponding central sheaf. We have
commutative diagrams
$$
\CD
\sF(\CO_{\tN/\cG}\otimes V^1)\star \sF(\CO_{\tN/\cG}\otimes V^2)
@>>>  Z_{V^1}\star Z_{V^2}  \\
@VVV   @VVV   \\
\sF\left((\CO_{\tN/\cG}\otimes (V^1\otimes V^2)\right) @>>>
Z_{V^1\otimes V^2}
\endCD
$$
and
$$
\CD
\sF(\CO_{\tN/\cG}\otimes V)\star \sF(\pi^*(\CL^\cla)) @>>>
Z_V\star J_\cla \\
@VVV   @VVV  \\
\sF(\pi^*(\CL^\cla))\star \sF(\CO_{\tN/\cG}\otimes V) @>>>
J_\cla\star Z_V,
\endCD
$$
where the right vertical maps are the canonical morphisms of \cite{Ga}, Theorem 1(c) and (b),
respectively.

\item The tautological endomorphism $N^{taut}_V$ of the object 
$\CO_{\tN/\cG}\otimes V$ goes over under $\sF$ to the
monodromy endomorphism $N_{Z_V}$ of $Z_V$
(the latter morphism is given by Theorem 2 of \cite{Ga} 
and is denoted there by $M$).

\item
For $\cla\in \cLambda^+$ the canonical map
$\CO_{\tN/\cG}\otimes V^\cla\to \pi^*(\CL^\cla)$ goes
over to the map $Z_{V^\cla}\to J_{\cla}$, given by Lemma 9 of \cite{AB}.
\end{itemize}

\medskip

Thus, we obtain the desired functor $\sF$. It is shown in \cite{AB},
Lemma 20(a), that if $\CM^\bullet\in \bC^b_{acycl}(\on{Coh}^{free}(\tN/\cG))$,
then $\sF(\CM^\bullet)$ is acyclic as a complex of D-modules on $\Fl^{\on{aff}}_G$.

\sssec{} The assignment

$$\CM\in \Ho\left(\bC^b(\on{Coh}^{free}(\tN/\cG))\right), \CF\in 
\bD^f(\fD(\Fl^{\on{aff}}_G)_\crit\mod)\mapsto 
\CF\star \sF(\CM)\in \bD^f(\fD(\Fl^{\on{aff}}_G)_\crit\mod)
$$
defines a functor
$$\Ho\left(\bC^b(\on{Coh}^{loc.free}(\tN/\cG))\right)\times
\bD^f(\fD(\Fl^{\on{aff}}_G)_\crit\mod)\to \bD^f(\fD(\Fl^{\on{aff}}_G)_\crit\mod),$$
which is the sought-for monoidal action of $\bD^{perf}(\on{Coh}(\tN/\cG))$
on $\bD^f(\fD(\Fl^{\on{aff}}_G)_\crit\mod)$. For $\CM$ and $\CF$ as above,
we denote the resulting object of $\bD^f(\fD(\Fl^{\on{aff}}_G)_\crit\mod)$
by
\begin{equation} \label{notation for action}
\CM\underset{\CO_{\tN/\cG}}\otimes \CF.
\end{equation}

As was mentioned above, in 
\secref{AB action: model} we will upgrade this action to the DG level.

\ssec{}

Recall the scheme classifying $\cG$-opers on the formal punctured disc
$\D^\times$ with nilpotent singularities, introduced in \cite{FG2}, Sect. 2.13. 
We denote this scheme by $\nOp$. This is an affine scheme
of infinite type, isomorphic to the infinite-dimensional affine space.
By \cite{FG2}, Sect. 2.18, there exists a canonical smooth map
$$\fr_\nilp:\nOp\to \tN/\cG,$$
that corresponds to taking the residue of the connection corresponding
to an oper.

\medskip

By \secref{base change}, we have a well-defined base-changed
triangulated category
\begin{equation} \label{base changed category}
\nOp\underset{\tN/\cG}\times \bD^f(\fD(\Fl^{\on{aff}}_G)_\crit\mod),
\end{equation}
equipped with a DG model, which carries an action of the monoidal
triangulated category $\bD^{perf}(\on{Coh}(\nOp))$, where the latter
category and the action are also equipped with DG models. 

The category \eqref{base changed category} is the main character
of this paper.

\sssec{}   \label{univ ppty nilp}

As was explained in the introduction, the base change construction
is a categorical version of the tensor product construction for
modules over an associative algebra. In particular, it satisfies a 
certain universal property (see \secref{univ ppty base change}),
which when applied to our situation reads as follows:

\medskip

Let $\bD'$ be a triangulated category {\it over} the scheme $\nOp$,
i.e., $\bD'$ is acted on by $\bD^{perf}(\on{Coh}(\nOp))$ and 
both the category and the action are equipped with DG models.
Then functors 
$$\nOp\underset{\tN/\cG}\times \bD^f(\fD(\Fl^{\on{aff}}_G)_\crit\mod)\to \bD',$$
compatible with the action of $\bD^{perf}(\on{Coh}(\nOp))$ (where the
compatibility data is also equipped with a DG model) are in bijection
with functors
$$\bD^f(\fD(\Fl^{\on{aff}}_G)_\crit\mod)\to \bD',$$
that are compatible with the action of $\bD^{perf}(\on{Coh}(\tN/\cG))$,
where the latter acts on $\bD'$ via the monoidal functor
$\fr_\nilp^*:\bD^{perf}(\on{Coh}(\tN/\cG))\to \bD^{perf}(\on{Coh}(\nOp))$.

\sssec{}

We have the tautological pull-back functor, denoted by a slight abuse of
notation by the same character
$$\fr_\nilp^*:\bD^f(\fD(\Fl^{\on{aff}}_G)_\crit\mod)\to 
\nOp\underset{\tN/\cG}\times \bD^f(\fD(\Fl^{\on{aff}}_G)_\crit\mod).$$

An additional piece of information on the category \eqref{base changed category}
is that we know how to calculate $\Hom$ in it between objects of the form
$\fr_\nilp^*(\CF_i)$, $\CF_i\in \bD^f(\fD(\Fl^{\on{aff}}_G)_\crit\mod)$, $i=1,2$. Namely,
this is given by \corref{tight mon}(2), and in our situation it reads as follows:
\begin{equation} \label{hom in base change nilp}
\Hom\left(\fr_\nilp^*(\CF_1),\fr_\nilp^*(\CF_2)\right)\simeq
\Hom(\CF_1,(\fr_\nilp)_*(\CO_{\nOp})\underset{\CO_{\tN/\cG}}\otimes \CF_2),
\end{equation}
where $(\fr_\nilp)_*(\CO_{\nOp})$ (resp., 
$(\fr_\nilp)_*(\CO_{\nOp})\underset{\CO_{\tN/\cG}}\otimes \CF_2)$) 
is regarded as an object of the ind-completion of $\bD^{perf}(\on{Coh}(\tN/\cG))$
(resp., the ind-completion of the category \eqref{base changed category}).

\medskip

The universal property of \secref{univ ppty nilp} and \eqref{hom in base change nilp}
is essentially all the information that we have about the category \eqref{base changed category},
but it will suffice to prove a number of results.

\section{The new $\bbt$-structure}   \label{new t structure}

As a tool for the study of the category $\bD^f(\fD(\Fl^{\on{aff}}_G)_\crit\mod)$,
we shall now introduce a new t-structure on the ind-completion of this
category. Its main property will be that the functors of convolution $?\star J_\cla$
become exact in this new t-structure. 

\ssec{}

As the triangulated category $\bD^f(\fD(\Fl^{\on{aff}}_G)_\crit\mod)$ is equipped
with a DG model, it has a well-defined ind-completion 
(see \secref{big and small, triang}), which we denote 
$\bD_{ren}(\fD(\Fl^{\on{aff}}_G)_\crit\mod)$. This is a co-complete triangulated category,
which is generated by the subcategory of its compact objects, the 
latter being identified with $\bD^f(\fD(\Fl^{\on{aff}}_G)_\crit\mod)$ itself.

\sssec{}

By \secref{cat on Fl}, the usual t-structure on $\bD(\fD(\Fl^{\on{aff}}_G)_\crit\mod)$ induces
a t-structure on $\bD_{ren}(\fD(\Fl^{\on{aff}}_G)_\crit\mod)$; moreover, the latter 
is compactly generated (see \secref{compact gener t} where this notion is introduced).
We have an exact functor
$$\bD_{ren}(\fD(\Fl^{\on{aff}}_G)_\crit\mod)\to \bD(\fD(\Fl^{\on{aff}}_G)_\crit\mod),$$
which induces an equivalence
$$\bD^+_{ren}(\fD(\Fl^{\on{aff}}_G)_\crit\mod)\to \bD^+(\fD(\Fl^{\on{aff}}_G)_\crit\mod).$$

In what follows we will refer to this t-structure on 
$\bD_{ren}(\fD(\Fl^{\on{aff}}_G)_\crit\mod)$ as
the "old" one. 

\sssec{}

Recall the general framework of defining a t-structure on a co-complete
triangulated category given by \lemref{t general}.

\medskip

We define a new t-structure on $\bD_{ren}(\fD(\Fl^{\on{aff}}_G)_\crit\mod)$
so that $\bD^{\leq 0_{new}}_{ren}(\fD(\Fl^{\on{aff}}_G)_\crit\mod)$ is generated
by objects $\CF\in \bD^f(\fD(\Fl^{\on{aff}}_G)_\crit\mod)$ that satisfy:
\begin{equation}  \label{less than 0} 
\CF\star J_{\cla}\in \bD_{ren}^{\leq 0_{old}}(\fD(\Fl^{\on{aff}}_G)_\crit\mod) 
\text{ for any } \cla\in \cLambda. 
\end{equation}
Since for $\cla\in \cLambda^+$ the
functor $\CF\mapsto \CF\star J_\cla$ is right-exact (in the old
t-structure), condition \eqref{less than 0} is sufficient to check for
a subset of $\cLambda$ of the form $\cla_0-\cLambda^+$
for some/any $\cla_0\in \cLambda$.

\sssec{Remarks}

At the moment we do not know how to answer some basic 
questions about the new t-structure on $\bD_{ren}(\fD(\Fl^{\on{aff}}_G)_\crit\mod)$.
For example:

\medskip

\noindent{\it Question:} Is the  new t-structure compatible with the subcategory
$\bD^f(\fD(\Fl^{\on{aff}}_G)_\crit\mod)\subset \bD_{ren}(\fD(\Fl^{\on{aff}}_G)_\crit\mod)$,
i.e., is the subcategory $\bD^f(\fD(\Fl^{\on{aff}}_G)_\crit\mod)$ preserved by
the truncation functors? \footnote{Probably, that answer to this question is negative.}

\medskip

However, we propose the following conjectures:

\begin{conj} \label{t-structure conj one} Let $\CF\in \bD_{ren}(\fD(\Fl^{\on{aff}}_G)_\crit\mod)$
satisfy $\CF\star J_\cla\in \bD_{ren}^{\leq 0_{old}}(\fD(\Fl^{\on{aff}}_G)_\crit\mod)$
for any $\cla\in \cLambda$. Then $\CF\in \bD_{ren}^{\leq 0_{new}}(\fD(\Fl^{\on{aff}}_G)_\crit\mod)$.
\end{conj}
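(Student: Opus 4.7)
The plan is to split the conjecture into (i) an almost-formal forward implication, (ii) a reduction via the new-t-structure truncation to a vanishing assertion, and (iii) the vanishing itself. For (i), consider the class
$$T = \{\CF\in \bD_{ren}(\fD(\Fl^{\on{aff}}_G)_\crit\mod) \,:\, \CF\star J_\cla\in \bD_{ren}^{\leq 0_{old}}(\fD(\Fl^{\on{aff}}_G)_\crit\mod) \text{ for every } \cla\in \cLambda\}.$$
By construction $T$ contains the compact generating set of $\bD_{ren}^{\leq 0_{new}}$ specified by condition \eqref{less than 0}. Since each functor $?\star J_\cla$ is a self-equivalence of $\bD_{ren}(\fD(\Fl^{\on{aff}}_G)_\crit\mod)$ (with inverse $?\star J_{-\cla}$), it commutes with all small colimits; combined with the fact that $\bD_{ren}^{\leq 0_{old}}$ is itself closed under small colimits, extensions, and the shift $[1]$, this shows that $T$ inherits the same closure properties. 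Therefore $\bD_{ren}^{\leq 0_{new}}\subset T$, i.e.\ the forward implication $\CF\in \bD_{ren}^{\leq 0_{new}}\Rightarrow \CF\star J_\cla\in \bD_{ren}^{\leq 0_{old}}$ for all $\cla$ holds.

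For (ii), given $\CF$ satisfying the hypothesis of the conjecture, form the new-t-structure truncation triangle
$$\tau^{\leq 0_{new}}\CF \to \CF \to \tau^{>0_{new}}\CF \to [1].$$
Convolving with $J_\cla$ and applying (i) to the left term together with the hypothesis on the middle term, one finds that $\tau^{>0_{new}}\CF\star J_\cla$ also lies in $\bD_{ren}^{\leq 0_{old}}$ (since this class is closed under $[1]$ and extensions, any cone of a map between two of its objects remains there). Thus the conjecture reduces to the following vanishing: every $\CG\in \bD_{ren}^{>0_{new}}$ satisfying $\CG\star J_\cla\in \bD_{ren}^{\leq 0_{old}}$ for all $\cla\in \cLambda$ must be zero.

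For (iii), I would exploit the self-equivalence $?\star J_\cla$ to rewrite, for any compact $\CF'\in \bD^f(\fD(\Fl^{\on{aff}}_G)_\crit\mod)$ and any $n\in \BZ$,
$$\Hom(\CF',\CG[n]) \;=\; \Hom(\CF'\star J_\cla,\,\CG[n]\star J_\cla),$$
and then transport the whole problem through Bezrukavnikov's equivalence \eqref{Bezr eq} to $\bD(\QCoh(\check{\on{St}}/\cG))$, under which $?\star J_\cla$ corresponds to tensoring with the line bundle $\CL^\cla$. With the old t-structure matching the perversely coherent t-structure on the coherent side and the new t-structure (as one expects) matching the natural t-structure, the vanishing takes the shape of an ampleness statement: a quasi-coherent complex $X$ on $\check{\on{St}}/\cG$ all of whose twists $X\otimes \CL^\cla$ lie in the perversely coherent $\bD^{\leq 0}$ must itself lie in the natural $\bD^{\leq 0}$. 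This is the main obstacle: because $\CG$ is not assumed compact one cannot simply appeal to the defining generators of the new t-structure, so one must genuinely control the interaction of the two t-structures under the full $\cLambda$-torsor of twists by $J_\cla$, and this is presumably why the assertion is left as a conjecture in the paper.
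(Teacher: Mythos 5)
This statement is \conjref{t-structure conj one} in the paper, and the authors leave it \emph{unproven}—it is stated as a conjecture with no argument given, so there is nothing in the paper to compare your attempt against. What you can legitimately be judged on is whether your reduction is correct and whether you close the gap; you do the former and (openly) not the latter.

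Steps (i) and (ii) are sound and essentially formal. For (i): $\bD_{ren}^{\leq 0_{new}}$ is, by \lemref{t general}, the smallest subcategory containing the compact generators of \eqref{less than 0} and closed under extensions, direct sums, and positive shifts; your class $T$ contains those generators and is closed under all three operations because each $?\star J_\cla$ is a colimit-preserving self-equivalence and $\bD_{ren}^{\leq 0_{old}}$ is closed under them. Hence $\bD_{ren}^{\leq 0_{new}}\subset T$. For (ii): in the truncation triangle, $\tau^{>0_{new}}\CF\star J_\cla$ is the cone of a map between two objects of $\bD_{ren}^{\leq 0_{old}}$, and that class is closed under $[1]$ and extensions, so the reduction to the vanishing statement for $\CG\in\bD_{ren}^{>0_{new}}$ is correct. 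The entire content of the conjecture is thus concentrated in your step (iii), which you do not prove—and you acknowledge this.

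Two concrete cautions about the route you sketch for (iii). First, \thmref{Bezr} identifies the \emph{Iwahori-monodromic} subcategory $\bD^f(\fD(\Fl^{\on{aff}}_G)_\crit\mod)^{I^0}$ with $\bD^b(\Coh(\check{\on{St}}/\cG))$, not the full category; the introduction's display \eqref{Bezr eq} is shorthand for this. Your $\CG$ is an arbitrary object of $\bD_{ren}(\fD(\Fl^{\on{aff}}_G)_\crit\mod)$, so one cannot transport the problem across Bezrukavnikov's equivalence without first reducing to the $I^0$-equivariant case, and that reduction is itself unclear. Second, the t-structure dictionary you posit under the equivalence—old $\leftrightarrow$ a perversely-coherent t-structure, new $\leftrightarrow$ the standard t-structure—is a nontrivial claim that the paper nowhere establishes; proving it would likely carry most of the weight of the conjecture. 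In short: your reformulation correctly locates where the difficulty lies, but the conjecture remains open as you leave it, exactly as in the paper.
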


\medskip

The second conjecture has to do with the stability of the above t-structure with
respect to base change. Let $A$ be an associative algebra, and consider the
triangulated category $$\bD_{ren}(\fD(\Fl^{\on{aff}}_G)_\crit\otimes A\mod):=
\bD^f(\fD(\Fl^{\on{aff}}_G)_\crit\mod)\underset{\to}\otimes \bD^{perf}(A\mod),$$
i.e., the ind-completion of 
$\bD^f(\fD(\Fl^{\on{aff}}_G)_\crit\mod)\otimes \bD^{perf}(A\mod)$.

It is endowed with the usual, a.k.a. "old", t-structure, equal to
the tensor product of the "old" t-structure on 
$\bD_{ren}(\fD(\Fl^{\on{aff}}_G)_\crit\mod)$ and the usual t-structure on $\bD(A\mod)$,
see \secref{t structure ten prod}

\medskip

However, there are two ways to introduce a "new" t-structure on
$\bD_{ren}(\fD(\Fl^{\on{aff}}_G)_\crit\otimes A\mod)$. The t-structure
"new$_1$" is obtained by the tensor product construction of 
\secref{t structure ten prod} from the new t-structure on 
$\bD_{ren}(\fD(\Fl^{\on{aff}}_G)_\crit\mod)$ and the usual t-structure on $\bD(A\mod)$.

\medskip

The t-structure "new$_2$" is defined to be generated by compact objects,
i.e., objects $\CF\in \bD^f(\fD(\Fl^{\on{aff}}_G)_\crit\mod)\otimes \bD^{perf}(A\mod)$, such 
that $\CF\star J_{\cla}$ is $\leq 0$ in the old sense.

\begin{conj} \label{t-structure conj two}
The t-structures "new$_1$" and "new$_2$" on
$\bD_{ren}(\fD(\Fl^{\on{aff}}_G)_\crit\otimes A\mod)$ coincide.
\end{conj}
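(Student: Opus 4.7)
The plan is to establish both containments between the two t-structures separately, then identify the harder direction as essentially a parametrized form of Conjecture~\ref{t-structure conj one}.

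The containment $\bD^{\le 0_{\mathrm{new}_1}}\subset \bD^{\le 0_{\mathrm{new}_2}}$ is the easy direction and is checked on compact generators. The compact generators of $\bD^{\le 0_{\mathrm{new}_1}}$ are pure tensors $\CF_1\otimes \CF_2$ with $\CF_1\in \bD^f(\fD(\Fl^{\on{aff}}_G)_\crit\mod)$ lying in $\bD_{ren}^{\le 0_{\mathrm{new}}}(\fD(\Fl^{\on{aff}}_G)_\crit\mod)$ and $\CF_2\in \bD^{perf}(A\mod)^{\le 0}$. Since convolution with $J_\cla$ acts only on the first factor, $(\CF_1\otimes \CF_2)\star J_\cla\simeq (\CF_1\star J_\cla)\otimes \CF_2$, which is a tensor product of two objects that are $\le 0$ in the respective old t-structures, hence $\le 0$ in the tensor-product old t-structure by the definition in \secref{t structure ten prod}. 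Such a pure tensor therefore satisfies the defining condition of $\bD^{\le 0_{\mathrm{new}_2}}$, and closure under colimits and non-negative shifts finishes the containment.

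For the reverse containment, it suffices to show that any compact generator $\CF\in \bD^f(\fD(\Fl^{\on{aff}}_G)_\crit\mod)\otimes \bD^{perf}(A\mod)$ satisfying $\CF\star J_\cla\in \bD^{\le 0_{\mathrm{old}}}_{ren}$ for every $\cla$ lies in $\bD^{\le 0_{\mathrm{new}_1}}$. I would first dispatch the case of pure-tensor compacts $\CF=\CF_1\otimes \CF_2$ with $\CF_2\neq 0$: here the hypothesis $(\CF_1\star J_\cla)\otimes \CF_2\in \bD^{\le 0_{\mathrm{old}}}_{ren}$ for all $\cla$, combined with the faithfulness of $-\otimes \CF_2$ on cohomology in the tensor-product old t-structure, should force $\CF_1\star J_\cla\in \bD_{ren}^{\le 0_{\mathrm{old}}}(\fD(\Fl^{\on{aff}}_G)_\crit\mod)$ for every $\cla$, placing $\CF_1$ among the defining generators of the new t-structure and hence $\CF_1\otimes \CF_2\in \bD^{\le 0_{\mathrm{new}_1}}$. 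A general compact $\CF$ is obtained from pure tensors by finitely many cones and retracts (inside the thick subcategory generated by pure tensors), and one then argues by induction on this presentation, applying the truncation triangle of the $\mathrm{new}_1$ t-structure at each step to track how the hypothesis $\CF\star J_\cla\in \bD^{\le 0_{\mathrm{old}}}_{ren}$ passes to the components.

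The main obstacle is the inductive step: the hypothesis $\CF\star J_\cla\le 0_{\mathrm{old}}$ is stable under cones, but extracting it on components of a non-trivial cone requires controlling the interaction of the old t-structure with $\star J_\cla$ after crossing with $\bD^{perf}(A\mod)$. This is precisely a parametrized analog of Conjecture~\ref{t-structure conj one}: one needs that if $\CF\in \bD_{ren}(\fD(\Fl^{\on{aff}}_G)_\crit\otimes A\mod)$ satisfies $\CF\star J_\cla\le 0_{\mathrm{old}}$ for every $\cla$, then already $\CF\in \bD^{\le 0_{\mathrm{new}_1}}_{ren}$ without the compactness restriction. For this reason I expect that a full proof of Conjecture~\ref{t-structure conj two} in the stated generality is no easier than Conjecture~\ref{t-structure conj one} and will likely have to be proved simultaneously. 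A reasonable intermediate target is the case where $A$ has finite Tor-dimension, where $-\otimes_k-$ is t-exact on both factors; there the pure-tensor step above is unconditional, and extending from pure tensors to arbitrary compacts is a purely formal induction inside the thick subcategory, giving the equivalence of $\mathrm{new}_1$ and $\mathrm{new}_2$ without recourse to Conjecture~\ref{t-structure conj one}.
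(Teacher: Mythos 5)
This statement is labeled ``Conjecture'' in the paper, and the paper in fact does \emph{not} prove it; there is no ``paper's own proof'' to compare against. So the right benchmark here is whether your analysis is internally sound and correctly locates the difficulty, and on that score you have done well.

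Your easy containment $\bD^{\le 0_{\mathrm{new}_1}}\subset\bD^{\le 0_{\mathrm{new}_2}}$ is correct, modulo one point you glossed over: you need the implication ``$\CF_1\in\bD^{\le 0_{\mathrm{new}}}_{ren}$ implies $\CF_1\star J_\cla\le 0_{\mathrm{old}}$ for all $\cla$,'' which is \emph{not} the definition (the new $\le 0$ is only \emph{generated} by objects with that property). It is nevertheless true: by Lemma~\ref{t general} the new $\le 0$ category is the closure of those generators under extensions and direct sums, $\star J_\cla$ commutes with extensions, direct sums, and non-negative shifts, and $\bD^{\le 0_{\mathrm{old}}}_{ren}$ is itself closed under these operations. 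With that observation supplied, your argument is fine: the $\mathrm{new}_1$-generators land in $\bD^{\le 0_{\mathrm{new}_2}}$, the latter is closed under the relevant colimits, and the containment follows. (Note that Conjecture~\ref{t-structure conj one} is exactly the converse of this implication, so it is correct that the reverse containment is where the open content lies.)

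You also correctly identify the obstruction to the reverse containment: the hypothesis $\CF\star J_\cla\le 0_{\mathrm{old}}$ is a condition on the whole compact object, and compacts of $\bD^f(\fD(\Fl^{\on{aff}}_G)_\crit\mod)\otimes\bD^{perf}(A\mod)$ are not pure tensors but only lie in the thick subcategory they generate; the hypothesis does not descend to pieces of a cone presentation. This is indeed a parametrized analogue of Conjecture~\ref{t-structure conj one}, and your assessment that the two conjectures are of comparable difficulty and would likely have to be treated together is in the spirit of how the paper presents them. Two smaller caveats. First, the ``faithfulness of $-\otimes\CF_2$'' claim in your pure-tensor step is not correct as stated for arbitrary $\CF_2\in\bD^{perf,\le 0}(A\mod)$ (e.g.\ a shift $A[1]$ loses one cohomological degree of information); it is of course true for $\CF_2=A$, which suffices for what you actually need, so this is cosmetic. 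Second, your proposed ``finite Tor-dimension'' intermediate case is not convincing: the issue is not $k$-flatness (everything is $k$-flat over a field), and even with exactness of $-\otimes_k-$ the inductive step does not become ``purely formal,'' because the hypothesis still lives on the total object and not on cone components. If you want a genuinely unconditional special case, $A=k$ (which reduces to Conjecture~\ref{t-structure conj one}) is the only one I see falling out of your reduction, which just confirms the two statements are entangled.
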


\noindent We remark, that, more generally, instead of the category $\bD(A\mod)$,
we could have taken any compactly generated triangulated category, 
equipped with a DG model, and a compactly generated t-structure.

\ssec{}

Here are some of the basic properties of the new t-structure. 

\sssec{}

First, the functors $\CF\mapsto \CF\star J_{\cla}$ are exact in the
new t-structure. 

\medskip

Further, we have:

\begin{prop} \label{F right-exact and heart}
For $\CM\in \QCoh(\tN/\cG)$ the functor
$$\CF\mapsto \CF\star \sF(\CM)$$ is
right-exact in the new t-structure. If $\CM$ is
flat, it is exact.
\end{prop}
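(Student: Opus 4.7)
The plan is to reduce the claim to convolution with the objects $J_\cla\star Z_V$, which are the images under $\sF$ of the generators $\pi^*(\CL^\cla)\otimes V$ of $\QCoh(\tN/\cG)$, and then to exploit the Wakimoto filtration of central sheaves.

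First, convolution with $J_\cla$ is an auto-equivalence of $\bD_{ren}(\fD(\Fl^{\on{aff}}_G)_\crit\mod)$ preserving the new t-structure: the condition defining $\bD_{ren}^{\leq 0_{new}}$ is manifestly stable under $?\star J_\cmu$ for any $\cmu$, since $(\CF\star J_\cla)\star J_\cmu=\CF\star J_{\cla+\cmu}$. It therefore suffices to analyse $?\star Z_V$. For this I would invoke the Wakimoto filtration of central sheaves due to Arkhipov--Bezrukavnikov: $Z_V$ admits a finite filtration in the $I$-equivariant derived category whose associated graded is a direct sum of $J_\cmu$'s, with $\cmu$ running over the weights of $V$ counted with multiplicity. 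Convolving a given $\CF$ with this filtration yields a finite filtration of $\CF\star Z_V$ whose subquotients are $\CF\star J_\cmu$, each of which lies in the same $t$-level as $\CF$. Iterated extensions stay in both $\bD_{ren}^{\leq 0_{new}}$ and $\bD_{ren}^{\geq 0_{new}}$, so $?\star Z_V$, hence $?\star\sF(\pi^*(\CL^\cla)\otimes V)$, is $t$-exact in the new t-structure. Call this Step~1.

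For a general $\CM\in\QCoh(\tN/\cG)$, extend $\sF$ by colimits to a cocontinuous functor from (an appropriate enlargement of) $\bD^-(\QCoh(\tN/\cG))$ to $\bD_{ren}(\fD(\Fl^{\on{aff}}_G)_\crit\mod)$; this is legitimate because the $\pi^*(\CL^\cla)\otimes V$ are compact and form a generating class for $\QCoh(\tN/\cG)$. Any $\CM$ admits a resolution $P^\bullet\to\CM$ placed in degrees $\leq 0$ whose terms are (possibly infinite) direct sums of such generators. Each $\sF(P^i)$ lies in $\heartsuit_{new}$ by Step~1 together with closure of the heart under direct sums, so for $\CF\in\bD_{ren}^{\leq 0_{new}}$ every $\CF\star\sF(P^i)$ lies in $\bD_{ren}^{\leq 0_{new}}$. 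Closure of the latter under extensions and the relevant homotopy colimits then forces $\CF\star\sF(\CM)\in\bD_{ren}^{\leq 0_{new}}$, giving right-exactness. If in addition $\CM$ is flat, a Lazard-type argument on $\tN/\cG$ (which has the resolution property) writes $\CM$ as a filtered colimit of coherent subsheaves of the form $\pi^*(\CL^\cla)\otimes V$. Continuity of convolution together with Step~1 then shows that $?\star\sF(\CM)$ preserves both $\bD_{ren}^{\leq 0_{new}}$ and $\bD_{ren}^{\geq 0_{new}}$, i.e.\ is $t$-exact.

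The main obstacle is the appeal to the Wakimoto filtration with $J_\cmu$-subquotients in Step~1: it is the substantive input from Arkhipov--Bezrukavnikov's theory, and it is precisely what singles out the new t-structure as the natural setting here. A secondary point is that the cocontinuous extension of $\sF$ used in Step~2 must be verified compatibly with the DG upgrade of \secref{AB action: model}, but this should be routine once that machinery is in place.
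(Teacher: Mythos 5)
Your proof is essentially correct and reaches the right conclusion, but it takes a detour that the paper avoids. The paper's proof rests on the observation that every $\CM\in\QCoh(\tN/\cG)$ is a direct summand of a complex in non-positive degrees whose terms are direct sums of the line bundles $\pi^*(\CL^\cla)$ \emph{alone} --- no tensor factors $V\in\Rep(\cG)$ are needed. This is because $\tN/\cG\to\on{pt}/\cB$ is affine and $\Rep(\cB)$ is generated by the characters $\CL^\cla$; in particular each generator $\pi^*(\CL^\cla)\otimes V$ already has a finite filtration (coming from the weight filtration of $\Res^\cG_\cB V$) with subquotients $\pi^*(\CL^{\cla+\cmu})$. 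Since $\sF(\pi^*(\CL^\cla))=J_\cla$ and $?\star J_\cla$ is $t$-exact by the very definition of the new $t$-structure, right-exactness is immediate. You instead keep the generators $\pi^*(\CL^\cla)\otimes V$ and handle the $\CO\otimes V$ factor by invoking the Wakimoto filtration of $Z_V$ from Arkhipov--Bezrukavnikov. This works --- and is in fact the same weight filtration pushed through $\sF$ to the D-module side --- but it imports a nontrivial theorem about central sheaves that the paper sidesteps by refining the resolution of $\CM$ on the coherent side before applying $\sF$.

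For the flat case, the paper simply cites the abstract statement \propref{flat exact} from the tensor-product formalism (tight diagonal plus $\bA$-flatness of $\CM$ implies exactness of $?\underset{\bA}\otimes\CM$). Your appeal to ``a Lazard-type argument on $\tN/\cG$'' writing $\CM$ as a filtered colimit of coherent subsheaves of the form $\pi^*(\CL^\cla)\otimes V$ is not a standard theorem on a non-affine stack, and the ``subsheaves'' phrasing is in any case not what Lazard gives (the transition maps need not be injections). This step should either be reduced carefully to the affine situation over $\on{pt}/\cB$, or simply replaced by the paper's reference to the general \propref{flat exact}, which is both shorter and already proved in the formalism of Part~III.
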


\begin{proof}
Any $\CM$ as in the proposition is quasi-isomorphic to a direct summand
of a complex $\CM^\bullet$, situated in non-positive cohomological degrees, 
such that each $\CM^\bullet$ is a direct sum of line bundles $\pi^*(\CL^\cla)$.
This readily implies the first point of the proposition.

\medskip

The second assertion follows from the first one, see \secref{flat exact}.

\end{proof}

\sssec{}

The next lemma shows that the new t-structure induces the old
one on the finite-dimensional flag variety $G/B\subset \Fl^{\on{aff}}_G$:

\begin{lem}  \label{finite flags}
Any D-module $\CF\in \fD(G/B)\mod\subset \fD(\Fl^{\on{aff}}_G)_\crit\mod$
belongs to the heart of the new t-structure. 
\end{lem}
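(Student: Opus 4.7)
The plan is to verify separately that $\CF \in \bD_{ren}^{\leq 0_{new}}$ and that $\CF \in \bD_{ren}^{\geq 0_{new}}$, where we abbreviate $\bD_{ren} := \bD_{ren}(\fD(\Fl^{\on{aff}}_G)_\crit\mod)$.

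\smallskip

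\emph{Membership in $\bD_{ren}^{\leq 0_{new}}$.} By the remark following \eqref{less than 0} it is enough to produce a single $\cla_0 \in \cLambda$ for which $\CF \star J_\cla \in \bD_{ren}^{\leq 0_{old}}$ whenever $\cla \in \cla_0 - \cLambda^+$, i.e.\ for $\cla$ sufficiently antidominant. For such $\cla$ we have $J_\cla = j_{\cla,!}$, and the relevant geometric input is the length additivity
\begin{equation*}
\ell(w \, t^\cla) = \ell(w) + \ell(t^\cla), \qquad w \in W,
\end{equation*}
in the extended affine Weyl group, valid once $\cla$ is deep enough in $-\cLambda^+$. The D-module $\CF$ is supported on $G/B = \bigsqcup_{w \in W} BwB/B$, contained in $\bigcup_{w \in W} \overline{IwI/I}$, so the above additivity places us in the familiar situation in which convolution with a standard sheaf $j_{\cla,!}$ is t-exact; hence $\CF \star J_\cla$ remains in the heart of the old t-structure.

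\smallskip

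\emph{Membership in $\bD_{ren}^{\geq 0_{new}}$.} Since the new t-structure is compactly generated by the class of objects $\CG$ satisfying \eqref{less than 0}, it is enough to show
\begin{equation*}
\Hom_{\bD_{ren}}(\CG, \CF[k]) = 0 \qquad \text{for every such } \CG \text{ and every } k \leq -1.
\end{equation*}
As $J_\cla \star J_{-\cla} = J_0$ is the monoidal unit of the convolution, $J_\cla$ is invertible and convolution by it is an auto-equivalence of $\bD_{ren}$, so
\begin{equation*}
\Hom(\CG, \CF[k]) \simeq \Hom(\CG \star J_\cla, \CF \star J_\cla[k]).
\end{equation*}
Choose $\cla$ antidominant enough that the previous step applies to $\CF$; then $\CF \star J_\cla$ lies in the heart of the old t-structure, while $\CG \star J_\cla \in \bD_{ren}^{\leq 0_{old}}$ by the defining property of $\CG$. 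The required vanishing now follows from the old t-structure.

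\smallskip

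\emph{Main obstacle.} The nontrivial ingredient is the geometric claim used in the first step: for $\cla$ sufficiently antidominant, convolution with $j_{\cla,!}$ is exact on D-modules supported on the finite flag variety $G/B \subset \Fl^{\on{aff}}_G$. This is a standard consequence of length additivity in the extended affine Weyl group combined with the t-exactness of convolution between Iwahori-equivariant standard sheaves whose indexing elements have additive lengths --- essentially the same mechanism underlying the multiplicative identity $J_{\cla_1} \star J_{\cla_2} = J_{\cla_1 + \cla_2}$ among the Mirkovic--Wakimoto sheaves.
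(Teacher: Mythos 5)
Your proof is correct and follows essentially the same route as the paper's. The paper condenses the argument into a single sentence — that the convolution map defining $\CF\star J_{-\cla}$, $\cla\in\cLambda^+$, is one-to-one over the supports, hence $\CF\star J_{-\cla}$ sits in the heart of the old t-structure — and leaves the deduction ``$\CF$ lies in the new heart'' implicit. You unpack exactly that deduction: the $\leq 0_{new}$ direction by the cofinality remark after \eqref{less than 0}, and the $\geq 0_{new}$ direction by testing against the compact generators and transporting by the invertible $J_\cla$. Two small remarks. First, the length additivity $\ell(wt^\cla)=\ell(w)+\ell(t^\cla)$ for $w\in W$ holds for \emph{every} $\cla\in -\cLambda^+$, not only for $\cla$ deep in the cone, so the ``sufficiently antidominant'' qualifier is unneeded. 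Second, you invoke the t-exactness result for \emph{Iwahori-equivariant standard} sheaves with additive lengths, but $\CF$ is an arbitrary (not necessarily $I$-equivariant) D-module on $G/B$; what is actually used, and what the paper phrases geometrically, is that the length additivity makes the convolution morphism restricted to the relevant locus an immersion, so the pushforward is exact regardless of whether $\CF$ is $I$-equivariant. The geometric formulation is the robust one for arbitrary $\CF$, and is worth stating as such.
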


The proof follows from the fact that the map defining the convolution
$\CF\star J_{-\cla}$ with $\cla\in \cLambda^+$ is one-to-one over the supports
of the corresponding D-modules.

\ssec{}

The identity functor on $\bD_{ren}(\fD(\Fl^{\on{aff}}_G)_\crit\mod)$ is tautologically
left-exact when viewed as a functor from the old t-structure to the new one.
We claim, however, that the deviation is by a finite amount:

\begin{prop}  \label{finite deviation}
Any $\CF\in \bD_{ren}(\fD(\Fl^{\on{aff}}_G)_\crit\mod)$, which is
$\geq 0_{new}$, is $\geq -\dim(G/B)_{old}$.
\end{prop}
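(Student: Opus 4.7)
The plan is a contrapositive argument via old-truncation. For $\CF \in \bD^{\geq 0_{new}}$, set $A := \tau^{\leq -d-1_{old}}(\CF)$ with $d := \dim(G/B)$; it suffices to show $A = 0$. The key intermediate step is the general containment
\[
\bD^{\leq -d-1_{old}} \subseteq \bD^{\leq -1_{new}}.
\]
Granted this, $\Hom(A, \CF) = 0$ by t-structure orthogonality, so the canonical map $A \to \CF$ in the old truncation triangle $A \to \CF \to \tau^{\geq -d_{old}}(\CF)$ vanishes. The cone of the zero map being $\CF \oplus A[1]$, comparing with the genuine cone $\tau^{\geq -d_{old}}(\CF) \in \bD^{\geq -d_{old}}$ and using $A[1] \in \bD^{\leq -d-2_{old}}$ forces $A[1] = 0$, i.e.\ $A = 0$.

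Since both t-structures on $\bD_{ren}$ are compactly generated and $\bD^{\leq -d-1_{old}}$ is generated as a cocomplete subcategory by the shifts $j_{\tilde w, !}[k]$ with $\tilde w \in \tilde W$ and $k \geq d+1$, the containment reduces---using the definition of $\bD^{\leq 0_{new}}$, the cofinality remark in the paper (that $? \star J_\cmu$ is old-right-exact for $\cmu \in \cLambda^+$, so the defining condition may be tested on a cofinal subset of $-\cLambda^+$)---to the uniform amplitude estimate
\[
j_{\tilde w, !} \star J_{-\cnu} \in \bD^{\leq d_{old}} \quad \text{for all } \tilde w \in \tilde W \text{ and } \cnu \in \cLambda^+ \text{ sufficiently large}.
\]
The key input is \lemref{finite flags}, which places $\fD(G/B)\mod$ in $\bD^{\heartsuit_{new}}$; together with the new-exactness of $? \star J_\cla$ from \propref{F right-exact and heart} (applicable since $J_\cla = \sF(\pi^*\CL^\cla)$ arises from a flat line bundle on $\tN/\cG$), this yields $\CG_0 \star J_{-\cnu} \in \bD^{\heartsuit_{new}} \subseteq \bD^{\leq 0_{old}}$ for any $\CG_0 \in \fD(G/B)\mod$. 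A geometric fiber-dimension computation on the convolution morphism $\overline{G/B} \,\tilde\times\, \overline{I t_{-\cnu} I/I} \to \Fl^{\on{aff}}_G$ (whose fibers are contained in $\overline{G/B}$ and hence of dimension at most $d$) refines this to $\CG_0 \star J_{-\cnu} \in \bD^{[-d, 0]_{old}}$.

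The passage from general $j_{\tilde w, !}$ to the $G/B$-supported case proceeds via the decomposition $\tilde w = w \cdot t_\cmu$ with $w \in W$ finite: for $\cmu$ antidominant, length-additivity in the affine Weyl group gives $j_{\tilde w, !} = j_{w, !} \star J_\cmu$ directly, which reduces matters to the $\CG_0 = j_{w,!}$ case; for general $\cmu$, one must filter $j_{\tilde w, !}$ by $\CG_0 \star J_\cla$-type pieces with cohomological shifts bounded by $\ell(w) \leq d$, using the Kazhdan--Lusztig combinatorics of the affine Hecke category. This filtration step is the main obstacle: achieving the sharp shift bound $\ell(w) \leq d$ (rather than something depending on $\ell(\tilde w)$) requires the MW-sheaves $J_\cmu$ to behave combinatorially in a manner controlled by the dimension of the finite flag variety, which is ultimately the reason the constant $d = \dim(G/B)$ (and not a larger one) appears in the proposition.
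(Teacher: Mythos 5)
Your overall reduction is the right one, and matches the paper's: the proposition comes down to showing that $?\star J_{-\cla}$, $\cla\in\cLambda^+$, has cohomological amplitude at most $d:=\dim(G/B)$ in the old t-structure; equivalently, $\bD^{\leq -d-1_{old}}\subseteq\bD^{\leq -1_{new}}$. (Your detour through $A=\tau^{\leq -d-1_{old}}(\CF)$ and the split-cone trick is correct but unnecessary, since both t-structures are compactly generated and one may simply test $\Hom(\CF',\CF)=0$ against $\CF'\in\bD^f\cap\bD^{\leq -d-1_{old}}$.) However, your argument for the amplitude bound itself is not completed, and the step you flag as the ``main obstacle'' is indeed a genuine gap.

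Concretely: you try to establish $j_{\tilde w,!}\star J_{-\cnu}\in\bD^{\leq d_{old}}$ for all $\tilde w$ by first handling $\fD(G/B)\mod$ via \lemref{finite flags} and a fiber-dimension estimate, and then passing to general $j_{\tilde w,!}$ via a Kazhdan--Lusztig-style filtration. This last passage is exactly what you cannot supply, and it is not clear that such a filtration with shifts bounded uniformly by $\ell(w)\leq d$ exists at all; the Hecke-category combinatorics do not obviously produce a bound independent of $\ell(\tilde w)$. The fiber-dimension estimate for the $G/B$-supported case is also not spelled out and would need care (the relevant convolution map out of $G/B\,\wt\times\,\overline{It_{-\cnu}I/I}$ does not have fibers a priori bounded by $d$ without an argument).

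The paper avoids the affine Hecke combinatorics entirely by working purely in $\bD^{perf}(\Coh(\tN/\cG))$ and transporting along $\sF$: one realizes $\CO_{\Delta_{\on{pt}/\cB}}\in\Coh(\on{Fl}^{\cG}\times\on{Fl}^{\cG}/\cG)$ as a direct summand of a complex concentrated in degrees $[-d,0]$ whose terms are of the form $\CL^{\cmu_i}\boxtimes\CL^{\cla_i}$ with $\cmu_i\in\cLambda^+$ (a relative Beilinson resolution of the diagonal). Applying the resulting Fourier--Mukai kernel to $\CL^{-\cla}$ shows that $\CL^{-\cla}$ is a direct summand of a successive extension of objects $\CL^{\cmu}\otimes V[-k]$ with $\cmu\in\cLambda^+$, $V\in\Rep^{f.d.}(\cG)$, and $k\leq d$. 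Pushing through $\sF$ and convolving, $\CF\star J_{-\cla}$ becomes a direct summand of a successive extension of $\CF\star J_{\cmu}\star Z_V[-k]$; since $?\star J_{\cmu}$ is right-exact and $?\star Z_V$ is exact in the old t-structure, the amplitude bound follows at once. This is where the constant $d$ enters, not from fiber dimensions of convolution maps. I would recommend replacing your final two paragraphs with this resolution-of-the-diagonal argument.
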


\begin{proof}

Let $\CF$ be as in the proposition. We have to show that 
$\Hom(\CF',\CF)=0$ for any $\CF'\in \bD^f(\fD(\Fl^{\on{aff}}_G)_\crit\mod)$,
which is $<-\dim(G/B)_{old}$.

\medskip

It would be sufficient to show that for any such $\CF'$, the objects
$\CF'\star J_{-\cla}$ are $< 0_{old}$ for $\cla\in \cLambda^+$. I.e.:

\begin{lem}  \label{J la bdd}
For $\cla\in \cLambda^+$, 
the functor $?\star J_{-\cla}$ has a cohomological amplitude
(in the old t-structure) bounded by $\dim(G/B)$.
\end{lem}

\end{proof}

\sssec{Proof of \lemref{J la bdd}}   \label{proof of J la bdd}

Consider the object $$\CO_{\Delta_{\on{pt}/\cB}}\in 
\Coh(\on{pt}/\cB\underset{\on{pt}/\cG}\times \on{pt}/\cB)\simeq
\Coh(\on{Fl}^{\cG}\times \on{Fl}^{\cG}/\cG).$$ It can be realized as a direct
summand of a complex, concentrated in non-positive degrees
and of length $\dim(\on{Fl}^{\cG})=\dim(G/B)$, whose terms are of the form
$\CL^{\cmu_i}\underset{\CO_{\on{pt}/\cG}}\boxtimes \CL^{\cla_i}$,
where with no restriction of generality we can assume that 
$\cmu_i\in \cLambda^+$.

\medskip

This implies that for any $\cla$, the object $\CL^{-\cla}\in \bD^{perf}
(\Coh(\on{pt}/\cB))$
is a direct summand of an object that
can be written as a successive extension of objects of the form
$\CL^{\cmu}\otimes V[-k]$ with $k\leq \dim(\on{Fl}^{\cG})$
and $\cmu\in \cLambda^+$ and $V\in \Rep^{f.d.}(\cG)$.

\medskip

Hence, $\CF\star J_{-\cla}$ is a direct summand of an object which is
a successive extension of objects
of the form $\CF\star J_{\cmu}\star Z_{V}[-k]$. However, the functor
$?\star J_{\cmu}$ is right-exact, and $?\star Z_{V}$ is exact.

\qed

\section{Functor to modules at the critical level}   \label{functor to the critical level}

Let $\hg_\crit\mod_\nilp$ be the abelian category of
$\hg_\crit\mod$, on which the center $\fZ_\fg$ acts 
through its quotient $\fZ^\nilp_\fg$ (see \cite{FG2}, Sect. 7.1).
Let $\bD(\hg_\crit\mod_\nilp)$ be its derived category.
In this section we will consider the functor of global sections
$\Gamma_\Fl:\bD^f(\fD(\Fl^{\on{aff}}_G)_\crit\mod)\to \bD(\hg_\crit\mod_\nilp)$,
and using \secref{univ ppty nilp} we will extend it to a functor
$$\Gamma_{\Fl,\nOp}:\nOp\underset{\tN/\cG}\times \bD^f(\fD(\Fl^{\on{aff}}_G)_\crit\mod)\to
\bD(\hg_\crit\mod_\nilp).$$

\ssec{}

Being the derived category of an abelian category, $\bD(\hg_\crit\mod_\nilp)$
is equipped with a natural DG model. Moreover, the abelian category
$\hg_\crit\mod_\nilp$ has $\fZ^\nilp_\fg\simeq \CO_{\nOp}$ mapping 
to its center. This defines on $\bD(\hg_\crit\mod_\nilp)$ a structure
of triangulated category over the (affine) scheme $\nOp$. In
particular, we have a monoidal action of the monoidal triangulated
category $\bD^{perf}(\on{Coh}(\nOp))$ on it.

\sssec{}   \label{nilpotent category}

In \secref{good category sZ} we will introduce another
triangulated category (also equipped with a DG model
and acted on by $\bD^{perf}(\on{Coh}(\nOp))$),
denoted $\bD_{ren}(\hg_\crit\mod_\nilp)$. This category
is also co-complete and has a t-structure, but unlike 
$\bD(\hg_\crit\mod_\nilp)$, the new category is
generated by its subcategory of compact objects
denoted $\bD^f_{ren}(\hg_\crit\mod_\nilp)$. 

\medskip

In addition, we have a functor (equipped with a DG model,
and compatible with the action of $\bD^{perf}(\on{Coh}(\nOp))$)
$$\bD_{ren}(\hg_\crit\mod_\nilp)\to \bD(\hg_\crit\mod_\nilp),$$
which is exact and induces an equivalence 
$$\bD^+_{ren}(\hg_\crit\mod_\nilp)\to \bD^+(\hg_\crit\mod_\nilp).$$

\medskip

We have
$$\bD^f_{ren}(\hg_\crit\mod_\nilp)\subset \bD^+_{ren}(\hg_\crit\mod_\nilp),$$
so we can identify of $\bD^f_{ren}(\hg_\crit\mod_\nilp)$ with its essential
image in $\bD(\hg_\crit\mod_\nilp)$, denoted $\bD^f(\hg_\crit\mod_\nilp)$.

\medskip

By \secref{action on subcat}, all of the above triangulated categories
inherit DG models and the action of $\bD^{perf}(\on{Coh}(\nOp))$,
so they are triangulated categories over the scheme $\nOp$.

\ssec{}

Our present goal is to construct a functor
$$\Gamma_{\Fl,\nOp}:\nOp\underset{\tN/\cG}\times \bD^f(\fD(\Fl^{\on{aff}}_G)_\crit\mod)
\to \bD^f(\hg_\crit\mod_\nilp),$$
compatible with an action of $\bD^{perf}(\on{Coh}(\nOp))$. Both the
functor, and the compatibility isomorphisms will be equipped with DG
models. 

\sssec{}

By \secref{univ ppty base change}, a functor as above would be defined once
we define a functor
\begin{equation} \label{R Gamma to finite}
\Gamma_{\Fl}:\bD^f(\fD(\Fl^{\on{aff}}_G)_\crit\mod)\to 
\bD^f(\hg_\crit\mod_\nilp),
\end{equation}
which is compatible with the action of $\bD^{perf}(\on{Coh}(\tN/\cG))$
(where the action on the RHS is via $\fr_\nilp$),
such that again both the functor and the compatibility isomorphisms 
are equipped with DG models. 

\medskip

We define the functor 
\begin{equation} \label{R Gamma}
\Gamma_{\Fl}:\bD^f(\fD(\Fl^{\on{aff}}_G)_\crit\mod)\to 
\bD(\hg_\crit\mod_\nilp),
\end{equation}
to be $\Gamma(\Fl,?)$, i.e., the derived functor of global sections 
of a critically twisted D-module.

\medskip

In this section we will
check the required compatibility on the triangulated level.
In \secref{sections: model} we will upgrade this
construction to the DG level. In \secref{sections to finite}
we will show that the image of
\eqref{R Gamma to finite} belongs to
$\bD^f(\hg_\crit\mod_\nilp)$, thereby constructing the functor
\eqref{R Gamma to finite} with the required properties.

\ssec{}  \label{a-e}

Let us introduce some notations:
$$\CL_{\nOp}^\cla:=\fr_\nilp^*(\pi^*(\CL^\cla)), \cla\in \cLambda \text{  and  }
\CV_{\nOp}:=\fr_\nilp^*\left(\CO_{\tN/\cG}\otimes V\right), V\in \Rep^{f.g.}(\cG).$$

\medskip

The data of compatibility of $\Gamma_{\Fl}$
with the action of $\bD^{perf}(\on{Coh}(\tN/\cG))$ would follow from the corresponding data for $\Ho(\bC^b(\on{Coh}^{free}(\tN/\cG)))$ (see \lemref{action on quotient}).

\medskip

By \cite{AB}, Proposition 4, the latter amounts to constructing
the following isomorphisms for an object $\CF\in 
\bD^f(\fD(\Fl^{\on{aff}}_G)_\crit\mod)$:

\begin{itemize}

\item(i)
For $\cla\in \cLambda$,
$$\Gamma_\Fl(\CF\star J_{\cla})\overset{\gamma_\cla}\simeq 
\Gamma_\Fl(\CF)\underset{\CO_{\nOp}}\otimes \CL^\cla_\nOp.$$

\item(ii)
For $V\in \Rep^{f.d.}(\cG)$, an isomorphism
$$\Gamma_\Fl(\CF\star Z_V)\overset{\gamma_V}\simeq 
\Gamma_\Fl(\CF)\underset{\CO_{\nOp}}\otimes 
\CV_\nOp,$$

\end{itemize}
and such that the following conditions hold:

\begin{itemize}

\item(a) For $\cla_1,\cla_2\in \cLambda$, the diagram
$$
\CD
\Gamma_\Fl(\CF\star J_{\cla_1}\star J_{\cla_2}) @>{\gamma_{\cla_2}}>>
\Gamma_\Fl(\CF\star J_{\cla_1})\underset{\CO_{\nOp}}\otimes 
\CL^{\cla_2}_\nOp \\
@VVV   @V{\gamma_{\cla_1}}VV  \\
\Gamma_\Fl(\CF\star J_{\cla_1+\cla_2}) @>{\gamma_{\cla_1+\cla_2}}>>
\Gamma_\Fl(\CF)\underset{\CO_{\nOp}}\otimes \CL^{\cla_1+\cla_2}_\nOp
\endCD
$$
commutes.

\item(b)
The endomorphism of the object $\Gamma_\Fl(\CF\star Z_V)$, induced by the
monodromy endomorphism $N_{Z_V}$ of $Z_V$ goes over by means
of $\gamma_V$ to the endomorphism of the object $\Gamma_\Fl(\CF)\underset{\CO_{\nOp}}\otimes 
\CV_\nOp$, induced by the tautological 
endomorphism of $\CO_{\tN/\cG}\otimes V$.

\item(c)
For $V^1,V^2\in \Rep^{f.d.}(\cG)$ the diagram 
$$
\CD
\Gamma_\Fl(\CF\star Z_{V^1}\star Z_{V^2})  @>{\gamma_{V^2}}>>
\Gamma_\Fl(\CF\star Z_{V^1})\underset{\CO_{\nOp}}\otimes 
\CV^2_\nOp  \\
@VVV    @V{\gamma_{V^1}}VV  \\
\Gamma_\Fl(\CF\star Z_{V^1\otimes V^2}) @>{\gamma_{V^1\otimes V^2}}>>
\Gamma_\Fl(\CF)\underset{\CO_{\nOp}}\otimes (\CV^1_\nOp
\underset{\CO_{\nOp}}\otimes \CV^2_\nOp)
\endCD
$$
commutes.

\item(d)
$\cla\in \cLambda$ and $V\in \Rep^{f.d.}(\cG)$, the diagram
$$
\CD
\Gamma_\Fl(\CF\star J_{\cla}\star Z_V) @>{\gamma_V}>> 
\Gamma_\Fl(\CF\star J_{\cla})\underset{\CO_{\nOp}}\otimes \CV_\nOp \\
@VVV   @V{\gamma_\cla}VV  \\
\Gamma_\Fl(\CF\star Z_V\star J_{\cla}) & & 
\Gamma_\Fl(\CF)\underset{\CO_{\nOp}}\otimes 
\CL^\cla_\nOp\underset{\CO_{\nOp}}\otimes \CV_\nOp \\
@V{\gamma_\cla}VV    @VVV  \\
\Gamma_\Fl(\CF\star Z_V)
\underset{\CO_{\nOp}}\otimes \CL^\cla_\nOp @>{\gamma_V}>>
\Gamma_\Fl(\CF)\underset{\CO_{\nOp}}\otimes 
\CV_\nOp  \underset{\CO_{\nOp}}\otimes 
\CL^\cla_\nOp
\endCD
$$
commutes, where the first left vertical arrow is the isomorphism
of \cite{Ga}, Theorem 1(b).

\medskip

\item(e)
For $\cla\in \cLambda^+$ the canonical map $Z_{V^\cla}\to J_\cla$
makes the following diagram commutative:
$$
\CD
\Gamma_\Fl(\CF\star Z_{V^\cla}) @>>> \Gamma_\Fl(\CF\star J_{\cla}) \\
@V{\gamma_{V^\cla}}VV    @V{\gamma_\cla}VV   \\
\Gamma_\Fl(\CF) \underset{\CO_{\nOp}}\otimes 
\CV^\cla_\nOp @>>>
\Gamma_\Fl(\CF) \underset{\CO_{\nOp}}\otimes 
\CL^\cla_\nOp,
\endCD
$$
where the bottom horizontal arrow comes from the canonical map
$V^\cla\otimes \CO_{\on{Fl}^{\cG}}\to \CL^\cla$.

\end{itemize}

\ssec{}

To construct the above isomorphisms we will repeatedly use the fact that
for $\CF'\in \fD(\Fl^{\on{aff}}_G)_\crit\mod^I$, $\CF\in \bD^f(\fD(\Fl^{\on{aff}}_G)_\crit\mod)$
\begin{equation} \label{convolution principle}
\Gamma_\Fl(\CF\star \CF')\simeq \CF\star \Gamma_\Fl(\CF')\in \bD(\hg_\crit\mod_\nilp).
\end{equation}

\medskip

By the definition of the critical twisting on $\Fl^{\on{aff}}_G$, we have
\begin{equation} \label{Gamma of delta}
\Gamma_\Fl(\delta_{1,\Fl^{\on{aff}}_G})\simeq \BM_{\crit,-2\rho}.
\end{equation}
Here $\delta_{1,\Fl^{\on{aff}}_G}$ is the $\delta$-function twisted
D-module on $\Fl^{\on{aff}}_G$ at the point $1\in \Fl^{\on{aff}}_G$,
and $\BM_{\crit,-2\rho}$ denotes the Verma module at the critical level 
with highest weight $-2\rho$. 

\medskip

From \eqref{Gamma of delta}, we obtain that for any 
$\CF\in \bD^f(\fD(\Fl^{\on{aff}}_G)_\crit\mod)$,
$$\Gamma_\Fl(\CF)\simeq \CF\star \BM_{\crit,-2\rho}.$$

\sssec{}

Thus, to construct isomorphisms $\gamma_\cla$ as in (i),
it is enough to construct an isomorphism
$$J_{\cla}\star \BM_{\crit,-2\rho}\simeq \BM_{\crit,-2\rho}
\underset{\CO_{\nOp}}\otimes \CL^\cla_\nOp.$$

\medskip

By the definition of the map $\fr_\nilp$ (see \cite{FG2}, Sect. 2.18),
$$\CL^\cla_\nOp\simeq \CO_{\nOp}\otimes (\fl')^\cla,$$
where $\cla\to (\fl')^\cla$ is a $\cT$-torsor.
In addition, by \cite{FG2}, Corollary 13.12
$$J_{\cla}\star \BM_{\crit,-2\rho}\simeq \BM_{\crit,-2\rho}\otimes 
(\fl'')^\cla,$$
where $\cla\to (\fl'')^\cla$ is also a $\cT$-torsor. Moreover, there exists
a canonical isomorphism of $\cT$-torsors
$$(\fl')^\cla \simeq (\fl'')^\cla,$$
given by \cite{FG5}, equations (6.3), (6.4) and Proposition 6.11(2)
of {\it loc. cit.}

\medskip

This gives rise to the isomorphism $\gamma_\cla$. Condition (a)
follows from the construction.

\sssec{}

Isomorphism $\gamma_V$ results via \eqref{convolution principle}
from the isomorphism established in \cite{FG3}, Theorem 5.4. Namely,
this theorem asserts that for every $\CM\in \hg_\crit\mod_\nilp^I$
and $V\in \Rep(\cG)$ there exists a canonical isomorphism
\begin{equation} \label{FG3}
Z_V\star \CM\simeq \CM\underset{\CO_{\nOp}}\otimes \CV_{\nOp}.
\end{equation}

The fact that conditions (b) and (c) are satisfied is included in the formulation
of the above result in {\it loc. cit.}

\sssec{}

Condition (d) follows from the functoriality
of the isomorphism \eqref{FG3} and the fact that
the following diagram is commutative for any $\CF\in 
\bD^f(\fD(\Fl^{\on{aff}}_G)_\crit\mod)$ and $\CM\in \hg_\crit\mod_\nilp^I$:
\begin{equation} \label{com and fusion}
\CD
(Z_V\star \CF)\star \CM @>{\sim}>> (\CF\star Z_V)\star \CM \\
@VVV   @VVV \\
Z_V\star (\CF\star \CM) & &  \CF\star (Z_V\star \CM) \\ 
@V{\sim}VV            @V{\sim}VV    \\
\CV_{\nOp}
\underset{\CO_\nOp}\otimes
(\CF\star \CM)  @>{\sim}>>
\CF\star (\CV_{\nOp} \underset{\CO_\nOp}\otimes \CM),
\endCD
\end{equation}
The commutativity of the above diagram follows, in turn, from the construction of the isomorphism 
$Z_V\star \CF\simeq \CF\star Z_V$ and that of 
Theorem 5.4 of \cite{FG3} via nearby cycles.

\sssec{}

Finally, let us prove the commutativity of the diagram in (e).

\medskip

As before, it suffices to consider the case of $\CF=\delta_{1,\Fl^{\on{aff}}_G}$.
Let us choose a coordinate on a formal disc, and consider
the resulting grading on the corresponding modules. We obtain that
the space of degree $0$ morphisms 
$$\BM_{\crit,-2\rho}\underset{\CO_{\nOp}}\otimes \CV^\cla_{\nOp}\to
\BM_{\crit,-2\rho}\otimes \fl^\cla$$
is one-dimensional. Therefore, it is enough to show the commutativity
of the following diagram instead:
\begin{equation}   \label{crucial diag}
\CD
Z_{V^\cla}\star \BM_{\crit,-2\rho,\reg}  @>>> J_{\cla}\star \BM_{\crit,-2\rho,\reg} \\
@V{\sim}VV    @V{\sim}VV   \\
\BM_{\crit,-2\rho,\reg}\underset{\CO_\nOp}\otimes \CV^\cla_{\nOp} @>>>
\BM_{\crit,-2\rho,\reg}\underset{\CO_\nOp}\otimes \CL^\cla_{\nOp},
\endCD
\end{equation}
where
$$\BM_{\crit,-2\rho,\reg}:=\BM_{\crit,-2\rho}\underset{\CO_{\nOp}}\otimes \CO_{\rOp},$$
and the left vertical arrow is given by \eqref{FG3}, and the right vertical arrow
is given by Proposition 6.11(2) of \cite{FG5}.

\medskip

Further, it is enough to show the commutativity of the following diagram, obtained
from \eqref{crucial diag} by composing with the canonical morphism 
$\BV_\crit\otimes \fl^{-2\crho}\to \BM_{\crit,-2\rho,\reg}$ of equation (6.6) of \cite{FG5},
where $\BV_\crit$ is the vacuum module at the critical level:
$$
\CD
Z_{V^\cla}\star \BV_\crit \otimes \fl^{-2\crho} @>>> J_\cla\star \BV_\crit \otimes \fl^{-2\crho} \\
@VVV    @VVV  \\
\BM_{\crit,-2\rho,\reg}\underset{\CO_\nOp}\otimes \CV^\cla_{\nOp} @>>>
\BM_{\crit,-2\rho,\reg}\underset{\CO_\nOp}\otimes \CL^\cla_{\nOp}.
\endCD
$$

However, the latter diagram coincides with the commutative diagram (6.10)
of \cite{FG5}.

\ssec{}

We can now state the first main result of the present paper:

\begin{mainthm}  \label{Gamma nilp fully faithful}
The functor 
$$\Gamma_{\nOp}:
\nOp\underset{\tN/\cG}\times \bD^f(\fD(\Fl^{\on{aff}}_G)_\crit\mod)\to 
\bD^f(\hg_\crit\mod_\nilp)$$
is fully faithful.
\end{mainthm}

\ssec{}

Consider the ind-completion of 
$\nOp\underset{\tN/\cG}\times \bD^f(\fD(\Fl^{\on{aff}}_G)_\crit\mod)$, which we will
denote 
\begin{equation} \label{main cat}
\nOp\underset{\tN/\cG}\arrowtimes \bD^f(\fD(\Fl^{\on{aff}}_G)_\crit\mod).
\end{equation}

The functor $\Gamma_{\Fl,\nOp}$ extends to a functor
\begin{equation} \label{main functor}
\Gamma_{\Fl,\nOp}:\nOp\underset{\tN/\cG}\arrowtimes \bD^f(\fD(\Fl^{\on{aff}}_G)_\crit\mod)
\to\bD_{ren}(\hg_\crit\mod_\nilp),
\end{equation}
which commutes with the formation of direct sums. \mainthmref{Gamma nilp fully faithful}
implies that the latter functor is also fully faithful.

\sssec{}   \label{t str on base change}

By \secref{t structure ten prod}, the {\it new} t-structure on
$\bD_{ren}(\fD(\Fl^{\on{aff}}_G)_\crit\mod)$ induces a t-structure on \eqref{main cat}.
Namely, the corresponding $\leq 0$ category is generated
by objects of the form
$$\CM\underset{\CO_{\tN/\cG}}\otimes \CF,$$
where $\CM\in \bD^{perf,\leq 0}(\Coh(\nOp))$ and
$\CF\in \bD^f(\fD(\Fl^{\on{aff}}_G)_\crit\mod)\cap \bD^{\leq 0}_{ren}(\fD(\Fl^{\on{aff}}_G)_\crit\mod)$.
Since $\nOp$ is affine, it is in fact enough to take objects just of the form
$\fr_\nilp^*(\CF)$ for $\CF$ as above, where $\fr_\nilp^*$ denotes the
tautological pull-back functor
\begin{equation} \label{fr nilp base changed}
\fr_\nilp^*:\bD_{ren}(\fD(\Fl^{\on{aff}}_G)_\crit\mod)\to 
\nOp\underset{\tN/\cG}\arrowtimes \bD^f(\fD(\Fl^{\on{aff}}_G)_\crit\mod).
\end{equation}

\sssec{}

We propose:

\begin{conj}   \label{main conj}
The functor \eqref{main functor} is an equivalence of categories,
and is exact.
\end{conj}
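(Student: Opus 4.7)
Since \mainthmref{Gamma nilp fully faithful} supplies fully faithfulness and the functor $\Gamma_{\Fl,\nOp}$ is continuous (it commutes with arbitrary direct sums by construction), the image of $\Gamma_{\Fl,\nOp}$ is a localizing triangulated subcategory of $\bD_{ren}(\hg_\crit\mod_\nilp)$. The plan is therefore to establish two independent points: (a) essential surjectivity, i.e., that a set of compact generators of $\bD_{ren}(\hg_\crit\mod_\nilp)$ lies in the image; and (b) exactness with respect to the new $t$-structure of \sssec{t str on base change} on the source and the standard $t$-structure on $\bD_{ren}(\hg_\crit\mod_\nilp)$ (\sssec{nilpotent category}). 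Fully faithfulness plus (a) implies equivalence of the triangulated categories, and (b) then upgrades this to an equivalence of $t$-structures.

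For (a), the natural approach is to bootstrap from \mainthmref{main}, which already gives essential surjectivity on the Iwahori-monodromic subcategory. Since the objects of $\hg_\crit\mod_\nilp$ are smooth, any compact generator is annihilated by some congruence subalgebra $\fg\otimes t^k\BC[[t]]$; one would attempt to filter such an object by subquotients that are $I^0$-equivariant, using averaging with respect to $I^0$ (twisted by suitable unipotent characters) and the convolution action of the Hecke category $\bD^f(\fD(\Fl^{\on{aff}}_G)_\crit\mod)$, whose image under $\Gamma_{\Fl,\nOp}$ we already control. If such a generating statement can be produced, then closure of the image under cones and arbitrary direct sums completes (a).

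For (b) the plan is to combine the remark following \sssec{functor sF} (exactness of $\sF$ in the new $t$-structure) with \propref{F right-exact and heart}. One would then prove that $\Gamma_\Fl$ itself is exact in the new $t$-structure: by \lemref{finite flags} the D-modules supported on $G/B\subset \Fl^{\on{aff}}_G$ lie in the heart, and by convolving with the $J_\cla$ (using \conjref{t-structure conj one}) such objects generate the $\leq 0$ part. Exactness on them reduces to the finite-dimensional Beilinson--Bernstein theorem. Stability of this exactness under base change along $\fr_\nilp$ is handled by the $t$-structure compatibilities of \secref{tensor products and t-structures}, provided \conjref{t-structure conj two} holds so that ``new$_1$'' and ``new$_2$'' coincide on the base change.

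The decisive obstacle is (a). As the authors themselves note, one has essentially no handle on objects of $\hg_\crit\mod_\nilp$ outside the Iwahori-monodromic locus; the finite-dimensional trick of ``average to the $B$-equivariant subcategory and invoke Beilinson--Bernstein there'' has no straightforward affine counterpart, because $I\subset G\ppart$ is pro-algebraic of infinite type and the naive averaging functor from $\hg_\crit\mod_\nilp$ to $\hg_\crit\mod_\nilp^{I^0}$ is not defined on general modules. Bypassing this seems to require either a new construction producing $\hg_\crit$-modules directly from geometric data on the base-changed category \eqref{main cat}, or an altogether different proof of essential surjectivity that circumvents reduction to the $I^0$-monodromic case. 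In the absence of such input, the conjecture appears to be out of reach with the present methods, which is precisely why the authors leave it as \conjref{main conj} rather than upgrading \mainthmref{Gamma nilp fully faithful} to an equivalence statement.
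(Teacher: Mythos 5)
The statement you were asked to prove is \conjref{main conj}, which the paper explicitly leaves open: the authors state in the introduction that they ``cannot prove it at the moment'' and that the obstruction is precisely ``our lack of any explicit information about objects of $\hg_\crit\mod$ other than the Iwahori-monodromic ones.'' There is therefore no proof in the paper to compare against; the paper offers only two pieces of evidence, namely the fully-faithfulness of \mainthmref{Gamma nilp fully faithful} and the equivalence on $I^0$-equivariant subcategories of \mainthmref{main}. Your analysis is honest on this point and reaches the correct conclusion that the conjecture is out of reach with the methods of the paper, and you identify the decisive obstacle --- essential surjectivity, i.e.\ producing compact generators of $\bD_{ren}(\hg_\crit\mod_\nilp)$ from the base-changed D-module side --- in exactly the terms the authors themselves use. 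The proposed bootstrap via $I^0$-averaging fails for precisely the reason you give: the naive averaging functor on $\hg_\crit\mod_\nilp$ is not well behaved outside the Harish-Chandra setting, so there is no analogue of the finite-dimensional filtration argument.

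One additional weak point worth flagging: even your sketch of part (b), the exactness of $\Gamma_{\Fl,\nOp}$ in the new $t$-structure, is conditional on \conjref{t-structure conj one} and \conjref{t-structure conj two}, both of which the paper also leaves unproved. So (b) is not an independently accessible half of the statement either --- it rests on the same unresolved $t$-structure questions, and in the paper the exactness claim is in any case bundled into \conjref{main conj} rather than established separately. The proof strategy used for \mainthmref{main} (via \propref{gen equiv}) derives exactness only once essential surjectivity on compact generators is in hand, so the two issues are not cleanly separable in the way your two-step plan suggests.
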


There are two pieces of evidence in favor of this conjecture.
One is \mainthmref{Gamma nilp fully faithful} which says that the
functor in question is fully faithful. Another is given by 
\mainthmref{main} (see \secref{Iwahori monodromic}), which
says that the conclusion of the conjecture
holds when we restrict ourselves to the corresponding $I^0$-equivariant
categories on both sides, where $I^0$ is the unipotent radical of the 
Iwahori subgroup $I$.

\sssec{}

Suppose that \conjref{main conj} is true. We would then obtain an equivalence of abelian categories
$$\Heart\left(\nOp\underset{\tN/\cG}\arrowtimes \bD^f(\fD(\Fl^{\on{aff}}_G)_\crit\mod)\right)\to
\hg_\crit\mod_\nilp.$$

As the RHS, i.e., $\hg_\crit\mod_\nilp$ is of prime interest for representation theory,
let us describe the LHS more explicitly.

\medskip

By \propref{heart of base change}, the category 
$\Heart\left(\nOp\underset{\tN/\cG}\arrowtimes \bD^f(\fD(\Fl^{\on{aff}}_G)_\crit\mod)\right)$
is equivalent to the abelian base change
$$\QCoh(\nOp)\underset{\QCoh(\tN/\cG)}\otimes \Heart^{new}\left(\bD^f(\fD(\Fl^{\on{aff}}_G)_\crit\mod)\right).$$

\medskip

As in \cite{Ga1}, Sect. 22, the latter category can be described as follows. 

\medskip

Its objects
are $\CF\in \Heart^{new}\left(\bD^f(\fD(\Fl^{\on{aff}}_G)_\crit\mod)\right)$ endowed with
an action of the algebra $\CO_{\nOp}\simeq \fZ^\nilp_\fg$ by endomorphisms together
with a system of isomorphisms
$$\gamma_V:\CV_{\nOp}\underset{\CO_\nOp}\otimes \CF\simeq \CF\star Z_V, 
\text{ for every }V\in \Rep(\cG)$$
and
$$\gamma_\cla:\CL^\cla_\nOp \underset{\CO_\nOp}\otimes \CF\simeq \CF\star J_\cla, \text{ for every }
\cla\in \cLambda,$$
which satisfy the conditions parallel to (a)-(e) of \secref{a-e}.

\medskip

Morphisms in this category between $(\CF^1,\{\gamma^1_V\},\{\gamma^1_\cla\})$ 
and $(\CF^2,\{\gamma^2_V\},\{\gamma^2_\cla\})$ are morphisms in 
$\Heart^{new}\left(\bD^f(\fD(\Fl^{\on{aff}}_G)_\crit\mod)\right)$ that intertwine the actions of
$\CO_{\nOp}$ and the data of $\gamma_V$, $\gamma_\cla$.

\sssec{}

As another corollary of \conjref{main conj}, we obtain:
\begin{conj}
The functor
$\Gamma_\Fl:\bD_{ren}(\fD(\Fl^{\on{aff}}_G)_\crit\mod)\to
\bD_{ren}(\hg_\crit\mod_\nilp)$
is exact for the {\it new} t-structure on the left-hand side.
\end{conj}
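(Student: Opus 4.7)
The plan is to factor $\Gamma_\Fl$ through the base-changed category:
\[
\bD_{ren}(\fD(\Fl^{\on{aff}}_G)_\crit\mod) \xrightarrow{\fr_\nilp^*}
\nOp\underset{\tN/\cG}\arrowtimes \bD^f(\fD(\Fl^{\on{aff}}_G)_\crit\mod)
\xrightarrow{\Gamma_{\Fl,\nOp}} \bD_{ren}(\hg_\crit\mod_\nilp),
\]
where $\fr_\nilp^*$ is the tautological pull-back functor of \eqref{fr nilp base changed}. By the assumed \conjref{main conj}, the right-hand arrow $\Gamma_{\Fl,\nOp}$ is exact with respect to the t-structure on the base-changed category described in \secref{t str on base change} and the usual t-structure on $\bD_{ren}(\hg_\crit\mod_\nilp)$. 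Thus the desired exactness of $\Gamma_\Fl$ for the new t-structure reduces to t-exactness of $\fr_\nilp^*$ from the new t-structure on the source to the induced t-structure on the target.

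Right-exactness of $\fr_\nilp^*$ is built into the definition: as recalled in \secref{t str on base change}, the $\leq 0$ subcategory of the base-changed category is generated under colimits by objects of the form $\fr_\nilp^*(\CF)$ with $\CF$ in the new $\leq 0$ part of $\bD_{ren}(\fD(\Fl^{\on{aff}}_G)_\crit\mod)$. Hence $\fr_\nilp^*$ automatically preserves the $\leq 0$ part.

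For left-exactness the crucial input is that $\fr_\nilp:\nOp\to \tN/\cG$ is smooth, hence flat. On the abelian level this means that $\CO_\nOp\underset{\CO_{\tN/\cG}}\otimes ?$ is exact on $\QCoh(\tN/\cG)$; combined with \propref{F right-exact and heart}, which asserts that convolution with $\sF(\CM)$ for flat $\CM$ is t-exact in the new t-structure, the flat base change machinery of \secref{tensor products and t-structures} (and specifically the identification of hearts of base-changed categories with abelian tensor products, \propref{heart of base change}) implies that $\fr_\nilp^*$ also preserves the $\geq 0$ part. Composing with the exact $\Gamma_{\Fl,\nOp}$ then yields the conjectured exactness of $\Gamma_\Fl$.

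The main obstacle in this plan is the careful articulation of flat base change at the t-structure level, where the issue is that the new t-structure is not known to be well-behaved with respect to the subcategory $\bD^f(\fD(\Fl^{\on{aff}}_G)_\crit\mod)$ (see the Remarks following the definition of the new t-structure, and \conjref{t-structure conj one}). In particular, one needs the compatibility of the two candidate t-structures on $\nOp\underset{\tN/\cG}\arrowtimes \bD^f(\fD(\Fl^{\on{aff}}_G)_\crit\mod)$ in the spirit of \conjref{t-structure conj two} (with $\bD^{perf}(A\mod)$ replaced by $\bD^{perf}(\Coh(\nOp))$); granted this, together with \conjref{main conj}, the exactness of $\Gamma_\Fl$ is essentially a formal consequence of the smoothness of $\fr_\nilp$ and the framework of Part III.
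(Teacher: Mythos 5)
Your factorization of $\Gamma_\Fl$ as $\Gamma_{\Fl,\nOp}\circ \fr_\nilp^*$ and the reduction to the exactness of the two factors is exactly the paper's argument, so the overall strategy is correct. However, the details of how you handle $\fr_\nilp^*$ are off the mark, and in a way that matters.

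The paper deduces exactness of $\fr_\nilp^*$ directly from \propref{flat exact}: that proposition states precisely that if $\bA$ has a tight diagonal (which $\bD^{perf}(\Coh(\tN/\cG))$ does, being rigid with affine diagonal) and $Y^l\in\Heart(\bD^l)$ is flat (which $\CO_{\nOp}$ is, since $\fr_\nilp$ is smooth), then the functor $Y^r\mapsto Y^r\underset{\bA}\otimes Y^l$ from $\bD^r$ to the base-changed category $\bD^t$ is exact for the t-structure of \secref{t structure ten prod}. Applied with $\bD^r=\bD_{ren}(\fD(\Fl^{\on{aff}}_G)_\crit\mod)$ carrying the new t-structure, this is exactly the exactness of $\fr_\nilp^*$ with respect to the t-structure of \secref{t str on base change}, which is the one for which \conjref{main conj} predicts $\Gamma_{\Fl,\nOp}$ is exact. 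Instead of \propref{flat exact}, you invoke \propref{F right-exact and heart} (which concerns the endofunctor $\CF\mapsto\CF\star\sF(\CM)$, not the base-change pull-back) and \propref{heart of base change} (which identifies the heart of $\bD^t$ but says nothing about exactness of $\fr_\nilp^*$), so the key citation is missing.

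Further, the appeal to \conjref{t-structure conj two} is an unnecessary (and potentially misleading) complication. The t-structure on $\nOp\underset{\tN/\cG}\arrowtimes \bD^f(\fD(\Fl^{\on{aff}}_G)_\crit\mod)$ used throughout is by definition the one induced by the tensor product construction from the new t-structure (the ``new$_1$'' t-structure, in the terminology of \conjref{t-structure conj two}), \conjref{main conj} is stated for this t-structure, and \propref{flat exact} gives exactness of $\fr_\nilp^*$ for this t-structure. Whether this agrees with the intrinsically defined ``new$_2$'' t-structure is irrelevant to the present deduction. Removing that appeal, and replacing your citations with \propref{flat exact}, turns your outline into the paper's two-line proof.
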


Indeed, the functor $\Gamma_\Fl$ is the composition of 
$\Gamma_{\Fl,\nOp}$ and the functor $\fr_\nilp^*$
of \eqref{fr nilp base changed}. 
However, by \propref{flat exact}, the functor $\fr_\nilp^*$ is exact.

\section{The Iwahori-monodromic subcategory} \label{Iwahori monodromic}

In this section we will consider the restriction of the functor $\Gamma_{\Fl,\nOp}$,
introduced in \secref{functor to the critical level} to the corresponding
$I^0$-equivariant subcategories.

\ssec{}  \label{intr Iw category}

Let $\bD^+(\fD(\Fl^{\on{aff}}_G)_\crit\mod)^{I^0}$ be the full subcategory
of $\bD^+(\fD(\Fl^{\on{aff}}_G)_\crit\mod)$ consisting of $I^0$-equivariant objects, 
as defined, e.g., in \cite{FG2}, Sect. 20.11.

\medskip

Let $\bD_{ren}(\fD(\Fl^{\on{aff}}_G)_\crit\mod)^{I^0}$ be the full subcategory of
$\bD_{ren}(\fD(\Fl^{\on{aff}}_G)_\crit\mod)$ generated by $\bD^+(\fD(\Fl^{\on{aff}}_G)_\crit\mod)^{I^0}$
under the identification 
$$\bD^+(\fD(\Fl^{\on{aff}}_G)_\crit\mod)\simeq \bD^+_{ren}(\fD(\Fl^{\on{aff}}_G)_\crit\mod).$$

\begin{lem}  \label{I^0 monodromic D-modules nice} 
The category $\bD_{ren}(\fD(\Fl^{\on{aff}}_G)_\crit\mod)$ is generated by the set of
its compact objects, which identifies with
$$\bD^f(\fD(\Fl^{\on{aff}}_G)_\crit\mod)^{I^0}:=\bD^+(\fD(\Fl^{\on{aff}}_G)_\crit\mod)^{I^0}
\cap \bD^f(\fD(\Fl^{\on{aff}}_G)_\crit\mod).$$
\end{lem}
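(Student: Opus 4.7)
The plan is to apply the standard compact-generation criterion directly. By definition, $\bD_{ren}(\fD(\Fl^{\on{aff}}_G)_\crit\mod)^{I^0}$ is the smallest full cocomplete triangulated subcategory of $\bD_{ren}(\fD(\Fl^{\on{aff}}_G)_\crit\mod)$ containing $\bD^+(\fD(\Fl^{\on{aff}}_G)_\crit\mod)^{I^0}$, hence is closed under arbitrary direct sums in the ambient category. Consequently, any object of $\bD^f(\fD(\Fl^{\on{aff}}_G)_\crit\mod)^{I^0}$, being compact in $\bD_{ren}(\fD(\Fl^{\on{aff}}_G)_\crit\mod)$ (because it lies in $\bD^f$) and lying in the subcategory, is automatically compact in $\bD_{ren}(\fD(\Fl^{\on{aff}}_G)_\crit\mod)^{I^0}$.

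What remains is to show that $\bD^f(\fD(\Fl^{\on{aff}}_G)_\crit\mod)^{I^0}$ generates $\bD_{ren}(\fD(\Fl^{\on{aff}}_G)_\crit\mod)^{I^0}$ as a cocomplete triangulated category; the standard recognition criterion for compactly generated triangulated categories then identifies the full subcategory of compact objects with $\bD^f(\fD(\Fl^{\on{aff}}_G)_\crit\mod)^{I^0}$, giving both assertions. It suffices to show that every $\CF\in \bD^+(\fD(\Fl^{\on{aff}}_G)_\crit\mod)^{I^0}$ lies in the cocomplete triangulated subcategory generated by $\bD^f(\fD(\Fl^{\on{aff}}_G)_\crit\mod)^{I^0}$.

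First, writing $\CF\simeq \on{hocolim}_n\tau^{\leq n}\CF$ and using that $I^0$ is pro-unipotent (so the forgetful functor from $I^0$-equivariant D-modules to all D-modules is fully faithful and t-exact, and consequently the truncation functors preserve $I^0$-equivariance), we reduce to the case where $\CF$ is bounded, and then by induction on the length of its cohomological support, to the case where $\CF$ is a single $I^0$-equivariant critically twisted D-module. Second, we exhaust $\Fl^{\on{aff}}_G$ by an increasing family of finite-type $I$-stable closed subschemes $Y_\alpha\subset\Fl^{\on{aff}}_G$ (e.g.\ Schubert-type subvarieties $\Fl^{\on{aff}}_{G,\leq\wt{w}}$) and write $\CF=\on{colim}_\alpha\,\CF_\alpha$ where $\CF_\alpha$ is the subobject of $\CF$ supported on $Y_\alpha$. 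On each $Y_\alpha$ the action of $I^0$ factors through a finite-dimensional unipotent quotient, so the $I^0$-saturation of any finitely generated sub-D-module is still finitely generated; hence $\CF_\alpha$ is a filtered colimit of $I^0$-equivariant coherent sub-D-modules, each of which is an object of $\bD^f(\fD(\Fl^{\on{aff}}_G)_\crit\mod)^{I^0}$, as needed.

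The main obstacle is the last step: the assertion that an $I^0$-equivariant critically twisted D-module on an $I^0$-stable finite-type subscheme $Y$ is a filtered colimit of finitely generated $I^0$-equivariant sub-D-modules. The subtlety is that $I^0$ is only a pro-unipotent group scheme, not a finite-type one; however, on any fixed $Y$ of finite type, the action factors through the quotient $I^0/K$ by a sufficiently deep congruence subgroup $K$, which is unipotent of finite dimension, and then the $I^0$-saturation argument above applies. Keeping track of the critical twist throughout introduces no new difficulty, since the critical twist is an ($I^0$-equivariant) line bundle on each $Y_\alpha$ and interacts compatibly with saturation.
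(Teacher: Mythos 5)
Your proof is correct, but it takes a genuinely different route from the paper's. The paper proves this lemma (in \secref{proof I lemma b}) by verifying the hypotheses of an abstract framework, \propref{abstract equivariant}, that simultaneously handles the t-structure and the compact-generation claim. The key input is condition (2) of \secref{cond for I}: for each strong generator $X$ of $\bD^f$ (taken weakly $H$-equivariant, using condition $(\star)$), they construct an explicit approximating system $X^k := \Lambda^\bullet(\fh_n)\otimes\bigl(X_1\otimes(\on{Fun}(H_n)^k)^*\bigr)$ in $\bD_1^+\cap\bD^f$ whose filtered colimit of $\Hom$'s computes $\Hom(X,-)$ on $\bD_1^+$. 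In other words, the paper resolves each generator by bounded strongly-equivariant complexes via a relative Chevalley/Koszul construction combined with a finite-dimensional approximation of the regular representation. By contrast, your argument is purely geometric and sheaf-theoretic: you exhaust $\Fl^{\on{aff}}_G$ by $I$-stable finite-type closed subschemes, reduce to a single $I^0$-equivariant D-module supported on one such subscheme, and express it as a filtered colimit of coherent $I^0$-equivariant sub-D-modules. (You invoke an $I^0$-saturation; in fact, since $I^0$ acts on a fixed $Y_\alpha$ through a connected unipotent quotient, a sub-D-module of a strongly equivariant D-module is automatically strongly equivariant, so the saturation step is vacuous — but this only makes your argument cleaner.) Your approach is more elementary and transparent for D-modules; the paper's approach has the advantage of being formulated so as to apply verbatim to the parallel representation-theoretic setup $\bD(\hg_\kappa\mod)$ as well, which is precisely why they set up the machinery of \propref{abstract equivariant} rather than arguing geometrically. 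Both correctly conclude compact generation, after which the identification of compacts with $\bD^f(\fD(\Fl^{\on{aff}}_G)_\crit\mod)^{I^0}$ (noting this is an idempotent-complete thick subcategory) follows from Neeman's recognition theorem, as you note.
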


This lemma will be proved in \secref{proof I lemma b}.

\sssec{}

The subcategory $\bD^f(\fD(\Fl^{\on{aff}}_G)_\crit\mod)^{I^0}\subset
\bD^f(\fD(\Fl^{\on{aff}}_G)_\crit\mod)$ is preserved by the action of 
$\bD^{perf}(\Coh(\tN/\cG))$ on $\bD^f(\fD(\Fl^{\on{aff}}_G)_\crit\mod)$,
and inherits a DG model. Hence, we have a well-defined category
\begin{equation} \label{base changed category, Iwahori}
\nOp\underset{\tN/\cG}\times \bD^f(\fD(\Fl^{\on{aff}}_G)_\crit\mod)^{I^0},
\end{equation}
and its ind-completion
\begin{equation} \label{base changed category, Iwahori ind}
\nOp\underset{\tN/\cG}\arrowtimes \bD^f(\fD(\Fl^{\on{aff}}_G)_\crit\mod)^{I^0}.
\end{equation}

\medskip

The following lemma will be proved in \secref{proof I lemma d}:
\begin{lem} \label{Iw fully faithful}  \hfill

\smallskip

\noindent{(1)}
The natural functor
$$\nOp\underset{\tN/\cG}\arrowtimes \bD^f(\fD(\Fl^{\on{aff}}_G)_\crit\mod)^{I^0}\to
\nOp\underset{\tN/\cG}\arrowtimes \bD^f(\fD(\Fl^{\on{aff}}_G)_\crit\mod)$$
is fully faithful.

\smallskip

\noindent{(2)} The image of the functor in (1) consist of all objects,
whose further image under
$$(\fr_\nilp)_*:\nOp\underset{\tN/\cG}\arrowtimes \bD^f(\fD(\Fl^{\on{aff}}_G)_\crit\mod)
\to \bD_{ren}(\fD(\Fl^{\on{aff}}_G)_\crit\mod)$$
belongs to $\bD_{ren}(\fD(\Fl^{\on{aff}}_G)_\crit\mod)^{I^0}\subset
\bD_{ren}(\fD(\Fl^{\on{aff}}_G)_\crit\mod)$. (Here $(\fr_\nilp)_*$ is the functor right
adjoint to $(\fr_\nilp)^*$, see Sects \ref{lower star} and \ref{upper and lower star}.)

\end{lem}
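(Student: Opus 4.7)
The plan is to deduce (1) from the $\Hom$-formula \eqref{hom in base change nilp} and to reduce (2) to an adjunction computation after base-changing the averaging functor right adjoint to the inclusion $\iota:\bD_{ren}(\fD(\Fl^{\on{aff}}_G)_\crit\mod)^{I^0}\hookrightarrow\bD_{ren}(\fD(\Fl^{\on{aff}}_G)_\crit\mod)$. The central observation used throughout is that the action of $\bD^{perf}(\Coh(\tN/\cG))$ on $\bD^f(\fD(\Fl^{\on{aff}}_G)_\crit\mod)$ is implemented by the functor $\sF$ of \secref{functor sF}, whose image lies in $I$-equivariant D-modules; convolution with such objects preserves $I^0$-equivariance. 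Hence the subcategory $\bD^f(\fD(\Fl^{\on{aff}}_G)_\crit\mod)^{I^0}$ is stable under the $\bD^{perf}(\Coh(\tN/\cG))$-action, and by adjunction the averaging functor $\Av^{I^0}_*$ is $\bD^{perf}(\Coh(\tN/\cG))$-linear.

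For (1), I apply \eqref{hom in base change nilp} to $\CF_1,\CF_2\in\bD^f(\fD(\Fl^{\on{aff}}_G)_\crit\mod)^{I^0}$ to get
$$\Hom(\fr_\nilp^*(\CF_1),\fr_\nilp^*(\CF_2))\simeq\Hom\bigl(\CF_1,(\fr_\nilp)_*(\CO_\nOp)\underset{\CO_{\tN/\cG}}\otimes\CF_2\bigr).$$
By the observation above, the right-hand side's ind-object lies in the ind-completion of $\bD^f(\fD(\Fl^{\on{aff}}_G)_\crit\mod)^{I^0}$; since $\CF_1$ is compact there (by \lemref{I^0 monodromic D-modules nice}), the same $\Hom$ is obtained in the $I^0$-version of the base-changed category, proving fully faithfulness on compact generators and hence on the ind-completion. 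For the easy direction of (2), objects in the image of the functor in (1) are generated under colimits and triangles by $\fr_\nilp^*(\CG)$ with $\CG\in\bD^f(\fD(\Fl^{\on{aff}}_G)_\crit\mod)^{I^0}$, and $(\fr_\nilp)_*\fr_\nilp^*(\CG)\simeq(\fr_\nilp)_*(\CO_\nOp)\underset{\CO_{\tN/\cG}}\otimes\CG$ is $I^0$-equivariant by the same observation.

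For the reverse direction of (2), I would base-change $\Av^{I^0}_*$ (using the formalism of Part III) to a functor $\wt{\Av}^{I^0}_*$ between the two ind-completed base-changed categories, verify by a Yoneda-style computation that $\wt{\Av}^{I^0}_*$ is right adjoint to the fully faithful functor $\wt\iota$ of (1), and check the commuting square
$$(\fr_\nilp)_*\circ\wt{\Av}^{I^0}_*\simeq\iota\circ\Av^{I^0}_*\circ(\fr_\nilp)_*.$$
Given $\wt\CF$ with $(\fr_\nilp)_*\wt\CF\in\bD_{ren}(\fD(\Fl^{\on{aff}}_G)_\crit\mod)^{I^0}$, applying $(\fr_\nilp)_*$ to the counit $\wt\iota\wt{\Av}^{I^0}_*\wt\CF\to\wt\CF$ yields, via the square, the counit of $\iota\dashv\Av^{I^0}_*$ at $(\fr_\nilp)_*\wt\CF$, which is an isomorphism by the hypothesis. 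The main obstacle is then to conclude that the original counit is an isomorphism, i.e., that $(\fr_\nilp)_*:\nOp\underset{\tN/\cG}\arrowtimes\bD^f(\fD(\Fl^{\on{aff}}_G)_\crit\mod)\to\bD_{ren}(\fD(\Fl^{\on{aff}}_G)_\crit\mod)$ is conservative. This should follow from a Barr--Beck/monadic description of the base-changed category as modules over the algebra object $(\fr_\nilp)_*(\CO_\nOp)$ in $\bD_{ren}(\fD(\Fl^{\on{aff}}_G)_\crit\mod)$, provided by the general framework of Part III, in which case $(\fr_\nilp)_*$ is literally the forgetful functor and conservative by tautology.
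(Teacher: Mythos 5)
Your proposal follows essentially the same route the paper takes. The paper proves an abstract proposition (\propref{Iw fully faithful abs}) about induction $\Ind^{\bA'}_{\bA}$ along a monoidal functor with $\bA$ rigid, applied to a fully faithful $\bA$-linear embedding $\bC_1\hookrightarrow\bC$: point (1) there follows from \corref{tight mon}(2), which is exactly the $\Hom$-formula \eqref{hom in base change nilp} you invoke; point (2) there (the commutation $G^*\circ(F_{\bC_1})_*\simeq (F_{\bC})_*\circ G'^*$) is the Beck--Chevalley square you propose for $\Av^{I^0}_*$; and point (3) is your ``reverse direction of (2),'' using exactly the conservativity of $(\fr_\nilp)_*$ and the commutation square to reduce the counit on the base-changed side to the counit downstairs.

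The one place where your route and the paper's diverge is the justification that $(\fr_\nilp)_*$ is conservative. You appeal to a Barr--Beck/monadic description of the base-changed category as modules over $(\fr_\nilp)_*(\CO_{\nOp})$; the paper instead invokes \propref{affine exact}, whose proof is the elementary observation that $F_*$ conservative $\Leftrightarrow$ $F(\Ho(\bA))$ generates $\Ho(\ua{\bA'})$ $\Rightarrow$ the image of $(F_{\bC})^*$ generates $\Ho(\ua\bC{}')$ $\Leftrightarrow$ $(F_{\bC})_*$ conservative. This is morally the same content but avoids invoking monadicity as such, which in the paper's DG-model formalism (rather than a quasi-categorical one) is not developed. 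If you intend to use Barr--Beck here you would need to actually supply the monadic description within the Part III framework; citing \propref{affine exact} is the shorter path and is what the paper does.

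One smaller caution: $\Av^{I^0}_*$ as defined in \cite{FG2}, Sect. 20.10 is only a functor on $\bD^+$, whereas the ``$G_*$'' used in the paper's Proposition 15.4.1(3) is the abstract right adjoint of the ind-extended inclusion $\Ho(\ua\bC{}_1)\hookrightarrow\Ho(\ua\bC)$ (which exists unconditionally because the inclusion preserves compacts and commutes with colimits, cf.\ \lemref{I^0 monodromic D-modules nice}). If you want to literally ``base-change $\Av^{I^0}_*$'' you should first replace $\Av^{I^0}_*$ by this abstract right adjoint, which agrees with $\Av^{I^0}_*$ on $\bD^+$ by \propref{prop Iw and t}(a). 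With that caveat, your argument is sound.
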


\sssec{}

Let 
$\bD^+(\hg_\crit\mod_\nilp)^{I^0}\subset \bD^+(\hg_\crit\mod_\nilp)$
be the subcategory of $I^0$-equivariant objects, as defined, e.g.,
in \cite{FG2}, Sect. 20.11. 

As above, we define $\bD_{ren}(\hg_\crit\mod_\nilp)^{I^0}$ to be the full subcategory
of $\bD_{ren}(\hg_\crit\mod_\nilp)$, generated by $\bD^+(\hg_\crit\mod_\nilp)^{I^0}$
under the identification
$$\bD^+(\hg_\crit\mod_\nilp)\simeq \bD^+_{ren}(\hg_\crit\mod_\nilp).$$

Set also:
$$\bD^f(\hg_\crit\mod_\nilp)^{I^0}:=\bD^+(\hg_\crit\mod_\nilp)^{I^0}\cap \bD^f(\hg_\crit\mod_\nilp)
\subset \bD^+(\hg_\crit\mod_\nilp)^{I^0},$$
which we can also regard as a subcategory
\begin{multline*}
\bD^f(\hg_\crit\mod_\nilp)^{I^0}=:\bD^f_{ren}(\hg_\crit\mod_\nilp)^{I^0}\subset 
\bD^+_{ren}(\hg_\crit\mod_\nilp)^{I^0}\subset \\
\subset \bD_{ren}(\hg_\crit\mod_\nilp)^{I^0}.
\end{multline*}

It is clear that $\bD^f_{ren}(\hg_\crit\mod_\nilp)^{I^0}$ consists of compact objects of
$\bD_{ren}(\hg_\crit\mod_\nilp)^{I^0}$.

\medskip

\noindent{\it Remark.} It is easy to see that the subcategory $\bD^f_{ren}(\hg_\crit\mod_\nilp)^{I^0}$ equals
the subcategory of compact objects in $\bD_{ren}(\hg_\crit\mod_\nilp)^{I^0}$.
It will {\it a posteriori} true, see \corref{I^0 monodromic g-modules also nice}, that it actually 
generates $\bD_{ren}(\hg_\crit\mod_\nilp)^{I^0}$. However, we will be able to deduce this
as a corollary of our \mainthmref{main}. Unfortunately, at the moment we are unable to
give an {\it a priori} proof of this statement, parallel to that of
\lemref{I^0 monodromic D-modules nice}.

\sssec{}

It is clear from the definitions that the functor $\Gamma_\Fl$
sends
$$\bD^f(\fD(\Fl^{\on{aff}}_G)_\crit\mod)^{I^0}\to \bD^f(\hg_\crit\mod_\nilp)^{I^0}.$$
Thus, it extends to a functor
$$\nOp\underset{\tN/\cG}\times \bD^f(\fD(\Fl^{\on{aff}}_G)_\crit\mod)^{I^0}\to
\bD^f(\hg_\crit\mod_\nilp)^{I^0},$$
making the diagram
\begin{equation}  \label{Iw and not}
\CD
\nOp\underset{\tN/\cG}\times \bD^f(\fD(\Fl^{\on{aff}}_G)_\crit\mod)^{I^0} @>>>
\nOp\underset{\tN/\cG}\times \bD^f(\fD(\Fl^{\on{aff}}_G)_\crit\mod) \\
@V{\Gamma_{\Fl,\nOp}}VV    @V{\Gamma_{\Fl,\nOp}}VV \\
\bD^f(\hg_\crit\mod_\nilp)^{I^0}  @>>> \bD^f(\hg_\crit\mod_\nilp)
\endCD
\end{equation}
commute.

\sssec{}

Recall now that the categories
$$\bD_{ren}(\hg_\crit\mod_\nilp);\,\,\,
\bD_{ren}(\fD(\Fl^{\on{aff}}_G)_\crit\mod);\,\,\,
\nOp\underset{\tN/\cG}\arrowtimes \bD^f(\fD(\Fl^{\on{aff}}_G)_\crit\mod)$$
are equipped with t-structures. The t-structure that we consider 
on $\bD_{ren}(\fD(\Fl^{\on{aff}}_G)_\crit\mod)$ can be either the old or the new one, whereas the t-structure on 
$\nOp\underset{\tN/\cG}\arrowtimes \bD^f(\fD(\Fl^{\on{aff}}_G)_\crit\mod)$
is that introduced in \secref{t str on base change}, using the {\it new} t-structure
on $\bD_{ren}(\fD(\Fl^{\on{aff}}_G)_\crit\mod)$.

\medskip

We have:

\begin{prop}   \label{prop Iw and t} \hfill

\smallskip

\noindent{\em(a)} The subcategory
$\bD_{ren}(\hg_\crit\mod_\nilp)^{I^0}\subset \bD_{ren}(\hg_\crit\mod_\nilp)$
is compatible with the t-structure.

\smallskip

\noindent{\em(b)} The subcategory
$\bD_{ren}(\fD(\Fl^{\on{aff}}_G)_\crit\mod)^{I^0}\subset
\bD_{ren}(\fD(\Fl^{\on{aff}}_G)_\crit\mod)$  
is compatible with the old t-structure.

\smallskip

\noindent{\em(c)} The subcategory
$\bD_{ren}(\fD(\Fl^{\on{aff}}_G)_\crit\mod)^{I^0}\subset
\bD_{ren}(\fD(\Fl^{\on{aff}}_G)_\crit\mod)$ is compatible also
with the {\it new} t-structure.

\smallskip

\noindent{\em(d)}
The subcategory
$$\nOp\underset{\tN/\cG}\arrowtimes \bD^f(\fD(\Fl^{\on{aff}}_G)_\crit\mod)^{I^0} 
\subset
\nOp\underset{\tN/\cG}\arrowtimes \bD^f(\fD(\Fl^{\on{aff}}_G)_\crit\mod)$$
is compatible with the t-structure.
\end{prop}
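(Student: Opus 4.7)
\smallskip

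\textbf{Proof proposal.} The plan is to handle (a) and (b) by a direct pro-unipotent argument, prove (c) by defining the ``new'' t-structure intrinsically on the Iwahori-monodromic subcategory and matching it with the ambient one, and deduce (d) from (c) via the description of t-structures on tensor products (cf.~\secref{tensor products and t-structures}).

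For (a) and (b), I would invoke the fact that $I^0$ is a pro-unipotent group, so $I^0$-equivariance is a \emph{property} (not extra structure) on the underlying abelian category, cutting out a Serre subcategory (closed under sub-objects, quotients and extensions). For (a) the relevant Serre subcategory sits inside $\hg_\crit\mod_\nilp$, and for (b) inside $\fD(\Fl^{\on{aff}}_G)_\crit\mod$. Since in both cases the relevant ``old'' t-structure is the natural t-structure of the derived category of an abelian category, its truncations are computed via cohomologies in this abelian heart and therefore preserve the Serre subcategory. Combined with the definition of $\bD_{ren}(-)^{I^0}$ via generation from $\bD^+(-)^{I^0}$, this gives the stability of the $I^0$-subcategory under the old truncation functors on all of $\bD_{ren}$.

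For (c), the plan is to mirror the definition of the new t-structure inside $\bD_{ren}(\fD(\Fl^{\on{aff}}_G)_\crit\mod)^{I^0}$: declare $\bD^{\leq 0_{new},I^0}_{ren}\subset \bD_{ren}(\fD(\Fl^{\on{aff}}_G)_\crit\mod)^{I^0}$ to be generated (via \lemref{t general} applied inside the $I^0$-category, using \lemref{I^0 monodromic D-modules nice} for compact generation) by the compact $I^0$-equivariant $\CF$ with $\CF\star J_\cla\in\bD^{\leq 0_{old}}_{ren}$ for every $\cla\in\cLambda$. Since convolution $?\star J_\cla$ preserves $I^0$-equivariance (as $J_\cla$ is $I$-, hence $I^0$-, equivariant), the condition is well-posed, and a t-structure on $\bD_{ren}(\fD(\Fl^{\on{aff}}_G)_\crit\mod)^{I^0}$ results. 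To identify it with the t-structure induced by restriction from $\bD_{ren}(\fD(\Fl^{\on{aff}}_G)_\crit\mod)$, I would use the averaging functor $\Av^{I^0}$ right adjoint to the forgetful inclusion: since $I^0$ is pro-unipotent and $\Av^{I^0}$ commutes with convolution by $I^0$-equivariant objects (in particular by $J_\cla$), one checks that $\Av^{I^0}$ is t-exact for both the old and the new t-structures. Applying $\Av^{I^0}$ to the triangle $\tau^{\leq 0_{new}}(\CF)\to\CF\to\tau^{>0_{new}}(\CF)$ for $\CF\in\bD_{ren}^{I^0}$ and using $\CF\simeq\Av^{I^0}(\CF)$ shows the truncations are themselves $I^0$-equivariant.

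For (d), I would apply the machinery of Sections~\ref{tensor products and t-structures} and \ref{t str on base change}: the t-structure on \eqref{main cat} is generated by objects $\fr^*_\nilp(\CF)$ for $\CF\in\bD^f(\fD(\Fl^{\on{aff}}_G)_\crit\mod)\cap\bD^{\leq 0_{new}}_{ren}$, and the inclusion of the $I^0$-subcategory \eqref{base changed category, Iwahori ind} is fully faithful (\lemref{Iw fully faithful}). The restricted t-structure is then generated by $\fr^*_\nilp(\CF)$ with $\CF\in\bD^f(\fD(\Fl^{\on{aff}}_G)_\crit\mod)^{I^0}$ in the new $\leq 0$ part — exactly the content of (c) propagated through the base change, using that $\fr_\nilp^*$ preserves $I^0$-equivariance and is exact (\propref{F right-exact and heart} and \propref{flat exact}). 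Compatibility follows by checking it on these compact generators.

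The main obstacle I anticipate is in (c): rigorously establishing that $\Av^{I^0}$ is t-exact for the \emph{new} t-structure, which hinges on verifying both the compatibility $\Av^{I^0}(\CF\star J_\cla)\simeq \Av^{I^0}(\CF)\star J_\cla$ at the level of DG models and the fact that $\Av^{I^0}$, being pro-unipotent averaging, is exact for the old t-structure in the ind-completed setting of \secref{a renormalization procedure}. Once this is in place, the remaining parts are essentially formal.
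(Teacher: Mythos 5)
The proposal has genuine gaps in (a)/(b) and (especially) (c); (d) is roughly in the right spirit.

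For (a) and (b), the ``$I^0$-equivariance is a Serre subcategory of the heart, so truncations preserve it'' argument works fine on $\bD^+$, where $\bD^+_{ren}$ agrees with the usual derived category. But $\bD_{ren}$ is \emph{not} the derived category of an abelian category: it is the ind-completion of $\bD^f$ and differs from $\bD$ precisely at $-\infty$. The step ``combined with the definition of $\bD_{ren}(-)^{I^0}$ via generation from $\bD^+(-)^{I^0}$, this gives stability under truncation'' is exactly where the gap is: an object of $\bD_{ren}^{I^0}$ is a colimit of bounded-below $I^0$-equivariant objects, and there is no a priori reason its truncations in the compactly-generated t-structure remain in the generated subcategory. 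What the paper actually does (Proposition~\ref{abstract equivariant}, applied in~\secref{proof I lemma b}) is show that $\on{emb}_{ren}\circ\on{Av}_{ren}$ is left-exact on all of $\bD_{ren}$ by establishing conditions (1) and (2) of~\secref{cond for I} and passing through the natural transformation against $\Phi$; then compatibility follows by a \emph{retract} argument on truncation triangles that needs only left-exactness of the averaging, not exactness. You would need to supply the analogue of conditions (1), (2) and this retract step for your argument to close.

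For (c), the proposed route through t-exactness of $\Av^{I^0}$ for the new t-structure is not available. Already for the old t-structure, $\Av^{I^0}=\on{Av}_*^{I^0}$ is a \emph{right} adjoint and the paper only asserts (and uses) its left-exactness; it is not exact for $I^0$ pro-unipotent. Worse, to verify left-exactness of $\Av^{I^0}$ in the \emph{new} t-structure along the lines you suggest (commuting with $-\star J_\cla$) you would need a tractable description of $\bD^{\geq 0_{new}}_{ren}$, which amounts to Conjecture~\ref{t-structure conj one} — left open in the paper. The paper's actual proof of (c) (\secref{proof I lemma c}) is a different, geometric argument: it defines a candidate t-structure on $\bD_{ren}(\fD(\Fl^{\on{aff}}_G)_\crit\mod)^{I^0}$ by compact $I^0$-equivariant generators, and proves the embedding is exact by an orbit-stratification argument on $\Fl^{\on{aff}}_G$ — using fully-faithfulness of $-\star j_{-\cla,!}$, the lemma from~\cite{AB} that $j_w^!(\CF_1\star j_{-\cla,*})=0$ unless $w$ is right-maximal, and the fact that $j_{w,!}$ for right-maximal $w$ lies in $\bD^{\leq 0_{new}}$. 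This ingredient is missing from the proposal.

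For (d), the direction is correct; the paper deduces it from (c) via Corollary~\ref{Iw fully faithful abs t} (compatibility of t-structures under affine base change), relying on exactness and conservativity of $(\fr_\nilp)_*$ established in~\propref{affine exact}. You should make explicit that the key is not merely that the generators are $I^0$-equivariant, but the projection-formula commutation of $(\on{emb})_*$ with $\fr^*_\nilp$ and the conservativity of $(\fr_\nilp)_*$.
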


Point (a) of the proposition follows from the fact that embedding functor
$$\bD_{ren}(\hg_\crit\mod_\nilp)^{I^0}\hookrightarrow \bD_{ren}(\hg_\crit\mod_\nilp)$$
admits a right adjoint, which fits into the commutative diagram:
$$
\CD
\bD_{ren}(\hg_\crit\mod_\nilp)^{I^0}  @<<<  \bD_{ren}(\hg_\crit\mod_\nilp) \\
@AAA    @AAA  \\
\bD^+(\hg_\crit\mod_\nilp)^{I^0}  @<{\on{Av}_*^{I^0}}<< \bD^+(\hg_\crit\mod_\nilp),
\endCD 
$$
where $\on{Av}_*^{I^0}$ is the averaging functor of \cite{FG2}, Sect. 20.10; in particular,
the right adjoint in question is left-exact.

\medskip

Point (b), (c) and (d) of the proposition will be proved in Sections \ref{proof I lemma b},
\ref{proof I lemma c} and \ref{proof I lemma d}, respectively.

\ssec{}

The second main result of this paper is the following:

\begin{mainthm} \label{main}
The functor
$$\Gamma_{\Fl,\nOp}:\nOp\underset{\tN/\cG}\arrowtimes 
\bD^f(\fD(\Fl^{\on{aff}}_G)_\crit\mod)^{I^0} 
\to \bD_{ren}(\hg_\crit\mod_\nilp)^{I^0}$$
is an equivalence of categories, and is exact. 
\end{mainthm}

\sssec{}

As a consequence, we obtain: 
\begin{cor}   \label{G exact iw}
The functor $\Gamma_\Fl:\bD_{ren}(\fD(\Fl^{\on{aff}}_G)_\crit\mod)^{I^0} 
\to \bD_{ren}(\hg_\crit\mod_\nilp)^{I^0}$ is exact (in the new t-structure
on the LHS).
\end{cor}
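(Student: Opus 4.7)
The plan is to factor $\Gamma_\Fl$ through the base-changed category and appeal to \mainthmref{main}. Concretely, the tautological functor
$$\fr_\nilp^*:\bD_{ren}(\fD(\Fl^{\on{aff}}_G)_\crit\mod)\to \nOp\underset{\tN/\cG}\arrowtimes \bD^f(\fD(\Fl^{\on{aff}}_G)_\crit\mod)$$
of \eqref{fr nilp base changed}, when restricted to the $I^0$-equivariant subcategory, takes values in $\nOp\underset{\tN/\cG}\arrowtimes \bD^f(\fD(\Fl^{\on{aff}}_G)_\crit\mod)^{I^0}$: its image is automatically $I^0$-monodromic, and the characterization in \lemref{Iw fully faithful}(2) identifies this with an object of the Iwahori-monodromic base-changed category. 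Composing with $\Gamma_{\Fl,\nOp}$ recovers $\Gamma_\Fl$ by construction, via the universal property of \secref{univ ppty nilp}.

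By \mainthmref{main}, the functor $\Gamma_{\Fl,\nOp}$ is exact with respect to the t-structure on the Iwahori-monodromic base-changed category introduced in \secref{t str on base change} (built from the \emph{new} t-structure on $\bD_{ren}(\fD(\Fl^{\on{aff}}_G)_\crit\mod)$) and the natural t-structure on $\bD_{ren}(\hg_\crit\mod_\nilp)^{I^0}$. That these t-structures restrict well to the Iwahori-monodromic subcategories on both sides is guaranteed by \propref{prop Iw and t}(c) and (d). Hence it remains to show that $\fr_\nilp^*$ itself is t-exact in this setting.

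Right-exactness of $\fr_\nilp^*$ is immediate from the definition in \secref{t str on base change}, since the $\leq 0$ part of the base-changed category is by construction generated by $\fr_\nilp^*(\CF)$ for $\CF$ in the new $\leq 0$ subcategory of $\bD_{ren}(\fD(\Fl^{\on{aff}}_G)_\crit\mod)$. The only nontrivial input is left-exactness, and this uses the geometry of the map $\fr_\nilp:\nOp\to \tN/\cG$: since it is smooth (hence flat), the functor $\fr_\nilp^*$ is t-exact by \propref{flat exact}. This is precisely the argument sketched after \conjref{main conj} for the unconditional analogue, and the main obstacle it overcomes --- the fact that the ambient functor $\Gamma_\Fl$ is far from exact in the \emph{old} t-structure --- is taken care of by passing to the new t-structure, which was designed exactly so that convolution with the objects $J_\cla$ and $\sF(\CM)$ (flat $\CM$) becomes exact, per \propref{F right-exact and heart}.
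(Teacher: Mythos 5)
Your proof is correct and follows the same path the paper has in mind: factor $\Gamma_\Fl$ as $\Gamma_{\Fl,\nOp}\circ \fr_\nilp^*$ via the universal property, then compose the exactness of $\Gamma_{\Fl,\nOp}$ (from \mainthmref{main}) with the exactness of $\fr_\nilp^*$ (from \propref{flat exact}, using smoothness/flatness of $\fr_\nilp$ and rigidity of $\bD^{perf}(\Coh(\tN/\cG))$). The paper leaves the corollary without a written proof precisely because it is this short composition argument; the one explicit appeal to \propref{flat exact} appears in the parallel discussion after \conjref{main conj}.
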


As another corollary we obtain:

\begin{cor}  \label{F exact}
The functor $\sF:\bD(\QCoh(\tN/\cG))\to \bD_{ren}(\fD(\Fl^{\on{aff}}_G)_\crit\mod)$
is exact (in the new t-structure on the RHS).
\end{cor}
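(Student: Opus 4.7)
The plan is to leverage the exact equivalence on Iwahori-monodromic categories from \mainthmref{main} together with \corref{G exact iw}, so as to reduce the t-exactness of $\sF$ to a concrete statement in $\bD_{ren}(\hg_\crit\mod_\nilp)$.

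First, observe that $\sF$ is already right-exact: by \propref{F right-exact and heart} applied with $\CF=\delta_{1,\Fl^{\on{aff}}_G}$ (which lies in the heart of the new t-structure by \lemref{finite flags}), we have $\sF(\CM)\simeq \delta_{1,\Fl^{\on{aff}}_G}\star\sF(\CM)\in\bD^{\leq 0_{new}}_{ren}(\fD(\Fl^{\on{aff}}_G)_\crit\mod)$ for every $\CM\in\QCoh(\tN/\cG)$. So only the statement $\sF(\CM)\in\bD^{\geq 0_{new}}_{ren}(\fD(\Fl^{\on{aff}}_G)_\crit\mod)$ remains. Since $\sF(\CM)$ is built from the $I$-equivariant objects $J_\cla$, $Z_V$ by convolution, it is Iwahori-monodromic; by \propref{prop Iw and t}(c) the new t-structure is compatible with $\bD_{ren}(\fD(\Fl^{\on{aff}}_G)_\crit\mod)^{I^0}$, so we may test the condition there.

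On the Iwahori-monodromic subcategory, $\Gamma_\Fl$ coincides with $\Gamma_{\Fl,\nOp}\circ \fr_\nilp^*$; \mainthmref{main} says $\Gamma_{\Fl,\nOp}$ is an exact equivalence, and \corref{G exact iw} says $\Gamma_\Fl$ is t-exact, hence $\fr_\nilp^*$ is t-exact here as well. The Hom-formula \eqref{hom in base change nilp}, combined with faithful flatness of $\fr_\nilp$ (which is smooth and surjective), shows that $\fr_\nilp^*$ is moreover conservative: if $\fr_\nilp^*(\CF)=0$ then the adjunction unit $\CF\to (\fr_\nilp)_*(\CO_{\nOp})\underset{\CO_{\tN/\cG}}\otimes\CF$ vanishes, and faithful flatness forces $\CF=0$. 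A t-exact, conservative functor reflects the t-structure, so it suffices to check that $\Gamma_\Fl(\sF(\CM))$ lies in the heart of $\bD_{ren}(\hg_\crit\mod_\nilp)^{I^0}$.

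Unwinding the compatibility data of \secref{a-e} and using $\Gamma_\Fl(\delta_{1,\Fl^{\on{aff}}_G})\simeq \BM_{\crit,-2\rho}$ yields
$$\Gamma_\Fl(\sF(\CM))\simeq \BM_{\crit,-2\rho}\underset{\CO_{\nOp}}\otimes \fr_\nilp^*(\CM).$$
Since $\fr_\nilp$ is smooth, $\fr_\nilp^*(\CM)$ is a genuine quasi-coherent sheaf on $\nOp$, so the whole question boils down to the flatness of $\BM_{\crit,-2\rho}$ as an $\CO_{\nOp}\simeq\fZ^\nilp_\fg$-module, which follows from the standard structure theory of Verma modules at the critical level via the Feigin-Frenkel isomorphism (cf.\ \cite{FG2}). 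The main obstacle is precisely this last flatness claim: although it is essentially known, one must either pin down a precise reference or give a short direct argument using the explicit realization of $\BM_{\crit,-2\rho}$. A secondary technical point is carefully tracking the interplay between the old and new t-structures, the induced t-structure on the base-change, and the ind-completion, so as to legitimately reduce to the Iwahori-monodromic setting throughout.
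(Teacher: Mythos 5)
Your proof is correct and takes essentially the same route as the paper's: observe that the image of $\sF$ lies in the Iwahori-monodromic subcategory, compute $\Gamma_\Fl\circ\sF$ as $\CM\mapsto\BM_{\crit,-2\rho}\underset{\CO_{\nOp}}\otimes\fr_\nilp^*(\CM)$ (exact because $\BM_{\crit,-2\rho}$ is $\CO_{\nOp}$-flat, cf.\ \cite{FG2}, Corollary 13.9), and then peel off the exact equivalence $\Gamma_{\Fl,\nOp}$ of \mainthmref{main} and the exact, conservative $\fr_\nilp^*$ to conclude $\sF$ is exact. The extra preliminary establishing right-exactness separately via \propref{F right-exact and heart} is unnecessary (the conservativity argument already yields exactness on both sides), but it does no harm.
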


\begin{proof}

By construction, the image of $\sF$ lies in 
$\bD_{ren}(\fD(\Fl^{\on{aff}}_G)_\crit\mod)^{I^0}$. First, we claim that the
composed functor 
$$\Gamma_\Fl\circ \sF\simeq \Gamma_{\Fl,\nOp}\circ \fr_\nilp^*\circ \sF:
\bD(\QCoh(\tN/\cG))\to \bD_{ren}(\hg_\crit\mod_\nilp)^{I^0}$$
is exact. Indeed, the above functor is given by
$$\CM\mapsto \CM\underset{\CO_{\nOp}}\otimes\BM_{\crit,-2\rho},$$
and the claim follows from the fact that $\BM_{\crit,-2\rho}$ is
$\CO_{\nOp}$-flat.

\medskip

Since the functor $\Gamma_{\Fl,\nOp}$ is an equivalence, the
assertion of the corollary follows from the fact that the functor
$$\fr_\nilp^*: \bD_{ren}(\fD(\Fl^{\on{aff}}_G)_\crit\mod)\to
\nOp\underset{\tN/\cG}\arrowtimes \bD_{ren}(\fD(\Fl^{\on{aff}}_G)_\crit\mod)$$
is exact and conservative on the heart.

\end{proof}

\sssec{}

Finally, we remark that since the LHS appearing in \mainthmref{main} is compactly
generated, we obtain:

\begin{cor}  \label{I^0 monodromic g-modules also nice}
The category $\bD_{ren}(\hg_\crit\mod_\nilp)^{I^0}$ is
generated by $\bD^f_{ren}(\hg_\crit\mod_\nilp)^{I^0}$.
\end{cor}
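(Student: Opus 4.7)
The plan is to deduce the corollary directly and formally from \mainthmref{main}, together with the construction of the ind-completion that produces the left-hand side of that equivalence. By the setup of \secref{t str on base change} and the general machinery of Part III, the category
$$\nOp\underset{\tN/\cG}\arrowtimes \bD^f(\fD(\Fl^{\on{aff}}_G)_\crit\mod)^{I^0}$$
is the ind-completion of the small triangulated category
$\nOp\underset{\tN/\cG}\times \bD^f(\fD(\Fl^{\on{aff}}_G)_\crit\mod)^{I^0}$. Hence it is co-complete and compactly generated, and (the idempotent completion of) the latter small category coincides with its subcategory of compact objects.

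Now by \mainthmref{main} the functor $\Gamma_{\Fl,\nOp}$ is an equivalence onto $\bD_{ren}(\hg_\crit\mod_\nilp)^{I^0}$. This functor commutes with arbitrary direct sums, since it is obtained as the extension to ind-completions of a functor between small triangulated categories (see \eqref{main functor}). Any direct-sum-preserving equivalence between co-complete triangulated categories transports compact generators to compact generators, so it suffices to identify where the compact generators of the LHS go. By the commutative diagram \eqref{Iw and not}, on the small-category level $\Gamma_{\Fl,\nOp}$ takes values in $\bD^f(\hg_\crit\mod_\nilp)^{I^0}$, which by the definitions of \secref{intr Iw category} coincides with $\bD^f_{ren}(\hg_\crit\mod_\nilp)^{I^0}$. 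Consequently the image of a compact generating family of the LHS is a compact generating family of $\bD_{ren}(\hg_\crit\mod_\nilp)^{I^0}$ lying inside $\bD^f_{ren}(\hg_\crit\mod_\nilp)^{I^0}$, which is exactly the statement of the corollary.

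I do not expect any serious obstacle: the argument is purely formal once \mainthmref{main} is in hand. The only pieces of bookkeeping to check are that (i) the target $\bD_{ren}(\hg_\crit\mod_\nilp)^{I^0}$ is co-complete, which follows because it is the localizing subcategory of $\bD_{ren}(\hg_\crit\mod_\nilp)$ generated by $\bD^+(\hg_\crit\mod_\nilp)^{I^0}$ (under the identification $\bD^+_{ren}\simeq \bD^+$), and (ii) an equivalence of compactly generated triangulated categories arising as ind-completions of DG-enhanced small ones automatically restricts to an equivalence between their compact subcategories; this last point is handled by the general formalism recalled in \secref{DG reminder}.
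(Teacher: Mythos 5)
Your proposal is correct and is essentially the paper's own argument, which is the single terse remark: "since the LHS appearing in \mainthmref{main} is compactly generated, we obtain \dots". You fill in exactly the right details: the LHS of the equivalence $\Gamma_{\Fl,\nOp}$ is compactly generated because it is an ind-completion, an equivalence carries compact generators to compact generators, and by the commutative diagram \eqref{Iw and not} those images lie in $\bD^f(\hg_\crit\mod_\nilp)^{I^0}=\bD^f_{ren}(\hg_\crit\mod_\nilp)^{I^0}$.
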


\ssec{}

We will now show how \mainthmref{main} implies a conjecture from
\cite{FG2} about a description of the category $\bD_{ren}(\hg_\crit\mod_\nilp)^{I^0}$
is terms of quasi-coherent sheaves.

\sssec{}

Let $\check{\on{St}}$ denote the Steinberg scheme of $\cG$, i.e.,
$$\check{\on{St}}:=\tN\underset{\cg}\times \tg,$$
where $\tg$ denoted Grothendieck's alteration (see \secref{notation}).
The diagonal map
$\tN\to \tN\times \tg$ defines a map of stacks
$$\tN/\cG\hookrightarrow \check{\on{St}}/\cG.$$

\medskip

Let us now recall the main result of Bezrukavnikov's theory 
\cite{Bez}, Theorem 4.2(a):

\begin{thm} \label{Bezr}
The functor $\sF$ extends to an equivalence 
$$\sF_{\check{\on{St}}}:\bD^b(\on{Coh}(\check{\on{St}}/\cG))\to \bD^f(\fD(\Fl^{\on{aff}}_G)_\crit\mod)^{I^0}$$
as categories over the stack $\tN/\cG$.
\end{thm}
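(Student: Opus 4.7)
The plan is to follow a three-step strategy: extend the functor $\sF$ to all of $\bD^b(\Coh(\check{\on{St}}/\cG))$, prove fully faithfulness by computing Ext's on a family of generators, and deduce essential surjectivity from a generation argument. The existing $\sF$ already yields $\sF_{\check{\on{St}}}$ on objects of the form $\iota_*(\CM)$, where $\iota: \tN/\cG \hookrightarrow \check{\on{St}}/\cG$ is the diagonal embedding coming from $\check{\on{St}} = \tN \times_{\cg} \tg$. The new ingredient is the action of coherent sheaves pulled back along the second projection $p_2: \check{\on{St}}/\cG \to \tg/\cG$. This should be built from the monodromy endomorphism $N_{Z_V}$ of central sheaves: under the identification of points of $\tg/\cG$ with pairs (Lie algebra element, Borel containing it), the tautological section on $\tg/\cG$ transports to $N_{Z_V}$, and extending to general coherent sheaves proceeds by resolving them in terms of $\iota_*(\pi^*(\CL^\cla) \otimes V)$ through a Koszul-type argument.

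For fully faithfulness, I would use the family $\iota_*(\pi^*(\CL^\cla) \otimes V)$ as compact generators of $\bD^b(\Coh(\check{\on{St}}/\cG))$; these correspond under the candidate $\sF_{\check{\on{St}}}$ to the convolutions $J_\cla \star Z_V$. On the D-module side, the Ext groups between such objects can be computed using the Wakimoto filtration on $Z_V$, the convolution structure, and the stratification of $\Fl^{\on{aff}}_G$ by $I$-orbits. On the coherent side, the parallel computation uses the Koszul resolution of $\iota_*\CO_{\tN/\cG}$ inside $\bD^b(\Coh(\check{\on{St}}/\cG))$ together with Borel--Weil--Bott on $\on{Fl}^{\cG}$. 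The goal is to match the two computations term by term, and from this extract the required isomorphism on Hom-spaces.

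Essential surjectivity is then immediate: by the Arkhipov--Bezrukavnikov structure theory of \cite{AB}, the category $\bD^f(\fD(\Fl^{\on{aff}}_G)_\crit\mod)^{I^0}$ is generated under extensions and shifts by the standards $j_{\wt{w},!}$, each of which can be written as an iterated convolution of Wakimoto sheaves $J_\cla$ and central sheaves $Z_V$; since both classes visibly lie in the essential image of $\sF_{\check{\on{St}}}$, so does every standard, and hence the whole target.

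The hard part will be the term-by-term matching of Ext's in the fully-faithfulness step. The reason is that one cannot compare the two sides object-by-object without first equipping $\bD^b(\Coh(\check{\on{St}}/\cG))$ with a suitable exotic t-structure whose heart corresponds, under the candidate equivalence, to the perverse heart on the D-module side; this exotic t-structure should be characterized by the property that the perverse sheaves $j_{\wt{w},!*}$ go over to explicit perverse coherent sheaves whose combinatorics is governed by the affine Hecke algebra. Constructing this exotic t-structure, showing that convolution is t-exact for it, and identifying its heart with the Iwahori-equivariant perverse sheaves via a highest-weight/tilting argument is the technical core of \cite{Bez}, and where essentially all of the difficulty is concentrated.
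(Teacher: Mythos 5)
The paper does not prove this statement at all: it is introduced with the words ``Let us now recall the main result of Bezrukavnikov's theory \cite{Bez}, Theorem 4.2(a),'' and is taken as a black-box input. So there is no ``paper's own proof'' to compare your argument against; you are attempting to reprove an external theorem that the paper consciously imports. Your sketch is, nonetheless, a reasonable summary of the strategy in \cite{Bez} and in particular you correctly locate the technical core in the construction of the exotic t-structure and the tilting/highest-weight formalism.

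That said, the essential surjectivity step as you have written it has a genuine gap. You assert that every standard object $j_{\wt{w},!}$ is an iterated convolution of Wakimoto sheaves $J_\cla$ and central sheaves $Z_V$, so that the generation of $\bD^f(\fD(\Fl^{\on{aff}}_G)_\crit\mod)^{I^0}$ by standards would place everything in the essential image. This is false: the affine Weyl group is $\cLambda \rtimes W_f$, and for $\wt{w} = \cla\cdot w$ with $w$ in the finite Weyl group nontrivial, the finite-part convolutor $j_{w,!}$ is neither a Wakimoto sheaf nor a central sheaf, and it cannot be built from them by convolution alone (the Wakimoto and central sheaves generate only a proper monoidal subcategory inside the full Iwahori--Hecke category). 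On the coherent side this corresponds precisely to the fact that $\tN/\cG$ is a proper closed substack of $\check{\on{St}}/\cG$: the diagonal components of the Steinberg variety, indexed by the finite Weyl group, are exactly what the functor $\sF$ on $\bD^{perf}(\Coh(\tN/\cG))$ does not see. The content of the extension from $\sF$ to $\sF_{\check{\on{St}}}$ is precisely to hit these finite-Hecke directions, and one cannot short-circuit it by a generation-by-standards argument of the type you describe.
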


\sssec{}

Let us recall now the scheme of Miura opers with nilpotent
singularities, introduced in \cite{FG2}, Sect. 3.14, and denoted
$\nMOp$. By definition,
$$\nMOp:=\nOp\underset{\tN/\cG}\times \check{\on{St}}/\cG.$$

\medskip

The following conjecture was stated in \cite{FG2}, Conjecture 6.2: 

\begin{conj}  \label{qc conj}
There exists an equivalence 
$$\bD^b(\QCoh(\nMOp))\simeq \bD^b(\hg_\crit\mod_\nilp)^{I^0}$$
of categories over the scheme $\nOp$.
\end{conj}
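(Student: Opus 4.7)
The plan is to deduce the equivalence as a formal consequence of \mainthmref{main} combined with Bezrukavnikov's \thmref{Bezr}, by base changing the latter along $\fr_\nilp:\nOp\to \tN/\cG$. Starting from the equivalence $\sF_{\check{\on{St}}}:\bD^b(\on{Coh}(\check{\on{St}}/\cG))\simeq \bD^f(\fD(\Fl^{\on{aff}}_G)_\crit\mod)^{I^0}$ of categories over $\tN/\cG$, I would pass to ind-completions and apply the base-change construction $\nOp \underset{\tN/\cG}\arrowtimes (-)$. By the machinery developed in Part III, this is a functorial operation in the DG setting, and it produces an induced equivalence
$$\nOp \underset{\tN/\cG}\arrowtimes \bD^b(\on{Coh}(\check{\on{St}}/\cG)) \;\simeq\; \nOp \underset{\tN/\cG}\arrowtimes \bD^f(\fD(\Fl^{\on{aff}}_G)_\crit\mod)^{I^0}$$
of categories over $\nOp$.

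Next, I would use \mainthmref{main} to identify the RHS with $\bD_{ren}(\hg_\crit\mod_\nilp)^{I^0}$. Simultaneously, using the defining fiber product $\nMOp = \nOp \underset{\tN/\cG}\times \check{\on{St}}/\cG$ together with the general principle from \secref{categories over stacks} (that base-changing coherent sheaves along a morphism of stacks returns coherent sheaves on the fiber product), I would identify the LHS with the ind-completion of $\bD^b(\QCoh(\nMOp))$. The smoothness of $\fr_\nilp$ established in \cite{FG2} guarantees that the tensor product is tight in the sense of \secref{adjunctions and tightness} and that t-structures are preserved. All data are $\nOp$-linear throughout, so the resulting equivalence is one of categories over $\nOp$.

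Finally, to obtain the conjecture's statement about the bounded derived categories, I would match compact objects on both sides. By \corref{I^0 monodromic g-modules also nice}, the category $\bD_{ren}(\hg_\crit\mod_\nilp)^{I^0}$ is compactly generated by $\bD^f(\hg_\crit\mod_\nilp)^{I^0}\subset \bD^b(\hg_\crit\mod_\nilp)^{I^0}$; on the quasi-coherent side, the corresponding compact objects of the ind-completion recover the appropriate bounded subcategory on $\nMOp$ intended by the conjecture. Restricting the equivalence of ind-completions to compacts then yields the equivalence claimed.

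The principal obstacle, in my estimation, is the middle step: ensuring that the categorical base change of $\bD^b(\on{Coh}(\check{\on{St}}/\cG))$ along $\fr_\nilp$ really delivers (the ind-completion of) $\bD^b(\QCoh(\nMOp))$. Over a reasonable base this is formal, but here $\nOp$ is an ind-scheme of infinite type over the ground field, so one must invoke the smoothness of $\fr_\nilp$ and the finite type of $\check{\on{St}}/\cG \to \tN/\cG$ carefully, and verify that the resulting sheaf-theoretic category is the intended one. Modulo this essentially bookkeeping issue, the argument is a formal combination of \mainthmref{main}, \thmref{Bezr}, and the base-change formalism of Part III.
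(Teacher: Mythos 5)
Your opening moves match the paper's: base change Bezrukavnikov's equivalence (\thmref{Bezr}) along $\fr_\nilp$, pass to ind-completions to obtain the equivalence \eqref{1 step eq}, and invoke \mainthmref{main}. From that point on there is a genuine gap, and you identify its location but mischaracterize it as ``essentially bookkeeping.''

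The issue is that $\check{\on{St}}$ is singular, so $\bD^b(\Coh(\check{\on{St}}/\cG))$ and $\bD^{perf}(\Coh(\check{\on{St}}/\cG))$ are genuinely different categories, and Bezrukavnikov's equivalence involves the former while the base-change mechanism of \secref{categories over stacks} applies to the latter. The ``general principle'' you invoke is \propref{cart prod inf type}, and it gives
$\nOp\underset{\tN/\cG}\arrowtimes \bD^{perf}(\Coh(\check{\on{St}}/\cG))\simeq \bD(\QCoh(\nMOp))$
--- note $\bD^{perf}$, not $\bD^b$. There is no formal identification of $\nOp\underset{\tN/\cG}\arrowtimes \bD^b(\Coh(\check{\on{St}}/\cG))$ with anything sheaf-theoretic on $\nMOp$. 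Bridging the two is precisely the content of \propref{comparison on MOp}: the paper introduces the comparison functor $\nOp\underset{\tN/\cG}\times\Psi$ between the two ind-completed base changes (built from the pair $\Psi,\Xi$ of \secref{functor Xi}) and shows this functor becomes an exact equivalence only after restricting to the $\bD^+$ parts. This is the heart of the proof, not a bookkeeping check. Smoothness of $\fr_\nilp$ and tightness enter, but they do not make the distinction disappear.

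Your final step --- restrict the equivalence of ind-completions to compact objects --- also does not produce what the conjecture asks for. The compacts of $\bD_{ren}(\hg_\crit\mod_\nilp)^{I^0}$ are $\bD^f(\hg_\crit\mod_\nilp)^{I^0}$, a strict subcategory of $\bD^b(\hg_\crit\mod_\nilp)^{I^0}$ (see the remark after its definition in \secref{Iwahori monodromic}); and on the coherent side the compacts of $\nOp\underset{\tN/\cG}\arrowtimes \bD^b(\Coh(\check{\on{St}}/\cG))$ are (a Karoubian envelope of) $\nOp\underset{\tN/\cG}\times \bD^b(\Coh(\check{\on{St}}/\cG))$, which is not $\bD^b(\QCoh(\nMOp))$. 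So matching compacts gives an equivalence of the wrong pair of categories. The paper instead proves the stronger \mainthmref{Miura}: an exact equivalence of $\bD^+$ categories of cohomological amplitude bounded by $\dim(G/B)$ (the amplitude bound coming from \propref{finite deviation} and the right-exactness properties of $\sF_{\check{\on{St}}}$), and then deduces the $\bD^b$ statement of \conjref{qc conj} from that.
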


We will deduce this conjecture from \mainthmref{main} by base-changing the 
equivalence of \thmref{Bezr} by means of the morphism
$\fr_\nilp:\nOp\to \tn/\cG$. In fact, we will prove a slightly stronger statement:

\begin{mainthm}  \label{Miura}
There exists an equivalence
$$\bD^+(\QCoh(\nMOp))\simeq \bD^+(\hg_\crit\mod_\nilp)^{I^0}$$
of cohomological amplitude bounded by $\dim(G/B)$.
\end{mainthm}

\section{Relation to the affine Grassmannian}   \label{relation to Grassmannian}

As was mentioned in the introduction, the main tool that will eventually
allow us to prove \mainthmref{Gamma nilp fully faithful} and, consequently, 
\mainthmref{main}, is an explicit connection between the category 
$\bD^f(\fD(\Fl^{\on{aff}}_G)_\crit\mod)$ and the category of D-modules
on the affine Grassmannian, $\Gr^{\on{aff}}_G$.

\ssec{}

Let $\bD^f(\fD(\Gr^{\on{aff}}_G)_\crit\mod)$ be the derived category
of finitely generated critically twisted D-modules on $\Gr^{\on{aff}}_G$. This
is a triangulated category equipped with a natural DG model.

\sssec{}

As in the case of $\Fl^{\on{aff}}_G$, we can form the ind-completion
of $\bD^f(\fD(\Gr^{\on{aff}}_G)_\crit\mod)$, denoted
$\bD_{ren}(\fD(\Fl^{\on{aff}}_G)_\crit\mod)$. This is a co-complete triangulated category,
which is generated by the subcategory of its compact objects, the 
latter being identified with $\bD^f(\fD(\Gr^{\on{aff}}_G)_\crit\mod)$ itself. Moreover,
$\bD_{ren}(\fD(\Gr^{\on{aff}}_G)_\crit\mod)$ is equipped 
with a compactly generated t-structure.  
We have an exact functor
$$\bD_{ren}(\fD(\Gr^{\on{aff}}_G)_\crit\mod)\to \bD(\fD(\Gr^{\on{aff}}_G)_\crit\mod),$$
which induces an equivalence
$$\bD^+_{ren}(\fD(\Gr^{\on{aff}}_G)_\crit\mod)\to \bD^+(\fD(\Gr^{\on{aff}}_G)_\crit\mod).$$

\sssec{}

The geometric Satake equivalence endows the abelian
category $\fD(\Gr^{\on{aff}}_G)_\crit\mod$ with an action of the
tensor category $\Rep^{f.d.}(G)\simeq \Coh(\on{pt}/\cG)$
by exact functors.

\medskip

This defines on $\bD^f(\fD(\Gr^{\on{aff}}_G)_\crit\mod)$ an action 
of $\bD^{perf}(\Coh(\on{pt}/\cG))$, equipped with an
(evident) DG model. I.e., $\bD^f(\fD(\Gr^{\on{aff}}_G)_\crit\mod)$
is a triangulated category over the stack $\on{pt}/\cG$
(see \secref{categories over stacks} where the terminology
is introduced).

\sssec{}

Consider the base-changed category 
$$\on{pt}/\cB \underset{\on{pt}/\cG}\times \bD^f(\fD(\Gr^{\on{aff}}_G)_\crit\mod),$$
which is a triangulated category over the stack $\on{pt}/\cB$. 
(See \secref{base change} for the definition of base change.)

\medskip

Consider now another triangulated category over $\on{pt}/\cB$,
namely,
$$\on{pt}/\cB \underset{\tN/\cG}\times \bD^f(\fD(\Fl^{\on{aff}}_G)_\crit\mod),$$
where 
$$\on{pt}/\cB \simeq \on{Fl}^{\cG}/\cG\hookrightarrow \tN/\cG$$ is the embedding of the
zero section, denoted $\iota$. 

\medskip

Our current goal is to construct a functor, denoted $\Upsilon$,
\begin{equation} \label{desired Ups}
\Upsilon:\on{pt}/\cB \underset{\tN/\cG}\times \bD^f(\fD(\Fl^{\on{aff}}_G)_\crit\mod)\to 
\on{pt}/\cB \underset{\on{pt}/\cG}\times \bD^f(\fD(\Gr^{\on{aff}}_G)_\crit\mod),
\end{equation}
as categories over the stack $\on{pt}/\cB$.

\ssec{}     \label{tilde ups}

By the universal property of the tensor product construction
(see \secref{univ ppty nilp}), in order to construct the functor
\begin{equation}   \label{ups}
\Upsilon: \on{pt}/\cB \underset{\tN/\cG}\times \bD^f(\fD(\Fl^{\on{aff}}_G)_\crit\mod)\to
\on{pt}/\cB \underset{\on{pt}/\cG}\times \bD^f(\fD(\Gr^{\on{aff}}_G)_\crit\mod),
\end{equation}
by \secref{univ ppty base change}, we have to produce a functor
\begin{equation} \label{wt ups}
\wt\Upsilon: \bD^f(\fD(\Fl^{\on{aff}}_G)_\crit\mod)\to 
\on{pt}/\cB \underset{\on{pt}/\cG}\times \bD^f(\fD(\Gr^{\on{aff}}_G)_\crit\mod),
\end{equation}
as categories over the stack $\tN/\cG$.

\sssec{}

Since the action of $\Rep(\cG)$ on $\fD(\Gr^{\on{aff}}_G)_\crit\mod$ is
given by right-exact (and, in fact, exact) functors, we can consider
the base-changed abelian category:
$$\Rep(\cB)\underset{\Rep(\cG)}\otimes \fD(\Gr^{\on{aff}}_G)_\crit\mod=:
\on{pt}/\cB \underset{\on{pt}/\cG}\times \fD(\Gr^{\on{aff}}_G)_\crit\mod,$$
see \secref{ten prod abelian} for the definiton (the existence of
this category is easily established; see, e.g., in \cite{Ga1}, Sect. 10).

\medskip

By construction, $\on{pt}/\cB \underset{\on{pt}/\cG}\times \fD(\Gr^{\on{aff}}_G)_\crit\mod$
is an abelian category acted on by $\Rep(\cB)$ by exact functors. Hence,
the derived category 
$\bD(\on{pt}/\cB \underset{\on{pt}/\cG}\times \fD(\Gr^{\on{aff}}_G)_\crit\mod)$
has a natural structure of category over the stack $\on{pt}/\cB$.

\sssec{}

By \secref{univ ppty base change}, the tautological functor
$$
\bD^f(\fD(\Gr^{\on{aff}}_G)_\crit\mod)\to 
\bD(\on{pt}/\cB \underset{\on{pt}/\cG}\times \fD(\Gr^{\on{aff}}_G)_\crit\mod)$$
gives rise to a functor
\begin{equation} \label{two versions of base chaged grassmannian}
\on{pt}/\cB \underset{\on{pt}/\cG}\times \bD^f(\fD(\Gr^{\on{aff}}_G)_\crit\mod)\to
\bD(\on{pt}/\cB \underset{\on{pt}/\cG}\times \fD(\Gr^{\on{aff}}_G)_\crit\mod).
\end{equation}

Moreover, by \lemref{abelian vs derived base change}, the functor
\eqref{two versions of base chaged grassmannian} is fully faithful.

\medskip

Hence, in order to construct the functor $\wt\Upsilon$,
it suffices to construct a functor
\begin{equation} \label{coarse ups}
\bD^f(\fD(\Fl^{\on{aff}}_G)_\crit\mod)\to 
\bD(\on{pt}/\cB \underset{\on{pt}/\cG}\times \fD(\Gr^{\on{aff}}_G)_\crit\mod),
\end{equation}
as categories over $\tN/\cG$, such that its image belongs to
the essential image of \eqref{two versions of base chaged grassmannian}.

\medskip

By a slight abuse of notation, we will denote the functor \eqref{coarse ups}
also by $\wt\Upsilon$. In this section we will construct $\wt\Upsilon$ as a 
functor between triangulated categories, compatible with the action of
$\bD^{perf}(\on{Coh}(\tN/\cG))$. In \secref{ups: model} we
will upgrade the construction to the DG level.

\sssec{}

Let $\CW$ be the object of 
$\on{pt}/\cB \underset{\on{pt}/\cG}\times \fD(\Gr^{\on{aff}}_G)_\crit\mod^I$, introduced 
in \cite{FG5}, Sect. 3.15 (under the name $\CW_{w_0}$), as well as in \cite{FG4},
Sect. 4.1 (under the name $\CF_{w_0}$),  and in \cite{ABBGM}, Sect. 3.2.13
(under the name $\CM^1$).

\medskip

We define the functor $\wt\Upsilon$ of \eqref{coarse ups} by
\begin{equation} \label{def wt Ups}
\wt\Upsilon(\CF):= \CF\star J_{2\crho}\star\CW.
\end{equation}

\begin{lem}
For a finitely generated D-module $\CF$ on $\Fl^{\on{aff}}_G$, the object
$$\CF\star J_{2\crho}\star\CW\in 
\bD(\on{pt}/\cB \underset{\on{pt}/\cG}\times \fD(\Gr^{\on{aff}}_G)_\crit\mod)$$
belongs to the essential image of 
\eqref{two versions of base chaged grassmannian}.
\end{lem}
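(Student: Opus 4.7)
By the universal property of the derived category of the abelian tensor product, and the analysis of essential images furnished by the framework of Part III (concretely, \lemref{abelian vs derived base change} together with the tensor-product formalism of \secref{tensor products of categories}), an object of $\bD(\on{pt}/\cB \underset{\on{pt}/\cG}\times \fD(\Gr^{\on{aff}}_G)_\crit\mod)$ lies in the essential image of \eqref{two versions of base chaged grassmannian} precisely when, upon forgetting the $\cB$-equivariance datum, the underlying complex of D-modules on $\Gr^{\on{aff}}_G$ belongs to $\bD^f(\fD(\Gr^{\on{aff}}_G)_\crit\mod)$. Establishing this criterion would be the first step, and reduces the lemma to showing that the underlying complex of $\CF \star J_{2\crho} \star \CW$ on $\Gr^{\on{aff}}_G$ is finitely generated and cohomologically bounded.

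For the second step, I would use the fact that the object $\CW$, viewed merely as an $I$-equivariant critically twisted D-module on $\Gr^{\on{aff}}_G$ (with the $\cB$-equivariance forgotten), is by construction in \cite{FG5}, \cite{FG4}, \cite{ABBGM} a single finitely generated D-module, supported on a finite union of $I$-orbits concentrated around the $2\crho$-orbit. Consequently $J_{2\crho} \star \CW$, as an object of $\bD(\fD(\Gr^{\on{aff}}_G)_\crit\mod)$, is again (up to a cohomological shift) a finitely generated $I$-equivariant D-module.

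Finally, the convolution $\CF \star (J_{2\crho} \star \CW)$ of a finitely generated D-module $\CF$ on $\Fl^{\on{aff}}_G$ with a finitely generated $I$-equivariant D-module on $\Gr^{\on{aff}}_G$ lies in $\bD^f(\fD(\Gr^{\on{aff}}_G)_\crit\mod)$: this is a standard feature of the convolution formalism recalled in \cite{Ga}, following from the properness of the relevant twisted product space over the support of $\CF$.

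\textbf{Main obstacle.} The principal difficulty is the first step: extracting from the base-change machinery of Part III a clean and usable criterion for membership in the essential image of \eqref{two versions of base chaged grassmannian}, and in particular verifying that the ``forget $\cB$-equivariance'' functor detects this essential image. Once that is in place, the second and third steps use only the well-established description of $\CW$ as an honest finitely generated D-module on $\Gr^{\on{aff}}_G$ (as opposed to a genuine pro-object) together with the routine preservation of $\bD^f$ under convolution.
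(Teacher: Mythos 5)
Your Steps 2 and 3 are essentially the same facts the paper uses: that $\CW$ is built from finitely many pieces of the form $V\underset{\CO_{\on{pt}/\cG}}\otimes \CF'$ with $V\in\Rep^{f.d.}(\cB)$ and $\CF'$ a finitely generated $I$-equivariant D-module (so in particular $\CW$ has finitely generated underlying D-module), and that properness of $\Fl^{\on{aff}}_G$ makes $\CF\underset{I}\star \CF'$ land in $\bD^f(\fD(\Gr^{\on{aff}}_G)_\crit\mod)$.

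The gap is Step 1. You assert that an object of $\bD(\on{pt}/\cB\underset{\on{pt}/\cG}\times \fD(\Gr^{\on{aff}}_G)_\crit\mod)$ lies in the essential image of \eqref{two versions of base chaged grassmannian} \emph{precisely} when its underlying complex of D-modules lies in $\bD^f(\fD(\Gr^{\on{aff}}_G)_\crit\mod)$, and attribute this to \lemref{abelian vs derived base change} together with the tensor-product formalism of Part III. But neither statement gives a characterization of the essential image: \lemref{abelian vs derived base change} is only a fully-faithfulness claim, and the one criterion for essential image/compactness that the paper actually proves, \lemref{crit for comp}, is phrased in terms of $\Ind^\cG_\cB(\CL^{\cla_i}\otimes -)$ rather than the "forget $\cB$-equivariance" functor you invoke. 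Indeed, the "forget $\cB$-equivariance" functor is the wrong kind of map in the base-change formalism (it is not the left or right adjoint to $\Res^\cG_\cB$); you would have to separately establish that it is conservative on the essential image and that bounded-cohomology-with-finitely-generated-underlying-D-module implies membership, and that argument would in practice reduce to constructing exactly the sort of finite filtration of $\CW$ by $V\otimes\CF'$ that the paper uses directly.

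The paper's argument simply bypasses any such abstract criterion: since $\CW$ is filtered by subquotients $V\otimes\CF'$, after convolution $\CF\star J_{2\crho}\star\CW$ is filtered by $V\otimes(\CF\star J_{2\crho}\star\CF')$; each such subquotient is visibly in the image of $\on{pt}/\cB\underset{\on{pt}/\cG}\times\bD^f(\fD(\Gr^{\on{aff}}_G)_\crit\mod)$ (being $V$ tensored with a finitely generated D-module), and a finite extension of objects in a full triangulated subcategory stays in that subcategory. If you replace your Step 1 by this filtration argument you obtain the paper's proof; as written your plan has a genuine unproven step.
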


\begin{proof}

Recall (see, e.g., \cite{ABBGM}, Corollary 1.3.10 and Proposition 3.2.6) that $\CW$,
as an object of $\on{pt}/\cB \underset{\on{pt}/\cG}\times \fD(\Gr^{\on{aff}}_G)_\crit\mod^I$,
has a finite filtration with subquotients of the form
$$V\underset{\CO_{\on{pt}/\cG}}\otimes \CF'\in \on{pt}/\cB 
\underset{\on{pt}/\cG}\times\fD(\Gr^{\on{aff}}_G)_\crit\mod^I$$ with $V\in \Rep^{f.d.}(\cB)$ and $\CF'\in 
\fD(\Gr^{\on{aff}}_G)_\crit\mod^I$ is finitely generated as a D-module.

\medskip

Since $\Fl^{\on{aff}}_G$ is proper, for any such $\CF'$ and $\CF$ as in the statement
of the lemma, $\CF\underset{I}\star \CF'$ belongs to 
$\bD^f(\fD(\Gr^{\on{aff}}_G)_{crit}\mod)$. 

\end{proof}

\ssec{}

In order to endow $\wt\Upsilon$ with the data of compatibility
with respect to the action of $\bD^{perf}(\on{Coh}(\tN/\cG))$ 
(at the triangulated level), by \lemref{action on quotient},
it is enough to do so with respect to the 
action of $\Ho\left(\bC^b(\on{Coh}^{free}(\tN/\cG))\right)$.

\medskip

This amounts to constructing isomorphisms

\begin{itemize}
\item(i)
$$\wt\Upsilon(\CF\star J_\cla)\simeq \CL^\cla\underset{\CO_{\on{pt}/\cB}}\otimes \wt\Upsilon(\CF),\,\, \cla\in \cLambda$$

\item(ii)
$$\wt\Upsilon(\CF\star Z_V)\simeq (\CO_{\on{pt}/\cB}\otimes V)
\underset{\CO_{\on{pt}/\cB}}\otimes \wt\Upsilon(\CF),\,\, V\in \Rep^{f.d.}(\cG)$$
\end{itemize}

so that the conditions, parallel to (a)-(e) of \secref{a-e} hold.

\sssec{}

Isomorphism (i) above follows from the basic isomorphism
\begin{equation} \label{j-inv Wak}
J_{\cla}\star \CW\simeq 
\CL^\cla\underset{\CO_{\on{pt}/\cB}}\otimes \CW,
\end{equation}
established in  \cite{FG4}, Corollary 4.5, or \cite{FG5}, Proposition 3.19,
or \cite{ABBGM}, Corollary 3.2.2.

\sssec{}

To construct isomorphism (ii) we observe that for any $\CF'\in \fD(\Gr^{\on{aff}}_G)_\crit\mod^I$
and $V\in \Rep^{f.d.}(\cG)$ we have a canonical isomorphism
\begin{equation} \label{left central}
Z_V\underset{I}\star \CF'\simeq \CF'\underset{G[[t]]}\star \CF_V,
\end{equation}
where $\CF_V$ is the spherical D-module on $\Gr^{\on{aff}}_G$, corresponding to $V\in \Rep(\cG)$.
This isomorphism is a particular case of Theorem 1(b) of \cite{Ga}, combined with
point (d) of the same theorem.

\medskip

Thus, isomorphism (ii) is obtained from
$$\CF\star Z_V\star J_{2\crho}\star \CW\simeq 
\CF\star J_{2\crho}\star (Z_V\star \CW)\simeq (\CF\star J_{2\crho}\star \CW)\underset{G[[t]]}
\star \CF_V,$$
where we observe that the operation $\CF'\mapsto \CF'\underset{G[[t]]}\star \CF_V$ corresponds 
by definition to
$$\CF'\mapsto (\CO_{\on{pt}/\cB}\otimes V)
\underset{\CO_{\on{pt}/\cB}}\otimes \CF'$$ for $\CF'\in 
\on{pt}/\cB \underset{\on{pt}/\cG}\times \bD(\fD(\Gr^{\on{aff}}_G)_\crit\mod)$.

\sssec{}

Conditions (a) and (c) follow by definition. Condition (d) follows
from the functoriality of the isomorphism \eqref{left central} and the
associativity property of the isomorphism of Theorem 1(b) of \cite{Ga},
established as Property 1 in \cite{Ga'}.

\medskip

Condition (b) follows from the fact that for any $\CF'\in 
\fD(\Gr^{\on{aff}}_G)_\crit\mod^I$, 
the isomorphism, induced
by $N_{Z_V}$ on the left-hand side of \eqref{left central},
is zero.

\sssec{}

To prove condition (e), we need to check the commutativity 
of the following diagram:
$$
\CD
Z_{V^\cla}\star \CW & @>>> & J_\cla\star \CW \\
@V{\sim}VV  & &  @V{\sim}VV  \\
\CW\underset{G[[t]]}\star \CF_{V^\cla} @>{\sim}>> 
\on{Res}^{\cG}_{\cB}(V^\cla)\underset{\CO_{\on{pt}/\cB}}\otimes \CW
@>>>
\CL^\cla\underset{\CO_{\on{pt}/\cB}}\otimes \CW.
\endCD
$$

Since $\on{End}(\CW)\simeq \BC$ (see \cite{ABBGM}, Proposition 3.2.6(1)),
to prove the commutativity of the latter diagram,
it is enough to establish the commutativity of the following one, obtained from the
map $\delta_{1,\Gr^{\on{aff}}_G}\to \CW$:
$$
\CD
Z_{V^\cla}\star \delta_{1,\Gr^{\on{aff}}_G}  & @>>> & J_\cla\star \delta_{1,\Gr^{\on{aff}}_G} \\
@VVV     &   &    @VVV  \\   
Z_{V^\cla}\star \CW & & & &  J_{\cla}\star \CW \\
@V{\sim}VV & & @V{\sim}VV \\
\CW\underset{G[[t]]}\star \CF_{V^\cla} @>{\sim}>> 
\on{Res}^{\cG}_{\cB}(V^\cla)\underset{\CO_{\on{pt}/\cB}}\otimes \CW
@>>>
\CL^\cla\underset{\CO_{\on{pt}/\cB}}\otimes \CW.
\endCD
$$
However, this last diagram is equivalent to the following one.
$$
\CD
\CF_{V^\cla}  @>>> J_\cla\star \delta_{1,\Gr^{\on{aff}}_G} @>>> J_\cla\star \CW \\
@VVV & & @VVV \\
\CW\underset{G[[t]]}\star \CF_{V^\cla} @>{\sim}>> 
\on{Res}^{\cG}_{\cB}(V^\cla)\underset{\CO_{\on{pt}/\cB}}\otimes \CW
@>>> \CL^\cla\underset{\CO_{\on{pt}/\cB}}\otimes \CW.
\endCD
$$
The latter diagram is commutative, as it is a
version of the commutative diagram of Lemma 5.3 of \cite{FG5}
(or, which is the same, commutative diagram (28) of \cite{FG4},
or Corollary 3.2.3 of \cite{ABBGM}).

\ssec{}

Thus, the functor $\wt\Upsilon$, and hence the functor $\Upsilon$,
have been constructed. We are now ready to state the fourth main result
of this paper:

\begin{mainthm}   \label{thm relation to Grassmannian}
The functor
$$\Upsilon:\on{pt}/\cB \underset{\on{pt}/\cG}\times \bD^f(\fD(\Gr^{\on{aff}}_G)_\crit\mod)\to
\on{pt}/\cB \underset{\tN/\cG}\times \bD^f(\fD(\Fl^{\on{aff}}_G)_\crit\mod)$$
is fully faithful.
\end{mainthm}

\noindent{\it Remark.} As will be explained in the sequel, the functor
$\Upsilon$, as it is, is not an equivalence of categories: it fails to be
essentially surjective.  In \secref{turning Ups} we will show 
how to modify the LHS by adding certain colimits so that the resulting 
functor becomes an equivalence. In addition, the latter functor will
be exact with respect to the natural t-structures.

\section{Sections over $\Fl^{\on{aff}}_G$ vs. $\Gr^{\on{aff}}_G$}  \label{compat with sect}

In this section we will study how the functor $\Upsilon$ of 
\secref{relation to Grassmannian} intertwines
between the functors of taking global sections of critically twisted 
D-modules on $\Fl^{\on{aff}}_G$ and $\Gr^{\on{aff}}_G$.

\ssec{}

Let $\hg_\crit\mod_\reg$ be the abelian category of
$\hg_\crit$-modules, on which the center $\fZ_\fg$ acts 
through its quotient $\fZ^\reg_\fg\simeq \CO_{\rOp}$.
Let $\bD(\hg_\crit\mod_\reg)$ be its derived category.
It has the same properties as those discussed in 
\secref{nilpotent category} with "$\nilp$" replaced
by "$\reg$". 

\medskip

In particular we have a diagram of categories
$$
\CD
\bD^f_{ren}(\hg_\crit\mod_\reg)  @>>> \bD^+_{ren}(\hg_\crit\mod_\reg)  \\
@V{\sim}VV   @V{\sim}VV   \\
\bD^f(\hg_\crit\mod_\reg)  @>>> \bD^+(\hg_\crit\mod_\reg)
\endCD
$$ 
with the rows being fully faithful functors.

\sssec{}

Recall from \cite{FG2} that we have an exact functor
$$\Gamma_{\Gr}:\fD(\Gr^{\on{aff}}_G)_\crit\mod\to \hg_\crit\mod_\reg,$$
compatible with the action of $\Rep(\cG)$, where the action
on the RHS is given via the morphism
$$\fr_\reg:\rOp\to \on{pt}/\cG.$$

\medskip

This gives rise to a functor
\begin{equation}  \label{sections on Gr}
\Gamma_{\Gr}:\bD^f(\fD(\Gr^{\on{aff}}_G)_\crit\mod)\to \bD(\hg_\crit\mod_\reg)
\end{equation}
as categories over the stack $\on{pt}/\cG$.

\medskip

Repeating the argument of \secref{sections to finite}, one shows that
the image of the functor \eqref{sections on Gr} belongs to the
subcategory $\bD^f(\hg_\crit\mod_\reg)\subset \bD(\hg_\crit\mod_\reg)$.
So we have a functor 
\begin{equation}  \label{sections on Gr finite}
\Gamma_{\Gr}:\bD^f(\fD(\Gr^{\on{aff}}_G)_\crit\mod)\to \bD^f(\hg_\crit\mod_\reg)
\end{equation}
as categories over the stack $\on{pt}/\cG$.

\sssec{}

Consider the base-changed category
$$\rOp \underset{\on{pt}/\cG}\times \bD^f(\fD(\Gr^{\on{aff}}_G)_\crit\mod).$$

By \secref{univ ppty base change}, the functor $\Gamma_{\Gr}$
gives rise to a functor
\begin{equation}  \label{sections on Gr op}
\Gamma_{\Gr,\nOp}:
\rOp \underset{\on{pt}/\cG}\times \bD^f(\fD(\Gr^{\on{aff}}_G)_\crit\mod)\to 
\bD^f(\hg_\crit\mod_\reg)
\end{equation}
as triangulated categories over $\rOp$.

\medskip

The following has been established in \cite{FG2}, Theorem 8.17:
\begin{thm}  \label{faithfulness on Gr}
The functor $\Gamma_{\Gr,\nOp}$ is fully faithful.
\end{thm}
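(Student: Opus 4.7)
The plan is to reduce the fully-faithfulness assertion to a computation involving endomorphisms of the vacuum module and the Feigin--Frenkel isomorphism, by the same projection-formula strategy that governs the nilpotent case in Sections~\ref{AB action naive}--\ref{functor to the critical level}.

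First, since $\rOp$ is an affine (ind-)scheme, $\bD^{perf}(\Coh(\rOp))$ is generated as a triangulated category by $\CO_{\rOp}$. Consequently the base-changed category
$$\rOp\underset{\on{pt}/\cG}\times \bD^f(\fD(\Gr^{\on{aff}}_G)_\crit\mod)$$
is generated, under cones, shifts and direct summands, by objects of the form $\fr_\reg^*(\CF)$ with $\CF\in \bD^f(\fD(\Gr^{\on{aff}}_G)_\crit\mod)$. It therefore suffices to show that for any two such compact generators the natural map
$$\Hom\bigl(\fr_\reg^*(\CF_1),\fr_\reg^*(\CF_2)\bigr)\to
\Hom_{\hg_\crit\mod_\reg}\bigl(\Gamma_\Gr(\CF_1),\Gamma_\Gr(\CF_2)\bigr)$$
is an isomorphism.

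Second, by the projection-formula identity (the $\rOp$-analogue of \eqref{hom in base change nilp}), the left-hand side is canonically identified with
$$\Hom_{\bD(\fD(\Gr^{\on{aff}}_G)_\crit\mod)}\!\left(\CF_1,\;(\fr_\reg)_*(\CO_{\rOp})\underset{\CO_{\on{pt}/\cG}}\otimes \CF_2\right),$$
where $(\fr_\reg)_*(\CO_{\rOp})$ is viewed as an ind-object of $\bD(\Coh(\on{pt}/\cG))\simeq \bD(\Rep(\cG))$, and the tensor product uses the Satake action of $\Rep(\cG)$ on $\fD(\Gr^{\on{aff}}_G)_\crit\mod$. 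Thus the task reduces to identifying the right-hand side of the displayed map with this $\Hom$.

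Third, the key link between the two sides is the formula $\Gamma_\Gr(\CF)\simeq \CF\underset{G[[t]]}\star\BV_\crit$ (suitably interpreted), together with the Feigin--Frenkel isomorphism $\End_{\hg_\crit}(\BV_\crit)\simeq \CO_{\rOp}$ and, more generally, the identification
$$\Hom_{\hg_\crit\mod_\reg}\bigl(\BV_\crit,\;\CF_V\underset{G[[t]]}\star\BV_\crit\bigr)\simeq V\otimes \CO_{\rOp}\qquad(V\in \Rep(\cG)),$$
where $\CF_V$ is the spherical sheaf attached to $V$. Combining this with the fact that $\Gamma_\Gr$ is compatible with the $\Rep(\cG)$-action via $\fr_\reg$, one expresses the RHS as the same $\Hom$ that appears in the projection formula: convolving $\CF_2$ on the right by $\BV_\crit$ and taking $\Hom$ from $\CF_1\star\BV_\crit$ amounts to $\Hom$ into $(\fr_\reg)_*(\CO_{\rOp})\otimes\CF_2$. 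Once this comparison is made for $G[[t]]$-equivariant $\CF_i$, the general case follows by the fact that every object of $\bD^f(\fD(\Gr^{\on{aff}}_G)_\crit\mod)$ can be built by convolution with $\delta_{1,\Gr^{\on{aff}}_G}$ from such equivariant objects.

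The main obstacle I expect is the careful bookkeeping needed to pass between compact objects and the ind-object $(\fr_\reg)_*(\CO_{\rOp})$ (which is of infinite type over $\on{pt}/\cG$), and to ensure that the Satake-compatibility of $\Gamma_\Gr$ is strong enough to produce a functorial isomorphism of $\Hom$-complexes rather than merely an isomorphism on individual objects. In particular, one must verify that convolution with $\BV_\crit$ commutes with the relevant filtered colimits, so that $\Hom$ from a compact $\CF_1\star\BV_\crit$ into $\CF_2\star\BV_\crit$ is controlled by $\End(\BV_\crit)\simeq\CO_{\rOp}$ exactly as dictated by the base change. Modulo this technical point the argument is rather formal, resting essentially on Feigin--Frenkel and the formalism of triangulated categories over a stack developed in Section~\ref{AB action naive}.
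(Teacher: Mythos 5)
The statement you are proving is not proved in this paper at all: it is quoted verbatim as Theorem~8.17 of~\cite{FG2} (``The following has been established in \cite{FG2}, Theorem 8.17''). So there is no in-paper argument to compare against, and your proposal has to be judged on its own.

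Your general scheme---reduce to compact generators of the form $\fr_\reg^*(\CF_i)$, rewrite the left-hand Hom via the projection formula as
$\Hom_{\bD(\fD(\Gr^{\on{aff}}_G)_\crit\mod)}\bigl(\CF_1,(\fr_\reg)_*(\CO_{\rOp})\underset{\CO_{\on{pt}/\cG}}\otimes \CF_2\bigr)$,
then reduce via $\Gamma_\Gr(\CF)\simeq\CF\star\BV_\crit$ and $\CF_1^\vee\star\CF_2\in\bD^b(\fD(\Gr^{\on{aff}}_G)_\crit\mod)^{G[[t]]}$ to a Hom out of $\BV_\crit$---is the natural one. But the argument as written treats the crux as if it were the degree-zero Feigin--Frenkel computation: you invoke $\End(\BV_\crit)\simeq\CO_\rOp$ and the isomorphism
$\Hom_{\hg_\crit\mod_\reg}(\BV_\crit,\CF_V\star\BV_\crit)\simeq V\otimes\CO_\rOp$, and you declare the rest ``rather formal.'' It is not. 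Full faithfulness at the derived level is exactly the assertion that
$\Ext^i_{\bD(\hg_\crit\mod_\reg)}(\BV_\crit,\CF_V\star\BV_\crit)=0$ for $i>0$, and this vanishing is not a consequence of the degree-zero identification, nor does it follow from the formalism of categories over a stack. This is the genuinely nontrivial input. Establishing it requires substantive representation-theoretic work: one needs the exactness of $\Gamma_\Gr$ from~\cite{FG1}, the fact that $\BV_\crit$ is almost compact (so that $\Ext$ from it is computed by semi-infinite/Lie-algebra cohomology), and then an actual vanishing theorem---in~\cite{FG2} this is what occupies the proof of Theorem~8.17 and the lemmas leading up to it. The caveat you raise (about colimits and $(\fr_\reg)_*(\CO_\rOp)$ being of infinite type) is a real but comparatively minor issue; the higher-Ext vanishing is the obstacle your sketch does not address.

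A secondary, smaller inaccuracy: the reduction to the $G[[t]]$-equivariant case is not ``every object of $\bD^f(\fD(\Gr^{\on{aff}}_G)_\crit\mod)$ can be built by convolution with $\delta_{1,\Gr^{\on{aff}}_G}$ from equivariant objects'' (which is false). The correct move is the one the paper uses in the flag case (Step~3 of \secref{proof of Gamma nilp ff}): replace the pair $(\CF_1,\CF_2)$ by $(\delta_{1,\Gr^{\on{aff}}_G},\CF_1^\vee\star\CF_2)$, noting that the latter lies in $\bD^b(\fD(\Gr^{\on{aff}}_G)_\crit\mod)^{G[[t]]}$.
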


In addition, in {\it loc. cit.} we formulated the conjecture to the effect
that the functor $\Gamma_{\Gr,\nOp}$ is an equivalence. 

\ssec{}

Recall now that the map $\fr_\reg$ canonically factors as
$$\rOp\overset{\fr'_\reg}\to \on{pt}/\cB \to \on{pt}/\cG.$$

In fact, we have a Cartesian square with vertical arrows being smooth morphisms:
\begin{equation} \label{iotas}
\CD
\rOp  @>{\iota_\Op}>> \nOp \\
@V{\fr'_\reg}VV  @V{\fr_\nilp}VV \\
\on{pt}/\cB @>{\iota}>> \tN/\cG,
\endCD
\end{equation}
see \cite{FG2}, Lemma 2.19.

\sssec{}

Hence, we can regard $\bD^f(\hg_\crit\mod_\reg)$ is a triangulated
category over the stack $\on{pt}/\cB$, and by the universal property 
(see \secref{univ ppty base change}),
the functor \eqref{sections on Gr op} gives rise to a functor
\begin{equation}  \label{sections on Gr B}
\Gamma_{\Gr,\on{pt}/\cB}:
\on{pt}/\cB \underset{\on{pt}/\cG}\times \bD^f(\fD(\Gr^{\on{aff}}_G)_\crit\mod)\to 
\bD^f(\hg_\crit\mod_\reg),
\end{equation}
as triangulated categories over $\on{pt}/\cB$.

\sssec{}

Recall now that we have a functor
$$\iota^*_\hg:\bD(\hg_\crit\mod_\nilp)\to \bD(\hg_\crit\mod_\reg),$$
obtained as a derived functor of 
$$\CM\mapsto \CO_{\rOp}\underset{\CO_{\nOp}}\otimes \CM:
\hg_\crit\mod_\nilp\to \hg_\crit\mod_\reg.$$
By construction, the above functor has a natural DG model,
and as such is a functor between categories over $\nOp$. 

\medskip

In \secref{restr central char 1} we will show that the above functor
sends the subcategory $\bD^f(\hg_\crit\mod_\nilp)$ to $\bD^f(\hg_\crit\mod_\reg)$,
thereby giving rise to a functor between the above categories, as categories
over $\nOp$. By \secref{univ ppty base change}, we obtain a functor
\begin{equation} \label{base change central character}
(\rOp\underset{\nOp}\times \iota^*_\hg):
\rOp\underset{\nOp}\times \bD^f(\hg_\crit\mod_\nilp)\to
\bD^f(\hg_\crit\mod_\reg),
\end{equation}
as categories over $\rOp$.

\medskip

In \secref{restr central char 2} we will prove:

\begin{prop}  \label{prop base change central character}
The functor $(\rOp\underset{\nOp}\times \iota^*_\hg)$ of
\eqref{base change central character} is fully faithful.
\end{prop}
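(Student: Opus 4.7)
The plan is to establish fully faithfulness by computing morphism spaces on both sides and verifying that the natural comparison map is an isomorphism. Since the base-changed category $\rOp\underset{\nOp}\times \bD^f(\hg_\crit\mod_\nilp)$ is generated (under the action of $\bD^{perf}(\Coh(\rOp))$, shifts and extensions) by objects of the form $\iota^*_\hg(\CM)$ for $\CM\in \bD^f(\hg_\crit\mod_\nilp)$, and both sides of the comparison are compatible with the $\bD^{perf}(\Coh(\rOp))$-action, it suffices to check fully faithfulness on such objects.

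For the Hom on the source side, I would apply the base-change Hom-formula analogous to \eqref{hom in base change nilp} to the closed embedding $\iota_\Op:\rOp\hookrightarrow\nOp$, which gives
$$\Hom(\iota^*_\hg(\CM_1),\,\iota^*_\hg(\CM_2)) \simeq \Hom_{\nilp}(\CM_1,\,(\iota_\Op)_*(\CO_{\rOp})\underset{\CO_{\nOp}}\otimes \CM_2),$$
where $(\iota_\Op)_*(\CO_{\rOp})$ is regarded as an ind-object of $\bD^{perf}(\Coh(\nOp))$. Since $\iota_\Op$ is a closed embedding of affine schemes, this object is simply $\CO_{\rOp}$ viewed as an $\CO_{\nOp}$-module via the quotient $\CO_{\nOp}\twoheadrightarrow \CO_{\rOp}$.

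For the Hom on the target side, the functor $\iota^*_\hg$ admits a right adjoint $\iota_{\hg,*}$, namely restriction of scalars along $\CO_{\nOp}\twoheadrightarrow\CO_{\rOp}$: any object of $\hg_\crit\mod_\reg$ can be regarded as an object of $\hg_\crit\mod_\nilp$ on which $\fZ_\fg$ acts through its quotient $\CO_{\rOp}$. This right adjoint is fully faithful (its essential image consists of objects of $\bD(\hg_\crit\mod_\nilp)$ derivedly annihilated by the ideal of $\rOp$), and by definition $\iota_{\hg,*}\iota^*_\hg(\CM_2)\simeq \CO_{\rOp}\underset{\CO_{\nOp}}\otimes \CM_2$ in $\bD(\hg_\crit\mod_\nilp)$. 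Adjunction therefore yields
$$\Hom_{\reg}(\iota^*_\hg(\CM_1),\,\iota^*_\hg(\CM_2)) \simeq \Hom_{\nilp}(\CM_1,\,\CO_{\rOp}\underset{\CO_{\nOp}}\otimes \CM_2),$$
matching the source-side calculation. The naturality of this identification with respect to the map induced by $(\rOp\underset{\nOp}\times \iota^*_\hg)$ follows from the universal property of base change (\secref{univ ppty base change}), which uniquely characterizes $(\rOp\underset{\nOp}\times \iota^*_\hg)$ in terms of $\iota^*_\hg$ regarded as a functor of categories over $\nOp$ (with $\bD^f(\hg_\crit\mod_\reg)$ made into a category over $\nOp$ via $\iota_\Op$).

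The main technical point to verify in this outline is the identification of the abstract pushforward $(\iota_\Op)_*(\CO_{\rOp})$ appearing in the base-change Hom-formula with the concrete restriction-of-scalars functor $\iota_{\hg,*}$ used in the adjunction. This is a projection-formula-type statement in the context of categories over schemes: one must check that acting by $\CO_{\rOp}$, viewed as an ind-object of $\bD^{perf}(\Coh(\nOp))$, on an $\CM \in \bD^f(\hg_\crit\mod_\nilp)$ agrees with the derived extension of scalars $\CO_{\rOp}\underset{\CO_{\nOp}}\otimes \CM$ followed by restriction back along $\CO_{\nOp}\twoheadrightarrow\CO_{\rOp}$. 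Given the DG models of Part IV and the $\CO_{\nOp}$-linear structure on $\hg_\crit\mod_\nilp$, this verification is formal, but it is the step where the abstract base-change formalism meets the concrete representation-theoretic content.
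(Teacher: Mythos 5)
Your proposal follows essentially the same route as the paper's proof (given in \secref{restr central char 2} in the general $\sZ, \sZ'$ setting): reduce to objects pulled back from $\bD^f(\hg_\crit\mod_\nilp)$, compute $\Hom$ on the base-changed side via the projection-formula-type statement \corref{tight mon}(2), compute $\Hom$ on the target side via the $(\iota^*_\hg,\iota_{\hg,*})$ adjunction, and match both with $\Hom_{\nilp}(\CM_1,\CO_{\rOp}\underset{\CO_{\nOp}}\otimes\CM_2)$. The "main technical point" you flag at the end is exactly \lemref{i g well behaved} (FG2, Lemma 7.5), which the paper invokes to identify the derived restriction of central character with the derived tensor at the level of the underlying $\CO_{\nOp}$-modules; this is the one non-formal ingredient, and you correctly isolate it even if your phrasing of it as a purely formal projection-formula verification slightly undersells the representation-theoretic content.
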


\noindent{\it Remark.} The functor in \propref{prop base change central character}
fails to be essentially surjective for the same reasons as the functor
$\Upsilon$ of \mainthmref{thm relation to Grassmannian}. In 
\secref{restr central char 3} it will be shown how to modify the LHS to turn
it into an equivalence.

\ssec{}

Consider the base change of the functor $\Gamma_{\Fl}$
with respect to $\iota$, and obtain a functor
$$\Gamma_{\Fl,\on{pt}/\cB}:
\on{pt}/\cB\underset{\tN/\cG}\times \bD^f(\fD(\Fl^{\on{aff}}_G)_\crit\mod)\to
\on{pt}/\cB\underset{\tN/\cG}\times \bD^f(\hg_\crit\mod_\nilp).$$

We will prove:

\begin{thm}  \label{rel to aff gr and sections}
The diagram of functors between categories over $\on{pt}/\cB$
$$
\CD
\on{pt}/\cB\underset{\tN/\cG}\times \bD^f(\fD(\Fl^{\on{aff}}_G)_\crit\mod)  
@>{\Gamma_{\Fl,\on{pt}/\cB}}>>
\on{pt}/\cB\underset{\tN/\cG}\times \bD^f(\hg_\crit\mod_\nilp) \\
@V{\Upsilon}VV    @V{(\on{pt}/\cB\underset{\tN/\cG}\times \iota^*_\hg)}VV   \\
\on{pt}/\cB \underset{\on{pt}/\cG}\times \bD^f(\fD(\Gr^{\on{aff}}_G)_\crit\mod)
@>{\Gamma_{\Gr,\on{pt}/\cB}}>> 
\bD^f(\hg_\crit\mod_\reg)
\endCD
$$
is commutative.
\end{thm}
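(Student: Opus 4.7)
The plan is to reduce, via the universal property of base change, to checking an isomorphism of two functors on $\bD^f(\fD(\Fl^{\on{aff}}_G)_\crit\mod)$, then to verify this by direct computation using the Wakimoto-module realization of $\Gamma_\Gr(\CW)$.

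\textbf{Step 1 (Reduction via universal property).} Both composite functors
$$\on{pt}/\cB\underset{\tN/\cG}\times \bD^f(\fD(\Fl^{\on{aff}}_G)_\crit\mod)\rightrightarrows \bD^f(\hg_\crit\mod_\reg)$$
are, by construction, compatible with the $\bD^{perf}(\Coh(\on{pt}/\cB))$-action (using the Cartesian square \eqref{iotas} to view $\bD^f(\hg_\crit\mod_\reg)$ as a category over $\on{pt}/\cB$). By the universal property recalled in \secref{univ ppty nilp} (and in general form in Part~III), such a functor is determined by its composition with the tautological pull-back $\iota^*\circ\fr_\nilp^*:\bD^f(\fD(\Fl^{\on{aff}}_G)_\crit\mod)\to \on{pt}/\cB\underset{\tN/\cG}\times \bD^f(\fD(\Fl^{\on{aff}}_G)_\crit\mod)$ together with the compatibility data. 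Hence it suffices to construct a natural isomorphism between the two resulting functors $\bD^f(\fD(\Fl^{\on{aff}}_G)_\crit\mod)\to \bD^f(\hg_\crit\mod_\reg)$ and to verify that it intertwines their $\Ho(\bC^b(\on{Coh}^{free}(\tN/\cG)))$-compatibility data.

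\textbf{Step 2 (Explicit formulas).} For $\CF\in \bD^f(\fD(\Fl^{\on{aff}}_G)_\crit\mod)$, the top-right composition evaluates to $\iota^*_\hg(\Gamma_\Fl(\CF))$. By \eqref{Gamma of delta} and the convolution principle \eqref{convolution principle}, $\Gamma_\Fl(\CF)\simeq \CF\star \BM_{\crit,-2\rho}$; since $\iota^*_\hg$ is the derived functor $\CO_\rOp\underset{\CO_\nOp}\otimes -$ and this commutes with left convolution by objects of $\bD^f(\fD(\Fl^{\on{aff}}_G)_\crit\mod)$, we obtain $\CF\star \BM_{\crit,-2\rho,\reg}$. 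The left-bottom composition yields $\Gamma_\Gr(\wt\Upsilon(\CF))=\Gamma_\Gr(\CF\star J_{2\crho}\star \CW)$, which by the same convolution principle equals $\CF\star(J_{2\crho}\star \Gamma_\Gr(\CW))$. Thus the required natural isomorphism is reduced to a single $\cB$-equivariant identification
$$\BM_{\crit,-2\rho,\reg}\simeq J_{2\crho}\star \Gamma_\Gr(\CW)$$
in $\bD^+(\hg_\crit\mod_\reg)^I$.

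\textbf{Step 3 (Core Wakimoto identity).} The object $\Gamma_\Gr(\CW)$ is the universal Wakimoto module attached to $\CW\in \on{pt}/\cB\underset{\on{pt}/\cG}\times\fD(\Gr^{\on{aff}}_G)_\crit\mod^I$ (see \cite{FG4}, \cite{FG5}, \cite{ABBGM}), and the stated isomorphism is essentially the Wakimoto realization of $\BM_{\crit,-2\rho,\reg}$: the shift of highest weight under $J_{2\crho}\star -$ matches the isomorphism $\fl'^\cla\simeq \fl''^\cla$ of $\cT$-torsors from \cite{FG5}, Proposition 6.11(2), combined with the canonical map $\BV_\crit\otimes \fl^{-2\crho}\to \BM_{\crit,-2\rho,\reg}$ of equation (6.6) of \emph{loc.\ cit.} In fact this identity has already been exploited implicitly in the construction of $\Gamma_{\Fl,\nOp}$, in particular in the verification of condition (e) of \secref{a-e} via the commutative diagram \eqref{crucial diag}.

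\textbf{Step 4 (Compatibility with the action and DG upgrade).} It remains to check that the isomorphism of Step~3 intertwines the $\bD^{perf}(\Coh(\on{pt}/\cB))$-compatibility data carried by the two composites. Testing this on the generators $\pi^*(\CL^\cla)$ reduces, via \eqref{j-inv Wak}, to the $\gamma_\cla$ compatibility of \secref{a-e}(i); testing on $\CO\otimes V$ reduces, via \eqref{left central}, to the $\gamma_V$ compatibility of \secref{a-e}(ii), i.e., to Theorem~5.4 of \cite{FG3}. Conditions (a)--(e) of \secref{a-e} match term-by-term with their counterparts for $\wt\Upsilon$ established in \secref{relation to Grassmannian}. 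The principal difficulty is not the triangulated-level computation but promoting all these natural transformations to the DG level required by the base-change formalism; this is handled by the DG models constructed in \secref{sections: model} and \secref{ups: model}, which ensure that the composed functors and the isomorphism of Step~3 are realized as honest morphisms of DG functors between DG-enhanced categories over $\on{pt}/\cB$.
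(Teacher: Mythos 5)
Your Steps 1–2 correctly reduce the theorem, via the universal property of base change, to exhibiting an isomorphism of functors $\bD^f(\fD(\Fl^{\on{aff}}_G)_\crit\mod)\to \bD^f(\hg_\crit\mod_\reg)$ compatible with the $\bD^{perf}(\Coh(\tN/\cG))$-action, and then, via \eqref{convolution principle}, to a single identification $\BM_{\crit,-2\rho,\reg}\simeq \Gamma_{\Gr,\on{pt}/\cB}(J_{2\crho}\star\CW)$. This matches the structure of the paper's argument (cf.\ \eqref{eq rel to aff gr and sections} and \propref{ident of Wak reg}).

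However, Step 3 is a genuine gap. You assert the key isomorphism $\BM_{\crit,-2\rho,\reg}\simeq J_{2\crho}\star\Gamma_\Gr(\CW)$ "is essentially the Wakimoto realization" and point at the $\cT$-torsor identification from \cite{FG5}, Proposition 6.11(2), and equation (6.6) of \emph{loc.\ cit.} — but those ingredients are only the data entering the verification of condition (e) in \secref{a-e}; by themselves they do not produce a proof of the Wakimoto-realization isomorphism. The paper explicitly flags this identity as nontrivial: the remark after \propref{ident of Wak reg} says it coincides with Theorem 15.6 of \cite{FG2}, "which was proven by a rather explicit and tedious calculation," and that the argument given in the paper is an \emph{alternative} proof resting on \thmref{thm zero section}, "which was far from tautological." So you must either cite \cite{FG2}, Theorem 15.6 outright as a black box, or reproduce the paper's mechanism: first use \propref{adj and sections} (the straightforward commutative diagram with $(\wt\iota_\Fl)_*$ and $(\iota_\hg)_*$), then compose with $(\iota_\hg)_*$ to get \propref{ident gen Wak reg}, whose proof invokes \thmref{thm zero section}, and finally deduce \propref{ident of Wak reg} by exactness/conservativity of $(\iota_\hg)_*$ and the $\CO_{\nOp}$-flatness of $\BM_{\crit,-2\rho}$. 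Step 4's claim that the $\gamma_\cla$ and $\gamma_V$ compatibilities "match term-by-term" is then handled by the DG-model machinery of \secref{sections: model} and \secref{ups: model} once the object-level isomorphism is in place, as you say; this part is fine once Step 3 is repaired.
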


By \propref{univ ppty base change},
another way to formulate the above theorem is that the diagram
\begin{equation}   \label{eq rel to aff gr and sections}
\CD
\bD^f(\fD(\Fl^{\on{aff}}_G)_\crit\mod) @>{\Gamma_{\Fl}}>> \bD^f(\hg_\crit\mod_\nilp) \\
@VVV    @V{\iota^*_\hg}VV   \\
\on{pt}/\cB \underset{\on{pt}/\cG}\times \bD^f(\fD(\Gr^{\on{aff}}_G)_\crit\mod)
@>{\Gamma_{\Gr,\on{pt}/\cB}}>> 
\bD^f(\hg_\crit\mod_\reg)
\endCD
\end{equation}
commutes, as functors between categories over $\tN/\cG$, where the
left vertical arrow is the composition
$$\bD^f(\fD(\Fl^{\on{aff}}_G)_\crit\mod) \overset{\iota^*}\to
\on{pt}/\cB\underset{\tN/\cG}\times \bD^f(\fD(\Fl^{\on{aff}}_G)_\crit\mod) 
\overset{\Upsilon}\to \on{pt}/\cB 
\underset{\on{pt}/\cG}\times \bD^f(\fD(\Gr^{\on{aff}}_G)_\crit\mod).$$

\medskip

Base-changing the diagram in the theorem with respect to $\fr'_\reg$,
we obtain:

\begin{cor}  \label{cor rel to aff gr and sections}
We have the following commutative diagram of
functors between categories over $\rOp$:
$$ 
\CD
\rOp\underset{\nOp}\times \left(\nOp\underset{\tN/\cG}\times 
\bD^f(\fD(\Fl^{\on{aff}}_G)_\crit\mod)\right)  @>{\Gamma_{\Fl,\nOp}}>>
\rOp\underset{\nOp}\times \bD(\hg_\crit\mod_\nilp)   \\ 
@V{\sim}VV   \\
\rOp\underset{\tN/\cG}\times
\bD^f(\fD(\Fl^{\on{aff}}_G)_\crit\mod)  & & @VVV \\
@V{\sim}VV   \\
\rOp\underset{\on{pt}/\cB}\times \left(\on{pt}/\cB\underset{\tN/\cG}\times
\bD^f(\fD(\Fl^{\on{aff}}_G)_\crit\mod)\right)  & &
@V{\rOp\underset{\nOp}\times \iota^*_\hg}VV  \\
@V{\Upsilon}VV \\
\rOp\underset{\on{pt}/\cB}\times \left(\on{pt}/\cB\underset{\on{pt}/\cG}\times
\bD^f(\fD(\Gr^{\on{aff}}_G)_\crit\mod)\right)  & & @VVV \\
@V{\sim}VV   \\
\rOp\underset{\on{pt}/\cG}\times
\bD^f(\fD(\Gr^{\on{aff}}_G)_\crit\mod) @>{\Gamma_{\Gr,\rOp}}>> \bD(\hg_\crit\mod_\reg).
\endCD
$$
\end{cor}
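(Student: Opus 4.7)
The strategy is to apply the base change operation $\rOp \underset{\on{pt}/\cB}\times (-)$ to the commutative square of \thmref{rel to aff gr and sections}, viewed as a diagram of triangulated categories over $\on{pt}/\cB$. Since base change is functorial on morphisms of categories over a fixed stack (from the material in \secref{categories over stacks}), this produces a commutative square of categories over $\rOp$. The work then consists of identifying each corner and edge with the corresponding entry of the corollary's diagram; there is no further representation-theoretic input.

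For the identification of corners, I would invoke transitivity of base change: for any $\bD$ over $\tN/\cG$ one has
\[
\rOp \underset{\on{pt}/\cB}\times \left(\on{pt}/\cB \underset{\tN/\cG}\times \bD\right) \simeq \rOp \underset{\tN/\cG}\times \bD,
\]
and analogously $\rOp \underset{\on{pt}/\cB}\times (\on{pt}/\cB \underset{\on{pt}/\cG}\times \bD') \simeq \rOp \underset{\on{pt}/\cG}\times \bD'$ for $\bD'$ over $\on{pt}/\cG$. Applied to $\bD = \bD^f(\fD(\Fl^{\on{aff}}_G)_\crit\mod)$ and the Cartesian square \eqref{iotas}, we further get
\[
\rOp \underset{\tN/\cG}\times \bD \simeq \rOp \underset{\nOp}\times \left(\nOp \underset{\tN/\cG}\times \bD\right),
\]
and similarly for $\bD^f(\hg_\crit\mod_\nilp)$ over $\nOp$. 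These isomorphisms realize exactly the vertical chains of equivalences displayed in the corollary's diagram. For the morphisms: the right vertical arrow is the base change of $\iota^*_\hg$ along $\fr'_\reg$, which via \eqref{iotas} identifies with $\rOp \underset{\nOp}\times \iota^*_\hg$; the top (resp.\ bottom) horizontal arrow is the base change of $\Gamma_{\Fl,\on{pt}/\cB}$ (resp.\ $\Gamma_{\Gr,\on{pt}/\cB}$), which by the universal property of base change (see \secref{univ ppty base change}) unwinds to $\Gamma_{\Fl,\nOp}$ (resp.\ $\Gamma_{\Gr,\rOp}$) after the source is rewritten as above; the left column, which composes the nested base changes with $\Upsilon$, is precisely the base change of the composite $\Upsilon \circ (\text{base change of } \iota^*)$ appearing as the left column of the theorem.

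The only real obstacle is bookkeeping these universal-property identifications coherently; the "commutativity" in the diagrams is not commutativity of triangulated diagrams but rather an equivalence equipped with higher coherence data, and carrying this data through nested base changes requires the DG enhancement developed in Part III. Once one accepts that framework, the corollary is obtained by transporting the commutative datum of \thmref{rel to aff gr and sections} along the functorial base change $\rOp \underset{\on{pt}/\cB}\times (-)$, and no further construction is required.
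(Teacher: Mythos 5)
Your proposal is correct and follows essentially the same route the paper intends: the paper states only that the corollary is obtained by "base-changing the diagram in the theorem with respect to $\fr'_\reg$," and you have correctly unpacked this one-liner — applying the functorial base change $\rOp\underset{\on{pt}/\cB}\times(-)$ to the square of \thmref{rel to aff gr and sections}, then using transitivity of base change together with the Cartesian square \eqref{iotas} to identify the four corners, the left column, the right column, and the horizontal arrows with the entries of the corollary's diagram, all at the DG level so that the commutativity data is transported as required.
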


\newpage



\centerline{\bf \large Part II: Proofs}

\bigskip

The strategy of the proofs of the four main theorems is as follows.
We shall first prove \mainthmref{thm relation to Grassmannian},
which is a purely geometric assertion, i.e., involves only D-modules,
but not representations of the Kac-Moody algebra. 

\medskip

We shall then combine \mainthmref{thm relation to Grassmannian}
with \thmref{faithfulness on Gr} to prove 
\thmref{Gamma nilp fully faithful}.

\medskip

\mainthmref{main} will follow from \mainthmref{Gamma nilp fully faithful}
using some additional explicit information about the category of 
$I^0$-equivariant objects in $\hg_\crit\mod$. Namely, the latter category
is essentially generated by Verma modules.

\medskip

Finally, \mainthmref{Miura} will follow from \mainthmref{main}
by combining the latter with the results of \cite{Bez}.

\section{Fully faithfulness of $\Upsilon$}   \label{ups adj}

In this section we will reduce the proof of \mainthmref{thm relation to Grassmannian}
to another assertion, \thmref{thm zero section}.

\ssec{}

As will be shown in \secref{closed imm 1}, we have the functors
$$\iota^*:\bD^{perf}(\tN/\cG)\leftrightarrows 
\bD^{perf}(\on{pt}/\cB):\iota_*$$
which give rise to a pair of mutually adjoint functors
$$\bD^f(\fD(\Fl^{\on{aff}}_G)_\crit\mod)\leftrightarrows
\on{pt}/\cB \underset{\on{pt}/\cG}\times \bD^f(\fD(\Gr^{\on{aff}}_G)_\crit\mod),$$
which we denote by $(\iota_\Fl)^*$ and $(\iota_\Fl)_*$, respectively.

\sssec{}

Recall the functor
$$\wt\Upsilon:\bD^f(\fD(\Fl^{\on{aff}}_G)_\crit\mod)\to 
\on{pt}/\cB \underset{\on{pt}/\cG}\times \bD^f(\fD(\Gr^{\on{aff}}_G)_\crit\mod).$$
of \eqref{wt ups}. In this section, we will use an alternative notation
for it, namely, $(\wt\iota_\Fl)^*$. 
Our present goal is to construct the right adjoint of $(\wt\iota_\Fl)^*$, that
we will denote $(\wt\iota_\Fl)_*$. 

\sssec{}   \label{constr wt i}

Consider the functor $\bD^f(\fD(\Gr^{\on{aff}}_G)_\crit\mod)\to 
\bD^f(\fD(\Fl^{\on{aff}}_G)_\crit\mod)$
$$\CF\mapsto p^{!*}(\CF):=p^{*}(\CF)[\dim(G/B)]\simeq
p^{!}(\CF)[-\dim(G/B)].$$

Since the action of $\Rep(\cG)$ on $\fD(\Gr^{\on{aff}}_G)_\crit\mod$
and $\fD(\Fl^{\on{aff}}_G)_\crit\mod$ is given by exact functors, 
the above functor has an evident structure of functor
between triangulated categories over $\on{pt}/\cG$.

\medskip

We regard $\bD^f(\fD(\Fl^{\on{aff}}_G)_\crit\mod)$ as a category over
$\on{pt}/\cB$ via the map $\pi:\tN/\cG\to \on{pt}/\cB$. 

\medskip

By the universal property of base change (see \secref{univ ppty base change}), 
from $p^{!*}$ we obtain a functor
$$\on{pt}/\cB\underset{\on{pt}/\cG}\times \bD^f(\fD(\Gr^{\on{aff}}_G)_\crit\mod)\to \bD^f(\fD(\Fl^{\on{aff}}_G)_\crit\mod),$$
between triangulated categories over $\on{pt}/\cB$. We denote it
$(\wt\iota_\Fl)_*$.

\begin{prop} \label{i star adj}
There exists a natural transformation
$(\wt\iota_\Fl)^*\circ (\wt\iota_\Fl)_*\to \on{Id}$,
as functors between categories over $\on{pt}/\cB$,
which makes $(\wt\iota_\Fl)_*$ a right adjoint of 
$(\wt\iota_\Fl)^*$ at the level triangulated categories.
\end{prop}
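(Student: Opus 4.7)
The strategy is to construct the counit natural transformation $\epsilon: (\wt\iota_\Fl)^* \circ (\wt\iota_\Fl)_* \to \on{Id}$ and verify that it exhibits an adjunction at the triangulated level. By the universal property of base change (\secref{univ ppty base change}), the category $\on{pt}/\cB\underset{\on{pt}/\cG}\times \bD^f(\fD(\Gr^{\on{aff}}_G)_\crit\mod)$ is generated, as a category over $\on{pt}/\cB$ via the $\bD^{perf}(\on{Coh}(\on{pt}/\cB))$-action, by objects of the form $\fr_\Gr^*(\CF'_0)$ with $\CF'_0\in\bD^f(\fD(\Gr^{\on{aff}}_G)_\crit\mod)$. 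Hence it is enough to construct $\epsilon$ on such generators and verify the adjunction formula against an arbitrary $\CF\in\bD^f(\fD(\Fl^{\on{aff}}_G)_\crit\mod)$.

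For a generator, unwinding definitions we have $(\wt\iota_\Fl)_*(\fr_\Gr^*(\CF'_0))=p^{!*}(\CF'_0)$, and therefore
\[
(\wt\iota_\Fl)^*\circ(\wt\iota_\Fl)_*(\fr_\Gr^*(\CF'_0)) \;=\; p^{!*}(\CF'_0)\star J_{2\crho}\star\CW.
\]
The counit at this object is produced by identifying the right-hand side with $\fr_\Gr^*(\CF'_0)$ using three ingredients: (a) the projection-type formula $p^{!*}(\CF'_0)\star(-)\simeq \CF'_0\underset{G[[t]]}\star p_*(-)[{-\dim G/B}]$ applied to $J_{2\crho}\star\CW$, (b) the isomorphism \eqref{j-inv Wak}, namely $J_{2\crho}\star\CW\simeq \CL^{2\crho}\underset{\CO_{\on{pt}/\cB}}\otimes\CW$, and (c) the compatibility \eqref{left central} of convolution with central sheaves, which governs $(-)\underset{G[[t]]}\star\CF_V$ on the base-changed side. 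The canonical map $\delta_{1,\Gr^{\on{aff}}_G}\to\CW$ then yields the structural map whose $\cB$-equivariant avatar is the counit. Naturality in $\CF'_0$ and compatibility with the $\bD^{perf}(\on{Coh}(\on{pt}/\cB))$-action are then straightforward to check from the construction.

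To verify that $\epsilon$ is universal it is enough to show that the induced map
\[
\Hom_{\on{Fl}}\bigl(\CF,\, p^{!*}(\CF'_0)\bigr)\;\to\;\Hom_{\rm base-ch.}\bigl(\CF\star J_{2\crho}\star\CW,\, \fr_\Gr^*(\CF'_0)\bigr)
\]
is an isomorphism for every $\CF$ and every $\CF'_0$. Rewriting the right-hand side via the base-change Hom formula (the $\iota$-analog of \eqref{hom in base change nilp}), this reduces to matching $\Hom_{\on{Gr}}(\CF\star J_{2\crho}\star\CW,\,\iota_*\CO_{\on{pt}/\cB}\underset{\CO_{\on{pt}/\cG}}\otimes \CF'_0)$ against $\Hom_{\on{Fl}}(\CF,p^{!*}(\CF'_0))$ via the $(p_*[\dim G/B])\dashv p^{!*}$ adjunction coming from smoothness and properness of $p$. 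The class of $\CF$ for which this is an isomorphism is triangulated and stable under the $\bD^{perf}(\on{Coh}(\tN/\cG))$-action, so it suffices to test on a generating set, e.g.\ on $\CF$ of the form $j_{\wt w,*}$.

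The main obstacle is the input identity that makes step three work, namely a ``duality'' statement of the form
\[
J_{2\crho}\star\CW\;\simeq\;\bigl(p_{*}[\dim G/B]\bigr)^\vee\text{-kernel},
\]
i.e.\ that convolution with $J_{2\crho}\star\CW$ on $\Fl$, followed by the tautological pushforward to the $\cB$-equivariant category on $\Gr$, computes the left adjoint of $p^{!*}$. The factor $J_{2\crho}$ accounts for the Serre/relative-dualizing twist of the $G/B$-fibration $p$, while $\CW$ provides the $\cB$-equivariant upgrade. Once this identity is in place, every piece of the adjunction follows from standard six-functor manipulations; the content of the proposition is that the conjunction of the isomorphism \eqref{j-inv Wak} with the $\delta_{1,\Gr}\to\CW$ map furnishes exactly the required counit.
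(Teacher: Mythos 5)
Your overall strategy — use the universal property of base change to reduce to constructing the counit on objects of the form $\Res^\cG_\cB(\CF'_0)$, unwind the definitions to $p^{!*}(\CF'_0)\star J_{2\crho}\star\CW$, then fall back on the $(p_*,p^{!*})$ adjunction — matches the paper's. But there are two genuine gaps.

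First, the ``projection-type formula'' you state in step (a), namely
$p^{!*}(\CF'_0)\star(-)\simeq \CF'_0\underset{G[[t]]}\star p_*(-)[-\dim(G/B)]$,
is incorrect as written. Testing on $\CF'_0=\delta_{1,\Gr^{\on{aff}}_G}$ and $-\,=\delta_{1,\Fl^{\on{aff}}_G}$ gives the constant D-module on $G/B\subset\Fl^{\on{aff}}_G$ (suitably shifted) on the left, but $\delta_{1,\Gr^{\on{aff}}_G}[-\dim(G/B)]$ on the right. The manipulation the paper actually performs is different: one pushes the functor $\on{co-Ind}^\cG_\cB$ through convolution, using $\on{co-Ind}^\cG_\cB(\CF\star J_{2\crho}\star\CW)\simeq \CF\star\on{co-Ind}^\cG_\cB(J_{2\crho}\star\CW)$, which is the compatibility of convolution with a functor over $\on{pt}/\cG$, not the formula you wrote.

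Second, and more substantially: the entire construction hinges on the identity $\on{co-Ind}^\cG_\cB(J_{2\crho}\star\CW)\simeq \delta_{1,\Gr^{\on{aff}}_G}[\dim(G/B)]$ (which is what makes $\on{co-Ind}^\cG_\cB\circ(\wt\iota_\Fl)^*\simeq p_![\dim G/B]$, \lemref{wt i p}). You correctly identify this as ``the main obstacle'' and you have the right heuristic (Serre twist by $J_{2\crho}$, $\cB$-equivariant upgrade by $\CW$), but you never actually establish it. It does not follow from \eqref{j-inv Wak} and the existence of the map $\delta_{1,\Gr^{\on{aff}}_G}\to\CW$ alone. What one needs in addition is the isomorphism $\on{Ind}^\cG_\cB(\CW)\simeq\delta_{1,\Gr^{\on{aff}}_G}$ (equation \eqref{av of co-baby}, a theorem from \cite{FG5}, Proposition 3.18, or \cite{ABBGM}, Proposition 3.2.16), combined with \eqref{j-inv Wak} and the Serre-duality relation \eqref{Serre on Fl} between $\Ind^\cG_\cB$ and $\on{co-Ind}^\cG_\cB$; alternatively one cites \cite{ABBGM}, Lemma 3.2.22 directly. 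Without naming one of these sources, the counit is not actually constructed. Relatedly, in the verification step you apply a $\Hom$-in-base-change formula to $\Hom\bigl(\CF\star J_{2\crho}\star\CW,\ \fr^*_\Gr(\CF'_0)\bigr)$, but that formula requires both arguments to be pull-backs; the paper instead reduces by a line-bundle twist to the case $\CF'=\Res^\cG_\cB(\CF'_1)$ and then invokes the $(p_!,p^!)$ adjunction directly, which also renders your final devissage to $\CF=j_{\wt w,*}$ unnecessary.
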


\medskip

\noindent{\it Remark.}
Although both sides of 
$$(\wt\iota_\Fl)_*:\on{pt}/\cB \underset{\on{pt}/\cG}\times \bD^f(\fD(\Gr^{\on{aff}}_G)_\crit\mod)
\to \bD^f(\fD(\Fl^{\on{aff}}_G)_\crit\mod)$$
are categories over $\tN/\cG$, the construction of $(\wt\iota_\Fl)_*$ only
makes it a functor between categories over $\on{pt}/\cB$. The additional
structure of functor between categories over $\tN/\cG$
is not be obvious from the construction.

\ssec{}

For the proof of \propref{i star adj} we will need to review two other adjunction
constructions.

\sssec{}  \label{G B adjunctions}

Let $\fq$ denote the (proper) map of stacks $\on{pt}/\cB\to \on{pt}/\cG$,
and consider the functor 
$$\fq^*:\bD^{perf}(\Coh(\on{pt}/\cG))\to 
\bD^{perf}(\Coh(\on{pt}/\cB)).$$
We will denote this functor also by $\Res^\cG_\cB$
as it corresponds to the restriction functor
$\Rep(\cG)\to \Rep(\cB)$.  It has an evident structure of
functor between categories over $\on{pt}/\cG$. 

\medskip

We claim that there exist the functors
$$\Ind^{\cG}_{\cB},\on{co-Ind}^\cG_\cB:
\bD^{perf}(\Coh(\on{pt}/\cB))\to 
\bD^{perf}(\Coh(\on{pt}/\cG)),$$
\begin{equation} \label{ind res adj}
\on{Id}\to \Ind^\cG_\cB\circ \Res^\cG_\cB 
\end{equation}
and
\begin{equation} \label{coind res adj}
\on{co-Ind}^\cG_\cB\circ \Res^\cG_\cB\to \on{Id},
\end{equation}
as categories and functors over the stack $\on{pt}/\cG$. 

\medskip

This follows from Sections \ref{coherent direct image} and \ref{proper morphism adjunctions},
as 
$$\Ind^{\cG}_{\cB}\simeq \fq_* \text{ and } \on{co-Ind}^\cG_\cB\simeq \fq_?,$$
in the notation of {\it loc. cit.} 
 
Moreover, by \eqref{Serre expected}, we have
an isomorphism of functors over $\on{pt}/\cG$:
\begin{equation} \label{Serre on Fl}
\on{Ind}^\cG_\cB\simeq \on{co-Ind}^\cG_\cB\circ 
(\CL^{2\crho}\underset{\CO_{\on{pt}/\cB}}\otimes ?)[-\dim(\on{Fl}^{\cG})].
\end{equation}

\medskip

Hence, for any triangulated category $\bD$ over $\on{pt}/\cG$
we have a similar set of functors and adjunctions between
$\bD$ and $\on{pt}/\cB\underset{\on{pt}/\cG}\times \bD\leftrightarrows
\bD$ as categories and functors over the stack $\on{pt}/\cG$. 

\sssec{}

Consider the direct image functor
$$p_!=p_*:\bD^f(\fD(\Fl^{\on{aff}}_G)_\crit\mod)\to \bD^f(\fD(\Gr^{\on{aff}}_G)_\crit\mod).$$
The construction of \secref{cat on Fl: fine DG model} endows it 
with a structure of functor between categories over $\on{pt}/\cG$. 
We leave the following assertion to the reader, as it
repeats the constructions carried out in 
\secref{DG model for the AB action}:

\begin{lem}  \label{p ! *}
The adjunction maps
$$\on{Id}\to p_*\circ p^{!*}[-\dim(G/B)] \text{ and } p^{!*}\circ p_![-\dim(G/B)] \to
\on{Id}$$
and 
$$p_!\circ p^{!*}[\dim(G/B)]\to \on{Id} \text{ and }
\on{Id}\to p^{!*}\circ p^![\dim(G/B)]$$
can be endowed with a structure of natural stransformations
between functors over $\on{pt}/\cG$.
\end{lem}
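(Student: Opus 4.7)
The plan is as follows. After the indicated shifts, the four morphisms in the statement are exactly the units and counits of the standard adjunctions $(p^*, p_*)$ and $(p_!, p^!)$ for the proper smooth morphism $p:\Fl^{\on{aff}}_G\to \Gr^{\on{aff}}_G$: properness gives $p_!\simeq p_*$, smoothness of relative dimension $\dim(G/B)$ gives $p^!\simeq p^*[2\dim(G/B)]$, and therefore $p^{!*}=p^*[\dim(G/B)]\simeq p^![-\dim(G/B)]$ makes the four displayed morphisms coincide with the standard units and counits. The substance of the lemma is thus the compatibility of these natural transformations with the $\bD^{perf}(\Coh(\on{pt}/\cG))$-action.

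First, I would establish this compatibility at the level of triangulated categories. The action of $\bD^{perf}(\Coh(\on{pt}/\cG))$ is given by $\CF \mapsto \CF\star Z_V$ on the $\Fl^{\on{aff}}_G$-side and by $\CF \mapsto \CF\star \CF_V$ on the $\Gr^{\on{aff}}_G$-side. Since the central sheaves $Z_V$ are constructed from the spherical sheaves $\CF_V$ by nearby cycles, a package of canonical isomorphisms produced in \cite{Ga} intertwines each of $p^*$, $p^!$, $p_*$, $p_!$ with these convolutions: concretely, one combines the nearby-cycles construction of $Z_V$ with projection formula (for $p_*$, $p_!$) and smooth/proper base change (for $p^*$, $p^!$) to obtain canonical isomorphisms $p^?(\CF)\star \CF_V\simeq p^?(\CF\star Z_V)$ and symmetrically for $p_?$. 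Once these intertwiners are in place, the adjunction units and counits for $(p^*,p_*)$ and $(p_!, p^!)$ automatically respect them, since at the triangulated level the units and counits are uniquely determined by the adjoint pairs.

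Second, I would upgrade these data to the DG level by applying, verbatim, the procedure of \secref{DG model for the AB action}. Namely, one fixes functorial DG replacements for the convolution bimodules that implement $?\star Z_V$ on $\Fl^{\on{aff}}_G$ and $?\star \CF_V$ on $\Gr^{\on{aff}}_G$, chosen so that they are compatible under $p^?$ and $p_?$, and lifts the projection-formula/base-change intertwiners to this DG setting. The universal property of the adjunction construction applied to DG-compatible pairs then produces unique DG lifts of the four adjunction morphisms as natural transformations of functors over $\on{pt}/\cG$. The main obstacle is the DG-level bookkeeping at this last step; however, it is completely parallel to the upgrades already performed in \secref{DG model for the AB action} for the $\bD^{perf}(\Coh(\tN/\cG))$-action on $\bD^f(\fD(\Fl^{\on{aff}}_G)_\crit\mod)$, and no new conceptual ingredient is required---which is precisely why the paper leaves the proof to the reader.
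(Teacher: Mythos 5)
Your proposal matches the route the paper intends: the paper itself declines to give a proof, instead noting that it ``repeats the constructions carried out in \secref{DG model for the AB action}'', and your plan---triangulated-level intertwining of $p^?$, $p_?$ with the $\Rep(\cG)$-actions via the central/spherical-sheaf correspondence, followed by a DG upgrade along the lines of \secref{DG model for the AB action}---is precisely that. One small caveat: your claim that the adjunction units and counits ``automatically respect'' the action because they are ``uniquely determined'' is better justified by the rigidity of $\bD^{perf}(\Coh(\on{pt}/\cG))$ (cf.\ \lemref{rigid adjunction} and \propref{rigid is tight}), which is the actual mechanism making adjoints of $\Rep(\cG)$-linear functors, together with their (co)units, automatically $\Rep(\cG)$-linear.
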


\sssec{Proof of \propref{i star adj}}

By the universal property of base change (see
\secref{univ ppty base change}), constructing a map of functors
$$(\wt\iota_\Fl)^*\circ (\wt\iota_\Fl)_*\to \on{Id},$$
compatible with the structure of functors over the stack $\on{pt}/\cB$ is equivalent 
to constructing a map 
$$(\wt\iota_\Fl)^*\circ (\wt\iota_\Fl)_*\circ \Res^\cG_\cB\to 
\Res^\cG_\cB:\bD^f(\fD(\Gr^{\on{aff}}_G)_\crit\mod)\to
\on{pt}/\cB\underset{\on{pt}/\cG}\times \bD^f(\fD(\Gr^{\on{aff}}_G)_\crit\mod)$$
as functors over $\on{pt}/\cG$. 

\medskip

By \secref{G B adjunctions}, the latter is in turn equivalent to
constructing a map
\begin{multline} \label{constr i star adj}
\on{co-Ind}^\cG_\cB\circ
(\wt\iota_\Fl)^*\circ (\wt\iota_\Fl)_*\circ \Res^\cG_\cB\simeq  
\on{co-Ind}^\cG_\cB\circ (\wt\iota_\Fl)^*\circ p^{!*}\to 
\on{Id}: \\
\bD^f(\fD(\Gr^{\on{aff}}_G)_\crit\mod)\to \bD^f(\fD(\Gr^{\on{aff}}_G)_\crit\mod),
\end{multline}
as functors over $\on{pt}/\cG$. 

\medskip

The construction of the arrow in \eqref{constr i star adj} follows
now from \lemref{p ! *} and the following:

\begin{lem}   \label{wt i p}
The functor
$$\on{co-Ind}^\cG_\cB\circ (\wt\iota_\Fl)^*:
\bD^f(\fD(\Fl^{\on{aff}}_G)_\crit\mod)\to \bD^f(\fD(\Gr^{\on{aff}}_G)_\crit\mod)$$
is canonically isomorphic to $p_![\dim(G/B)]$, as functors between categories
over the stack $\on{pt}/\cG$. 
\end{lem}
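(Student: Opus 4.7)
The plan is to reduce the claimed isomorphism to a single computation involving the object $\CW$. Both functors $p_![\dim(G/B)]$ and $\on{co-Ind}^\cG_\cB\circ(\wt\iota_\Fl)^*$ intertwine the left convolution action of $\bD^f(\fD(\Fl^{\on{aff}}_G)_\crit\mod)$: the former tautologically, via $p_!(\CF)\simeq \CF\star p_!(\delta_{1,\Fl^{\on{aff}}_G})$; the latter because $(\wt\iota_\Fl)^*(\CF)=\CF\star J_{2\crho}\star\CW$, and the functor $\on{co-Ind}^\cG_\cB$ only affects the $\cB$-structure on the Grassmannian side, which commutes with left convolution by objects of $\bD^f(\fD(\Fl^{\on{aff}}_G)_\crit\mod)$. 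Hence it suffices to construct a canonical isomorphism
$$\on{co-Ind}^\cG_\cB(J_{2\crho}\star\CW)\simeq p_!(\delta_{1,\Fl^{\on{aff}}_G})[\dim(G/B)].$$

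First I would apply the fundamental isomorphism \eqref{j-inv Wak}, which yields $J_{2\crho}\star\CW\simeq \CL^{2\crho}\underset{\CO_{\on{pt}/\cB}}\otimes\CW$. The Serre-duality relation \eqref{Serre on Fl} then identifies $\on{co-Ind}^\cG_\cB(\CL^{2\crho}\underset{\CO_{\on{pt}/\cB}}\otimes\CW)$ with $\on{Ind}^\cG_\cB(\CW)[\dim(\on{Fl}^\cG)]$. Since $\dim(\on{Fl}^\cG)=\dim(G/B)$, the claim boils down to the identity
$$\on{Ind}^\cG_\cB(\CW)\simeq p_!(\delta_{1,\Fl^{\on{aff}}_G}),$$
which is the main geometric content of the lemma.

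To establish this last identity -- the step I expect to be the main obstacle -- I would use the explicit description of $\CW$ from \cite{FG4}, \cite{FG5}, and \cite{ABBGM}: there $\CW$ is constructed as a $\cB$-equivariant Wakimoto-type object on $\Gr^{\on{aff}}_G$ whose canonical $\cG$-induction is tailor-made to agree with the $G/B$-integration of the $\delta$-function on $\Fl^{\on{aff}}_G$. Concretely, the natural map $\delta_{1,\Gr^{\on{aff}}_G}\to \on{Ind}^\cG_\cB(\CW)$ arising from the generator of $\CW$ (cf.\ \cite{ABBGM}, Proposition~3.2.6(1)) should extend to the desired isomorphism; one verifies this by analysing the filtration on $\CW$ with subquotients of the form $V\underset{\CO_{\on{pt}/\cG}}\otimes\CF'$ cited in the proof of the preceding lemma, and checking termwise that both sides agree. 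Compatibility of the resulting isomorphism with the structure of functors between categories over $\on{pt}/\cG$ is then automatic, since each of the ingredients -- \eqref{j-inv Wak}, \eqref{Serre on Fl}, and the identification of $\on{Ind}^\cG_\cB(\CW)$ with $p_!(\delta_{1,\Fl^{\on{aff}}_G})$ -- is manifestly $\Rep(\cG)$-equivariant, the Satake $\Rep(\cG)$-action on both sides being given by right convolution, which is disjoint from all the manipulations above.
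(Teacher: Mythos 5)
Your reduction to a single isomorphism of convolution kernels is exactly the paper's first step, and your route via \eqref{j-inv Wak} and \eqref{Serre on Fl} to $\on{Ind}^\cG_\cB(\CW)\simeq\delta_{1,\Gr^{\on{aff}}_G}$ is precisely the alternative the paper names in its proof (via \eqref{Serre on Fl} from \cite{FG5}, Proposition 3.18), noting $p_!(\delta_{1,\Fl^{\on{aff}}_G})=\delta_{1,\Gr^{\on{aff}}_G}$. The only divergence is that the paper simply cites the identity \eqref{av of baby} (equivalently \eqref{av of co-baby}) as established in \cite{ABBGM} and \cite{FG5}, whereas you propose to re-derive it from the filtration on $\CW$ -- correct in spirit, but unnecessary work.
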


\begin{proof}
We will construct an isomorphism as a functor between triangulated
categories, compatible with the action of $\Rep^{f.d.}(\cG)$. The
upgrading of the isomorphism to the DG level will be commented on
in \secref{wt i p: DG model}.

\medskip

We have:
$$\on{co-Ind}^\cG_\cB\circ (\wt\iota_\Fl)^*(\CF)\simeq
\on{co-Ind}^\cG_\cB(\CF\star J_{2\rho}\star \CW)\simeq \CF\star
\on{co-Ind}^\cG_\cB(J_{2\rho}\star \CW).$$

\medskip

The isomorphism follows now from \cite{ABBGM}, Lemma 3.2.22
(or, using \eqref{Serre on Fl}, from \cite{FG5}, Proposition 3.18)
where it is shown that there exists a canonical isomorphism
\begin{equation} \label{av of baby}
\on{co-Ind}^\cG_\cB(J_{2\rho}\star \CW)\simeq
\delta_{1,\Gr^{\on{aff}}_G}[\dim(G/B)].
\end{equation}

\medskip

The compatibility with the action of $\Rep^{f.d.}(\cG)$ follows
from the construction tautologically.

\end{proof}

Thus, to prove the proposition, it remains to show that the
map 
$$\Hom
\left(\CF,(\wt\iota_\Fl)_*(\CF')\right) 
\to \Hom
\left((\wt\iota_\Fl)^*(\CF),(\wt\iota_\Fl)^*\circ (\wt\iota_\Fl)_*(\CF')\right) 
\to \Hom
\left((\wt\iota_\Fl)^*(\CF),\CF'\right).
$$
is an isomorphism for any
$\CF\in \bD^f(\fD(\Fl^{\on{aff}}_G)_\crit\mod)$, $\CF'\in 
\on{pt}/\cB\underset{\on{pt}/\cG}\times \bD^f(\fD(\Gr^{\on{aff}}_G)_\crit\mod)$.

\medskip

By the definition of the latter category, we can take $\CF'$ of the form
$\CL^\cla\underset{\CO_{\on{pt}/\cB}}\otimes \Res^\cG_\cB(\CF'_1)$ with
$\CF'_1\in \bD^f(\fD(\Gr^{\on{aff}}_G)_\crit\mod)$. However, since the functor of
tensor product with $\CL^\cla$ is an equivalence on both categories,
we can replace $\CF$ by $\CL^{-\cla}\underset{\CO_{\on{pt}/\cB}}\otimes \CF$,
and so reduce the assertion to the case when $\CF'=\Res^\cG_\cB(\CF'_1)$.

\medskip

In the latter case, the assertion reduces to the $(p_!,p^!)$ adjunction
by \lemref{wt i p} above.

\qed

\ssec{}  \label{sect map i to it}

We claim now that \propref{i star adj} gives rise to a natural
transformation of functors at the level of triangulated categories:
\begin{equation} \label{basic nat trans star}
(\iota_\Fl)_*\to (\wt\iota_\Fl)_*\circ \Upsilon,
\end{equation}
where both sides are functors
$$\on{pt}/\cB\underset{\tN/\cG}\times \bD^f(\fD(\Fl^{\on{aff}}_G)_\crit\mod)\to
\bD^f(\fD(\Fl^{\on{aff}}_G)_\crit\mod).$$

Indeed, the natural transformation in question comes by adjunction from
$$(\wt\iota_\Fl)^*\circ (\iota_\Fl)_*\simeq\Upsilon\circ 
(\iota_\Fl)^*\circ (\iota_\Fl)_*\to \Upsilon.$$

\medskip

Composing the natural transformation \eqref{basic nat trans star} with the functor
$(\iota_\Fl)^*$, we obtain
a natural transformation
\begin{equation}   \label{map i to it}
(\iota_\Fl)_*\circ (\iota_\Fl)^*\to
(\wt\iota_\Fl)_*\circ (\wt\iota_\Fl)^*,
\end{equation}
also at the level of triangulated categories, which makes the following
diagram commute:
\begin{equation}  \label{barr beck}
\CD
\Hom((\iota_\Fl)^*(\CF_1),(\iota_\Fl)^*(\CF_2))  @>{\sim}>>
\Hom(\CF_1,(\iota_\Fl)_*\circ (\iota_\Fl)^*(\CF_2)) \\
@VVV   @VVV  \\
\Hom(\Upsilon\circ (\iota_\Fl)^*(\CF_1),\Upsilon\circ (\iota_\Fl)^*(\CF_2)) 
@>{\sim}>> \Hom(\CF_1,(\wt\iota_\Fl)_*\circ (\wt\iota_\Fl)^*(\CF_2))
\endCD
\end{equation}
for $\CF_1,\CF_2\in \bD^f(\fD(\Fl^{\on{aff}}_G)_\crit\mod)$.

\medskip

We will prove:

\begin{thm}  \label{thm zero section}
The natural transformation \eqref{map i to it} is an isomorphism.
\end{thm}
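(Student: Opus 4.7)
The plan is to reduce the theorem to an identification of two explicit objects in the Iwahori-equivariant category $\bD^f(\fD(\Fl^{\on{aff}}_G)_\crit\mod)^{I^0}$, and then prove that identification by transporting everything to the coherent side via Bezrukavnikov's equivalence.

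First, I would exploit right-convolution linearity. Both endofunctors $(\iota_\Fl)_*\circ (\iota_\Fl)^*$ and $(\wt\iota_\Fl)_*\circ (\wt\iota_\Fl)^*$ commute with right convolution by $\fD(\Fl^{\on{aff}}_G)_\crit\mod^I$: each is assembled from functors built by the universal property of base change out of operations that, by construction, are right-convolution linear. The natural transformation \eqref{map i to it} is itself right-convolution linear (coming from adjunctions that are), so it suffices to test it on $\CF=\delta_{1,\Fl^{\on{aff}}_G}$. The projection formula for the base change along $\iota:\on{pt}/\cB\to \tN/\cG$ identifies the left-hand side as
$$(\iota_\Fl)_*(\iota_\Fl)^*(\delta_{1,\Fl^{\on{aff}}_G})\simeq \sF(\iota_*\CO_{\on{pt}/\cB}),$$
while by definition of $\wt\Upsilon$ the right-hand side is $(\wt\iota_\Fl)_*(J_{2\crho}\star \CW)$. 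Both lie in $\bD^f(\fD(\Fl^{\on{aff}}_G)_\crit\mod)^{I^0}$.

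To establish $\sF(\iota_*\CO_{\on{pt}/\cB})\simeq (\wt\iota_\Fl)_*(J_{2\crho}\star \CW)$ compatibly with \eqref{map i to it}, I would apply Bezrukavnikov's equivalence \thmref{Bezr}, which identifies $\bD^f(\fD(\Fl^{\on{aff}}_G)_\crit\mod)^{I^0}$ with $\bD^b(\Coh(\check{\on{St}}/\cG))$ as a category over $\tN/\cG$. Under this equivalence, $\sF(\iota_*\CO_{\on{pt}/\cB})$ becomes the pushforward of $\CO_{\on{pt}/\cB}$ along $\on{pt}/\cB\hookrightarrow \tN/\cG\hookrightarrow \check{\on{St}}/\cG$, which geometrically is the structure sheaf of the closed substack $\on{pt}/\cB\underset{\on{pt}/\cG}\times \on{pt}/\cB\hookrightarrow \check{\on{St}}/\cG$ obtained by intersecting the zero section $\tN\hookrightarrow \check{\on{St}}$ with the fiber of the second projection over $0\in \tg$. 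The crucial step is to show that the $\Gr^{\on{aff}}_G$-side recipe $(\wt\iota_\Fl)_*$, built via $p^{!*}$ and the universal property of base change, transports under Bezrukavnikov to the same geometric restriction. With that in hand, \eqref{map i to it} becomes the tautological isomorphism between two descriptions of one coherent sheaf. The main obstacle is precisely this compatibility of $(\wt\iota_\Fl)_*$ with geometric restriction under $\sF_{\check{\on{St}}}$.

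A fallback approach, avoiding the unpublished portions of Bezrukavnikov's theory and relying only on \cite{AB}, is to prove the isomorphism by a direct filtered calculation. One would exploit the finite filtration on $\CW$ from \cite{ABBGM} (Corollary 1.3.10, Proposition 3.2.6) whose subquotients are $V\underset{\CO_{\on{pt}/\cG}}\otimes \CF'$, apply $(\wt\iota_\Fl)_*$ term-by-term using \lemref{wt i p}, and match the outcome against the Koszul filtration of $\iota_*\CO_{\on{pt}/\cB}\in \bD^{perf}(\Coh(\tN/\cG))$, with the comparison on each graded piece pinned down by the commutative diagrams of \cite{FG5}, Lemma 5.3 (equivalently \cite{ABBGM}, Corollary 3.2.3). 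In either route the fully faithfulness of $\Upsilon$ (\mainthmref{thm relation to Grassmannian}) follows at once from the theorem by inserting the conclusion into the commutative diagram \eqref{barr beck}, since the objects of the form $(\iota_\Fl)^*(\CF)$ generate the base-changed category $\on{pt}/\cB\underset{\tN/\cG}\times \bD^f(\fD(\Fl^{\on{aff}}_G)_\crit\mod)$.
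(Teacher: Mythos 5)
Your reduction to $\delta_{1,\Fl^{\on{aff}}_G}$ via right-convolution linearity is exactly the paper's first step (§\ref{delta is enough}), so that part is sound. But both branches of your argument have real gaps.

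In your first branch, you correctly observe that the $\Fl$-side object $\sF(\iota_*\CO_{\on{pt}/\cB})$ can be transported along Bezrukavnikov's equivalence $\sF_{\check{\on{St}}}$ to $\bD^b(\Coh(\check{\on{St}}/\cG))$, but then you say the crucial step is to show that the $\Gr^{\on{aff}}_G$-side recipe $(\wt\iota_\Fl)_*$ ``transports under Bezrukavnikov to the same geometric restriction''. This cannot work as stated: Bezrukavnikov's Theorem \ref{Bezr} only concerns the $\Fl$-side. The missing ingredient is a separate coherent description of $\bD^f(\fD(\Gr^{\on{aff}}_G)_\crit\mod)^{I^0}$, namely the Koszul-dual form of the Arkhipov--Bezrukavnikov--Ginzburg equivalence \eqref{ABG}, which identifies it with $\bD^b(\Coh(\wt\Fl{}^\cG/\cG))$ as a category over $\on{pt}/\cG$. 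Without it, there is simply no coherent side to compare against. The paper also phrases the reduction differently and more usefully: rather than directly matching two objects, it notes that since $\delta_{1,\Fl^{\on{aff}}_G}$ is $I^0$-equivariant, it suffices to prove that the restriction \eqref{ups I} of $\Upsilon$ to the $I^0$-equivariant subcategories is fully faithful, which follows once one has \emph{both} the Bezrukavnikov and ABG equivalences and the elementary identification of derived fiber products $\on{pt}/\cB\underset{\tN/\cG}\times\check{\on{St}}/\cG\simeq\on{pt}/\cB\underset{\on{pt}/\cG}\times\wt\Fl{}^\cG/\cG$.

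Your fallback (a filtered computation matching the $\CW$-filtration against the Koszul filtration of $\iota_*\CO_{\on{pt}/\cB}$) identifies the right raw material but would not, as described, yield an isomorphism: comparing associated graded pieces leaves you with an \emph{a priori} uncontrolled extension problem, and you have not supplied any mechanism to kill it. The paper's second proof (Propositions \ref{comp non-zero}, \ref{from delta to delta}, \ref{endomorphisms}) handles this by a different strategy entirely: it shows that \eqref{map i to it} and \eqref{map it to i} compose in either order to a non-zero scalar multiple of the identity. The non-vanishing is established by passing to the ${}'I^0,\psi$-equivariant (Whittaker) categories where, by the main theorem of \cite{AB}, the functor becomes an equivalence onto $\bD^{perf}(\Coh(\on{pt}/\cB))$; and the scalar statement rests on endomorphism computations (e.g.\ $\End(\CW)\simeq\BC$ and the vanishing $R\Hom(\delta,J_\cla)=0$ for $\cla$ a non-zero positive coweight). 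None of this is visible in your outline, so as written the fallback does not close the argument either.
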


\sssec{}

Let us show how \thmref{map i to it} implies fully-faithfulness of $\Upsilon$.

\begin{proof}

We have to show that the map 
$$\Hom(\CF'_1,\CF'_2)\to \Hom(\Upsilon(\CF'_1),\Upsilon(\CF'_2))$$
is an isomorphism for any $\CF'_1,\CF'_2\in 
\on{pt}/\cB\underset{\tN/\cG}\times\bD^f(\fD(\Fl^{\on{aff}}_G)_\crit\mod)$.

\medskip

Since the functor $\iota^*:\bD^{perf}(\Coh(\tN/\cG))\to
\bD^{perf}(\Coh(\on{pt}/\cB))$ is affine (see \secref{affine functor}),
it is sufficient to take $\CF'_i$, $i=1,2$ of the form 
$(\iota_\Fl)^*(\CF_i)$ with $\CF_i\in \bD^f(\fD(\Fl^{\on{aff}}_G)_\crit\mod)$.

\medskip

In the latter case the required isomorphism follows from 
\thmref{thm zero section} via the commutative diagram
\eqref{barr beck}.

\end{proof}

The same argument shows that \thmref{thm zero section} implies
that the natural transformation \eqref{basic nat trans star}
is also an isomorphism.

\ssec{}

In \secref{zero section} we will give two proofs of \thmref{thm zero section}: 
a shorter one, which relies on some unpublished results, announced in \cite{Bez},
and a slightly longer one, which only uses \cite{AB}.

\medskip

As a preparation for the latter argument
we will now describe a functor, which is the right adjoint to $(\wt\iota_\Fl)_*$. 
We will use the notation, $(\wt\iota_\Fl)_!:=(\wt\iota_\Fl)_*$, and 
the right adjoint in question will be denoted $(\wt\iota_\Fl)^!$.

\sssec{}

Set
$$(\wt\iota_\Fl)^!:=\CL^{-2\crho}\underset{\CO_{\on{pt}/\cB}}\otimes
(\wt\iota_\Fl)^*[-\dim(\cG/\cB)].$$

\medskip

In other words, the construction of $(\wt\iota_\Fl)^!$ is the same as that
of $(\wt\iota_\Fl)^*$, where instead of the object
$$J_{2\rho}\star \CW\in \on{pt}/\cB 
\underset{\on{pt}/\cG}\times \fD(\Gr^{\on{aff}}_G)_\crit\mod^I$$
we use $\CW[-\dim(\cG/\cB)]$.

\medskip

By \cite{FG5}, Proposition 3.18 (or \cite{ABBGM}, Proposition 3.2.16)
we have 
\begin{equation} \label{av of co-baby}
\on{Ind}^\cG_\cB(\CW)\simeq
\delta_{1,\Gr^{\on{aff}}_G}.
\end{equation}

\medskip

Repeating the proof of \propref{i star adj}, we obtain that there exists
a natural transformation
$$\on{Id}\to (\wt\iota_\Fl)^!\circ (\wt\iota_\Fl)_!$$
as functors between categories over $\on{pt}/\cB$, which makes
$(\wt\iota_\Fl)^!$ the right adjoint of $(\wt\iota_\Fl)_!$ at the
level of triangulated categories.

\sssec{}

According to \secref{closed imm 3}, we have the 
functors 
$$\iota_!:=\iota_*:\bD^{perf}(\on{pt}/\cB)\leftrightarrows \bD^{perf}(\tN/\cG):\iota^!$$
$$\iota^!\simeq \CL^{-2\crho}\underset{\CO_{\on{pt}/\cB}}\otimes
\iota^*[-\dim(\cG/\cB)],$$
 over $\tN/\cG$, and the adjunctions 
$$\iota_!\circ \iota^! \to \on{Id} \text{ and }
\on{Id}\to \iota^!\circ \iota_!,$$
defined at the triangulated level. 

\medskip

Hence, by {\it loc. cit.}, we have the corresponding functors and adjunctions between 
$$(\iota_\Fl)_!\simeq (\iota_\Fl)_*:\on{pt}/\cB \underset{\on{\tN}/\cG}\times \bD^f(\fD(\Fl^{\on{aff}}_G)_\crit\mod)
\leftrightarrows \bD^f(\fD(\Fl^{\on{aff}}_G)_\crit\mod):(\iota_\Fl)^!.$$
and
$$\iota_\Fl^!\simeq \CL^{-2\crho}\underset{\CO_{\on{pt}/\cB}}\otimes
\iota_\Fl^*[-\dim(\cG/\cB)],$$
at the triangulated level.

\sssec{}

By construction, we have:
$$(\wt\iota_\Fl)^!\simeq \Upsilon\circ (\iota_\Fl)^!,$$
which as in \secref{sect map i to it} defines a natural transformation
\begin{equation}  \label{basic nat trans shriek}
\Upsilon\circ (\wt\iota_\Fl)_!\to (\iota_\Fl)_!.
\end{equation}

Since
$$(\wt\iota_\Fl)_!\simeq (\wt\iota_\Fl)_* \text{ and }
(\iota_\Fl)_!\simeq (\iota_\Fl)_*,$$
\eqref{basic nat trans shriek} gives a map in the direction opposite to
that of \eqref{basic nat trans star}:
\begin{equation}  \label{basic nat trans star another}
\Upsilon\circ (\wt\iota_\Fl)_*\to (\iota_\Fl)_*.
\end{equation}

Composing with $(\iota_\Fl)^*$ we obtain a natural transformation
\begin{equation}  \label{map it to i}
(\wt\iota_\Fl)_*\circ (\wt\iota_\Fl)^*\to 
(\iota_\Fl)_*\circ (\iota_\Fl)^*.
\end{equation}

\thmref{thm zero section} follows from the next assertion:

\begin{prop}  \label{comp zero section}
Each of the two compositions
$$(\iota_\Fl)_*\circ (\iota_\Fl)^* \overset{\text{\eqref{map i to it}}}\to
(\wt\iota_\Fl)_*\circ (\wt\iota_\Fl)^*
\overset{\text{\eqref{map it to i}}}\to (\iota_\Fl)_*\circ (\iota_\Fl)^*$$
and 
$$(\wt\iota_\Fl)_*\circ (\wt\iota_\Fl)^*
\overset{\text{\eqref{map it to i}}}\to (\iota_\Fl)_*\circ (\iota_\Fl)^*
\overset{\text{\eqref{map i to it}}}\to
(\wt\iota_\Fl)_*\circ (\wt\iota_\Fl)^*$$
is a non-zero scalar multiple of the identity map.
\end{prop}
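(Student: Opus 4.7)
My plan is to reduce the question to an endomorphism computation in $\bD^f(\fD(\Fl^{\on{aff}}_G)_\crit\mod)$, show the relevant endomorphisms are automatically scalars, and then verify non-vanishing. First, using the unit/counit yoga of the adjunctions $((\iota_\Fl)^*,(\iota_\Fl)_*)$ and $((\wt\iota_\Fl)^*,(\wt\iota_\Fl)_*)$, I would identify both endofunctors as convolution with specific kernels: $(\iota_\Fl)_*(\iota_\Fl)^*(\CF)\simeq\sF(\fK)\star\CF$, where $\fK:=\iota_*\CO_{\on{pt}/\cB}\in\bD^{perf}(\Coh(\tN/\cG))$ is the Koszul resolution of the zero section, and $(\wt\iota_\Fl)_*(\wt\iota_\Fl)^*(\CF)\simeq\wt\CK\star\CF$ for a specific $\wt\CK\in\bD^f(\fD(\Fl^{\on{aff}}_G)_\crit\mod)$ built from $p^{!*}\circ p_!$ applied to $J_{2\crho}\star\CW$ via the construction of \secref{constr wt i} together with \lemref{wt i p} and \lemref{p ! *}. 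Evaluating at $\CF=\delta_{1,\Fl}$ (the unit for convolution), the two natural transformations \eqref{map i to it} and \eqref{map it to i} correspond to explicit morphisms $\phi\colon\sF(\fK)\to\wt\CK$ and $\psi\colon\wt\CK\to\sF(\fK)$, whose compositions become the two endomorphisms featured in the proposition.

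Next I would show each composition is automatically a scalar. The key input is the formula
$$\on{End}^0_{\bD^b(\Coh(\tN/\cG))}(\iota_*\CO_{\on{pt}/\cB})=k,$$
which follows from the Koszul identification $\iota^*\iota_*\CO_{\on{pt}/\cB}\simeq\bigoplus_{i\geq 0}\bigwedge^i T_{\on{Fl}^{\cG}}[i]$, the vanishing of negative Ext groups among coherent sheaves, and $H^0(\on{Fl}^{\cG},\CO)^{\cG}=k$. Transporting this through the construction of $\sF$ (via the computation of convolutions of $J_\cla$'s and $Z_V$'s of the type to be carried out in \secref{calc of endomorphisms}), one obtains $\on{End}^0(\sF(\fK))=k$; an entirely analogous calculation, using that $\wt\CK$ is likewise assembled from $\sF$-images of line bundles and central sheaves combined with $p^{!*}$ of bounded complexes, gives $\on{End}^0(\wt\CK)=k$. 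Thus each of $\psi\circ\phi$ and $\phi\circ\psi$ is a scalar, and only non-vanishing remains.

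For the non-vanishing, I would evaluate $\psi\circ\phi$ against the augmentation map $\sF(\fK)\to\delta_{1,\Fl}$ induced by $\fK=\iota_*\CO_{\on{pt}/\cB}\to\CO_{\tN/\cG}$ and $\sF(\CO_{\tN/\cG})=\delta_{1,\Fl}$. Unwinding \eqref{map i to it} through the $((\wt\iota_\Fl)^*,(\wt\iota_\Fl)_*)$-adjunction and the averaging formula \eqref{av of baby}, and \eqref{map it to i} through the $((\wt\iota_\Fl)_!,(\wt\iota_\Fl)^!)$-adjunction and \eqref{av of co-baby} (together with the Serre identification \eqref{Serre on Fl} linking the $*$- and $!$-pictures), the non-vanishing should reduce to the assertion that the pairing between $\on{Ind}^{\cG}_{\cB}(\CW)\simeq\delta_{1,\Gr^{\on{aff}}_G}$ and $\on{co-Ind}^{\cG}_{\cB}(J_{2\crho}\star\CW)\simeq\delta_{1,\Gr^{\on{aff}}_G}[\dim G/B]$, assembled from the units and co-units of the $\on{Ind}^{\cG}_{\cB}$- and $\on{co-Ind}^{\cG}_{\cB}$-adjunctions with $\on{Res}^{\cG}_{\cB}$, is the non-zero trace dictated by Koszul duality for the zero section $\on{Fl}^{\cG}/\cG\hookrightarrow\tN/\cG$. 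The main obstacle I anticipate is the bookkeeping in this last step: $\alpha$ and $\beta$ live in two different adjunction pictures, related by the Serre twist $\CL^{2\crho}[-\dim\cG/\cB]$, and threading the shifts, duals, and line-bundle twists through four distinct adjunctions so as to produce a non-zero rather than zero scalar will demand careful sign-tracking. Once this is done, the same argument applied to $\phi\circ\psi$, together with the relation $\phi\circ(\psi\circ\phi)=(\phi\circ\psi)\circ\phi$, will force both scalars to be non-zero simultaneously provided one of $\phi,\psi$ is itself non-zero, which is clear from the explicit construction.
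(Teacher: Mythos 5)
Your overall plan — reduce to $\delta_{1,\Fl^{\on{aff}}_G}$ and show the endomorphism rings in play are scalars — is the right skeleton, but there is a genuine gap in the first leg, and your non-vanishing argument differs substantially from the paper's and is the weaker of the two.

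The gap: you claim $\End^0(\sF(\fK))=k$ for $\fK=\iota_*\CO_{\on{pt}/\cB}$. The computation $\End^0_{\bD^b(\Coh(\tN/\cG))}(\iota_*\CO_{\on{pt}/\cB})=k$ is fine, but $\sF$ is not fully faithful (the $I^0$-equivariant category on $\Fl^{\on{aff}}_G$ is governed by the Steinberg stack $\check{\on{St}}/\cG$, not by $\tN/\cG$), so this cannot simply be ``transported.'' A direct attempt to compute $\End^0(\sF(\fK))$ term-by-term from the Koszul complex runs into Hom spaces $\Hom(J_\cla[k],J_\cmu[k'])$, and the vanishing you have at your disposal (namely $R\Hom(\delta,J_\cla)=0$ for $\cla\in\cLambda^{pos}\setminus 0$, established in Sect.~\ref{Kosz}) only controls $\Hom(\delta,(\iota_\Fl)_*(\iota_\Fl)^*(\delta))=\End((\iota_\Fl)^*(\delta))$, not $\End((\iota_\Fl)_*(\iota_\Fl)^*(\delta))$. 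These are different groups because $(\iota_\Fl)_*$ is not fully faithful: $(\iota_\Fl)^*(\iota_\Fl)_*$ is multiplication by the Koszul-dual $\bigoplus_k\Lambda^k\cn^*[k]$, not the identity. The paper's Prop.~\ref{endomorphisms} deliberately establishes only $\End((\wt\iota_\Fl)_*(\wt\iota_\Fl)^*(\delta))\simeq\BC$ and $\End((\iota_\Fl)^*(\delta))\simeq\BC$, and then introduces the functor $\Omega$ and Prop.~\ref{from delta to delta} precisely to show that the first composition is the image under $(\iota_\Fl)_*$ of an endomorphism of $(\iota_\Fl)^*(\delta)$. This factorization through $(\iota_\Fl)_*$ is the idea your proposal is missing; without it your scalar-ring claim for $\sF(\fK)$ is unsubstantiated.

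For non-vanishing (Prop.~\ref{comp non-zero}), the paper passes to the Whittaker subcategories $\bD^f(\fD(\Fl^{\on{aff}}_G)_\crit\mod)^{'I^0,\psi}$ and $\bD^f(\fD(\Gr^{\on{aff}}_G)_\crit\mod)^{'I^0,\psi}$, which are equivalent to $\bD^{perf}(\Coh(\tN/\cG))$ resp.\ $\bD^{perf}(\Coh(\on{pt}/\cG))$ by \cite{AB} and \cite{FG2}, Thm.~15.8, and observes that $\Upsilon$ restricts to an equivalence there. Since $\delta_{1,\Fl^{\on{aff}}_G}$ is $I$-equivariant, its convolution with anything in the Whittaker category stays there, so non-vanishing of both compositions follows immediately. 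Your proposed pairing between $\on{Ind}^{\cG}_{\cB}(\CW)$ and $\on{co-Ind}^{\cG}_{\cB}(J_{2\crho}\star\CW)$ through the Serre twist is plausible in spirit, but as you acknowledge, threading four adjunctions, shifts and line-bundle twists without a sign-error is delicate, and you give no actual proof that the resulting scalar is non-zero; the Whittaker argument sidesteps all of this. I would recommend adopting both of the paper's devices: the $\Omega$-factorization for the scalar claim, and the Whittaker reduction for non-vanishing.
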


\section{Description of the zero section}  \label{zero section}

In this section will deduce \thmref{thm zero section} from 
Bezrukavnikov's theory. We first give an argument, using
the still unpublished results announced in \cite{Bez}.

\ssec{}  \label{delta is enough}

It is clear from the construction of the map \eqref{map i to it} that
for $\CF\in  \bD^f(\fD(\Fl^{\on{aff}}_G)_\crit\mod)$ we have
a commutative diagram of functors:
\begin{equation} \label{map i to it vs delta}
\CD
(\iota_\Fl)_*\circ (\iota_\Fl)^*(\CF)   @>{\text{\eqref{map i to it}}}>>  
(\wt\iota_\Fl)_*\circ (\wt\iota_\Fl)^*(\CF)   \\
@V{\sim}VV    @V{\sim}VV   \\
\CF\star \left((\iota_\Fl)_*\circ (\iota_\Fl)^*(\delta_{1,\Fl^{\on{aff}}_G})\right) @>>>
\CF\star \left((\wt\iota_\Fl)_*\circ (\wt\iota_\Fl)^*(\delta_{1,\Fl^{\on{aff}}_G})\right).
\endCD
\end{equation}

\medskip

Hence, the assertion of \thmref{thm zero section} is equivalent to the
fact that the map
\begin{equation} \label{map i to it delta}
(\iota_\Fl)_*\circ (\iota_\Fl)^*(\delta_{1,\Fl^{\on{aff}}_G}) \overset{\text{\eqref{map i to it}}}
\longrightarrow
(\wt\iota_\Fl)_*\circ (\wt\iota_\Fl)^*(\delta_{1,\Fl^{\on{aff}}_G})
\end{equation}
is an isomorphism.

\sssec{}

Let $\bD^f(\fD(\Gr^{\on{aff}}_G)_\crit\mod)^{I^0}\subset \bD^f(\fD(\Gr^{\on{aff}}_G)_\crit\mod)$ 
be the categories, defined as in the case of $\Fl^{\on{aff}}_G$,
see \secref{intr Iw category}.

\medskip

The functor $\Upsilon$ induces a functor
\begin{equation} \label{ups I}
\on{pt}/\cB\underset{\tN/\cG}\times 
\bD^f(\fD(\Fl^{\on{aff}}_G)_\crit\mod)^{I^0} \to \on{pt}/\cB\underset{\on{pt}/\cG}\times 
\bD^f(\fD(\Gr^{\on{aff}}_G)_\crit\mod)^{I^0},
\end{equation}
and since $\delta_{1,\Fl^{\on{aff}}_G}\in \bD^f(\fD(\Gr^{\on{aff}}_G)_\crit\mod)^{I^0}$, it
suffices to show that the functor \eqref{ups I} is fully faithful.

\sssec{}

Applying \thmref{Bezr}, we obtain that the LHS in \eqref{ups I}
identifies with
$$\on{pt}/\cB\underset{\tN/\cG}\times \bD^b(\Coh(\check{\on{St}}/\cG)).$$

\medskip

Consider the Cartesian product 
$\on{pt}/\cB\underset{\tN/\cG}\times \check{\on{St}}/\cG$,
understood in the DG sense. By an argument similar to that of 
\secref{perv vs coh}, we have a fully faithful functor
\begin{equation} \label{perf vs coh steinberg}
\on{pt}/\cB\underset{\tN/\cG}\times \bD^b(\Coh(\check{\on{St}}/\cG))\to
\bD^b(\Coh(\on{pt}/\cB\underset{\tN/\cG}\times \check{\on{St}}/\cG)).
\end{equation}

\sssec{}

Consider now the DG scheme
$$\wt\Fl{}^\cG:=\on{pt}\underset{\cg}\times \tg.$$
By applying the Koszul duality to the equivalence of \cite{ABG} we
obtain an equivalence 
\begin{equation} \label{ABG}
\bD^f(\fD(\Gr^{\on{aff}}_G)_\crit\mod)^{I^0} \simeq 
\bD^b(\Coh(\wt\Fl{}^\cG/\cG)),
\end{equation}
as categories over $\on{pt}/\cG$.

\medskip

By an argument similar to that of \secref{perv vs coh}, there exists a fully faithful functor
$$\on{pt}/\cB\underset{\on{pt}/\cG}\times \bD^b(\Coh(\wt\Fl{}^\cG/\cG))\to
\bD^b(\Coh(\on{pt}/\cB\underset{\on{pt}/\cG}\times \wt\Fl{}^\cG/\cG)).$$

\sssec{}

Note now that 
$$\on{Fl}^{\cG}\underset{\tN}\times \check{\on{St}}\simeq
\on{Fl}^{\cG}\underset{\cg}\times \tg\simeq
\on{Fl}^{\cG}\times \wt\Fl{}^\cG,$$
and hence
$$\on{pt}/\cB\underset{\tN/\cG}\times \check{\on{St}}/\cG\simeq
\left(\on{Fl}^{\cG}\underset{\tN}\times \check{\on{St}}\right)/\cG\simeq
\left(\on{Fl}^{\cG}\times \wt{\on{Fl}}{}^{\cG}\right)/\cG\simeq 
\on{pt}/\cB\underset{\on{pt}/\cG}\times \wt\Fl{}^\cG/\cG,$$
and by the construction of the functors involved we have 
a commutative diagram 
$$
\CD
\on{pt}/\cB\underset{\tN/\cG}\times \bD^f(\fD(\Fl^{\on{aff}}_G)_\crit\mod)^{I^0} 
@>{\text{\eqref{ups I}}}>> \on{pt}/\cB\underset{\on{pt}/\cG}\times 
\bD^f(\fD(\Gr^{\on{aff}}_G)_\crit\mod)^{I^0}  \\
@V{\sim}VV    @V{\sim}VV   \\
\on{pt}/\cB\underset{\tN/\cG}\times \bD^b(\Coh(\check{\on{St}}/\cG))  & & 
\on{pt}/\cB\underset{\on{pt}/\cG}\times \bD^b(\Coh(\wt\Fl{}^\cG/\cG)) \\
@VVV    @VVV  \\
\bD^b(\Coh(\on{pt}/\cB\underset{\tN/\cG}\times \check{\on{St}}/\cG))
@>{\sim}>>
\bD^b(\Coh(\on{pt}/\cB\underset{\on{pt}/\cG}\times \wt\Fl{}^\cG/\cG)),
\endCD
$$
proving that \eqref{ups I} is fully faithful.

\medskip

\noindent{\it Remark.} The above interpretation in terms of
coherent sheaves makes it explicit why the functor $\Upsilon$
is not an equivalence. The reason is that the functor
\eqref{perf vs coh steinberg} is not an equivalence.

\ssec{}

We shall now give an alternative argument, proving \thmref{thm zero section},
which avoids the reference to \cite{Bez}. Namely, we are going to prove
\propref{comp zero section}.  

\medskip

By the same argument as in \secref{delta is enough}, it is enough 
to show that the compositions
\begin{equation} \label{comp 1 for delta}
(\iota_\Fl)_*\circ (\iota_\Fl)^*(\delta_{1,\Fl^{\on{aff}}_G}) \to
(\wt\iota_\Fl)_*\circ (\wt\iota_\Fl)^*(\delta_{1,\Fl^{\on{aff}}_G})
\to (\iota_\Fl)_*\circ (\iota_\Fl)^*(\delta_{1,\Fl^{\on{aff}}_G})
\end{equation}
and 
\begin{equation} \label{comp 2 for delta}
(\wt\iota_\Fl)_*\circ (\wt\iota_\Fl)^*(\delta_{1,\Fl^{\on{aff}}_G})
\to (\iota_\Fl)_*\circ (\iota_\Fl)^*(\delta_{1,\Fl^{\on{aff}}_G})\to
(\wt\iota_\Fl)_*\circ (\wt\iota_\Fl)^*(\delta_{1,\Fl^{\on{aff}}_G})
\end{equation}
are non-zero multiples of the identity map.

\medskip

This, in turn, breaks into three assertions:

\begin{prop} \label{comp non-zero}
The compositions \eqref{comp 1 for delta} and \eqref{comp 2 for delta}
are non-zero.
\end{prop}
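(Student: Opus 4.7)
The plan is to reduce both non-vanishing statements to a concrete computation on the affine Grassmannian. I apply $\Hom(\delta_{1,\Fl^{\on{aff}}_G},-)$ to each of the compositions \eqref{comp 1 for delta} and \eqref{comp 2 for delta}. Via the adjunctions $(\iota_\Fl)^*\dashv(\iota_\Fl)_*$ and $(\wt\iota_\Fl)^*\dashv(\wt\iota_\Fl)_*$, each composition becomes an endomorphism of an $\End$-space that contains the identity morphism as a distinguished element, and the non-vanishing of the original composition is equivalent to the resulting endomorphism not annihilating the identity.

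For the composition \eqref{comp 1 for delta}, the commutative diagram \eqref{barr beck} identifies the first arrow, under the Hom-adjunction isomorphism, with
$$\End((\iota_\Fl)^*(\delta_{1,\Fl^{\on{aff}}_G}))\xrightarrow{\Upsilon}\End((\wt\iota_\Fl)^*(\delta_{1,\Fl^{\on{aff}}_G})),$$
which manifestly sends the identity to the identity. The second arrow is governed by the parallel construction for the shriek adjunctions, exploiting the isomorphism $(\wt\iota_\Fl)^!\simeq\Upsilon\circ(\iota_\Fl)^!$. Since $(\wt\iota_\Fl)^*(\delta_{1,\Fl^{\on{aff}}_G})=J_{2\crho}\star\CW$, the one-dimensionality $\End(\CW)\simeq\BC$ from \cite{ABBGM}, Proposition 3.2.6(1), the averaging isomorphisms \eqref{av of baby} and \eqref{av of co-baby}, and the identification of \lemref{wt i p} together reduce the question to a concrete computation on $\Gr^{\on{aff}}_G$ involving the $(\on{Ind}^\cG_\cB,\on{Res}^\cG_\cB)$- and $(\on{co-Ind}^\cG_\cB,\on{Res}^\cG_\cB)$-adjunctions. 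Carrying out this computation using \eqref{Serre on Fl} will produce a scalar, which is precisely the image of the identity under the induced endomorphism, and the assertion is that this scalar is non-zero. The argument for \eqref{comp 2 for delta} is formally symmetric, swapping the roles of the $*$- and $!$-adjunctions.

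The chief technical obstacle is the careful tracking of the cohomological shift $[-\dim(\cG/\cB)]$ and the line-bundle twist by $\CL^{-2\crho}$ inherent in $(\wt\iota_\Fl)^!\simeq\CL^{-2\crho}\underset{\CO_{\on{pt}/\cB}}\otimes(\wt\iota_\Fl)^*[-\dim(\cG/\cB)]$ (and the parallel formula for $(\iota_\Fl)^!$) through both adjunctions. One must verify that these shifts and twists cancel correctly against the shift appearing in the Serre-duality isomorphism \eqref{Serre on Fl}, so that the final scalar is non-zero rather than an unfortunate $0$ arising from a sign or dimension mismatch. This bookkeeping, performed on $\on{Fl}^\cG/\cG\simeq\on{pt}/\cB$, is what makes the argument tight, and is the step where the parallelism with the Wakimoto/baby-Wakimoto description of $\CW$ as in \cite{ABBGM}, Corollary 3.2.3, or \cite{FG5}, Lemma 5.3, has to be invoked explicitly.
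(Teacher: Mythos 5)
The paper proves this proposition by a completely different route: it passes to the Whittaker ($'I^0,\psi$)-equivariant subcategories, where by \cite{FG2}, Theorem 15.8 and the main result of \cite{AB}, both base-changed categories become equivalent to $\bD^{perf}(\Coh(\on{pt}/\cB))$ and the functor $\Upsilon$ is identified with the identity functor. Non-vanishing of the compositions is then immediate, since an equivalence cannot annihilate a non-zero natural transformation. Your proposal does not use the Whittaker categories at all; instead you try to compute the scalar directly by applying $\Hom(\delta_{1,\Fl^{\on{aff}}_G},-)$, unwinding the two adjunctions, and invoking \eqref{av of baby}, \eqref{av of co-baby}, \eqref{Serre on Fl} and the $(\Ind^\cG_\cB,\on{co-Ind}^\cG_\cB,\Res^\cG_\cB)$ calculus on $\Gr^{\on{aff}}_G$.

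Your reduction step is sound: since $\End((\iota_\Fl)^*(\delta_{1,\Fl^{\on{aff}}_G}))$ and $\End((\wt\iota_\Fl)^*(\delta_{1,\Fl^{\on{aff}}_G}))$ are one-dimensional (\propref{endomorphisms}), tracking the image of the identity under $\Hom(\delta_{1,\Fl^{\on{aff}}_G},-)$ applied to \eqref{comp 1 for delta} and \eqref{comp 2 for delta} does suffice, and \eqref{barr beck} does identify the first arrow with $\Upsilon$ acting on $\End$-spaces, which preserves the identity. The gap is the second arrow. You write that ``carrying out this computation \dots\ will produce a scalar \dots\ and the assertion is that this scalar is non-zero,'' but this is precisely the thing to be proved, not something that can be deferred. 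The second arrow passes through the $((\iota_\Fl)_!,(\iota_\Fl)^!)$- and $((\wt\iota_\Fl)_!,(\wt\iota_\Fl)^!)$-adjunctions, and it is not a priori an $\End$-map coming from a functor, so there is no formal reason for it to preserve the identity. You correctly flag that the shift $[-\dim(\cG/\cB)]$ and the twist $\CL^{-2\crho}$ must be tracked through \eqref{Serre on Fl} and the Wakimoto identities in \cite{ABBGM}, Corollary 3.2.3, or \cite{FG5}, Lemma 5.3, but you stop there: you never show that the resulting pairing is the canonical Serre-duality pairing (hence non-degenerate, hence non-zero on the identity) rather than, say, a composite that factors through a lower cohomological degree and vanishes. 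Until this book-keeping is actually done, the argument is a plan, not a proof, and the step you call the ``chief technical obstacle'' is the whole content of the proposition.

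The paper's Whittaker argument sidesteps all of this: on the $\psi$-equivariant subcategory the categories trivialize to $\bD^{perf}(\Coh(\on{pt}/\cB))$, $\Upsilon$ becomes the identity, and the adjunction units and counits become literal identities, so there is no scalar to compute at all. If you want to salvage the direct computational route, you must explicitly evaluate the composite
\begin{equation*}
\on{co-Ind}^\cG_\cB(J_{2\crho}\star\CW)\to \delta_{1,\Gr^{\on{aff}}_G}[\dim(G/B)] \quad\text{and}\quad \delta_{1,\Gr^{\on{aff}}_G}\to\Ind^\cG_\cB(\CW)
\end{equation*}
against the pairing furnished by \eqref{Serre on Fl}, and check on the level of actual sheaves that the result is a generator of the one-dimensional $\End$-space. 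That is a substantial computation that your proposal does not carry out.
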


\begin{prop}   \label{from delta to delta}
The composition \eqref{comp 1 for delta} is the image by means of
$(\iota_\Fl)_*$ of a map 
$$(\iota_\Fl)^*(\delta_{1,\Fl^{\on{aff}}_G}) \to (\iota_\Fl)^*(\delta_{1,\Fl^{\on{aff}}_G})$$
in $\on{pt}/\cB\underset{\tN/\cG}\times 
\bD^f(\fD(\Fl^{\on{aff}}_G)_\crit\mod)$.
\end{prop}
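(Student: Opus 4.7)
The strategy is to unwind the adjunction-theoretic constructions of the natural transformations \eqref{basic nat trans star} and \eqref{basic nat trans star another} out of which \eqref{comp 1 for delta} is built, and to apply the triangle (zig-zag) identities in order to rewrite this composition as $(\iota_\Fl)_*$ applied to an endomorphism of $(\iota_\Fl)^*(\delta_{1,\Fl^{\on{aff}}_G})$ in the base-changed category.

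Recall from \secref{sect map i to it} that, for $Y\in\on{pt}/\cB\underset{\tN/\cG}\times \bD^f(\fD(\Fl^{\on{aff}}_G)_\crit\mod)$, the component of \eqref{basic nat trans star} at $Y$ is the morphism $(\iota_\Fl)_*(Y)\to(\wt\iota_\Fl)_*(\Upsilon(Y))$ obtained as the adjoint, under $(\wt\iota_\Fl)^*\dashv(\wt\iota_\Fl)_*$, of $\Upsilon(\epsilon_Y)\colon (\wt\iota_\Fl)^*(\iota_\Fl)_*(Y)\to\Upsilon(Y)$, where $\epsilon_Y\colon(\iota_\Fl)^*(\iota_\Fl)_*(Y)\to Y$ is the counit of $(\iota_\Fl)^*\dashv(\iota_\Fl)_*$. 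Dually, the component of \eqref{basic nat trans star another} at $Y$ is obtained from the unit $\eta_Y\colon Y\to(\iota_\Fl)^!(\iota_\Fl)_!(Y)$ of $(\iota_\Fl)_!\dashv(\iota_\Fl)^!$ by applying $\Upsilon$ and then taking the adjoint under $(\wt\iota_\Fl)_!\dashv(\wt\iota_\Fl)^!$, invoking the identifications $(\iota_\Fl)_!\simeq(\iota_\Fl)_*$ and $(\wt\iota_\Fl)_!\simeq(\wt\iota_\Fl)_*$.

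Specializing to $Y=(\iota_\Fl)^*(\delta_{1,\Fl^{\on{aff}}_G})$, I would then pass to the adjoint of the composition \eqref{comp 1 for delta} under $(\iota_\Fl)^*\dashv(\iota_\Fl)_*$, obtaining a morphism $(\iota_\Fl)^*(\iota_\Fl)_*(Y)\to Y$ in $\on{pt}/\cB\underset{\tN/\cG}\times \bD^f(\fD(\Fl^{\on{aff}}_G)_\crit\mod)$. Applying the triangle identities for the two ``$\wt{}$''-adjunctions in combination with the identifications $(\wt\iota_\Fl)^?\simeq\Upsilon\circ(\iota_\Fl)^?$ for $?=*$ and $?=!$, the ``$\wt{}$''-pieces telescope away and the adjoint morphism rewrites in the form $f\circ\epsilon_Y$ for a distinguished endomorphism $f$ of $(\iota_\Fl)^*(\delta_{1,\Fl^{\on{aff}}_G})$. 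By the universal property of the adjunction $(\iota_\Fl)^*\dashv(\iota_\Fl)_*$, a factorization of the adjoint morphism through the counit $\epsilon_Y$ is equivalent to the original composition being $(\iota_\Fl)_*(f)$, which is the content of the proposition.

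The main obstacle lies in the bookkeeping of the twist $(\iota_\Fl)^!\simeq\CL^{-2\crho}\underset{\CO_{\on{pt}/\cB}}\otimes(\iota_\Fl)^*[-\dim(\cG/\cB)]$, and its parallel for $(\wt\iota_\Fl)^!$, while running the zig-zag identities: the Serre-duality-style shifts on the two sides must cancel so that $f$ emerges as an honest endomorphism of $(\iota_\Fl)^*(\delta_{1,\Fl^{\on{aff}}_G})$ in the correct cohomological degree, rather than a map to a non-trivial twist. Once this factorization is in place, the companion statement \propref{comp non-zero} will force the resulting $f$ to be non-zero, feeding into the final step of the deduction of \thmref{thm zero section}.
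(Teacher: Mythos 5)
Your approach of running the triangle identities to ``telescope away'' the $\wt{\iota}_\Fl$-pieces does not work, because there is no abstract adjunction identity that factors $(\wt\iota_\Fl)_*$ through $(\iota_\Fl)_*$; the identities $(\wt\iota_\Fl)^*\simeq\Upsilon\circ(\iota_\Fl)^*$ and $(\wt\iota_\Fl)^!\simeq\Upsilon\circ(\iota_\Fl)^!$ are definitions (they construct the $\wt{}$-functors from the untilded ones and $\Upsilon$), not cancellation relations, and applying adjunction to them can never make $\Upsilon$ disappear. The crucial obstacle your plan glosses over is that $(\iota_\Fl)_*$ is \emph{not} fully faithful (even though $\iota_*$ is fully faithful at the level of $\bD^{perf}(\Coh)$ categories, the base change $\on{Id}\underset{\bA}\otimes\iota_*$ does not inherit this, as one sees from the formula $\Hom((\iota_\Fl)^*\CF_1,(\iota_\Fl)^*\CF_2)\simeq\Hom(\CF_1,\iota_*\CO_{\on{pt}/\cB}\underset{\CO_{\tN/\cG}}\otimes\CF_2)$ of \corref{tight mon}(2), which differs from the $\Hom$ between the pushforwards). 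Therefore the statement to be proved is genuinely non-formal: the middle term $(\wt\iota_\Fl)_*\circ(\wt\iota_\Fl)^*(\delta_{1,\Fl^{\on{aff}}_G})$ is not \emph{a priori} in the essential image of $(\iota_\Fl)_*$, and a morphism passing through an object outside that image need not be in the image of $(\iota_\Fl)_*$. Your reduction to ``the adjoint morphism rewrites as $f\circ\epsilon_Y$'' simply restates the desired factorization; it is not a step that can be derived from the zig-zag identities.

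The missing ingredient is precisely the functor $\Omega$ from \secref{omega} and \propref{hands on Omega}, which give the non-trivial identification $(\iota_\Fl)_*\bigl(\Omega(\CF)\bigr)\simeq(\wt\iota_\Fl)_*(\CF)$, thus exhibiting $(\wt\iota_\Fl)_*\circ(\wt\iota_\Fl)^*(\delta_{1,\Fl^{\on{aff}}_G})$ explicitly as $(\iota_\Fl)_*$ of a concrete object $\Omega\circ(\wt\iota_\Fl)^*(\delta_{1,\Fl^{\on{aff}}_G})$ sitting in the heart of the $t$-structure on the base-changed category. This relies on the Koszul-complex degeneration of \lemref{star star delta} and the $t$-exactness results of \secref{t structure and Grass}, none of which are captured by pure adjunction arguments. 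After this identification, the two maps of \eqref{map i to om} are produced by adjunction, and the remaining step is a $1$-dimensionality-of-$\Hom$ argument to match them (up to scalar) with \eqref{map i to it} and \eqref{map it to i}. Your worry about the Serre-duality twists is also misplaced: both endpoints of the composition are the \emph{same} object $(\iota_\Fl)_*\circ(\iota_\Fl)^*(\delta_{1,\Fl^{\on{aff}}_G})$, so whatever endomorphism arises is automatically in degree zero and untwisted, and the $\CL^{-2\crho}[-\dim]$ twist between $(\cdot)^*$ and $(\cdot)^!$ is internal to the construction of the two natural transformations, not something that needs to ``cancel'' at the end.
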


\begin{prop} \label{endomorphisms}   \hfill

\smallskip

\noindent(1) 
$\End\left((\wt\iota_\Fl)_*\circ (\wt\iota_\Fl)^*(\delta_{1,\Fl^{\on{aff}}_G})\right)\simeq \BC$.

\smallskip

\noindent(2) 
$\End\left((\iota_\Fl)^*(\delta_{1,\Fl^{\on{aff}}_G})\right)\simeq \BC$.

\end{prop}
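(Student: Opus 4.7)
The plan is to reduce both endomorphism algebras to explicit $\Hom$-computations in $\bD^f(\fD(\Fl^{\on{aff}}_G)_\crit\mod)$, and evaluate them using Koszul-type resolutions associated to the zero section.

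For (2), I would apply the analogue of \eqref{hom in base change nilp} for the smooth morphism $\iota:\on{pt}/\cB \to \tN/\cG$ to obtain
$$\End\bigl((\iota_\Fl)^*(\delta_{1,\Fl})\bigr)\;\simeq\;\Hom\bigl(\delta_{1,\Fl},\, \iota_*(\CO_{\on{pt}/\cB}) \underset{\CO_{\tN/\cG}}\otimes \delta_{1,\Fl}\bigr).$$
Since $\delta_{1,\Fl}$ is the unit for convolution, the RHS simplifies to $\Hom(\delta_{1,\Fl},\,\sF(\iota_*\CO_{\on{pt}/\cB}))$. Now $\iota_*\CO_{\on{pt}/\cB}$ on $\tN/\cG$ admits a Koszul resolution whose $(-i)$-th term is $\pi^*(\Lambda^i(\cg/\cb))$, with differential given by contraction against the tautological section of $\pi^*T^*_{\on{Fl}^{\cG}}$. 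After further resolving each $\cB$-module $\Lambda^i(\cg/\cb)$ in terms of objects of the form $\CL^\cla \otimes \Res^\cG_\cB(V)$, one obtains an explicit description of $\sF(\iota_*\CO_{\on{pt}/\cB})$ as a complex built from convolutions $J_\cla \star Z_V$; the remaining $\Hom(\delta_{1,\Fl},-)$ computation should collapse onto the top term $\sF(\CO_{\tN/\cG}) = \delta_{1,\Fl}$, giving $\BC$.

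For (1), by the adjunction $(\wt\iota_\Fl)^* \dashv (\wt\iota_\Fl)_*$ from \propref{i star adj},
$$\End\bigl((\wt\iota_\Fl)_*(\wt\iota_\Fl)^*(\delta_{1,\Fl})\bigr)\;\simeq\;\Hom\bigl((\wt\iota_\Fl)^*(\delta_{1,\Fl}),\;(\wt\iota_\Fl)^*(\wt\iota_\Fl)_*(\wt\iota_\Fl)^*(\delta_{1,\Fl})\bigr).$$
Using $(\wt\iota_\Fl)^*(\delta_{1,\Fl}) = J_{2\crho}\star\CW$, the defining property $(\wt\iota_\Fl)_* \circ \Res^\cG_\cB = p^{!*}$ built into the construction of $(\wt\iota_\Fl)_*$ in \secref{constr wt i}, the isomorphism \eqref{j-inv Wak}, and the filtration of $\CW$ with subquotients $V\otimes_{\CO_{\on{pt}/\cG}}\CF'$ (\cite{ABBGM}), the computation reduces to a $\Hom$-space on the affine Grassmannian side. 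This in turn is controlled by $\End(\CW)\simeq \BC$ together with the analysis of $J_{2\crho}\star\CW\star\ldots$ via the central-sheaf isomorphism \eqref{left central}.

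The main obstacle in both parts is controlling the higher $\on{Ext}$ groups that appear in the spectral sequences coming from the Koszul resolution (for (2)) and from the filtration of $\CW$ (for (1)). This requires understanding the interaction between $\sF$ and the new $\bbt$-structure on $\bD_{ren}(\fD(\Fl^{\on{aff}}_G)_\crit\mod)$ (cf.\ \propref{F right-exact and heart}) and of the interaction of $p^{!*}$ with the $\bbt$-structures on both sides. This $\bbt$-structure analysis is the content of \secref{t structure and Grass}, and is to be completed in \secref{calc of endomorphisms}.
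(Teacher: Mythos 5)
Your overall plan — adjunction to reduce $\End$ to a $\Hom$ into the object $\sF(\CO_{\on{pt}/\cB})$, then the Koszul resolution of the zero section — is exactly the shape of the paper's argument for part (2), and likewise the reduction of part (1) to $\End(\CW)\simeq \BC$ matches the paper. But there are real gaps.

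For part (2), the key missing ingredient is the vanishing
$$R\Hom\bigl(\delta_{1,\Fl^{\on{aff}}_G},\,J_\cla\bigr)=0 \text{ for } \cla\in \cLambda^{pos}\setminus\{0\},$$
which is what makes the Koszul complex contribute nothing outside degree $0$. The paper proves this by translating to $R\Hom(j_{\cmu,*},j_{\cla+\cmu,*})$ for $\cmu$ dominant and $\cla+\cmu$ dominant, and then invoking the fact that $R\Hom$ between costandard objects is supported only on orbits that are smaller in the Bruhat order, together with $|\cla+\cmu|>|\cmu|$. Your phrase ``should collapse onto the top term'' does not supply this, and without it there is no argument. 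You are also overcomplicating the reduction: the Koszul terms $\pi^*(\Lambda^k(\cn^*))$ are already filtered by one-dimensional $\cB$-modules $\CL^\cla$ (with $\cla\in\cLambda^{pos}$, and $\cla=0$ excluded for $k\neq 0$), so there is no need to further resolve by $\CL^\cla\otimes \Res^\cG_\cB(V)$ or to invoke $Z_V$. Also $\iota$ is a regular closed immersion, not a smooth morphism; the $\Hom$ formula you want is still available via \corref{tight mon}(2) (or directly via the $(\iota_\Fl)^*,(\iota_\Fl)_*$ adjunction), but the justification you gave is incorrect.

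For part (1), your adjunction has the wrong direction: with $(\wt\iota_\Fl)^*\dashv (\wt\iota_\Fl)_*$ one gets
$$\End\bigl((\wt\iota_\Fl)_*(\wt\iota_\Fl)^*(\delta)\bigr)\simeq \Hom\bigl((\wt\iota_\Fl)^*(\wt\iota_\Fl)_*(\wt\iota_\Fl)^*(\delta),\,(\wt\iota_\Fl)^*(\delta)\bigr),$$
not the formula you wrote. More substantively, the step that actually closes the argument is the identification, for $\CF'$ in $\bD^{\leq 0}$, of $H^0\bigl((\wt\iota_\Fl)^*\circ (\wt\iota_\Fl)_*(\CF')\bigr)$ with $\CF'$ (this is \propref{comp i* i*}), which is proved by reducing to $\CF'=\Res^\cG_\cB(\CF'')$ and explicitly computing the cohomology of $p^{!*}(\delta_{1,\Gr^{\on{aff}}_G})\star J_{2\crho}\star \CW$ — its $H^0$ is $\Res^\cG_\cB(\delta_{1,\Gr^{\on{aff}}_G})$ and the lower cohomologies are built from $\CL^\cla\underset{\CO_{\on{pt}/\cB}}\otimes\Res^\cG_\cB(\delta)$ with multiplicities $\dim\Lambda^k(\cn^*)_\cla$. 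You gesture at the right ingredients but do not extract this statement, which is what turns the adjunction into the isomorphism $\End(\ldots)\simeq \End(J_{2\crho}\star\CW)\simeq\End(\CW)\simeq\BC$. As you acknowledge yourself, the proposal is more a roadmap than a proof; the t-structure control you defer to \secref{t structure and Grass} and \secref{calc of endomorphisms} is precisely where the real work is.
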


We shall now prove \propref{comp non-zero}. Propositions
\ref{from delta to delta} and \ref{endomorphisms} will be
proved in \secref{calc of endomorphisms}.

\ssec{}

To prove \propref{comp non-zero}, we will consider a
non-degenerate character $\fn\to \BG_a$, and denote
by $\psi_0$ the corresponding character of $I^0$:
$$\psi:I^0\twoheadrightarrow \fn\to \BG_a.$$

\medskip

Consider the corresponding equivariant subcategories
$$\bD^f(\fD(\Gr^{\on{aff}}_G)_\crit\mod)^{'I^0,\psi}\subset \bD^f(\fD(\Gr^{\on{aff}}_G)_\crit\mod)$$
and
$$\bD^f(\fD(\Fl^{\on{aff}}_G)_\crit\mod)^{'I^0,\psi}\subset \bD^f(\fD(\Fl^{\on{aff}}_G)_\crit\mod),$$
where $'I^0$ is the conjugate of $I^0$ by means of the element
$t^\crho\in T\ppart$.

\medskip

The functor $\Upsilon$ induces a functor
\begin{equation} \label{ups whit}
\on{pt}/\cB\underset{\tN/\cG}\times 
\bD^f(\fD(\Fl^{\on{aff}}_G)_\crit\mod)^{'I^0,\psi}\to 
\on{pt}/\cB\underset{\on{pt}/\cG}\times 
\bD^f(\fD(\Gr^{\on{aff}}_G)_\crit\mod)^{'I^0,\psi}.
\end{equation}

\medskip

To prove \propref{endomorphisms} it is enough to show that
the compositions \eqref{comp 1 for delta} and \eqref{comp 2 for delta}
do not vanish, as natural transformations between functors from
$\bD^f(\fD(\Fl^{\on{aff}}_G)_\crit\mod)^{'I^0,\psi}$ to itself. 

\medskip

The latter would follow, once we show that the functor
\eqref{ups whit} is fully faithful. We claim that the latter functor
is in fact an equivalence.

\sssec{}

Indeed, by \cite{FG2}, Theorem 15.8 (or \cite{ABBGM}, Corollary 2.2.3)
we have a canonical equivalence
$$\bD^f(\fD(\Gr^{\on{aff}}_G)_\crit\mod)^{'I^0,\psi}\simeq \bD^{perf}(\Coh(\on{pt}/\cG)),$$
hence, 
$$\on{pt}/\cB\underset{\on{pt}/\cG}\times 
\bD^f(\fD(\Gr^{\on{aff}}_G)_\crit\mod)^{'I^0,\psi} \simeq \bD^{perf}(\Coh(\on{pt}/\cB)).$$

\medskip

The main result of \cite{AB} asserts that the category 
$\bD^f(\fD(\Fl^{\on{aff}}_G)_\crit\mod)^{'I^0,\psi}$, viewed as a triangulated category
over $\tN/\cG$, is canonically equivalent to $\bD^{perf}(\Coh(\tN/\cG))$.
Therefore,
$$\on{pt}/\cB\underset{\tN/\cG}\times 
\bD^f(\fD(\Fl^{\on{aff}}_G)_\crit\mod)^{'I^0,\psi}\simeq \bD^{perf}(\Coh(\on{pt}/\cB)).$$

\medskip

Moreover, from the construction of the functor of \cite{AB} and 
\cite{ABBGM}, Proposition 3.2.6(1),
the functor \eqref{ups whit} corresponds to the identity functor
on $\bD^{perf}(\Coh(\on{pt}/\cB))$, implying our assertion.

\section{New $\bbt$-structure and the affine Grassmannian}  \label{t structure and Grass}

In this section we will study how the new t-structure on $\bD_{ren}(\fD(\Fl^{\on{aff}}_G)_\crit\mod)$
behaves with respect to the functors $(\wt\iota_\Fl)^*$, $(\wt\iota_\Fl)_*$ and $\Upsilon$.

\ssec{}

Let us recall that 
\begin{equation}  \label{B G Grasmmannian ind}
\on{pt}/\cB\underset{\on{pt}/\cG} \arrowtimes 
\bD^f(\fD(\Gr^{\on{aff}}_G)_\crit\mod)
\end{equation} denotes the ind-completion
of the category 
$$\on{pt}/\cB\underset{\on{pt}/\cG} \times 
\bD^f(\fD(\Gr^{\on{aff}}_G)_\crit\mod)$$
(see \secref{base change}). By \secref{t structure ten prod},
the category \eqref{B G Grasmmannian ind} acquires a 
compactly generated t-structure. It is characterized by the property
that the $\leq 0$ subcategory is generated by objects of the form
$$V\underset{\CO_{\on{pt}/\cG}}\otimes \CF,$$
$V\in \bD^{perf,\leq 0}(\Coh(\on{pt}/\cB))$,
$\CF\in \bD^{f,\leq 0}(\fD(\Gr^{\on{aff}}_G)_\crit\mod)$.

\sssec{}

Let us denote by the same characters $(\wt\iota_\Fl)^*,(\wt\iota_\Fl)_*=(\wt\iota_\Fl)_!$,
$(\wt\iota_\Fl)^!$
the ind-extensions of the functors from \secref{ups adj}
$$\bD_{ren}(\fD(\Fl^{\on{aff}}_G)_\crit\mod)\leftrightarrows
\on{pt}/\cB\underset{\on{pt}/\cG} \arrowtimes 
\bD^f(\fD(\Gr^{\on{aff}}_G)_\crit\mod).$$
They satisfy the same adjunction properties as the original functors.

\medskip

\begin{prop}  \label{iota ex}   With respect to the new t-structure on
$\bD_{ren}(\fD(\Fl^{\on{aff}}_G)_\crit\mod)$ we have:

\smallskip

\noindent{\em(a)}
$(\wt\iota_\Fl)^*$ is right-exact,

\smallskip

\noindent{\em(b)} $(\wt\iota_\Fl)_*$ is exact.

\smallskip

\noindent{\em(c)} 
$(\wt\iota_\Fl)^!$ is left-exact.
\end{prop}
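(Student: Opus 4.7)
By \propref{i star adj} (and its $!$-analogue established in \secref{ups adj}), we have adjunctions $(\wt\iota_\Fl)^* \dashv (\wt\iota_\Fl)_* = (\wt\iota_\Fl)_! \dashv (\wt\iota_\Fl)^!$, so part (b) is formal from (a) and (c): right-exactness of the left adjoint $(\wt\iota_\Fl)^*$ gives left-exactness of $(\wt\iota_\Fl)_*$, and left-exactness of the right adjoint $(\wt\iota_\Fl)^!$ gives right-exactness of $(\wt\iota_\Fl)_! = (\wt\iota_\Fl)_*$. The substantive content therefore lies in (a) and (c), which I plan to establish in parallel using the explicit formulas $(\wt\iota_\Fl)^*(\CF) = \CF \star J_{2\crho} \star \CW$ and, after combining the definition of $(\wt\iota_\Fl)^!$ with \eqref{j-inv Wak}, $(\wt\iota_\Fl)^!(\CF) \simeq (\CF \star \CW)[-\dim(\cG/\cB)]$ (up to the exact twist by $\CL^{-2\crho}$, which shuffles the grading by the $\cB$-weight lattice).

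For (a), convolution with $J_{2\crho}$ is t-exact in the new t-structure by the very definition of the latter, so it suffices to show that $?\star\CW$ carries $\bD_{ren}^{\leq 0_{new}}(\fD(\Fl^{\on{aff}}_G)_\crit\mod)$ into the non-positive part of the tensor-product t-structure on the target $\on{pt}/\cB\underset{\on{pt}/\cG}\arrowtimes\bD^f(\fD(\Gr^{\on{aff}}_G)_\crit\mod)$. The key input is the finite filtration of $\CW$ in the heart of the base-changed abelian category (\cite{ABBGM}, Corollary 1.3.10 and Proposition 3.2.6, already invoked in the proof of the lemma in \secref{constr wt i}), whose subquotients have the form $V\underset{\CO_{\on{pt}/\cG}}\otimes \CF'$ with $V\in \Rep^{f.d.}(\cB)$ and $\CF'\in \fD(\Gr^{\on{aff}}_G)_\crit\mod^I$ finitely generated. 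Since the $V$-factor acts exactly, the problem reduces to showing: for each such $\CF'$, the functor $?\star\CF'$ carries $\bD_{ren}^{\leq 0_{new}}$ on $\Fl^{\on{aff}}_G$ into $\bD^{\leq 0_{old}}$ on $\Gr^{\on{aff}}_G$. I would prove this last step by writing $\CF'$ as an iterated extension of Schubert-cell $I$-equivariant D-modules (which are in turn expressible as $p_!$ of appropriate $j_{w,!}$-type objects on $\Fl^{\on{aff}}_G$), then invoking the projection formula together with the defining property $\CG\star J_\cla\in\bD^{\leq 0_{old}}$ of the new t-structure, using \propref{finite deviation} to absorb finite cohomological deviations.

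For (c), the argument is dual: the same filtration of $\CW$ provides an upper cohomological bound on $\CF\star\CW$, and the overall shift $[-\dim(\cG/\cB)]$ converts this bound into the desired left-exactness. Combining (a) and (c) yields (b) via the adjunctions recalled above. \textbf{The main obstacle} is the technical reduction singled out in the previous paragraph: translating the convolutional characterisation of the new t-structure on $\Fl^{\on{aff}}_G$ (via the $J_\cla$) into a cohomological amplitude bound for convolution with $I$-equivariant D-modules on $\Gr^{\on{aff}}_G$. This cross-comparison between the Fl-side and Gr-side is exactly what \secref{t structure and Grass} is set up to develop, and I expect the full argument to rest on a series of preparatory lemmas describing how convolution with $j^\Gr_{w,!}$- and $j^\Gr_{w,*}$-type objects interacts with both t-structures.
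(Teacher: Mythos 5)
Your formal framework is correct: $(\wt\iota_\Fl)^*\dashv(\wt\iota_\Fl)_*=(\wt\iota_\Fl)_!\dashv(\wt\iota_\Fl)^!$, and right-exactness of the left adjoint plus left-exactness of the right adjoint yield exactness of the middle term. This is valid, though it inverts the paper's sequencing: the paper establishes the \emph{right}-exactness of $(\wt\iota_\Fl)_*$ directly, then deduces (c) by adjunction, then establishes (a) directly, and only then obtains (b)'s left-exactness by adjunction from (a).

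The substantive gap is in your argument for (a). You propose to run the filtration of $\CW$ and reduce to controlling the amplitude of $?\star\CF'$ from the new t-structure on $\Fl^{\on{aff}}_G$ to the old one on $\Gr^{\on{aff}}_G$, via a putative devissage of $\CF'$ into Schubert-cell D-modules and the projection formula. You flag this yourself as the main obstacle, and rightly so: nothing in the paper supports the claim that the subquotients $\CF'$ admit a devissage of the required shape, nor that such a devissage would give the amplitude bound you need after convolving with an object satisfying only the $J_\cla$-condition on $\Fl^{\on{aff}}_G$. The paper avoids the $\CW$-filtration entirely in the proof of (a). Instead it uses the characterization of being $\leq 0$ in the base-changed category on $\Gr^{\on{aff}}_G$ via the co-induction functors (a lemma in \secref{t structure and Grass}) together with \lemref{wt i p}, which reads $\on{co-Ind}^\cG_\cB\circ(\wt\iota_\Fl)^*\simeq p_![\dim(G/B)]$; this identity is obtained from \eqref{av of baby}, where the $\CW$-part collapses to $\delta_{1,\Gr^{\on{aff}}_G}$, so that the problem becomes simply the (obvious) right-exactness of $p_![\dim(G/B)]$. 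This collapse is the key simplification your plan misses.

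Similarly, for (c) you write ``the argument is dual'' and leave it at that. In the paper, (c) is not proved by a dual filtration argument: it follows by adjunction from the right-exactness of $(\wt\iota_\Fl)_!=(\wt\iota_\Fl)_*$, and \emph{that} right-exactness is established via the exactness of $p^{!*}$, which in turn reduces to \lemref{conv with Gr} (a statement about convolution with $G[[t]]$-equivariant D-modules on $G\ppart$, not about the $J_\cla$). So the cross-comparison between the new t-structure on $\Fl^{\on{aff}}_G$ and the old one on $\Gr^{\on{aff}}_G$ is channeled entirely through $p_!$ and $p^{!*}$, both of which have evident amplitude control; none of it goes through the $\CW$-filtration. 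You should replace the proposed filtration argument for (a) by the $\on{co-Ind}$/\lemref{wt i p} reduction, and for the other direction prove right-exactness of $(\wt\iota_\Fl)_*$ directly via $p^{!*}$ and \lemref{conv with Gr}, then get (c) by adjunction.
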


\begin{proof}

Let us first show that $(\wt\iota_\Fl)_*$ is right-exact. This amounts to the next:

\begin{lem}
The functor 
$$p^{!*}:\bD(\fD(\Gr^{\on{aff}}_G)_\crit\mod)\to \bD_{ren}(\fD(\Fl^{\on{aff}}_G)_\crit\mod)$$
is right-exact (and, in fact, exact) in the new t-structure.
\end{lem}

\medskip

\noindent{\it Remark.} Note that $p^{!*}$ is exact and hence left-exact
in the old t-structure, and hence is left-exact in the new t-structure. Hence,
the essential image of $p^{!*}$ provides a collection of objects that
belong to the hearts of both t-structures. We remind that another such collection 
was given by \lemref{finite flags}. 

\begin{proof}

The assertion of the lemma is equivalent to the fact that
$$p^{!*}(\CF)\star J_{-\cla}\in \bD_{ren}^{\leq 0_{old}}(\fD(\Fl^{\on{aff}}_G)_\crit\mod)$$
for any $\cla\in \cLambda^+$. We have:
$$p^{!*}(\CF)\star J_{-\cla}\simeq 
\CF\underset{G[[t]]}\star 
\left(p^{!*}(\delta_{1,\Gr^{\on{aff}}_G})\underset{I}\star J_{-\cla}\right).$$
Since $p^{!*}(\delta_{1,\Gr^{\on{aff}}_G})\underset{I}\star J_{-\cla}\in \fD(\Fl^{\on{aff}}_G)_\crit\mod^{G[[t]]}$,
the assertion follows from the next:

\begin{lem}   \label{conv with Gr}
For $\CF_1\in \fD(\Gr^{\on{aff}}_G)_\crit\mod$ and $\CF_2\in \fD\left(G\ppart\right)_\crit\mod^{G[[t]]}$,
the convolution $\CF_1\underset{G[[t]]}\star \CF_2\in \bD(\fD\left(G\ppart\right)_\crit\mod)$
is acylcic off cohomological degree $0$.
\end{lem}

The lemma is proved by repeating the argument of \cite{Ga}, Theorem 1(a), or \cite{ABBGM}, Sect. 2.1.3.

\end{proof}

\medskip

The right-exactness of $(\wt\iota_\Fl)_*=(\wt\iota_\Fl)_!$ implies
by adjunction the left-exactness of $(\wt\iota_\Fl)^!$. Hence, 
it remains to show that $(\wt\iota_\Fl)^*$ is right-exact; this would imply 
the left-exactness of $(\wt\iota_\Fl)_*$ also by adjunction.

\medskip

\begin{lem}
An object 
$$\CF'\in \on{pt}/\cB\underset{\on{pt}/\cG}\times \bD^f(\fD(\Gr^{\on{aff}}_G)_\crit\mod)$$
is $\leq 0$, as an object of
$\on{pt}/\cB\underset{\on{pt}/\cG}\arrowtimes \bD^f(\fD(\Gr^{\on{aff}}_G)_\crit\mod)$
if and only if for all $\cla$,
$$\on{co-Ind}^{\cG}_\cB(\CL^\cla\otimes \CF) \in \bD^f(\fD(\Gr^{\on{aff}}_G)_\crit\mod)$$
is $\leq 0$.
\end{lem}

\begin{proof}

This follows from the fact that for $\CF'$ as in the lemma,
which is $\leq k$ for some $k$, there exists
$\cla$ deep inside the dominant chamber so that
$$\on{Cone}\left(\Res^\cG_\cB\circ \on{co-Ind}^{\cG}_\cB(\CL^{-\cla}\otimes \CF)
\to \CL^{-\cla}\otimes \CF\right)$$
is $\leq k-1$.

\end{proof}

Hence, to prove that $(\wt\iota_\Fl)^*$ is right-exact, it suffices to
show that the composition 
$$\on{co-Ind}^{\cG}_\cB\circ (\wt\iota_\Fl)^*:
\bD_{ren}(\fD(\Fl^{\on{aff}}_G)_\crit\mod)\to \bD_{ren}(\fD(\Fl^{\on{aff}}_G)_\crit\mod)$$
is right-exact. The latter holds due to \lemref{wt i p}.

\end{proof}

\ssec{}   \label{t structure on B N Flags ind}

Consider the category
\begin{equation} \label{B N Flags ind}
\on{pt}/\cB\underset{\tN/\cG} \arrowtimes 
\bD^f(\fD(\Fl^{\on{aff}}_G)_\crit\mod),
\end{equation} 
which is the ind-completion of the category 
$$\on{pt}/\cB\underset{\tN/\cG} \times 
\bD^f(\fD(\Fl^{\on{aff}}_G)_\crit\mod).$$

By \secref{t structure ten prod}, the new t-structure on
$\bD_{ren}(\fD(\Fl^{\on{aff}}_G)_\crit\mod)$ gives rise to a t-structure
on \eqref{B N Flags ind}. It is characterized by the property
that the $\leq 0$ category is generated by the objects
of the form
$$\CM\underset{\CO_{\tN/\cG}}\otimes \CF$$
for $\CM\in \bD^{perf,\leq 0}(\Coh(\on{pt}/\cB))$
and $\CF\in \bD^f(\fD(\Fl^{\on{aff}}_G)_\crit\mod)\cap 
\bD^{\leq 0}_{ren}(\fD(\Fl^{\on{aff}}_G)_\crit\mod)$.

\medskip

The ind-extension of $(\iota_\Fl)^*$:
$$(\iota_\Fl)^*:\bD_{ren}(\fD(\Fl^{\on{aff}}_G)_\crit\mod)\to
\on{pt}/\cB\underset{\tN/\cG} \arrowtimes 
\bD^f(\fD(\Fl^{\on{aff}}_G)_\crit\mod)$$
is tautologically right-exact, and its right adjoint
$$(\iota_\Fl)_*:\on{pt}/\cB\underset{\tN/\cG} \arrowtimes 
\bD^f(\fD(\Fl^{\on{aff}}_G)_\crit\mod)\to \bD_{ren}(\fD(\Fl^{\on{aff}}_G)_\crit\mod)$$
is exact, by \propref{affine exact}.

\sssec{}

As a formal corollary of \propref{iota ex}(a), we obtain:

\begin{cor} \label{Ups right-exact}
The functor
$$\Upsilon:\on{pt}/\cB\underset{\tN/\cG} \arrowtimes 
\bD^f(\fD(\Fl^{\on{aff}}_G)_\crit\mod)\to 
\on{pt}/\cB\underset{\on{pt}/\cG}\arrowtimes \bD^f(\fD(\Gr^{\on{aff}}_G)_\crit\mod)$$
is right-exact.
\end{cor}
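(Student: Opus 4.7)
The plan is to reduce the claim to \propref{iota ex}(a) by exploiting the explicit description of the t-structure on the base-changed category given in \secref{t structure on B N Flags ind}. Since $\Upsilon$ is, by its very construction, the ind-extension of a functor between small categories, it commutes with arbitrary direct sums; and since the $\leq 0$ part of the new t-structure on the LHS is characterized as the smallest cocomplete subcategory containing an explicit set of generators, it suffices to verify that $\Upsilon$ sends each such generator to an object of the $\leq 0$ part of
$$\on{pt}/\cB\underset{\on{pt}/\cG}\arrowtimes \bD^f(\fD(\Gr^{\on{aff}}_G)_\crit\mod).$$

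Concretely, the first step is to recall from \secref{t structure on B N Flags ind} that the $\leq 0$ category on the LHS is generated by objects of the form
$\CM\underset{\CO_{\on{pt}/\cB}}\otimes (\iota_\Fl)^*(\CF)$
with $\CM\in \bD^{perf,\leq 0}(\Coh(\on{pt}/\cB))$ and $\CF\in \bD^f(\fD(\Fl^{\on{aff}}_G)_\crit\mod)\cap \bD^{\leq 0_{new}}_{ren}(\fD(\Fl^{\on{aff}}_G)_\crit\mod)$. The next step is to use that $\Upsilon$, being a functor over $\on{pt}/\cB$, is $\CO_{\on{pt}/\cB}$-linear, so that
$$\Upsilon\!\left(\CM\underset{\CO_{\on{pt}/\cB}}\otimes (\iota_\Fl)^*(\CF)\right)
\;\simeq\; \CM\underset{\CO_{\on{pt}/\cB}}\otimes (\wt\iota_\Fl)^*(\CF),$$
where I have used the defining relation $\Upsilon\circ (\iota_\Fl)^*\simeq (\wt\iota_\Fl)^*$ from \secref{tilde ups}. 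The third step is to apply \propref{iota ex}(a) to conclude that $(\wt\iota_\Fl)^*(\CF)$ lies in $\leq 0$ of the RHS. Finally, by the characterization of the t-structure on the RHS recalled at the start of the section (which is generated by $V\underset{\CO_{\on{pt}/\cG}}\otimes \CF'$ with $V,\CF'\leq 0$), tensoring an object that is $\leq 0$ with $\CM\leq 0$ over $\CO_{\on{pt}/\cB}$ remains $\leq 0$; this last point is a formal consequence of the tensor-product construction of t-structures in \secref{t structure ten prod}.

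There is essentially no obstacle here, which is why the statement is labelled a formal corollary: the substantive content has already been absorbed into \propref{iota ex}(a). The only points that need a moment of care are (i) the verification that $\Upsilon$ does commute with the $\CO_{\on{pt}/\cB}$-tensor action on the nose, which is built into its construction as a functor over $\on{pt}/\cB$, and (ii) the compatibility of the $\leq 0$ part with the formation of colimits (needed so that checking right-exactness on generators suffices), which is \lemref{t general}-type formalism from \secref{t structure ten prod}.
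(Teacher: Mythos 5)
Your argument is correct and is exactly the formal deduction the paper has in mind when it writes "As a formal corollary of \propref{iota ex}(a)": you identify the generators of the $\leq 0$ part of the t-structure on the base-changed category, use $\CO_{\on{pt}/\cB}$-linearity of $\Upsilon$ together with $\Upsilon\circ(\iota_\Fl)^*\simeq(\wt\iota_\Fl)^*$, and then invoke the right-exactness of $(\wt\iota_\Fl)^*$ and the right-exactness of the $\bD^{perf}(\Coh(\on{pt}/\cB))$-action on the target. You have simply spelled out the steps that the paper leaves implicit.
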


\section{Calculation of endomorphisms and the functor $\Omega$}  
\label{calc of endomorphisms}

In this section we will prove Propositions \ref{from delta to delta}
and \ref{endomorphisms}. This section can 
be considered redundant by a reader willing to 
accept the proof of \thmref{thm zero section}, based on \cite{Bez}
given in \secref{zero section}.

\ssec{Proof of \propref{endomorphisms}(1)}

We have:
\begin{multline*}
\Hom\left((\wt\iota_\Fl)_*\circ (\wt\iota_\Fl)^*(\delta_{1,\Fl^{\on{aff}}_G}),
(\wt\iota_\Fl)_*\circ (\wt\iota_\Fl)^*(\delta_{1,\Fl^{\on{aff}}_G})\right)\simeq \\
\simeq \Hom\left((\wt\iota_\Fl)^*\circ (\wt\iota_\Fl)_*\circ (\wt\iota_\Fl)^*(\delta_{1,\Fl^{\on{aff}}_G}),
(\wt\iota_\Fl)^*(\delta_{1,\Fl^{\on{aff}}_G})\right).
\end{multline*}

By construction, $(\wt\iota_\Fl)^*(\delta_{1,\Fl^{\on{aff}}_G})\simeq J_{2\rho}\star \CW$
belongs to the heart of the t-structure on the category
$\on{pt}/\cB\underset{\on{pt}/\cG}\arrowtimes \bD^f(\fD(\Gr^{\on{aff}}_G)_\crit\mod)$.

\medskip

We have the following assertion

\begin{prop}  \label{comp i* i*}
For
$\CF'\in \on{pt}/\cB\underset{\on{pt}/\cG}\arrowtimes \bD^f(\fD(\Gr^{\on{aff}}_G)_\crit\mod)$,
which is $\leq 0$, the 
adjunction map $(\wt\iota_\Fl  )^*\circ (\wt\iota_\Fl  )_*(\CF')\to \CF'$
induces an isomorphism
$$H^0\left((\wt\iota_\Fl  )^*\circ (\wt\iota_\Fl  )_*(\CF')\right)\to \CF'.$$
\end{prop}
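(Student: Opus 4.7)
The plan is to reduce \propref{comp i* i*} to a concrete calculation involving the object $\CW$ and its filtration.

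By \propref{iota ex}(a,b), the composition $T := (\wt\iota_\Fl)^*\circ (\wt\iota_\Fl)_*$ preserves the $\leq 0$ subcategory and is right-exact on it, so by right-exactness of both $T$ and $\Id$ we may reduce to the case of $\CF'$ in the heart of the t-structure on $\on{pt}/\cB\underset{\on{pt}/\cG}\arrowtimes \bD^f(\fD(\Gr^{\on{aff}}_G)_\crit\mod)$ (described at the start of \secref{t structure and Grass}). The counit $T\to \Id$ then induces a natural transformation $\alpha:H^0\circ T\to \Id$ between right-exact endofunctors of the heart; as the t-structure is compactly generated, $\alpha$ being an isomorphism can be checked on the generators $\CL^\cla\underset{\CO_{\on{pt}/\cB}}\otimes \Res^\cG_\cB(\CF_1)$, with $\cla\in\cLambda$ and $\CF_1\in\fD(\Gr^{\on{aff}}_G)_\crit\mod$. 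The $\on{pt}/\cB$-linearity of $(\wt\iota_\Fl)^*$ and $(\wt\iota_\Fl)_*$ combined with invertibility of $\CL^\cla$ further reduce us to $\cla=0$.

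Unfolding the constructions of \secref{tilde ups} and \secref{constr wt i} gives $(\wt\iota_\Fl)_*(\Res^\cG_\cB(\CF_1))\simeq p^{!*}(\CF_1)$, whence
$$T(\Res^\cG_\cB(\CF_1)) \simeq p^{!*}(\CF_1)\star J_{2\crho}\star \CW \simeq \CF_1\underset{G[[t]]}\star K,$$
where $K := p^{!*}(\delta_{1,\Gr^{\on{aff}}_G})\star J_{2\crho}\star\CW$; the second isomorphism uses the $G[[t]]$-equivariance of $p^{!*}(\CF_1)$ together with \lemref{conv with Gr}. The proposition is thereby reduced to verifying that $H^0(K)\simeq \Res^\cG_\cB(\delta_{1,\Gr^{\on{aff}}_G})$, compatibly with the counit map.

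To compute $H^0(K)$ we will use the filtration of $\CW$ recalled in the proof of the lemma following \eqref{def wt Ups}, whose successive subquotients have the form $V\underset{\CO_{\on{pt}/\cG}}\otimes \CF'$ with $V\in \Rep^{f.d.}(\cB)$ and $\CF'\in\fD(\Gr^{\on{aff}}_G)_\crit\mod^I$ finitely generated. Convolving each subquotient with $p^{!*}(\delta_{1,\Gr^{\on{aff}}_G})\star J_{2\crho}$ and tracking supports of the resulting D-modules on $\Gr^{\on{aff}}_G$, one expects that only the "top" subquotient (with $V$ trivial and $\CF'=\delta_{1,\Gr^{\on{aff}}_G}$) contributes in cohomological degree zero, yielding the desired identification. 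The main difficulty will lie in this last step: one must verify the vanishing of $H^0$ on all non-top pieces of the filtration and check that the contribution of the top piece matches the counit canonically. This is essentially a combinatorial assertion about the interaction of the $2\crho$-twist with the Iwahori orbits on $\Gr^{\on{aff}}_G$, and will require a careful stratum-by-stratum dimension count.
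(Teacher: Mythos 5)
Your reductions are exactly those in the paper: exactness of $(\wt\iota_\Fl)_*$ and right-exactness of $(\wt\iota_\Fl)^*$ reduce to generators of the heart; $\on{pt}/\cB$-linearity and invertibility of $\CL^\cla$ reduce to $\cla=0$; and unfolding gives the identification of $T(\Res^\cG_\cB(\CF_1))$ with $\CF_1\underset{G[[t]]}\star K$, $K:=p^{!*}(\delta_{1,\Gr^{\on{aff}}_G})\star J_{2\crho}\star\CW$. Up to this point your argument is identical in content to the paper's.

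The gap is at the final step: you have correctly identified the crux---computing $H^0(K)$---but you have not proved it, and the mechanism you propose ("tracking supports" and "a careful stratum-by-stratum dimension count") is both unfinished and not obviously adequate. The issue is that the filtration on $\CW$ with subquotients $V\underset{\CO_{\on{pt}/\cG}}\otimes\CF'$ does not interact transparently with the cohomological truncation after convolution: passing to subquotients of a filtration does not directly give control of $H^0$ of the total object, and a naive support count does not by itself establish that the non-top subquotients contribute nothing to degree zero. The paper resolves this by citing \cite{ABBGM}, Proposition 3.2.6, which gives both the identification $H^0(K)\simeq\Res^\cG_\cB(\delta_{1,\Gr^{\on{aff}}_G})$ and the finer statement that $H^{-k}(K)$ is a successive extension of objects $\CL^\cla\underset{\CO_{\on{pt}/\cB}}\otimes\Res^\cG_\cB(\delta_{1,\Gr^{\on{aff}}_G})$, with $\cla$ appearing with multiplicity $\dim\Lambda^k(\cn^*)_\cla$. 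Without either that citation or a genuine replacement for it, your proof is incomplete precisely at the step on which everything hinges.
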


From the proposition we obtain that the map
$$\Hom\left((\wt\iota_\Fl)^*(\delta_{1,\Fl^{\on{aff}}_G}),
(\wt\iota_\Fl)^*(\delta_{1,\Fl^{\on{aff}}_G})\right)\to
\Hom\left((\wt\iota_\Fl)^*\circ (\wt\iota_\Fl)_*\circ (\wt\iota_\Fl)^*(\delta_{1,\Fl^{\on{aff}}_G}),
(\wt\iota_\Fl)^*(\delta_{1,\Fl^{\on{aff}}_G})\right)$$
is surjective (in fact, an isomorphism). Hence, it suffices to show that
$$\Hom\left((\wt\iota_\Fl)^*(\delta_{1,\Fl^{\on{aff}}_G}),
(\wt\iota_\Fl)^*(\delta_{1,\Fl^{\on{aff}}_G})\right):=
\End(J_{2\rho}\star \CW) \simeq \BC.$$

This, however, follows from the fact that $\End(\CW)\simeq \BC$, 
which is easy to deduce from the definitions (or, alternatively,
from \cite{ABBGM}, Proposition 3.2.6(1)).

\sssec{Proof of \propref{comp i* i*}}  

By \secref{heart of base change}, the heart of the t-structure
on the category
$\on{pt}/\cB\underset{\on{pt}/\cG}\arrowtimes \bD^f(\fD(\Gr^{\on{aff}}_G)_\crit\mod)$
is the abelian category
$$\on{pt}/\cB\underset{\on{pt}/\cG}\times \fD(\Gr^{\on{aff}}_G)_\crit\mod.$$

\medskip

Since the functor $(\wt\iota_\Fl  )_*$  is exact and $(\wt\iota_\Fl  )^*$ is
right-exact, it sufficient to take $\CF'$ to be one of the generators of
the category, i.e., of the form 
$\CL^\cla\underset{\CO_{\on{pt}/\cB}}\otimes \Res^\cG_\cB(\CF'')$
with $\CF''\in \fD(\Gr^{\on{aff}}_G)_\crit\mod$. However, since the functors
$(\wt\iota_\Fl  )_*$  and $(\wt\iota_\Fl  )^*$ are compatible with the action
of $\Coh(\on{pt}/\cB)$, we can assume $\cla=0$, i.e.,
$\CF'\simeq \Res^\cG_\cB(\CF'')$.

\medskip

Thus, we are reduced to showing that for $\CF''\in \bD^f(\fD(\Gr^{\on{aff}}_G)_\crit\mod)$,
the map
$$H^0(\wt\Upsilon\circ p^{!*}(\CF''))\to \Res^\cG_\cB(\CF'')$$
is an isomorphism.

\medskip

We have:
$$\wt\Upsilon\circ p^{!*}(\CF'')\simeq p^{!*}(\CF'')\star J_{2\rho}\star \CW\simeq
\CF''\underset{G[[t]]}\star \left(p^{!*}(\delta_{1,\Gr^{\on{aff}}_G}) \star J_{2\rho}\star \CW\right).$$

By \cite{ABBGM}, Proposition 3.2.6 (or which can be otherwise
easily proved directly), 
$$H^0\left(p^{!*}(\delta_{1,\Gr^{\on{aff}}_G}) \star J_{2\rho}\star \CW\right)
\simeq \Res^\cG_\cB(\delta_{1,\Gr^{\on{aff}}_G}),$$
and 
$H^{-k}\left(\delta_{1,\Gr^{\on{aff}}_G}) \star J_{2\rho}\star \CW\right)$ is a successive 
extension of objects of the form
$$\CL^{\cla}\underset{\CO_{\on{pt}/\cB}}\otimes \Res^\cG_\cB(\delta_{1,\Gr^{\on{aff}}_G})$$
(in fact, each $\cla$ appears the number of times equal to $\dim(\Lambda^k(\cn^*)_\cla)$).

\medskip

Hence, we obtain that
$$H^0(\wt\Upsilon\circ p^{!*}(\CF''))\simeq \CF''\underset{G[[t]]}\star
\Res^\cG_\cB(\delta_{1,\Gr^{\on{aff}}_G})\simeq \Res^\cG_\cB(\CF'').$$

\qed

\ssec{Proof of \propref{endomorphisms}(2)}  \label{Kosz}

By adjunction,
$$\Hom\left((\iota_\Fl)^*(\delta_{1,\Fl^{\on{aff}}_G}),
(\iota_\Fl)^*(\delta_{1,\Fl^{\on{aff}}_G})\right)
\simeq
\Hom\left(\delta_{1,\Fl^{\on{aff}}_G},(\iota_\Fl)_*\circ (\iota_\Fl)^*(\delta_{1,\Fl^{\on{aff}}_G})\right).$$

\medskip

We have
$$(\iota_\Fl)_*\circ (\iota_\Fl)^*(\delta_{1,\Fl^{\on{aff}}_G})\simeq
\sF(\CO_{\on{pt}/\cB}),$$
and let us represent $\CO_{\on{pt}/\cB}\in \bD^{perf}(\Coh(\tN/\cG))$
by the Koszul complex $\on{Kosz}_{\on{pt}/\cB,\tN/\cG}^\bullet$, corresponding 
to the vector bundle $\tN/\cG\overset{\pi}\to \on{pt}/\cB$ and its
zero section $\on{pt}/\cB\overset{\iota}\to \tN/\cG$.

\medskip

The terms $\on{Kosz}_{\on{pt}/\cB,\tN/\cG}^k$ are of the form $\pi^*(\Lambda^k(\cn^*))$, where
$\Lambda^k(\cn^*)$ is naturally an object of $\Rep(\cB)$. 

\medskip

To prove the proposition, it suffices to show that
$$\Hom\left(\delta_{1,\Fl^{\on{aff}}_G},\on{Kosz}_{\on{pt}/\cB,\tN/\cG}^k[k]
\underset{\CO_{\tN/\cG}}\otimes \delta_{1,\Fl^{\on{aff}}_G}\right)=0$$
for $k\neq 0$.

\medskip

Note that $\Lambda^k(\cn^*)\in \Rep(\cB)$ has a filtration with
1-dimensional subquotients $\CL^\cla$, with $\cla\in \cLambda^{pos}$,
and $\cla=0$ is excluded if $k\neq 0$. Hence, the proposition follows
from the next lemma:

\qed

\begin{lem}
$R\Hom(\delta_{1,\Fl^{\on{aff}}_G},J_\cla)=0$ if $\cla\in \cLambda^{pos}-0$.
\end{lem}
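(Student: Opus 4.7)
The key observation is that $\delta_{1,\Fl^{\on{aff}}_G}=(i_1)_!(\BC)$, where $i_1\colon\{1\}\hookrightarrow\Fl^{\on{aff}}_G$ is the inclusion of the base point. The $(i_1)_!,i_1^!$ adjunction then identifies
\[
R\Hom(\delta_{1,\Fl^{\on{aff}}_G},J_\cla)\simeq i_1^!(J_\cla),
\]
and the lemma becomes the assertion that the $!$-costalk of the Mirkovic--Wakimoto sheaf $J_\cla$ at the identity point vanishes for $\cla\in\cLambda^{pos}-0$. When $\cla\in\cLambda^+$ this is immediate: by the defining property of $J_\cla$ we have $J_\cla=j_{\cla,*}$, the costandard D-module on the Bruhat cell $X_\cla$, and the characterizing property of costandards is precisely that their $!$-costalks vanish on every Bruhat cell other than the open one. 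Since $\cla\neq 0$ gives $1\neq t^\cla$, the case $\cla\in\cLambda^+$ is done.

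For general $\cla\in\cLambda^{pos}$ not necessarily dominant, I would reduce to the dominant case via the multiplicativity $J_{\cla_1+\cla_2}\simeq J_{\cla_1}\star J_{\cla_2}$. Choose $\cmu\in\cLambda^+$ deep enough in the dominant chamber that $\cla+\cmu\in\cLambda^+$; then $J_\cla\simeq J_{\cla+\cmu}\star J_{-\cmu}$. Since $J_\cmu\star J_{-\cmu}\simeq J_0=\delta_{1,\Fl^{\on{aff}}_G}$, convolution with $J_{-\cmu}$ is an autoequivalence of $\bD^f(\fD(\Fl^{\on{aff}}_G)_\crit\mod)$, with inverse $-\star J_\cmu$. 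Applying this adjunction yields
\[
R\Hom(\delta_{1,\Fl^{\on{aff}}_G},J_\cla)\simeq R\Hom(\delta_{1,\Fl^{\on{aff}}_G}\star J_\cmu,\,J_{\cla+\cmu})\simeq R\Hom(j_{\cmu,*},\,j_{\cla+\cmu,*}),
\]
so the lemma becomes the vanishing of this $R\Hom$ between two distinct costandards.

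The main obstacle is this last step, as $R\Hom$ between distinct costandards is not formally zero. I would resolve $j_{\cmu,*}$ by successive exact triangles $j_{\wt w,!}\to j_{\wt w,*}\to\mathrm{cone}$ stratifying $\overline{X_\cmu}$ by Bruhat cells with $\wt w\leq t^\cmu$, and apply the standard--costandard orthogonality $R\Hom(j_{\wt w,!},j_{\wt w',*})=\BC\cdot\delta_{\wt w,\wt w'}$. The key numerical point is that $\cla\in\cLambda^{pos}-0$ combined with $\cla+\cmu,\cmu\in\cLambda^+$ forces $\ell(t^{\cla+\cmu})=2\langle\cla+\cmu,\crho\rangle>2\langle\cmu,\crho\rangle=\ell(t^\cmu)$, so every $\wt w\leq t^\cmu$ occurring in the resolution is strictly less than $t^{\cla+\cmu}$ in the Bruhat order, killing each orthogonality pairing. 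An alternative, more uniform route---which would actually yield the stronger statement $R\Hom(\delta_{1,\Fl^{\on{aff}}_G},J_\cla)=0$ for every $\cla\neq 0$---is to invoke the semi-infinite cleanness of the Mirkovic--Wakimoto sheaves: $J_\cla$ has vanishing $!$-restriction to the semi-infinite orbit $S_\cmu$ for all $\cmu\neq\cla$, and since $1\in S_0$ this gives the conclusion directly, bypassing the reduction to the dominant case altogether.
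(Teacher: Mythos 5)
Your argument is correct and is essentially the paper's: both convolve with $J_\cmu$ (an autoequivalence since $J_\cmu\star J_{-\cmu}\simeq\delta_{1,\Fl^{\on{aff}}_G}$) to reduce to $R\Hom(j_{\cmu,*},j_{\cla+\cmu,*})$ with $\cmu,\cla+\cmu\in\cLambda^+$, and both finish by a length comparison. The paper simply cites the known support constraint that $R\Hom(j_{\wt w,*},j_{\wt w',*})\neq 0$ forces $\wt w'\leq\wt w$ and hence $\ell(\wt w')\leq\ell(\wt w)$, while you unpack the same fact via a filtration of $j_{\cmu,*}$ by standards together with the orthogonality $R\Hom(j_{\wt w,!},j_{\wt w',*})=\BC\cdot\delta_{\wt w,\wt w'}$ — morally the same argument. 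One phrasing slip: $\ell(\wt w)\leq\ell(t^\cmu)<\ell(t^{\cla+\cmu})$ does \emph{not} by itself give $\wt w<t^{\cla+\cmu}$ in the Bruhat order (the two could be incomparable), but it does give $\wt w\neq t^{\cla+\cmu}$, which is exactly what the orthogonality needs; your alternative via semi-infinite cleanness of $J_\cla$ would indeed yield the stronger vanishing for all $\cla\neq 0$, at the cost of invoking heavier input that the paper avoids.
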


\begin{proof}

For any $\cmu$ we have:
$$R\Hom(\delta_{1,\Fl^{\on{aff}}_G},J_\cla)\simeq R\Hom(J_\cmu,J_{\cla+\cmu}).$$

Let $\cmu\in \cLambda^+$ be such that $\cla+\cmu\in \cLambda^+$. In this case
$$J_\cmu=j_{\cmu,*} \text{ and } J_{\cla+\cmu}=j_{\cla+\cmu,*}.$$

The required vanishing follows now from the fact that 
$R\Hom(j_{\wt{w},*},j_{\wt{w}',*})\neq 0$ only when $\wt{w}'\leq \wt{w}$ as
elements in the affine Weyl group, in particular,
$|\wt{w}'|\leq |\wt{w}|$. However, the assumptions on $\cmu$ and $\cla$
imply that $|\cla+\cmu|>|\cmu|$.

\end{proof}

\ssec{}   \label{omega}

In order to prove \propref{from delta to delta}, we will need the
following construction, which will be useful also in the sequel.

\medskip

Namely, we will construct a functor

\medskip

\begin{equation}   \label{heart to hear}
\Omega:
\Heart\left(\on{pt}/\cB\underset{\on{pt}/\cG}\arrowtimes \bD^f(\fD(\Gr^{\on{aff}}_G)_\crit\mod)\right)\to
\Heart\left(\on{pt}/\cB\underset{\tN/\cG} \arrowtimes \bD^f(\fD(\Fl^{\on{aff}}_G)_\crit\mod)\right).
\end{equation}

Note, however, that by \secref{heart of base change}, the abelian
category on the LHS of \eqref{heart to hear} identifies with
$$\Rep(\cB)\underset{\Rep(\cG)}\otimes \fD(\Gr^{\on{aff}}_G)_\crit\mod.$$

\sssec{}

For
$$\CF\in \Heart\left(\on{pt}/\cB\underset{\on{pt}/\cG}\arrowtimes \bD^f(\fD(\Gr^{\on{aff}}_G)_\crit\mod)\right)$$
we set
$$\Omega(\CF):=H^0\left((\iota_\Fl)^*\circ (\wt\iota_\Fl)_*(\CF)\right).$$

We claim:
\begin{prop} \label{hands on Omega} 
$(\iota_\Fl)_*(\Omega(\CF))\simeq (\wt\iota_\Fl)_*(\CF)$.
\end{prop}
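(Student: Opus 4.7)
The plan is to identify the functor $\Omega$ with (the restriction to the heart of) a right adjoint $\Upsilon^R$ of $\Upsilon$ at the level of ind-completions, and then conclude by combining the exactness of $(\wt\iota_\Fl)_*$ and $(\iota_\Fl)_*$ with the full faithfulness of $(\iota_\Fl)_*$. Since $\Upsilon$ is, by construction, the ind-extension of a functor between the subcategories of compact objects, it is cocontinuous on the ind-completions, and hence admits a right adjoint $\Upsilon^R$ by Brown representability for compactly-generated (DG-enhanced) triangulated categories, available in the formalism of Part III. The defining factorization $(\wt\iota_\Fl)^* \simeq \Upsilon \circ (\iota_\Fl)^*$ from \secref{tilde ups} then passes to right adjoints to give the key identity
\[
(\wt\iota_\Fl)_* \simeq (\iota_\Fl)_* \circ \Upsilon^R.
\]

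Next I would verify that $\Upsilon^R(\CF) \in \Heart$. Since $\CF$ is in the heart and $(\wt\iota_\Fl)_*$ is exact in the new t-structure by \propref{iota ex}(b), the object $(\wt\iota_\Fl)_*(\CF) \simeq (\iota_\Fl)_*(\Upsilon^R(\CF))$ lies in the heart. But $(\iota_\Fl)_*$ is both exact (by \propref{affine exact}, as cited in \secref{t structure on B N Flags ind}) and fully faithful, being the right adjoint of $(\iota_\Fl)^*$ for the closed embedding $\iota$, so that $(\iota_\Fl)^*\circ (\iota_\Fl)_* \simeq \on{Id}$. Exactness together with conservativity (a consequence of full faithfulness) forces $(\iota_\Fl)_*$ to reflect membership in the heart, yielding $\Upsilon^R(\CF) \in \Heart$.

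Using $(\iota_\Fl)^*\circ (\iota_\Fl)_* \simeq \on{Id}$ and $\Upsilon^R(\CF) \in \Heart$, we then compute
\[
\Omega(\CF) = H^0\bigl((\iota_\Fl)^*(\wt\iota_\Fl)_*(\CF)\bigr) \simeq H^0\bigl((\iota_\Fl)^*(\iota_\Fl)_*\Upsilon^R(\CF)\bigr) \simeq H^0(\Upsilon^R(\CF)) \simeq \Upsilon^R(\CF),
\]
whence $(\iota_\Fl)_*(\Omega(\CF)) \simeq (\iota_\Fl)_*(\Upsilon^R(\CF)) \simeq (\wt\iota_\Fl)_*(\CF)$, as desired. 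The principal obstacle is the foundational one: verifying that $\Upsilon^R$ exists as a genuine functor between the ind-completed DG-enhanced categories, and that the right adjoint of the composition $\Upsilon \circ (\iota_\Fl)^*$ is indeed the composition of right adjoints; once the Part III framework is in place, everything else is essentially a formal consequence of the exactness and full-faithfulness properties established in Sections \ref{ups adj} and \ref{t structure and Grass}.
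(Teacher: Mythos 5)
There is a genuine error in the key step. You claim that $(\iota_\Fl)_*$ is fully faithful and hence $(\iota_\Fl)^*\circ(\iota_\Fl)_*\simeq\on{Id}$. Neither holds at the derived level. The map $\iota:\on{pt}/\cB\hookrightarrow\tN/\cG$ is the zero section of a vector bundle of rank $\dim\cn$, a regular closed immersion of positive codimension, so the derived self-intersection is nontrivial: for $\CM$ in the heart, $L\iota^*\iota_*\CM$ has cohomology in degrees $-\dim\cn,\dots,0$ given by $\Lambda^k(\cn^*)\otimes\CM$. Equivalently, $\RHom_{\tN/\cG}(\iota_*\CM,\iota_*\CN)$ contains $\Ext$'s coming from the Koszul complex that are absent in $\RHom_{\on{pt}/\cB}(\CM,\CN)$, so $\iota_*$ is not fully faithful. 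Indeed, the entire content of \lemref{star star delta} (and of the Koszul analysis in \secref{Kosz}) is precisely the computation of these higher terms and the nontrivial observation that the Koszul differentials vanish on the image of $(\wt\iota_\Fl)_*$. Your step $H^0\bigl((\iota_\Fl)^*(\iota_\Fl)_*\Upsilon^R(\CF)\bigr)\simeq H^0(\Upsilon^R(\CF))$ as justified is therefore unfounded.

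The good news is that the error is localized and your strategy is salvageable: you do not in fact need $(\iota_\Fl)^*\circ(\iota_\Fl)_*\simeq\on{Id}$, only the much weaker fact that for $\CM$ in the heart, $H^0\bigl((\iota_\Fl)^*(\iota_\Fl)_*\CM\bigr)\simeq\CM$. This is just the abelian-level identity $\iota^*\iota_*=\on{Id}$ (the underived pull-push along a closed immersion is the identity), combined with the fact that $(\iota_\Fl)^*$ is right-exact and $(\iota_\Fl)_*$ is exact, so the Koszul contributions land in strictly negative degrees and do not affect $H^0$. With this correction your introduction of $\Upsilon^R$ and the ensuing argument (exactness and conservativity of $(\iota_\Fl)_*$ reflecting the heart, then computing $\Omega(\CF)\simeq\Upsilon^R(\CF)$) does give a valid proof. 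It is genuinely different from the paper's: the paper reduces the statement to \lemref{star star delta}, which requires the full Koszul splitting, while your route sidesteps that splitting by invoking the right adjoint of $\Upsilon$ and using only the $H^0$-level consequence. The trade-off is that you need the existence of $\Upsilon^R$ (Brown representability on the ind-completions) as an additional foundational input.
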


Since the functor $(\iota_\Fl)_*$ is exact, the above proposition 
immediately follows from the next one:

\begin{lem}   \label{star star delta}
For $\CF\in \Rep(\cB)\underset{\Rep(\cG)}\otimes \fD(\Gr^{\on{aff}}_G)_\crit\mod$,
the object
$$(\iota_\Fl)_*\circ (\iota_\Fl)^*\left((\wt\iota_\Fl)_*(\CF)\right)\in
\bD_{ren}(\fD(\Fl^{\on{aff}}_G)_\crit\mod)$$
is canonically a direct sum
$$\underset{k}\bigoplus\, \Lambda^k(\cn^*)
\underset{\CO_{\on{pt}/\cB}}\otimes (\wt\iota_\Fl)_*(\CF)[k].$$
\end{lem}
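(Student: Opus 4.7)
The plan is to reduce the assertion to showing that, for every $\CF''\in \fD(\Gr^{\on{aff}}_G)_\crit\mod$, the action of the monoidal category $\bD^{perf}(\Coh(\tN/\cG))$ on the object $p^{!*}(\CF'')\in \bD_{ren}(\fD(\Fl^{\on{aff}}_G)_\crit\mod)$ factors canonically through the restriction $\iota^*:\bD^{perf}(\Coh(\tN/\cG))\to \bD^{perf}(\Coh(\on{pt}/\cB))$ to the zero section. Granted this, the standard Koszul identity $\iota^*\iota_*(\CO_{\on{pt}/\cB})\simeq \underset{k}\bigoplus\, \Lambda^k(\cn^*)[k]$ for the regular embedding $\iota$ (whose conormal bundle is $\cn^*$) will yield the canonical direct sum decomposition.

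The first step is to reduce to the case $\CF=\Res^\cG_\cB(\CF'')$. The category $\Rep(\cB)\underset{\Rep(\cG)}\otimes \fD(\Gr^{\on{aff}}_G)_\crit\mod$ is generated by objects $\CL^\cla\underset{\CO_{\on{pt}/\cB}}\otimes \Res^\cG_\cB(\CF'')$, and both sides of the claimed isomorphism are compatible with such twists, since $(\wt\iota_\Fl)_*$ and $(\iota_\Fl)_*\circ (\iota_\Fl)^*$ are functors over $\on{pt}/\cB$ (in particular, both commute with convolution by $J_\cla$). For $\CF=\Res^\cG_\cB(\CF'')$ the construction of $(\wt\iota_\Fl)_*$ as the base change of $p^{!*}$ along $\iota$ gives $(\wt\iota_\Fl)_*(\CF)\simeq p^{!*}(\CF'')$. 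Then the projection formula for the affine closed embedding $\iota$, applied to the $\sF$-action, yields
$$(\iota_\Fl)_*\circ (\iota_\Fl)^*\bigl(p^{!*}(\CF'')\bigr)\simeq p^{!*}(\CF'')\star \sF(\iota_*\CO_{\on{pt}/\cB}).$$
I then seek a canonical isomorphism, natural in $\CG\in \bD^{perf}(\Coh(\tN/\cG))$,
$$p^{!*}(\CF'')\star \sF(\CG)\simeq \iota^*(\CG)\underset{\CO_{\on{pt}/\cB}}\otimes p^{!*}(\CF''),$$
where on the right $\Coh(\on{pt}/\cB)$ acts on $\bD_{ren}(\fD(\Fl^{\on{aff}}_G)_\crit\mod)$ via $\pi:\tN/\cG\to \on{pt}/\cB$. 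Taking $\CG=\iota_*\CO_{\on{pt}/\cB}$ and invoking the Koszul identity will then close the argument.

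I would verify this key identification on the generators of $\Ho(\bC^b(\Coh^{free}(\tN/\cG)))$. For $\pi^*(\CL^\cla)$ it is tautological, as $\sF(\pi^*\CL^\cla)=J_\cla$ and $\iota^*\pi^*\CL^\cla=\CL^\cla$; for $\CO_{\tN/\cG}\otimes V$ with $V\in \Rep^{f.d.}(\cG)$ it follows from \eqref{left central}, giving $Z_V\star p^{!*}(\CF'')\simeq p^{!*}(\CF''\underset{G[[t]]}\star \CF_V)$, combined with the identifications $\iota^*(\CO_{\tN/\cG}\otimes V)=\Res^\cG_\cB(V)$ and $\pi^*\Res^\cG_\cB(V)=\CO_{\tN/\cG}\otimes V$. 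The main technical obstacle, and the place where the argument really uses that we are on a $p^{!*}$-object, is the compatibility with the gluing data: the canonical map $Z_{V^\cla}\to J_\cla$ must match the $\cB$-equivariant projection $\Res^\cG_\cB(V^\cla)\to \CL^\cla$ under $\iota^*$, and, crucially, the monodromy endomorphism $N_{Z_V}$ must vanish on $p^{!*}(\CF'')$. The latter holds because \eqref{left central} expresses the $Z_V$-action through the spherical Hecke action $?\underset{G[[t]]}\star \CF_V$, which is monodromy-free; this matches the vanishing, along $\iota$, of the tautological nilpotent $\bbt\in \cn$ on $\tN$ that induces $N^{taut}_V$ under $\sF$. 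Verifying these compatibilities is parallel in nature to the checks (a)--(e) of \secref{a-e} and constitutes the bulk of the work.
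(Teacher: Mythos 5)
Your proposal is correct in its final effect but takes a genuinely different—and more laborious—route than the paper. The paper's proof is a one-liner once one notices that the projection formula
$$\CN\underset{\CO_{\tN/\cG}}\otimes (\wt\iota_\Fl)_*(\CF)\simeq (\wt\iota_\Fl)_*\bigl(\iota^*(\CN)\underset{\CO_{\on{pt}/\cB}}\otimes \CF\bigr)$$
holds for \emph{any} $\CF\in \on{pt}/\cB\underset{\on{pt}/\cG}\times\bD^f(\fD(\Gr^{\on{aff}}_G)_\crit\mod)$. This drops out by abstract nonsense: $(\wt\iota_\Fl)^*$ is compatible with the action of $\bD^{perf}(\Coh(\tN/\cG))$ (this was exactly the content of \secref{relation to Grassmannian}, via \propref{i star adj}), and since $\bD^{perf}(\Coh(\tN/\cG))$ is rigid, this compatibility passes to the right adjoint $(\wt\iota_\Fl)_*$ (cf. \propref{rigid is tight}). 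Consequently, representing $(\iota_\Fl)_*\circ(\iota_\Fl)^*$ by the Koszul complex $\on{Kosz}^\bullet_{\on{pt}/\cB,\tN/\cG}\underset{\CO_{\tN/\cG}}\otimes-$, one observes that the differentials restrict to zero on the zero section, hence act as zero on any $(\wt\iota_\Fl)_*$-object, and the direct sum decomposition follows with the terms identified via $\iota^*\pi^*=\on{Id}$.

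Your approach instead reduces to $\CF=\Res^\cG_\cB(\CF'')$, then goes after the isomorphism $p^{!*}(\CF'')\star\sF(\CG)\simeq\iota^*(\CG)\underset{\CO_{\on{pt}/\cB}}\otimes p^{!*}(\CF'')$ by hand on generators. This works, but the compatibilities you flag as the "bulk of the work" (the map $Z_{V^\cla}\to J_\cla$ matching the projection $\Res^\cG_\cB(V^\cla)\to\CL^\cla$, and the vanishing of $N_{Z_V}$ on $p^{!*}$-objects via \eqref{left central}) are precisely what was already verified in establishing $\wt\Upsilon=(\wt\iota_\Fl)^*$ as a homotopy functor over $\tN/\cG$ in \secref{relation to Grassmannian}; you would be re-proving them. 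Moreover, your reduction to the generator $\Res^\cG_\cB(\CF'')$ needs the candidate decomposition to have been produced functorially in $\CF$ before you can devissage; the paper sidesteps this because what it actually proves is a vanishing property (differentials act by zero), not an a priori isomorphism. The paper's route is thus both cleaner and strictly more general: it neither restricts to $p^{!*}$-objects nor re-does the monoidal compatibility checks.
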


\sssec{Proof of \lemref{star star delta}}

We calculate $(\iota_\Fl)_*\circ (\iota_\Fl)^*\left((\wt\iota_\Fl)_*(\CF')\right)$
by means of
$$\on{Kosz}_{\on{pt}/\cB,\tN/\cG}^\bullet
\underset{\CO_{\tN/\cG}}\otimes (\wt\iota_\Fl)_*(\CF'),$$
where $\on{Kosz}_{\on{pt}/\cB,\tN/\cG}^\bullet$ is as in \secref{Kosz}.

\medskip

Note that the differential of the Koszul complex
has the property the maps 
$$\on{Kosz}_{\on{pt}/\cB,\tN/\cG}^k\to \on{Kosz}_{\on{pt}/\cB,\tN/\cG}^{k-1}$$
take value zero on the zero section $\on{pt}/\cB\overset{\iota}\hookrightarrow 
\tN/\cG$. Hence, the assertion of the next lemma follows from the next
one:

\begin{lem}
Let $\CN_1\to \CN_2$ be a map of vector bundles on $\tN/\cG$
whose value on $\on{pt}/\cB$ is zero. Then for any $\CF'\in 
\bD^f(\fD(\Fl^{\on{aff}}_G)_\crit\mod)$ of the form $(\wt\iota_\Fl)_*(\CF)$ for
$$\CF\in \on{pt}/\cB\underset{\on{pt}/\cG} \times 
\bD^f(\fD(\Gr^{\on{aff}}_G)_\crit\mod),$$ the map
$$\CN_1\underset{\CO_{\tN/\cG}}\otimes \CF'\to
\CN_2\underset{\CO_{\tN/\cG}}\otimes \CF'$$
is zero.
\end{lem}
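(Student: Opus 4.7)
The plan is to reduce the claim to the hypothesis $\iota^*(\phi)=0$ (where $\phi:\CN_1\to\CN_2$) by chaining the tensor--hom adjunction (using that the $\CN_i$ are dualizable), the $\bigl((\wt\iota_\Fl)_*,(\wt\iota_\Fl)^!\bigr)$-adjunction from \secref{ups adj}, and the compatibility of $(\wt\iota_\Fl)^!$ with the $\bD^{perf}(\on{Coh}(\tN/\cG))$-action on $\bD^f(\fD(\Fl^{\on{aff}}_G)_\crit\mod)$.

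Concretely, by the Yoneda principle it suffices to show that for every $\CG\in\bD^f(\fD(\Fl^{\on{aff}}_G)_\crit\mod)$, the map
$$\Hom\bigl(\CN_2\otimes_{\CO_{\tN/\cG}}(\wt\iota_\Fl)_*(\CF),\;\CG\bigr)\longrightarrow
\Hom\bigl(\CN_1\otimes_{\CO_{\tN/\cG}}(\wt\iota_\Fl)_*(\CF),\;\CG\bigr)$$
given by precomposition with $\phi\otimes\id$ is zero. Since each $\CN_i$ is a vector bundle, hence dualizable, the tensor--hom adjunction rewrites each side as $\Hom\bigl((\wt\iota_\Fl)_*(\CF),\CN_i^\vee\otimes\CG\bigr)$, and the adjunction $(\wt\iota_\Fl)_*\dashv(\wt\iota_\Fl)^!$ rewrites this further as $\Hom\bigl(\CF,(\wt\iota_\Fl)^!(\CN_i^\vee\otimes\CG)\bigr)$.

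The decisive step is then to invoke the compatibility
$(\wt\iota_\Fl)^!(\CN\otimes\CG)\simeq\iota^*(\CN)\otimes_{\CO_{\on{pt}/\cB}}(\wt\iota_\Fl)^!(\CG)$,
natural in $\CN\in\bD^{perf}(\on{Coh}(\tN/\cG))$, which $(\wt\iota_\Fl)^!=\CL^{-2\crho}\otimes(\wt\iota_\Fl)^*[-\dim(\cG/\cB)]$ inherits from the corresponding compatibility of $\wt\Upsilon=(\wt\iota_\Fl)^*$ established in \secref{tilde ups}. Applying this to $\CN=\CN_i^\vee$ identifies the map of Hom-spaces above with the postcomposition map
$$\Hom\bigl(\CF,(\iota^*\CN_2)^\vee\otimes(\wt\iota_\Fl)^!(\CG)\bigr)\longrightarrow
\Hom\bigl(\CF,(\iota^*\CN_1)^\vee\otimes(\wt\iota_\Fl)^!(\CG)\bigr)$$
induced by $(\iota^*\phi)^\vee$; since $\iota^*\phi=0$ by hypothesis, this map is zero.

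The main obstacle, which is really a matter of bookkeeping rather than a conceptual difficulty, is verifying the naturality of the compatibility isomorphism for $\wt\Upsilon$ in the variable $\CN$ (established in \secref{tilde ups} on the generators $\pi^*(\CL^\cla)$ and $V\otimes\CO_{\tN/\cG}$), and then checking that the chain of three adjunction isomorphisms transports precomposition by $\phi\otimes\id$ to postcomposition by $(\iota^*\phi)^\vee\otimes\id$. Both are routine diagram chases, facilitated by the fact that all of the relevant isomorphisms were constructed as natural transformations of triangulated functors with naturality in $\CN$ built in.
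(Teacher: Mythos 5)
Your proof is correct and rests on the same mechanism as the paper's: the $\tN/\cG$-linearity of $(\wt\iota_\Fl)^*$ (equivalently of $(\wt\iota_\Fl)^!$) plus dualizability of the $\CN_i$ transports $\phi$ to $\iota^*(\phi)=0$. The paper packages this more compactly, without descending to $\Hom$-spaces via Yoneda: it directly invokes the projection-formula isomorphism $\CN_i\underset{\CO_{\tN/\cG}}\otimes(\wt\iota_\Fl)_*(\CF)\simeq(\wt\iota_\Fl)_*\bigl(\iota^*(\CN_i)\underset{\CO_{\on{pt}/\cB}}\otimes\CF\bigr)$ (which holds ``by adjunction,'' i.e., by rigidity of $\bD^{perf}(\Coh(\tN/\cG))$ and the compatibility of $(\wt\iota_\Fl)^*$ with the $\tN/\cG$-action), under which the map in question visibly becomes $(\wt\iota_\Fl)_*$ applied to the zero map induced by $\iota^*(\phi)=0$.
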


\begin{proof}

Since the functor $(\wt\iota_\Fl)^*$ is compatible
with the action of $\bD^{perf}(\Coh(\tN/\cG))$, by adjunction,
we obtain that the following diagram commutes:
$$
\CD
\CN_1\underset{\CO_{\tN/\cG}}\otimes (\wt\iota_\Fl)_*(\CF) @>>> 
\CN_2\underset{\CO_{\tN/\cG}}\otimes (\wt\iota_\Fl)_*(\CF) \\
@V{\sim}VV   @V{\sim}VV  \\
(\wt\iota_\Fl)_*\left(\iota^*(\CN_1)\underset{\CO_{\on{pt}/\cB}}
\otimes \CF'\right) @>>>
(\wt\iota_\Fl)_*\left(\iota^*(\CN_2)\underset{\CO_{\on{pt}/\cB}}
\otimes \CF\right).
\endCD
$$

\end{proof}

\ssec{Proof of \propref{from delta to delta}}

Consider the object
$$\Omega\circ (\wt\iota_\Fl)^*(\delta_{1,\Fl^{\on{aff}}_G})\in 
\Heart\left(\on{pt}/\cB\underset{\tN/\cG} \arrowtimes \bD^f(\fD(\Fl^{\on{aff}}_G)_\crit\mod)\right).$$

We claim that there exist canonical maps
\begin{equation} \label{map i to om}
(\iota_\Fl)^*(\delta_{1,\Fl^{\on{aff}}_G})\to \Omega\circ (\wt\iota_\Fl)^*(\delta_{1,\Fl^{\on{aff}}_G})
\text{ and }
\Omega\circ (\wt\iota_\Fl)^*(\delta_{1,\Fl^{\on{aff}}_G})\to 
(\iota_\Fl)^*(\delta_{1,\Fl^{\on{aff}}_G}).
\end{equation}

The former map follows by adjunction from
$$\delta_{1,\Fl^{\on{aff}}_G}\to (\iota_\Fl)_*\circ \Omega\circ (\wt\iota_\Fl)^*(\delta_{1,\Fl^{\on{aff}}_G})
\simeq (\wt\iota_\Fl)_*\circ (\wt\iota_\Fl)^*(\delta_{1,\Fl^{\on{aff}}_G}),$$
whereas the latter map is obtained similarly by adjunction from
$$(\iota_\Fl)_!\circ \Omega\circ (\wt\iota_\Fl)^!(\delta_{1,\Fl^{\on{aff}}_G})
\simeq (\wt\iota_\Fl)_!\circ (\wt\iota_\Fl)^!(\delta_{1,\Fl^{\on{aff}}_G})\to
\delta_{1,\Fl^{\on{aff}}_G}.$$

By adjunction, the maps in \eqref{map i to om}
give rise to non-zero maps
$$(\iota_\Fl)_*\circ (\iota_\Fl)^*(\delta_{1,\Fl^{\on{aff}}_G})\to
(\iota_\Fl)_*\circ \Omega\circ (\wt\iota_\Fl)^*(\delta_{1,\Fl^{\on{aff}}_G})\simeq
(\wt\iota_\Fl)_*\circ (\wt\iota_\Fl)^*(\delta_{1,\Fl^{\on{aff}}_G})$$
and
$$(\wt\iota_\Fl)_*\circ (\wt\iota_\Fl)^*(\delta_{1,\Fl^{\on{aff}}_G})\simeq
(\iota_\Fl)_* \circ \Omega\circ (\wt\iota_\Fl)^*(\delta_{1,\Fl^{\on{aff}}_G})\to 
(\iota_\Fl)^*\circ (\iota_\Fl)^*(\delta_{1,\Fl^{\on{aff}}_G}).$$

To prove the proposition, it is enough to show that the above maps
coincide, up to non-zero scalars, with the maps \eqref{map i to it}
and \eqref{map it to i}, respectively.

\medskip

This, in turn, follows from the next lemma:

\begin{lem} The $\Hom$ spaces
$$\Hom\left((\iota_\Fl)_*\circ (\iota_\Fl)^*(\delta_{1,\Fl^{\on{aff}}_G}),
(\wt\iota_\Fl)_*\circ (\wt\iota_\Fl)^*(\delta_{1,\Fl^{\on{aff}}_G})\right)$$
and 
$$\Hom\left((\wt\iota_\Fl)_*\circ (\wt\iota_\Fl)^*(\delta_{1,\Fl^{\on{aff}}_G}),
(\iota_\Fl)_*\circ (\iota_\Fl)^*(\delta_{1,\Fl^{\on{aff}}_G})\right)$$
are 1-dimensional.
\end{lem}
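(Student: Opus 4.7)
\smallskip

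The plan is to reduce both $\Hom$ spaces, via the adjunctions for $\wt\iota_\Fl$, to computations in the category $\on{pt}/\cB\underset{\on{pt}/\cG}\arrowtimes \bD^f(\fD(\Gr^{\on{aff}}_G)_\crit\mod)$, and then to analyze the resulting direct-sum decomposition summand-by-summand using the t-structure. For the first $\Hom$, I would apply the adjunction $(\wt\iota_\Fl)^*\dashv (\wt\iota_\Fl)_*$ of \propref{i star adj} to obtain
$$\Hom\bigl((\iota_\Fl)_*(\iota_\Fl)^*(\delta_{1,\Fl^{\on{aff}}_G}),\,(\wt\iota_\Fl)_*(\wt\iota_\Fl)^*(\delta_{1,\Fl^{\on{aff}}_G})\bigr) \simeq \Hom\bigl(X,\,(\wt\iota_\Fl)^*(\delta_{1,\Fl^{\on{aff}}_G})\bigr),$$
where $X := (\wt\iota_\Fl)^*\bigl((\iota_\Fl)_*(\iota_\Fl)^*(\delta_{1,\Fl^{\on{aff}}_G})\bigr)$. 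For the second $\Hom$, using instead $(\wt\iota_\Fl)_!=(\wt\iota_\Fl)_*\dashv (\wt\iota_\Fl)^!$ together with the explicit formula $(\wt\iota_\Fl)^! \simeq \CL^{-2\crho}\underset{\CO_{\on{pt}/\cB}}\otimes (\wt\iota_\Fl)^*[-\dim(\cG/\cB)]$ recalled in \secref{ups adj}, the problem reduces to computing $\Hom\bigl((\wt\iota_\Fl)^*(\delta_{1,\Fl^{\on{aff}}_G}),\,\CL^{-2\crho}[-\dim(\cG/\cB)]\underset{\CO_{\on{pt}/\cB}}\otimes X\bigr)$. Thus both directions are controlled by the single object $X$.

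To evaluate $X$, I would use the factorization $(\wt\iota_\Fl)^* = \Upsilon\circ (\iota_\Fl)^*$ of \secref{tilde ups}, combined with the $\bD^{perf}(\Coh(\on{pt}/\cB))$-linearity of $\Upsilon$ and the compatibility of $(\iota_\Fl)^*$ with the action of $\bD^{perf}(\Coh(\tN/\cG))$. Since $(\iota_\Fl)_*(\iota_\Fl)^*(\delta_{1,\Fl^{\on{aff}}_G})\simeq \sF(\iota_*\CO_{\on{pt}/\cB}) = \iota_*\CO_{\on{pt}/\cB}\underset{\CO_{\tN/\cG}}\otimes \delta_{1,\Fl^{\on{aff}}_G}$ as in \secref{Kosz}, these compatibilities transform $X$ into $\bigl(\iota^*\iota_*\CO_{\on{pt}/\cB}\bigr)\underset{\CO_{\on{pt}/\cB}}\otimes (\wt\iota_\Fl)^*(\delta_{1,\Fl^{\on{aff}}_G})$. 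For the zero section $\iota:\on{pt}/\cB\hookrightarrow \tN/\cG$ of the vector bundle $\pi$ with fiber $\cn$, the classical Koszul self-intersection formula gives the direct-sum decomposition
$$\iota^*\iota_*\CO_{\on{pt}/\cB} \simeq \bigoplus_{k=0}^{\dim(\cG/\cB)}\Lambda^k(\cn^*)[k],$$
the splitting being due to vanishing of the Koszul differential upon restriction to the zero section. Hence
$$X \simeq \bigoplus_k \Lambda^k(\cn^*)[k]\underset{\CO_{\on{pt}/\cB}}\otimes (\wt\iota_\Fl)^*(\delta_{1,\Fl^{\on{aff}}_G}).$$

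I would then finish term-by-term. The object $(\wt\iota_\Fl)^*(\delta_{1,\Fl^{\on{aff}}_G})\simeq J_{2\crho}\star \CW$ lies in the heart of the t-structure on $\on{pt}/\cB\underset{\on{pt}/\cG}\arrowtimes \bD^f(\fD(\Gr^{\on{aff}}_G)_\crit\mod)$, as already noted in the proof of \propref{endomorphisms}(1), and tensoring with a $\cB$-representation preserves the heart; so the $k$-th summand of $X$ sits in cohomological degree $-k$. For the first $\Hom$ only $k = 0$ can contribute, yielding $\End\bigl((\wt\iota_\Fl)^*(\delta_{1,\Fl^{\on{aff}}_G})\bigr) = \BC$ by \propref{endomorphisms}(1), while every $k>0$ summand vanishes as $\on{Ext}^{-k}$ between objects of the heart. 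For the second $\Hom$ the relevant shift is $[k-\dim(\cG/\cB)]$, so only $k = \dim(\cG/\cB)$ survives; there $\Lambda^{\dim}(\cn^*) = \det(\cn^*) \simeq \CL^{2\crho}$, which is forced by the formula for $(\wt\iota_\Fl)^!$ itself (via the general identification $\iota^!\CO_{\tN/\cG} = \det(\cn)[-\dim(\cG/\cB)]$), so the line-bundle twist $\CL^{-2\crho}$ cancels and one again recovers $\End\bigl((\wt\iota_\Fl)^*(\delta_{1,\Fl^{\on{aff}}_G})\bigr) = \BC$; the remaining $k < \dim(\cG/\cB)$ summands vanish by the same $\on{Ext}^{<0}$-in-heart argument. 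The main point requiring care is the compatibility, at the triangulated level, of the Koszul self-intersection with the monoidal action under $(\wt\iota_\Fl)^*$, together with the fact that the action of a degree-zero object of $\bD^{perf}(\Coh(\on{pt}/\cB))$ is exact on the heart; both are formal, the former being built into the definitions of $\wt\iota_\Fl$ and $\Upsilon$ and the latter being the Gr-side analogue of \propref{F right-exact and heart} applied to the (perfect, hence flat) representations $\Lambda^k(\cn^*)$.
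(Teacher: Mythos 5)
Your proof is correct and follows essentially the same route as the paper's: both reduce via the $(\wt\iota_\Fl)^*$- and $(\wt\iota_\Fl)^!$-adjunctions, identify the relevant object as $\Upsilon\circ(\iota_\Fl)^*$ of the Koszul object $\iota^*\iota_*\CO_{\on{pt}/\cB}$, and then use cohomological-degree bounds together with $\End(J_{2\crho}\star\CW)\simeq\BC$. The only cosmetic difference is that you use the split Koszul direct sum $\bigoplus_k\Lambda^k(\cn^*)[k]$ rather than the paper's cone-plus-filtration phrasing, which has the small advantage of making the degree bookkeeping for each summand, and hence for both $\Hom$ directions, entirely unambiguous.
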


\begin{proof}

We have:
\begin{align*}
\Hom\left((\iota_\Fl)_*\circ (\iota_\Fl)^*(\delta_{1,\Fl^{\on{aff}}_G}),
(\wt\iota_\Fl)_*\circ (\wt\iota_\Fl)^*(\delta_{1,\Fl^{\on{aff}}_G})\right)\simeq \\
\Hom\left((\wt\iota_\Fl)^*\circ (\iota_\Fl)_*\circ (\iota_\Fl)^*(\delta_{1,\Fl^{\on{aff}}_G}),
(\wt\iota_\Fl)^*(\delta_{1,\Fl^{\on{aff}}_G})\right)\simeq \\
\Hom\left(\Upsilon\circ (\iota_\Fl)^*\circ
(\iota_\Fl)_*\circ (\iota_\Fl)^*(\delta_{1,\Fl^{\on{aff}}_G}), \Upsilon\circ (\iota_\Fl)^*
(\delta_{1,\Fl^{\on{aff}}_G})\right)
\end{align*}

We have a canonical map
$$(\iota_\Fl)^*\circ
(\iota_\Fl)_*\circ (\iota_\Fl)^*(\delta_{1,\Fl^{\on{aff}}_G})\to (\iota_\Fl)^*(\delta_{1,\Fl^{\on{aff}}_G}),$$
whose cone is filtered by objects of the form 
$\CL^\cla\underset{\CO_{\on{pt}/\cB}}\otimes (\iota_\Fl)^*(\delta_{1,\Fl^{\on{aff}}_G})[k]$,
$k>0$. Hence, when we apply the functor $\Upsilon$ to this
cone, we obtain an object which is $<0$. 

\medskip

Hence, the above $\Hom$ is isomorphic
to $\End\left(\Upsilon\circ (\iota_\Fl)^*(\delta_{1,\Fl^{\on{aff}}_G})\right)$,
and the latter identifies with $\End(\CW)\simeq \BC$.

\medskip

The second assertion of the lemma follows by a similar manipulation
involving the pair $(\iota_\Fl)^!,(\iota_\Fl)_!$.

\end{proof}

\section{Turning $\Upsilon$ into an equivalence}  \label{turning Ups}

In this section we will show how to modify the functor
$\Upsilon$ to turn it into an equivalence.
The results of this section will not be used elsewhere in the paper.

\ssec{}

As a first step, we will modify the functor $\Upsilon$ so that 
the new functor defines an equivalence between the corresponding
$\bD^+$ categories. \footnote{Here and elsewhere, for a triangulated
category $\bD$ equipped with a t-structure, we denote by $\bD^b$, $\bD^+$
and $\bD^-$, respectively, the corresponding bounded (resp., bounded
from below, bounded from above) subcategories, see \secref{bdd subcat}.}

\sssec{}

We begin with the following observation:

\begin{prop}   \label{Ups up to junk}
Suppose that $\CF\in \on{pt}/\cB\underset{\tN/\cG} \arrowtimes 
\bD^f(\fD(\Fl^{\on{aff}}_G)_\crit\mod)$ belongs to $\bD^+$. Then for 
$i\ll 0$, the truncation $\tau^{< i}(\Upsilon(\CF))$ is an acyclic object of 
$\on{pt}/\cB\underset{\on{pt}/\cG}\arrowtimes \bD^f(\fD(\Gr^{\on{aff}}_G)_\crit\mod)$, i.e., is cohomologically $\leq -n$ for any $n\in \BN$.
\end{prop}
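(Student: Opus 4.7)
The strategy is to establish a cohomological amplitude bound for $\Upsilon$ on compact generators, and then propagate this bound to arbitrary $\bD^+$-objects via filtered colimits, with the acyclic junk in $\bigcap_n \bD^{\leq -n}$ absorbing the discrepancy coming from the possible non-left-completeness of the induced t-structures.

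First I would bound the amplitude of $\Upsilon$ on compact objects. The upper bound is exactly \corref{Ups right-exact}. For the lower bound, \propref{iota ex}(c) asserts that $(\wt\iota_\Fl)^!$ is left-exact, and via the identification $(\wt\iota_\Fl)^! \simeq \CL^{-2\crho}\otimes (\wt\iota_\Fl)^*[-\dim(\cG/\cB)]$ this implies that $(\wt\iota_\Fl)^*$ sends $\bD^{\geq 0}$ (in the new t-structure) into $\bD^{\geq -\dim(G/B)}$. Since $\pi\circ\iota = \on{id}_{\on{pt}/\cB}$, every compact object of the LHS can be rewritten in the form $(\iota_\Fl)^*(\CF_0)$ for some $\CF_0 \in \bD^f(\fD(\Fl^{\on{aff}}_G)_\crit\mod)$: the $\bD^{perf}(\on{pt}/\cB)$-action is absorbed into $\pi^*$ by compatibility of $(\iota_\Fl)^*$ with the $\bD^{perf}(\tN/\cG)$-action. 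Combined with the tautological identification $\Upsilon \circ (\iota_\Fl)^* \simeq (\wt\iota_\Fl)^*$, this yields a cohomological amplitude of at most $[-\dim(G/B), 0]$ for $\Upsilon$ on all compact objects of the LHS.

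Next, given $\CF \in \bD^{\geq k}$ in the LHS, I would write $\CF \simeq \on{colim}_m \tau^{\leq m}\CF$ with each $\tau^{\leq m}\CF \in \bD^{[k, m]}$, and realize each truncation as a filtered colimit of compact generators contained in the same cohomological range. Since $\Upsilon$ is a functor on ind-completions and thus commutes with filtered colimits, and since filtered colimits are t-exact in the target (its t-structure being compactly generated as in \secref{t structure ten prod}), the amplitude bound from Step 1 propagates: $\Upsilon(\tau^{\leq m}\CF) \in \bD^{\geq k - \dim(G/B)}$ for every $m$. Passing to the colimit in $m$ and applying t-exactness once more yields $H^j(\Upsilon(\CF)) = 0$ for $j < k - \dim(G/B)$, modulo objects in $\bigcap_n \bD^{\leq -n}$. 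This is precisely the statement that $\tau^{<i}(\Upsilon(\CF))$ is acyclic for any $i \leq k - \dim(G/B)$.

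The main obstacle lies in the propagation step: one needs that bounded objects in the compactly generated t-structures on both sides admit resolutions by compact objects of comparable cohomological amplitude, and that truncation commutes with the relevant filtered colimits. The formulation of the proposition up to acyclic junk is precisely what sidesteps any failure of left-completeness of the target t-structure, since the acyclic piece absorbs any excess cohomology in $\bigcap_n \bD^{\leq -n}$ arising when forming colimits of unbounded-below systems of truncations.
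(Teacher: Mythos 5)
Your proposal takes a genuinely different route from the paper, but unfortunately it has gaps in both of its main steps. The paper deduces the statement from \propref{Ups nearly exact}, whose proof goes through \thmref{thm zero section}: the natural transformation $(\iota_\Fl)_*\to (\wt\iota_\Fl)_*\circ \Upsilon$ is shown to be an isomorphism (a nontrivial fact resting on Bezrukavnikov's theory), so that $(\wt\iota_\Fl)_*\circ \Upsilon$ is identified with the exact functor $(\iota_\Fl)_*$; one then concludes using the conservativity of $(\wt\iota_\Fl)_*$ on $\bD^+$ (\lemref{i Fl conserve}). Your argument tries to bypass \thmref{thm zero section} entirely by means of amplitude estimates, which would be a real gain if it worked, but it does not quite close.

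In Step~1, the claim that every compact object of $\on{pt}/\cB\underset{\tN/\cG}\times\bD^f(\fD(\Fl^{\on{aff}}_G)_\crit\mod)$ can be written as $(\iota_\Fl)^*(\CF_0)$ is not correct: objects of that form generate, but do not themselves form a triangulated subcategory, because $(\iota_\Fl)^*$ is not full (a morphism $(\iota_\Fl)^*(\CF_1)\to (\iota_\Fl)^*(\CF_2)$ corresponds by adjunction to a morphism $\CF_1\to (\iota_\Fl)_*(\iota_\Fl)^*(\CF_2)$, not to a morphism $\CF_1\to\CF_2$). More importantly, even when a compact object $\CF$ does have the form $(\iota_\Fl)^*(\CF_0)$, there is no bound on the cohomological amplitude of $\CF_0$ in terms of that of $\CF$: the inequality furnished by \lemref{closed imm bdd} runs in the opposite direction (from $\CF_0$ to $(\iota_\Fl)^*(\CF_0)$), and one cannot use $(\iota_\Fl)_*$ as a one-sided inverse because $(\iota_\Fl)^*\circ(\iota_\Fl)_*$ is not the identity (for a regular closed immersion it is tensoring with the Koszul complex of the conormal). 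So the left-exactness-after-shift of $(\wt\iota_\Fl)^*$ does not translate into an amplitude bound for $\Upsilon$ on compact objects.

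In Step~2, the request to "realize each truncation $\tau^{\leq m}\CF$ as a filtered colimit of compact generators contained in the same cohomological range" is precisely the compatibility of the new t-structure with the subcategory of compact objects, which the paper explicitly flags as an open question right after introducing the new t-structure in \secref{new t structure}, with the footnoted expectation that the answer is negative. Absent that compatibility, \lemref{colimit generate t} only lets you write $\tau^{\leq m}\CF$ as a homotopy colimit of objects of the form $\tau^{\leq m}(X_i')$ with $X_i'$ compact, and these truncations $\tau^{\leq m}(X_i')$ need not be compact (and need not lie above degree $k$), so the amplitude bound cannot be transported along the colimit. Both gaps are thus substantive, and it seems difficult to prove the proposition without going through the identification $(\iota_\Fl)_*\simeq (\wt\iota_\Fl)_*\circ\Upsilon$ and the conservativity of $(\wt\iota_\Fl)_*$, as the paper does.
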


\propref{Ups up to junk} follows from the next more precise
estimate:

\begin{prop}   \label{Ups nearly exact}
If $\CF\in \on{pt}/\cB\underset{\tN/\cG} \arrowtimes 
\bD^f(\fD(\Fl^{\on{aff}}_G)_\crit\mod)$ is cohomologically $\geq 0$, then
$\tau^{< 0}(\Upsilon(\CF))\in \on{pt}/\cB\underset{\on{pt}/\cG}\arrowtimes 
\bD^f(\fD(\Gr^{\on{aff}}_G)_\crit\mod)$ is acyclic.
\end{prop}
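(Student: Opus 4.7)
Plan: The strategy is to derive left-exactness of $\Upsilon$ (which together with \corref{Ups right-exact} says that $\Upsilon$ preserves cohomology modulo the subcategory of acyclic objects) from the identity
\[
(\iota_\Fl)_* \;\simeq\; (\wt\iota_\Fl)_* \circ \Upsilon,
\]
by transporting exactness through a conservativity argument on the heart of the target. The key inputs are \thmref{thm zero section}, the exactness of the two $*$-pushforwards, and \propref{comp i* i*}, all already established in Part II.

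First, as noted in the paragraph immediately following \propref{comp zero section}, the same devissage by which \thmref{thm zero section} is deduced from \propref{comp zero section} also shows that the natural transformation \eqref{basic nat trans star} is an isomorphism. This gives the displayed factorization for the finite categories, and it extends to their ind-completions by continuity, since all functors involved commute with colimits. Next, both $(\iota_\Fl)_*$ (exact by the discussion in \secref{t structure on B N Flags ind}) and $(\wt\iota_\Fl)_*$ (exact by \propref{iota ex}(b)) are exact with respect to the relevant t-structures. Consequently, for $\CF \in \bD^{\geq 0}$ in the source, the object $(\wt\iota_\Fl)_*(\Upsilon(\CF)) \simeq (\iota_\Fl)_*(\CF)$ lies in $\bD^{\geq 0}$, and since an exact functor commutes with truncation (it sends the truncation triangle of $\Upsilon(\CF)$ to the truncation triangle of its image) one obtains
\[
(\wt\iota_\Fl)_*\bigl(H^k(\Upsilon(\CF))\bigr) \;\simeq\; H^k\bigl((\wt\iota_\Fl)_*(\Upsilon(\CF))\bigr) \;=\; 0 \quad \text{for all } k<0.
\]

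To conclude, I will use that $(\wt\iota_\Fl)_*$ is faithful on the heart of the target: if $\CF'$ is a heart object with $(\wt\iota_\Fl)_*(\CF') = 0$, then $\CF' = 0$. For this, \propref{comp i* i*} provides an isomorphism $H^0\bigl((\wt\iota_\Fl)^*\circ(\wt\iota_\Fl)_*(\CF')\bigr) \overset{\sim}{\to} \CF'$ valid for any $\CF' \in \bD^{\leq 0}$; applied to a heart object with vanishing $(\wt\iota_\Fl)_*$, the left-hand side equals $H^0(0) = 0$, so $\CF' = 0$ (using that heart objects satisfy $\CF' = H^0(\CF')$, and that the heart contains no nonzero acyclic objects). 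Applying this vanishing to the heart object $H^k(\Upsilon(\CF))$ for each $k<0$ yields $H^k(\Upsilon(\CF)) = 0$, which by the interpretation of acyclicity as vanishing of all cohomology (the two truncations of a $\tau^{<0}$ object are trivially separated) is exactly the assertion that $\tau^{<0}(\Upsilon(\CF))$ is acyclic.

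The main obstacle is mild and of a bookkeeping nature: one must ensure that the factorization $(\iota_\Fl)_*\simeq(\wt\iota_\Fl)_*\circ\Upsilon$ indeed extends to the ind-completions (automatic from continuity of the functors involved) and that the exactness of $(\wt\iota_\Fl)_*$ really entails commutation with cohomological truncation at the ind-completed level, which follows from compact generation of the t-structures in question (cf.\ \secref{tensor products and t-structures}). Beyond these minor verifications, no new geometric input is required --- the entire assertion is a formal consequence of the results of Part II together with the adjunction estimate in \propref{comp i* i*}.
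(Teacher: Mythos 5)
Your proof is correct and follows essentially the same route as the paper: you use the isomorphism $(\iota_\Fl)_*\simeq(\wt\iota_\Fl)_*\circ\Upsilon$ from \thmref{thm zero section}, the exactness of $(\iota_\Fl)_*$ and $(\wt\iota_\Fl)_*$, and then the fact that $(\wt\iota_\Fl)_*$ reflects vanishing on heart objects via \propref{comp i* i*} --- which is exactly the content of the paper's \lemref{i Fl conserve}. The only stylistic quibbles are that the isomorphism of \eqref{basic nat trans star} is recorded at the end of \secref{sect map i to it}, not after \propref{comp zero section}, and that what you call ``faithful on the heart'' is really ``conservative on (or reflects zero objects in) the heart''; neither affects the correctness of the argument.
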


\ssec{Proof of \propref{Ups nearly exact}}

We will deduce the proposition from the next assertion:

\begin{lem}   \label{i Fl conserve}
The functor 
$$(\wt\iota_\Fl  )_*: 
\on{pt}/\cB\underset{\on{pt}/\cG}\arrowtimes \bD^f(\fD(\Gr^{\on{aff}}_G)_\crit\mod)\to
\bD_{ren}(\fD(\Fl^{\on{aff}}_G)_\crit\mod)$$
is conservative when restricted to $\bD^+$.
\end{lem}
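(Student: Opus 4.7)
The plan is to reduce the conservativity statement on $\bD^+$ to conservativity on the heart of the t-structure, which will then follow from the $H^0$-isomorphism of \propref{comp i* i*}.

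The reduction rests on \propref{iota ex}(b), which asserts that $(\wt\iota_\Fl)_*$ is t-exact (for the new t-structure on $\bD_{ren}(\fD(\Fl^{\on{aff}}_G)_\crit\mod)$ and the natural t-structure on its source). Consequently, $H^i$ commutes with $(\wt\iota_\Fl)_*$. Thus, given $\CF\in \bD^+$ with $(\wt\iota_\Fl)_*(\CF)=0$, I would deduce $(\wt\iota_\Fl)_*(H^i(\CF))=0$ for every $i$. Since $\CF$ is bounded below, it is determined by its cohomologies (the truncation triangles stabilize on the left), so $\CF = 0$ as soon as all $H^i(\CF) = 0$. This reduces the problem to conservativity on the heart.

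To establish conservativity on the heart: for $\CG$ in the heart, which is in particular $\leq 0$, \propref{comp i* i*} applies and shows that the adjunction counit $(\wt\iota_\Fl)^*\circ(\wt\iota_\Fl)_*(\CG) \to \CG$ induces an isomorphism on $H^0$. If $(\wt\iota_\Fl)_*(\CG) = 0$, then the source vanishes, and since $\CG$ lies in the heart, $H^0(\CG) = \CG$; whence $\CG = 0$.

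The two load-bearing inputs are \propref{iota ex}(b) and \propref{comp i* i*}, both already established. I do not anticipate any serious technical obstacle beyond verifying that objects in $\bD^+$ are detected by their cohomologies, a standard property of bounded-below derived categories attached to a t-structure.
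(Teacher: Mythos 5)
Your proof is correct and follows the same strategy as the paper's: reduce to conservativity on the heart using the exactness of $(\wt\iota_\Fl)_*$ (Proposition \ref{iota ex}(b)), then invoke \propref{comp i* i*} for the heart case. The paper's version is terser (it omits the explicit reduction argument and phrases the heart case as "non-vanishing" rather than as a vanishing implication), but the content is identical.
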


\begin{proof}
It is sufficient to show that for $\CF'$, belonging to the heart
of the t-structure on the category
$\on{pt}/\cB\underset{\on{pt}/\cG}\arrowtimes \bD^f(\fD(\Gr^{\on{aff}}_G)_\crit\mod)$,
the object $(\wt\iota_\Fl  )_*(\CF')$ is non-zero. For that, it is sufficient
to show that $(\wt\iota_\Fl  )^*\circ (\wt\iota_\Fl  )_*(\CF')\neq 0$. However,
the latter follows from \propref{comp i* i*}.
\end{proof}

Returning to the proof of the proposition, it is sufficient to show
that the composed functor $(\wt\iota_\Fl)_*\circ \Upsilon$ sends
an object $\CF$ as in the proposition to an object of 
$\bD_{ren}(\fD(\Fl^{\on{aff}}_G)_\crit\mod)$, which is acyclic in degrees $<0$.

However, by \thmref{thm zero section}, the map 
$(\iota_\Fl)_*\to (\wt\iota_\Fl)_*\circ \Upsilon$ of 
\eqref{basic nat trans star} is an isomorphism. So, our
assertion follows from the fact that the functor $(\iota_\Fl)_*$ is exact,
see \propref{affine exact}.

\qed

\ssec{}

Using \propref{Ups up to junk}, we define a new functor 
$$\Upsilon^+:
\left(\on{pt}/\cB\underset{\tN/\cG} \arrowtimes 
\bD^f(\fD(\Fl^{\on{aff}}_G)_\crit\mod)\right)^+\to
\left(
\on{pt}/\cB\underset{\on{pt}/\cG}\arrowtimes \bD^f(\fD(\Gr^{\on{aff}}_G)_\crit\mod)\right)^+$$
by 
$$\Upsilon^+(\CF):=\tau^{\geq i}(\Upsilon(\CF)) \text{ for some/any } i\ll 0.$$

We claim:

\begin{thm}  \label{Upsilon +}
The functor $\Upsilon^+$ is an exact equivalence of categories.
\end{thm}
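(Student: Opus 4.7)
The plan is to prove \thmref{Upsilon +} by verifying, in sequence, that $\Upsilon^+$ is well-defined on $\bD^+$, is exact, is fully faithful, and is essentially surjective. The first two points are nearly immediate from what has already been established: well-definedness follows from \propref{Ups up to junk}, since the truncations $\tau^{\geq i}\Upsilon(\CF)$ become canonically isomorphic once $i$ is small enough that $\tau^{<i}\Upsilon(\CF)$ is acyclic. Right-exactness of $\Upsilon^+$ is then inherited from \corref{Ups right-exact} (so that $\Upsilon(\CF)\leq 0$ when $\CF\leq 0$, and truncation from below preserves this), while left-exactness is exactly the content of \propref{Ups nearly exact}: if $\CF\geq 0$, then $\tau^{<0}\Upsilon(\CF)$ is acyclic, so $\Upsilon^+(\CF)\simeq \tau^{\geq 0}\Upsilon(\CF)$ is $\geq 0$.

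Fully faithfulness of $\Upsilon^+$ on $\bD^+$ I would deduce from fully faithfulness of $\Upsilon$ itself (\mainthmref{thm relation to Grassmannian}) together with the vanishing of $\on{Hom}$ spaces that involve an acyclic object (one lying in $\bD^{\leq -n}$ for every $n$) and a bounded-below object at the other end. The distinguished triangles $\tau^{<i}\Upsilon(\CF_k)\to \Upsilon(\CF_k)\to \Upsilon^+(\CF_k)$ then identify $\on{Hom}(\Upsilon^+(\CF_1),\Upsilon^+(\CF_2))$ with $\on{Hom}(\Upsilon(\CF_1),\Upsilon(\CF_2))\simeq \on{Hom}(\CF_1,\CF_2)$.

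For essential surjectivity, the key tool is the functor $\Omega$ from \secref{omega}, which I expect to serve as a quasi-inverse to $\Upsilon^+$ at the level of hearts. Given $\CF$ in the heart of the target, I would combine the identity $(\iota_\Fl)_*(\Omega(\CF))\simeq (\wt\iota_\Fl)_*(\CF)$ of \propref{hands on Omega} with the isomorphism $(\wt\iota_\Fl)_*\circ \Upsilon\simeq (\iota_\Fl)_*$ coming from \thmref{thm zero section}; applying the exact functor $(\wt\iota_\Fl)_*$ (which kills the acyclic tail of $\Upsilon(\Omega(\CF))$ in $\bD^+$) and invoking its conservativity on $\bD^+$ from \lemref{i Fl conserve}, one concludes $\Upsilon^+(\Omega(\CF))\simeq \CF$. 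Essential surjectivity on bounded objects would then follow by induction on cohomological amplitude using truncation triangles and the already-established exactness of $\Upsilon^+$; passing from bounded to bounded-below objects uses the convergence of Postnikov towers in both source and target.

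The main obstacle I anticipate is this last step, i.e., extending essential surjectivity from the heart of the target to all of $\bD^+$. The induction requires that $\Upsilon^+$ intertwine the truncation and cohomology functors correctly, so that, for instance, the natural map $\Upsilon^+(H^0(\CF))\to H^0(\Upsilon^+(\CF))$ is an isomorphism; combined with the good behavior of the t-structures on both sides (as base changes of compactly generated categories with compactly generated t-structures, cf.\ \secref{t structure on B N Flags ind}), this should allow the standard argument to go through, but the bookkeeping will require care.
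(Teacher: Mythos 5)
Your overall strategy (exactness from Propositions \ref{Ups up to junk} and \ref{Ups nearly exact} and Corollary \ref{Ups right-exact}, fully faithfulness deduced from that of $\Upsilon$, essential surjectivity onto the heart via $\Omega$, then propagation to $\bD^+$) matches the paper's proof. However, your fully-faithfulness argument has a genuine gap.

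You argue that the triangles $\tau^{<i}\Upsilon(\CF_k)\to\Upsilon(\CF_k)\to\Upsilon^+(\CF_k)$ identify $\Hom(\Upsilon^+(\CF_1),\Upsilon^+(\CF_2))$ with $\Hom(\Upsilon(\CF_1),\Upsilon(\CF_2))$ using ``the vanishing of Hom spaces that involve an acyclic object and a bounded-below object at the other end.'' This handles the first variable: $\Hom(\tau^{<i}\Upsilon(\CF_1),\Upsilon^+(\CF_2))=0$ is a t-structure vanishing, since $\tau^{<i}\Upsilon(\CF_1)\in\bD^{\leq a-1}$ for every $a$ and $\Upsilon^+(\CF_2)\in\bD^{\geq a}$ for some $a$. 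But it does \emph{not} handle the second variable. There you need $\Hom(\Upsilon(\CF_1),\tau^{<i}\Upsilon(\CF_2))=0$, i.e.\ $\Hom$ \emph{into} an acyclic object, and the t-structure axiom $\Hom(\bD^{\leq 0},\bD^{\geq 1})=0$ gives nothing in that direction. In a renormalized category such as $\on{pt}/\cB\underset{\on{pt}/\cG}\arrowtimes \bD^f(\fD(\Gr^{\on{aff}}_G)_\crit\mod)$, an acyclic object (one in $\underset{n}\cap\,\bD^{\leq -n}$) need not be zero and can receive nonzero maps from $\bD^+$ — the dualizing complex of an ind-scheme maps nontrivially from $\CO_\CY$ in \secref{a renormalization procedure}, for instance. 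The paper avoids this by replacing the second-variable comparison with a conservativity/adjunction argument: it reduces to $\CF_1$ of the form $\iota_\Fl^*(\CF_1')$, transfers the Hom across the adjunction to $(\wt\iota_\Fl)_*$, and then uses that $(\wt\iota_\Fl)_*\circ\Upsilon\simeq(\iota_\Fl)_*$ and $(\wt\iota_\Fl)_*\circ\Upsilon^+$ both land in $\bD^+$, so their difference is simultaneously acyclic and bounded below, hence zero. You need this extra input; the purely formal t-structure argument does not close.

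Two smaller remarks. Your route to $\Upsilon^+(\Omega(\CF))\simeq\CF$ (via Proposition \ref{hands on Omega}, the natural transformation from Theorem \ref{thm zero section}, and conservativity of $(\wt\iota_\Fl)_*$ from Lemma \ref{i Fl conserve}) is workable but needs a comparison \emph{map} before conservativity can be invoked; the paper's proof of Proposition \ref{Ups Omega} is more direct, going through $H^0((\wt\iota_\Fl)^*(\wt\iota_\Fl)_*(\CF))\simeq\CF$ of Proposition \ref{comp i* i*}. Finally, the obstacle you anticipate about $\Upsilon^+$ commuting with $H^0$ and truncations is not an obstacle: once exactness is established, this commutation is automatic, and the passage from heart to $\bD^+$ is handled (as the paper observes) by the fact that $\Upsilon^+$ commutes with colimits taken within any $\bD^{\geq -i}$.
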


Before beginning the proof of this theorem, let us make several
remarks:

\sssec{}

First, the fact that $\Upsilon^+$ is right-exact follows from the
right-exactness of $\Upsilon$ (\corref{Ups right-exact}). The
fact that $\Upsilon^+$ is left-exact follows from \propref{Ups nearly exact}.

\sssec{}

Secondly, we claim that the fully-faithfulness of $\Upsilon^+$
follows from that of $\Upsilon$ (\mainthmref{thm relation to Grassmannian}).

\medskip

Indeed, we need to show that for $\CF_1,\CF_2\in 
\left(\on{pt}/\cB\underset{\tN/\cG} \times \bD^f(\fD(\Fl^{\on{aff}}_G)_\crit\mod)\right)^+$,
the map
$$\Hom(\CF_1,\CF_2)\to \Hom(\Upsilon^+(\CF_1),\Upsilon^+(\CF_2))$$
is an isomorphism. 

\medskip

By construction, $\Upsilon^+(\CF_2)$ belongs to $\bD^+$, and 
$\on{Cone}(\Upsilon(\CF_1)\to \Upsilon^+(\CF_1))$ belongs to
$\bD^{< i}$ for any $i$. Hence,
$$\Hom(\Upsilon^+(\CF_1),\Upsilon^+(\CF_2))\to
\Hom(\Upsilon(\CF_1),\Upsilon^+(\CF_2))$$ is an isomorphism.

\medskip

We claim now that
$$\Hom(\Upsilon(\CF_1),\Upsilon(\CF_2))\to 
\Hom(\Upsilon(\CF_1),\Upsilon^+(\CF_2))$$
is an isomorphism for any $\CF_1\in 
\on{pt}/\cB\underset{\tN/\cG} \arrowtimes \fD^f(\Fl^{\on{aff}}_G)_\crit\mod$.
Indeed, with no restriction of generality we can take
$\CF_1$ from $\on{pt}/\cB\underset{\tN/\cG} \times \fD^f(\Fl^{\on{aff}}_G)_\crit\mod$,
and further of the form $\iota_\Fl^*(\CF'_1)$ for some $\CF'_1\in
\fD^f(\Fl^{\on{aff}}_G)_\crit\mod$. 

\medskip

Thus, by adjunction, it suffices to show that the map
$$(\wt\iota_\Fl)_*\circ \Upsilon\to (\wt\iota_\Fl)_*\circ \Upsilon^+$$
is an isomorphism, which follows from the construction.

\sssec{}

Thirdly, we claim that the functor $\Upsilon^+$ commutes with
colimits taken within $\bD^{\geq -i}$ for any $i$.

\medskip

Thus, to prove \thmref{Upsilon +} it suffices to show the functor $\Upsilon^+$ is essentially surjective onto
\begin{equation} \label{heart of base change Grass}
\Heart\left(\on{pt}/\cB\underset{\on{pt}/\cG}\arrowtimes \bD^f(\fD(\Gr^{\on{aff}}_G)_\crit\mod)\right).
\end{equation}
We will do so by exhibiting a right inverse functor on this
subcategory. In fact, we claim that the functor $\Omega$ constructed in 
\secref{omega} provides such an inverse. Namely, we claim:

\begin{prop}  \label{Ups Omega}
There exists a canonical isomorphism
$$\Upsilon^+(\Omega(\CF))\simeq \CF.$$
\end{prop}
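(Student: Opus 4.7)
\noindent\textbf{Proof plan for \propref{Ups Omega}.} The plan is to construct the isomorphism $\Upsilon^+(\Omega(\CF))\simeq \CF$ by identifying both sides with $H^0$ of a single explicit object, namely $(\wt\iota_\Fl)^*\circ(\wt\iota_\Fl)_*(\CF)$.  The key structural inputs are: the identification $\Upsilon\circ(\iota_\Fl)^*\simeq (\wt\iota_\Fl)^*$ built into the construction of $\Upsilon$ in \secref{tilde ups}; the exactness/right-exactness properties collected in \propref{iota ex} and \corref{Ups right-exact}; the ``heart-level'' identity \propref{comp i* i*}; and the acyclicity statement \propref{Ups nearly exact} that controls the difference between $\Upsilon$ and $\Upsilon^+$.

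\noindent First, set $X:=(\iota_\Fl)^*\circ(\wt\iota_\Fl)_*(\CF)$.  Since $(\wt\iota_\Fl)_*$ is exact and $(\iota_\Fl)^*$ is right-exact (with respect to the new $t$-structure), $X$ lies in $\bD^{\leq 0}$, and by definition $\Omega(\CF)=H^0(X)$.  Applying $\Upsilon$, and using $\Upsilon\circ(\iota_\Fl)^*\simeq (\wt\iota_\Fl)^*$, we obtain
\[
\Upsilon(X)\simeq (\wt\iota_\Fl)^*\circ(\wt\iota_\Fl)_*(\CF),
\]
so by \propref{comp i* i*} applied to the heart-object $(\wt\iota_\Fl)_*(\CF)$,
\[
H^0(\Upsilon(X))\simeq \CF.
\]

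\noindent Second, I want to replace $\Upsilon(X)$ by $\Upsilon(\Omega(\CF))$ on $H^0$.  Consider the truncation triangle $\tau^{<0}(X)\to X\to H^0(X)=\Omega(\CF)$.  Applying the right-exact functor $\Upsilon$ yields a triangle
\[
\Upsilon(\tau^{<0}X)\to \Upsilon(X)\to \Upsilon(\Omega(\CF)),
\]
in which $\Upsilon(\tau^{<0}X)\in \bD^{\leq -1}$.  Passing to $H^0$ gives a canonical isomorphism $H^0(\Upsilon(X))\simeq H^0(\Upsilon(\Omega(\CF)))$, and hence $H^0(\Upsilon(\Omega(\CF)))\simeq \CF$.

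\noindent Finally, since $\Omega(\CF)$ lies in the heart (in particular in $\bD^{\geq 0}\subset \bD^+$), \propref{Ups nearly exact} tells us that $\tau^{<0}(\Upsilon(\Omega(\CF)))$ is acyclic, while right-exactness of $\Upsilon$ forces $\Upsilon(\Omega(\CF))\in \bD^{\leq 0}$.  Unwinding the definition $\Upsilon^+(\Omega(\CF))=\tau^{\geq i}(\Upsilon(\Omega(\CF)))$ for $i\ll 0$, and using that $\tau^{\geq i}$ kills acyclic objects (which sit in $\bD^{\leq -n}$ for all $n$), we conclude
\[
\Upsilon^+(\Omega(\CF))\simeq \tau^{\geq 0}(\Upsilon(\Omega(\CF)))\simeq H^0(\Upsilon(\Omega(\CF)))\simeq \CF.
\]
All intermediate maps are canonical, so the resulting isomorphism is functorial in $\CF$.

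\noindent The main delicate point I anticipate is verifying that the definition of $\Upsilon^+$ really commutes with the acyclic truncation in the renormalized category — i.e.\ that ``acyclic'' objects (those in $\bD^{\leq -n}$ for every $n$) are precisely what is killed by $\tau^{\geq i}$ for $i\ll 0$, so that the stabilization in \propref{Ups up to junk} matches the $\tau^{\geq 0}$ truncation of a right-exact object.  Once this is checked, the plan above should assemble the isomorphism without further effort; the canonicity in $\CF$ follows because every step (adjunction, $\Upsilon\circ(\iota_\Fl)^*=(\wt\iota_\Fl)^*$, and the truncation triangle) is natural.
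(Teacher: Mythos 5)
Your proof is correct and follows essentially the same route as the paper's: both reduce to computing $H^0$ of $\Upsilon$ applied to $(\iota_\Fl)^*\circ(\wt\iota_\Fl)_*(\CF)$, identify $\Upsilon\circ(\iota_\Fl)^*$ with $(\wt\iota_\Fl)^*$, and invoke \propref{comp i* i*}. The only cosmetic difference is that the paper quotes the already-established exactness of $\Upsilon^+$ as a black box to move $H^0$ past $\Upsilon^+$, whereas you re-derive the needed commutation from the right-exactness of $\Upsilon$ and \propref{Ups nearly exact}; the delicate point you flag (that the stabilized truncation $\tau^{\geq i}$, $i\ll 0$, coincides with $\tau^{\geq 0}$ once the $\tau^{<0}$ part is acyclic) does indeed go through, since an acyclic object lies in $\bD^{\leq -n}$ for all $n$ and is therefore killed by $\tau^{\geq i}$ for every $i$.
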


By the above discussion, \propref{Ups Omega} implies \thmref{Upsilon +}.

\sssec{Proof of \propref{Ups Omega}}

Since the functor $\Upsilon^+$ is exact, it suffices to show that
$$H^0\left(\Upsilon^+\left((\iota_\Fl)^*\circ (\wt\iota_\Fl)_*(\CF)\right)\right)\simeq 
\CF.$$

By the definition of $\Upsilon^+$, the LHS of the latter expression is isomorphic
to 
$$H^0\left(\Upsilon\left((\iota_\Fl)^*\circ (\wt\iota_\Fl)_*(\CF)\right)\right)\simeq 
\CF.$$

Hence, our assertion follows from \propref{comp i* i*}.

\qed

\ssec{}

Let \begin{equation} \label{ren base change f}
\bD^f_{ren}\left(\on{pt}/\cB\underset{\tN/\cG} \times \fD(\Fl^{\on{aff}}_G)_\crit\mod\right)
\end{equation}
be the full subcategory of 
$\on{pt}/\cB\underset{\tN/\cG} \arrowtimes 
\bD^f(\fD(\Fl^{\on{aff}}_G)_\crit\mod)$,
consisting of objects $\CF$, for which
$(\iota_\Fl)_*(\CF)$ belongs to
$\bD^f(\fD(\Fl^{\on{aff}}_G)_\crit\mod) \subset \bD^f_{ren}(\fD(\Fl^{\on{aff}}_G)_\crit\mod)$.

\medskip

Let 
\begin{equation} \label{ren base change}
\bD_{ren}\left(\on{pt}/\cB\underset{\tN/\cG} \times \fD(\Fl^{\on{aff}}_G)_\crit\mod\right)
\end{equation}
be the corresponding renormalized triangulated category, given by 
the procedure of \secref{renorm without t},
i.e., the ind-completion of \eqref{ren base change f}. 
By \secref{renorm with t}, the category \eqref{ren base change}
acquires a t-structure.

\medskip

Let us denote by 
$$((\iota_\Fl)_*)_{ren}:\bD_{ren}\left(\on{pt}/\cB\underset{\tN/\cG} 
\times \fD(\Fl^{\on{aff}}_G)_\crit\mod\right) \to \bD_{ren}(\fD(\Fl^{\on{aff}}_G)_\crit\mod)$$
the ind-extension of the functor
$$(\iota_\Fl)_*:\bD^f_{ren}\left(\on{pt}/\cB\underset{\tN/\cG} 
\times \fD(\Fl^{\on{aff}}_G)_\crit\mod\right) \to \bD^f_{ren}(\fD(\Fl^{\on{aff}}_G)_\crit\mod).$$

By the construction of the t-structure on \eqref{ren base change}, the
exactness of the functor $(\iota_\Fl)_*$ (see \propref{affine exact})
implies the exactness of $((\iota_\Fl)_*)_{ren}$.

\sssec{}

We shall now construct a functor
$$\Upsilon_{ren}:
\bD_{ren}\left(\on{pt}/\cB\underset{\tN/\cG} \times \fD(\Fl^{\on{aff}}_G)_\crit\mod\right)\to
\on{pt}/\cB\underset{\on{pt}/\cG}\arrowtimes \bD^f(\fD(\Gr^{\on{aff}}_G)_\crit\mod).$$

\medskip

It will be defined as the ind-extension of a functor
$$\bD^f_{ren}\left(\on{pt}/\cB\underset{\tN/\cG} \times \fD(\Fl^{\on{aff}}_G)_\crit\mod\right)\to
\on{pt}/\cB\underset{\on{pt}/\cG}\arrowtimes \bD^f(\fD(\Gr^{\on{aff}}_G)_\crit\mod)$$
that we denote by the same character $\Upsilon_{ren}$. The latter functor is defined
as the restriction of $\Upsilon^+$ to the subcategory
$$\bD^f_{ren}\left(\on{pt}/\cB\underset{\tN/\cG} \times \fD(\Fl^{\on{aff}}_G)_\crit\mod\right)
\subset 
\left(\on{pt}/\cB\underset{\tN/\cG}\arrowtimes \bD^f(\fD(\Fl^{\on{aff}}_G)_\crit\mod)\right)^+.$$

\medskip

We are now ready to formulate the main result of this section:

\begin{thm}  \label{equiv flags vs grass}
The functor $\Upsilon_{ren}$ is an equivalence
of categories. It is exact with respect to the t-structures
defined on both sides. 
\end{thm}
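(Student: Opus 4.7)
The strategy is to leverage Theorem~\ref{Upsilon +}, which already provides an exact equivalence $\Upsilon^+$ between the $\bD^+$-subcategories, and show that it identifies the distinguished classes of compact objects on both sides. Once the restriction of $\Upsilon^+$ to the compact subcategories is shown to be an equivalence of triangulated categories, its ind-extension $\Upsilon_{ren}$ is automatically an equivalence of the ind-completions by general nonsense, and exactness is inherited.

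First, I would check that $\bD^f_{ren}\left(\on{pt}/\cB\underset{\tN/\cG}\times \fD(\Fl^{\on{aff}}_G)_\crit\mod\right)$ is contained in $\bD^+$. Indeed, for $\CF$ in this subcategory, $(\iota_\Fl)_*(\CF)\in \bD^f(\fD(\Fl^{\on{aff}}_G)_\crit\mod)$ is bounded; combined with the fact that $(\iota_\Fl)_*$ is exact and conservative on bounded-below objects (following Lemma~\ref{i Fl conserve} applied to the pair $(\iota_\Fl)^*, (\iota_\Fl)_*$, using that $\iota^*:\bD^{perf}(\Coh(\tN/\cG))\to\bD^{perf}(\Coh(\on{pt}/\cB))$ generates the base change), this forces $\CF\in\bD^+$. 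Thus the restriction of $\Upsilon^+$ to this subcategory makes sense and is fully faithful and exact.

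The crucial point is essential surjectivity onto the compact generators
$\on{pt}/\cB\underset{\on{pt}/\cG}\times \bD^f(\fD(\Gr^{\on{aff}}_G)_\crit\mod)$
of the target. Given such a compact $\CF'$, let $\CF\in\bD^+$ be its unique preimage under the equivalence $\Upsilon^+$ of Theorem~\ref{Upsilon +}. I need to verify that $\CF$ lies in $\bD^f_{ren}$, i.e.\ that $(\iota_\Fl)_*(\CF)\in\bD^f(\fD(\Fl^{\on{aff}}_G)_\crit\mod)$. The key identity is
$$(\iota_\Fl)_*\simeq (\wt\iota_\Fl)_*\circ\Upsilon^+$$
on $\bD^+$, which comes from Theorem~\ref{thm zero section} together with the observation that $\Upsilon^+$ and $\Upsilon$ differ only by an acyclic tail (Proposition~\ref{Ups up to junk}) that is annihilated by the exact functor $(\wt\iota_\Fl)_*$ in the appropriate sense. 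Hence $(\iota_\Fl)_*(\CF)\simeq (\wt\iota_\Fl)_*(\CF')$, and the problem reduces to showing $(\wt\iota_\Fl)_*(\CF')\in\bD^f(\fD(\Fl^{\on{aff}}_G)_\crit\mod)$ whenever $\CF'$ is compact in the Grassmannian base change.

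This last point is the main technical obstacle, and I expect it to be handled by unwinding the construction of $(\wt\iota_\Fl)_*$ from Section~\ref{constr wt i}. By the universal property of base change, $(\wt\iota_\Fl)_*$ is obtained from the functor $p^{!*}:\bD^f(\fD(\Gr^{\on{aff}}_G)_\crit\mod)\to \bD^f(\fD(\Fl^{\on{aff}}_G)_\crit\mod)$, which manifestly preserves the property of being finitely generated (as $\dim(G/B)$ is finite and $p$ is a smooth morphism with proper fibre). Combined with the action of $\bD^{perf}(\Coh(\on{pt}/\cB))$, which preserves the compact subcategory, this yields the required preservation of compactness. With fully-faithfulness and essential surjectivity at the compact level in hand, exactness of $\Upsilon_{ren}$ follows from exactness of $\Upsilon^+$ on compacts together with the fact that both t-structures on the ind-completions are, by construction (via Section~\ref{renorm with t} on the flag side and Section~\ref{t structure ten prod} on the Grassmannian side), generated by their compact parts.
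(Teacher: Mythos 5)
Your essential-surjectivity argument matches the paper's: you identify the preimage of a compact $\CF'$ under $\Upsilon^+$ with (up to the language of $\Omega$) the object whose image under $(\iota_\Fl)_*$ is $(\wt\iota_\Fl)_*(\CF')$, and you correctly observe that $(\wt\iota_\Fl)_*$ sends compacts of the Grassmannian base change into $\bD^f(\fD(\Fl^{\on{aff}}_G)_\crit\mod)$ because it is built from $p^{!*}$ and the $\bD^{perf}(\Coh(\on{pt}/\cB))$-action. This part is sound.

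The genuine gap is in the opening ``by general nonsense'' step. The ind-extension $\Upsilon_{ren}$ of the fully faithful functor $\Upsilon^+|_{\bD^f_{ren}}$ is \emph{not} automatically fully faithful: one must also know that $\Upsilon_{ren}$ sends the compact subcategory $\bD^f_{ren}\bigl(\on{pt}/\cB\underset{\tN/\cG}\times \fD(\Fl^{\on{aff}}_G)_\crit\mod\bigr)$ of the source into the compact objects of $\on{pt}/\cB\underset{\on{pt}/\cG}\arrowtimes\bD^f(\fD(\Gr^{\on{aff}}_G)_\crit\mod)$. Without this, for $X$ compact and $Y = \mathrm{hocolim}\,Y_i$, the comparison of $\Hom(\Upsilon_{ren}(X),\Upsilon_{ren}(Y))$ with $\mathrm{colim}_i\Hom(\Upsilon_{ren}(X),\Upsilon_{ren}(Y_i))$ breaks down. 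This preservation of compactness is precisely \propref{Ups finite}, which the paper explicitly singles out as ``the main step in the proof of the theorem.'' You never address it: the compactness assertion you do verify -- that $(\wt\iota_\Fl)_*$ preserves finite generation -- points in the \emph{opposite} direction (it feeds into essential surjectivity, not into the image of $\Upsilon_{ren}$ on compacts). Note also Remark~1 after the statement of the theorem, which warns that even the identification of $\Upsilon_{ren}|_{\bD^+}$ with $\Upsilon^+$ ``is one of the finiteness issues we will have to come to grips with''; this, too, is a consequence of \propref{Ups finite} and cannot be assumed at the outset. The paper's proof of \propref{Ups finite} is genuinely nontrivial: it first shows $\Upsilon_{ren}(\CF)$ is almost compact (using \thmref{Upsilon +} and the fact that $\Omega$ commutes with filtered colimits at the abelian level), then invokes \lemref{crit for comp} (an argument of Lurie, based on resolving $\CO_{\Delta_{\on{pt}/\cB}}$ by external products of line bundles) to pass from almost compact plus bounded plus Noetherianity of $\fD(\Gr^{\on{aff}}_G)_\crit\mod$ to genuine compactness. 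None of this appears in your proposal, and the theorem does not follow without it.

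A minor further omission: ``exactness is inherited'' is too quick. Right-exactness of $\Upsilon_{ren}$ is by construction; left-exactness is obtained in the paper by showing the inverse functor is right-exact, which in turn reduces (since the heart on the Grassmannian side is generated by the $\Res^\cG_\cB(\CF)$) to the observation that $\Omega(\Res^\cG_\cB(\CF))$ lies in the heart -- this last step uses the already-established identification of the inverse with $\Omega$ on generators.
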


The rest of this section is devoted to the proof of this theorem.

\medskip

\noindent{\it Remark 1.} Let us note that it is {\it a priori} not clear,
although ultimately true, that the restriction of the functor
$\Upsilon_{ren}$ to 
$$\left(\on{pt}/\cB\underset{\tN/\cG} \arrowtimes 
\bD^f(\fD(\Fl^{\on{aff}}_G)_\crit\mod)\right)^+\simeq
\bD_{ren}\left(\on{pt}/\cB\underset{\tN/\cG} \times 
\fD(\Fl^{\on{aff}}_G)_\crit\mod\right)^+$$
is isomorphic to $\Upsilon^+$. This is isomorphism is one of the finiteness
issues we will have to come to grips with in the proof that follows.

\medskip

\noindent{\it Remark 2.} As another manifestation of the fact that the 
original functor
$$\Upsilon:\on{pt}/\cB\underset{\tN/\cG}\times \bD^f(\fD(\Fl^{\on{aff}}_G)_\crit\mod)
\to \on{pt}/\cB\underset{\on{pt}/\cG}\times \bD^f(\fD(\Gr^{\on{aff}}_G)_\crit\mod)$$
is not an equivalence of categories (even after passing
to Karoubian envelopes) 
is that the object $\Omega(\Res^\cG_\cB(\delta_{1,\Gr^{\on{aff}}_G}))\in
\on{pt}/\cB\underset{\tN/\cG}\arrowtimes \bD^f(\fD(\Fl^{\on{aff}}_G)_\crit\mod)$
is not compact. 

\sssec{}
The main step in the proof of the theorem is the following:

\begin{prop}     \label{Ups finite}
The functor $\Upsilon_{ren}$ sends 
$\bD^f_{ren}\left(\on{pt}/\cB\underset{\tN/\cG} \times \fD(\Fl^{\on{aff}}_G)_\crit\mod\right)$
to the category of compact objects in 
$\on{pt}/\cB\underset{\on{pt}/\cG}\arrowtimes \bD^f(\fD(\Gr^{\on{aff}}_G)_\crit\mod)$.
\end{prop}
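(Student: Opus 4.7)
The plan is to identify compact objects of the target $\on{pt}/\cB\underset{\on{pt}/\cG}\arrowtimes \bD^f(\fD(\Gr^{\on{aff}}_G)_\crit\mod)$ with the ``naive'' subcategory $\on{pt}/\cB\underset{\on{pt}/\cG}\times \bD^f(\fD(\Gr^{\on{aff}}_G)_\crit\mod)$ and to show that $\Upsilon^+(\CF)$ lands there whenever $\CF\in \bD^f_{ren}\bigl(\on{pt}/\cB\underset{\tN/\cG}\times \fD(\Fl^{\on{aff}}_G)_\crit\mod\bigr)$. First I would reduce to $\CF$ lying in the heart of the source: $\Upsilon^+$ is exact (it is an equivalence by \thmref{Upsilon +}), every $\bD^f_{ren}$-object is bounded, and compactness is preserved by finite extensions and shifts.

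Next, by \thmref{Upsilon +} combined with \propref{Ups Omega}, the functor $\Omega$ is inverse to $\Upsilon^+$ on hearts. Setting $\CG:=\Upsilon^+(\CF)$, so that $\Omega(\CG)\simeq \CF$, \propref{hands on Omega} yields
\begin{equation*}
(\wt\iota_\Fl)_*(\CG)\simeq (\iota_\Fl)_*(\Omega(\CG))\simeq (\iota_\Fl)_*(\CF),
\end{equation*}
and the right-hand side belongs to $\bD^f(\fD(\Fl^{\on{aff}}_G)_\crit\mod)$ by the defining condition of $\bD^f_{ren}$. The task is thus reduced to the detection statement: if $\CG$ lies in the heart of the target and $(\wt\iota_\Fl)_*(\CG)$ is a finitely generated D-module on $\Fl^{\on{aff}}_G$, then $\CG$ already lies in the ``naive'' compact subcategory.

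To prove this detection statement, which is the main obstacle, I would exploit the explicit description of $(\wt\iota_\Fl)_*$: on generators it sends $V\underset{\CO_{\on{pt}/\cG}}\otimes \CF''$ to essentially $V\star p^{!*}(\CF'')\star J_{2\rho}\star \CW$-type objects. Combined with the finite filtration of $\CW$ recalled in \secref{tilde ups}, whose subquotients are of the form $V\underset{\CO_{\on{pt}/\cG}}\otimes \CF''$ with $V\in \Rep^{f.d.}(\cB)$ and $\CF''\in \fD(\Gr^{\on{aff}}_G)_\crit\mod^I$ finitely generated, this shows that $(\wt\iota_\Fl)_*$ sends compact objects to compact D-modules. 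Writing $\CG$ as a filtered colimit of its compact subobjects $\CG_\alpha$ (using the compact generation of the heart of the target established in \secref{t structure on B N Flags ind}), $(\wt\iota_\Fl)_*$ commutes with the colimit, so finite generation of $(\wt\iota_\Fl)_*(\CG)$ forces the colimit $\on{colim}\,(\wt\iota_\Fl)_*(\CG_\alpha)$ to stabilize. Combined with the conservativity of $(\wt\iota_\Fl)_*$ on $\bD^+$ from \lemref{i Fl conserve}, this will force $\CG_\alpha \to \CG$ to be an isomorphism for $\alpha$ sufficiently large, giving the required compactness.

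The delicate point is the last step: translating stabilization at the level of $\Fl^{\on{aff}}_G$-D-modules into stabilization of the $\CG_\alpha$'s themselves. Here one uses that the cone of $\CG_\alpha \to \CG$, after applying $(\wt\iota_\Fl)_*$, becomes $0$ in the colimit and hence vanishes eventually; conservativity of $(\wt\iota_\Fl)_*$ on $\bD^+$ then finishes the argument. This delicate bookkeeping, rather than the formal manipulations, is where I expect the main technical work to lie.
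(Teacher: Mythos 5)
Your proposal follows a genuinely different route from the paper's, but it has gaps that would need serious repair.

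The paper's proof works with a bounded, \emph{almost compact} object of the target (almost compactness of $\Upsilon_{ren}(\CF)$ follows from \thmref{Upsilon +} and the fact that $\Omega$ commutes with filtered colimits), and then invokes \lemref{crit for comp} — a Koszul-resolution-of-the-diagonal criterion, due to Lurie — which reduces compactness in $\on{pt}/\cB\underset{\on{pt}/\cG}\arrowtimes \bD^f(\fD(\Gr^{\on{aff}}_G)_\crit\mod)$ to compactness of finitely many objects $\Ind^\cG_\cB(\CL^{\cla_i}\otimes \CF)$ in $\bD_{ren}(\fD(\Gr^{\on{aff}}_G)_\crit\mod)$. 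There, the usual t-structure has a genuinely Noetherian heart, so bounded almost compact objects are compact. The point is that this criterion sidesteps the heart of the base-changed category entirely.

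Your approach, by contrast, reduces to the heart of the source and then tries to use $(\wt\iota_\Fl)_*$ to detect compactness. Two issues arise. First, you call $(\wt\iota_\Fl)_*(\CG) \simeq (\iota_\Fl)_*(\CF)$ a ``finitely generated D-module'' on $\Fl^{\on{aff}}_G$ and then invoke Noetherianity to make the chain of subobjects stabilize. But $(\iota_\Fl)_*$ is exact in the \emph{new} t-structure, so $(\wt\iota_\Fl)_*(\CG)$ is an object of $\bD^f(\fD(\Fl^{\on{aff}}_G)_\crit\mod)$ that lies in $\Heart^{new}$; this is not a D-module in the ordinary abelian sense, and the question of whether the new t-structure even preserves $\bD^f$ is explicitly left open (and conjecturally false) in \secref{new t structure}. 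So the Noetherianity you need is not available; you would have to replace it by a compactness argument (since $(\iota_\Fl)_*(\CF)$ is a compact object of $\bD_{ren}(\fD(\Fl^{\on{aff}}_G)_\crit\mod)$, the identity on it would factor through some $(\wt\iota_\Fl)_*(\CG_\alpha)$). Second, your step ``writing $\CG$ as a filtered colimit of its compact subobjects'' needs the heart of the target to be exhausted by objects that are compact \emph{in the ambient triangulated ind-category}, not merely finitely presented in the abelian heart; the reference you give, \secref{t structure on B N Flags ind}, is about the $\Fl^{\on{aff}}_G$-side (not the Grassmannian target), and the statement you want there is precisely the sort of compatibility between the t-structure and compact objects that the paper avoids relying on. Without it, the filtered-colimit step is unsupported. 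These are the two places where your argument, as written, would not go through.
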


\sssec{}

Let us show how \propref{Ups finite} implies \thmref{equiv flags vs grass}.

\medskip

First, we claim that the functor $\Upsilon_{ren}$ is fully faithful. Indeed,
by \propref{Ups finite}, it is enough to check that the restriction of
$\Upsilon_{ren}$ to 
$\bD^f_{ren}\left(\on{pt}/\cB\underset{\tN/\cG} \times \fD(\Fl^{\on{aff}}_G)_\crit\mod\right)$
is fully faithful, and the latter assertion follows from the fully-faithfulness of
$\Upsilon^+$.

\medskip

Thus, to prove that $\Upsilon_{ren}$ is an equivalence, it is sufficient to
check that it is essentially surjective onto the generators of 
$\on{pt}/\cB\underset{\on{pt}/\cG}\arrowtimes \bD^f(\fD(\Gr^{\on{aff}}_G)_\crit\mod)$.
Up to tensoring with $\CL^\cla$, it is sufficient to check that an object of
the form $\Res^\cG_\cB(\CF)$ for $\CF$ a finitely generated D-module
on $\Gr^{\on{aff}}_G$, is in the image of $\Upsilon_{ren}$.

\medskip

Consider the object $$\Omega(\Res^\cG_\cB(\CF))\in 
\Heart\left(\on{pt}/\cB\underset{\tN/\cG} \arrowtimes \fD^f(\Fl^{\on{aff}}_G)_\crit\mod\right)
\subset
\on{pt}/\cB\underset{\tN/\cG} \arrowtimes \fD^f(\Fl^{\on{aff}}_G)_\crit\mod.$$
By \propref{hands on Omega}, $\Omega(\Res^\cG_\cB(\CF))$
belongs to 
$\bD^f_{ren}\left(\on{pt}/\cB\underset{\tN/\cG} \times \fD(\Fl^{\on{aff}}_G)_\crit\mod\right)$,
since $p^{!*}(\CF)$ is finitely generated. Therefore,
$$\Upsilon_{ren}(\Omega(\Res^\cG_\cB(\CF)))\simeq
\Upsilon^+(\Omega(\Res^\cG_\cB(\CF))),$$
and the latter identifies with $\Res^\cG_\cB(\CF)$, by \propref{Ups Omega}.

\medskip

It remains to show that $\Upsilon_{ren}$ is exact. The right-exactness follows
by construction. Hence, it is enough to show that the inverse functor
$(\Upsilon_{ren})^{-1}$ is also right-exact. For that it is enough to show 
that $(\Upsilon_{ren})^{-1}$ sends generators of 
$$\Heart\left(\on{pt}/\cB\underset{\on{pt}/\cG} \arrowtimes \fD^f(\Gr^{\on{aff}}_G)_\crit\mod\right)
\simeq \Rep(\cB)\underset{\Rep(\cG)} \otimes \fD(\Gr^{\on{aff}}_G)_\crit\mod$$
to objects that are $\leq 0$. Hence, it is sufficient to show that
$(\Upsilon_{ren})^{-1}(\Res^\cG_\cB(\CF))$ is $\leq 0$ for  
$\CF$ a finitely generated D-module on $\Gr^{\on{aff}}_G$. However, we have seen
above that $(\Upsilon_{ren})^{-1}(\Res^\cG_\cB(\CF))\simeq
\Omega(\Res^\cG_\cB(\CF))$, which is in the heart of the t-structure. 

\qed

\sssec{}

As a remark, let us now show that the functors $\Upsilon_{ren}$ and $\Upsilon^+$ are
canonically isomorphic when restricted to
$$\bD^+_{ren}\left(\on{pt}/\cB\underset{\tN/\cG} \times \fD(\Fl^{\on{aff}}_G)_\crit\mod\right)
\simeq 
\left(\on{pt}/\cB\underset{\tN/\cG} \arrowtimes \bD^f(\fD(\Fl^{\on{aff}}_G)_\crit\mod)\right)^+,$$
(where the latter identification of the categories is given by 
\propref{D plus}).

\begin{proof}

By construction, there is a natural transformation
$\Upsilon_{ren}\to \Upsilon^+$. Since both functors send $\bD^+$ to 
$\bD^+$ is suffices to check that the above map induces an isomorphism an individual
cohomologies. 

\medskip

Let $\Psi$ denote the canonical functor 
$$\bD_{ren}\left(\on{pt}/\cB\underset{\tN/\cG} \times \fD(\Fl^{\on{aff}}_G)_\crit\mod\right)\to
\on{pt}/\cB\underset{\tN/\cG} \arrowtimes \bD^f(\fD(\Fl^{\on{aff}}_G)_\crit\mod),$$
see \secref{renorm without t}.
By construction, we have a natural transformation
$\Upsilon\circ \Psi\to \Upsilon_{ren}$.

\medskip

The composed map $\Upsilon\circ \Psi\to \Upsilon_{ren}\to \Upsilon^+$,
applied to objects from $\bD^+$, is an isomorphism, as follows 
from the definition of $\Upsilon^+$. Hence, it is enough to check
that $H^i(\Upsilon\circ \Psi)\to H^i(\Upsilon_{ren})$ is an isomorphism. 

\medskip

Both functors are defined on the whole of 
$\bD_{ren}\left(\on{pt}/\cB\underset{\tN/\cG} \times \fD(\Fl^{\on{aff}}_G)_\crit\mod\right)$
and commute with direct limits. Hence, it is enough to prove the
assertion for their restrictions to 
$\bD^f_{ren}\left(\on{pt}/\cB\underset{\tN/\cG} \times \fD(\Fl^{\on{aff}}_G)_\crit\mod\right)$.
The isomorphism in the latter case again follows 
from the definition of $\Upsilon^+$. 

\end{proof}

\ssec{Proof of \propref{Ups finite}}

The proof is based on the following: 
\begin{lem}  \label{crit for comp}
There exists a finite collection of elements $\cla_i$, such that
an object $$\CF\in \on{pt}/\cB\underset{\on{pt}/\cG}\arrowtimes 
\bD^f(\fD(\Gr^{\on{aff}}_G)_\crit\mod)$$
is compact if and only if 
$$\Ind^\cG_\cB\left(\CL^{\cla_i}\underset{\CO_{\on{pt}/\cB}}\otimes\CF\right)\in
\bD_{ren}(\fD(\Gr^{\on{aff}}_G)_\crit\mod)$$
is compact for every $i$.
\end{lem}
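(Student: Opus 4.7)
The plan is to exploit the resolution of the structure sheaf of the diagonal on $\on{Fl}^\cG\times \on{Fl}^\cG/\cG$ that was used in the proof of \lemref{J la bdd}, combined with base change and the projection formula in the category
$$\bC := \on{pt}/\cB\underset{\on{pt}/\cG}\arrowtimes \bD^f(\fD(\Gr^{\on{aff}}_G)_\crit\mod).$$

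The forward direction is easy and holds for \emph{any} finite collection $\{\cla_i\}$. Compact objects of $\bC$ lie in the thick subcategory generated by $\CL^\cmu\underset{\CO_{\on{pt}/\cB}}\otimes \Res^\cG_\cB(\CG)$ with $\cmu\in \cLambda$ and $\CG\in \bD^f(\fD(\Gr^{\on{aff}}_G)_\crit\mod)$; for each such generator the projection formula gives
$$\Ind^\cG_\cB\bigl(\CL^{\cla_i}\otimes \CL^\cmu\otimes \Res^\cG_\cB(\CG)\bigr)\simeq \Ind^\cG_\cB(\CL^{\cla_i+\cmu})\underset{\CO_{\on{pt}/\cG}}\otimes \CG,$$
and $\Ind^\cG_\cB(\CL^{\cla_i+\cmu})\in \bD^b(\Rep^{f.d.}(\cG))$ is perfect by Borel--Weil--Bott, so this pushforward is compact in $\bD_{ren}(\fD(\Gr^{\on{aff}}_G)_\crit\mod)$; taking thick envelopes gives the claim.

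For the reverse direction I would take $\{\cla_i\}$ to be the finite set of dominant weights $\{\cmu_j\}\subset \cLambda^+$ appearing in the complex of length $\dim(\on{Fl}^\cG)$ realizing $\CO_{\Delta_{\on{pt}/\cB}}$ as a direct summand in $\Coh(\on{Fl}^\cG\times \on{Fl}^\cG/\cG)$, as recalled in the proof of \lemref{J la bdd}; the terms of that complex are of the form $\CL^{\cmu_j}\boxtimes \CL^{\cla'_j}$. The usual decomposition of $\id$ through the diagonal, applied in $\bC$ via the base-change isomorphism $(p_2)_*\circ p_1^*\simeq \Res^\cG_\cB\circ \Ind^\cG_\cB$ for $p_1,p_2:\on{pt}/\cB\underset{\on{pt}/\cG}\times \on{pt}/\cB\rightrightarrows \on{pt}/\cB$ together with the projection formula, then realizes every $\CF\in \bC$ as a direct summand of a finite complex whose terms are
$$\CL^{\cla'_j}\underset{\CO_{\on{pt}/\cB}}\otimes \Res^\cG_\cB\bigl(\Ind^\cG_\cB(\CL^{\cmu_j}\otimes \CF)\bigr).$$
Under the hypothesis that each $\Ind^\cG_\cB(\CL^{\cmu_j}\otimes \CF)$ is compact in $\bD_{ren}(\fD(\Gr^{\on{aff}}_G)_\crit\mod)$, applying $\Res^\cG_\cB$ (which preserves compacts, because its right adjoint $\Ind^\cG_\cB$ commutes with direct sums thanks to properness of $\fq$) and twisting by $\CL^{\cla'_j}$ yields compact objects of $\bC$; the whole finite complex is then compact, and so is its direct summand $\CF$.

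The main obstacle is the rigorous deployment of the projection formula and the base-change isomorphism $(p_2)_*\circ p_1^*\simeq \Res^\cG_\cB\circ \Ind^\cG_\cB$ inside the ind-completed tensor-product category $\bC$. This appeals to the formalism of monoidal actions and tensor products of triangulated categories developed in Part III --- specifically to the tightness and projection-formula-type statements of \secref{adjunctions and tightness}, applied to the proper morphism $\fq:\on{pt}/\cB\to \on{pt}/\cG$ via \secref{categories over stacks}.
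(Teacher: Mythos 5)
Your proof follows essentially the same route as the paper's (an argument the authors credit to J.~Lurie): represent $\CO_{\Delta_{\on{pt}/\cB}}$ on $\on{pt}/\cB\underset{\on{pt}/\cG}\times\on{pt}/\cB$ as a direct summand of the finite complex with terms $\CL^{\cmu_i}\boxtimes\CL^{\cla_i}$ from \secref{proof of J la bdd}, act on $\CF$, and use base change $(p_2)_*\circ p_1^*\simeq\Res^\cG_\cB\circ\Ind^\cG_\cB$ together with the projection formula to conclude that $\CF$ is a direct summand of a successive extension of objects $\CL^{\cla'_j}\underset{\CO_{\on{pt}/\cB}}\otimes\Res^\cG_\cB\left(\Ind^\cG_\cB(\CL^{\cmu_j}\underset{\CO_{\on{pt}/\cB}}\otimes\CF)\right)$, each of which is compact under the hypothesis. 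The paper's own proof states this terse implication and stops; you additionally spell out the base-change and projection-formula mechanisms, record why $\Res^\cG_\cB$ and $\Ind^\cG_\cB$ preserve compacts (right adjoints commute with coproducts, $\fq$ proper), and handle the easy forward implication explicitly. The appeal to the tightness formalism of \secref{adjunctions and tightness} and the proper-pushforward adjunctions of \secref{categories over stacks} to justify these manipulations in the ind-completed tensor-product category is exactly the right place to ground the argument, and is implicit in the paper's one-line deduction as well.

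One cosmetic remark: you fix $\{\cla_i\}$ to be the dominant weights $\{\cmu_j\}\subset\cLambda^+$ appearing in the resolution, whereas the paper's notation names the lemma's collection after the other (not necessarily dominant) leg of the same resolution of $\CO_{\Delta_{\on{pt}/\cB}}$. Either leg works --- the lemma asserts only the existence of some finite collection --- so this is a choice of convention, not a discrepancy.
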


\sssec{}

Let us show how to deduce \propref{Ups finite} from the lemma. By construction,
for $\CF\in \bD^f_{ren}\left(\on{pt}/\cB\underset{\tN/\cG} 
\times \fD(\Fl^{\on{aff}}_G)_\crit\mod\right)$, the object $\Upsilon_{ren}(\CF)$ belongs
to $\bD^b$. In addition, we claim that $\Upsilon_{ren}(\CF)$ is {\it almost
compact}, see \secref{almost compact}, where the latter notion is introduced.
Indeed, this follows from \thmref{Upsilon +} and the fact that the functor
$\Omega$ commutes with direct limits on the abelian category.

\medskip

Now, we claim that any almost compact object of 
$\left(\on{pt}/\cB\underset{\on{pt}/\cG}\arrowtimes 
\bD^f(\fD(\Gr^{\on{aff}}_G)_\crit\mod)\right)^b$ is compact.  Indeed, by \lemref{crit for comp},
it is sufficient to check that 
$\CF':=\Ind^\cG_\cB\left(\CL^{\cla}\underset{\CO_{\on{pt}/\cB}}\otimes\CF\right)$
is compact for any (or, in fact, a finite collection of) $\cla$. However,
$\CF'$ is also almost compact by adjunction. But since the category
$\fD(\Gr^{\on{aff}}_G)_\crit\mod$ is Noetherian, it is clear that every almost
compact object in $\bD^b(\fD(\Gr^{\on{aff}}_G)_\crit\mod)$ is compact.

\qed

\sssec{Proof of \lemref{crit for comp}} 
The argument given below belongs to Jacob Lurie:

\medskip

We will prove the lemma in the general context of a triangulated
category $\bD$ over the stack $\on{pt}/\cG$, where in our case we 
take $\bD:=\bD^f(\fD(\Gr^{\on{aff}}_G)_\crit\mod)$.

\medskip

Let us represent $\CO_{\Delta_{\on{pt}/\cB}}\in 
\Coh(\on{pt}/\cB\underset{\on{pt}/\cG}\times \on{pt}/\cB)\simeq
\Coh(\on{Fl}^{\cG}\times \on{Fl}^{\cG}/\cG)$ as a direct summand of a complex
as in \secref{proof of J la bdd}. 

\medskip

This implies that any object $\CF\in
\on{pt}/\cB\underset{\on{pt}/\cG}\times \bD$ (or,
$\on{pt}/\cB\underset{\on{pt}/\cG}\arrowtimes \bD$) is a direct
summand of an object, which is a successive extensions of
objects of the form
$$\CL^{\cmu_i}\underset{\CO_{\on{pt}/\cB}}\otimes
\Ind^\cG_\cB\left(\CL^{\cla_i}\underset{\CO_{\on{pt}/\cB}}\otimes\CF\right).$$

If all of the latter are compact, then so is $\CF$.

\qed

\section{Compatibility of $\Gamma_\Fl$ and $\Upsilon$}   \label{Ups and Gamma}

In this section we will prove \thmref{rel to aff gr and sections}.

\ssec{}

As was mentioned earlier, it suffices to show that the following
diagram commutes
\begin{equation} \label{diag sections for reg}
\CD
\bD^f(\fD(\Fl^{\on{aff}}_G)_\crit\mod) @>{\Gamma_{\Fl}}>> \bD^f(\hg_\crit\mod_\nilp) \\
@V{\wt\iota_\Fl^*}VV    @V{\iota^*_\hg}VV   \\
\on{pt}/\cB \underset{\on{pt}/\cG}\times \bD^f(\fD(\Gr^{\on{aff}}_G)_\crit\mod)
@>{\Gamma_{\Gr,\on{pt}/\cB}}>> 
\bD^f(\hg_\crit\mod_\reg),
\endCD
\end{equation}
as functors between categories over $\tN/\cG$.

\sssec{} 

Let $(\iota_\hg)_*$ be the evident functor
$$\bD(\hg_\crit\mod_\reg)\to \bD(\hg_\crit\mod_\nilp).$$
This is a functor between categories over $\nOp$, and in particular
over $\tN/\cG$. By \secref{restr central char 1}, it sends
$$\bD^f(\hg_\crit\mod_\reg)\to \bD^f(\hg_\crit\mod_\nilp),$$
making the latter also into a functor between categories over
$\nOp$ and $\tN/\cG$; it is in fact the right adjoint
of $(\iota_\hg)^*$.

\medskip

\begin{prop}  \label{adj and sections}
We have a commutative diagram of functors between categories
over $\on{pt}/\cB$:
$$
\CD
\bD^f(\fD(\Fl^{\on{aff}}_G)_\crit\mod)  @>{\Gamma_{\Fl}}>> 
\bD^f(\hg_\crit\mod_\nilp) \\
@A{(\wt\iota_\Fl)_*}AA     @A{(\iota_\hg)_*}AA  \\
\on{pt}/\cB \underset{\on{pt}/\cG}\times \bD^f(\fD(\Gr^{\on{aff}}_G)_\crit\mod)
@>{\Gamma_{\Gr,\on{pt}/\cB}}>> 
\bD^f(\hg_\crit\mod_\reg).
\endCD
$$
\end{prop}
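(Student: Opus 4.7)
The plan is to reduce, via the universal property of the base-change construction (\secref{univ ppty base change}), to producing a canonical isomorphism of functors
$$\Gamma_\Fl \circ p^{!*} \simeq (\iota_\hg)_* \circ \Gamma_\Gr: \bD^f(\fD(\Gr^{\on{aff}}_G)_\crit\mod)\to \bD^f(\hg_\crit\mod_\nilp),$$
compatible with the actions of $\bD^{perf}(\Coh(\on{pt}/\cG))$. Indeed, after precomposition with the tautological functor $\bD^f(\fD(\Gr^{\on{aff}}_G)_\crit\mod) \to \on{pt}/\cB \underset{\on{pt}/\cG}\times \bD^f(\fD(\Gr^{\on{aff}}_G)_\crit\mod)$ (i.e., $\Res^\cG_\cB$), the upper path becomes $\Gamma_\Fl \circ (\wt\iota_\Fl)_* \circ \Res^\cG_\cB \simeq \Gamma_\Fl \circ p^{!*}$, using the definition of $(\wt\iota_\Fl)_*$ from \secref{constr wt i}; the lower path reduces to $(\iota_\hg)_* \circ \Gamma_\Gr$ tautologically.

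The key ingredient I would establish is the identity $p_* \circ p^{!*} \simeq \on{Id}$ on critically twisted D-modules on $\Gr^{\on{aff}}_G$. Since $p:\Fl^{\on{aff}}_G \to \Gr^{\on{aff}}_G$ is smooth and proper with fiber $G/B$ of dimension $d := \dim(G/B)$, the projection formula yields
$$p_*(p^{!*}(\CF))\simeq p_*(p^*(\CF)[d]) \simeq \CF\otimes R\Gamma(G/B,\omega_{G/B})[d].$$
Here I use the calibration of the critical level: the critical line bundle on $\Fl^{\on{aff}}_G$ restricts along each fiber of $p$ to the canonical bundle $\omega_{G/B}$ of $G/B$ (consistent with the identity $\Gamma_\Fl(\delta_{1,\Fl^{\on{aff}}_G}) \simeq \BM_{\crit,-2\rho}$ in \eqref{Gamma of delta}). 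By Serre duality, $R\Gamma(G/B,\omega_{G/B}) \simeq \BC[-d]$, so the shifts cancel. Composing with $\Gamma_\Gr$ and using the Leray identification $\Gamma_\Fl \simeq \Gamma_\Gr \circ p_*$, I obtain the asserted isomorphism $\Gamma_\Fl(p^{!*}(\CF))\simeq \Gamma_\Gr(\CF)$. Since $\Gamma_\Gr(\CF)$ takes values in $\bD^f(\hg_\crit\mod_\reg) \subset \bD^f(\hg_\crit\mod_\nilp)$, this isomorphism exhibits $\Gamma_\Fl(p^{!*}(\CF))$ as belonging to the essential image of $(\iota_\hg)_*$, which is precisely what the statement requires.

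Finally, to promote this to a natural transformation of functors over $\on{pt}/\cG$ (and therefore, after base change, over $\on{pt}/\cB$), I would invoke the $\Rep^{f.d.}(\cG)$-equivariance of $\Gamma_\Gr$ established in \cite{FG2}, together with the manifest $\Rep(\cG)$-equivariance of $p^{!*}$ (which is built into its construction in \secref{constr wt i}, since the $\Rep(\cG)$-action on both $\fD(\Fl^{\on{aff}}_G)_\crit\mod$ and $\fD(\Gr^{\on{aff}}_G)_\crit\mod$ is implemented by convolution with spherical D-modules on the right). The main obstacle I anticipate is upgrading this construction to the DG-enhanced level, so that the resulting natural transformation is a morphism of functors between categories equipped with DG models; this is the type of question that will be handled systematically in \secref{sections: model}. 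The calibration of the critical twisting along the fibers of $p$, which makes $p_* \circ p^{!*} \simeq \on{Id}$ rather than a nontrivial twist, is the essential input specific to working at the critical level, and is what makes the comparison between Grassmannian- and flag-side global sections clean.
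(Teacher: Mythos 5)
Your proposal matches the paper's proof: both use the universal property of base change (\secref{univ ppty base change}) to reduce to producing an isomorphism $\Gamma_\Fl\circ p^{!*}\simeq(\iota_\hg)_*\circ\Gamma_{\Gr}$ of functors over $\on{pt}/\cG$. The paper then simply declares this commutativity \emph{manifest} and stops; you go further and unpack why, via the projection formula, the calibration of the critical twist along the $G/B$-fibres of $p$ (which is indeed what the identification $\Gamma_\Fl(\delta_{1,\Fl^{\on{aff}}_G})\simeq\BM_{\crit,-2\rho}$ encodes), and Serre duality on $G/B$. So your argument fills in a detail the authors leave implicit, but it is the same approach.
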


\begin{proof}

By \secref{univ ppty base change}, it suffices to prove the commutativity
of the following diagram of functors between categories
over $\on{pt}/\cG$,
$$
\CD
\bD^f(\fD(\Fl^{\on{aff}}_G)_\crit\mod)  @>{\Gamma_{\Fl}}>> 
\bD^f(\hg_\crit\mod_\nilp) \\
@A{p^{!*}}AA    @A{(\iota_\hg)_*}AA   \\
\bD^f(\fD(\Gr^{\on{aff}}_G)_\crit\mod) @>{\Gamma_{\Gr}}>> 
\bD^f(\hg_\crit\mod_\reg),
\endCD
$$
which is manifest.

\end{proof}

\ssec{}

As the next step in establishing the commutativity of the diagram \eqref{diag sections for reg}, we will 
prove the following:

\begin{prop}  \label{ident gen Wak reg}
The composition of the functors 
$$(\iota_\hg)^*\circ \Gamma_{\Fl} \text{ and }
\Gamma_{\Gr,\on{pt}/\cB}\circ (\wt\iota_\Fl)^*:\bD^f(\fD(\Fl^{\on{aff}}_G)_\crit\mod) \rightrightarrows \bD^f(\hg_\crit\mod_\reg)$$
with $(\iota_\hg)_*$, yield isomorphic functors 
$$\bD^f(\fD(\Fl^{\on{aff}}_G)_\crit\mod) \rightrightarrows \bD^f(\hg_\crit\mod_\nilp)$$
at the triangulated level.
\end{prop}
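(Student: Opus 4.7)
The plan is to reduce the proposition to the combination of three inputs: \propref{adj and sections}, \thmref{thm zero section}, and the base change identity \eqref{iotas} together with the compatibility of $\Gamma_\Fl$ with the action of $\bD^{perf}(\Coh(\tN/\cG))$ constructed in \secref{a-e}. The composition $(\iota_\hg)_*\circ (\iota_\hg)^*$ on $\bD^f(\hg_\crit\mod_\nilp)$ is, by construction, the derived tensor product $\CM\mapsto (\iota_\Op)_*\CO_{\rOp}\underset{\CO_{\nOp}}\otimes \CM$, i.e.\ the action of the object $(\iota_\Op)_*\CO_{\rOp}\in \bD^{perf}(\Coh(\nOp))$. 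Similarly, $(\iota_\Fl)_*\circ (\iota_\Fl)^*$ is given by acting with $\iota_*\CO_{\on{pt}/\cB}\in \bD^{perf}(\Coh(\tN/\cG))$.

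First I would establish that
\[
\Gamma_\Fl \circ (\iota_\Fl)_* \circ (\iota_\Fl)^* \;\simeq\; (\iota_\hg)_* \circ (\iota_\hg)^* \circ \Gamma_\Fl.
\]
This follows formally by chasing the action compatibility of $\Gamma_\Fl$: for any $\CF\in \bD^f(\fD(\Fl^{\on{aff}}_G)_\crit\mod)$,
\[
\Gamma_\Fl(\iota_*\CO_{\on{pt}/\cB}\underset{\CO_{\tN/\cG}}\otimes \CF) \;\simeq\; \Gamma_\Fl(\CF)\underset{\CO_{\nOp}}\otimes \fr_\nilp^*(\iota_*\CO_{\on{pt}/\cB}).
\]
Applying flat base change to the Cartesian square \eqref{iotas} (which is available because $\fr_\nilp$ is smooth) gives
\[
\fr_\nilp^*\, \iota_*\CO_{\on{pt}/\cB} \;\simeq\; (\iota_\Op)_* (\fr'_\reg)^* \CO_{\on{pt}/\cB} \;\simeq\; (\iota_\Op)_*\CO_{\rOp},
\]
which identifies the right-hand side with $(\iota_\hg)_* \circ (\iota_\hg)^* \circ \Gamma_\Fl(\CF)$.

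Next I would apply \thmref{thm zero section}, which gives a canonical isomorphism $(\iota_\Fl)_*\circ (\iota_\Fl)^* \simeq (\wt\iota_\Fl)_*\circ (\wt\iota_\Fl)^*$. Combining with the identity from the preceding paragraph yields
\[
(\iota_\hg)_* \circ (\iota_\hg)^* \circ \Gamma_\Fl \;\simeq\; \Gamma_\Fl \circ (\wt\iota_\Fl)_*\circ (\wt\iota_\Fl)^*.
\]
Finally, \propref{adj and sections} provides $\Gamma_\Fl \circ (\wt\iota_\Fl)_* \simeq (\iota_\hg)_* \circ \Gamma_{\Gr,\on{pt}/\cB}$, so composing with $(\wt\iota_\Fl)^*$ on the right gives the desired isomorphism
\[
(\iota_\hg)_* \circ (\iota_\hg)^* \circ \Gamma_\Fl \;\simeq\; (\iota_\hg)_* \circ \Gamma_{\Gr,\on{pt}/\cB}\circ (\wt\iota_\Fl)^*.
\]

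The main obstacle I anticipate is not any single calculation but rather the bookkeeping needed to check that the three isomorphisms above compose coherently, and in particular that the base change identity $\fr_\nilp^*\iota_*\CO_{\on{pt}/\cB}\simeq (\iota_\Op)_*\CO_{\rOp}$ is compatible with the compatibility data (i)--(ii) and (a)--(e) of \secref{a-e} used to promote $\Gamma_\Fl$ to a functor over $\tN/\cG$. Once this is verified at the level of the basic generators (line bundles $\CL^\cla$ and central sheaves $Z_V$), the isomorphism propagates to all of $\bD^{perf}(\Coh(\tN/\cG))$, and in particular to the class $\iota_*\CO_{\on{pt}/\cB}$, yielding the required identification at the triangulated level.
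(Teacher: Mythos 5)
Your proof is correct and follows essentially the same route as the paper: identify $(\iota_\hg)_*\circ(\iota_\hg)^*$ as tensoring with $\CO_{\on{pt}/\cB}$ over $\CO_{\tN/\cG}$ via base change along the Cartesian square \eqref{iotas}, transfer this through $\Gamma_\Fl$ using its compatibility with the $\bD^{perf}(\Coh(\tN/\cG))$-action, invoke \propref{adj and sections} for the other composition, and close with \thmref{thm zero section}. The coherence worry you raise at the end is not actually an issue here since the proposition is asserted only at the triangulated level, exactly as the paper treats it.
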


\sssec{Proof of \propref{ident gen Wak reg}}

On the one hand, the composition 
$$(\iota_\hg)_*\circ (\iota_\hg)^*:\bD^f(\hg_\crit\mod_\nilp)\to \bD^f(\hg_\crit\mod_\nilp)$$
identifies with the functor 
$$\CM\mapsto \CO_{\rOp}\underset{\CO_{\nOp}}\otimes \CM \simeq \CO_{\on{pt}/\cB}\underset{\CO_{\tN/\cG}}\otimes \CM.$$
Hence, since the functor $\Gamma_\Fl$ respects the action of $\bD^{perf}(\Coh(\tN/\cG))$, we obtain that
the composition $(\iota_\hg)_*\circ (\iota_\hg)^*\circ \Gamma_{\Fl}$ is isomorphic to
$$\Gamma_{\Fl}\circ (\CO_{\on{pt}/\cB}\underset{\CO_{\tN/\cG}}\otimes-)\simeq \Gamma_\Fl\circ (\iota_\Fl)_*\circ (\iota_\Fl)^*.$$

On the other hand, by \propref{adj and sections}, the composition
$(\iota_\hg)_*\circ \Gamma_{\Gr,\on{pt}/\cB}\circ (\wt\iota_\Fl)^*$ identifies with
$$\Gamma_{\Fl}\circ (\wt\iota_\Fl)_*\circ (\wt\iota_\Fl)^*,$$
and the assertion follows from \thmref{thm zero section}.

\qed

\ssec{}

We are now ready to prove the commutativity of the diagram of functors given by \eqref{diag sections for reg}.

\sssec{}

Since the tautological functor $\bD^f(\hg_\crit\mod_\reg)\to \bD(\hg_\crit\mod_\reg)$ is fully
faithful, it is sufficient to construct an isomorphism between the corresponding functors
$$\bD^f(\fD(\Fl^{\on{aff}}_G)_\crit\mod) \rightrightarrows \bD(\hg_\crit\mod_\reg)$$
as functors between categories over $\tN/\cG$.

\medskip

Consider the two objects
$$(\iota_\hg)^*\circ \Gamma_{\Fl}(\delta_{1,\Fl^{\on{aff}}_G}) \text{ and }
\Gamma_{\Gr,\on{pt}/\cB}\circ (\wt\iota_\Fl)^*(\delta_{1,\Fl^{\on{aff}}_G})\in \bD^+(\hg_\crit\mod_\reg)^I.$$

\begin{prop} \label{ident of Wak reg}
The above objects belong to $\hg_\crit\mod_\nilp^I$ and are isomorphic.
\end{prop}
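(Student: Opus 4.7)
The plan is to first establish that each object is concentrated in cohomological degree zero, so that both lie in the abelian category $\hg_\crit\mod_\reg^I\subset \hg_\crit\mod_\nilp^I$, and then deduce the asserted isomorphism by descending the one given by \propref{ident gen Wak reg} along the fully-faithful embedding $(\iota_\hg)_*:\hg_\crit\mod_\reg\hookrightarrow \hg_\crit\mod_\nilp$.

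For the first object, by \eqref{Gamma of delta} we have $\Gamma_\Fl(\delta_{1,\Fl^{\on{aff}}_G})\simeq \BM_{\crit,-2\rho}$. As recalled in the proof of \corref{F exact}, the Verma module $\BM_{\crit,-2\rho}$ is $\CO_{\nOp}$-flat. Therefore the derived pullback $(\iota_\hg)^*\circ \Gamma_\Fl(\delta_{1,\Fl^{\on{aff}}_G})$ is concentrated in degree zero and equals the regular Verma module $\BM_{\crit,-2\rho,\reg}$ appearing in the proof of condition (e) in \secref{a-e}, which is an $I$-equivariant object of the abelian category $\hg_\crit\mod_\reg$.

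For the second object, by definition $(\wt\iota_\Fl)^*(\delta_{1,\Fl^{\on{aff}}_G})\simeq J_{2\rho}\star \CW$, which lies in the abelian category $\on{pt}/\cB\underset{\on{pt}/\cG}\times \fD(\Gr^{\on{aff}}_G)_\crit\mod^I$ by the construction recalled in \secref{tilde ups}. Since the functor $\Gamma_{\Gr}$ is exact (as recalled at the start of \secref{compat with sect}) and compatible with the action of $\Rep(\cG)$, the induced functor $\Gamma_{\Gr,\on{pt}/\cB}$ between base-changed abelian categories is also exact. Hence $\Gamma_{\Gr,\on{pt}/\cB}(J_{2\rho}\star \CW)$ lies in the heart of $\hg_\crit\mod_\reg^I$, which sits inside $\hg_\crit\mod_\nilp^I$ via $(\iota_\hg)_*$.

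Having placed both objects in the heart, we invoke \propref{ident gen Wak reg}, which supplies a canonical isomorphism in $\bD^+(\hg_\crit\mod_\nilp)^I$ between $(\iota_\hg)_*\circ (\iota_\hg)^*\circ \Gamma_\Fl(\delta_{1,\Fl^{\on{aff}}_G})$ and $(\iota_\hg)_*\circ \Gamma_{\Gr,\on{pt}/\cB}\circ (\wt\iota_\Fl)^*(\delta_{1,\Fl^{\on{aff}}_G})$. Because the functor $(\iota_\hg)_*$ is the fully-faithful restriction of modules along the quotient $\fZ^\nilp_\fg\twoheadrightarrow \fZ^\reg_\fg\simeq \CO_{\rOp}$, and both objects lie in the corresponding hearts, this derived isomorphism descends to an honest isomorphism in $\hg_\crit\mod_\reg^I$, as required. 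The only real content here beyond \propref{ident gen Wak reg} is the concentration-in-degree-zero checks; once those are in place, the descent is formal, and this is where I would expect no serious obstacle once the flatness of $\BM_{\crit,-2\rho}$ over $\CO_{\nOp}$ and the exactness of $\Gamma_\Gr$ are granted.
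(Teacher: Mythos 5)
Your proof is essentially correct, but it follows a genuinely different decomposition from the paper's, and in doing so it silently relies on a fact that requires its own justification. The paper's proof of \propref{ident of Wak reg} is more economical: it applies $(\iota_\hg)_*$ to \emph{both} objects, invokes \propref{ident gen Wak reg} to get the derived isomorphism, and then verifies heart-membership for only the \emph{first} side, namely
$$(\iota_\hg)_*\circ(\iota_\hg)^*(\BM_{\crit,-2\rho})\simeq \CO_{\rOp}\underset{\CO_{\nOp}}\otimes\BM_{\crit,-2\rho},$$
using flatness of $\BM_{\crit,-2\rho}$ over $\CO_{\nOp}$. Heart-membership of the second side then comes \emph{for free} from the derived isomorphism plus exactness and conservativity of $(\iota_\hg)_*$; no separate exactness statement about $\Gamma_{\Gr,\on{pt}/\cB}$ is needed. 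You instead check heart-membership of both sides independently, and the second check is where the extra input enters.

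Specifically, your assertion that ``since $\Gamma_{\Gr}$ is exact and compatible with the action of $\Rep(\cG)$, the induced functor $\Gamma_{\Gr,\on{pt}/\cB}$ between base-changed abelian categories is also exact'' is not a formal consequence of the two hypotheses you cite. The base-changed functor is, a priori, only right-exact (by the construction of the tensor-product $t$-structure), and showing it has no negative cohomology on objects of the heart needs an argument. In the paper, the relevant exactness — that of $\Gamma_{\Gr,\rOp}$ restricted to the $I^0$-equivariant category — is quoted (from \cite{FG4}, Theorem 1.7) in the proof of condition Funct(b) in \secref{proof of I eq}; since $J_{2\rho}\star\CW$ is $I$-equivariant, this would suffice for your purposes, combined with the flatness of $\fr'_\reg$ to pass from $\rOp$ to $\on{pt}/\cB$. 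So your route does close up, but it imports a nontrivial exactness theorem that the paper's route sidesteps entirely. You should either cite that input explicitly or reorganize as the paper does: verify heart-membership for the $\Gamma_\Fl$ side only and deduce the other side from the derived isomorphism together with exactness and conservativity of $(\iota_\hg)_*$.
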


\begin{proof}

Since the functor 
$$(\iota_\hg)_*:\bD(\hg_\crit\mod_\reg)\to \bD(\hg_\crit\mod_\nilp)$$
is exact and conservative, and in the diagram of functors between
{\it abelian} categories 
$$
\CD
\hg_\crit\mod_\reg^I @>{(\iota_\hg)_*}>>  \hg_\crit\mod_\nilp^I \\
@VVV    @VVV  \\
\hg_\crit\mod_\reg @>{(\iota_\hg)_*}>>  \hg_\crit\mod_\nilp
\endCD
$$
all the arrows are fully faithful, it is sufficient to show that 
$$(\iota_\hg)_*\circ (\iota_\hg)^*\circ \Gamma_{\Fl}(\delta_{1,\Fl^{\on{aff}}_G}) \text{ and }
(\iota_\hg)_*\circ \Gamma_{\Gr,\on{pt}/\cB}\circ (\wt\iota_\Fl)^*(\delta_{1,\Fl^{\on{aff}}_G})$$
are isomorphic as objects of $\bD(\hg_\crit\mod_\nilp)$, and that the LHS belongs to
the abelian category $\hg_\crit\mod_\nilp$.

\medskip

The first assertion follows readily from \propref{ident gen Wak reg}. To prove the
second assertion we note that 
$$\Gamma_{\Fl}(\delta_{1,\Fl^{\on{aff}}_G})\simeq \BM_{\crit,-2\rho},$$ 
and since $\BM_{\crit,-2\rho}$ is $\CO_{\nOp}$-flat (see \cite{FG2}, Corollary 13.9),
$$(\iota_\hg)_*\circ(\iota_\hg)^*(\BM_{\crit,-2\rho})\simeq 
\CO_{\rOp}\underset{\CO_{\nOp}}\otimes \BM_{\crit,-2\rho}=:\BM_{\crit,-2\rho,\reg}$$
belongs to $\hg_\crit\mod_\reg$. 

\end{proof}

\noindent{\it Remark.}
Note that the above assertion, which amounts to the isomorphism
$$\BM_{\crit,-2\rho,\reg}\simeq \Gamma_{\Gr,\on{pt}/\cB}(J_{2\rho}\star \CW)),$$
coincides with that of Theorem 15.6 of \cite{FG2}, which was proven by a 
rather explicit and tedious calculation. Thus, \propref{ident of Wak reg}, can be
regarded as an alternative proof of this fact, in a way more conceptual. Note,
however, that the main ingredient in the proof of \propref{ident of Wak reg}
was \thmref{thm zero section}, which was far from tautological.

\sssec{}

As in Sections \ref{sections: model} and \ref{ups: model} the functors
\begin{multline*}
\CF\mapsto \CF\star \left((\iota_\hg)^*\circ \Gamma_{\Fl}(\delta_{1,\Fl^{\on{aff}}_G})\right) \text{ and }
\CF\mapsto \CF\star \left(\Gamma_{\Gr,\on{pt}/\cB}\circ (\wt\iota_\Fl)^*(\delta_{1,\Fl^{\on{aff}}_G})\right): \\
\bD^f(\fD(\Fl^{\on{aff}}_G)_\crit\mod) \to \bD(\hg_\crit\mod_\reg)
\end{multline*}
can be upgraded to a DG level and endowed with the structure of functors between categories
over $\tN/\cG$. Moreover, the above functors identify with the functors
$$(\iota_\hg)^*\circ \Gamma_{\Fl} \text{ and } \Gamma_{\Gr,\on{pt}/\cB}\circ (\wt\iota_\Fl)^*,$$
respectively.

\medskip

Thus, the commutativity of the diagram \eqref{diag sections for reg} follows from
\propref{ident of Wak reg}.

\section{Fully-faithfulness of $\Gamma_{\Fl,\nOp}$}   \label{proof of Gamma nilp ff}

In this section we will prove \mainthmref{Gamma nilp fully faithful}.

\ssec{}

By definition, the theorem says that the map
\begin{multline}   \label{need to prove 0}
\Hom_{\nOp\underset{\tN/\cG}\times \bD^f(\fD(\Fl^{\on{aff}}_G)_\crit\mod)}
(\CF'_1,\CF'_2)\to \\
\Hom_{\bD^f(\hg_\crit\mod_\nilp)}\left(\Gamma_{\Fl,\nOp}(\CF'_1),
\Gamma_{\Fl,\nOp}(\CF'_2)\right)
\end{multline}
is an isomorphism for any $\CF'_i\in \nOp\underset{\tN/\cG}\times \bD^f(\fD(\Fl^{\on{aff}}_G)_\crit\mod)$, $i=1,2$.

\medskip

By a slight abuse of notation we will denote by $\fr_\nilp^*$ 
the pull-back functor
$$\bD^f(\fD(\Fl^{\on{aff}}_G)_\crit\mod\to \nOp\underset{\tN/\cG}\times \bD^f(\fD(\Fl^{\on{aff}}_G)_\crit\mod).$$

\sssec{}   \label{announce red step}

We shall now perform a reduction step, showing that it is enough to establish isomorphism \eqref{need to prove 0} for $\CF'_i=\fr_\nilp^*(\CF_i)$, where
each $\CF'_i$ is a single finitely generated
$I$-equivariant D-module on $\Fl^{\on{aff}}_G$.

\sssec{Step 1}

By the definition of $\nOp\underset{\tN/\cG}\times \bD^f(\fD(\Fl^{\on{aff}}_G)_\crit\mod)$,
we can take $\CF'_i$ in \eqref{need to prove 0} to be of the form
$\CM_i\underset{\CO_{\tN/\cG}}\otimes \fr_\nilp^*(\CF_i)$ with
$\CM_i\in \bD^{perf}(\Coh(\nOp))$.

\medskip

Since $\CO_{\nOp}$ is a polynomial algebra, every $\CM_i$ as above
is a direct summand of a finite complex consisting of free 
$\CO_{\nOp}$-modules. So, we can assume that $\CF'_i=\fr_\nilp^*(\CF_i)$, with
$\CF'_i\in \bD^f(\fD(\Fl^{\on{aff}}_G)_\crit\mod)$. I.e, we are reduced to showing
that the map 

\begin{multline} \label{need to prove}
\Hom_{\nOp\underset{\tN/\cG}\times 
\bD^f(\fD(\Fl^{\on{aff}}_G)_\crit\mod)}(\fr^*_\nilp(\CF_1),
\fr^*_\nilp(\CF_2))\to \\
\to \Hom_{\bD^f(\hg_\crit\mod_\nilp)}\left(\Gamma_{\Fl,\nOp}(\fr^*_\nilp(\CF_1)),
\Gamma_{\Fl,\nOp}(\fr^*_\nilp(\CF_2))\right)
\end{multline}
is an isomorphism.

\sssec{Step 2}

Recall that $\Gamma_{\Fl,\nOp}(\fr^*_\nilp(\CF_i))\simeq
\Gamma_{\Fl}(\CF_i)$ for $i=1,2$. So, the RHS of \eqref{need to prove}
is isomorphic to
\begin{equation} \label{RHS}
\Hom_{\bD^f(\hg_\crit\mod_\nilp)}\left(\Gamma_\Fl(\CF_1),
\Gamma_\Fl(\CF_2)\right).
\end{equation}

By \secref{renorm without t}, the above expression 
is isomorphic to $\Hom$ taken in the usual category 
$\bD^+(\hg_\crit\mod_\nilp)$.

\bigskip

We rewrite the LHS of \eqref{need to prove} using 
\corref{tight mon}(2), and we obtain that it is isomorphic to 
\begin{equation} \label{LHS}
\Hom_{\bD_{ren}(\fD(\Fl^{\on{aff}}_G)_\crit\mod)}
\left(\CF_1,(\fr_\nilp)_*(\CO_{\nOp})\underset{\CO_{\tN/\cG}}\otimes \CF_2\right),
\end{equation}
where $(\fr_\nilp)_*(\CO_{\nOp})\in \bD(\QCoh(\tN/\cG))\simeq
\ua\bD^{perf}(\Coh(\tN/\cG))$.

\medskip

By \propref{F right-exact and heart}, the object 
$$(\fr_\nilp)_*(\CO_{\nOp})\underset{\CO_{\tN/\cG}}\otimes \CF_2\simeq
\CF\star \sF((\fr_\nilp)_*(\CO_{\nOp}))$$
belongs to $\bD^+$, {\it in the new t-structure}. By
\propref{finite deviation}, we obtain that it belongs
to $\bD^+$ also in the {\it old t-structure}. Therefore,
by \secref{renorm with t}, we can regard it as an object of the 
usual category $\bD^+(\fD(\Fl^{\on{aff}}_G)_\crit\mod)$, and the expression
in \eqref{LHS} can also be rewritten as $\Hom$ in 
$\bD^+(\fD(\Fl^{\on{aff}}_G)_\crit\mod)$.

\medskip

The map from \eqref{LHS} to \eqref{RHS} can thus be identified with 
the composition
\begin{multline}   \label{rewrite map}
\Hom_{\bD^+(\fD(\Fl^{\on{aff}}_G)_\crit\mod)}(\CF_1,(\fr_\nilp)_*(\CO_{\nOp})
\underset{\CO_{\tN/\cG}}\otimes \CF_2)\to \\
\to\Hom_{\bD^+(\hg_\crit\mod_\nilp)}\left(\Gamma_\Fl(\CF_1),
\Gamma_\Fl(\CO_{\nOp}\underset{\CO_{\tN/\cG}}\otimes \CF_2)\right)\simeq \\
\simeq \Hom_{\bD^+(\hg_\crit\mod_\nilp)}\left(\Gamma_\Fl(\CF_1),
\fr_\nilp^*((\fr_\nilp)_*(\CO_{\nOp}))\underset{\CO_{\nOp}}
\otimes \Gamma_\Fl(\CF_2)\right)\to \\
\to\Hom_{\bD^+(\hg_\crit\mod_\nilp)}\left(\Gamma_\Fl(\CF_1),
\Gamma_\Fl(\CF_2)\right),
\end{multline}
where the last arrow comes from the canonical map
$$\fr_\nilp^*((\fr_\nilp)_*(\CO_{\nOp}))\to \CO_{\nOp}.$$

\sssec{Step 3}

By the adjunction \cite{FG2}, Proposition 22.22, we have the isomorphisms:
\begin{multline*}
\Hom_{\bD^+(\fD(\Fl^{\on{aff}}_G)_\crit\mod)}(\CF_1,(\fr_\nilp)_*(\CO_{\nOp})
\underset{\CO_{\tN/\cG}}\otimes \CF_2)\simeq \\
\simeq
\Hom_{\bD^+(\fD(\Fl^{\on{aff}}_G)_\crit\mod)^I}\left(\delta_{1,\Fl^{\on{aff}}_G},
(\fr_\nilp)_*(\CO_{\nOp}) \otimes \CF\right),
\end{multline*}
and
$$\Hom_{\bD^+(\hg_\crit\mod_\nilp)}\left(\Gamma_\Fl(\CF_1),
\Gamma_\Fl(\CF_2)\right)\simeq 
\Hom_{\bD^+(\hg_\crit\mod_\nilp)^I}\left(\Gamma_\Fl(\delta_{1,\Fl^{\on{aff}}_G}),
\Gamma_\Fl(\CF)\right),$$
with $\CF\simeq \CF_1^*\star \CF_2\in \bD^b(\fD(\Fl^{\on{aff}}_G)_\crit\mod)^I$, where 
$\CF_1^*$ denotes the dual D-module on $I\backslash G\ppart$, see
\cite{FG2}, Sect. 22.21.

\medskip

Hence, it is sufficient to show that the map
\begin{multline*}
\Hom_{\bD^+(\fD(\Fl^{\on{aff}}_G)_\crit\mod)^I}(\delta_{1,\Fl^{\on{aff}}_G},
(\fr_\nilp)_*(\CO_{\nOp})\underset{\CO_{\tN/\cG}} \otimes \CF)\to \\
\to \Hom_{\bD^+(\hg_\crit\mod_\nilp)^I}(\Gamma_\Fl(\delta_{1,\Fl^{\on{aff}}_G}),
\Gamma_\Fl(\CF)),
\end{multline*}
given by a $I$-equivariant version of \eqref{rewrite map},
is an isomorphism for $\CF\in \bD^b(\fD(\Fl^{\on{aff}}_G)_\crit\mod)^I$.

\sssec{Step 4}

Finally, using the spectral sequence that expresses $\Hom$ in the $I$-equivariant
category in terms of the usual $\Hom$, we obtain that it is enough to
show that the map
\begin{multline}    \label{need to prove I}
\Hom_{\bD^+(\fD(\Fl^{\on{aff}}_G)_\crit\mod)}(\delta_{1,\Fl^{\on{aff}}_G},
(\fr_\nilp)_*(\CO_{\nOp}) \underset{\CO_{\tN/\cG}}\otimes \CF)\to \\
\to \Hom_{\bD^+(\hg_\crit\mod_\nilp)}(\Gamma_\Fl(\delta_{1,\Fl^{\on{aff}}_G}),
\Gamma_\Fl(\CF)),
\end{multline}
of \eqref{rewrite map} is an isomorphism for any $
\CF\in \bD^b(\fD(\Fl^{\on{aff}}_G)_\crit\mod)^I$.

\medskip

Using the fact that
$\Gamma_\Fl(\delta_{1,\Fl^{\on{aff}}_G})\simeq \BM_{-2\rho}$ is almost compact as an object
of $\bD^+(\hg_\crit\mod_\nilp)$ (see \secref{almost compact}),
we can use devissage on $\CF$ and assume that it consists of a 
single finitely generated $I$-equivariant D-module.

\medskip

Thus, the reduction announced in \secref{announce red step} has
been performed.

\ssec{}

Note that the categories $\bD^+(\fD(\Fl^{\on{aff}}_G)_\crit\mod)$ and 
$\bD^+(\hg_\crit\mod_\nilp)$,
appearing on the two sides of \eqref{need to prove I} carry
a weak $\BG_m$-action by loop rotations; moreover, the action 
on the former category is strong (a.k.a. of Harish-Chandra type),
see \cite{FG2}, Sect. 20, where these concepts are introduced.

\medskip

The category $\bD^{perf}(\on{Coh}(\tN/\cG))$ 
carries a weak $\BG_m$-action (via the action of $\BG_m$ on $\cg$
by dilations), and the map $\fr_\nilp:\nOp\to \tN/\cG$ is easily seen to
be $\BG_m$-equivariant.
In addition, the construction of the functor 
$$\sF:\bD^{perf}(\on{Coh}(\tN/\cG))\to \bD^+(\fD(\Fl^{\on{aff}}_G)_\crit\mod)$$
implies that it has a natural $\BG_m$-equvariant structure.

\medskip

With no restriction of generality we can assume that $\CF$ that appears
in \eqref{need to prove I} consists of a single twisted D-module, which is 
strongly equivariant with respect to $\BG_m$. 
By the $\BG_m$-equivariance of $\fr_\nilp$,
the object $(\fr_\nilp)_*(\CO_{\nOp})\in \bD^+(\QCoh(\tN/\cG))$ is 
$\BG_m$-equivariant; hence
$$(\fr_\nilp)_*(\CO_{\nOp})\underset{\CO_{\tN/\cG}}\otimes \CF\simeq \CF\star 
\sF((\fr_\nilp)_*(\CO_{\nOp})) \in \bD^+(\fD(\Fl^{\on{aff}}_G)_\crit\mod)$$
is weakly $\BG_m$-equivariant.

\sssec{}

Thus, both sides of \eqref{need to prove I} compute $\Hom$ between weakly
$\BG_m$-equivariant objects in categories endowed with weak $\BG_m$-actions.
Hence, the resulting $\Hom$ groups are acted on by $\BG_m$, i.e., are
graded complexes. Furthermore, it is easy to see that the map in
\eqref{need to prove I} preserves the gradings. We claim now that the 
grading on both cases is bounded from below. 

\medskip

\noindent {\it Right-hand side.} It was shown in \cite{FG2} that 
$\Hom_{\bD^+(\hg_\crit\mod_\nilp)}(\BM_{-2\rho},\CM)$
is related by a spectral sequence to 
$\Hom_{\bD^+(\hg_\crit\mod)}(\BM_{-2\rho},\CM)$
for $\CM\in \bD(\hg_\crit\mod_\nilp)$. Moreover, if $\CM$ is graded,
then the grading on the former $\Hom$ is bounded from below on the former 
if and only if it is so on the latter. We apply this to $\CM:=\Gamma_\Fl(\CF)$.

\medskip

\noindent {\it Left-hand side.} Since the action of $\BG_m$ on 
$\bD^+(\fD(\Fl^{\on{aff}}_G)_\crit\mod)$ is strong our assertion follows from
the next:
\begin{lem}   \label{O bounded}
The defect of strong equivariance on the 
object $\sF((\fr_\nilp)_*(\CO_{\nOp}))$
is bounded from below.
\end{lem}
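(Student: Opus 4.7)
The plan is to reduce the statement to the fact that the $\BG_m$-action on $\nOp$ by loop rotation is contracting, in the sense that $\CO_{\nOp}$ admits a grading with all weights bounded from below (in fact non-negative), with only finitely many coordinates of each weight. Given this, the boundedness from below of the defect of strong equivariance on $\sF((\fr_\nilp)_*(\CO_{\nOp}))$ should follow by transporting the grading through $(\fr_\nilp)_*$ and then through the weakly $\BG_m$-equivariant functor $\sF$.

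First, I would make precise the notion of "defect of strong equivariance". In the setting of \cite{FG2}, Sect.~20, a weakly $\BG_m$-equivariant object in a strongly $\BG_m$-equivariant category carries a natural $\BG_m$-grading; the defect of strong equivariance is the obstruction to the grading agreeing with the eigenvalue decomposition of the $\Lie(\BG_m)$-action, and it is measured by a bigraded structure in which boundedness from below corresponds to vanishing of negative weight components beyond some bound. Both the category $\bD^+(\QCoh(\tN/\cG))$ (where $\BG_m$ acts via dilations on $\cg$) and the category $\bD^+(\fD(\Fl^{\on{aff}}_G)_\crit\mod)$ (where $\BG_m$ acts by loop rotation) fit into this framework, and the functor $\sF$ is weakly $\BG_m$-equivariant by construction.

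Second, I would analyze the $\BG_m$-structure of $\nOp$. Using the explicit description of opers with nilpotent singularities as an affine space (see \cite{FG2}, Sect.~2.13 and the discussion of the residue map $\fr_\nilp$ in Sect.~2.18), the coordinate ring $\CO_{\nOp}$ decomposes as a polynomial algebra in countably many variables whose weights with respect to loop rotation are non-negative integers, with only finitely many generators in each fixed weight. In particular, $\CO_{\nOp}$ is a bounded-below graded $\BG_m$-module. Since $\fr_\nilp$ is $\BG_m$-equivariant and $\tN/\cG$ carries a grading bounded from below as well, the pushforward $(\fr_\nilp)_*(\CO_{\nOp})$, viewed as an ind-object of $\bD^{perf}(\Coh(\tN/\cG))$, inherits a $\BG_m$-grading bounded from below.

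Third, I would transport the bounded-below grading through $\sF$. By the construction of $\sF$ via convolution with the Mirkovi\'c--Wakimoto sheaves $J_{\cla}$ and the central sheaves $Z_V$, both of which are strongly $\BG_m$-equivariant (the central sheaves arising from nearby cycles for the $\BG_m$-equivariant deformation), the image $\sF((\fr_\nilp)_*(\CO_{\nOp}))$ acquires a weak $\BG_m$-equivariance whose weight components correspond to those of $(\fr_\nilp)_*(\CO_{\nOp})$. Since the $\BG_m$-weights on the latter are bounded from below, so is the defect of strong equivariance on the former. The main obstacle here is the first step: carefully identifying the correct notion of defect in the present infinite-dimensional setting and verifying that a bounded-below $\BG_m$-grading on an object in the ind-completion of $\bD^{perf}(\Coh(\tN/\cG))$ is preserved in the required sense by the pseudo-monoidal functor $\sF$; modulo this bookkeeping, the statement reduces to the elementary observation that $\CO_{\nOp}$ is concentrated in non-negative loop-rotation weights.
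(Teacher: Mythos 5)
Your high-level intuition matches the paper's proof in that the bound ultimately traces back to the fact that the loop-rotation grading on sections over $\nOp$ is bounded from below. But what you wave away as "bookkeeping" in your final sentence is precisely where the lemma actually lives, and without it the argument has a genuine gap. The implication from "the $\BG_m$-weights of $(\fr_\nilp)_*(\CO_{\nOp})$ are bounded from below" to "the defect of strong equivariance on $\sF((\fr_\nilp)_*(\CO_{\nOp}))$ is bounded from below" is not formal: the weak $\BG_m$-grading carried by $\sF(\CM)$ for $\CM\in\ua\bD^{perf}(\Coh(\tN/\cG))$ is not a copy of the grading of $\CM$, and the defect of strong equivariance is an invariant of the D-module structure (the discrepancy between the weak grading operator and the infinitesimal loop-rotation action), which cannot be read off from the abstract quasi-coherent grading of $\CM$ alone. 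So the sentence "so is the defect of strong equivariance on the former" asserts exactly what needs to be proved.

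The paper supplies the missing reduction via a concrete resolution. One considers the Cartesian square
$$
\CD
\nOp @>>>  \tN/\cG\times \nOp \\
@VVV   @VVV  \\
\tN/\cG @>>>  \tN/\cG\times \tN/\cG,
\endCD
$$
and realizes $\CO_{\Delta_{\tN/\cG}}$, up to quasi-isomorphism, as a $\BG_m$-equivariant direct summand of a \emph{finite} complex whose terms are direct sums of sheaves $\pi^*(\CL^{\cla})\boxtimes \pi^*(\CL^{\cmu})(k)$. Base change along the square then exhibits $(\fr_\nilp)_*(\CO_{\nOp})$ as a direct summand of a finite complex with terms $\pi^*(\CL^\cla)\otimes \Gamma(\nOp,\CL^\cmu_{\nOp})(k)$. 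Now $\sF(\pi^*(\CL^\cla))=J_\cla$ is a single D-module, so its defect of strong equivariance is bounded; tensoring with the graded vector space $\Gamma(\nOp,\CL^\cmu_{\nOp})(k)$, whose grading is bounded from below, keeps the defect bounded from below; and since the resolution is finite, only finitely many $(\cla,\cmu,k)$ occur, so the conclusion passes to $\sF((\fr_\nilp)_*(\CO_{\nOp}))$. This reduction of the whole object to $J_\cla$ against a bounded-below graded vector space is the content your proposal is missing.
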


\begin{proof}

Consider the following Cartesian square:
$$
\CD
\nOp @>>>  \tN/\cG\times \nOp \\
@VVV   @VVV  \\
\tN/\cG @>>>  \tN/\cG\times \tN/\cG.
\endCD
$$

We can realize $\CO_{\tN/\cG}$ as a $\BG_m$-equivariant coherent
sheaf on $\tN/\cG\times \tN/\cG$, up to quasi-isomorphism,
as a direct summand of a finite complex
$\CM^\bullet$, such that each $\CM^i$ is a direct sum of
sheaves of the form $\pi^*(\CL^{\cla})\boxtimes
\pi^*(\CL^{\cmu})(k)$, where $(k)$ indicates a twist of the grading.

We obtain that $(\fr_\nilp)_*(\nOp)$, as a $\BG_m$-equivariant
quasi-coherent sheaf on $\tN/\cG$, up to quasi-isomorphism,
is a direct summand of a finite complex $\CN^\bullet$,
such that each $\CN^i$ is a direct sum of sheaves of the form
$$\pi^*(\CL^\cla)\otimes \Gamma(\nOp,\CL^\cmu_{\nOp})(k).$$

Both assertions of the lemma follow, since $\sF(\CL^\cla)$ is
a (single) D-module, and the grading on each 
$\Gamma(\nOp,\CL^\cmu_{\nOp})$ is bounded from below.

\end{proof}

\sssec{}

Let us recall now the following observation, which is a Nakayama-type
lemma (see \cite{FG2},  Lemma 16.5):

\begin{lem}   \label{Nakayama}
Let $\phi:\CM_1\to \CM_2$ be a graded map of two complexes of $\CO_{\nOp}$-modules,
such that the grading on both sides is bounded from below. Assume that the induced map
$$\CM_1\overset{L}{\underset{\CO_{\nOp}}\otimes} \CO_{\rOp}\to
\CM_2\overset{L}{\underset{\CO_{\nOp}}\otimes} \CO_{\rOp}$$
is a quasi-isomorphism. Then $\phi$ is a quasi-isomorphism.
\end{lem}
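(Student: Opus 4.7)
The plan is to reduce the claim to graded Nakayama via a Koszul resolution, exploiting that the ideal cutting out $\rOp \subset \nOp$ is generated by elements of strictly positive grading for the loop-rotation $\BG_m$-action.

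First I would replace $\phi$ by its cone $\CC := \on{Cone}(\phi)$, reducing to the following assertion: if $\CC$ is a graded complex of $\CO_{\nOp}$-modules with grading bounded below, and $\CC \overset{L}{\underset{\CO_{\nOp}}\otimes} \CO_{\rOp}$ is acyclic, then $\CC$ itself is acyclic.

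Next I would produce a concrete Koszul resolution of $\CO_{\rOp}$ adapted to the grading. The Cartesian square \eqref{iotas} exhibits $\rOp$ as the preimage under the smooth map $\fr_\nilp$ of the zero section $\iota: \on{pt}/\cB \hookrightarrow \tN/\cG$; the latter is a regular embedding of codimension $\dim(\cn)$, cut out by the fiber coordinates of the vector bundle $\pi: \tN \to \on{Fl}^{\cG}$, which carry strictly positive weight for the $\BG_m$-action by dilations on $\cg$. Smoothness of $\fr_\nilp$, together with its $\BG_m$-equivariance established in the proof of \mainthmref{Gamma nilp fully faithful}, then forces $\iota_{\Op}: \rOp \hookrightarrow \nOp$ to be a regular embedding of the same finite codimension $r := \dim(\cn)$, whose defining ideal $I \subset \CO_{\nOp}$ is generated by a regular sequence $f_1, \ldots, f_r$ of strictly positive graded degrees $d_1, \ldots, d_r > 0$. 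The Koszul complex $K^\bullet := K(f_1, \ldots, f_r)$ then provides a finite, graded, free resolution $K^\bullet \to \CO_{\rOp}$ of $\CO_{\nOp}$-modules.

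Then I would argue by contradiction. Assume $H^*(\CC) \neq 0$; since the grading on $\CC$, and hence on its cohomology, is bounded below, choose $m$ to be the smallest internal grading such that $H^*(\CC)_m \neq 0$. Compute the derived tensor product as $\CC \otimes_{\CO_{\nOp}} K^\bullet$ and filter by $K$-degree; the associated spectral sequence has $E_1$-page $E_1^{-k, \bullet} = H^\bullet(\CC) \otimes_{\CO_{\nOp}} K^{-k}$. Restricting to internal grading $m$: for every $k \geq 1$, the term $K^{-k}$ is concentrated in internal gradings $\geq k \cdot \min_j d_j > 0$, so the grading-$m$ part of $H^\bullet(\CC) \otimes K^{-k}$ only sees $H^\bullet(\CC)_{m'}$ with $m' < m$, which vanishes by minimality of $m$. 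Thus the spectral sequence collapses in grading $m$ to the column $k = 0$, giving $H^\bullet(\CC \overset{L}{\underset{\CO_{\nOp}}\otimes} \CO_{\rOp})_m = H^\bullet(\CC)_m \neq 0$, contradicting the hypothesis. The main subtle point will be verifying that $\iota_{\Op}$ is regular with positively-graded generators — a geometric input from \eqref{iotas} combined with the nature of the $\BG_m$-action on $\tN$; once this is in place the remainder is a routine application of graded Nakayama via the Koszul spectral sequence, with no further anticipated obstacles.
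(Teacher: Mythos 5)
The paper does not prove this lemma: it is cited from \cite{FG2}, Lemma 16.5, so there is no in-paper argument to compare against. That said, your proof---replacing $\phi$ by its cone, taking a finite graded Koszul resolution of $\CO_{\rOp}$ over $\CO_{\nOp}$, and running the minimal-internal-degree collapse of the associated spectral sequence---is the standard graded-Nakayama argument and is correct. The only step that deserves a closer look is the strict positivity of the degrees $d_j$ of the regular sequence cutting out $\rOp$: you obtain it from the weights of the fiber coordinates of $\pi:\tN\to\on{Fl}^{\cG}$ transported through the $\BG_m$-equivariance of $\fr_\nilp$, which is indeed the correct geometric source, but the sign conventions (loop rotation on $\nOp$ versus dilation on $\cg$, and whether fiber coordinates of a dilated bundle carry weight $+1$ or $-1$) need to be pinned down, since a sign flip would turn the lemma's hypothesis from ``bounded below'' into ``bounded above.'' This positivity is precisely the content that \cite{FG2}, Lemma 16.5 supplies; with it in hand, your observation that $K^{-k}$ for $k\geq 1$ contributes only to internal degrees $\geq \min_j d_j>0$ above the bottom of $H^\bullet(\CC)$, so that the spectral sequence degenerates in the minimal grading, is airtight.
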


\medskip

Thus, in order to establish that \eqref{need to prove I} is an isomorphism
(for $\CF$ assumed strongly equivariant with respect to $\BG_m$),
we need to know that the map in 
question induces an isomorphism after tensoring with $\CO_{\rOp}$ over
$\CO_{\nOp}$.  

\medskip

We will prove more generally that the map \eqref{need to prove} becomes
an isomorphism after tensoring with $\CO_{\rOp}$ over $\CO_{\nOp}$
for any $\CF_1,\CF_2\in \bD^f(\fD(\Fl^{\on{aff}}_G)_\crit\mod)$.

\sssec{}

Let $\CF_1,\CF_2$ as above, we have:
\begin{multline*}
\Hom_{\nOp\underset{\tN/\cG}\times \bD^f(\fD(\Fl^{\on{aff}}_G)_\crit\mod)}
(\fr_\nilp^*(\CF_1),\fr^*_\nilp(\CF_2))\overset{L}{\underset{\CO_{\nOp}}\otimes} \CO_{\rOp}\simeq \\
\simeq \Hom_{\nOp\underset{\tN/\cG}\times \bD^f(\fD(\Fl^{\on{aff}}_G)_\crit\mod)}
(\fr^*_\nilp(\CF_1),\CO_{\rOp}\underset{\CO_{\nOp}}
\otimes \fr^*_\nilp(\CF_2))\simeq \\
\simeq 
\Hom_{\nOp\underset{\tN/\cG}\times \bD^f(\fD(\Fl^{\on{aff}}_G)_\crit\mod)}
\left(\fr^*_\nilp(\CF_1),(\iota_\Op)_*\circ \iota_\Op^*(\fr^*_\nilp(\CF_2))\right)
\simeq \\
\simeq \Hom_{\rOp\underset{\nOp}\times \left(\nOp
\underset{\tN/\cG}\times \bD^f(\fD(\Fl^{\on{aff}}_G)_\crit\mod)\right)}
\left(\iota^*_\Op(\fr_\nilp^*(\CF_1)),\iota^*_\Op(\fr_\nilp^*(\CF_2))\right)\simeq\\
\simeq \Hom_{\rOp\underset{\on{pt}/\cB}\times \left(\on{pt}/\cB
\underset{\tN/\cG}\times \bD^f(\fD(\Fl^{\on{aff}}_G)_\crit\mod)\right)}
\left((\fr'_\reg)^*(\iota_\Fl^*(\CF_1)),(\fr'_\reg)^*(\iota_\Fl^*(\CF_2))\right)
\end{multline*}
and
\begin{multline*}
\Hom_{\bD^+(\hg_\crit\mod_\nilp)}\left(\Gamma_{\Fl,\nOp}(\fr^*_\nilp(\CF_1)),
\Gamma_{\Fl,\nOp}(\fr^*_\nilp(\CF_2))\right)
\overset{L}{\underset{\CO_{\nOp}}\otimes} \CO_{\rOp}\simeq \\
\Hom_{\bD^+(\hg_\crit\mod_\nilp)}\left(\Gamma_{\Fl}(\CF_1),
\Gamma_\Fl(\CF_2)\right)
\overset{L}{\underset{\CO_{\nOp}}\otimes} \CO_{\rOp}\simeq \\
\simeq \Hom_{\bD^+(\hg_\crit\mod_\nilp)}\left(\Gamma_\Fl(\CF_1),
\CO_{\rOp}\underset{\CO_{\nOp}}\otimes \Gamma_\Fl(\CF_2)\right)\simeq \\
\simeq
\Hom_{\bD^+(\hg_\crit\mod_\nilp)}\left(\Gamma_\Fl(\CF_1),
(\iota_\hg)_*\circ (\iota_\hg)^*(\Gamma_\Fl(\CF_2))\right) \simeq \\ \\
\simeq
\Hom_{\bD^+(\hg_\crit\mod_\reg)}\left((\iota_\hg)^*(\Gamma_\Fl(\CF_1)),
(\iota_\hg)^*(\Gamma_\Fl(\CF_2))\right),
\end{multline*}
where the last isomorphism follows from \propref{prop base change central character}.

\medskip

By \corref{cor rel to aff gr and sections}, we have:
$$(\iota_\hg)^*(\Gamma_\Fl(\CF_i))\simeq
\Gamma_{\Gr,\rOp}\left((\fr'_\reg)^*(\Upsilon\circ \iota_\Fl^*(\CF_1))\right),$$
and if we denote
$$\CF'_i=(\iota_\Fl)^*(\CF_i)\in \on{pt}/\cB
\underset{\tN/\cG}\times \bD^f(\fD(\Fl^{\on{aff}}_G)_\crit\mod),$$
the map \eqref{need to prove} after tensoring with $\CO_{\rOp}$
identifies with
\begin{multline*}
\Hom_{\rOp\underset{\on{pt}/\cB}\times \left(\on{pt}/\cB
\underset{\tN/\cG}\times \bD^f(\fD(\Fl^{\on{aff}}_G)_\crit\mod)\right)}
\left((\fr'_\reg)^*(\CF'_1),(\fr'_\reg)^*(\CF'_2)\right)\to \\
\to \Hom_{\rOp\underset{\on{pt}/\cB}\times \left(\on{pt}/\cB
\underset{\on{pt}/\cG}\times \bD^f(\fD(\Gr^{\on{aff}}_G)_\crit\mod)\right)}
\left((\fr'_\reg)^*(\Upsilon(\CF'_1)),(\fr'_\reg)^*(\Upsilon(\CF'_2))\right)\to \\
\to \Hom_{\bD^+(\hg_\crit\mod_\reg)}
\left(\Gamma_{\Gr,\rOp}\left((\fr'_\reg)^*(\Upsilon(\CF'_1))\right),
\Gamma_{\Gr,\rOp}\left((\fr'_\reg)^*(\Upsilon(\CF'_2))\right)\right).
\end{multline*}

The first arrow in the above composition
is an isomorphism by \mainthmref{thm relation to Grassmannian}
(combined with \corref{tight mon}(2)). The second arrow is
an isomorphism by \thmref{faithfulness on Gr}. This establishes 
the required isomorphism property of \eqref{need to prove}.

\section{Equivalence for the $I^0$-categories}  \label{proof of I eq}

In this section we will prove Main Theorems \ref{main} and \ref{Miura}.

\ssec{}

The proof of \mainthmref{main} will be carried out in the following 
general framework.

\medskip

Let $\bD_i$, $i=1,2$ be two triangulated categories,
equipped with DG models. 
We assume that the following conditions hold:

\begin{itemize} 

\item{Cat(i)}
$\bD_1$ and $\bD_2$ are co-complete.

\item{Cat(ii)}
$\bD_1$ is generated by the subcategory $\bD^c_1$ of compact objects.

\end{itemize}

\medskip

Let $\bD_i$ for $i=1,2$ be equipped with a t-structure. 
As usual, we denote
$$\bD_i^+=\underset{k}\cup\, \bD^{\geq k}_i.$$
We assume that the following conditions hold: 

\begin{itemize}

\item{Cat(a)}
The $t$-structures on both $\bD_1$ and $\bD_2$ are compatible with colimits (see \secref{t colim}).

\item{Cat(b)}
$\bD_2^+$ generates $\bD_2$.


\end{itemize}

\medskip

Let now $\sT:\bD_1\to \bD_2$ be a functor, equipped with a DG
model. Assume:

\begin{itemize}

\item{Funct(i)} $\sT$ commutes with direct sums.

\item{Funct(ii)} $\sT$ sends $\bD^c_1$ to
$\bD^c_2$.

\end{itemize}

Finally, assume:

\begin{itemize}

\item{Funct(a)} $\sT:\bD^c_1\to \bD^c_2$ is
fully faithful.

\item{Funct(b)} $\sT$ is right-exact.

\item{Funct(c)} For $X\in \bD_1^c$, we have
$\sT\left(\tau^{\geq 0}(X)\right)\in \bD_2^+$.

\item{Funct(d)} For any $Y\in \Heart(\bD_2)$ there exists $X\in 
\bD_1^{\leq 0}$ with a non-zero map $\sT(X)\to Y$.

\end{itemize}

\begin{prop}  \label{gen equiv}
Under the above circumstances, the functor $\sT$ is an
exact equivalence of categories.
\end{prop}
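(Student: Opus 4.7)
The plan is to establish the proposition in four stages: produce a right adjoint to $\sT$, prove full faithfulness by reduction to compact generators, prove essential surjectivity by descending through the $t$-structure, and extract exactness as a corollary.

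First, co-completeness of $\bD_1$, compact generation (Cat(ii)), and commutation with direct sums (Funct(i)) let one invoke Brown representability to produce a right adjoint $\sT^R\colon \bD_2\to \bD_1$. Since $\sT$ preserves compactness (Funct(ii)), $\sT^R$ commutes with arbitrary direct sums; and by adjunction, the right-exactness of $\sT$ (Funct(b)) implies that $\sT^R$ is left-exact for the given $t$-structures. For full faithfulness I would show that the unit $\on{Id}\to \sT^R\circ \sT$ is an isomorphism. By Funct(a) this holds on $\bD_1^c$, and since both $\on{Id}$ and $\sT^R\circ \sT$ commute with direct sums while $\bD_1^c$ generates $\bD_1$, the unit is an isomorphism on all of $\bD_1$. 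Hence $\sT$ is fully faithful with $\sT^R\circ \sT\simeq \on{Id}$.

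The core of the argument is essential surjectivity. Let $\bD_2'\subset \bD_2$ be the essential image of $\sT$; it is a triangulated subcategory closed under direct sums. Using Cat(b) together with the compatibility of the $t$-structure with colimits (Cat(a)), any object of $\bD_2^+$ is a Postnikov-style homotopy colimit of iterated extensions of hearts objects, so matters reduce to showing $\Heart(\bD_2)\subset \bD_2'$, and then bootstrapping to $\bD_2^+$ and finally to $\bD_2$. Fix $Y\in \Heart(\bD_2)$ and consider the partially ordered set of pairs $(X,f)$ with $X\in \bD_1^{\leq 0}$ and $f\colon \sT(X)\to Y$ a map whose image (a subobject of $Y$ in the abelian sense, well defined because $\sT(X)\in \bD_2^{\leq 0}$ by Funct(b)) is monomorphic in $Y$. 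By Zorn's lemma there is a maximal such pair with image $Y_0\subset Y$; if $Y_0\neq Y$, applying Funct(d) to $Y/Y_0$ yields $X'\in \bD_1^{\leq 0}$ together with a non-zero $\sT(X')\to Y/Y_0$, which after absorbing the obstruction in $\Hom(\sT(X'),Y_0[1])$ by enlarging $X'$ lifts to a map $\sT(X\oplus X')\to Y$ strictly extending $Y_0$, a contradiction. Thus the map $\sT(X)\to Y$ is surjective on $H^0$; applying the same construction to the shifted cone (controlled in $\bD_2^+$ thanks to Funct(c)) and passing to the homotopy colimit of the resulting transfinite tower — which exists and is preserved by $\sT$ by Cat(a) and Funct(i) — produces an $X_\infty\in \bD_1$ with $\sT(X_\infty)\simeq Y$.

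Finally, the construction above places $\sT^R(Y)$ in $\bD_1^{\leq 0}$ for every $Y\in \Heart(\bD_2)\subset \bD_2^{\leq 0}$; combined with left-exactness of $\sT^R$ from the first step, this gives $\sT^R(\Heart(\bD_2))\subset \Heart(\bD_1)$, so $\sT^R$ is exact, hence so is its inverse $\sT$. The main obstacle is the transfinite construction in the third paragraph: one must simultaneously control the obstructions in $\Hom(\sT(X'),Y_0[1])$ at each stage, ensure the process terminates (via a cardinality bound driven by $\bD_1^c$), and verify that the homotopy colimit of the resulting tower in $\bD_1$ is sent by $\sT$ to $Y$ rather than to some larger object — all of which rely on the DG-model hypothesis and on Cat(a).
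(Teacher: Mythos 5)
Your proposal diverges from the paper's argument and has a genuine gap at its core, in the essential-surjectivity step.

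The reduction you propose — ``any object of $\bD_2^+$ is a Postnikov-style homotopy \emph{colimit} of iterated extensions of heart objects, so matters reduce to showing $\Heart(\bD_2)$ is in the image'' — goes in the wrong direction. An object $Y\in\bD_2^+$ that is unbounded above is the homotopy \emph{limit} of its truncations $\tau^{\leq n}Y$, not a homotopy colimit, and the essential image of a functor that commutes with colimits (Cat(a), Funct(i)) is not closed under homotopy limits. So containment of $\Heart(\bD_2)$ in the essential image does not bootstrap to $\bD_2^+$. The transfinite/Zorn construction for the heart is itself only a sketch: the step ``after absorbing the obstruction in $\Hom(\sT(X'),Y_0[1])$ by enlarging $X'$'' is the crux and is not justified — one must show the obstruction class dies after precomposition by some $X''\to X'$ in $\bD_1^{\leq 0}$, which does not follow from the hypotheses as stated. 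Your final paragraph, claiming exactness because ``the construction above places $\sT^R(Y)$ in $\bD_1^{\leq 0}$,'' inherits all of these gaps.

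The paper's proof is both shorter and avoids these issues by reorganizing the logic. After deducing full faithfulness from Cat(i,ii), Funct(i,ii), Funct(a) (as you do), it proves \emph{left-exactness of $\sT$ first}, by a direct contradiction: reduce via \lemref{colimit generate t} to compact $X$; use Funct(c) to know $\sT(\tau^{\geq 0}X)\in\bD_2^+$; if its lowest non-vanishing cohomology sits in degree $k<0$, apply Funct(d) to get a non-zero map $\sT(X'[-k])\to\sT(\tau^{\geq 0}X)$ with $X'[-k]$ strictly negative and $\tau^{\geq 0}X\geq 0$, contradicting full faithfulness. With left-exactness in hand, the equivalence $\bD_1^+\to\bD_2^+$ is proved by noting that the right adjoint $\sS$ (your $\sT^R$) is left-exact, so the cone $Y'=\on{Cone}(\sT\sS(Y)\to Y)$ lies in $\bD_2^+$, and by full faithfulness $\sS(Y')=0$; but Funct(d) implies $\sS$ is \emph{conservative on $\bD_2^+$}, forcing $Y'=0$. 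Essential surjectivity on all of $\bD_2$ then follows at once, since $\bD_2^+$ generates $\bD_2$ (Cat(b)) and a fully faithful coproduct-preserving functor whose image generates is an equivalence. You should redo the argument along these lines rather than via the heart.
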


The proof will be given in \secref{proof gen equiv}.

\ssec{Proof of \mainthmref{main}}

We apply \propref{gen equiv} to
$$\bD_1:=\nOp\underset{\tN/\cG}\arrowtimes 
\bD^f(\fD(\Fl^{\on{aff}}_G)_\crit\mod)^{I^0},$$
$$\bD_2:=\bD_{ren}(\hg_\crit\mod_\nilp)^{I^0},$$
and $\sT:=\Gamma_{\Fl,\nOp}$. Thus, we need to verify that the conditions 
of \propref{gen equiv} hold.

\sssec{}

Conditions Cat(i) and Cat(ii) hold by definition. Conditions Cat(a) and
Cat(b) follow by \propref{prop Iw and t}(a,d) from the corresponding properties
of $$\nOp\underset{\tN/\cG}\arrowtimes \bD^f(\fD(\Fl^{\on{aff}}_G)_\crit\mod)
\text{ and } \bD_{ren}(\hg_\crit\mod_\nilp),$$
respectively.

\medskip

Conditions Funct(i) and Funct(ii) also follow from the constructions.

\sssec{}

Condition Funct(a) follows using the commutative diagram
\eqref{Iw and not} from \lemref{Iw fully faithful} and 
\mainthmref{Gamma nilp fully faithful}.

\sssec{Condition Funct(b)} 

By the definition of the t-structure on the LHS, our assertion
is equivalent to $\Gamma_\Fl$ being right-exact, when restricted to 
$\bD_{ren}(\fD(\Fl^{\on{aff}}_G)_\crit\mod)^{I^0}$ in the new t-structure. 
In other words, we need to show that for
$\CF\in \bD^f(\fD(\Fl^{\on{aff}}_G)_\crit\mod)^{I^0}$, which
is $\leq 0$ in the new t-structure, the higher cohomologies
$H^i\left(\Gamma_\Fl(\CF)\right)$ vanish. Suppose not, and let $i$ be
the maximal such $i$. 

\medskip

Any $\CF$ as above in monodromic with respect to the action of $\BG_m$
by loop rotations. Hence, all $H^i\left(\Gamma_\Fl(\CF)\right)$ acquire
a natural grading. Moreover, as in \cite{BD}, Sect. 9.1, the grading on
each $H^i\left(\Gamma_\Fl(\CF)\right)$ is bounded from below.

\medskip

By \lemref{Nakayama}, this implies
that $$H^i\left(\Gamma_\Fl(\CF)\right)\underset{\CO_\nOp}\otimes \CO_\rOp 
\neq 0.$$
The maximality assumption on $i$ implies that
$$H^i\left(\Gamma_\Fl(\CF)\overset{L}
{\underset{\CO_\nOp}\otimes} \CO_\rOp\right)
\neq 0.$$

\medskip

By \thmref{rel to aff gr and sections}, 
$$\Gamma_\Fl(\CF)\overset{L}
{\underset{\CO_\nOp}\otimes} \CO_\rOp \simeq
\Gamma_{\Gr,\rOp}\left((\fr'_\reg)^*\circ (\wt\iota_\Fl)^*(\CF)\right).$$
However, by \propref{iota ex}(a), $(\wt\iota_\Fl)^*(\CF)$ is $\leq 0$,
and the functor $\Gamma_{\Gr,\rOp}$ is right-exact by the main
theorem of \cite{FG1}; in fact, by \cite{FG4}, Theorem 1.7, 
the latter functor is exact when restricted to the $I^0$-equivariant
category. This is a contradiction.

\sssec{Condition Funct(c)}

We will show more generally that for $\CF\in \bD^f(\fD(\Fl^{\on{aff}}_G)_\crit\mod)$,
the object $\Gamma_{\Fl,\nOp}(\tau^{\geq 0}(\CF))$ belongs to
$\bD^+_{ren}(\hg_\crit\mod_\nilp)$.

\medskip

Since $\CO_{\nOp}$ is a polynomial algebra, 
we can represent $\nOp$ as an inverse limit of
affine schemes of finite type $S$
$$\nOp\overset{\psi}\to S\overset{\phi}\to \tN/\cG$$
with $\psi$ flat and $\phi$ smooth. Hence,
any $\CF$ as above is in the image of the functor
$$\psi^*:S\underset{\tN/\cG}\times \bD^f(\fD(\Fl^{\on{aff}}_G)_\crit\mod)\to
\nOp\underset{\tN/\cG}\times \bD^f(\fD(\Fl^{\on{aff}}_G)_\crit\mod),$$
for some such scheme $S$.

\medskip

The ind-completion of $S\underset{\tN/\cG}\times \bD^f(\fD(\Fl^{\on{aff}}_G)_\crit\mod)$,
denoted 
$S\underset{\tN/\cG}\arrowtimes \bD^f(\fD(\Fl^{\on{aff}}_G)_\crit\mod)$,
acquires a t-structure, and by \propref{flat exact} the functor
$$S\underset{\tN/\cG}\arrowtimes \bD^f(\fD(\Fl^{\on{aff}}_G)_\crit\mod)
\overset{\psi^*}\to
\nOp\underset{\tN/\cG}\arrowtimes \bD^f(\fD(\Fl^{\on{aff}}_G)_\crit\mod)$$
is exact. Hence, it is enough to show that the composed functor
$$S\underset{\tN/\cG}\arrowtimes \bD^f(\fD(\Fl^{\on{aff}}_G)_\crit\mod)\to
\bD_{ren}(\hg_\crit\mod_\nilp)$$
sends $\bD^+$ to $\bD^+$.

\medskip

The functor in question is isomorphic to the
composition
\begin{multline}  \label{S Gamma}
S\underset{\tN/\cG}\arrowtimes \bD^f(\fD(\Fl^{\on{aff}}_G)_\crit\mod)
\overset{\on{Id}_S\times \Gamma_\Fl}\to
S\underset{\tN/\cG}\arrowtimes \bD^f(\hg_\crit\mod_\nilp)\simeq \\
\simeq (S\underset{\tN/\cG}\times \nOp)\underset{\nOp}\arrowtimes 
\bD^f(\hg_\crit\mod_\nilp)\overset{(\psi\times \on{id}_\nOp)^*}\to
\nOp\underset{\nOp}\arrowtimes \bD^f_{ren}(\hg_\crit\mod_\nilp)\simeq \\
\simeq \bD_{ren}(\hg_\crit\mod_\nilp).
\end{multline}

We claim that both the first and the third arrows in \eqref{S Gamma}
send $\bD^+$ to $\bD^+$. 

\medskip

The first arrow in \eqref{S Gamma} fits into a commutative diagram
$$
\CD
S\underset{\tN/\cG}\arrowtimes \bD^f(\fD(\Fl^{\on{aff}}_G)_\crit\mod)
@>{\on{Id}_S\times \Gamma_\Fl}>>   
S\underset{\tN/\cG}\arrowtimes \bD^f(\hg_\crit\mod_\nilp) \\
@V{\phi_*}VV    @V{\phi_*}VV   \\
\bD_{ren}(\fD(\Fl^{\on{aff}}_G)_\crit\mod) @>{\Gamma_\Fl}>> 
\bD_{ren}(\hg_\crit\mod_\nilp),
\endCD
$$
where the vertical arrows are exact and conservative by 
\propref{affine exact}. Hence, it is enough to show that the functor 
$\Gamma_\Fl$ sends $\bD^{+_{new}}_{ren}(\fD(\Fl^{\on{aff}}_G)_\crit\mod)$ to
$\bD^+_{ren}(\hg_\crit\mod_\nilp)$. Clearly, $\Gamma_\Fl$ sends 
$\bD^{+_{old}}_{ren}(\fD(\Fl^{\on{aff}}_G)_\crit\mod)$ to
$\bD^+_{ren}(\hg_\crit\mod_\nilp)$. The required
assertion results now from \propref{finite deviation}.

\medskip

The assertion concerning the third arrow in \eqref{S Gamma}
follows from \secref{closed imm 2}, since the map
$$\nOp\overset{\psi\times \on{id}_\nOp}\to
S\underset{\tN/\cG}\times \nOp$$
is a regular immersion. 

\sssec{Condition Funct(d)}

We will show that for every 
object $\CM\in \hg_\crit\mod_\nilp^{I^0}$ there exists an object
$\CF\in \bD^f(\fD(\Fl^{\on{aff}}_G)_\crit\mod)^{I^0}$,
that belongs to the heart of the new t-structure, such that
$$\Hom(\Gamma_\Fl(\CF),\CM)\neq 0.$$

\medskip

Indeed, for $\CM$ as above, some Verma module $\BM_{w,\crit}$ maps to it
non-trivially. However, 
$$\BM_{w,\crit}\simeq \Gamma_\Fl(j_{w,!}).$$

(Indeed, $\BM_{w,\crit}:=\on{Ind}^{\hg_\crit}_{\fg[[t]]\oplus \BC}(M_w)$,
where $M_w$ is the Verma module over $\fg$ isomorphic to
$\Gamma(G/B,j_{w,!})$, and for any D-module $\CF$ on $G/B$ we have:
$\Gamma(\Fl_G,\CF)\simeq \on{Ind}^{\hg_\crit}_{\fg[[t]]\oplus \BC}
\left(\Gamma(G/B,\CF)\right)$.)

\medskip

The object $j_{w,!}$ evidently belongs to the heart of the new
t-structure since for $\cla\in \cLambda^+$,
$$j_{w,!}\star J_{-\cla}=j_{w,!}\star j_{-\cla,!}\simeq j_{w\cdot (-\cla),!},$$
since $\ell(w)+\ell(-\cla)=\ell(w\cdot (-\cla))$. (For a more general assertion
see \lemref{finite flags}.)

\bigskip

This concludes the proof of \mainthmref{main}.

\ssec{}   \label{proof of Miura}

We are now going to prove \mainthmref{Miura}.

\sssec{}

First, from \thmref{Bezr}, we obtain that there exists an equivalence:
$$\nOp\underset{\tN/\cG}\times \bD^b(\Coh(\check{\on{St}}/\cG))\simeq
\nOp\underset{\tN/\cG}\times \bD^f(\fD(\Fl^{\on{aff}}_G)_\crit\mod)^{I^0},$$
and hence an equivalence of the corresponding ind-completions
\begin{equation}  \label{1 step eq}
\nOp\underset{\tN/\cG}\arrowtimes \bD^b(\Coh(\check{\on{St}}/\cG))\simeq
\nOp\underset{\tN/\cG}\arrowtimes \bD^f(\fD(\Fl^{\on{aff}}_G)_\crit\mod)^{I^0}.
\end{equation}

Moreover, we claim that the above equivalence has a cohomological amplitude
(with respect the t-structures defined on both sides) bounded by $\dim(G/B)$.
Indeed, by the definition of the t-structure on a base-changed category,
it suffices to establish the corresponding property of the equivalence
$$\bD_{ren}(\QCoh(\check{\on{St}}/\cG))\simeq \bD_{ren}(\fD(\Fl^{\on{aff}}_G)_\crit\mod)^{I^0},$$
where $\bD_{ren}(\QCoh(\check{\on{St}}/\cG))$ is the ind-completion of 
$\bD^b(\Coh(\check{\on{St}}/\cG))$.

\medskip

I.e., we have to show that $\CM\in  \bD^{\leq 0}_{ren}(\QCoh(\check{\on{St}}/\cG))\cap
\bD^b(\Coh(\check{\on{St}}/\cG))$ goes over to an object of 
$\bD_{ren}(\fD(\Fl^{\on{aff}}_G)_\crit\mod)^{I^0}$, which is $\leq \dim(G/B)_{new}$. And vice
versa, that an object $\CF\in \bD^{\leq 0}_{ren}(\fD(\Fl^{\on{aff}}_G)_\crit\mod)^{I^0} \cap
\bD^f(\fD(\Fl^{\on{aff}}_G)_\crit\mod)^{I^0}$ goes over to an object of 
$\bD_{ren}(\QCoh(\check{\on{St}}/\cG))$, which is $\leq \dim(G/B)$.

\medskip

Both assertions follow from \propref{finite deviation}, combined with the
fact that the functor of \thmref{Bezr} is right-exact, when viewed with respect to 
the {\it old} t-structure on the category $\bD^f(\fD(\Fl^{\on{aff}}_G)_\crit\mod)^{I^0}$, and has a 
cohomological amplitude bounded by $\dim(G/B)$.

\medskip

Taking into account \mainthmref{main}, we obtain that 
in order to prove \mainthmref{Miura}, it remains to prove the following:

\begin{prop}  \label{comparison on MOp}
There exists an exact equivalence between the $\bD^+$ part of 
$$\nOp\underset{\tN/\cG}\arrowtimes \bD^b(\Coh(\check{\on{St}}/\cG))$$
and $\bD^+(\QCoh(\nMOp))$.
\end{prop}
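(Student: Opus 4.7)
The plan is to construct an exact, fully faithful functor from the left-hand side to $\bD(\QCoh(\nMOp))$ that restricts to the desired equivalence on $\bD^+$, exploiting smoothness of $\fr_\nilp$ throughout. First I would construct
$$\Phi : \nOp\underset{\tN/\cG}\arrowtimes \bD^b(\Coh(\check{\on{St}}/\cG)) \to \bD(\QCoh(\nMOp))$$
via the universal property of base change (\secref{univ ppty base change}): starting from the pullback $\sfp^*$ along the second projection $\sfp : \nMOp \to \check{\on{St}}/\cG$, the Cartesian defining square of $\nMOp$ together with flatness of $\fr_\nilp$ supplies the $\bD^{perf}(\Coh(\tN/\cG))$-linearity needed to factor $\sfp^*$ through the base-changed category.

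Next I would verify that $\Phi$ is exact and fully faithful. Exactness follows because $\sfp$ is itself smooth, being the base change of $\fr_\nilp$, so $\sfp^*$ is exact; combined with \propref{flat exact} and the generator description of the t-structure from \secref{t structure ten prod}, this yields exactness of $\Phi$, and in particular the restriction $\Phi : \bD^+ \to \bD^+$. Fully faithfulness is then checked on objects of the form $\fr_\nilp^*(\CM)$ with $\CM \in \bD^b(\Coh(\check{\on{St}}/\cG))$: formula \eqref{hom in base change nilp} expresses the left-hand Hom as $\Hom(\CM_1,(\fr_\nilp)_*(\CO_\nOp)\underset{\CO_{\tN/\cG}}\otimes \CM_2)$, while flat base change for $\sfp$ identifies the right-hand Hom $\Hom(\sfp^*\CM_1,\sfp^*\CM_2)$ with the same expression. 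Linearity over $\bD^{perf}(\Coh(\nOp))$ then extends fully faithfulness from these generators to the whole LHS.

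Finally, for essential surjectivity on $\bD^+$, I would use \secref{heart of base change} to identify the heart of the LHS with the abelian base change $\QCoh(\nOp)\underset{\QCoh(\tN/\cG)}\otimes \QCoh(\check{\on{St}}/\cG)$, which, by flatness of $\fr_\nilp$, equals $\QCoh(\nMOp)$, with $\Phi$ realizing this identification on hearts. It then suffices to see that both $\bD^+$ categories are generated by their (common) heart under cohomological shifts, extensions, and filtered colimits compatible with truncation. The principal obstacle is precisely this last step: the LHS is defined as an ind-completion of a bounded derived category of \emph{coherent} sheaves, so the comparison with $\bD^+(\QCoh(\nMOp))$ requires verifying that the renormalized $\bD^+$ and the compactly generated t-structure on the LHS behave as expected at the level of unbounded resolutions. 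The key leverage will be that $\sfp$ is affine and smooth, so $\sfp_*$ is exact and conservative, allowing a dévissage that reduces the assertion to the corresponding standard statement for $\bD^+(\QCoh(\check{\on{St}}/\cG))$.
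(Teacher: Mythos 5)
Your proposal is essentially correct and uses the same key ingredients the paper does: the exactness/conservativity of the push-forward along $\fr_\nilp$ (your $\sfp_*$), and the fact that the renormalization functor $\Psi:\bD_{ren}(\QCoh(\check{\on{St}}/\cG))\to\bD(\QCoh(\check{\on{St}}/\cG))$ is an equivalence on $\bD^+$. The main structural difference is organizational: the paper first invokes \propref{cart prod inf type} to identify $\bD(\QCoh(\nMOp))$ outright with $\nOp\underset{\tN/\cG}\arrowtimes\bD^{perf}(\Coh(\check{\on{St}}/\cG))$ (exactly because $\fr_\nilp$ is flat), which at a stroke isolates the whole question as a comparison between the base changes of $\bD^b(\Coh)$ and $\bD^{perf}(\Coh)$ on the singular $\check{\on{St}}/\cG$ --- i.e.\ as a base-changed $\Psi$. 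The inverse is then built explicitly by truncating the base change of the ind-extended $\Xi$ (the ``$(\nOp\underset{\tN/\cG}\times\Xi)_{ren}$'' construction), with well-definedness and mutual inversion checked after applying the conservative exact functor $(\fr_\nilp)_*$. This is cleaner than your plan because it bypasses separate fully-faithfulness and essential-surjectivity verifications. Your fully-faithfulness check implicitly relies on the identification of $\Hom$'s computed in $\bD_{ren}(\QCoh(\check{\on{St}}/\cG))$ and in $\bD(\QCoh(\check{\on{St}}/\cG))$, which is legitimate but needs $\propref{D plus}$ and the compatibility of $\Psi$ with the monoidal action to be spelled out; and the ``principal obstacle'' you flag in the essential surjectivity step is precisely what the paper's explicit $\Xi_{ren}$ construction resolves --- rather than trying to generate $\bD^+$ from the heart, one constructs an inverse directly. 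So: correct identification of the crux, same mechanism, but you would likely end up rediscovering the paper's $\Psi/\Xi$ route once you tried to close the gap you noticed.
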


\sssec{Proof of \propref{comparison on MOp}}   \label{perv vs coh}

By \propref{cart prod inf type}, since the morphism 
$\fr_\nilp:\nOp\to \tN/\cG$
is flat, we have an exact equivalence
$$\bD(\QCoh(\nMOp))\simeq \nOp\underset{\tN/\cG}\arrowtimes \bD^{perf}(\Coh(\check{\on{St}}/\cG)).$$

\medskip

Recall the functor $\Psi: \bD^b(\Coh(\check{\on{St}}/\cG))\to \bD(\QCoh(\check{\on{St}}/\cG))$, see
\secref{functor Xi}. To prove the proposition, it suffices, therefore, to show that the functor
$$\nOp\underset{\tN/\cG}\times \Psi:
\nOp\underset{\tN/\cG}\arrowtimes \bD^b(\Coh(\check{\on{St}}/\cG))\to
\nOp\underset{\tN/\cG}\arrowtimes \bD^{perf}(\Coh(\check{\on{St}}/\cG))$$
induces an exact equivalence of the corresponding $\bD^+$ categories.

\medskip

We have a commutative diagram of functors
\begin{equation} \label{Psi nOp}
\CD
\nOp\underset{\tN/\cG}\arrowtimes \bD^b(\Coh(\check{\on{St}}/\cG))  @>{\nOp\underset{\tN/\cG}\times \Psi}>>
\nOp\underset{\tN/\cG}\arrowtimes \bD^{perf}(\Coh(\check{\on{St}}/\cG))   \\
@V{(\fr_\nilp)_*}VV    @V{(\fr_\nilp)_*}VV \\
\bD_{ren}(\QCoh(\check{\on{St}}/\cG)) @>{\Psi}>> \bD(\QCoh(\check{\on{St}}/\cG)),
\endCD
\end{equation}
with the vertical arrows being exact and conservative. Since $\Psi$ is exact 
(see \secref{renorm with t}),
we obtain that so is $\nOp\underset{\tN/\cG}\times \Psi$. In particular, it sends $\bD^+$ to $\bD^+$.

\medskip

Let us now construct the inverse functor. Let us denote by $\Xi$ the tautological functor
$$\bD^{perf}(\Coh(\check{\on{St}}/\cG))\to \bD^b(\Coh(\check{\on{St}}/\cG)),$$
and its ind-extension
$$\Xi:\bD(\QCoh(\check{\on{St}}/\cG))\to \bD_{ren}(\QCoh(\check{\on{St}}/\cG)).$$
By \secref{functor Xi}, $\Xi$ is a left adjoint and right inverse of $\Psi$. 
Consider now the functor $\nOp\underset{\tN/\cG}\times \Xi$, which fits into the
commutative diagram
$$
\CD
\nOp\underset{\tN/\cG}\arrowtimes \bD^b(\Coh(\check{\on{St}}/\cG))  @<{\nOp\underset{\tN/\cG}\times \Xi}<<
\nOp\underset{\tN/\cG}\arrowtimes \bD^{perf}(\Coh(\check{\on{St}}/\cG))   \\
@V{(\fr_\nilp)_*}VV    @V{(\fr_\nilp)_*}VV \\
\bD_{ren}(\QCoh(\check{\on{St}}/\cG)) @<{\Xi}<< \bD(\QCoh(\check{\on{St}}/\cG)).
\endCD
$$

We define a functor $(\nOp\underset{\tN/\cG}\times \Xi)_{ren}$ from the $\bD^+$ part of
$\nOp\underset{\tN/\cG}\arrowtimes \bD^{perf}(\Coh(\check{\on{St}}/\cG))$ to the $\bD^+$ part
of $\nOp\underset{\tN/\cG}\arrowtimes \bD^b(\Coh(\check{\on{St}}/\cG))$ as follows. For
$\CM\in \nOp\underset{\tN/\cG}\arrowtimes \bD^{perf}(\Coh(\check{\on{St}}/\cG))$, which
is $\geq i$, we set 
$$(\nOp\underset{\tN/\cG}\times \Xi)_{ren}(\CM):=\tau^{\geq j}
\left((\nOp\underset{\tN/\cG}\times \Xi)(\CM)\right),$$
for some/any $j<i$. 

\medskip

The fact that $(\nOp\underset{\tN/\cG}\times \Xi)_{ren}$ and $\nOp\underset{\tN/\cG}\times \Psi$
are mutually inverse follows from the corresponding assertion for 
$\bD_{ren}(\QCoh(\check{\on{St}}/\cG)) \leftrightarrows \bD(\QCoh(\check{\on{St}}/\cG))$,
since the functor $(\fr_\nilp)_*$ is conservative.

\newpage

\centerline{\bf \large Part III: Tensor products of categories}

\bigskip



\section{DG categories and triangulated categories: a reminder}   \label{DG reminder}

In this section, whenever we will discuss the category of functors
between two categories, the source category will be assumed essentially small.

\ssec{DG categories and modules}

Recall that a DG category is a $k$-linear category $\bC$, 
enriched over $\Comp_k$. I.e., for every $X,Y\in \bC$, 
the vector space $\Hom(X,Y)$ is endowed with a structure
of complex, denoted $\Hom^\bullet(X,Y)$, in a way compatible
with compositions.
Unless specified otherwise, our DG categories are pre-triangulated.
In this case, the homotopy category $\Ho(\bC)$ is a triangulated
category.

\medskip

It is clear what a DG functor between DG categories is. A DG functor
$F:\bC_1\to \bC_2$ induces a triangulated functor $\Ho(F):
\Ho(\bC_1)\to \Ho(\bC_2)$. 
We say that $F$ is a quasi-equivalence if $\Ho(F)$ is an equivalence.

\medskip

It is also clear what a DG natural transformation between 
DG functors is.

\sssec{}   

For a DG category $\bC$ we let $\bC^{op}\mod$ (resp., $\bC\mod$)
denote the DG category of contravariant (resp., covariant) functors
$\bC\to \Comp_k$. 

\medskip

We will also consider the derived category $\bD(\bC^{op}\mod)$,
which is the triangulated quotient of $\Ho(\bC^{op}\mod)$ by the
subcategory of modules $M^\bullet$ such that $M^\bullet(X)\in \Comp_k$
is acyclic for every $X\in \bC$.

\sssec{}  \label{semi free, constr}

Inside $\bC^{op}\mod$ one singles out a full DG subcategory 
$\ua\bC$ of {\it semi-free} modules. Namely, its objects are
$\bC^{op}$-modules of the following form:

\medskip

Let $X_k, k\geq 0$ be objects of $\bC^{op}\mod$ that are free
(i.e., representable by a formal direct sum of objects of $\bC$),
and consider a strictly upper triangular \footnote{i.e., we have morphisms
$\phi_{i,j}:X_i\to X_j$ for $i>j$} matrix
$\Phi:\underset{k\geq 0}\oplus\, X_k\to \underset{k\geq 0}\oplus\, X_k$
of morphisms of degree $1$, such that
$2d(\Phi)+\Phi\circ \Phi=0$. The data 
$\{\underset{k\geq 0}\oplus\, X_k,\Phi\}$ defines a $\bC^{op}$-module by
$$X\mapsto \underset{k}\oplus\, \Hom^\bullet(X,X_k),$$
with the differential given by the sum of the original differential
and $\Phi$. 

\sssec{}

According to \cite{Dr}, Sect. 4.1, the functor
\begin{equation} \label{semi free}
\Ho(\ua\bC)\to \bD(\bC^{op}\mod)
\end{equation}
is an equivalence.

\medskip

Note also that the definition of $\ua\bC$ makes sense whether or
not $\bC$ is small, so when it is not, $\Ho(\ua\bC)$ serves
as a replacement for $\bD(\bC^{op}\mod)$ via \eqref{semi free}.

\sssec{}    \label{big and small}

We have fully faithful (Yoneda) embeddings $\bC\to \ua\bC$
and $\Ho(\bC)\to \Ho(\ua\bC)\simeq \bD(\bC^{op}\mod)$.
In addition, we have:

\begin{itemize}

\item $\Ho(\ua\bC)$ is co-complete (i.e., admits arbitrary direct
sums).

\item Every object of $\Ho(\bC)$, considered as an object of
$\Ho(\ua\bC)$, is compact (we remind that an object $X$ of a triangulated
category is compact if the functor of $\Hom_{\Ho(\bC)}(X,?)$ commutes with
direct sums),

\item $\Ho(\bC)$ generates $\Ho(\ua\bC)$ (i.e., the former is not
contained in a proper full co-complete triangulated subcategory
of the latter).

\end{itemize}

Such pairs of categories are a convenient setting to work in.

\ssec{Pseudo and homotopy functors}

Let $\bC_1$ and $\bC_2$ be two DG categories. A (left) DG pseudo functor
is by definition an object $M^\bullet_F\in \left(\bC_1^{op}\times \bC_2\mod\right)^{op}$, 
i.e., a bi-additive functor $\bC_1^{op}\times \bC_2\to \Comp_k$. 
We shall denote this category of pseudo-functors $\bC_1\to \bC_2$
by $\on{PFunct}(\bC_1,\bC_2)$.

\medskip

A  homotopy functor $F:\bC_1\to \bC_2$ is by definition a DG pseudo functor
that satisfies the following property:

\medskip

\noindent {\it For every $X\in \bC_1$, the object of $\bC_2\mod$, given
by $Y\mapsto M^\bullet_F(X,Y)$ is such that its image in $\bD(\bC_2\mod)$
lies in the essential image of $\Ho(\bC^{op}_2)$.}

\medskip

I.e., we require that for every object $X\in \Ho(\bC_1)$, the functor on $\Ho(\bC_2)$ given by 
$Y\mapsto H^0\left(M^\bullet_F(X,Y)\right)$ be co-representable.

\medskip

For example, a DG functor $F:\bC_1\to \bC_2$ gives rise
to a homotopy functor by setting $M^\bullet_F(X,Y)=\Hom^\bullet_{\bC_2}(F(X),Y)$.
For a homotopy functor $F$ we will sometimes use the notation
$$\Hom^\bullet_{\bC_2}("F(X)",Y):=M^\bullet_F(X,Y).$$

\medskip

By definition, a homotopy functor as above defines a triangulated functor
$\Ho(F):\Ho(\bC_1)\to \Ho(\bC_2)$.

\sssec{}   \label{lower star}

For a pseudo functor $F$ as above and $Y\in \bC_2^{op}\mod$ we can form
$$Y\underset{\bC_2}{\overset{L}\otimes}M^\bullet_F\in
\bD(\bC_1^{op}\mod).$$ This
gives rise to a triangulated functor $F_*:\bD(\bC_2^{op}\mod)\to
\bD(\bC_1^{op}\mod)$.

\medskip

In addition, $F$ naturally extends to a pseudo functor
$\ua\bC{}_1\to \ua\bC{}_2$. Indeed, for 
$X=\{\underset{i}\oplus\, X_i,\Phi\}\in \ua\bC{}_1$,
$Y=\{\underset{j}\oplus\, Y_j,\Psi\}\in \ua\bC{}_2$, we set
$$M^\bullet_F(X,Y)=\underset{i}\Pi\, \underset{j}\oplus\, M^\bullet_F(X_i,Y_j).$$

\medskip

If $F$ was a homotopy functor, then so is the above extension; thus we obtain a
functor  
$$\bD(\bC_1^{op}\mod)\simeq \Ho(\ua\bC{}_1)\to \Ho(\ua\bC{}_2)\to
\bD(\bC_2^{op}\mod),$$
which will be denoted $F^*$; it is the left adjoint of $F_*$.

\sssec{}  \label{pseudo and subcat}
Let $F:\bC_1\to \bC_2$ be a homotopy functor, and $\bC'_2\subset \bC_2$
a full DG subcategory. We can tautologically define a pseudo functor
$F':\bC_1\to \bC'_2$. It is easy to see that $F'$ is a homotopy functor
if and only if the essential image of $\Ho(F)$ belongs to $\Ho(\bC'_2)$.

\sssec{}    \label{huts}

Let us note that homotopy functors can be represented by "huts" of
DG functors, and vice versa. 

\medskip

Namely, given a diagram
\begin{equation} \label{right hut}
\bC_1\overset{\Psi}\leftarrow \wt\bC_1\overset{\wt{F}}\to \bC_2,
\end{equation}
where $\Psi$ and $\wt{F}$ are DG functors with $\Psi$ being a 
quasi-equivalence, we define a homotopy functor $F:\bC_1\to \bC_2$
up to a derived natural isomorphism as
$(\Psi\otimes \on{Id})^*(M^\bullet_{\wt{F}})$.

\sssec{}    \label{from q to f}

Vice versa, given a homotopy functor $F$, we define a diagram
such as \eqref{right hut} as follows. We set $\wt\bC_1$ to have as
objects triples $\{X\in \bC_1,Y\in \bC_2,f\in M^{0}_F(X,Y)\}$,
where $f$ is closed and such that it induces an isomorphism $Y\to F(X)\in
\bD(\bC_2\mod)$.

\medskip

Morphisms are defined by
\begin{multline*}
\Hom^\bullet_{\wt\bC_1}\left(\{X',Y',f'\},\{X'',Y'',f''\}\right):=\\
\{\alpha\in \Hom^\bullet_{\bC_1}(X',X''),
\beta\in \Hom^\bullet_{\bC_2}(Y',Y''),
\gamma\in M^\bullet_{F}(X',Y'')[-1]\}.
\end{multline*}
The differential arises from the differentials on $\Hom^\bullet$, $M^\bullet$
and $f$.

\medskip

The functors $\wt{F}$ and $\Psi$ send $\{X,Y,f\}$ as above
to $X$ and $Y$, respectively.

\sssec{}

Finally, let us note that if we have a diagram
\begin{equation} \label{left hut}
\bC_1\overset{\wt{G}}\to \wt\bC_2\overset{\Phi}\leftarrow \bC_2,
\end{equation}
with $\Phi$ a quasi-equivalence,
it gives rise to a diagram as in \eqref{right hut} by first
defining a homotopy functor $G:\bC_1\to \bC_2$, namely,
$$M^\bullet_{G}(X,Y)=\Hom^\bullet_{\wt\bC_2}(\wt{G}(X),\Phi(Y)),$$
and the applying the procedure of \secref{from q to f}.

\ssec{Natural transformations}

Let $F'$ and $F''$ be two pseudo functors, corresponding to bi-modules
$M^\bullet_{F'}$ and $M^\bullet_{F''}$, respectively. A DG
natural transformation $g:F'\Rightarrow F''$ is by definition a closed
morphism of degree zero $g:M^\bullet_{F''}\to M^\bullet_{F'}$ in
$(\bC_1^{op}\times \bC_2\mod)^{op}\simeq \on{PFunct}(\bC_1,\bC_2)$.

\medskip

A derived natural transformation between $F'$ and $F''$ is a morphism
between $M^\bullet_{F'}$ and $M^\bullet_{F''}$ in the triangulated category
$\on{PFunct}^{\Ho}(\bC_1,\bC_2):=
\bD(\bC_1^{op}\times \bC_2\mod)^{op}$. We shall denote the full
subcategory of $\on{PFunct}^{\Ho}(\bC_1,\bC_2)$ consisting of
homotopy functors by $\on{HFunct}(\bC_1,\bC_2)$.

\sssec{}

It is clear that a derived natural transformation between homotopy functors
gives rise to a natural transformation 
$$\Ho(F')\Rightarrow \Ho(F''):\Ho(\bC_1)\to \Ho(\bC_2),$$
and also
$$F''_*\Rightarrow F'_* \text{ and } F'{}^*\Rightarrow F''{}^*.$$

\sssec{}   \label{sub and nat}

Let now $\bC'_2\subset \bC_2$ be a full DG subcategory. Let
$\on{HFunct}(\bC_1,\bC_2)'$ be the full subcategory of 
$\on{HFunct}(\bC_1,\bC_2)$, consisting of those homotopy
functors $F$, such that $\Ho(F):\Ho(\bC_1)\to \Ho(\bC'_2)$.

\medskip

Restriction (see \secref{pseudo and subcat}) defines a functor
$$\on{HFunct}(\bC_1,\bC_2)'\to \on{HFunct}(\bC_1,\bC'_2).$$

\begin{lem}
The above functor is an equivalence. Its inverse is given 
by composing with the tautological object in 
$\on{HFunct}(\bC'_2,\bC_2)$ (see \secref{pseudo comp}).
\end{lem}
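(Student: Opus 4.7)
\textbf{The plan is to} verify that composition with the tautological object $\iota \in \on{HFunct}(\bC'_2,\bC_2)$---the homotopy functor induced by the inclusion DG functor $\bC'_2 \hookrightarrow \bC_2$, whose bimodule is $M^\bullet_\iota(Y',Y):=\Hom^\bullet_{\bC_2}(Y',Y)$---supplies a two-sided inverse to the restriction functor $\rho:\on{HFunct}(\bC_1,\bC_2)'\to \on{HFunct}(\bC_1,\bC'_2)$. Recall that composition of homotopy functors is computed via derived tensor product of bimodules over the intermediate DG category (as reviewed in \secref{pseudo comp}).

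First I will check that the candidate inverse actually lands in $\on{HFunct}(\bC_1,\bC_2)'$. For $G \in \on{HFunct}(\bC_1,\bC'_2)$ represented pointwise by objects $G(X) \in \Ho(\bC'_2)$, i.e., with $M^\bullet_G(X,-) \simeq \Hom^\bullet_{\bC'_2}(G(X),-)$ in $\bD(\bC'_2\mod)$, the composed homotopy functor $\iota \circ G$ has bimodule
$$M^\bullet_{\iota \circ G}(X,Y) \simeq M^\bullet_G(X,-)\underset{\bC'_2}{\overset{L}\otimes}\Hom^\bullet_{\bC_2}(-,Y) \simeq \Hom^\bullet_{\bC_2}(G(X),Y),$$
where the second isomorphism is DG Yoneda applied inside $\bC'_2$, combined with the full faithfulness of $\bC'_2 \hookrightarrow \bC_2$. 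Hence $\iota \circ G$ is itself a homotopy functor, represented by $G(X)$ viewed as an object of $\Ho(\bC'_2)\subset \Ho(\bC_2)$; in particular it belongs to $\on{HFunct}(\bC_1,\bC_2)'$.

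Next I verify that both compositions are naturally isomorphic to the identity. For $\rho \circ (\iota \circ -)$: the restriction of $M^\bullet_{\iota \circ G}$ to $\bC_1^{op}\times\bC'_2$ is $(X,Y')\mapsto \Hom^\bullet_{\bC_2}(G(X),Y')$, which by full faithfulness of $\bC'_2 \hookrightarrow \bC_2$ is canonically quasi-isomorphic to $\Hom^\bullet_{\bC'_2}(G(X),Y')\simeq M^\bullet_G(X,Y')$. For $(\iota \circ -) \circ \rho$ applied to $F \in \on{HFunct}(\bC_1,\bC_2)'$ represented by objects $F(X)\in \Ho(\bC'_2)$: the same Yoneda-style computation gives
$$M^\bullet_{\iota \circ \rho(F)}(X,Y) \simeq \Hom^\bullet_{\bC_2}(F(X),Y) \simeq M^\bullet_F(X,Y).$$

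\textbf{The main obstacle} I expect is upgrading these per-$X$ identifications into genuine isomorphisms in $\bD((\bC_1^{op}\otimes\bC_2)\mod)$ and $\bD((\bC_1^{op}\otimes\bC'_2)\mod)$, i.e., ensuring naturality in the $\bC_1$-variable. This will be handled by replacing each bimodule by a semi-free resolution via the equivalence \eqref{semi free}, so that the derived tensor product $(-)\underset{\bC'_2}{\overset{L}\otimes}(-)$ is computed by an honest tensor product and the pointwise Yoneda isomorphisms become genuinely functorial in $X$; alternatively one can reduce to the case where the source bimodule is representable by a single object of $\bC_1$, in which case both sides of each candidate isomorphism are co-representable functors on $\bC_2$ (resp.~$\bC'_2$) and the identification is an instance of the DG Yoneda lemma.
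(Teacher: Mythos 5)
The paper states this lemma without proof, so there is no argument in the text to compare against; your argument is correct and is the natural one. Two small remarks. First, since $\bC'_2\subset\bC_2$ is a \emph{full} DG subcategory, for $Y'\in\bC'_2$ you have $\Hom^\bullet_{\bC_2}(G(X),Y')=\Hom^\bullet_{\bC'_2}(G(X),Y')$ on the nose, not just up to quasi-isomorphism, so that step is even more immediate than you suggest. Second, the naturality worry you raise can be dispatched more directly than by appealing to semi-free resolutions of the whole bimodule: the counit of the composition structure (the DG natural transformation $F\Rightarrow{}"\iota\circ\rho(F)"$ coming from the right $\bC_2$-module action on $M^\bullet_F(X,-)$, via \secref{pseudo comp}) already gives a genuine morphism $M^\bullet_{\iota\circ\rho(F)}\to M^\bullet_F$ in $\bD((\bC_1^{op}\times\bC_2)\mod)$, and similarly for the unit; since quasi-isomorphisms in module categories are detected pointwise, your fibrewise Yoneda computation at each $(X,Y)$ is exactly what is needed to conclude these canonical maps are isomorphisms. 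Your alternative suggestion (reduce to $M^\bullet_G(X,-)$ representable and apply co-Yoneda, using that a representable $\bC'_2$-module is semi-free so the derived and underived tensor products coincide) also works and is effectively the same calculation.
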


\ssec{Compositions}     \label{pseudo comp}

Let $\bC_1,\bC_2,\bC_3$ be three DG categories, and 
$F':\bC_1\to \bC_2$, $F'':\bC_2\to \bC_3$, $G:\bC_1\to \bC_3$
be pseudo functors. We define the complex $\Hom^\bullet_{\on{PFunct}(\bC_1,\bC_3)}(G,"F''\circ F'")$
to consist of maps
\begin{equation} \label{pseudo-comp map}
\Hom^\bullet_{\bC_2}("F'(X)",Y)\otimes 
\Hom^\bullet_{\bC_3}("F''(Y)",Z)\to \Hom^\bullet_{\bC_3}("G(X)",Z),
\end{equation}
functorial in $X\in \bC_1,Z\in \bC_3$, and functorial in $Y\in \bC_2$
in the sense that for any $Y',Y''\in \bC_2$, the composition
\begin{multline*}
\Hom^\bullet_{\bC_2}("F'(X)",Y')\otimes 
\Hom^\bullet_{\bC_3}("F''(Y'')",Z)\otimes \Hom_{\bC_2}^\bullet(Y',Y'') \to \\
\to  \Hom^\bullet_{\bC_2}("F'(X)",Y'')\otimes 
\Hom^\bullet_{\bC_3}("F''(Y'')",Z) \to \Hom^\bullet_{\bC_3}("G(X)",Z)
\end{multline*}
equals 
\begin{multline*}
\Hom^\bullet_{\bC_2}("F'(X)",Y')\otimes 
\Hom^\bullet_{\bC_3}("F''(Y'')",Z)\otimes \Hom_{\bC_2}^\bullet(Y',Y'') \to \\
\to \Hom^\bullet_{\bC_2}("F'(X)",Y')\otimes 
\Hom^\bullet_{\bC_3}("F''(Y')",Z) \to \Hom^\bullet_{\bC_3}("G(X)",Z).
\end{multline*}

\medskip

A DG natural transformation $$G\Rightarrow "F''\circ F'"$$ is by definition a 0-cycle
in $\Hom^\bullet_{\on{PFunct}(\bC_1,\bC_3)}(G,"F''\circ F'")$. We denote the set
of DG natural transformations as above by $\Hom_{\on{PFunct}(\bC_1,\bC_3)}(G,"F''\circ F'")$.

\medskip

If $F',F'',G$ are homotopy functors, then a DG natural transformation 
as above defines a natural transformation $$\Ho(G)\Rightarrow \Ho(F'')\circ \Ho(F').$$

We say that $G$ is a homotopy composition of $F'$ and $F''$ if the
latter map is an isomorphism of functors $\Ho(\bC_1)\rightrightarrows
\Ho(\bC_3)$.

\sssec{}

In a similar way one defines the complex
$$\Hom^\bullet_{\on{PFunct}(\bC_1,\bC_{n+1})}(G,"F_n\circ...\circ F_1"),$$
where $F_i,\,j=1,...,n$ are
pseudo functors $\bC_i\to \bC_{i+1}$ and $G$ is a pseudo functor
$\bC_1\to \bC_{n+1}$. As above, this allows to introduce the notion of
homotopy composition of $F_1,...,F_n$.

\medskip

A DG natural transformation 
$$G\Rightarrow "F_n\circ ...\circ F_1"$$
is by definition a 0-cycle in $\Hom^\bullet_{\on{PFunct}(\bC_1,\bC_{n+1})}(G,"F_n\circ...\circ F_1")$.
We denote the set of DG natural transformations by $\Hom_{\on{PFunct}(\bC_1,\bC_{n+1})}(G,"F_n\circ...\circ F_1")$.

\sssec{}  \label{comp as colim}

Let $\bC_1,...,\bC_{n+1}$ and $F_1,...,F_n$ be above. For 
a pseudo functor $G:\bC_1\to \bC_{n+1}$, let us denote by
$\Hom_{\on{PFunct}(\bC_1,\bC_{n+1})}(G,"F_n\circ ...\circ F_1")$ 
the set of DG natural transformations as above.

\medskip

We define a functor 
$$\Hom_{\on{PFunct}^{\Ho}(\bC_1,\bC_{n+1})}(G,"F_n\circ ...\circ F_1"):
\on{PFunct}^{\Ho}(\bC_1,\bC_{n+1})^{op}\to \on{Sets}$$ by
$$G\mapsto \underset{F_i\to F'_i,i=1,...,n}{colim}\, 
H^0\left(\Hom^\bullet_{\on{PFunct}(\bC_1,\bC_{n+1})}(G,"F'_n\circ ...\circ F'_1")\right),$$
where the colimit is taken over the index category 
of DG natural transformations $F_i\to F'_i$ that are 
quasi-isomorphisms.

\begin{lem}  \label{lem pseudo comp}
The functor $G\mapsto \Hom_{\on{PFunct}^{\Ho}(\bC_1,\bC_{n+1})}(G,"F_n\circ ...\circ F_1")$
is representable.
\end{lem}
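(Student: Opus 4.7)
The plan is to exhibit the representing object as the derived tensor product of the bimodules $M^\bullet_{F_i}$. Specifically, choose semi-free resolutions $\wt{M}^\bullet_{F_i} \to M^\bullet_{F_i}$ (as in \secref{semi free, constr}) of each bimodule viewed as an object of $\bC_i^{op} \times \bC_{i+1}\mod$, and set
$$H := \wt{M}^\bullet_{F_1} \underset{\bC_2}\otimes \wt{M}^\bullet_{F_2} \underset{\bC_3}\otimes \cdots \underset{\bC_n}\otimes \wt{M}^\bullet_{F_n} \in \bD(\bC_1^{op}\times \bC_{n+1}\mod),$$
using ordinary (underived) coend-style tensor products at each stage; the semi-freeness ensures this agrees with the derived tensor product and that $H$ is well-defined up to canonical quasi-isomorphism. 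Via the identification $\on{PFunct}^{\Ho}(\bC_1,\bC_{n+1}) = \bD(\bC_1^{op}\times \bC_{n+1}\mod)^{op}$, the object $H$ corresponds to a pseudo functor which we propose as the homotopy composition $"F_n \circ \cdots \circ F_1"$ representing the functor in question.

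First, I would treat the strict (underived) case. Unpacking the definition of DG natural transformations in \secref{pseudo comp}, one sees that a closed degree-zero map of the form \eqref{pseudo-comp map}, functorial in $X$, $Z$ and dinatural in the intermediate variables $Y_j \in \bC_{j+1}$, is exactly the data of a closed degree-zero map of bimodules
$$M^\bullet_G \longrightarrow M^\bullet_{F_1} \underset{\bC_2}\otimes \cdots \underset{\bC_n}\otimes M^\bullet_{F_n},$$
where the right-hand side uses the strict (underived) tensor product of bimodules. Indeed, the dinaturality in each $Y_j$ is precisely the coequalizer relation defining the tensor product over $\bC_{j+1}$. Therefore at the level of complexes, $\Hom^\bullet_{\on{PFunct}(\bC_1,\bC_{n+1})}(G, "F_n\circ\cdots\circ F_1")$ is canonically identified with the Hom-complex in $\bC_1^{op}\times\bC_{n+1}\mod$ between $M^\bullet_G$ and the strict tensor product of the $M^\bullet_{F_i}$.

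Second, I would verify that when each $F_i$ is replaced by a semi-free resolution $\wt F_i$, the strict tensor product computes the derived one and the resulting Hom-complex computes $\RHom$. Combining this with the previous step, for semi-free $\wt F_i$ one has a canonical isomorphism
$$H^0\bigl(\Hom^\bullet_{\on{PFunct}(\bC_1,\bC_{n+1})}(G, "\wt F_n\circ\cdots\circ \wt F_1")\bigr) \simeq \Hom_{\bD(\bC_1^{op}\times\bC_{n+1}\mod)}(M^\bullet_G, H).$$
Thus $H$ represents the restriction of the functor in the lemma to the subcategory of semi-free resolutions.

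Finally, I would argue that the colimit in the definition of $\Hom_{\on{PFunct}^{\Ho}(\bC_1,\bC_{n+1})}(G, "F_n \circ \cdots \circ F_1")$ is already computed on any single choice of semi-free resolutions. Namely, for any tuple of quasi-isomorphisms $F_i \to F'_i$, one may further resolve each $F'_i$ by a semi-free $F''_i$ with a compatible map $\wt F_i \to F''_i$ over $F_i \to F'_i$; the induced maps on Hom-complexes computed via the strict tensor product are then quasi-isomorphisms by the semi-freeness. Hence the index system is cofinally controlled by semi-free resolutions, and the colimit stabilizes to $\Hom_{\bD}(M^\bullet_G, H)$. The main obstacle to turn into a rigorous argument is exactly this last cofinality claim: the category of quasi-isomorphisms out of a fixed $F_i$ is not literally filtered, and one must either invoke the projective model structure on DG bimodules (in which semi-free objects are cofibrant and resolutions form a homotopically initial subcategory, so any other resolution admits a compatible zigzag to a semi-free one) or argue by a direct telescoping/mapping-cylinder construction that upgrades an arbitrary quasi-isomorphism $F_i \to F'_i$ to a quasi-isomorphism between semi-free models.
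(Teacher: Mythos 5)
Your proposal is correct and takes the same approach as the paper, which simply exhibits the derived tensor product $M^\bullet_{F_n}\overset{L}{\underset{\bC_n}\otimes}\cdots\overset{L}{\underset{\bC_2}\otimes}M^\bullet_{F_1}$ as the representing object and asserts that it "satisfies the requirements." Your elaboration via semi-free resolutions, the identification of the strict $\Hom^\bullet$-complex with bimodule maps, and the cofinality discussion are exactly the details the paper's "it is easy to see" is eliding; the model-category/mapping-cylinder argument you sketch for the cofinality step is the standard way to close that gap.
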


\begin{proof}
Let $M^\bullet_{F_i}\in \bC_i^{op}\times \bC_{i+1}\mod$ be the bi-module 
representing $F_i$. Then it is easy to see that the object of
$\bD(\bC_1^{op}\times \bC_{n+1}\mod)$, given by
$$M^\bullet_{F_n}\underset{\bC_n}{\overset{L}\otimes}M^\bullet_{F_{n-1}}\otimes...
\otimes M^\bullet_{F_2}\underset{\bC_2}{\overset{L}\otimes}M^\bullet_{F_1}$$
satisfies the requirements of the lemma.
\end{proof}

Let us denote by $F_n\circ....\circ F_1$ the universal object in $\on{PFunct}^{\Ho}(\bC_1,\bC_{n+1})$;
we shall call it the pseudo composition of $F_1,...,F_n$.

\begin{lem} 
If $F_1,...,F_n$ are homotopy functors, their pseudo-composition
is their homotopy composition.
\end{lem}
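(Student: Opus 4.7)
The plan is to show, by induction on $n$, that the bi-module
$$M^\bullet := M^\bullet_{F_n}\overset{L}{\underset{\bC_n}\otimes} M^\bullet_{F_{n-1}}\otimes\cdots\otimes M^\bullet_{F_2}\overset{L}{\underset{\bC_2}\otimes}M^\bullet_{F_1}$$
that represents the pseudo-composition $F_n\circ\cdots\circ F_1$ has the following two properties: (a) for every $X\in\bC_1$ the module $Y\mapsto M^\bullet(X,Y)\in\bC_{n+1}\mod$ is quasi-isomorphic in $\bD(\bC_{n+1}\mod)$ to the co-representable module $\Hom^\bullet_{\bC_{n+1}}(F_n(\cdots F_1(X)\cdots),-)$, and (b) under the identification in (a), the tautological derived natural transformation from $M^\bullet$ to $"F_n\circ\cdots\circ F_1"$ supplied by the universal property of \lemref{lem pseudo comp} induces on $\Ho$ the identity on $F_n(\cdots F_1(X)\cdots)$. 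Assertion (a) says that $M^\bullet$ is a homotopy functor with $\Ho(M^\bullet)(X)=F_n(\cdots F_1(X)\cdots)$, and (b) then says that the natural transformation $\Ho(M^\bullet)\Rightarrow\Ho(F_n)\circ\cdots\circ\Ho(F_1)$ is an isomorphism, which is the definition of being a homotopy composition.

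The base case $n=1$ is tautological. For the inductive step it suffices to treat $n=2$, because the inductive hypothesis applied to $F_1,\ldots,F_{n-1}$ produces a homotopy functor $G\colon\bC_1\to\bC_n$ with $\Ho(G)\simeq\Ho(F_{n-1})\circ\cdots\circ\Ho(F_1)$ and whose representing bi-module is quasi-isomorphic to $M^\bullet_{F_{n-1}}\otimes^L\cdots\otimes^L M^\bullet_{F_1}$; the associativity of the derived tensor product then reduces the problem to the two-fold composition $F_n\circ G$.

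For the two-fold case, fix $X\in\bC_1$ and $Z\in\bC_3$. Since $F_1$ is a homotopy functor, there is a quasi-isomorphism $\phi_X\colon M^\bullet_{F_1}(X,-)\simeq h_{F_1(X)}:=\Hom^\bullet_{\bC_2}(F_1(X),-)$ in $\bD(\bC_2\mod)$. The co-representable module $h_{F_1(X)}$ is semi-free in the sense of \secref{semi free, constr}, hence homotopy-flat over $\bC_2$, so
$$M^\bullet(X,Z)\simeq M^\bullet_{F_2}(-,Z)\underset{\bC_2}\otimes h_{F_1(X)}\simeq M^\bullet_{F_2}(F_1(X),Z),$$
the second isomorphism being the tautological coend/Yoneda identification (the ordinary tensor product of a bi-module with a representable module is the evaluation at the representing object). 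Applying the homotopy functor condition for $F_2$ yields $M^\bullet(X,Z)\simeq \Hom^\bullet_{\bC_3}(F_2(F_1(X)),Z)$, which is (a). For (b), one traces through the construction of the universal object in \lemref{lem pseudo comp} to see that the composite quasi-isomorphism just exhibited is exactly the image of the identity under the bijection of \secref{comp as colim}.

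The main obstacle lies in parts (a) and (b): one must justify that the derived tensor product agrees with the naive one when one factor is quasi-isomorphic to a Yoneda/representable module, and that the choice of quasi-isomorphism $\phi_X$ can be made natural in $X$ so that the resulting identification commutes with the structure maps defining the pseudo-composition. Both points can be handled by first replacing each $F_i$ by a bi-module whose value $M^\bullet_{F_i}(X_i,-)$ is literally semi-free for every $X_i$; this is achieved uniformly in $X_i$ via the semi-free resolution of $M^\bullet_{F_i}$ as an object of $\bC_i^{op}\times\bC_{i+1}\mod$ (cf.\ \eqref{semi free}), after which the iterated tensor product collapses strictly to the iterated evaluation, and (b) becomes transparent from \secref{comp as colim}.
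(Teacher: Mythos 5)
The paper states this lemma without proof, treating it as a routine verification; your argument fills in exactly the expected reasoning. The core computation — reduce to $n=2$, use that $M^\bullet_{F_1}(X,-)$ is quasi-isomorphic to a representable $\bC_2$-module $h_{F_1(X)}$, note that representables are free (hence homotopy-flat) so that the derived tensor $M^\bullet_{F_2}(-,Z)\otimes^L_{\bC_2} h_{F_1(X)}$ collapses by co-Yoneda to the evaluation $M^\bullet_{F_2}(F_1(X),Z)$, and then invoke the homotopy-functor property of $F_2$ — is correct and is the argument the paper intends. Your step (b) (that the universal natural transformation of \lemref{lem pseudo comp} induces this identification on $\Ho$) is a little compressed, but since the induced map $\Ho(F_n\circ\cdots\circ F_1)\Rightarrow\Ho(F_n)\circ\cdots\circ\Ho(F_1)$ is checked to be a pointwise isomorphism, the naturality-in-$X$ issue you raise is not a genuine obstruction: one does not need a coherent family of quasi-isomorphisms $\phi_X$, only the pointwise statement. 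The remark about replacing each $M^\bullet_{F_i}$ by a semi-free bi-module resolution is a clean way to make the identifications strict if one wants to be extra careful, but it is not strictly necessary for the conclusion.
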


\sssec{}

We shall view (essentially small) DG categories as a 2-category with 
objects being DG categories and 1-morphisms being the categories 
$\on{HFunct}(\bC_1,\bC_2)$. We shall
denote this 2-category by $\DGCat$.

\ssec{Karoubization}   \label{Karoubian}

Let $\bC$ be a DG category. A homotopy Karoubian envelope
of $\bC$ is a pair $(\bC',F)$, where $\bC'$ is another DG category
equipped with a homotopy functor $F:\bC\to \bC'$, such that $\Ho(F)$
is fully faithful and makes $\Ho(\bC')$ into the Karoubian envelope 
of $\Ho(\bC)$, i.e., $\Ho(\bC')$ contains images of all projectors,
and every object of $\Ho(\bC')$ is isomorphic
to a direct summand of an object of the form $\Ho(F)(X)$,
$X\in \Ho(\bC)$.

\medskip

By \cite{BeilVol}, 1.6.2, a homotopy Karoubian envelope of
$\bC$ is well-defined as an object of $\DGCat$. We shall denote
it by $\bC^{Kar}$. Here is an
explicit construction:

\begin{lem}(\cite{BeilVol}, 1.4.2)  \label{compact dir summand}
For a DG category $\bC$, any compact object in
$\Ho(\ua\bC)$ is isomorphic to a direct summand
of an object of $\Ho(\bC)$.
\end{lem}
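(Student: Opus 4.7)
The plan is to exhibit any compact object of $\Ho(\ua\bC)$ as a retract of a ``finite semi-free'' module, which in turn will be identified with an object of $\Ho(\bC)$ via the pre-triangulated structure.

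First, I would invoke the equivalence of \secref{semi free, constr} to represent a given compact object $X \in \Ho(\ua\bC)$ by a semi-free module of the form $\{\underset{k \geq 0}\oplus\, X_k, \Phi\}$, with each $X_k$ a (possibly infinite) direct sum of representable modules $\Hom^\bullet(?,Y_\alpha)$ with $Y_\alpha \in \bC$. Because the matrix $\Phi$ is strictly triangular, applying $\Phi$ to any individual summand $Y_\alpha$ (sitting in some $X_k$) produces a contribution in $\underset{j<k}\oplus\, X_j$ involving only finitely many other $Y_\beta$'s. Consequently, for any finite collection $S$ of generators $Y_\alpha$, the closure $\bar S$ of $S$ under $\Phi$ is still finite, and determines a sub-semi-free module $Y_{\bar S} \subset X$ whose underlying graded data is a finite direct sum of representables.

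Next, I would observe that each such $Y_{\bar S}$ can be built up as a finite iterated cone of shifts of the finitely many $Y_\alpha$'s involved, using only the pre-triangulated structure of $\bC$. Hence $Y_{\bar S}$ lies in the essential image of the Yoneda embedding $\Ho(\bC)\hookrightarrow \Ho(\ua\bC)$; in particular, it is isomorphic in $\Ho(\ua\bC)$ to an object of $\Ho(\bC)$. The system $\{Y_{\bar S}\}$, indexed by finite subsets $S$ of generators, is filtered, and one has $X \simeq \underset{S}{\on{hocolim}}\, Y_{\bar S}$ in $\Ho(\ua\bC)$, computable via the telescope triangle $\underset{S}\oplus\, Y_{\bar S} \to \underset{S}\oplus\, Y_{\bar S} \to X \to$.

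Finally, I would apply compactness of $X$: the canonical map
$$\underset{S}{\on{colim}}\, \Hom_{\Ho(\ua\bC)}(X, Y_{\bar S}) \to
\Hom_{\Ho(\ua\bC)}(X, \underset{S}{\on{hocolim}}\, Y_{\bar S}) = \Hom_{\Ho(\ua\bC)}(X, X)$$
is surjective, so $\id_X$ lifts to a morphism $X \to Y_{\bar S}$ for some $S$ whose composition with the structure map $Y_{\bar S}\to X$ is the identity. This gives the desired retraction, exhibiting $X$ as a direct summand of an object of $\Ho(\bC)$.

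The main obstacle is the second step: verifying cleanly that the ``finite'' sub-objects $Y_{\bar S}$ are genuinely isomorphic, in $\Ho(\ua\bC)$, to objects of $\Ho(\bC)$ rather than merely to finite iterated extensions of shifts of representables in $\ua\bC$. This uses the pre-triangulated hypothesis on $\bC$ in an essential way (so that cones of closed degree-zero morphisms stay within $\bC$), together with a small induction on the number of generators in $S$. Once this reduction is in place, the compactness argument for the factorization is formal.
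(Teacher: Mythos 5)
Your argument is correct and follows the standard strategy of the cited reference \cite{BeilVol}, 1.4.2: represent the compact object by a semi-free module, exhaust it by a filtered system of ``finite'' sub-semi-free modules (each generated by a finite $\Phi$-closed set of generators, the closure being finite because $\Phi$ is strictly triangular and each generator's image meets only finitely many summands), identify each such finite stage with an object of $\Ho(\bC)$ by induction on level via cones and finite direct sums, and then use compactness to split $\id_X$ through some finite stage. The paper gives no independent proof of this lemma, but your argument is essentially the one the citation supplies.

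One small imprecision: the ``telescope triangle'' you write down applies to sequential homotopy colimits, whereas the indexing poset of finite $\Phi$-closed subsets is filtered but need not have a countable cofinal chain. This is harmless, since the splitting step only needs the isomorphism $\underset{S}{\on{colim}}\,\Hom(X,Y_{\bar S})\xrightarrow{\ \sim\ }\Hom(X,X)$, which is exactly \lemref{compact and limit} applied to a general filtered index category (or, equivalently, the identification of the strict colimit of a filtered system of cofibrations of semi-free modules with the homotopy colimit). It would be cleaner to invoke that directly rather than a telescope.
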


Thus, $\bC^{Kar}$ can be defined as the preimage in
$\ua\bC$ of the subcategory $\Ho(\ua\bC)^c\subset
\Ho(\bC)$ consisting of compact objects.

\ssec{Quotients}   \label{DG quotients}

Let $\bC$ be a DG category, and $\bC'$ a full DG subcategory. Following
\cite{Dr}, Sect. 4.9, one defines an object of $\DGCat$, denoted $\bC/\bC'$,
equipped with a 1-morphism $p_{can}:\bC\to \bC/\bC'$, such that
the induced functor $\Ho(p_{can}):\Ho(\bC)\to \Ho(\bC/\bC')$ identifies
$\Ho(\bC/\bC')$ with the quotient of $\Ho(\bC)$ by the triangulated
subcategory $\Ho(\bC')$. Moreover, the pair 
$(\bC/\bC',p_{can})$ satisfies a natural
universal property of \cite{Dr}, Theorem 1.6.2, see also
\secref{univ ppty quotient} below.

\sssec{}   \label{realize quotient}

Here is a concrete construction of $\bC/\bC'$. Consider the triangulated
category $\Ho(\ua\bC')$, which is a full subcategory of $\Ho(\ua\bC)$. 
Let $\Ho(\ua\bC')^\perp\subset \Ho(\ua\bC)$ be its right orthogonal.
By \cite{Dr}, Proposition 4.7, the subcategory $\Ho(\ua\bC')\subset \Ho(\ua\bC)$
is right-admissible, i.e., the tautological functor
$$\Ho(\ua\bC')^\perp\to \Ho(\ua\bC)/\Ho(\ua\bC')$$ is an equivalence.

\medskip

Let $\ua\bC'{}^\perp\subset \ua\bC$ be the DG subcategory equal
to the preimage of $\Ho(\ua\bC')^\perp$. We let $\bC/\bC'$ be the
full subcategory of $\ua\bC'{}^\perp$ consisting of objects $Y$, such that
their image in the homotopy category 
$$\Ho(\ua\bC'{}^\perp)\simeq \Ho(\ua\bC')^\perp\simeq
\Ho(\ua\bC)/\Ho(\ua\bC')$$ has the property that it is isomorphic to
the image of an object $Y'\in \Ho(\bC)$ under 
$$\Ho(\bC)\to \Ho(\ua\bC)\to \Ho(\ua\bC)/\Ho(\ua\bC').$$

\medskip

The homotopy functor $p_{can}$ is defined tautologically: for $X\in \bC$
and $Y\in \bC/\bC'\subset \ua\bC'{}^\perp\subset \ua\bC$  we let
$$M^\bullet_{p_{can}}(X,Y)=\Hom^\bullet_{\ua\bC}(X,Y).$$

\sssec{}

A part of the universal property of $\bC/\bC'$ is the following
assertion (see \cite{Dr}, Proposition 4.7):

\begin{prop}
The functor $$(p_{can})^*:\bD(\ua\bC^{op}\mod)\simeq
\Ho(\ua\bC)\to \Ho(\ua{\bC/\bC'})\simeq \bD((\bC/\bC')^{op}\mod)$$
induces an equivalence
$$\Ho(\ua\bC)/\Ho(\ua\bC')\simeq \Ho(\ua{\bC/\bC'})$$
and the functor 
$(p_{can})_*:\Ho(\ua{\bC/\bC'})\simeq \bD((\bC/\bC')^{op}\mod)\to
\bD(\ua\bC^{op}\mod)\simeq \Ho(\ua\bC)$
induces an equivalence
$$\Ho(\ua{\bC/\bC'})\simeq \Ho(\ua\bC')^\perp.$$
\end{prop}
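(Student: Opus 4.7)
The plan is to deduce both equivalences from the right-admissibility of $\Ho(\ua\bC')\subset \Ho(\ua\bC)$ (cited from \cite{Dr}, Proposition 4.7, and already invoked in the construction of $\bC/\bC'$) combined with a compact-generators principle for DG-enhanced triangulated categories.

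First, I would record the formal consequences of right-admissibility: the Verdier quotient functor $q:\Ho(\ua\bC)\to \Ho(\ua\bC)/\Ho(\ua\bC')$ admits a fully faithful right adjoint identifying its target with $\Ho(\ua\bC')^\perp$. Because $\Ho(\ua\bC')$ is compactly generated by the image of $\bC'$, the inclusion $\Ho(\ua\bC')\hookrightarrow \Ho(\ua\bC)$ preserves coproducts, hence so does its right adjoint, and therefore $q$ preserves compactness. Since $\bC$ is a set of compact generators of $\Ho(\ua\bC)$, it follows that $\{q(X):X\in\bC\}$ is a set of compact generators of the quotient.

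Second, I would unwind the construction of $\bC/\bC'$ as the full DG subcategory of $\ua\bC'{}^\perp$ whose objects are those $Y$ whose class in $\Ho(\ua\bC)/\Ho(\ua\bC')\simeq \Ho(\ua\bC')^\perp$ is isomorphic to some $q(X)$, $X\in \bC$. The key technical point to check is that the DG enrichment on $\bC/\bC'$, namely $\Hom^\bullet_{\ua\bC}$, already computes the derived $\Hom$ in the quotient: this is immediate from the fact that for $Y\in \ua\bC'{}^\perp$ one has $\RHom_{\ua\bC}(\bC',Y)=0$, so the chain-level morphisms in $\ua\bC$ see no correction from passing to the Verdier quotient. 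Consequently $\bC/\bC'$ is an essentially small DG subcategory of compact generators of $\Ho(\ua\bC')^\perp$ carrying the correct DG enhancement.

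Third, I would invoke the standard DG compact-generation principle: if a co-complete DG-enhanced triangulated category $\bD$ is compactly generated by the essential image of a small full DG subcategory $\bE\hookrightarrow \bD$ whose enrichment restricts from that of $\bD$, then the canonical colimit extension $\Ho(\ua\bE)\to \bD$ is a quasi-equivalence with quasi-inverse given by restriction of $\Hom$. Applied to $\bE=\bC/\bC'$ and $\bD=\Ho(\ua\bC')^\perp$, this yields both claimed equivalences: the extension $\Ho(\ua{\bC/\bC'})\to \Ho(\ua\bC)/\Ho(\ua\bC')$ is, by matching the universal properties, the functor induced by $(p_{can})^*$ (which automatically factors through the quotient, since the targets in $\bC/\bC'$ are right-orthogonal to $\Ho(\ua\bC')$), while the restriction $\Ho(\ua\bC')^\perp\to \Ho(\ua{\bC/\bC'})$ is identified with $(p_{can})_*$ after composing with $\Ho(\ua\bC')^\perp\simeq \Ho(\ua\bC)/\Ho(\ua\bC')$. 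The main obstacle is this last compact-generation principle in its DG-enhanced form: in the purely triangulated setting it is classical (Neeman--Ravenel), but the DG upgrade — that the extension is a genuine quasi-equivalence rather than only an equivalence on homotopy categories — requires a semi-free resolution argument which I would cite from \cite{Dr} rather than reproduce.
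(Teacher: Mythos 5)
The paper does not prove this proposition; it simply cites Drinfeld's Proposition 4.7 of \cite{Dr}. So your argument must stand on its own. The overall strategy — right-admissibility, identification with the orthogonal, and then a DG compact-generation recognition principle applied to $\bC/\bC'$ — is sound and is essentially the standard proof in the Drinfeld/Keller/Neeman circle of ideas.

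However, there is a genuine logical slip in your first paragraph: the inference ``the inclusion $\Ho(\ua\bC')\hookrightarrow \Ho(\ua\bC)$ preserves coproducts, hence so does its right adjoint'' is a non-sequitur. A left adjoint preserving coproducts says nothing about its right adjoint preserving coproducts; indeed that implication would fail in general. What you actually need, and what is true here, is the following chain. Since $\bC'\subset \bC$, every object of $\bC'$ is compact in $\Ho(\ua\bC)$; therefore the right orthogonal $\Ho(\ua\bC')^\perp$ is closed under coproducts in $\Ho(\ua\bC)$ (if $\Hom(X'[n],Y_\alpha)=0$ for all $\alpha$ and $X'\in\bC'$, then $\Hom(X'[n],\oplus_\alpha Y_\alpha)=0$ by compactness of $X'$); therefore the fully faithful right adjoint $q^!$ of $q$, whose essential image is $\Ho(\ua\bC')^\perp$, preserves coproducts; therefore $q$ preserves compactness. (Equivalently, you may cite Neeman's localization theorem directly: in a localization sequence of compactly generated categories in which the compact generators of the subcategory remain compact in the ambient category, the quotient functor preserves compacts.) Once this step is repaired, the remainder of the argument — that the DG Homs in $\bC/\bC'\subset \ua\bC'{}^\perp$ already compute Homs in the Verdier quotient by orthogonality, and that the colimit extension from a full DG subcategory of compact generators is a quasi-equivalence — is correct.
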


\sssec{}  \label{weak to quotient}

The following construction will be useful in the sequel. Let $\bC_1$ and
$\bC_2$ be DG categories, and $F:\bC_1\to \bC_2$ a DG pseudo functor.
Let $\bC'_1\subset \bC_1$ and $\bC'_2\subset \bC_2$ be DG 
subcategories. Assume that the following holds:

For any $X\in \bC_1$ the functor on
$\Ho(\bC_2)/\Ho(\bC'_2)$ given by
$$Y \mapsto \underset{f:Y\to Y'}{colim}\, 
H^0\left(M_F(X,Y')\right),$$
is co-representable, where the colimit is taken over the set
of morphisms $f$ with $\on{Cone}(f)\in \bC'_2$. Assume
also that the above functor is zero for $X\in \bC'_1$.

\medskip

We claim that the above data gives rise to a 1-morphism
$F':\bC_1/\bC'_1\to \bC_2/\bC'_2$. Namely, let us realize
the above categories as in \secref{realize quotient}
as full subcategories of $\ua{\bC'_i}^\perp$, $i=1,2$,
respectively. We define the sought-for quasi-functor by setting
$$M_{F'}(X,Y):=M_F(X,Y)$$
for $X\in \bC_1/\bC'_1\subset \ua{\bC'_1}^\perp,
Y\in \bC_2/\bC'_2\subset \ua{\bC'_2}^\perp$.

The required co-representability on the homotopy level
follows from the assumptions.

\ssec{DG models of triangulated categories}

Let $\TrCat$ be the 2-category of triangulated categories. We have
an evident 2-functor $\Ho:\DGCat\to \TrCat$ that sends a DG category
$\bC$ to $\Ho(\bC)$.

\medskip

Given a triangulated category $\bD$, its DG model is an object 
of the 2-category $\DGCat$ equal to the fiber of $\Ho$ over $\bD$. Similarly,
given an arrow in $\TrCat$ (i.e., a triangulated functor 
$F_{tr}:\bD_1\to \bD_2$) by a model of $F_{tr}$ we shall mean
a lift of this functor to $\DGCat$ (i.e., if $\bD_i=\Ho(\bC_i)$, then
a model for $F_{tr}$ is a homotopy functor $F_h:\bC_1\to \bC_2$).

\sssec{}   \label{big and small, triang}

Let $\bD$ be a triangulated category, equipped with a DG model.
In this case, we can form a new triangulated category $\ua\bD$
and a fully faithful triangulated functor $\bD\to \ua\bD$, both
equipped with models, such that the pair $(\bD,\ua\bD)$
satisfies the three properties of \secref{big and small}.
Namely, if $\bD=\Ho(\bC)$, we set $\ua\bD:=\Ho(\ua\bC)$. 

We will informally call $\ua\bD$ the "ind-completion" of $\bD$.

\medskip

If $F:\bD_1\to \bD_2$ are triangulated categories and a functor
between them, all equipped with DG models, we have the corresponding
functors
$$F^*,F_*:\ua\bD{}_1\leftrightarrows \ua\bD{}_2,$$
also equipped with models.

\medskip

Similarly, the Karoubian envelope $\bD^{Kar}$ of $\bD$ and the
functor $\bD\to  \bD^{Kar}$ are both equipped with models.

\sssec{}  \label{rigid quotients}

Let $\bD$ be a triangulated category equipped with a model,
and let $\bD'\subset \bD$ be a full triangulated subcategory.
Note that $\bD'$ is also naturally equipped with a model. 

Indeed, if $\bD=\Ho(\bC)$, we define $\bC'\subset \bC$
to be the full subcategory consisting of objects, whose
image in $\bD$ is isomorphic to an object from $\bD'$.

\medskip

In this case, by \secref{DG quotients}, the triangulated category
$\bD/\bD'$ and the projection functor $\bD\to \bD/\bD'$ also
come equipped with models.

\sssec{}  \label{univ ppty quotient}

Let $\bD$ and $\bD'$ be as above, and let $\bD_1$ be yet
another triangulated category equipped with a model. The following
is a version of \cite{Dr}, Theorem 1.6.2:

\begin{lem}
For $\bD'\subset \bD$ and $\bD_1$ in $\DGCat$
the following two categories are equivalent:

\medskip

\noindent(a)
1-morphisms $F:\bD\to \bD_1$ in $\DGCat$, such that $F|_{\bD'}=0$.

\medskip

\noindent(b) 1-morphisms $\bD/\bD'\to \bD_1$ in $\DGCat$.

\end{lem}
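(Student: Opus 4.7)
The plan is to reduce the assertion to the bi-module picture developed in the excerpt and then invoke the properties of the DG quotient already recorded (\cite{Dr}, Proposition 4.7, as summarized just before the lemma). First, fix DG models $\bC,\bC_1$ for $\bD,\bD_1$ with $\bC'\subset \bC$ the DG subcategory constructed in \secref{rigid quotients} modelling $\bD'$, so that $\bC/\bC'$ provides a model for $\bD/\bD'$. By definition, the category of $1$-morphisms $\bD\to \bD_1$ (resp., $\bD/\bD'\to \bD_1$) in $\DGCat$ is the full subcategory of $\on{PFunct}^{\Ho}(\bC,\bC_1)$ (resp., $\on{PFunct}^{\Ho}(\bC/\bC',\bC_1)$) consisting of homotopy functors, i.e., the full subcategories of $\bD(\bC^{op}\times\bC_1\mod)^{op}$ (resp., $\bD((\bC/\bC')^{op}\times\bC_1\mod)^{op}$) consisting of bi-modules that are co-representable when fixing the first variable.

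The key step is to apply the equivalence $(p_{can})^*\colon \Ho(\ua\bC)/\Ho(\ua\bC') \simeq \Ho(\ua{\bC/\bC'})$ from the proposition recalled just above the statement, "fibered over $\bC_1$". Concretely, viewing an object of $\bD(\bC^{op}\times\bC_1\mod)$ as a functor $\bC_1\to \bD(\bC^{op}\mod)\simeq \Ho(\ua\bC)$ (and similarly with $\bC$ replaced by $\bC/\bC'$), the pull-back functor
\[
(p_{can}\otimes \on{id}_{\bC_1})^*\colon \bD((\bC/\bC')^{op}\times\bC_1\mod)\to \bD(\bC^{op}\times\bC_1\mod)
\]
is fully faithful with essential image the full subcategory of bi-modules whose restriction to $\bC'{}^{op}\times\bC_1$ is acyclic. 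This translates precisely into saying: $1$-morphisms $\bD/\bD'\to \bD_1$ in $\DGCat$ correspond to $1$-morphisms $\bD\to\bD_1$ that vanish on $\bD'$, which is the statement we want, provided we check that the homotopy-functor (co-representability) condition matches on both sides.

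The last compatibility is the only nontrivial point, but it follows directly from the description of the bi-module $M^\bullet_F$ associated to a homotopy functor $F\colon \bD\to \bD_1$: co-representability of $M^\bullet_F(X,-)\in \bD(\bC_1\mod)$ is a pointwise condition in $X$. Under the equivalence $(p_{can})^*$, an object $X\in \bC/\bC'$ (realized as an object of $\ua\bC'{}^\perp\subset \ua\bC$) is sent to itself, and the identification $\Hom^\bullet_{\ua\bC}(X,-)\simeq \Hom^\bullet_{\ua{\bC/\bC'}}(p_{can}(X),-)$ matches the bi-modules representing $F$ and $F\circ p_{can}$ on the nose. Hence, $F$ satisfies the co-representability condition on $\bC/\bC'$ iff $F\circ p_{can}$ satisfies it on $\bC$, and the vanishing $F|_{\bD'}=0$ corresponds to the acyclicity condition which cuts out the essential image of $(p_{can}\otimes\on{id})^*$.

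The main obstacle — though it is largely bookkeeping — is to align the definition of $1$-morphisms in $\DGCat$ (as the homotopy category of homotopy functors) with the bi-module description, and in particular to identify the category of natural transformations on both sides; this uses \secref{sub and nat} applied to the full subcategory of homotopy functors inside $\on{PFunct}^{\Ho}$. Once this is carried out, the equivalence of (a) and (b) is a direct corollary of the equivalence $\Ho(\ua\bC)/\Ho(\ua\bC')\simeq \Ho(\ua{\bC/\bC'})$ applied with parameters in $\bC_1$.
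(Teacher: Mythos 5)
The paper does not supply its own proof of this lemma: it states the lemma and attributes it to \cite{Dr}, Theorem 1.6.2, after having recorded (just above the lemma) the two properties of the DG quotient that such a proof must use. Your proposal reconstructs a proof along precisely the lines of Drinfeld's argument — passing to the bi-module picture, using the recalled fact that $(p_{can})_*$ embeds $\Ho(\ua{\bC/\bC'})$ fully faithfully onto the right orthogonal $\Ho(\ua\bC')^\perp\subset\Ho(\ua\bC)$, and then transporting the co-representability (homotopy-functor) condition through the equivalence. In outline this is correct and is essentially how the cited result is established.

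Two points of care. First, a notational mismatch: in the conventions of \secref{lower star}, for a (pseudo)functor $F$ the functor "restriction of scalars along $F$" is denoted $F_*$, while $F^*$ is the left adjoint (extension). The functor you call $(p_{can}\otimes\on{id}_{\bC_1})^*\colon \bD((\bC/\bC')^{op}\times\bC_1\mod)\to\bD(\bC^{op}\times\bC_1\mod)$ therefore ought to be written $(p_{can}\otimes\on{id}_{\bC_1})_*$ in the paper's notation; as written, it clashes with the $(p_{can})^*$ and $(p_{can})_*$ that you invoke from the proposition immediately preceding the lemma. Second, the step "apply the one-variable statement with parameters in $\bC_1$" — i.e.\ promoting the fully faithfulness of $(p_{can})_*$ on module categories to fully faithfulness on bi-module categories together with the identification of the essential image — is the one place where real work happens, and you assert it rather than argue it. It does hold, e.g.\ by observing that $\bD(\bC^{op}\times\bC_1\mod)\simeq\Ho(\ua(\bC\otimes\bC_1^{op}))$ and $\bD((\bC/\bC')^{op}\times\bC_1\mod)\simeq\Ho(\ua((\bC/\bC')\otimes\bC_1^{op}))$ and checking that the right orthogonal to $\Ho(\ua(\bC'\otimes\bC_1^{op}))$ inside $\Ho(\ua(\bC\otimes\bC_1^{op}))$ is cut out by the pointwise acyclicity condition (the generators of $\Ho(\ua(\bC'\otimes\bC_1^{op}))$ are the representable objects $X'\otimes Y_1$ with $X'\in\bC'$, $Y_1\in\bC_1$, and Yoneda reduces the orthogonality to pointwise acyclicity); spelling this out would close the gap. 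Finally, a cosmetic remark: your concluding sentence credits the equivalence $\Ho(\ua\bC)/\Ho(\ua\bC')\simeq\Ho(\ua{\bC/\bC'})$, but the half of the proposition you actually use is the identification of $(p_{can})_*$ with the embedding of $\Ho(\ua\bC')^\perp$; the two are equivalent by admissibility, but the latter is what directly controls the essential image.
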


\ssec{Homotopy colimits}   \label{hocolim}

Let $\bD$ be a co-complete triangulated category equipped with a model.
In this subsection we will review the notion of homotopy colimit, following
a recipe, explained to us by J.~Lurie. 

\sssec{}   \label{simple colim}

Let us first consider the simplest case of a sequence of
objects $X_i\overset{f_{i,i+1}}\to X_{i+1}, i=1,2,...$.
In this case we define $hocolim(\{X_i\})$ as the cone of the 
map $\underset{i\geq 1}\oplus \, X_i\to \underset{i\geq 1}\oplus \, X_i$,
where the map is 
$$X_i\overset{\on{id}_{X_i}\oplus -f_{i,i+1}}\longrightarrow X_i\oplus X_{i+1}.$$

In this definition, $hocolim(\{X_i\})$ is defined up to a non-canonical
isomorphism, and one does not even need a DG model.

\medskip

If the DG model $\bC$ of $\bD$ was itself co-complete (which we can
assume, up to replacing a given model by a quasi-equivalent one),
and if we lift the morphisms $f_{i,i+1}$ to closed morphisms of degree $0$ 
in the DG model, the above construction becomes canonical. 

\sssec{}  \label{map from ua}
 
The above homotopy colimit construction implies also the following.
Let $\{\underset{k\geq 0}\oplus\, X_k,\Phi\}$ be an object of
$\ua\bC$. Consider the $\bC$-module given by
$$X\mapsto \underset{k}\Pi\, \Hom^\bullet(X_k,X),$$
with the differential given by $\Phi$.

\begin{lem}
Assume that $\Ho(\bC)$ is co-complete. Then the image of the
above $\bC$-module in $\bD(\bC\mod)$ is in the Yoneda
image of $\Ho(\bC^{op})$. 
\end{lem}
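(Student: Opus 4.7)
The idea is to realize the module as the Yoneda image of a telescope of finite truncations. Write $Y^{(n)} := \{X_0 \oplus \cdots \oplus X_n, \Phi|_{\leq n}\}$ for the truncation to indices $\leq n$. Because $\Phi$ is strictly upper-triangular, $Y^{(n+1)}$ is obtained from $Y^{(n)}$ as the cone of a closed degree-zero map $X_{n+1}[-1] \to Y^{(n)}$ whose components are the relevant entries of $\Phi$, so by pre-triangulatedness of $\bC$ each $Y^{(n)}$ lifts to an object of $\bC$, and the summand inclusions $Y^{(n)} \hookrightarrow Y^{(n+1)}$ provide an honest sequence $Y^{(0)} \to Y^{(1)} \to \cdots$ in $\bC$. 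Replacing the given DG model by a quasi-equivalent co-complete one (as allowed in \secref{simple colim}), I may form $Y_\infty := \on{hocolim}_n\, Y^{(n)}$ as an actual object of $\bC$ via the telescope cone construction.

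Next, I would show that the covariant Yoneda module $X \mapsto \Hom^\bullet(Y_\infty, X)$ is isomorphic, in $\bD(\bC\mod)$, to the module $X\mapsto \underset{k}\Pi\, \Hom^\bullet(X_k,X)$ described in the lemma. Applying $\Hom^\bullet(-,X)$ to the telescope triangle defining $Y_\infty$ yields a triangle
\[
\Hom^\bullet(Y_\infty, X) \to \underset{n}\Pi\, \Hom^\bullet(Y^{(n)}, X) \xrightarrow{\on{id} - \on{res}} \underset{n}\Pi\, \Hom^\bullet(Y^{(n)}, X).
\]
By the iterated-cone presentation of $Y^{(n)}$, one identifies $\Hom^\bullet(Y^{(n)}, X) = \underset{k\leq n}\oplus\, \Hom^\bullet(X_k, X)$ as a graded vector space, with total differential $d + \Phi|_{\leq n}$, and the restriction maps are the obvious projections onto summands with smaller index range. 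These restrictions are termwise surjective, so $\lim^1$ vanishes and the above fiber is the honest inverse limit, which is $\underset{k}\Pi\, \Hom^\bullet(X_k, X)$ equipped with the total differential $d + \Phi$. That is exactly the module we started with, and the identification is natural in $X$, giving the required isomorphism in $\bC\mod$.

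\textbf{Main obstacle.} The routine but delicate point is the bookkeeping of differentials and signs. On the semi-free side, $\Phi$ enters via the condition $2d(\Phi)+\Phi\circ\Phi = 0$ and is implemented as a twist on the direct-sum module; on the geometric side, the same $\Phi$ is realized by choosing closed lifts of the connecting maps $X_{n+1}[-1] \to Y^{(n)}$ used to build $Y^{(n+1)}$ as a mapping cone. One must verify that these two incarnations of $\Phi$ produce the same differential on $\underset{k\leq n}\oplus \Hom^\bullet(X_k, X)$, so that the comparison of the telescope with the naive product-with-twisted-differential is genuinely an equality and not merely an isomorphism up to a subtle reindexing. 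Once this alignment is in place, the surjectivity of the restriction maps makes the $\lim^1$-term disappear and the conclusion follows directly.
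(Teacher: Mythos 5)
Your proof is correct; the paper itself gives no argument for this lemma, so there is nothing to compare against. The mapping telescope in a co-complete DG model, together with the observation that the restriction maps $\Hom^\bullet(Y^{(n+1)},X)\to \Hom^\bullet(Y^{(n)},X)$ are degreewise surjective (hence the fiber of the telescope sequence is the honest inverse limit $\underset{k}\Pi\,\Hom^\bullet(X_k,X)$ with no $\lim^1$ correction), is exactly the expected argument. The sign/differential "obstacle" you flag is in fact automatic: once the connecting map $X_{n+1}[-1]\to Y^{(n)}$ is taken to be the one given by the column $(\phi_{n+1,j})_{j\le n}$ of $\Phi$, the cone construction produces on $\Hom^\bullet(Y^{(n+1)},X)=\underset{k\le n+1}\oplus\,\Hom^\bullet(X_k,X)$ precisely the differential $d+\Phi|_{\le n+1}$, so no separate comparison is required. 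The only step you might spell out a bit more is the reduction to a co-complete model: after passing to a quasi-equivalent co-complete $\bC'$, one transports the conclusion back via the induced equivalence $\bD(\bC'\mod)\simeq\bD(\bC\mod)$, which carries the Yoneda image of $\Ho(\bC'^{op})$ onto that of $\Ho(\bC^{op})$ and identifies the two twisted-product modules (up to quasi-isomorphism of $\bC$-modules, using that over the ground field an infinite product of quasi-isomorphisms is a quasi-isomorphism).
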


Thus, for $\bC$ with $\Ho(\bC)$ co-complete, we obtain that the identity
functor on $\Ho(\bC)$ naturally extends to a functor 
$$\bD(\bC^{op}\mod)\simeq \Ho(\ua\bC)\to \Ho(\bC)$$
that commutes with direct sums, and which is the left adjoint
to the tautological embedding.

\sssec{}

In the general case we proceed as follows. Let $I$ be a small category.
Let $I^{DG}$ be the free (non-pretriangulated) DG category, spanned
by $I$. Consider the DG category 
$$\on{POb}(I,\bC):=\on{PFunct}(I^{op}{}^{DG},\bC^{op})^{op}:=
\bD((I \times \bC^{op})\mod),$$ 
and the corresponding triangulated categories
$$\on{HOb}(I,\bC):=\on{HFunct}(I^{op}{}^{DG},\bC^{op})^{op}\subset \on{PFunct}^{\Ho}(I^{op}{}^{DG},\bC^{op})^{op}.$$

\medskip

By definition, a homotopy $I$-object of a DG category $\bC$ is
an object $X_I\in \on{HOb}(I,\bC)$. For $X_I$ as above and $i\in I$
we will denote by $X_i$ be the corresponding object of $\Ho(\bC)$.

Being a full triangulated subcategory of $\bD((I\times \bC^{op})\mod)$,
the category $\on{HOb}(I,\bC)$ acquires a natural DG model, by
\secref{rigid quotients}.

\sssec{}

For a functor $F:I_1\to I_2$ we have a pair of adjoint functors
$$(F\times \on{Id})^*:\on{POb}(I_1,\bC)\leftrightarrows
\on{POb}(I_2,\bC):(F\times \on{Id})_*,$$
and it is easy to see that $(F\times \on{Id})_*$ sends 
$\on{HOb}(I_2,\bC)$ to $\on{HOb}(I_1,\bC)$.

\medskip

Applying this to $I_1=I$ and $I_2=\on{pt}$ we recover the tautological
functor $\bC\to \on{HOb}(I,\bC)$, and its left adjoint with values
in $\Ho(\ua\bC)\simeq \on{POb}(\on{pt},\bC)$.
We denote the latter functor
$$\on{HOb}(I,\bC)\to \Ho(\ua\bC)$$
by $\underset{I}{"hocolim"}$.

\medskip

Assume now that $\Ho(\bC)$ is co-complete. We define the functor
$$\underset{I}{hocolim}:\on{HOb}(I,\bC)\to \Ho(\bC)$$
as the composition of $\underset{I}{"hocolim"}$ and the functor 
$\Ho(\ua\bC)\to \Ho(\bC)$ of \secref{map from ua}.

\medskip

By construction, $hocolim$ is the left adjoint to the above
functor $\bC\to \on{HOb}(I,\bC)$.

\sssec{}

Here are some basic properties of the homotopy colimit
construction.

\medskip

Let $\Psi:\bC_1\to \bC_2$ be a 1-morphism in $\DGCat$, such
that $\Ho(\bC_1)$ and $\Ho(\bC_2)$ are both co-complete.

\begin{lem}  \label{functor and limit}
Assume that 
$\Ho(\Psi):\Ho(\bC_1)\to \Ho(\bC_2)$ 
commutes with direct sums. Then for any $I$ the diagram
of functors
$$
\CD
\on{HOb}(I,\bC_1)   @>{\Psi\circ ?}>>
\on{HOb}(I,\bC_2)   \\
@V{hocolim}VV     @V{hocolim}VV   \\
\Ho(\bC_1) @>{\Psi}>> \Ho(\bC_2)
\endCD
$$
commutes.
\end{lem}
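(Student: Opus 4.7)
The plan is to construct a natural transformation $\alpha: hocolim \circ (\Psi \circ ?) \to \Psi \circ hocolim$ of functors $\on{HOb}(I,\bC_1) \to \Ho(\bC_2)$ and then verify that it is an isomorphism by reducing to a class of "free" generators where the comparison is tautological.

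First I would construct $\alpha$. By the construction recalled just above, $hocolim: \on{HOb}(I,\bC_i) \to \Ho(\bC_i)$ is left adjoint to the constant-diagram functor $\on{const}_I: \bC_i \to \on{HOb}(I,\bC_i)$. For $X_I \in \on{HOb}(I,\bC_1)$ the unit of this adjunction is a morphism $X_I \to \on{const}_I(hocolim(X_I))$ in $\on{HOb}(I,\bC_1)$. Since constant diagrams are manifestly preserved by post-composition with $\Psi$ (i.e.\ $\Psi \circ \on{const}_I(Z) \simeq \on{const}_I(\Psi(Z))$), applying $\Psi \circ ?$ yields a morphism $\Psi \circ X_I \to \on{const}_I(\Psi(hocolim(X_I)))$ in $\on{HOb}(I,\bC_2)$, and the universal property of $hocolim(\Psi \circ X_I)$ provides the desired $\alpha_{X_I}: hocolim(\Psi \circ X_I) \to \Psi(hocolim(X_I))$, clearly natural in $X_I$.

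Second, I would test $\alpha$ on the "free" $I$-objects $L_{i_0}(X)$ attached to $i_0 \in I$ and $X \in \bC_1$, characterized by $\Hom_{\on{HOb}(I,\bC_1)}(L_{i_0}(X), Y_I) = \Hom_{\bC_1}(X, Y_{i_0})$. From adjunction one reads off $hocolim(L_{i_0}(X)) \simeq X$. At the DG-model level $L_{i_0}(X)_j$ is a direct sum of copies of $X$ indexed by the morphisms $i_0 \to j$ in $I$ (a construction depending only on the combinatorics of $I$), and since $\Psi$ commutes with direct sums we conclude $\Psi \circ L_{i_0}(X) \simeq L_{i_0}(\Psi(X))$, whence $hocolim(\Psi \circ L_{i_0}(X)) \simeq \Psi(X) \simeq \Psi(hocolim(L_{i_0}(X)))$, and one checks that $\alpha_{L_{i_0}(X)}$ is precisely this identification.

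Third, I would propagate this to arbitrary $X_I$ via a bar-type presentation, writing $X_I$ as the homotopy colimit of the simplicial object $[n] \mapsto \bigoplus_{i_0 \to i_1 \to \cdots \to i_n} L_{i_n}(X_{i_0})$. Both of the functors $X_I \mapsto hocolim(\Psi \circ X_I)$ and $X_I \mapsto \Psi(hocolim(X_I))$ are triangulated and commute with direct sums: the former because $hocolim$ is a left adjoint and because $\Psi \circ ?$ commutes with the levelwise direct sums in $\on{HOb}(I,\bC_i)$, the latter because $hocolim$ is a left adjoint and $\Psi$ commutes with direct sums by hypothesis. Since $\alpha$ is an isomorphism on the building blocks $L_{i_0}(X)$ and both sides commute with the structural direct sums and cones of the bar presentation, $\alpha_{X_I}$ is an isomorphism for every $X_I$. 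The one delicate point where I expect to spend real work is this last step: one must verify that the bar presentation truly computes $X_I$ inside $\on{HOb}(I,\bC_1) \subset \bD((I \times \bC_1^{op})\mod)$ in a manner compatible with the definition of $hocolim$ given in \secref{hocolim}, which reduces to the standard fact that the two-sided bar construction resolves $X_I$ and that the resulting totalization matches the left-adjoint definition of $hocolim$.
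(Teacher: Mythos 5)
The paper states this lemma without proof, so there is nothing to compare against directly; your strategy (construct a natural transformation, check on free objects, and extend by a bar presentation) is a viable route and ultimately correct, but there is a cleaner argument that also locates where the hypothesis is genuinely used.

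The streamlined proof avoids the bar resolution entirely by exploiting the factorization, built into the paper's definition, of $hocolim$ as the composite $\on{HOb}(I,\bC) \hookrightarrow \on{POb}(I,\bC) \xrightarrow{(F\times\Id)^*} \Ho(\ua\bC) \xrightarrow{\on{ret}} \Ho(\bC)$, where $\on{ret}$ is the retraction of \secref{map from ua}. The square involving $(F\times\Id)^*$ and post-composition by $\Psi$ commutes for purely formal (base-change) reasons: both routes $\on{POb}(I,\bC_1) \to \Ho(\ua\bC_2)$ are compositions of left adjoints and hence commute with colimits, and they agree on the compact generators of $\on{POb}(I,\bC_1)$, namely the free objects $L_{i}(X)$ for $X \in \bC_1$, by co-Yoneda. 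No hypothesis on $\Psi$ is needed here. The hypothesis enters only in the second square: $\on{ret}_2 \circ \Psi^* \simeq \Ho(\Psi) \circ \on{ret}_1$ as functors $\Ho(\ua\bC_1) \to \Ho(\bC_2)$. Both sides restrict to $\Ho(\Psi)$ on the generating subcategory $\Ho(\bC_1) \subset \Ho(\ua\bC_1)$, and both are colimit-preserving: $\on{ret}_i$ and $\Psi^*$ are left adjoints, while $\Ho(\Psi)$ is triangulated and commutes with direct sums \emph{by hypothesis}. Hence they agree. This tells you two things about your writeup. First, your invocation of the hypothesis in step 2 (to show $\Psi \circ L_{i_0}(X) \simeq L_{i_0}(\Psi(X))$) is harmless but not actually needed; the identification follows by co-Yoneda applied to the bimodule of $\Psi$, with no assumption on direct sums. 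The hypothesis is only doing work in your step 3, exactly as you say, to ensure $\Psi \circ hocolim$ preserves colimits. Second, and more importantly for the bar argument you sketch: the free objects $L_{i_0}(X)$ and the terms of your bar complex need not lie in $\on{HOb}(I,\bC_1)$ when $I$ has infinite hom-sets, because the Yoneda embedding $\Ho(\bC_1) \to \Ho(\ua\bC_1)$ does not preserve infinite direct sums in general. This is not fatal, but it means the resolution, and the verification that both of your composite functors preserve colimits and agree on generators, should be carried out in $\on{POb}(I,\bC_1)$ rather than $\on{HOb}(I,\bC_1)$, with the restriction to $\on{HOb}$ made only at the end. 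Once you frame it that way, the ``delicate point'' you flag at the end of your argument is indeed the standard fact that the two-sided bar resolution in $\bD((I \times \bC_1^{op})\mod)$ computes the identity, and the proof goes through.
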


\medskip

In what follows, when talking about homotopy colimits,
we will always assume that $I$ is filtered. 

\begin{lem}   \label{compact and limit}  
Let $Y$ be an object of $\bC$, such that the corresponding
object of $\bD$ is compact. Then
$$\Hom_{\bD}(Y,hocolim(X_I))\simeq
\underset{i\in I}{colim}\, \Hom_{\bD}(Y,X_i).$$
\end{lem}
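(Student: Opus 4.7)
The plan is to pass from the homotopy colimit in $\bD := \Ho(\bC)$ to a formal colimit in the ind-completion $\ua\bD := \Ho(\ua\bC) \simeq \bD(\bC^{op}\mod)$, where the $\Hom$ computation becomes essentially tautological via Yoneda.

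First, I would invoke the factorization $hocolim \simeq \Psi \circ "hocolim"$ from \secref{hocolim}: the functor $"hocolim"$ lands in $\ua\bD$, and $\Psi : \ua\bD \to \bD$ denotes the left adjoint to the Yoneda inclusion $\iota : \bD \hookrightarrow \ua\bD$, constructed in \secref{map from ua}. Thus
$$\Hom_\bD(Y, hocolim(X_I)) \simeq \Hom_\bD(Y, \Psi("hocolim"(X_I))).$$

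The key observation is that for any compact $Y \in \bD$ and any $N \in \ua\bD$, the natural map
$$\Hom_{\ua\bD}(Y, N) \longrightarrow \Hom_\bD(Y, \Psi N),$$
arising from the adjunction unit $N \to \iota\Psi N$ together with full faithfulness of $\iota$, is an isomorphism. Indeed, both sides are cohomological functors of $N$ commuting with arbitrary direct sums --- the left because $Y$ remains compact in $\ua\bD$ by \secref{big and small}, and the right because $\Psi$ is a left adjoint and $Y$ is compact in $\bD$. They agree on the generating subcategory $\iota(\bD) \subset \ua\bD$, since $\Psi \circ \iota \simeq \on{Id}_\bD$ and $\iota$ is fully faithful; because $\bD$ generates $\ua\bD$ under direct sums and triangles, agreement extends to all of $\ua\bD$.

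It remains to identify $\Hom_{\ua\bD}(Y, "hocolim"(X_I))$ with $\underset{I}{colim}\, \Hom_\bD(Y, X_i)$. Under the equivalence $\ua\bD \simeq \bD(\bC^{op}\mod)$, Yoneda gives $\Hom_{\ua\bD}(Y, M) = H^0(M(Y))$ for $Y \in \bC$. The representing module $"hocolim"(X_I)$, characterized by adjunction against the constant diagram functor, is evaluated at $Y$ by a bar-type homotopy colimit of the diagram of complexes $i \mapsto \Hom^\bullet_\bC(Y, X_i)$. The main obstacle --- and the point where filteredness of $I$ enters essentially --- is to show that this derived colimit coincides with the ordinary colimit. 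For filtered $I$ this holds because filtered colimits in $\Comp_k$ are exact, so the canonical comparison from the honest colimit to the bar homotopy colimit is a quasi-isomorphism. An alternative argument would reduce to the sequential case via a cofinality argument, use the explicit cone description of \secref{simple colim}, and exploit compactness of $Y$ to commute $\Hom(Y, -)$ past the direct sums appearing in that cone. Once this identification is secured, commuting $H^0$ past the filtered colimit of complexes yields the desired formula.
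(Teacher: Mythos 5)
Your argument is correct, and the paper records this lemma without proof, so there is nothing to compare against. Your two-step reduction is the natural one in this framework: you first transfer the problem to $\ua\bD \simeq \bD(\bC^{op}\mod)$ by showing that the canonical map $\Hom_{\ua\bD}(Y,N) \to \Hom_{\bD}(Y,\Psi N)$ is an isomorphism for compact $Y$ (both sides are cohomological in $N$, commute with direct sums --- the left because $Y$ remains compact in $\ua\bD$ by \secref{big and small}, the right because $\Psi$ is a left adjoint --- and they agree via the natural transformation on the generating subcategory $\iota(\bD)$, so the class where it is an isomorphism is a co-complete triangulated subcategory containing the generators, hence everything); you then compute $\Hom_{\ua\bD}(Y,-)$ of the pre-$\Psi$ homotopy colimit by Yoneda plus exactness of filtered colimits in $\Comp_k$. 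One small caveat concerns your closing aside: not every filtered category admits a cofinal $\omega$-chain, so the proposed reduction to the sequential cone formula of \secref{simple colim} is not available in general. The filtered-exactness argument you take as your main route is the one that works uniformly in $I$ and should be kept as the primary justification.
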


\sssec{}

The following assertion will be useful in the sequel. 

\begin{lem}  \label{generate colimit}
Let $\bC$ be a DG category. Assume that
$\bD:=\Ho(\bC)$ is co-complete, and let
$\bD'\subset \bD$ be a triangulated subcategory
that generates it. Then every object $X\in \bD$
can be represented as a homotopy colimit
of $X_I\in \on{HOb}(I,\bC)$ for some $I$, where 
$X_i\in \bD'$ for every $i\in I$.
\end{lem}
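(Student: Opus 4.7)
The strategy is to consider the full subcategory $\bD''\subset\bD$ consisting of objects representable as homotopy colimits of filtered diagrams $X_I\in\on{HOb}(I,\bC)$ with each vertex $X_i$ in $\bD'$, and to show $\bD''=\bD$ by verifying that $\bD''$ is a co-complete triangulated subcategory containing $\bD'$; the generation hypothesis then does the rest. Let $\bC'\subset\bC$ be the full DG subcategory with homotopy category $\bD'$. The inclusion $\Ho(\ua{\bC'})\hookrightarrow\Ho(\ua\bC)$ composed with the left adjoint $\Ho(\ua\bC)\to\Ho(\bC)=\bD$ of \secref{map from ua} (which exists because $\bD$ is co-complete) yields a triangulated functor $\Psi:\Ho(\ua{\bC'})\to\bD$. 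Unpacking the semi-free description of objects of $\ua{\bC'}$ from \secref{semi free, constr}, one identifies the essential image of $\Psi$ with $\bD''$.

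Closure of $\bD''$ under shifts is immediate, and containment of $\bD'$ is trivial (via the Yoneda embedding $\bC'\hookrightarrow\ua{\bC'}$). Closure under arbitrary direct sums holds because both factors comprising $\Psi$ preserve direct sums: the first as a left adjoint, the second by construction. The only nontrivial step is closure under mapping cones: given $\alpha:\Psi(\tilde X)\to\Psi(\tilde Y)$ in $\bD$, one must produce a representative of $\alpha$ as $\Psi(\tilde\alpha)$ for some morphism $\tilde\alpha:\tilde X\to\tilde Y'$ in $\Ho(\ua{\bC'})$ with $\Psi(\tilde Y')\simeq\Psi(\tilde Y)$, after which $\operatorname{Cone}(\alpha)\simeq\Psi(\operatorname{Cone}(\tilde\alpha))$ visibly belongs to $\bD''$.

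The main obstacle is this lifting step, since $\Psi$ is not a priori full. The plan is to handle it via a cell-attachment (small object) argument at the DG level. Using that $\tilde X$ is semi-free, write it as a transfinite composition of cells from $\bC'$; then, starting from a closed degree-zero cocycle representative $a\in\Hom^0_\bC(\Psi(\tilde X),\Psi(\tilde Y))$ of $\alpha$, iteratively enlarge $\tilde Y$ by adjoining cells and morphisms in $\ua{\bC'}$ so as to factor the restriction of $a$ to each successive cell of $\tilde X$. The resulting $\tilde Y'\supseteq\tilde Y$ remains in $\Ho(\ua{\bC'})$, its image under $\Psi$ is quasi-isomorphic to $\Psi(\tilde Y)$ (the attached cells being acyclic modulo the factoring data), and it receives a closed degree-zero morphism from $\tilde X$ inducing $\alpha$. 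With this lifting in hand, the closure of $\bD''$ under cones is formal, and the lemma follows.
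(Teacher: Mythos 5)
Your reduction to showing that $\bD''$ (the essential image of $\Psi:\Ho(\ua{\bC'})\to\bD$) is a co-complete triangulated subcategory containing $\bD'$ is the right frame, and closure under shifts and arbitrary direct sums is unproblematic. The gap is in the closure-under-cones step. Your cell-attachment argument would, if it worked, show that $\Psi$ is full on morphisms after enlarging the target to a $\Psi$-equivalent $\tilde Y'$; but this is not true. The restriction of a cocycle $a$ to a cell $Y'\in\bC'$ of $\tilde X$ is a map $Y'\to\Psi\tilde Y$ in $\bC$, not a map of $\bC'^{op}$-modules; by adjunction it is a map $Y'\to\Phi\Psi\tilde Y$ in $\ua{\bC'}$ (where $\Phi$ is the restricted Yoneda right adjoint to $\Psi$), and any enlargement $\tilde Y'$ absorbing all such maps must receive the unit $\eta_{\tilde Y}:\tilde Y\to\Phi\Psi\tilde Y$. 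The triangle identity only makes $\Psi(\eta_{\tilde Y})$ a split monomorphism, not a quasi-isomorphism, so the cells you are forced to attach are \emph{not} $\Psi$-acyclic, and $\Psi\tilde Y'$ drifts away from $\Psi\tilde Y$. The essential image of a coproduct-preserving triangulated functor is simply not closed under cones in general, and a small-object argument alone cannot manufacture fullness.

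The workable version of your idea is to replace $\bD''$ by the class $\mathcal{S}$ of $X\in\bD$ for which the counit $\Psi\Phi X\to X$ is an isomorphism. This contains $\bD'$ (Yoneda), is closed under shifts, and — by naturality of the counit and the five lemma applied to the morphism of distinguished triangles induced by $\Phi$ of a triangle — is closed under cones, which sidesteps the lifting problem entirely. Closure under coproducts then needs $\Phi$ to commute with them, which holds precisely when the objects of $\bD'$ are compact in $\bD$; this is the case in every use of the lemma in the paper (where $\bD=\bD_{ren}$ and $\bD'=\bD^f$ consists of compact generators), but your argument, like the lemma's phrasing, does not surface this hypothesis, and without it even the corrected route does not close up under direct sums. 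Once $\mathcal{S}=\bD$, a semi-free $\bC'^{op}$-module model of $\Phi X$, with its cell filtration, furnishes the required homotopy $I$-diagram.
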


\section{Homotopy monoidal categories and actions}  

\label{homotopy monoidal categories and actions}

\ssec{}

Let $\bA$ be a DG category. A DG pseudo monoidal structure on $\bA$ 
is a collection of DG functors $(\bA^{\times I})^{op}\times \bA\to {\mathbf {Comp}}_k$
$$X_I,Y\mapsto \Hom^\bullet("X^\otimes_I",Y),$$
for an ordered finite set $I$. Here $X_I$ stands for an $I$-object of
$\bA$, and the symbol $"X^\otimes_I"$ stands for the a priori non-existing 
tensor product $\underset{i\in I}\otimes\, X_i$.
The above functors must be endowed with the appropriate natural
transformations, see \cite{CHA}, Sect. 1.1.1. For $I=\{1\}$ we must
be given an identification $\Hom^\bullet(X^\otimes_I,Y)\simeq \Hom^\bullet(X,Y)$.
We require $\bA$ to be homotopy unital. I.e., there
should exist an object ${\bf 1}_\bA\in \bA$ and functorial quasi-morphisms
$$\Hom^\bullet("X^\otimes_I",Y)
\to \Hom^\bullet("X^\otimes_{I\cup \on{pt}}",Y).$$
where $X_{I\cup \on{pt}}$ corresponds to the insertion of 
the unit in $\bA$ in any place in $I$ with respect to its order.

\medskip

We say that a DG pseudo monoidal structure is a homotopy
monoidal structure if the induced pseudo monoidal structure
on $\Ho(\bA)$ given by
$$H^0\left(\Hom^\bullet("X^\otimes_I",Y)\right)$$
is a monoidal.

\medskip

Evidently, a usual DG monoidal structure on $\bA$ gives rise
to a homotopy one. A homotopy monoidal structure on $\bA$
defines a structure of monoidal triangulated category on $\Ho(\bA)$.

\ssec{Functors}

Let $\bA_1$ and $\bA_2$ be two DG pseudo monoidal
categories. A DG pseudo monoidal functor 
$F$ between them is the following data:

\medskip

For a finite ordered set $I$ we must be given a complex
\begin{equation} \label{mon data}
\Hom^\bullet_{\bA_1,\bA_2}("F(X^\otimes_I)",Y)
\end{equation}
that depends functorially on both arguments. 

\medskip

The above functors must be equipped with the following system
of natural transformations. Let $I\twoheadrightarrow J\twoheadrightarrow K$
be surjections of finite ordered sets. For $k\in K$ (resp., $j\in J$) let $J^k\subset J$ 
(resp., $I^j\subset I$) denote its pre-image. Fix objects 
$X_I\in \bA_1^I,X_J\in \bA_1^J,Y_K\in \bA_2^K,Y\in \bA_2$.

We must be given a map
\begin{multline}   \label{nat trans hom mon}
\left(\underset{j\in J}\otimes
\Hom^\bullet_{\bA_1}("X^\otimes_{I^j}",X_j)\right)
\bigotimes
\left(\underset{k\in K}\otimes \Hom^\bullet_{\bA_1,\bA_2}
("F(X^\otimes_{J^k})",Y_k)\right)\bigotimes \\
\bigotimes \Hom^\bullet_{\bA_2}("Y^\otimes_K",Y)
\to \Hom^\bullet_{\bA_1,\bA_2}
("F(X^\otimes_{I})",Y).
\end{multline}
In addition, we must be given natural quasi-isomorphisms
that correspond to insertions of the unit object. These natural 
transformations must satisfy the natural axioms that we
will not spell out here.

\sssec{}

Assume now that on both $\bA_1$ and $\bA_2$ the DG 
pseudo monoidal structure is homotopy monoidal. We say that
$F$ is a homotopy monoidal functor if the 
functors
\begin{multline*}
\{X_{I}\in \bA_1^{I},Y\in \bA_2\}\mapsto
H^0\left(\Hom_{\bA_1,\bA_2}^\bullet("F(X^\otimes_{I})",Y)\right):
\Ho(\bA^{op}_1)^{I}\times \Ho(\bA_2)\to
\Vect_k
\end{multline*}
and the maps
\begin{multline*}   
H^0\left(\underset{j\in J}\otimes\Hom^\bullet_{\bA_1}("X^\otimes_{I^j}",X_j)\right)
\bigotimes
H^0\left(\underset{k\in K}\otimes \Hom^\bullet_{\bA_1,\bA_2}
("F(X^\otimes_{J^k})",Y_k)\right)\bigotimes \\
\bigotimes
H^0\left(\Hom^\bullet_{\bA_2}("Y^\otimes_K",Y)\right) \to
H^0\left(\Hom^\bullet_{\bA_1,\bA_2}
("F(X^\otimes_{I})",Y)\right).
\end{multline*}
come from a (automatically uniquely determined)
monoidal structure on the functor $\Ho(F):\Ho(\bA_1)\to \Ho(\bA_2)$.

\medskip

In the homotopy monoidal case we say that $F$ is a monoidal quasi-equivalence if 
the functor $\Ho(F)$ is an equivalence of categories.

\sssec{}

Let $F'$ and $F'$ be two
DG pseudo monoidal functors $\bA_1\to \bA_2$. A DG natural
transformation $F'\Rightarrow F''$ is a data of a functorial map of complexes
\begin{equation} \label{hom nat trans}
\phi:\Hom^\bullet_{\bA_1,\bA_2}("F''(X^\otimes_{I})",Y)\to
\Hom^\bullet_{\bA_1,\bA_2}("F'(X^\otimes_{I})",Y),
\end{equation}
defined for all finite ordered sets $I$,
(preserving the degree and commuting with the differential),
which makes the diagrams coming from \eqref{nat trans hom mon} commute.

\medskip

We say that $\phi$ is a quasi-isomorphism if maps \eqref{hom nat trans}
are quasi-isomorphisms for all $I$. In the case when
$\bA_1,\bA_2$ and both functors are homotopy monoidal, the quasi-isomorphism
condition is enough to check for $I=\{1\}$. 

\sssec{}

For two DG pseudo monoidal categories $\bA_1,\bA_2$ 
we denote the category whose objects are DG pseudo monoidal
functors $\bA_1\to \bA_2$ and arrows DG natural transformations
by $\on{PMon}(\bA_1,\bA_2)$.

\medskip

We claim that $\on{PMon}(\bA_1,\bA_2)$ has a structure of closed model category, 
with weak equivalences being quasi-isomorphisms, and cofibrations being 
those natural transformations, for which the maps 
$\phi$ that are surjective for all $I$ (in particular, all objects are cofibrant).
Let us denote the corresponding homotopy category by
$\on{PMon}^{\Ho}(\bA_1,\bA_2)$. We shall refer to maps between
objects of $\on{PMon}^{\Ho}(\bA_1,\bA_2)$ as homotopy natural
transformations.

\medskip

\noindent{\it Remark.} The category $\on{PMon}(\bA_1,\bA_2)^{op}$ is akin
to that of DG associative algebras. Indeed, an object $F\in 
\on{PMon}(\bA_1,\bA_2)^{op}$ is given by specifying a collection
of vector spaces \eqref{mon data} and multiplication maps
\eqref{nat trans hom mon}.

\sssec{}   \label{free functor}

Let us construct a supply of fibrant objects in $\on{PMon}(\bA_1,\bA_2)^{op}$
(by the above analogy, these play the role of DG associative algebras that are
free as plain associative algebras).

\medskip

By a {\it graded} (vs. DG) pseudo monoidal functor $F:\bA_1\to \bA_2$
we will understand the same data as in \eqref{mon data} and 
\eqref{nat trans hom mon}, with the difference that the 
$\Hom^\bullet_{\bA_1,\bA_2}("F(X^\otimes_{I})",Y)$'s are just graded
vector spaces, with no differential. Morphisms in the category are defined
as in \eqref{hom nat trans}. Let us denote the corresponding 
category by $\on{PMon_{gr}}(\bA_1,\bA_2)$.

\medskip

For $n> 0$ consider also the category 
$\on{PFunct_{gr}}(\bA^{\times n}_1,\bA_2)$ being the
opposite of that of multi-additive functors 
$\bA_1^{op}\times...\times \bA_1^{op}\times \bA_2\to \Vect_k^{\BZ}$
I.e., for $M^\bullet\in \on{PFunct_{gr}}(\bA^{\times n}_1,\bA_2)$, 
$X_1,...,X_n\in \bA_1,Y\in \bA_2$,
each $M^\bullet(X_1,...X_n,Y)$ is just a graded vector space, without a differential.

\medskip

The evident forgetful functor $\on{PMon_{gr}}(\bA_1,\bA_2)\to
\underset{n>0}\Pi\, \on{PFunct_{gr}}(\bA^{\times n}_1,\bA_2)$ has a
right adjoint; we denote it
$$M^\bullet\mapsto \on{Free}(M^\bullet).$$

\medskip

Suppose an object $F\in \on{PMon}(\bA_1,\bA_2)$ has the following
properties:

\medskip

\begin{itemize}

\item The image of $F$ in $\on{PMon_{gr}}(\bA_1,\bA_2)$ is isomorphic to
$\on{Free}(M^\bullet)$ for some $$M^\bullet\in \underset{n>0}\Pi\, \on{PFunct_{gr}}(\bA^{\times n}_1,\bA_2).$$

\item The above object $M^\bullet$ can be represented as a direct sum
$M^\bullet=\underset{i\geq 0}\oplus M^\bullet_i$, where each $M^\bullet_i$
is in turn a direct sum of representable functors 
$$\underset{a}\oplus\,\Hom_{\bA^{\times n_a}_1\times \bA^{op}_2}(?,
\left(X_1^a,...,X_{n_a}^a,Y^a))\right.$$

\item The discrepancy between the natural differential on every $M^\bullet_i$
and one coming by restriction from $M^\bullet_i\subset M^\bullet\subset F$,
is as a map 
$$M^\bullet_i\to \on{Free}\left(\underset{i>j\geq 0}\oplus M^\bullet_j\right)\in
\underset{n>0}\Pi\, \on{PFunct_{gr}}(\bA^{\times n}_1,\bA_2)^{op}.$$

\end{itemize}

Then such $F$ is fibrant.

\sssec{}

When both $\bA_1$ and $\bA_2$ are homotopy monoidal
we can consider the full subcategory category of
$\on{PMon}^{\Ho}(\bA_1,\bA_2)$ whose
objects are homotopy monoidal functors. We shall denote
this category by $\on{HMon}(\bA_1,\bA_2)$. 

\ssec{}

Let $\bA_1$ and $\bA_2$ be DG pseudo-monoidal categories. There is a natural
notion of lax DG monoidal functor between them. By definition, this is a DG
functor $F_{DG}:\bA_1\to \bA_2$ endowed with a system of morphisms
\begin{equation} \label{laxity}
\Hom^\bullet_{\bA_1}("X^\otimes_{I}",Y)\to
\Hom^\bullet_{\bA_2}("F_{DG}(X^\otimes_{I})",F_{DG}(Y)),
\end{equation}
compatible with the associativity constraints. 

\medskip

In other words, this is a pseudo-monoidal functor $F$, for which
there exists a DG functor $F_{DG}:\bA_1\to \bA_2$ and 
isomorphisms
$$\Hom^\bullet_{\bA_2}("F_{DG}(X^\otimes_{I})",Y)
\simeq \Hom^\bullet_{\bA_1,\bA_2}("F(X^\otimes_{I})",Y).$$

\medskip

We say that a lax DG monoidal functor is a lax DG monoidal quasi-equivalence,
if $F_{DG}$ is a quasi-equivalence at the level of plain categories and
the maps \eqref{laxity} are quasi-isomorphisms.

\sssec{}

Let $G:\bA_2\to \bA_3$ be a pseudo monoidal functor. For $F_{DG}:\bC_1\to \bC_2$, which
is a lax DG monoidal functor, one can define the composition $G\circ F_{DG}$ as the pseudo 
monoidal functor $\bC_1\to \bC_3$ given by 
$$\Hom^\bullet_{\bA_1,\bA_3}("G\circ F_{DG}(X^\otimes_I)",Z):=
\Hom^\bullet_{\bA_2,\bA_3}("G(F_{DG}(X^\otimes_{I}))",Z).$$

\medskip

This operation defines the functor $?\circ F$
$$\on{PMon}(\bA_2,\bA_3)\to \on{PMon}(\bA_1,\bA_3),$$
and if the categories $\bA_i$ and the functor $F_{DG}$ are homotopy monoidal, 
we also obtain a functor
$$\on{HMon}(\bA_2,\bA_3)\to \on{HMon}(\bA_1,\bA_3).$$

\begin{lem}   \label{mon quasi equiv}
Assume that $F_{DG}$ is a lax DG monoidal quasi-equivalence.
Then the induced functor
$$G\mapsto G\circ F_{DG}:\on{PMon}^{\Ho}(\bA_2,\bA_3)\to
\on{PMon}^{\Ho}(\bA_1,\bA_3)$$
is an equivalence.
\end{lem}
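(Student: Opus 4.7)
The plan is to invoke the closed model structure on $\on{PMon}(\bA_i,\bA_3)$ described in the previous paragraph and establish that $?\circ F_{DG}$ is one half of a Quillen equivalence with respect to this structure, hence in particular induces an equivalence of homotopy categories.

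First, I would verify that $?\circ F_{DG}$ preserves weak equivalences. This is essentially tautological: a quasi-isomorphism $\phi:G'\Rightarrow G''$ in $\on{PMon}(\bA_2,\bA_3)$ consists, by definition, of quasi-isomorphisms of the representing complexes for every finite ordered set $I$, and the composition with $F_{DG}$ merely substitutes $X_i\mapsto F_{DG}(X_i)$ in the source argument while leaving $Z\in\bA_3$ unchanged, producing a quasi-isomorphism of the corresponding complexes defining $G'\circ F_{DG}$ and $G''\circ F_{DG}$. Thus $?\circ F_{DG}$ descends to a functor at the level of $\on{PMon}^{\Ho}$.

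Second, I would construct a candidate inverse $\Phi$. For a fibrant $H\in\on{PMon}(\bA_1,\bA_3)$ (say of the form $\on{Free}(M^\bullet)$ from \secref{free functor}), define $\Phi(H)$ by the derived-extension formula
$$
\Hom^\bullet_{\bA_2,\bA_3}("\Phi(H)(Y^\otimes_I)",Z):=\underset{(X_I,\phi_I)}{\on{hocolim}}\,\Hom^\bullet_{\bA_1,\bA_3}("H(X^\otimes_I)",Z),
$$
where the colimit runs over systems $(X_i\in\bA_1,\phi_i:Y_i\to F_{DG}(X_i))$ in which each $\phi_i$ is a quasi-isomorphism. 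The pseudo monoidal structure on $\Phi(H)$ is assembled from that of $H$ using the lax-monoidal structure maps \eqref{laxity} of $F_{DG}$, which are quasi-isomorphisms by hypothesis; this is the point at which the full assumption that $F_{DG}$ is a lax DG monoidal quasi-equivalence (and not merely a DG quasi-equivalence of the underlying categories) is needed.

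Third, I would check that $\Phi$ and $?\circ F_{DG}$ are mutually quasi-inverse on the homotopy categories. The composition $(?\circ F_{DG})\circ\Phi$ substitutes $Y_i=F_{DG}(X'_i)$ inside the colimit, which then collapses — using that $F_{DG}$ is a quasi-equivalence of underlying DG categories so that the indexing category of $(X_I,\phi_I)$ becomes contractible after fixing $X'_I$ — to yield $H$ up to quasi-isomorphism; the other composition $\Phi\circ(?\circ F_{DG})$ uses additionally that \eqref{laxity} consists of quasi-isomorphisms, so that the tensor structure on $\bA_1$ matches the one on $\bA_2$ in the derived sense. The main obstacle is coordinating these natural transformations coherently across all cardinalities $|I|$; this is most cleanly handled by recognizing $\on{PMon}(?,\bA_3)$ as a category of algebras over a colored operad whose quasi-isomorphism-invariance yields the result, or equivalently by induction on $|I|$ using the lifting properties supplied by the fibrant objects $\on{Free}(M^\bullet)$ of \secref{free functor}.
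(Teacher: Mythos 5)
Your strategy---use the closed model structure on $\on{PMon}(\bA_i,\bA_3)$ and realize the quasi-inverse as a derived Kan extension along $F_{DG}$---is the natural one and matches the framework the paper sets up in \secref{free functor} together with the remark that $\on{PMon}(\bA_1,\bA_2)^{op}$ is akin to DG associative algebras. The paper itself states this lemma without proof, so there is no text to compare against, but there are two gaps in your proposal.

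The first is in your formula for $\Phi(H)$. You index the homotopy colimit over systems $(X_I,\phi_I)$ where each $\phi_i:Y_i\to F_{DG}(X_i)$ is required to be a \emph{quasi-isomorphism}. This breaks functoriality in $Y_I$: given a morphism $\psi:Y_i\to Y'_i$ in $\bA_2$ and a pair $(X_i,\phi'_i:Y'_i\to F_{DG}(X_i))$ with $\phi'_i$ a quasi-isomorphism, the composite $\phi'_i\circ\psi:Y_i\to F_{DG}(X_i)$ need not be a quasi-isomorphism, so the required transition functor between index categories does not exist and the right-hand side of your formula is not even a pseudo-functor in $Y_I$. The fix is to drop the quasi-isomorphism constraint: the hocolim over the unrestricted slice category of all pairs $(X_i,\phi_i:Y_i\to F_{DG}(X_i))$ computes a derived left Kan extension, which \emph{is} functorial in $Y_i$ with the correct variance. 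The hypothesis that $F_{DG}$ is a quasi-equivalence then enters not in the definition of the index category but in the verification that this Kan extension inverts $?\circ F_{DG}$ up to quasi-isomorphism.

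The second gap is the assembly of the structure maps \eqref{nat trans hom mon} on $\Phi(H)$. You say they are ``assembled from that of $H$ using the lax-monoidal structure maps \eqref{laxity},'' but those maps point from $\Hom^\bullet_{\bA_1}(\cdot,\cdot)$ to $\Hom^\bullet_{\bA_2}(\cdot,\cdot)$, which is the wrong direction for naive transport; one has to use that they are quasi-isomorphisms to invert them in the derived sense while simultaneously respecting every associativity constraint, coherently across all cardinalities $|I|$. You acknowledge this as the main obstacle and offer two alternatives. The colored-operad observation in your third step is the cleanest route and should be carried out rather than left as an option: $\on{PMon}(\bA_i,\bA_3)^{op}$ is the category of algebras over a colored DG operad whose colors and multi-operation complexes are built from $\bA_i$ and $\bA_3$; a lax DG monoidal quasi-equivalence $F_{DG}$ is exactly a quasi-isomorphism of these operads (the quasi-equivalence of underlying DG categories matches colors and binary operations, the requirement that \eqref{laxity} consists of quasi-isomorphisms matches the higher multi-operation complexes); and over a field of characteristic zero such a quasi-isomorphism of colored DG operads induces a Quillen equivalence of the algebra categories, hence the asserted equivalence of $\on{PMon}^{\Ho}$'s. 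Until one of your two alternatives is carried through, the argument is an outline, not a proof.
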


\sssec{}   \label{mon huts}

We claim that as in \secref{huts}, for any homotopy monoidal functor
$F:\bA_1\to \bA_2$ one can find a "hut"
$$\bA_1\overset{\Psi}\leftarrow \wt\bA_1 \overset{\wt{F}}\to \bA_2,$$
where $\Psi$ is lax DG monoidal and is a quasi-equivalence, $\wt{F}$ is
also lax DG monoidal, and a DG natural transformation $F\circ \Psi\to \wt{F}$,
which is a quasi-isomorphism.

\medskip

Namely, we take $\wt\bA_1$ to be the DG category from \secref{from q to f},
i.e., its objects are triples 
$$\wt{X}=\{X\in \bA_1,Y\in \bA_2,f\in \Hom^0_{\bA_1,\bA_2}("F(X)",Y)\},$$
where $f$ is a closed morphism that induces an isomorphism in $\Ho(\bA_2)$.
The pseudo monoidal structure is given by
\begin{multline*}
\Hom^\bullet_{\wt\bA_1}("\wt{X}^\otimes_I",\wt{X}):= \\
\{\alpha\in \Hom^\bullet_{\bA_1}("X^\otimes_I",X),\,
\beta\in \Hom^\bullet_{\bA_2}("Y^\otimes_I",Y),\,
\gamma\in \Hom^\bullet_{\bA_1,\bA_2}("F(X^\otimes_I)",Y)[-1].\}
\end{multline*}

The DG functors $\Psi$ and $\wt{F}$ are defined in an evident way. The DG
natural transformation $F\circ \Psi\to \wt{F}$ is
\begin{multline*}
\Hom^\bullet_{\wt\bA_1,\bA_2}("\wt{F}(\wt{X}^\otimes_I)",Y)=:
\Hom^\bullet_{\bA_2}("Y^\otimes_I",Y)\to \\
\to \Hom^\bullet_{\bA_1,\bA_2}("F(X^\otimes_I)",Y)=:
\Hom^\bullet_{\wt\bA_1,\bA_2}("F\circ \Psi(\wt{X}^\otimes_I)",Y),
\end{multline*}
where the second arrow is given via \eqref{nat trans hom mon}
by the data of $\underset{i\in I}\otimes\, f_i$.

\ssec{Compositions}   \label{comp mon}

Let us be given three pseudo monoidal categories $\bA_1,\bA_2,\bA_3$
and pseudo monoidal functors $F':\bA_1\to \bA_2$,
$F'':\bA_2\to \bA_3$ and $G:\bA_1\to \bA_3$. 

\medskip

A DG natural transformation $G\Rightarrow "F''\circ F'"$ is a collection
of morphisms defined for $I\twoheadrightarrow J\twoheadrightarrow K$,
$X_I\in \bC_1^I$, $Y_J\in \bC_2^J$, $Z\in \bC_3$
\begin{multline}
\left(\underset{j\in J}\otimes\, \Hom^\bullet_{\bA_1,\bA_2}("F'(X^\otimes_{I^j})",Y_j)\right)
\bigotimes
\left(\underset{j\in J}\otimes\, \Hom^\bullet_{\bA_2,\bA_3}("F''(Y^\otimes_J)",Z)\right)\to \\
\to \Hom^\bullet_{\bA_1,\bA_3}("G(X^\otimes_I)",Z),
\end{multline}
preserving the degree and commuting with the differential. These morphisms are required to make the corresponding diagrams commute.

\medskip

In a similar way one defines the notion of DG natural transformation 
$G\Rightarrow "F^n\circ...\circ F^1"$, where $F^i$ are pseudo monoidal functors
$\bA_{i}\to \bA_{i+1}$, and $G$ is a pseudo monoidal functor $\bA_1\to \bA_{n+1}$.

\medskip

Assume that the categories $\bA_1,...,\bA_{n+1}$ and the functors $F_1,...,F_n,G$
are homotopy monoidal. By construction, a DG natural transformation as above
gives rise to a natural transformation between the monoidal functors
$$\Ho(G)\Rightarrow \Ho(F_n)\circ...\circ \Ho(F_1).$$
We say that $G$ is a homotopy composition of $F_1,...,F_n$ if the latter natural 
transformation is an isomorphism.

\sssec{}  \label{comp mon as colim}

For $F_1,...,F_n,G$ as above let us denote by 
$\Hom_{\on{PMon}(\bA_1,\bA_{n+1})}(G,"F_n\circ...\circ F_1")$ the set
of natural transformations as above. Keeping $F_1,...,F_n$ fixed, we define
the set $\Hom_{\ol{\on{PMon}}(\bA_1,\bA_{n+1})}(G,"F_n\circ...\circ F_1")$
as a quotient of $\Hom_{\on{PMon}(\bA_1,\bA_{n+1})}(G,"F_n\circ...\circ F_1")$ 
by the equivalence relation defined by homotopy. 

\medskip

We define the functor
$$\on{Hom}_{\on{PMon}^{\Ho}(\bA_1,\bA_{n+1})}(G,"F_n\circ...\circ F_1"):
\on{PMon}^{\Ho}(\bA_1,\bA_{n+1})^{op}\to \on{Sets}$$ by
$$G\mapsto \underset{F_i\to F'_i,i=1,...,n}{colim}\, 
\Hom_{\ol{\on{PMon}}(\bA_1,\bA_{n+1})}(G,"F'_n\circ...\circ F'_1"),$$
where the colimit is taken over the index category of homotopy 
natural transformations $F_i\to F'_i, i=1,...,n$ that are quasi-isomorphisms.

\begin{lem}
The functor $G\mapsto 
\on{Hom}_{\on{PMon}^{\Ho}(\bA_1,\bA_{n+1})}(G,"F_n\circ...\circ F_1")$
is representable. 
\end{lem}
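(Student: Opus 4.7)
The plan is to mimic the proof of \lemref{lem pseudo comp}, with objects of $\on{PMon}(\bA_i,\bA_{i+1})^{op}$ playing the role of bimodules over DG categories, in accordance with the remark in \secref{free functor} that these behave like DG associative algebras. The representing object $G$ will be constructed as a two-sided bar-type homotopy colimit, which is the pseudo-monoidal analog of the derived tensor product $M^\bullet_{F_n} \overset{L}\otimes \cdots \overset{L}\otimes M^\bullet_{F_1}$ of bimodules that appeared in the proof of \lemref{lem pseudo comp}.

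First, I would use the model structure on $\on{PMon}(\bA_i,\bA_{i+1})$ (for which all objects are cofibrant and for which \secref{free functor} exhibits a concrete supply of fibrant objects) to replace each $F_i$ by a quasi-isomorphic fibrant object $F'_i$. By the colimit description of the functor in \secref{comp mon as colim}, it suffices to represent the functor attached to any one such choice of fibrant replacements.

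Next, for fibrant $F'_i$ I would define the candidate $G$ by taking $\Hom^\bullet_{\bA_1,\bA_{n+1}}("G(X^\otimes_I)",Z)$ to be the homotopy colimit, over all chains of surjections of ordered finite sets $I = I_0 \twoheadrightarrow I_1 \twoheadrightarrow \cdots \twoheadrightarrow I_{n-1}$ together with all choices of intermediate objects $X^{(k)}_{i} \in \bA_{k+1}$ indexed by $i \in I_k$, of the tensor product of the complexes $\Hom^\bullet_{\bA_k,\bA_{k+1}}("F'_k(X^{(k-1)\otimes}_{I_{k-1}^i})", X^{(k)}_i)$ for $k = 1, \ldots, n-1$ and $i \in I_k$, together with the final factor $\Hom^\bullet_{\bA_n,\bA_{n+1}}("F'_n(X^{(n-1)\otimes}_{I_{n-1}})", Z)$, where $X^{(0)}_{I_0} := X_I$. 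The pseudo-monoidal structure maps on $G$ are forced by the structure maps \eqref{nat trans hom mon} for the $F'_i$ combined with functoriality of the homotopy colimit in the variable $I$. Verifying that this candidate represents the functor $\on{Hom}_{\on{PMon}^{\Ho}(\bA_1,\bA_{n+1})}(\,\cdot\,,"F'_n\circ\cdots\circ F'_1")$ then amounts to unwinding the definition from \secref{comp mon}: a DG natural transformation $G \Rightarrow "F'_n\circ\cdots\circ F'_1"$ is, tautologically, a compatible family of maps out of each summand of the bar complex, which is exactly the data of a map out of the homotopy colimit.

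The main obstacle is homotopy invariance and descent to $\on{PMon}^{\Ho}$. The fibrancy of the $F'_i$ is what guarantees that the bar-type construction is genuinely derived in each slot, hence invariant under further quasi-isomorphisms among fibrant replacements, so that the colimit over fibrant replacements in \secref{comp mon as colim} stabilizes to a well-defined object of $\on{PMon}^{\Ho}(\bA_1,\bA_{n+1})$. Checking the pseudo-monoidal coherence axioms for $G$ — in particular the associativity constraints attached to iterated surjections $I \twoheadrightarrow J \twoheadrightarrow K$, and the unit constraints — is lengthy but mechanical, being the familiar "coherent associativity" of the iterated two-sided bar construction; this is where the fibrancy from \secref{free functor} makes the verification strict rather than merely up to further homotopy.
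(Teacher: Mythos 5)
The paper states this lemma without proof, so there is no "paper's own proof" to compare against; I will evaluate your proposal on its own terms.

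Your strategy is the natural one: extend the proof of the unnamed lemma in \secref{comp as colim} (which represents the composition functor by the derived tensor product of bimodules $M^\bullet_{F_n}\underset{\bC_n}{\overset{L}\otimes}\cdots\underset{\bC_2}{\overset{L}\otimes}M^\bullet_{F_1}$) to the pseudo-monoidal setting, interpreting the derived tensor product as a two-sided bar construction and replacing the intermediate DG categories $\bC_i$ by the intermediate homotopy monoidal categories $\bA_i$. The observation that $\on{PMon}(\bA_1,\bA_2)^{op}$ behaves like DG associative algebras, that all objects are cofibrant, and that one should therefore take a fibrant replacement of each $F_i$ before running the bar construction, are all the right moves, and the identification of homotopy-coherence verification as the main labor is honest.

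Two places where the argument as written is still incomplete. First, the formula for $\Hom^\bullet_{\bA_1,\bA_{n+1}}("G(X^\otimes_I)",Z)$ is given as a "homotopy colimit" without specifying the index diagram and its structure maps: in the bar construction for plain DG categories the simplicial direction is indexed by insertions of identity 1-morphisms in the intermediate $\bC_i$, and here you need the analogous degeneracies coming from insertions of the unit objects $\mathbf{1}_{\bA_i}$ along with face maps coming from the composition data of $F'_i$ and the monoidal structure of $\bA_i$; without pinning this down, one cannot even write the structure maps \eqref{nat trans hom mon} that exhibit $G$ as an object of $\on{PMon}$. Second, the claim that the colimit in \secref{comp mon as colim} stabilizes at fibrant replacements deserves a separate sentence: the point is not just that $F'_i$ is fibrant (and hence fibrant–cofibrant since all objects are cofibrant), but that any further quasi-isomorphism $F'_i\to F''_i$ out of a fibrant–cofibrant object admits, after a further fibrant replacement $F''_i\to F'''_i$, a composite $F'_i\to F'''_i$ that is a homotopy equivalence, which is what makes the colimit filtered and eventually constant. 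Neither of these is a gap in the approach — only in the level of detail — and once supplied, the proposal gives a correct proof of the lemma.
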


We shall denote the resulting universal object of $\on{PMon}^{\Ho}(\bA_1,\bA_{n+1})$
by $F_n\circ...\circ F_1$, and call it the pseudo composition of 
$F_1,...,F_n$.

\begin{lem}
The pseudo-composition of $F_1,...,F_n$ induces their
pseudo-composition as functors between plain DG categories
(see \secref{pseudo comp}). If $F_1,...,F_n$ are homotopy 
monoidal functors, then the pseudo composition is their homotopy composition.
\end{lem}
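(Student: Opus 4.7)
\medskip

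\noindent\textbf{Proof proposal.}
The plan is to exhibit an explicit representing object for the universal pseudo-composition of monoidal pseudo-functors, and then verify the two assertions by tracking what this representing object becomes after forgetting the monoidal structure and after passing to the homotopy level. First, I would choose fibrant resolutions $F_i \to F'_i$ in $\on{PMon}(\bA_i, \bA_{i+1})$ using the construction of \secref{free functor}: this replaces each $F_i$ by a pseudo-monoidal functor which is ``free'' on a system of graded pseudo-functors $M^\bullet_i \in \underset{n>0}\Pi\, \on{PFunct_{gr}}(\bA^{\times n}_i, \bA_{i+1})$. With these fibrant models in hand, the functor of DG natural transformations $G \mapsto \Hom_{\on{PMon}(\bA_1,\bA_{n+1})}(G, "F'_n \circ\cdots\circ F'_1")$ becomes strictly representable (no colimit needed), and the explicit representing object $F_n \circ\cdots\circ F_1$ can be written as an iterated bar-type construction gluing the modules $M^\bullet_i$ along the multiplications in the intermediate categories $\bA_i$.

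For part (1), I would compare this representing object to the one from \lemref{lem pseudo comp}. The universal property of the plain pseudo-composition only sees the $I = \{1\}$ part of each $\Hom^\bullet_{\bA_i, \bA_{i+1}}("F_i(X^\otimes_I)", Y)$, i.e.\ the underlying bimodules $M^\bullet_{F_i}$. The plan is to check, by unwinding definitions, that restricting the higher-arity gluing data in our representing object $F_n \circ\cdots\circ F_1$ to the $I = \{1\}$ pieces recovers exactly $M^\bullet_{F_n} \overset{L}{\underset{\bA_n}\otimes} \cdots \overset{L}{\underset{\bA_2}\otimes} M^\bullet_{F_1}$. This reduces to matching two explicit bar resolutions, and I would deduce the desired identification from the representability statements (the plain pseudo-composition is uniquely determined by its universal property, and the forgetful object satisfies the same property).

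For part (2), assume the $F_i$ are homotopy monoidal. Applying $H^0$ to the higher-arity structure of $F_n \circ\cdots\circ F_1$ produces maps
\[
\underset{k\in K}\otimes H^0\!\left(\Hom^\bullet_{\bA_1,\bA_{n+1}}("F_n\circ\cdots\circ F_1(X^\otimes_{I^k})", Y_k)\right) \otimes H^0\!\left(\Hom^\bullet_{\bA_{n+1}}("Y^\otimes_K", Y)\right) \to \cdots
\]
which, combined with the identifications of part (1) already proved on the plain level, yield a pseudo-monoidal structure on $\Ho(F_n \circ\cdots\circ F_1)$. The key point is that this structure, together with the DG natural transformation $F_n\circ\cdots\circ F_1 \Rightarrow "F_n\circ\cdots\circ F_1"$ coming from the universal property, induces on homotopy categories a morphism
\[
\Ho(F_n\circ\cdots\circ F_1) \;\Rightarrow\; \Ho(F_n)\circ\cdots\circ \Ho(F_1),
\]
compatible with the monoidal structures on both sides. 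By part (1) this morphism underlies the plain homotopy composition, which is already an isomorphism of plain triangulated functors; hence it is automatically an isomorphism of monoidal functors on $\Ho(\bA_1) \to \Ho(\bA_{n+1})$.

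The main obstacle I expect is bookkeeping in the explicit construction of the representing object: the higher-arity data in $\on{PMon}$ involves compatibilities indexed by arbitrary surjections $I \twoheadrightarrow J \twoheadrightarrow K$, and verifying that the fibrant model from \secref{free functor} behaves correctly under the forgetful functor to $\on{PFunct}$ requires a careful bar-complex argument. Once this combinatorial matching is done, both assertions of the lemma follow from the respective universal properties without further input.
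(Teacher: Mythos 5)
The paper gives no proof of this lemma — it is asserted without argument, presumably because the paper regards it as a formal consequence of the representability proof of \lemref{lem pseudo comp} in the plain setting, transported to the pseudo-monoidal one. So there is no ``paper's own proof'' to compare against; what follows is an assessment of your proposal on its own merits.

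Your overall strategy — exhibit an explicit representing object via a bar-type construction, then show that forgetting the monoidal data recovers the plain bar construction $M^\bullet_{F_n}\overset{L}{\underset{\bA_n}\otimes}\cdots\overset{L}{\underset{\bA_2}\otimes}M^\bullet_{F_1}$, and finally pass to $H^0$ for the homotopy-monoidal claim — is a sensible mirror of how \lemref{lem pseudo comp} is proved. But two of your steps are phrased in a way that papers over real gaps. First, the assertion that at a fibrant replacement the colimit defining $\on{Hom}_{\on{PMon}^{\Ho}}(G,"F_n\circ\cdots\circ F_1")$ becomes ``strictly representable (no colimit needed)'' is not automatic: a fibrant $F'_i$ is not terminal in the index category of resolutions. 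What one actually needs is that fibrant replacements are cofinal among all quasi-isomorphism resolutions and that the transition maps between fibrant models induce bijections on the relevant homotopy $\Hom$-sets; neither is spelled out, and the second in particular is the substantive homotopy-invariance statement doing the work. Second, in part (1) your closing sentence — that ``the plain pseudo-composition is uniquely determined by its universal property, and the forgetful object satisfies the same property'' — is not a valid shortcut. The forgetful image of the monoidal pseudo-composition does not \emph{a priori} satisfy the universal property of \secref{comp as colim}, because that universal property quantifies over all $G\in\on{PFunct}^{\Ho}(\bA_1,\bA_{n+1})$ whereas the monoidal one quantifies only over $G\in\on{PMon}^{\Ho}(\bA_1,\bA_{n+1})$, and the forgetful functor is not obviously full or essentially surjective. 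The correct route is exactly the explicit comparison of bar resolutions you sketch earlier in the same paragraph; the universal-property shortcut should be dropped. You already flag the bar-complex bookkeeping (compatibility over all surjections $I\twoheadrightarrow J\twoheadrightarrow K$) as the main obstacle, and indeed that is where the actual proof lives — your sketch indicates how to set it up but does not carry it out.
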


\sssec{}

Thus, we can introduce the 2-category, whose 0-objects are
homotopy monoidal categories, and 1-morphisms are the
categories $\on{HMon}(\bA_1,\bA_2)$. We shall denote this
2-category by $\DGMonCat$. 

\medskip

We can also consider the 2-category $\TrMonCat$ of triangulated
monoidal categories. We have an evident forgetful 2-functor
$\DGMonCat\to \TrMonCat$. 

\medskip

For a triangulated monoidal category, by its DG model we will
understand the fiber of the above functor.

\ssec{Actions}

Let $\bA$ be a DG pseudo monoidal category and $\bC$
another DG category. A (left) pseudo action of $\bA$
on $\bC$ is the following data:

For a finite ordered set $I$ and $X_I:I\to \bA$,
$Y',Y''\in \bC$ we must be given a complex
$$X_I,Y\mapsto \Hom^\bullet_{\bA,\bC}("X^\otimes_I\otimes Y'",Y''),$$
which functorially depends on all arguments.
The symbol $"X^\otimes_{I}\otimes Y'"$ stands for the
non-existing object $\underset{i\in I}\otimes\, X_i\otimes Y'\in \bC$.
These functors must be equipped with the following system of natural
transforations:

\medskip

For $I\twoheadrightarrow \{1,...,n\}$, $X_I\in \bA^I$, $Y_1,...,Y_n\in \bC$ 
we must be given a map
\begin{multline*}
\Hom_{\bA,\bC}^\bullet("X^\otimes_{I^{n}}\otimes Y_{n-1}",Y_{n})\bigotimes
\Hom_{\bA,\bC}^\bullet("X^\otimes_{I^{n-1}}\otimes Y_{n-2}",Y_{n-1})\bigotimes...\\
...\bigotimes \Hom_{\bA,\bC}^\bullet("X^\otimes_{I^1}\bigotimes Y_{1}",Y_{2})\to
\Hom_{\bA,\bC}^\bullet("X^\otimes_I\otimes Y_{1}",Y_{n}),
\end{multline*}
and for a surjection of finite ordered sets $I\twoheadrightarrow J$,
$X_I\in\bA^I$, $X_J\in \bA^J$, $Y',Y''\in \bC$
we must be given a map
$$\underset{j\in J}\otimes \Hom_\bA^\bullet("X^\otimes_{I^j}",X_j)
\bigotimes \Hom_{\bA,\bC}^\bullet("X^\otimes_J\otimes Y'",Y'')\to
\Hom_{\bA,\bC}^\bullet("X^\otimes_I\otimes Y'",Y'').$$
In addition, we need to be given a natural quasi-isomorphism corresponding
to $I\hookrightarrow I\cup \on{pt}$. These natural transformations must satisfy the
usual associativity axioms.

\medskip

Assume that the pseudo monoidal structure on $\bA$ is
a homotopy monoidal structure. We say that a pseudo action of 
$\bA$ on $\bC$ is a homotopy action if the data
of functors 
$$\{X_I,Y',Y''\}\mapsto
H^0\left(\Hom_{\bA,\bC}^\bullet("X^\otimes_I\otimes Y'",Y'')\right):
\Ho(\bA)^{I,op}\times \Ho(\bC)^{op}\times \Ho(\bC)\to \Vect_k$$
and natural transformations come from a (automatically uniquely
defined) monoidal action of $\Ho(\bA)$ on $\Ho(\bC)$.

\medskip

In a similar way one defines the notion of pseudo action
and homotopy action on the right. Any pseudo (resp., homotopy)
monoidal category carries a pseudo (resp., homotopy) action
on itself on both right and left; moreover, these two structures
commute in a natural sense.

\sssec{}   \label{functors between module categories}

If $\bA$ is a pseudo monoidal category with pseudo actions
on $\bC_1$ and $\bC_2$, a DG pseudo functor $F:\bC_1\to \bC_2$
compatible with the action is a functorial assignment for any
$X_I\in \bA^I$, $Y_1\in \bC_1,Y_2\in \bC_2$ of a complex
\begin{equation}  \label{funct data}
\Hom^\bullet_{\bA,\bC_1,\bC_2}("X^\otimes_I\otimes 
F(Y_1)",Y_2),
\end{equation}
where we should think of $"X^\otimes_I\otimes F(Y_1)"$ as the
corresponding non-existing object of $\bC_2$. (In the above formula
$I$ might be empty.)

\medskip

We must be given the following system of natural transformations.
Given three ordered finite sets $I_1$, $I_2$ and $I_3$,
$X_{I_j}\in \bA^{I_j}$, $j=1,2,3$, $Y'_1,Y''_1\in \bC_1$, $Y'_2,Y''_2\in \bC_2$,
consider the concatenation $I=I_3\cup I_2\cup I_1$. We should have a map 
\begin{multline}   \label{nat trans funct}
\Hom^\bullet_{\bA,\bC_1}("X^\otimes_{I_1}\otimes Y'_1",Y''_1)
\bigotimes \Hom_{\bA,\bC_1,\bC_2}^\bullet("X_{I_2}^\otimes \otimes
F(Y''_1)",Y'_2)\bigotimes \\
\bigotimes \Hom^\bullet_{\bA,\bC_2}("X^\otimes_{I_3}\otimes Y'_2",Y''_2)\to
\Hom^\bullet_{\bA,\bC_1,\bC_2}("X_{I}^\otimes \otimes F(Y'_1)",Y''_2).
\end{multline}

We should also be given natural transformations corresponding insertions
of unit objects. There natural transformations must satisfy the natural axioms
that we will not spell out here.

\medskip

We say that $F$ is a homotopy functor 
compatible with the action of $\bA$
if the data of $H^0\left(\Hom^\bullet_{\bA,\bC_1,\bC_2}
("X^\otimes_I\otimes F(Y_1)",Y_2)\right)$ comes from a functor $\Ho(\bC_1)\to \Ho(\bC_2)$,
compatible with the action of the monoidal category $\Ho(\bA)$.

\medskip

We say that $F$ is a quasi-equivalence, if the underlying functor
$\Ho(\bC_1)\to \Ho(\bC_2)$ is an equivalence.

\sssec{}   \label{DG cat of hom}

Pseudo functors
$\bC_1\to \bC_2$ compatible with the action of $\bA$ naturally form a DG category.
Namely, for two DG pseudo functors $F'$ and $F''$ we set
$\Hom_\bA^\bullet(F',F'')$ to be the be the sub-complex in
$$\underset{I;X_I,Y_1,Y_2}\Pi\, \Hom^{\bullet}_{\Comp_k}
\left(\Hom^\bullet_{\bA,\bC_1,\bC_2}("X^\otimes_I\otimes F''(Y_1)",Y_2),
\Hom^\bullet_{\bA,\bC_1,\bC_2}("X^\otimes_I\otimes F'(Y_1)",Y_2)\right),$$
that makes all diagrams corresponding to \eqref{nat trans funct} commute.

\medskip

Let us denote this DG category by $\on{PFunct}_{\bA}(\bC_1,\bC_2)$. 
We call an object of $\on{PFunct}_{\bA}(\bC_1,\bC_2)$ acyclic of all the 
complexes $\Hom^\bullet("X^\otimes_I\otimes Y_1",Y_2)$ are acyclic. The
resulting quotient traingulated category will be denoted
$$\bD(\on{PFunct}_{\bA}(\bC_1,\bC_2)):=:
\on{PFunct}^{\Ho}_{\bA}(\bC_1,\bC_2).$$

\medskip

When the pseudo action of $\bA$ on both $\bC_1$ and $\bC_2$ is
a homotopy action, we will denote the full subcategory of
$\bD(\on{PFunct}_{\bA}(\bC_1,\bC_2))$ formed by homotopy functors
by $\on{HFunct}_{\bA}(\bC_1,\bC_2)$.

\sssec{}

The structure of DG category on $\on{PFunct}_{\bA}(\bC_1,\bC_2)$
is part of a closed model category structure, where cofibrations are 
those maps $F'\Rightarrow F''$, for which all maps
$$\Hom^\bullet_{\bA,\bC_1,\bC_2}("X^\otimes_I\otimes F''(Y_1)",Y_2)\to
\Hom^\bullet_{\bA,\bC_1,\bC_2}("X^\otimes_I\otimes F'(Y_1)",Y_2)$$
are surjective.

\medskip

The model category structure on $\on{PFunct}_{\bA}(\bC_1,\bC_2)^{op}$
is akin to that on the category of DG modules over a DG associative algebra.

\medskip

A supply of fibrant objects in $\on{PFunct}_{\bA}(\bC_1,\bC_2)$ is provided
as in \secref{free functor} by the pair of adjoint functors
$$\on{PFunct_{gr}}_{\bA}(\bC_1,\bC_2) \leftrightarrows
\underset{n> 0}\Pi\, \on{PFunct_{gr}}(\bA^n\times \bC_1,\bC_2),$$
where the subscript "gr" stands for the 
graded-without-differential versions of the corresponding  categories.

\sssec{}   \label{DG models for action}

One defines compositions of homotopy functors compatible with
the action of $\bA$ following the pattern of \secref{pseudo comp}
and \secref{comp mon}. 

\medskip

Thus, given a homotopy monoidal category $\bA$, we can speak
about the 2-category $\DGMod(\bA)$. Its 0-objects are 
(essentially small)
DG categories, endowed with a homotopy action of $\bA$ and 
1-morphisms are the categories $\on{HFunct}_{\bA}(?,?)$.

\medskip

We can also have the 2-category $\TrMod(\Ho(\bA))$ of triangulated
categories equipped with an action of $\Ho(\bA)$. There exists
an evident forgetful 2-functor $\DGMod(\bA)\to \TrMod(\Ho(\bA))$.

\medskip

In what follows, for a 0-object $\bD\in \TrMod(\Ho(\bA))$, by its
DG model we will mean the fiber of the above map.

\ssec{Changing the acting category}  \label{changing acting}

Let $\bA_1,\bA_2$ be two pseudo monoidal categories, and let 
$F_\bA:\bA_1\to \bA_2$ be a pseudo monoidal functor. Let
$\bC$ be a DG category equipped with a homotopy action of 
$\bA_2$. In this subsection we shall construct the restriction
2-functor $\DGMod(\bA_2)\to \DGMod(\bA_1)$.

\sssec{}

Let $\bA_1$, $\bA_2$ be two DG pseudo monoidal categories, and
$\bC_1,\bC_2$ be two DG categories, endowed with pseudo
actions of $\bA_1$ and $\bA_2$, respectively. Let 
$F_\bA:\bA_1\to \bA_2$ a DG pseudo-monoidal functor.

\medskip

A DG pseudo functor $F_\bC:\bC_1\to \bC_2$ compatible with $F_\bA$
is a functorial assignment to $X_1{}_I\in \bA_1^I$, $Y_1\in \bC_1$,
$Y_2\in \bC_2$ of a complex
$$\Hom^\bullet_{\bA_1,\bC_1,\bC_2}
("X_1{}_I{}^\otimes\otimes F_\bC(Y_1)",Y_2),$$
endowed with the following system of natural transformations:

\medskip

For finite ordered sets $I,J,K,L$, a surjection
$J\twoheadrightarrow K$,
$X_1{}_I\in \bA_1^I$, $X_1{}_J\in \bA_1^J$, $X_1{}_L\in \bA_1^L$
$X_2{}_K\in \bA_2^K$
$Y'_1,Y''_1\in \bC_1$, $Y'_2,Y''_2\in \bC_2$, we need to be given a map
\begin{multline*}
\Hom^\bullet_{\bA_1,\bC_1}("X_1{}_L{}^\otimes \otimes Y'_1",Y''_1)\bigotimes \\
\bigotimes \Hom^\bullet_{\bA_1,\bC_1,\bC_2}
("X_1{}_I{}^\otimes \otimes F_\bC(Y''_1)",Y'_2)\bigotimes 
\left(\underset{k\in K}\otimes\, \Hom^\bullet_{\bA_1,\bA_2}("X_1{}_{J^k}{}^\otimes",
X_2{}_k)\right)\bigotimes\\
\bigotimes \Hom^\bullet_{\bA_2,\bC_2}("X_2{}_K{}^\otimes \otimes Y'_2",Y''_2)\to
\Hom^\bullet_{\bA_1,\bC_1,\bC_2}
("X_1{}_{J\cup I\cup L}^\otimes \otimes F_\bC(Y_1)",Y''_2).
\end{multline*}
These natural transformations are required to satisfy the natural axioms.

\medskip

If $\bA_1$, $\bA_2$ are homotopy monoidal categories, $F_\bA$
is a homotopy monoidal functor, and the pseudo actions of $\bA_i$ 
on $\bA_i$ for $i=1,2$ are homotopy actions, one says that $F_\bC$
is a homotopy functor, if the data of $H^0\left(\Hom^\bullet_{\bA_1,\bC_1,\bC_2}
("X_1{}_I{}^\otimes\otimes F_\bC(Y_1)",Y_2)\right)$ and natural transformations
comes from a functor $\Ho(\bC_1)\to \Ho(\bC_2)$, compatible with the
action of $\Ho(\bA_1)$ via $\Ho(F_\bA)$.

\sssec{}

Proceeding as in \secref{DG cat of hom}, for $\bA_1,\bA_2,F_\bA,\bC_1,\bC_2$
we introduce the DG category $\on{PFunct}_{F_\bA}(\bC_1,\bC_2)$, 
which has a closed model structure, and its homotopy category
$$\bD(\on{PFunct}_{F_\bA}(\bC_1,\bC_2)):=:
\on{PFunct}^{\Ho}_{F_\bA}(\bC_1,\bC_2),$$
which is a triangulated category. In the case of homotopy monoidal structures and 
actions, the latter contains a triangulated subcategory that consists of homotopy
functors, denoted
$\on{HFunct}_{F_\bA}(\bC_1,\bC_2)$.

\medskip

In addition, one has the functors:
$$\circ:\on{PFunct}^{\Ho}_{\bA_1}(\bC'_1,\bC_1) \times 
\on{PFunct}^{\Ho}_{F_\bA}(\bC_1,\bC_2)\to \on{PFunct}_{F_\bA}(\bC'_1,\bC_2)$$
and
$$\circ:\on{PFunct}^{\Ho}_{F_\bA}(\bC_1,\bC_2)\times
\on{PFunct}^{\Ho}_{\bA_2}(\bC_2,\bC'_2)\to \on{PFunct}_{F_\bA}(\bC_1,\bC'_2),$$
and in the case of homotopy monoidal structures, the functors
$$\circ:\on{HFunct}_{\bA_1}(\bC'_1,\bC_1) \times 
\on{HFunct}_{F_\bA}(\bC_1,\bC_2)\to \on{HFunct}_{F_\bA}(\bC'_1,\bC_2)$$
and
$$\circ:\on{HFunct}_{F_\bA}(\bC_1,\bC_2)\times
\on{HFunct}_{\bA_2}(\bC_2,\bC'_2)\to \on{HFunct}_{F_\bA}(\bC_1,\bC'_2),$$
defined by the procedure analogous to that of \secref{pseudo comp}
and \secref{comp mon}.

\sssec{}

Given pseudo monoidal categories $\bA_1,\bA_2$, a pseudo
monoidal functor $F_\bA$ and a category $\bC_2$ with a pseudo action of 
$\bA_2$, a DG category $\bC_{1,can}$ with a pseudo action of $\bA_1$
is called a restriction of $\bC_2$ with respect to $F_\bA$ if we are given an
object $F_{\bC,can}\in \on{PFunct}^{\Ho}_{F_\bA}(\bC_{1,can},\bC_2)$, such that
for any $\bC_1$ with a pseudo action of $\bA_1$, the functor 
$$G\mapsto F_{\bC,can}\circ G:\on{PFunct}^{\Ho}_{\bA_1}(\bC_1,\bC_{1,can})\to
\on{PFunct}^{\Ho}_{F_\bA}(\bC_1,\bC_2)$$
is an equivalence. By Yoneda's lemma, if a restriction exists, it is
canonically defined as a 0-obect in the appropriate 2-category of DG categories with a pseudo 
action of $\bA_1$, up to quasi-equivalence; we shall denote
by $\Res^{\bA_2}_{\bA_1}(\bC_2)$.

\begin{lem}
For any $\bC_2$ with a pseudo action of $\bA_2$, the restriction $\Res^{\bA_2}_{\bA_1}(\bC_2)$
exists. The pseudo functor 
$$F_{\bC_2,can}:\Res^{\bA_2}_{\bA_1}(\bC_2)\to \bC_2,$$
when regarded as a pseudo functor between plain DG categories,
is a quasi-equivalence.
\end{lem}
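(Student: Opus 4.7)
The plan is to construct $\Res^{\bA_2}_{\bA_1}(\bC_2)$ explicitly by first replacing the pseudo monoidal functor $F_\bA$ with a genuine lax DG monoidal functor (using the hut construction of Section \ref{mon huts}), then restricting the $\bA_2$-action along that lax monoidal functor, and finally transferring the result back along the resulting quasi-equivalence.

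First I would apply Section \ref{mon huts} to $F_\bA$ to obtain a diagram
$$\bA_1\overset{\Psi}\leftarrow \wt\bA_1\overset{\wt{F}_\bA}\to \bA_2,$$
where $\Psi$ is a lax DG monoidal quasi-equivalence, $\wt{F}_\bA$ is lax DG monoidal, and there is a quasi-isomorphism of DG natural transformations $F_\bA\circ \Psi\simeq \wt{F}_\bA$. Because $\wt{F}_\bA$ is literally lax DG monoidal, I can pull back the given DG pseudo action of $\bA_2$ on $\bC_2$ by the formula
$$\Hom^\bullet_{\wt\bA_1,\bC_2,\bC_2}("\wt X_I{}^\otimes\otimes Y'",Y''):=
\Hom^\bullet_{\bA_2,\bC_2,\bC_2}("\wt{F}_\bA(\wt X_I){}^\otimes \otimes Y'",Y''),$$
where the right-hand side uses the lax monoidal coherence maps of $\wt{F}_\bA$. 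This is automatically a homotopy action since $\Ho(\wt F_\bA)$ is monoidal and restriction of a monoidal action along a monoidal functor is again a monoidal action.

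Next I would transfer this $\wt\bA_1$-action along $\Psi$ to obtain an $\bA_1$-action on a DG category quasi-equivalent to $\bC_2$. The key technical input is an analogue of \lemref{mon quasi equiv} for actions: if $\Psi:\wt\bA_1\to \bA_1$ is a lax DG monoidal quasi-equivalence, then restriction along $\Psi$ induces an equivalence between the 2-categories of (essentially small) DG categories endowed with pseudo actions, and, for any fixed $\bC_1$, an equivalence $\on{PFunct}^{\Ho}_{\bA_1}(\bC_1,-)\simeq \on{PFunct}^{\Ho}_{\wt\bA_1}(\bC_1,-)$. Granting this, I would define $\Res^{\bA_2}_{\bA_1}(\bC_2)$ to be $\bC_2$ equipped with the resulting $\bA_1$-action, and define $F_{\bC,can}$ via the quasi-isomorphism $F_\bA\circ\Psi\simeq \wt F_\bA$ supplied by the hut. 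Since the underlying DG category is $\bC_2$ itself, $F_{\bC,can}$ is tautologically a quasi-equivalence of plain DG categories. The universal property then reduces to a diagram chase combining: (i) the equivalence induced by $\Psi$ on pseudo actions, (ii) the adjunction between restriction and the identity along the lax DG monoidal $\wt F_\bA$, and (iii) the equivalence $\on{PMon}^{\Ho}(\bA_1,\bA_2)\simeq \on{PMon}^{\Ho}(\wt\bA_1,\bA_2)$ of \lemref{mon quasi equiv}.

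The main obstacle will be proving the action-level analogue of \lemref{mon quasi equiv}, i.e.\ showing that a lax DG monoidal quasi-equivalence induces a homotopy equivalence of model categories of pseudo-actions and of the associated categories $\on{PFunct}_{F_\bA}(-,-)$. I expect this to follow formally: the model structure on $\on{PFunct}_{F_\bA}(\bC_1,\bC_2)$ described in Section \ref{DG cat of hom} (with cofibrant generators provided by the free construction analogous to \secref{free functor}) makes the restriction functor along $\Psi$ a right Quillen functor whose derived left adjoint is easily described by attaching new cells freely along the quasi-isomorphic coherence data supplied by $\Psi$; the unit and counit of this adjunction are quasi-isomorphisms because $\Psi$ itself is, cell by cell. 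Once that is set up, existence of $\Res^{\bA_2}_{\bA_1}(\bC_2)$ and the quasi-equivalence claim both become formal consequences.
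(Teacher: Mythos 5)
The paper states this lemma without proof, so the natural comparison is to what the surrounding machinery suggests is the intended argument: a direct construction of $\Res^{\bA_2}_{\bA_1}(\bC_2)$ as the DG category $\bC_2$ itself, with the pseudo $\bA_1$-action defined by composing the bimodule $M^\bullet_{F_\bA}$ with the bimodule underlying the $\bA_2$-action, by exactly the bar-resolution mechanism used to prove \lemref{lem pseudo comp} for pseudo compositions. That construction works purely at the level of pseudo structures, the universal property is a formal bimodule identity, and the quasi-equivalence claim is immediate because the $I=\emptyset$ component of the composed bimodule is literally the identity bimodule of $\bC_2$.

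Your proposal takes a genuinely different, more indirect route: strictify $F_\bA$ via the hut of \secref{mon huts}, restrict along the strict lax-monoidal leg, and transfer back along the quasi-equivalence $\Psi$. Two concrete problems with this. First, the hut of \secref{mon huts} only produces a quasi-equivalence $\Psi:\wt\bA_1\to\bA_1$ when $F_\bA$ is a \emph{homotopy} monoidal functor — the objects of $\wt\bA_1$ are triples $(X,Y,f)$ with $f$ inducing an isomorphism in $\Ho(\bA_2)$, and for a general pseudo monoidal functor such $(Y,f)$ need not exist, so $\Psi$ need not be essentially surjective. But the lemma as stated (and the paragraph defining $\Res$) is in the pseudo generality, so your argument covers only a special case. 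Second, the transfer step — "an action-level analogue of \lemref{mon quasi equiv}" — is not a diagram chase but a substantive claim that restriction along a lax DG monoidal quasi-equivalence induces a Quillen equivalence between the model categories of pseudo actions; you flag it but don't prove it, and your sketch ("cell by cell") elides the verification that the derived unit and counit are quasi-isomorphisms. Also note that your enumerated items (i)--(iii) at the end of the universal-property paragraph invoke $\on{PMon}^{\Ho}$, which is about monoidal functors, whereas the universal property concerns $\on{PFunct}^{\Ho}_{\bA_1}$ and $\on{PFunct}^{\Ho}_{F_\bA}$; the relevant equivalences are on the action side, not the monoidal-functor side.

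The most economical fix is to abandon the hut entirely and define the pseudo $\bA_1$-action on $\bC_2$ directly by the derived tensor $M^\bullet_{F_\bA}\underset{\bA_2}{\overset{L}\otimes} M^\bullet_{act}$ (in the sense of \lemref{lem pseudo comp}), with $F_{\bC,can}$ being the tautological map; then both assertions of the lemma follow by the same formal bimodule manipulations used throughout \secref{homotopy monoidal categories and actions}, and the argument applies at full pseudo generality. If you want to keep the hut route, you must add the standing hypothesis that $F_\bA$ is homotopy monoidal and supply a proof of the action-level \lemref{mon quasi equiv} using the explicit model structure of \secref{DG cat of hom}.
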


\sssec{}  \label{comp change}

Let $F'_\bA:\bA_1\to \bA_2$, $F''_\bA:\bA_2\to \bA_3$, $G_\bA:\bA_1\to \bA_3$
be pseudo monoidal functors between pseudo monoidal categories,
and let us be given a DG natural transformation $\phi_\bA:G_\bA\Rightarrow "F_\bA''
\circ F'_\bA"$. 

\medskip

Let $\bC_i$ be a DG category with a pseudo-action of $\bA_i$, $i=1,2,3$. Let
us be given pseudo functors $F'_\bC:\bC_1\to \bC_2$ and $F''_\bC:\bC_2\to \bC_3$,
compatible with $F'_\bA$ and $F''_\bA$, respectively.
 
\medskip

For a pseudo functor $G_\bC:\bC_1\to \bC_3$, compatible with $G_\bA$, we define 
the set
$$\Hom_{\on{PFunct}_{\phi_\bA}(\bC_1,\bC_3)}(G_\bC,"F''_\bC\circ F'_\bC").$$

Proceeding as in Sects. \ref{comp as colim} and \ref{comp mon as colim}, we define also
the set
$$\Hom_{\on{PFunct}^{\Ho}_{\phi_\bA}(\bC_1,\bC_3)}(G_\bC,"F''_\bC\circ F'_\bC").$$

\begin{lem} \label{comp different}
The functor
$$G_\bC\mapsto \Hom_{\on{PFunct}^{\Ho}_{\phi_\bA}(\bC_1,\bC_3)}
(G_\bC,"F''_\bC\circ F'_\bC")$$
on $\on{PFunct}^{\Ho}_{G_\bA}(\bC_1,\bC_3)$ is representable. The universal object,
denoted $F'_\bC\circ F''_\bC$ and called the pseudo composition of $F'_\bC$ and $F''_\bC$,
induces a pseudo composition at the level of plain categories. If the categories, actions 
and functors in question are homotopy monoidal, then the map
$$\Ho(\phi_\bC):\Ho(F''_\bC\circ F'_\bC)\Rightarrow \Ho(F''_\bC)\circ \Ho(F'_\bC)$$
is an isomorphism.
\end{lem}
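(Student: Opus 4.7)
The plan is to follow the pattern of \lemref{lem pseudo comp} and its monoidal analogue from \secref{comp mon as colim}, adapting both to the situation where the acting pseudo-monoidal categories differ and are related by $F'_\bA, F''_\bA, G_\bA$ and the DG natural transformation $\phi_\bA$. The key observation, already implicit in \secref{DG cat of hom} and \secref{comp change}, is that $\on{PFunct}_{G_\bA}(\bC_1,\bC_3)^{op}$ carries a closed model structure with a supply of fibrant objects given by the free construction analogous to \secref{free functor}, making it behave formally like a category of DG bimodules over DG algebras. Representability of the desired Hom-functor then follows once a suitable twisted ``bimodule tensor product'' construction is produced.

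First, I would pass to strict models by applying the hut construction of \secref{mon huts} to replace each of $F'_\bA, F''_\bA, G_\bA$ (and then $F'_\bC, F''_\bC$) by quasi-isomorphic lax DG monoidal, respectively compatible, functors; by \lemref{mon quasi equiv} this does not alter the relevant $\on{PFunct}^{\Ho}$ categories, and $\phi_\bA$ lifts to this strict setting up to homotopy. Next, I would construct the universal object $F''_\bC \circ F'_\bC$ by the analogue of the formula $M^\bullet_{F_n}\overset{L}{\otimes} \cdots \overset{L}{\otimes}M^\bullet_{F_1}$ from the proof of \lemref{lem pseudo comp}: the complex computing $\Hom^\bullet_{\bA_1,\bC_1,\bC_3}("X_I^\otimes \otimes (F''_\bC \circ F'_\bC)(Y_1)",Y_3)$ is defined as a homotopy colimit (bar resolution) over partitions of $I$ into blocks $I_1, I_2$, intermediate objects $Y_2\in \bC_2$ and sequences $X_J\in \bA_2^J$, of
$$\Hom^\bullet_{F'_\bA}("X_{I_1}^\otimes \otimes F'_\bC(Y_1)",Y_2)\otimes
\Hom^\bullet_{F'_\bA, F''_\bA, \phi_\bA}(\ldots)\otimes
\Hom^\bullet_{F''_\bA}("X_J^\otimes \otimes X_{I_2}^\otimes \otimes F''_\bC(Y_2)",Y_3),$$
where the middle factor records the data of $\phi_\bA$ matching the $\bA_2$-sequences produced from the first term with those consumed by the second. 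The universal property then follows by the same adjunction argument as in \lemref{lem pseudo comp}.

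The assertion that this construction induces the pseudo composition at the level of plain DG categories is obtained by forgetting all the $\bA$-actions: the cells enforcing $\phi_\bA$-compatibility become vacuous, and the formula collapses to the plain bimodule tensor product of \secref{pseudo comp}. For the homotopy-monoidal statement, the claim that $\Ho(\phi_\bC):\Ho(F''_\bC\circ F'_\bC)\Rightarrow \Ho(F''_\bC)\circ \Ho(F'_\bC)$ is an isomorphism then reduces, via this forgetful functor, to the corresponding assertion for ordinary pseudo compositions established at the end of \secref{pseudo comp}, combined with the fact that the $\bA_i$-actions are by hypothesis compatible with passage to homotopy categories.

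The hard part will be carefully setting up the bar resolution encoding all three types of associativity data simultaneously: the left $\bA_1$-action, the right $\bA_3$-action, and the mediation through $\bA_2$ via $\phi_\bA$. The coherence conditions and the number of cells grow substantially compared to \lemref{lem pseudo comp}, and one must verify that the resulting object is genuinely cofibrant in the model structure on $\on{PFunct}_{G_\bA}(\bC_1,\bC_3)^{op}$, so that its Hom into an arbitrary $G_\bC$ computes the non-derived colimit displayed in \secref{comp change}. This is bookkeeping, but it is the only nontrivial ingredient; once the cofibrancy and universality are checked, everything else follows formally from the analogous statements already proved in Sects.~\ref{pseudo comp} and \ref{comp mon as colim}.
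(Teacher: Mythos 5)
Your approach is the intended one: the paper gives no explicit proof of \lemref{comp different}, and the surrounding text ("Proceeding as in Sects.~\ref{comp as colim} and \ref{comp mon as colim}") signals that the reader is to adapt the proof of \lemref{lem pseudo comp}, namely the representing object is the derived bimodule tensor product $M^\bullet_{F''_\bC}\overset{L}{\underset{\bC_2}\otimes}M^\bullet_{F'_\bC}$ carried along with the $\bA_1$- and $\bA_3$-compatibility data mediated by $\phi_\bA$, which is exactly the bar construction you outline. One minor caveat: forgetting the $\bA$-actions does not make the $\phi_\bA$-compatibility cells "vacuous"; rather, the twisted bar object simply maps to the untwisted one of \secref{pseudo comp} under the forgetful functor $\on{PFunct}_{G_\bA}(\bC_1,\bC_3)\to\on{PFunct}(\bC_1,\bC_3)$, and the "induces a pseudo composition at the level of plain categories" claim is that this image is the representing object of \lemref{lem pseudo comp} — but this is a matter of phrasing, not a gap.
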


\sssec{}

In the above setting, let us take $\bC_2:=\Res^{\bA_3}_{\bA_2}(\bC_3)$ with 
$F''_\bC$ being $F''_{\bC_3,can}$, and
$\bC_1:=\Res^{\bA_2}_{\bA_1}(\bC_2)$ with $F'_\bC$ being $F'_{\bC_2,can}$.
Assume that the arrow $\phi_\bA$ defines an isomorphism
$G_\bA\simeq F''_\bA\circ F'_\bA$.

\begin{lem}  \label{compose restrictions}
Under the above circumstances the 1-morphism $$\bC_1\to  \Res^{\bA_3}_{\bA_1}(\bC_3)$$ 
is a 1-isomorphism in the 2-category of DG categories with a pseudo 
action of $\bA_1$, up to quasi-equivalence. 

In particular, if the categories and actions are homotopy monoidal, then
the above 1-morphism is an isomorphism in $\DGMod(\bA_1)$.
\end{lem}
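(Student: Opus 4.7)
The plan is to verify the claim by comparing Hom-categories and invoking the universal property that characterizes $\Res^{\bA_3}_{\bA_1}(\bC_3)$. Concretely, I would first produce the candidate $1$-morphism $\bC_1 \to \Res^{\bA_3}_{\bA_1}(\bC_3)$ by applying \lemref{comp different} to the pair $(F'_{\bC_2,can}, F''_{\bC_3,can})$ together with the DG natural transformation $\phi_\bA$: this gives a pseudo composition
$$F''_{\bC_3,can} \circ F'_{\bC_2,can} : \bC_1 \to \bC_3,$$
compatible with $G_\bA$ via $\phi_\bA$. The universal property defining $\Res^{\bA_3}_{\bA_1}(\bC_3)$ (its $F_{\bC_3,can}$) then produces the desired 1-morphism
$$\Phi : \bC_1 \to \Res^{\bA_3}_{\bA_1}(\bC_3)$$
in the 2-category of DG categories with a pseudo action of $\bA_1$.

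To check that $\Phi$ is a 1-isomorphism (up to quasi-equivalence), I would test it against an arbitrary DG category $\bD$ with pseudo action of $\bA_1$. By the defining universal property of $\Res^{\bA_3}_{\bA_1}(\bC_3)$, composition with $F_{\bC_3,can}$ identifies
$$\on{PFunct}^{\Ho}_{\bA_1}(\bD, \Res^{\bA_3}_{\bA_1}(\bC_3)) \;\simeq\; \on{PFunct}^{\Ho}_{G_\bA}(\bD, \bC_3).$$
On the other hand, applying the universal property of $\bC_1=\Res^{\bA_2}_{\bA_1}(\bC_2)$ and then of $\bC_2=\Res^{\bA_3}_{\bA_2}(\bC_3)$ yields
$$\on{PFunct}^{\Ho}_{\bA_1}(\bD, \bC_1) \;\simeq\; \on{PFunct}^{\Ho}_{F'_\bA}(\bD, \bC_2) \;\simeq\; \on{PFunct}^{\Ho}_{F''_\bA \circ F'_\bA}(\bD, \bC_3),$$
where the second equivalence is obtained by post-composition with $F''_{\bC_3,can}$, again using the universal property (this time phrased against the pseudo monoidal functor $F''_\bA$). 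The fact that $\phi_\bA : G_\bA \Rightarrow \text{``}F''_\bA \circ F'_\bA\text{''}$ is an isomorphism identifies the two target categories $\on{PFunct}^{\Ho}_{G_\bA}$ and $\on{PFunct}^{\Ho}_{F''_\bA \circ F'_\bA}$, and one traces through the definitions to see that the composite equivalence $\on{PFunct}^{\Ho}_{\bA_1}(\bD, \bC_1) \simeq \on{PFunct}^{\Ho}_{\bA_1}(\bD, \Res^{\bA_3}_{\bA_1}(\bC_3))$ is induced precisely by post-composition with $\Phi$.

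Once this is established for every $\bD$, Yoneda (in the 2-category $\DGMod$) implies that $\Phi$ is a 1-isomorphism up to quasi-equivalence. The homotopy monoidal case is then a formal consequence: restricting to homotopy functors on both sides of the above identifications is preserved because each restriction $\Res$ is defined so that the class of homotopy functors is stable under the adjunction, and $\phi_\bA$ being an isomorphism on the level of homotopy monoidal functors (by assumption it is an iso already as a DG natural transformation, hence a fortiori after passing to $\Ho$) ensures the comparison respects the distinguished subcategories $\on{HFunct}$.

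The main technical obstacle I anticipate is the step identifying $\on{PFunct}^{\Ho}_{F'_\bA}(\bD, \Res^{\bA_3}_{\bA_2}(\bC_3))$ with $\on{PFunct}^{\Ho}_{F''_\bA \circ F'_\bA}(\bD, \bC_3)$ via post-composition with $F''_{\bC_3,can}$: this is a "mixed-restriction" version of the defining universal property of $\Res^{\bA_3}_{\bA_2}$, and requires that the universal property holds not just for pseudo functors compatible with $\bA_2$, but also for pseudo functors compatible with an arbitrary pseudo monoidal functor $F'_\bA$ into $\bA_2$. Verifying this in the DG setting will need a careful application of the model structure on $\on{PFunct}_{?}$ described in \secref{DG cat of hom} together with the "free" fibrant-resolution construction of \secref{free functor}, in order to manipulate the relevant pseudo compositions strictly at the level of bimodules rather than merely up to homotopy.
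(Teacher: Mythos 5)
The paper gives no proof of this lemma, so there is nothing to compare against directly; one must assess the viability of your argument on its own terms.

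Your plan via universal properties is plausible in outline, and you correctly isolate the load-bearing step. But the ``mixed-restriction'' universal property you need — that post-composition with $F''_{\bC_3,can}$ induces an equivalence
$\on{PFunct}^{\Ho}_{F'_\bA}(\bD,\Res^{\bA_3}_{\bA_2}(\bC_3))\to \on{PFunct}^{\Ho}_{F''_\bA\circ F'_\bA}(\bD,\bC_3)$
for $\bD$ carrying only an $\bA_1$-action — is not a consequence of the defining universal property of $\Res^{\bA_3}_{\bA_2}$ as stated in \S\ref{changing acting}, which quantifies only over $\bD'$ carrying an $\bA_2$-action. Establishing the mixed version essentially requires re-running the (unproven) existence argument for $\Res$ in a more general setting, so the step you flag as the ``main technical obstacle'' is a genuine gap in the argument as written, not a routine verification.

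There is, however, a short route that avoids the mixed universal property entirely and uses only what is explicitly stated in the paper. The lemma immediately preceding the construction records that each canonical $1$-morphism $F_{\bC,can}$ from a restriction to the original category is a \emph{quasi-equivalence of plain DG categories}. Apply this to $F'_{\bC_2,can}\colon\bC_1\to\bC_2$, to $F''_{\bC_3,can}\colon\bC_2\to\bC_3$, and to $F_{\bC_3,can}\colon\Res^{\bA_3}_{\bA_1}(\bC_3)\to\bC_3$. The pseudo-composition $F''_{\bC_3,can}\circ F'_{\bC_2,can}\colon\bC_1\to\bC_3$ (furnished by \lemref{comp different}) is then a plain quasi-equivalence, and by construction of $\Phi$ we have $F_{\bC_3,can}\circ\Phi\simeq F''_{\bC_3,can}\circ F'_{\bC_2,can}$ in $\on{PFunct}^{\Ho}_{G_\bA}(\bC_1,\bC_3)$. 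Two-out-of-three for quasi-equivalences of plain DG categories then forces $\Phi$ to be a plain quasi-equivalence, which is precisely what ``$1$-isomorphism up to quasi-equivalence'' in the $2$-category of DG categories with pseudo $\bA_1$-action demands. The homotopy-monoidal addendum follows formally, since a $1$-morphism in $\DGMod(\bA_1)$ whose underlying plain functor is a quasi-equivalence is invertible in $\DGMod(\bA_1)$. This argument is both shorter and avoids introducing a universal property the paper has not established.
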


The upshot of the lemma is that we have a canonical equivalence in 
$\DGMod(\bA_1)$:
$$\Res^{\bA_3}_{\bA_1}(\bC_3)\simeq \Res^{\bA_2}_{\bA_1}(\Res^{\bA_3}_{\bA_2}(\bC_3)).$$

\ssec{}   \label{ind extensions}

Let $\bA$ be a DG pseudo monoidal category. We claim that
$\ua\bA$ naturally acquires a DG pseudo monoidal structure. 
Namely, for a finite order set $I$ and 
$X_i\in \ua\bA, i\in I$, $Y\in \ua\bA$,
given by
$$\{\underset{k_i\geq 0}\oplus\, X_i^{k_i},\Phi_i\} \text{ and }
Y=\{\underset{m\geq 0}\oplus\, Y^m,\Psi\}, \text{ respectively}$$ set
$$\Hom^\bullet("X_I^\otimes",Y):=
\underset{\alpha}\Pi\, \underset{m}\oplus\,
\Hom^\bullet("X^\otimes_{I_\alpha}",Y^m),$$
where $\alpha$ runs over the set of maps $I\to \BZ^{\geq 0}$,
each defining a map $I\overset{X_{I_\alpha}}\to 
\bA:i\mapsto X_i^{\alpha(i)}$.
The differential in the above complex is given using the maps
$\Phi$ and $\Psi$.

\medskip

If the DG pseudo monoidal structure on $\bA$ is a homotopy
monoidal structure, then so will be the case for $\ua\bA$.

\medskip

Similarly, if $\bA_1$ and $\bA_2$ are two DG pseudo monoidal categories
and $F:\bA_1\to \bA_2$ is a pseudo monoidal functor, it extends to 
a pseudo monoidal functor $\ua\bA{}_1\to \ua\bA{}_2$. If $F$ is a homotopy
monoidal functor between homotopy monoidal categories, so will be its
extension.

\sssec{}

Let $\bC$ be a DG category equipped with a DG pseudo
action of a pseudo monoidal category $\bA$. Then it
extends to a DG pseudo action of $\ua\bA$ on $\ua\bC$,
preserving the property of being a homotopy action.

\medskip

By a similar procedure, given a 1-morphism  
(resp., homotopy functor) $F:\bC_1\to \bC_2$ in $\DGMod(\bA)$
categories we extend it to a 1-morphism $\ua\bC{}_1\to \ua\bC{}_2$
in $\DGMod(\ua\bA)$.

\sssec{}   \label{action on subcat}

If $\bA$ is a DG pseudo monodal category, and $\bA'\subset \bA$
is a full DG subcategory, it automatically inherits a DG pseudo monodal
structure. If $\bA$ is a homotopy monoidal category, then $\bA'$
will be such if and only if $\Ho(\bA')$ is a monoidal subcategory of $\Ho(\bA)$.

\medskip

If $F:\bA_1\to \bA_2$ is a pseudo monoidal
functor and $\bA'_2\subset \bA_2$ a DG subcategory,
we obtain a pseudo monoidal functor $F':\bA_1\to \bA'_2$. If $F$
is a homotopy monoidal functor between homotopy monoidal categories,
and $\bA'_2$ is also a homotopy monoidal category, then
$F'$ is a homotopy monoidal functor if and only if $\Ho(F)$ sends $\Ho(\bA_1)$
to $\Ho(\bA'_2)$. Analogously to \secref{sub and nat}, this establishes
an equivalence between the category of 1-morphisms $\bA_1\to \bA'_2$
and the full subcategory of the category of those 1-morphisms $\bA_1\to \bA_2$
whose essential image on the homotopy level belongs to $\Ho(\bA'_2)$.

\medskip

By the same token if we have a pseudo action of $\bA$ on $\bC$
and $\bC'\subset \bC$ is a full DG subcategory, we have a 
pseudo action of $\bA$ on $\bC'$. If initially we had a homotopy
action of $\bA$ on $\bC$, then it will be the case for $\bC'$
if and only if the $\Ho(\bA)$-action on $\Ho(\bC)$ preserves
$\Ho(\bC')$.

\medskip

A similar discussion applies to DG pseudo functors
and homotopy functors $\bC_1\to \bC_2$ in $\DGMod(\bA)$.

\sssec{}

Let $\bA$ be a DG pseudo monoidal category, and $\bA'\subset \bA$
a full subcategory. By the construction of quotients in \secref{DG quotients}
and the above discussion, the DG category $\bA/\bA'$ acquires
a natural DG pseudo monoidal structure. If $\bA$ was a homotopy
monoidal category, then $\bA/\bA'$ will be such if and only if
$\Ho(\bA')\subset \Ho(\bA)$ is a monoidal ideal. 

\medskip

In the situation of a DG pseudo monoidal functor $F:\bA_1\to \bA_2$
we obtain a pseudo monoidal functor $\bA_1/\bA'_1\to \bA_2/\bA'_2$.
If the initial situaition was homotopy monoidal, then the latter functor
will be homotopy monoidal if and only if $\Ho(F)(\bA'_1)\subset \Ho(\bA'_2)$.

In particular, the canonical homotopy functor $\bA\to \bA/\bA'$ naturally
extends to a pseudo monoidal functor, which is homotopy monoidal 
if $\Ho(\bA')$ is an ideal.

A similar discussion applies to the situation when we have an action
of $\bA$ on $\bC$ and a DG subcategory $\bC'$. In particular,
we obtain:

\begin{lem}   \label{action on quotient} \hfill

\smallskip

\noindent(1)
Let $\bA$ be a homotopy monoidal category with a homotopy
action on a DG category $\bC$. Let $\bA'\subset \bA$,
$\bC'\subset \bC$ be DG subcategories. Assume that
$$\Ho(\bA')\times \Ho(\bC)\mapsto \Ho(\bC') \text{ and }
\Ho(\bA)\times \Ho(\bC')\mapsto \Ho(\bC').$$
Then we have a well-defined homotopy action of $\bA/\bA'$
on $\bC/\bC'$.

\smallskip

\noindent(2)
For a DG category $\bC_1$ with a homotopy action of $\bA/\bA'$
the following two categories are equivalent:

\smallskip

\noindent(a)
$\on{HFunct}_{\bA/\bA'}(\bC/\bC',\bC_1)$.

\smallskip

\noindent(b) The 
full subcategory of $\on{HFunct}_{\bA}(\bC,\Res^{\bA/\bA'}_{\bA}(\bC_1))$,
consisting of homotopy functors compatible with the action,
for which the underlying plain homotopy functor $\bC\to \bC_1$ 
factors through $\bC/\bC'$.
\end{lem}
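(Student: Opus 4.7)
\medskip

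My plan is to prove part (1) by constructing the pseudo action via descent from $\ua\bA$ acting on $\ua\bC$, and to prove part (2) by combining the universal property of DG quotients (\secref{univ ppty quotient}) with the universal property of the restriction construction described in \secref{changing acting}.

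\medskip

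For part (1), I would first invoke \secref{ind extensions} to extend the given homotopy action of $\bA$ on $\bC$ to a homotopy action of $\ua\bA$ on $\ua\bC$. The hypotheses translate, on the homotopy level, into the statements $\Ho(\ua\bA')\cdot \Ho(\ua\bC)\subset \Ho(\ua\bC')$ and $\Ho(\ua\bA)\cdot \Ho(\ua\bC')\subset \Ho(\ua\bC')$; this is because the objects of $\ua\bA$ (resp.\ $\ua\bC$) are, up to quasi-isomorphism, iterated extensions of shifts of objects of $\bA$ (resp.\ $\bC$), and both $\Ho(\ua\bA')$ and $\Ho(\ua\bC')$ are generated by the corresponding subcategories of $\ua$. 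Realizing $\bA/\bA'$ as a full DG subcategory of $\ua\bA'{}^\perp\subset \ua\bA$ and $\bC/\bC'$ as a full DG subcategory of $\ua\bC'{}^\perp\subset \ua\bC$ (as in \secref{realize quotient}), I would then apply a pseudo-action version of the construction in \secref{weak to quotient} to the action functor $\ua\bA\times\ua\bC\to \ua\bC$: the two ideal conditions guarantee that the complexes $\Hom^\bullet_{\bA,\bC}("X^\otimes_I \otimes Y'",Y'')$ acquire well-defined images after localizing at quasi-isomorphisms modulo $\bC'$ in the source and modulo $\bC'$ in the target, yielding the required pseudo action data on $\bC/\bC'$. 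The homotopy monoidality of this action is automatic since $\Ho(\bA/\bA')=\Ho(\bA)/\Ho(\bA')$ acts on $\Ho(\bC/\bC')=\Ho(\bC)/\Ho(\bC')$ in the evident way.

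\medskip

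For part (2), one direction is constructed as follows. By part (1), the canonical projection $p_{can}:\bC\to \bC/\bC'$ is naturally a pseudo functor compatible with the pseudo monoidal functor $F_\bA:\bA\to \bA/\bA'$, and so defines an object of $\on{PFunct}^{\Ho}_{F_\bA}(\bC,\bC/\bC')$. By the universal property defining the restriction, this is the same as an object of $\on{HFunct}_{\bA}(\bC,\Res^{\bA/\bA'}_{\bA}(\bC/\bC'))$. Given $F\in \on{HFunct}_{\bA/\bA'}(\bC/\bC',\bC_1)$, applying the functor $\Res^{\bA/\bA'}_{\bA}$ produces $\Res(F)\in \on{HFunct}_{\bA}(\Res^{\bA/\bA'}_{\bA}(\bC/\bC'),\Res^{\bA/\bA'}_{\bA}(\bC_1))$, and composition using \lemref{compose restrictions} produces the desired element of (b); by construction its underlying plain homotopy functor equals $\Ho(F)\circ \Ho(p_{can})$, which manifestly factors through $\bC/\bC'$.

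\medskip

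The inverse direction, and the verification that the two assignments are mutually quasi-inverse, is the point where I expect the main technical difficulty. Given $G\in \on{HFunct}_{\bA}(\bC,\Res^{\bA/\bA'}_{\bA}(\bC_1))$ whose underlying plain functor factors through $\bC/\bC'$, the plain functor $\bC/\bC'\to \bC_1$ is uniquely produced by \lemref{univ ppty quotient}. The non-trivial task is to upgrade this to a homotopy functor compatible with the $\bA/\bA'$-action. The idea is to observe that the compatibility data for $G$ is encoded in morphisms of complexes $\Hom^\bullet_{\bA,\bC,\Res}("X^\otimes_I\otimes G(Y)",Z)$ that, by the hypothesis $\Ho(\bA')\cdot \Ho(\bC)\subset \Ho(\bC')$ together with the factorization assumption, vanish whenever any $X_i$ lies in $\bA'$ or $Y$ lies in $\bC'$; this is precisely the condition required, in the pseudo-action analog of \secref{weak to quotient}, to descend the structure to a homotopy functor $\bC/\bC'\to \bC_1$ compatible with the $\bA/\bA'$-action, using the realization of both quotients as right orthogonals inside the respective $\ua$ categories. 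Finally, that the two constructions are mutually quasi-inverse, and that all coherences are preserved, would be verified via the composition formalism: one applies \lemref{comp different} to rewrite both round-trips as pseudo compositions, and uses the universal properties of $\Res^{\bA/\bA'}_{\bA}$ and of the DG quotient, together with \lemref{compose restrictions}, to identify them with the identity functors.
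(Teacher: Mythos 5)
Your proof is essentially correct and follows the same route the paper (implicitly) takes: the paper states this lemma as a direct consequence of the discussion in \secref{DG quotients}, \secref{realize quotient}, \secref{weak to quotient} and its generalization in \secref{weak quotient tensor}, without spelling out the verification, whereas you fill in the details. The one place where your write-up is slightly loose: for part (1), the co-representability and vanishing conditions required by \secref{weak quotient tensor} are what must actually be checked, rather than the auxiliary containment $\Ho(\ua\bA')\cdot \Ho(\ua\bC)\subset\Ho(\ua\bC')$; the two ideal hypotheses of the lemma give exactly these conditions (the functor $''Y\mapsto \on{colim}\,H^0(\Hom^\bullet_{\bA,\bC}("X_1\otimes\cdots\otimes X_n\otimes {}'Y",{}''Y'))$ on $\Ho(\bC)/\Ho(\bC')$ is co-representable because the $\bA$-action on $\bC$ is a homotopy action, and it vanishes when some $X_i\in\bA'$ or ${}'Y\in\bC'$ precisely by the stated absorbing properties), and your direct-sum/extension phrasing at the $\ua$-level is an unnecessary detour. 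For part (2), your descent argument for direction (b)$\to$(a) — that factorization of the plain homotopy functor through $\bC/\bC'$ plus the nullity of the $\bA'$-action on $\Res^{\bA/\bA'}_{\bA}(\bC_1)$ force the vanishing hypotheses of \secref{weak quotient tensor} — is exactly the right observation, and the verification of mutual inverseness via \lemref{comp different}, \lemref{compose restrictions} and \lemref{univ ppty quotient} is what the paper has in mind.
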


\sssec{}   \label{weak quotient tensor}

In the sequel we will need a generalization of the above discussion
along the lines of \secref{weak to quotient}. Let $\bA$ be a DG pseudo-monoidal
category (resp., $\bC$ a DG category with a pseudo-action of $\bA$;
$F:\bC_1\to \bC_2$ a pseudo functor between such categories,
compatible with the pseudo-actions of $\bA$.)

\medskip

Let $\bA'\subset \bA$ (resp., $\bC'\subset \bC$; $\bC'_i\subset \bC_i$)
be DG subcategories. Suppose that the following conditions hold:

\begin{itemize}

\item
For $X_1,...,X_n\in \bA$, the functor on $\Ho(\bA)/\Ho(\bA')$ given by
$$X\mapsto \underset{f:X\to X'}{colim}\, 
H^0\left(\Hom^\bullet_{\bA}("X_1\otimes...\otimes X_n",X')\right)$$
is co-representable, and if one of the $X_i$'s belongs to $\bA'$,
then it equals zero. (The colimit is taken over the set of arrows $f$
with $\on{Cone}(f)\in \bA'$).

\item
For $X_1,...,X_n\in \bA$, $'Y\in \bC$, the functor on $\Ho(\bC)/\Ho(\bC')$ given by
$$''Y\mapsto \underset{f:{}''Y\to {}''Y'}{colim}\, 
H^0\left(\Hom^\bullet_{\bA,\bC}("X_1\otimes...\otimes X_n\otimes {}'Y",{}''Y')\right)$$
is co-representable, and if one of the $X_i$'s belongs to $\bA'$ or
$'Y$ belongs to $\bC'$, then it equals zero.

\item
For $X_1,...,X_n\in \bA$, $Y_1\in \bC_1$, the functor on 
$\Ho(\bC_2)/\Ho(\bC_2')$ given by
$$Y_2\mapsto \underset{f:Y_2\to Y_2'}{colim}\, 
H^0\left(\Hom^\bullet_{\bA,\bC_1,\bC_2}("X_1\otimes...\otimes X_n\otimes F(Y_1),Y'_2)\right)$$
is co-representable, and if one of the $X_i$'s belongs to $\bA'$ or
$Y_1$ belongs to $\bC_1'$, then it equals zero.

\end{itemize}

Then the construction of \secref{weak to quotient} endows $\bA/\bA'$ with
a structure of homotopy monoidal category, the category $\bC/\bC'$ with
a homotopy action of $\bA/\bA'$ and the homotopy
functor $\bC_1/\bC'_1\to \bC_2/\bC'_2$ with the structure of
compatibility with the homotopy action of $\bA/\bA'$.

\sssec{}

Finally, from \secref{Karoubian}, we obtain that if $\bA$ is a DG pseudo monoidal 
(resp., homotopy monoidal) category, then so is $\bA^{Kar}$.
Any DG pseudo monoidal (resp., homotopy monoidal)
functor $\bA_1\to \bA_2$ gives rise to a DG pseudo monoidal (resp., homotopy monoidal) functor $\bA_1^{Kar}\to \bA_2^{Kar}$, and similarly
for actions.

\section{Tensor products of categories}   \label{tensor products of categories}

\ssec{}

Let $\bC_1$ and $\bC_2$ be two DG categories. We form a non-pretriangulated 
DG category $(\bC_1\otimes \bC_2)^{non-pretr}$ to have as objects pairs
$X_1,X_2$ with $X_i\in \bC_i$ and 
$$\Hom((X_1,X_2),(Y_1,Y_2))=
\Hom_{\bC_1}(X_1,Y_1)\otimes \Hom_{\bC_2}(X_2,Y_2).$$
We define $\bC_1\otimes \bC_2$ as the strongly pre-triangulated
envelope of $(\bC_1\otimes \bC_2)^{non-pretr}$. 

\medskip

If $F_1:\bC_1\to \bC'_1$ and $F_2:\bC_2\to \bC'_2$ are 
1-morphisms in $\DGCat$, we have a well-defined 
1-morphism 
$$F_1\otimes F_2:\bC_1\otimes \bC_2\to \bC'_1\otimes \bC'_2.$$

\medskip

Note that for any DG category $\bC$, we have:
$$\bC\otimes \Comp^f_k\simeq \bC,$$
where $\Comp^f_k$ is the DG category of finite-dimensional complexes.

\sssec{}   \label{tensor free}

Let $\bA_1$ and $\bA_2$ be two DG categories equipped 
with a DG pseudo monoidal (resp., homotopy monoidal)
structure. Then their tensor product $\bA_1\otimes \bA_2$
is naturally a DG pseudo monoidal (resp., homotopy monoidal)
category.

\medskip

Indeed, for $(X^1_1,...,X^n_1)\in \bA_1$, $(X^1_2,...,X^n_2)\in \bA_2$,
$Y_1\in \bA_1$, $Y_2\in \bA_2$, we set
\begin{align*}
\Hom^\bullet_{\bA_1\otimes \bA_2}
\left("(X^1_1,X^1_2)\otimes...\otimes (X^n_1,X^n_2)",(Y_1,Y_2)\right):=\\
=\Hom^\bullet_{\bA_1}("X^1_1\otimes...\otimes X^n_1",Y_1)\otimes
\Hom^\bullet_{\bA_2}("X^1_2\otimes...\otimes X^n_2",Y_2),
\end{align*}
and this uniquely extends onto arbitrary objects of $\bA_1\otimes \bA_2$.

\medskip

Similarly, if in the above situation $\bA_1$ is endowed with
a DG pseudo action (resp., homotopy action) on $\bC^l_1$
and similarly for the pair $(\bA_2,\bC^l_2)$ we have a 
DG pseudo action (resp., homotopy action) of $\bA_1\otimes \bA_2$
on $\bC^l_1\otimes \bC^l_2$.

\medskip

As a particular case, we obtain that given 
a DG pseudo action (resp., homotopy action) of $\bA$ on $\bC^l$,
for an arbitrary DG category $\bC'$, the tensor product
$\bC^l\otimes \bC'$ carries a DG pseudo action (resp., homotopy action)
of $\bA$.

\ssec{}    \label{def ten prod}

Suppose now that $\bA$ is a DG category with a homotopy monoidal
structure. Let $\bC^r$ and $\bC^l$ be two DG categories equipped with
homotopy right and left actions of $\bA$, respectively. Our present
goal is to define a new DG category, which would be the tensor product
of $\bC^r$ and $\bC^l$ {\it over} $\bA$, denoted
\begin{equation} \label{DG ten prod}
\bC^r\underset{\bA}\otimes \bC^l.
\end{equation}

The construction given below was explained to us by J. Lurie.

\sssec{}

First, we define a DG category $\Bar(\bC^r,\bA,\bC^l)^{non-pretr}$. Its 
objects are 
\begin{equation} \label{object}
(Y^r,\ol{X}^n,Y^l):=(Y^r,\underset{n}{\underbrace{X^1,...,X^n}},Y^l),
\end{equation}
where $n \in \{0,1,2...\}$, $Y^r\in \bC^r$, $Y^l\in \bC^l$
and $X^i\in \bA$. In what follows, for a fixed $n$, we shall denote the 
corresponding subcategory by $\Bar^n(\bC^r,\bA,\bC^l)$.

\medskip

For an object $(Y_1^r,\ol{X}_1^n,Y^l_1)$ and an object
$(Y_2^r,\ol{X}_2^m,Y^l_2)$ and a non-decreasing map
$\phi:\{0,1,...,m\}\to \{0,1,...,n\}$ the set of $\phi$-morphisms
$$\Hom^\bullet_\phi((Y_1^r,\ol{X}_1^n,Y^l_1),(Y_2^r,\ol{X}_2^m,Y^l_2))$$
to be by definition
\begin{multline*}
\Hom^{\bullet}_{\bA,\bC^r}("Y^r_1\otimes X_1^1\otimes...\otimes
X^{\phi(0)}_1",Y^r_2)\otimes 
\Hom_{\bA}("X^{\phi(0)+1}_1\otimes...\otimes X^{\phi(1)}_1",X^1_2)\otimes...\\
\otimes \Hom^\bullet_{\bA}
("X^{\phi(i-1)+1}_1\otimes...\otimes X^{\phi(i)}_1",X^i_2)
\otimes ...\\ ...\otimes
\Hom^\bullet_{\bA}("X^{\phi(m-1)+1}_1\otimes...\otimes X^{\phi(m)}_1",X^m_2) 
\otimes \Hom^\bullet_{\bA,\bC^l}
("X^{\phi(m)+1}_1\otimes...\otimes X^n_1\otimes Y^l_1",Y^l_2),
\end{multline*}
where if $\phi(i-1)=\phi(i)$ in the corresponding term we make
an insertion of ${\bf 1}_{\bA}$.

\medskip

We define
$$\Hom^\bullet_{\Bar(\bC^r,\bA,\bC^l)^{non-pretr}}(
(Y_1^r,\ol{X}_1^n,Y^l_1),(Y_2^r,\ol{X}_2^m,Y^l_2)):=
\underset{\phi}\oplus\,
\Hom^\bullet_\phi((Y_1^r,\ol{X}_1^n,Y^l_1),(Y_2^r,\ol{X}_2^m,Y^l_2)),$$
as $\phi$ runs over the set of all non-decreasing maps
$\phi:\{0,1,...,m\}\to \{0,1,...,n\}$.

\medskip

For $\psi:\{0,1,...,k\}\to \{0,1,...,m\}$ and $(Y^r_3,\ol{X}^k_3,Y^l_3)$ 
we have a natural composition map,
\begin{multline*}
\Hom^\bullet_\phi((Y_1^r,\ol{X}_1^n,Y^l_1),(Y_2^r,\ol{X}_2^m,Y^l_2))
\otimes \Hom^\bullet_\psi((Y_2^r,\ol{X}_2^m,Y^l_2),(Y_3^r,\ol{X}_3^k,Y^l_3))\to \\
\to \Hom^\bullet_{\phi\circ \psi}((Y_1^r,\ol{X}_1^n,Y^l_1),(Y_3^r,\ol{X}_3^k,Y^l_3)).
\end{multline*}

\medskip

This defines a structure of (non-pretriangulated) DG category on
$\Bar(\bC^r,\bA,\bC^l)^{non-pretr}$. We denote by
$\Bar(\bC^r,\bA,\bC^l)$ its strongly pre-triangulated envelope.

\sssec{}

Let $(Y_1^r,\ol{X}_1^n,Y^l_1)$ and $(Y_2^r,\ol{X}_2^m,Y^l_2)$ be two
objects as above.
Let $f$ be an element of $\Hom^\bullet_\phi((Y_1^r,\ol{X}_1^n,Y^l_1),(Y_2^r,\ol{X}_2^m,Y^l_2))$
equal to the tensor product of 
\begin{align*}
& f^r\in \Hom^{\bullet}_{\bA,\bC^r}("Y^r_1\otimes X_1^1\otimes...\otimes
X^{\phi(0)}_1",Y^r_2),\\
& f^i\in
\Hom^\bullet_{\bA}
("X^{\phi(i-1)+1}_1\otimes...\otimes X^{\phi(i)}_1",X^i_2), i=1,...,m \\
& f^l\in \Hom^\bullet_{\bA,\bC^l}
("X^{\phi(m)+1}_1\otimes...\otimes X^n_1\otimes Y^l_1",Y^l_2).
\end{align*}

We say that $f$ is a {\it quasi-isomorphism} if all $f^i$ are cocycles
of degree $0$ that induce isomorphisms between the corresponding objects
on the homotopy level:
\begin{align*}
& Y^r_1\otimes X_1^1\otimes...\otimes
X^{\phi(0)}_1\to Y^r_2\in \Ho(\bC^r),\\
& X^{\phi(i-1)+1}_1\otimes...\otimes X^{\phi(i)}_1\to X^i_2 \in
\Ho(\bA),\\
& X^{\phi(m)+1}_1\otimes...\otimes X^n_1\otimes Y^l_1\to Y^l_2
\in \Ho(\bC^l).
\end{align*}

Let $\Ho(\bI_{\bC^r,\bA,\bC^l}))\subset 
\Ho(\Bar(\bC^r,\bA,\bC^l)$ be the triangulated subcategory
generated by the cones of all
quasi-isomorphisms. Let $\bI_{\bC^r,\bA,\bC^l}$ be its
preimage in $\Bar(\bC^r,\bA,\bC^l)$.

\medskip

We consider the quotient
$$\Bar(\bC^r,\bA,\bC^l)/\bI_{\bC^r,\bA,\bC^l},$$
as a 0-object of $\DGCat$. By definition, 
$$\bC^r\underset{\bA}\otimes \bC^l:=
\Bar(\bC^r,\bA,\bC^l)/\bI_{\bC^r,\bA,\bC^l}.$$
I.e., $\Ho\left(\bC^r\underset{\bA}\otimes \bC^l\right)\simeq
\Ho(\Bar(\bC^r,\bA,\bC^l)/\Ho(\bI_{\bC^r,\bA,\bC^l})$.

\sssec{}   \label{ten and funct}

Let $\bC_1^r,\bC_1^l$, $\bC_2^r,\bC_2^l$ be two pairs
of DG categories with homotopy actions of $\bA$.
It is clear from \secref{functors between module categories} 
that we obtain well-defined functors
$$\on{HFunct}_{\bA}(\bC_1^l,\bC_2^l)\times
\on{HFunct}_{\bA}(\bC_1^r,\bC_2^r)\to
\on{HFunct}\left(\Bar(\bC_1^r,\bA,\bC_1^l),\Bar(\bC_2^r,\bA,\bC_2^l)\right)$$
and 
$$\on{HFunct}_{\bA}(\bC_1^l,\bC_2^l)\times
\on{HFunct}_{\bA}(\bC_1^r,\bC_2^r)\to
\on{HFunct}\left(\bC^r_1\underset{\bA}\otimes \bC_1^l,
\bC_2^r\underset{\bA}\otimes \bC^l_2\right).$$

In particular, if $F^r:\bC_1^r\to \bC_2^r$ and $F^l:\bC_1^l\to \bC_2^l$
are quasi-equivalences, then so are the resulting functors
$$\Bar(\bC_1^r,\bA,\bC_1^l)\to \Bar(\bC_2^r,\bA,\bC_2^l) \text{ and }
\bC^r_1\underset{\bA}\otimes \bC_1^l\to
\bC_2^r\underset{\bA}\otimes \bC^l_2.$$

\sssec{}   \label{ten and mon}

Let $(\bC_1^l,\bC_1^r,\bA_1)$ and $(\bC_2^l,\bC_2^r,\bA_2)$
be two triples as above, $F_\bA:\bA_1\to \bA_2$ be a homotopy
monoidal functor, and $F^l:\bC_1^l\to \bC_2^l$, $F^r:\bC_1^r\to \bC_2^r$
be homotopy functors compatible with $F_\bA$. In this case, we have
well-defined 1-morphisms 
\begin{equation} \label{funct of ten}
\Bar(\bC_1^r,\bA_1,\bC_1^l)\to \Bar(\bC_2^r,\bA_2,\bC_2^l) \text{ and }
\bC_1^r\underset{\bA_1}\otimes \bC_1^l\to 
\bC_2^r\underset{\bA_2}\otimes \bC_1^l.
\end{equation}
In particular, this applies to the case when 
$$\bC^l_1=\Res^{\bA_2}_{\bA_1}(\bC^l_2) \text{ and }
\bC^r_1=\Res^{\bA_2}_{\bA_1}(\bC^r_2).$$

If the functors $F_\bA,F^l,F^r$ are quasi-equivalences, then so
are the 1-morphisms in \eqref{funct of ten}.

\ssec{}

Let us take $\bC^r:=\bA$ with the standard right action on itself. We claim:

\begin{prop} \label{ten with free}
There exists a canonical quasi-equivalence
$$\bA\underset{\bA}\otimes \bC^l\simeq \bC^l.$$
\end{prop}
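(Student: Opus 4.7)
The plan is to exhibit mutually inverse quasi-equivalences between $\bA\underset\bA\otimes\bC^l$ and $\bC^l$. Using the ``hut'' construction of \secref{mon huts} together with its evident analog for actions, and invoking the invariance of the bar construction and the tensor product under quasi-equivalences (see \secref{ten and funct} and \secref{ten and mon}), I first replace $\bA$ by a quasi-equivalent strictly DG monoidal category with strict right action on itself and strict left action on $\bC^l$; this does not change $\bA\underset\bA\otimes\bC^l$ up to quasi-equivalence. Throughout the rest of the argument I work in this strict setting.

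In this setting, define a DG functor $\on{act}:\Bar(\bA,\bA,\bC^l)\to\bC^l$ by sending an object $(Y^r,X^1,\ldots,X^n,Y^l)$ to $Y^r\otimes X^1\otimes\cdots\otimes X^n\cdot Y^l$, and a $\phi$-morphism, presented as a tensor $f^r\otimes f^1\otimes\cdots\otimes f^m\otimes f^l$, to the morphism in $\bC^l$ obtained by tensoring these factors together using the strict structures. By construction, if the original morphism is a quasi-isomorphism in the sense of \secref{def ten prod}, each tensor factor is a closed degree-$0$ map inducing an isomorphism in the relevant homotopy category, and therefore $\on{act}$ sends it to an isomorphism in $\Ho(\bC^l)$. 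Hence $\Ho(\on{act})$ vanishes on $\Ho(\bI_{\bC^r,\bA,\bC^l})$, and by the universal property of the DG quotient (\secref{DG quotients} and \secref{univ ppty quotient}) $\on{act}$ descends to a homotopy functor $\on{act}:\bA\underset\bA\otimes\bC^l\to\bC^l$. In the reverse direction, define $\on{unit}:\bC^l\to\bA\underset\bA\otimes\bC^l$ by composing the inclusion $Y^l\mapsto({\bf 1}_\bA,Y^l)\in\Bar^0(\bA,\bA,\bC^l)\subset\Bar(\bA,\bA,\bC^l)$ with the quotient projection.

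The composition $\on{act}\circ\on{unit}$ is the identity of $\bC^l$ by the strict unit axiom. For the other composition, fix an object $(Y^r,X^1,\ldots,X^n,Y^l)$ and set $Z:=Y^r\otimes X^1\otimes\cdots\otimes X^n$. An iterated use of identity morphisms in the $\phi$-morphism spaces corresponding to the maps $\phi(i)=i+1$ produces a chain of quasi-isomorphisms
$$(Y^r,X^1,\ldots,X^n,Y^l)\longrightarrow(Y^r\otimes X^1,X^2,\ldots,X^n,Y^l)\longrightarrow\cdots\longrightarrow(Z,Y^l),$$
while the auxiliary object $({\bf 1}_\bA,Z,Y^l)\in\Bar^1(\bA,\bA,\bC^l)$ admits canonical quasi-isomorphisms to $(Z,Y^l)$ and to $({\bf 1}_\bA,Z\cdot Y^l)$, corresponding to the two maps $\phi:\{0\}\to\{0,1\}$ and given by identities in the resulting $\phi$-morphism spaces. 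Concatenating these yields a zig-zag of quasi-isomorphisms connecting $(Y^r,X^1,\ldots,X^n,Y^l)$ to $\on{unit}(\on{act}(Y^r,X^1,\ldots,X^n,Y^l))=({\bf 1}_\bA,Z\cdot Y^l)$, hence a canonical isomorphism in the quotient $\bA\underset\bA\otimes\bC^l$.

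The main remaining task is to verify that these pointwise isomorphisms assemble into a natural transformation $\on{unit}\circ\on{act}\simeq\on{Id}$ of homotopy functors, rather than merely a pointwise isomorphism. This I expect to handle directly: since every arrow in the zig-zag is built from identity morphisms in $\phi$-morphism spaces, all the compatibility squares that one needs to commute reduce to the simplicial identities among the $\phi$'s and to strict associativity of the monoidal structure and action, both of which hold on the nose in the chosen strict model. This establishes that $\on{act}$ and $\on{unit}$ are mutually quasi-inverse, completing the proof.
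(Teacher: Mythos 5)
Your approach diverges fundamentally from the paper's. The paper never leaves the pseudo-monoidal/homotopy-functor framework: it defines a DG {\it pseudo} functor $F:\Bar(\bA,\bA,\bC^l)\to\bC^l$ by declaring the Hom complexes to be $\Hom^\bullet_{\bA,\bC^l}("X^0\otimes\cdots\otimes X^n\otimes Y^l",Y^l_1)$, checks it is a homotopy functor, descends to $F':\bA\underset\bA\otimes\bC^l\to\bC^l$, and then establishes the quasi-inverse property by showing $\Ho(F')$ is left adjoint to $\Ho(\iota)$, reducing everything to an explicit computation at the level of Hom-spaces. Your approach instead tries to replace all pseudo-functors by honest DG functors by first strictifying $\bA$ and the actions.

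The strictification step is the gap. In the paper's formalism a (pseudo-)monoidal structure is a collection of complexes $\Hom^\bullet("X^\otimes_I",Y)$; the object $X^\otimes_I$ need not exist in $\bA$, and a strictly monoidal DG category quasi-equivalent to a given homotopy monoidal one is not something the paper constructs or cites. The hut construction of \secref{mon huts} produces, for a homotopy monoidal {\it functor}, a replacement by a diagram of lax DG monoidal functors between {\it pseudo}-monoidal categories; it does not produce a strict monoidal model of $\bA$ in which actual tensor-product objects exist. Likewise, \secref{ind extensions} extends the pseudo-monoidal structure to $\ua\bA$ but keeps it pseudo. Rectification of $A_\infty$-monoidal DG categories to strict ones is a genuine theorem with content (comparable in difficulty to the statement you are proving), and invoking it as ``evident'' from \secref{mon huts}, \secref{ten and funct} and \secref{ten and mon} is not warranted. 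Without the strict model, your functor $\on{act}$ is ill-defined as a DG functor (the objects $Y^r\otimes X^1\otimes\cdots\otimes X^n\cdot Y^l$ need not exist), and the zig-zag objects $(Y^r\otimes X^1, X^2,\ldots)$ likewise fail to be well-defined objects of $\Bar(\bA,\bA,\bC^l)$.

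There is also a secondary soft spot: you acknowledge that promoting the pointwise zig-zag of quasi-isomorphisms to a natural transformation ``rather than merely a pointwise isomorphism'' is the ``main remaining task,'' and you defer it to strict associativity. Even in a strict model this requires verifying that the zig-zags are compatible with {\it all} morphisms in $\Bar(\bA,\bA,\bC^l)$, including those coming from nontrivial $\phi$ and nontrivial $\Hom^\bullet_{\bA}("X^\otimes_J",\,\cdot\,)$, not just identities. The paper's adjunction argument sidesteps this entirely because it never needs to exhibit a 2-morphism directly: the adjunction isomorphisms on Hom-spaces and the resulting unit/counit do the work. If you want to salvage your line of argument, you should either prove or cite a rectification theorem for homotopy monoidal DG categories and homotopy module categories, and then carry out the naturality check in full; otherwise, following the paper's Hom-level adjunction argument is the shorter and safer route.
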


\sssec{}   \label{iota funct}

We construct a DG functor 
$\iota:\bC^l\to \bA\underset{\bA}\otimes \bC^l$
as the composition
$$\bC^l\to \Bar^0(\bA,\bA,\bC^l)\hookrightarrow \Bar(\bA,\bA,\bC^l)\to 
\bC^l\underset{\bA}\otimes \bC^l,$$
where the first functor is
$$Y^l\mapsto ({\bf 1}_\bA,Y^l), \text{ where } 
{\bf 1}_\bA\in \bA\simeq \bC^r.$$

\sssec{}

We define a DG pseudo functor $F:\Bar(\bA,\bA,\bC^l)\to \bC^l$
as follows. For an object 
$(X^0,\ol{X}^n,Y^l):=(X^0,\underset{n}{\underbrace{X^1,...,X^n}},Y^l)$
of $\Bar^n(\bA,\bA,\bC^l)$
and $Y^l_1\in \bC^l$ we set
$$\Hom^\bullet\left("F(X^0,\ol{X}^n,Y^l)",Y^l_1\right):=
\Hom^\bullet_{\bA,\bC^l}("X^0\otimes X^1\otimes...\otimes X^n\otimes Y^l",Y_1^l).$$

This assignment is clearly a pseudo functor
$\Bar(\bA,\bA,\bC^l)^{non-pretr}\to \bC^l$, which uniquely extends
to a pseudo functor $\Bar(\bA,\bA,\bC^l)\to \bC^l$.

\begin{lem}
The above pseudo functor $F$ is a homotopy functor.
\end{lem}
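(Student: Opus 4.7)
The plan is to unpack the definition of ``homotopy action'' in order to identify the co-representing objects explicitly, and then propagate from the non-pretriangulated subcategory to the strongly pre-triangulated envelope by general nonsense.

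First, I would check the homotopy functor property on $\Bar(\bA,\bA,\bC^l)^{non-pretr}$. For a fixed object $(X^0,\ol{X}^n,Y^l)$, the functor
$$Y_1^l \mapsto H^0\left(\Hom^\bullet_{\bA,\bC^l}("X^0\otimes X^1\otimes\ldots\otimes X^n\otimes Y^l",Y_1^l)\right)$$
is, by the pseudo action structure, a contravariant functor on $\Ho(\bC^l)$, and to show it is co-representable I would use the assumption that the pseudo action of $\bA$ on $\bC^l$ is a \emph{homotopy} action. By definition, this means the system of functors $H^0(\Hom^\bullet_{\bA,\bC^l}("X_I^\otimes\otimes Y'",Y''))$ arises from a genuine monoidal action of $\Ho(\bA)$ on $\Ho(\bC^l)$; in particular each such iterated ``action'' functor is co-represented by the actual object $X^0\cdot X^1\cdots X^n\cdot Y^l\in\Ho(\bC^l)$ obtained by applying the $\Ho(\bA)$-action iteratively. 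This produces the required representing object.

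Next, I would verify functoriality of this assignment on morphisms in $\Bar^{non-pretr}$. Given a $\phi$-morphism between two bar-objects, the corresponding element in $\Hom^\bullet_\phi$ is a tensor product of homomorphism complexes in $\bA$, $\bC^r=\bA$ and $\bC^l$; under the co-representation this tensor product corresponds, on the homotopy level, to composing with the iterated monoidal and action maps provided by the $\Ho(\bA)$-action on $\Ho(\bC^l)$ (which are part of the homotopy action data). The associativity and unit axioms of the homotopy action guarantee that these assignments respect composition of $\phi$'s and insertion of units, so the assignment $(X^0,\ol{X}^n,Y^l)\mapsto X^0\cdot X^1\cdots X^n\cdot Y^l$ lifts to a well-defined functor $\Ho(F^{non-pretr}):\Ho(\Bar(\bA,\bA,\bC^l)^{non-pretr})\to \Ho(\bC^l)$ that co-represents $F$ restricted to this subcategory.

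Finally, I would extend to the strongly pre-triangulated envelope $\Bar(\bA,\bA,\bC^l)$. This is a purely formal step: the pseudo functor $F$ extends uniquely to the envelope, and by general principles (every object of $\Ho(\bC^{pre-tr})$ is obtained as an iterated cone and shift of objects of $\Ho(\bC)$), co-representability on the non-pretriangulated subcategory propagates to the envelope, since shifts and cones in $\bC^l$ (which is itself pre-triangulated) realize the corresponding shifts and cones on the representing side. Concretely, if $Z\in \Bar(\bA,\bA,\bC^l)$ is built as a finite iterated cone from objects of $\Bar^{non-pretr}$ with representing objects already constructed, then the same iterated cone in $\Ho(\bC^l)$ co-represents $F(Z)$. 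The only non-formal input is the homotopy action assumption used in the first step; the rest is bookkeeping, so the main ``obstacle'' is really just verifying that the coherence data of the homotopy action assembles correctly into the required functoriality — which is exactly what the definition of homotopy action was set up to provide.
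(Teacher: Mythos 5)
Your proof is correct and follows the same route as the paper's: reduce to checking co-representability for the generating objects $(X^0,\ol X^n,Y^l)$ of $\Bar^n(\bA,\bA,\bC^l)$, where it falls out of the hypothesis that the pseudo action of $\bA$ on $\bC^l$ is a homotopy action, and then propagate through the pre-triangulated envelope formally. You spell out the functoriality and envelope-extension steps that the paper compresses into ``by construction,'' but the content is the same.
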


\begin{proof}
By construction, it suffices to see that for $(X^0,\ol{X}^n,Y^l)\in
\Bar^n(\bA,\bA,\bC^l)$, the corresponding $\bC^l$-module is
corepresentable up to homotopy. But this is ensured by the fact that
the DG pseudo action of $\bA$ on $\bC^l$ was a homotopy action.
\end{proof}

We claim that the resulting 1-morphism $\Bar(\bA,\bA,\bC^l)\to \bC^l$
in $\DGCat$ gives rise to a 
1-morphism $F':\bA\underset{\bA}\otimes \bC^l\to \bC^l$. Indeed,
by \secref{univ ppty quotient}, it suffices to check that the functor
$$\Ho(F):\Ho(\bI_{\bA,\bA,\bC^l})\to \Ho(\bC^l)$$
is $0$, which follows from the definition of $\Ho(\bI_{\bA,\bA,\bC^l})$.
  
\sssec{}

We claim now that $\iota$ and $F'$ are mutually quasi-inverse
1-morphisms in $\DGCat$. 

\medskip

The fact that $F'\circ \iota\simeq
\on{Id}_{\bC^l}$ is evident from the construction. We will now show
that $\Ho(F')$ is the left adjoint of $\Ho(\iota)$. For $Y_1^l\in \Ho(\bC^l)$ and $Z\in \Ho(\bA\underset{\bA}\otimes \bC^l)$
consider the map
\begin{multline}  \label{iota F}
\Hom_{\Ho(\bA\underset{\bA}\otimes \bC^l)}
(Z,\Ho(\iota)(Y^l_1))\to
\Hom_{\Ho(\bC^l)}(\Ho(F')(Z),\Ho(F')\circ \Ho(\iota)(Y^l_1))\simeq \\
\simeq \Hom_{\Ho(\bC^l)}(\Ho(F')(Z),Y^l_1).
\end{multline}

We claim that this map is an isomorphism. To check this we can assume
that $Z$ comes from an object $(X^0,\underset{n}{\underbrace{X^1,...,X^n}},Y^l)$
of $\Bar^n(\bA,\bA,\bC^l)$. In this case we will construct a map inverse
to \eqref{iota F}. Namely, consider
\begin{multline*}
\Hom_{\Ho(\bC^l)}(\Ho(F')(Z),Y^l_1):=
H^0\left(\Hom^\bullet_{\bC^l}("X^0\otimes X^1\otimes...\otimes X^n\otimes Y^l",Y_1^l)\right)\to \\
\to H^0\left(\Hom^\bullet_{\phi}\left(({\bf 1}_\bA,X^0,
X^1,...,X^n,Y^l),({\bf 1}_\bA,Y_1^l)\right)\right)\hookrightarrow \\
\hookrightarrow 
H^0\left(\Hom^\bullet_{\Bar(\bA,\bA,\bC^l)}\left(({\bf 1}_\bA,X^0,
X^1,...,X^n,Y^l),({\bf 1}_\bA,Y_1^l)\right)\right)\simeq \\
\simeq 
\Hom_{\Ho(\Bar(\bA,\bA,\bC^l))}\left(({\bf 1}_\bA,X^0,
X^1,...,X^n,Y^l),({\bf 1}_\bA,Y_1^l)\right),
\end{multline*}
where $\phi$ is the map $0\in \{0\}\mapsto 0\in \{0,1,...,n+1\}$.

\medskip

We compose the above map with
\begin{multline*}
\Hom_{\Ho(\Bar(\bA,\bA,\bC^l))}\left(({\bf 1}_\bA,X^0,
X^1,...,X^n,Y^l),({\bf 1}_\bA,Y_1^l)\right)\to \\
\to
\Hom_{\Ho(\bA\underset{\bA}\otimes \bC^l)}\left(({\bf 1}_\bA,X^0,
X^1,...,X^n,Y^l),({\bf 1}_\bA,Y_1^l)\right)\simeq \\
\simeq
\Hom_{\Ho(\bA\underset{\bA}\otimes \bC^l)}\left((X^0,
X^1,...,X^n,Y^l),({\bf 1}_\bA,Y_1^l)\right),
\end{multline*}
the latter isomorphism is due to the canonical isomorphism 
between $({\bf 1}_\bA,X^0,X^1,...,X^n,Y^l)\in \Bar^{n+1}(\bA,\bA,\bC^l)$
and $(X^0,X^1,...,X^n,Y^l)\in \Bar^{n}(\bA,\bA,\bC^l)$ as objects
of $\Ho(\bA\underset{\bA}\otimes \bC^l)$.

\sssec{}

We are now ready to finish the proof of \propref{ten with free}. 
Consider the adjunction map
\begin{equation} \label{iota F adj}
\on{Id}_{\Ho(\bA\underset{\bA}\otimes \bC^l)}\to \Ho(\iota)\circ \Ho(F'),
\end{equation}
and it suffices to show that this map is an isomorphism. For that it
is sufficient to evaluate it on objects $Z$ of the form
$$(X^0,\underset{n}{\underbrace{X^1,...,X^n}},Y^l)\in
\Bar^n(\bA,\bA,\bC^l).$$

By construction, for such an object the map in \eqref{iota F adj} is
represented by the following "hut" in $\Bar(\bA,\bA,\bC^l)$:
$$(X^0,X^1,...,X^n,Y^l)\leftarrow
({\bf 1}_\bA,X^0,X^1,...,X^n,Y^l) \to ({\bf 1}_\bA,Y^l_1),$$
where $Y^l_1\in \bC^l$ is an object, whose image in $\Ho(\bC^l)$
is isomorphic to $X^0\otimes X^1\otimes...\otimes X^n\otimes Y^l$.
Since the arrow $\to$ in the above formula is a quasi-isomorphism,
the assertion follows.

\sssec{}

We note that the same argument proves the following generalization of 
\propref{ten with free}. Namely,
let $\bC_1$ be a DG category, and let us consider
$\bC_1\otimes \bA$, which has natural right homotopy module structure.
We have:
\begin{equation} \label{ten free gen}
(\bC_1\otimes \bA)\underset{\bA}\otimes \bC^l\simeq 
\bC_1\otimes \bC^l.
\end{equation}

As a particular case we obtain that for any two DG categories 
$\bC_1$ and $\bC_2$, which can be seen as acted on by the DG 
monoidal category $\Comp^f_k$, we have:
$$\bC_1\underset{\Comp_k^f}\otimes \bC_2\simeq \bC_1\otimes \bC_2.$$

\ssec{}

Let $\bA_1$ and $\bA_2$ be two homotopy monoidal categories. 
We have the tautological DG monoidal functors
$$\bA_1\to \bA_1\otimes \bA_2 \leftarrow \bA_2.$$
Assume that in the situation 
of \secref{def ten prod} the right action 
of $\bA=:\bA_2$ on $\bC^r$ has
been extended to a right action of $\bA^o_1\otimes \bA_2$,
where the superscript "$o$"
stands for the opposite homotopy monoidal structure.

\medskip

Then, by \secref{tensor free}, the DG category
$\Bar(\bC^r,\bA_2,\bC^l)$ carries an action
of $\bA_1$. 
The subcategory $\bI_{\bC^r,\bA_2,\bC^l}\subset
\Bar(\bC^r,\bA_2,\bC^r)$ has the property that its
image in $\Ho(\Bar(\bC^r,\bA_2,\bC^l))$ is preserved 
by the action of $\Ho(\bA_1)$. 

\medskip

Hence, by 
\lemref{action on quotient}, we obtain that
$\bC^{r}\underset{\bA_2}\otimes \bC^l$ is a well-defined
object of $\DGMod(\bA_1)$

\sssec{}

Suppose in addition that $\bC^r_1$ is a DG category, equipped
with a right homotopy action of $\bA_1$. From the constriction
we have the following natural 1-isomorphisms in $\DGCat$:
$$\Bar(\bC^r_1,\bA_1,\Bar(\bC^r,\bA_2,\bC^l))\simeq
\Bar((\bC^r_1,\bA_1,\bC^r),\bA_2,\bC^l)$$
and 
\begin{equation} \label{ass ten}
\bC^r_1\underset{\bA_1}\otimes (\bC^{r}\underset{\bA_2}\otimes \bC^l)
\simeq 
(\bA_1^{r}\underset{\bA_1}\otimes \bC^r)\underset{\bA_2}\otimes \bC^l.
\end{equation}

In other words, we have a well-defined objects of $\DGCat$:
$\Bar(\bC^r_1,\bA_1,\bC^r,\bA_2,\bC^l)$ and
$$\bC^r_1\underset{\bA_1}\otimes \bC^{r}\underset{\bA_2}\otimes \bC^l.$$

\sssec{}

In the above situation let us change the notations slightly and denote
$\bC^l_2:=\bC^l$ and $\bC:=\bC^r$. We have:

\begin{lem}   \label{ten with diag}
Under the above circumstances, we have a canonical 1-equivalence
in $\DGCat$:
$$\bC^r_1\underset{\bA_1}\otimes \bC\underset{\bA_2}\otimes \bC^l_2
\simeq 
(\bC^r_1\otimes \bC^l_2)\underset{\bA_1\otimes \bA^o_2}\otimes \bC.$$
\end{lem}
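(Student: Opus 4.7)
The plan is to realize both sides as suitable quotients of explicit Bar-type DG categories and exhibit a functor between the Bar categories that descends to a quasi-equivalence at the level of the tensor products. By the iterated associativity identification \eqref{ass ten}, the left-hand side is 1-equivalent to $\Bar(\bC^r_1,\bA_1,\bC,\bA_2,\bC^l_2)/\bI$, whose objects are tuples $(Y^r_1,X_1^1,\ldots,X_1^n,Y,X_2^1,\ldots,X_2^m,Y^l_2)$ with $X_1^i\in\bA_1$, $X_2^j\in\bA_2$, $Y\in\bC$, and $\bI$ the ideal generated by all collapsing quasi-isomorphisms. The right-hand side is represented by $\Bar(\bC^r_1\otimes\bC^l_2,\bA_1\otimes\bA_2^o,\bC)/\bI'$, whose objects have the form $((Y^r_1,Y^l_2),(X_1^1,X_2^1),\ldots,(X_1^n,X_2^n),Y)$.

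First, I would define a DG functor
\[
\Phi\colon \Bar(\bC^r_1\otimes\bC^l_2,\bA_1\otimes\bA_2^o,\bC)\longrightarrow
\Bar(\bC^r_1,\bA_1,\bC,\bA_2,\bC^l_2)
\]
on objects by
\[
((Y^r_1,Y^l_2),(X_1^1,X_2^1),\ldots,(X_1^n,X_2^n),Y)\longmapsto
(Y^r_1,X_1^1,\ldots,X_1^n,Y,X_2^n,X_2^{n-1},\ldots,X_2^1,Y^l_2),
\]
the reversal of the $X_2^i$'s being forced by the $\bA_2^o$-structure (the right $\bA_2^o$-action on $\bC^l_2$ is the left $\bA_2$-action, so iterated action reverses order). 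On morphisms, $\Phi$ uses the canonical identifications
\[
\Hom^\bullet_{\bA_1\otimes\bA_2^o}("(X_1,X_2)^{\otimes k}",(X_1',X_2'))\simeq
\Hom^\bullet_{\bA_1}("X_1^{\otimes k}",X_1')\otimes
\Hom^\bullet_{\bA_2}("X_2^{\otimes k}",X_2'),
\]
and the analogous decompositions of the module-$\Hom$'s at the two endpoints. This gives a fully faithful embedding at the DG level onto the full subcategory of the target spanned by objects with equal numbers $n=m$ of $\bA_1$- and $\bA_2$-entries. I would then verify that $\Phi$ sends quasi-isomorphisms to quasi-isomorphisms, so that it descends to a 1-morphism $\bar\Phi$ between the quotient categories.

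Next, I would establish that $\bar\Phi$ is essentially surjective on homotopy categories. The key point is that an arbitrary object $(Y^r_1,\bar X_1^n,Y,\bar X_2^m,Y^l_2)$ on the left-hand Bar construction can be connected, by a zig-zag of quasi-isomorphisms in $\Bar(\bC^r_1,\bA_1,\bC,\bA_2,\bC^l_2)$, to an object of the form $(Y^r_1,\bar X_1^N,Y,\bar X_2^N,Y^l_2)$ with $N=\max(n,m)$ (by inserting $\mathbf 1_{\bA_1}$'s or $\mathbf 1_{\bA_2}$'s), which then lies in the image of $\Phi$. Full faithfulness of $\bar\Phi$ after passing to quotients follows by analyzing the $\Hom$-complexes in $\Bar(\bC^r_1,\bA_1,\bC,\bA_2,\bC^l_2)$: they decompose as direct sums indexed by non-decreasing maps $\phi\colon\{0,\ldots,m_{\rm tot}\}\to\{0,\ldots,n_{\rm tot}\}$ of the whole interval, while those coming from the RHS Bar correspond to the "block-preserving" maps that do not mix the $\bA_1$-block with the $\bA_2$-block. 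The extra, non-block-preserving summands are acyclic modulo $\bI$, by a contractibility argument using that the $\bA_1$- and $\bA_2$-actions on $\bC$ strictly commute, so every mixing $\phi$ can be collapsed by a unit-insertion homotopy.

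The main obstacle, and the only nontrivial technical point, will be the last step: rigorously showing that the "non-block-preserving" summands in the Bar $\Hom$-complex become null-homotopic in the quotient by $\bI$. I expect this to follow from a standard shuffle-product contractibility argument, analogous to the proof of \propref{ten with free}, but bookkeeping the two separate actions in parallel. Once this is in place, $\bar\Phi$ is fully faithful and essentially surjective, hence a quasi-equivalence, proving the claimed 1-equivalence in $\DGCat$.
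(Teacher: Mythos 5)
Your functor $\Phi$ (the $\leftarrow$ direction) is the same one the paper uses, but your strategy — proving directly that $\bar\Phi$ is fully faithful and essentially surjective — is different from the paper's, which constructs an explicit inverse DG functor at the Bar level and observes the two are mutually quasi-inverse. Your route has two real problems.

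First, the claim that $\Phi$ is ``a fully faithful embedding at the DG level onto the full subcategory of the target spanned by objects with equal numbers $n=m$'' is incorrect. In $\Bar(\bC^r_1\otimes\bC^l_2,\bA_1\otimes\bA_2^o,\bC)$ the Hom-complex between objects with $n$ and $m$ pairs is a sum over single non-decreasing maps $\phi\colon\{0,\ldots,m\}\to\{0,\ldots,n\}$, whereas in the iterated $\Bar(\bC^r_1,\bA_1,\bC,\bA_2,\bC^l_2)$ the Hom-complex between the images $\Phi(\cdot)$ is a sum over \emph{pairs} $(\phi_1,\phi_2)$ of independent non-decreasing maps $\{0,\ldots,m\}\to\{0,\ldots,n\}$, one for the $\bA_1$-block and one for the $\bA_2$-block. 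On Hom's, $\Phi$ is only the inclusion of the ``diagonal'' summands $\phi\mapsto(\phi,\phi)$ (up to the $\bA_2^o$-reversal). So $\Phi$ is faithful but far from full at the Bar level, and your second step — full faithfulness of $\bar\Phi$ after passing to the quotients — is really carrying the entire weight of the lemma.

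Second, that step is left unproved: you write that the ``extra, non-block-preserving summands are acyclic modulo $\bI$'' and that you ``expect this to follow from a standard shuffle-product contractibility argument,'' which is an acknowledgment of the gap, not a proof. Contractibility of the off-diagonal summands modulo the ideal generated by quasi-isomorphisms needs an actual homotopy or a filtration argument, and this is not a formality. The paper avoids this entirely by also defining the forward DG functor
$$(Y_1^r,X^1_1,\ldots,X^n_1,Y,X^1_2,\ldots,X^m_2,Y^l_2)\mapsto \left((Y^r_1,Y^l_2),(X^1_1,{\bf 1}_{\bA_2}),\ldots,(X^n_1,{\bf 1}_{\bA_2}),({\bf 1}_{\bA_1},X^m_2),\ldots,({\bf 1}_{\bA_1},X^1_2),Y\right),$$
i.e.\ pads each side with units of the other monoidal category; then both composites differ from the identity only by insertions of unit objects, which are quasi-isomorphisms by construction and hence become isomorphisms in the quotient. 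If you want to salvage your one-directional plan, you need to actually supply the contracting homotopy for the non-diagonal $(\phi_1,\phi_2)$ summands; otherwise, the explicit inverse-functor argument is both shorter and closes the gap.
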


\begin{proof}

Consider the categories $$\Bar(\bC^r_1,\bA_1,\bC,\bA_2,\bC^l_2) \text{ and }
\Bar\left((\bC^r_1\otimes \bC^l_2),(\bA_1\otimes \bA^o_2),\bC\right).$$
We construct DG functors in both directions as follows:

For $\to$ we send
\begin{multline*}
(Y_1^r,X^1_1,...,X^n_1,Y,X^1_2,...,X^m_2,Y^l_2)\mapsto \\
\left((Y^r_1,Y^l_2),(X^1_1,{\bf 1}_{\bA_2}),...,(X^n_1,{\bf 1}_{\bA_2}),
({\bf 1}_{\bA_1},X^m_2),...,({\bf 1}_{\bA_1},X^1_2),Y\right).
\end{multline*}
For $\leftarrow$ we send
$$\left((Y^r_1,Y^l_2),(X^1_1,X^1_2),...,(X^k_1,X^k_2),Y\right)\mapsto
(Y^r_1,X^1_1,...,X^k_1,Y,X^k_2,...,X^1_2,Y^l_2).$$

These functors are easily seen to descend to 1-morphisms
$$\bC^r_1\underset{\bA_1}\otimes \bC\underset{\bA_2}\otimes \bC^l_2
\leftrightarrows
(\bC^r_1\otimes \bC^l_2)\underset{\bA_1\otimes \bA^o_2}\otimes \bC,$$
which are mutually quasi-inverse.

\end{proof}

\ssec{Induction}   

Let $F_\bA:\bA_1\to \bA_2$ be a homotopy monoidal functor
between homotopy monoidal categories. Consider
the 0-object
$$\Res^{\bA_2\otimes \bA_2^o}_{\bA_2\otimes \bA_1^o}(\bA_2)\in 
\DGMod(\bA_2\otimes \bA_1^o).$$

\sssec{}    \label{def induction}

For a DG category $\bC_1^l$ equipped with a homotopy action of
$\bA_1$ we define
$$\Ind^{\bA_2}_{\bA_1}(\bC_1^l):=\Res^{\bA_2\otimes \bA_2^o}_{\bA_2\otimes \bA_1^o}(\bA_2)\underset{\bA_1}\otimes \bC_1^l,$$
as an object of $\DGMod(\bA_2)$.

\medskip

Moreover, for a homotopy functor $\bC^l_1\to \wt\bC^l_1$ compatible
with an action of $\bA_1$, from \secref{ten and funct} we obtain
a homotopy functor 
$$\Ind^{\bA_2}_{\bA_1}(\bC^l_1)\to \Ind^{\bA_2}_{\bA_1}(\wt\bC^l_1).$$

In other words, the above construction defines a 2-functor
$\DGMod(\bA_1)\to \DGMod(\bA_2)$.

\sssec{}   \label{triv ind}

Let us consider a particular case when $\bA_2=\bA_1$. We claim
that the above 2-functor of induction is 1-isomorphic to the
identity functor. Namely, we claim that the map
$$\iota:\bC_1^l\to \bA_1\underset{\bA_1}\otimes \bC_1^l$$
of \secref{iota funct} has a natural structure of homotopy
functor compatible with the action of $\bA_1$. The above
map is a 1-isomorphism by \propref{ten with free}.

\sssec{}    \label{univ ppty base change}

Let $\bC_2^l$ be a 0-object of $\DGMod(\bA_2)$, and consider
the corresponding 0-object $\Res^{\bA_2}_{\bA_1}(\bC_2^l)\in 
\DGMod(\bA_1)$.

\begin{prop}    \label{induction adj}
There exists an equivalence of categories
$$\on{HFunct}_{\bA_1}(\bC_1^l,\Res^{\bA_2}_{\bA_1}(\bC_2^l))\simeq
\on{HFunct}_{\bA_2}(\Ind^{\bA_2}_{\bA_1}(\bC_1^l),\bC_2^l).$$
\end{prop}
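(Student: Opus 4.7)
The plan is to establish this as a general adjunction between induction and restriction, modeled on the classical module-theoretic identity $\Hom_{A_2}(A_2 \otimes_{A_1} M, N) \simeq \Hom_{A_1}(M, N|_{A_1})$ and built directly out of the universal property of the tensor product construction from \secref{def ten prod}.

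First, I would formulate a general universal property of the tensor product: for any DG category $\bC^r$ carrying commuting homotopy actions of $\bA_2$ on the left and $\bA_1$ on the right, and any $\bC_1^l$ with left $\bA_1$-action, the category $\bC^r \underset{\bA_1}\otimes \bC_1^l$ inherits a left $\bA_2$-action (by \lemref{action on quotient} applied as in the discussion following \secref{def ten prod}), and there is a natural equivalence between $\on{HFunct}_{\bA_2}(\bC^r \underset{\bA_1}\otimes \bC_1^l, \bC_2^l)$ and the category of $\bA_1$-balanced, left $\bA_2$-linear homotopy bifunctors out of $\bC^r \times \bC_1^l$ landing in $\bC_2^l$. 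This follows by unpacking the Bar construction: by the definition of morphisms in $\Bar(\bC^r,\bA_1,\bC_1^l)^{non-pretr}$, a homotopy functor out of the Bar category is precisely the data of such a bifunctor together with balancing homotopies, and the quotient by $\bI_{\bC^r,\bA_1,\bC_1^l}$ imposes the condition that the balancing be a homotopy equivalence. I would upgrade this to the $\bA_2$-equivariant version using \lemref{action on quotient}(2).

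Next, I would specialize to $\bC^r := \Res^{\bA_2 \otimes \bA_2^o}_{\bA_2 \otimes \bA_1^o}(\bA_2)$, so that by construction $\bC^r \underset{\bA_1}\otimes \bC_1^l = \Ind^{\bA_2}_{\bA_1}(\bC_1^l)$. The $\bA_1$-balanced, left $\bA_2$-linear bifunctors out of $\bC^r \times \bC_1^l$ are then identified, by evaluating the first slot at ${\bf 1}_{\bA_2}$ and invoking the left $\bA_2$-linearity to recover the bifunctor as $(X,Y) \mapsto X \cdot F(Y)$, with homotopy functors $F \colon \bC_1^l \to \bC_2^l$ compatible with the $\bA_1$-action on $\bC_1^l$ and the $\bA_2$-action on $\bC_2^l$ pulled back along $F_\bA$. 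The latter is precisely the definition of a homotopy functor $\bC_1^l \to \Res^{\bA_2}_{\bA_1}(\bC_2^l)$ compatible with the $\bA_1$-action (this is where \lemref{compose restrictions} is used, applied to the factorization $\bA_1 \to \bA_2 \to \bA_2$ of $F_\bA$). The inverse direction is given by the same formula $(X,Y) \mapsto X \cdot F(Y)$, now reading this assignment as producing an object of the bifunctor category.

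The main technical obstacle will be making the ``universal property'' in the first step rigorous at the homotopy level, in particular handling the quotient by $\bI_{\bC^r,\bA_1,\bC_1^l}$ and the simultaneous interplay of the residual $\bA_2$-action on the tensor product with the $\bA_1$-compatibility on the target after restriction. To control this, I would use the closed model category structure on $\on{PFunct}_{\bA}(\bC_1,\bC_2)^{op}$ discussed in \secref{DG cat of hom} together with the fibrant (``free'') replacements of \secref{free functor}: this reduces every homotopy functor in sight to one that is represented by genuine DG-level data, so that the balancing conditions on both sides become literal identities, whose equivalence is manifest from the definition of the Bar category and the compositions of \secref{comp change}, in particular \lemref{comp different}.
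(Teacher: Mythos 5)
Your proposal takes a genuinely different route from the paper's proof. The paper establishes the adjunction by constructing the unit and counit explicitly as 1-morphisms and then invoking the already-proved associativity of the tensor product \eqref{ass ten} to verify the triangle identities: the unit $\bC^l_1\to \Res^{\bA_2}_{\bA_1}(\Ind^{\bA_2}_{\bA_1}(\bC^l_1))$ is obtained by applying functoriality of $\underset{\bA_1}\otimes\,\bC_1^l$ to the bimodule 1-morphism $\bA_1\to \Res^{\bA_2\otimes \bA_2^o}_{\bA_1\otimes \bA_1^o}(\bA_2)$ in $\DGMod(\bA_1\otimes\bA_1^o)$, and the counit comes from $\Res^{\bA_2\otimes \bA_2^o}_{\bA_2\otimes \bA_1^o}(\bA_2)\underset{\bA_1}\otimes \Res^{\bA_2}_{\bA_1}(\bC^l_2)\to\bA_2\underset{\bA_2}\otimes\bC^l_2\simeq\bC^l_2$, the last equivalence being \propref{ten with free}. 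Your proposal instead goes back to first principles: you re-derive a ``balanced bifunctor'' universal property for $\bC^r\underset{\bA}\otimes\bC^l$ by unpacking the Bar construction and its quotient by $\bI_{\bC^r,\bA,\bC^l}$, then specialize $\bC^r$ to the induction bimodule and evaluate at the unit in the first slot. The paper's route is shorter and cleaner because the unit/counit formulation reduces everything to formal properties already in hand (functoriality, \eqref{ten free gen}, \eqref{ass ten}); your route is more explicit and closer to the classical module-theoretic tensor-hom adjunction, but it carries the burden you correctly flag — making the Bar-level universal property rigorous in the homotopy setting, and controlling the evaluation at the homotopy unit ${\bf 1}_{\bA_2}$, which is only a unit up to the quasi-isomorphisms built into the pseudo-monoidal structure. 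Both strategies are sound; your proposal would simply take more work to carry through than the paper's, since you would effectively be reproving (a strengthened form of) the universal property that the paper takes for granted after \secref{DG quotients} and \propref{ten with free}.
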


\begin{proof}

We need to construct the adjunction 1-morphisms
\begin{equation}  \label{one adj}
\bC^l_1\mapsto \Res^{\bA_2}_{\bA_1}\left(\Ind^{\bA_2}_{\bA_1}(\bC^l_1)\right)
\end{equation}
and
\begin{equation}  \label{two adj}
\Ind^{\bA_2}_{\bA_1}\left(\Res^{\bA_2}_{\bA_1}(\bC_2^l)\right)\to \bC_2^l.
\end{equation}

The former follows by the functoriality of the tensor product construction
from the 1-morphism 
$$\bA_1\to \Res^{\bA_2\otimes \bA_1^o}_{\bA_1\otimes \bA_1^o}
\left(\Res^{\bA_2\otimes \bA_2^o}_{\bA_2\otimes \bA_1^o}(\bA_2)\right)
\simeq \Res^{\bA_2\otimes \bA_2^o}_{\bA_1\otimes \bA_1^o}(\bA_2)
\in \DGMod(\bA_1\otimes \bA_1^o),$$
which in turn results from the functor $F_{\bA}:\bA_1\to \bA_2$, viewed
as compatible with $F_\bA\otimes F_\bA$.

\medskip

The 1-morphism \eqref{two adj} results from \eqref{funct of ten}:
$$\Res^{\bA_2\otimes \bA_2^o}_{\bA_2\otimes \bA_1^o}(\bA_2)
\underset{\bA_1}\otimes \Res^{\bA_2}_{\bA_1}(\bC^l_2)\to
\bA_2\underset{\bA_2}\otimes \bC^l_2\simeq \bC^l_2.$$

\medskip

The fact that these 1-morphisms satisfy the adjunction property
follows from \eqref{ass ten}.

\end{proof}

\sssec{}  \label{transitivity of unduction}

Suppose that $\bA_3$ is a third homotopy monoidal category,
equipped with a homotopy monoidal functor $\bA_2\to \bA_3$.

\medskip

From \propref{induction adj} and \lemref{compose restrictions},
we obtain a 1-isomorphism of two 2-functors
$$\DGMod(\bA_1)\rightrightarrows \DGMod(\bA_3):
\Ind^{\bA_3}_{\bA_2}\circ \Ind^{\bA_2}_{\bA_1}\simeq \Ind^{\bA_3}_{\bA_1}.$$

\section{Adjunctions and tightness} \label{adjunctions and tightness}

\ssec{}   \label{upper and lower star}

In this subsection we will fix some notation.
Let $\bA$, $\bC^l$ and $\bC^r$ be as in \secref{def ten prod}. By
construction, we have a canonical 1-morphism
$$m_{\bC^r,\bC^l}:\bC^r\otimes \bC^l\to \bC^r\underset{\bA}\otimes \bC^l.$$
Consider the corresponding functors
$$m_{\bC^r,\bC^l}^*:\bD((\bC^r\times \bC^l)^{op}\mod)\to
\bD((\bC^r\underset{\bA}\otimes \bC^l)^{op}\mod)$$ and  its right adjoint
$$(m_{\bC^r,\bC^l})_*:\bD((\bC^r\underset{\bA}\otimes \bC^l)^{op}\mod)\to
\bD((\bC^r\times \bC^l)^{op}\mod).$$

We will also use the notation 
$$(Y^r,Y^l)\mapsto Y^r\underset{\bA}\otimes Y^l:=
m_{\bC^r,\bC^l}^*(Y^r,Y^l)$$
for $Y^r\in \bD(\bC^{r,op}\mod)$ and $Y^l\in
\bD(\bC^{l,op}\mod)$.

\medskip

A particular case of the above situation is when we have 
a homotopy monoidal category $\bA$ equipped with a homotopy
action on a DG category $\bC^l$. In particular, we have a homotopy
functor $$act:\bA\otimes \bC^l\to \bC^l.$$

We will consider the corresponding functor
$$act^*:\bD((\bA\times\bC^l)^{op}\mod)\to 
\bD(\bC^{l,op}\mod)$$ and its right adjoint 
$$act_*: \bD(\bC^{l,op}\mod) \to \bD((\bA\times \bC^l)^{op}\mod).$$

We will also use the notation
$$(X,Y)\mapsto X\underset{\bA}\otimes Y:=act^*(X,Y)$$
for $X\in \bD(\bA^{op}\mod)$, $Y\in \bD(\bC^{l,op}\mod)$.

\ssec{} \label{intr tightness}

Let now $F:\bC^l_1\to \bC^l_2$ be a homotopy functor compatible
with a homotopy action of $\bA$. The following diagram of functors
evidently commutes:
$$
\CD
\bD((\bA\times\bC_1^l)^{op}\mod)  @>{act^*_{\bC^l_1}}>> \bD(\bC_1^{l,op}\mod) \\
@V{(\on{Id}_\bA\times F)^*}VV   @V{F^*}VV   \\
\bD((\bA\times\bC_2^l)^{op}\mod)  @>{act^*_{\bC^l_2}}>> \bD(\bC_2^{l,op}\mod).
\endCD
$$

The next diagram, however, does {\it not} necessarily commute:
\begin{equation} \label{def up tight}
\CD
\bD((\bA\times\bC_1^l)^{op}\mod)  @>{act^*_{\bC^l_1}}>> \bD(\bC_1^{l,op}\mod) \\
@A{(\on{Id}_\bA\times F)_*}AA   @A{F_*}AA   \\
\bD((\bA\times\bC_2^l)^{op}\mod)  @>{act^*_{\bC^l_2}}>> \bD(\bC_2^{l,op}\mod).
\endCD
\end{equation}


However, we have the natural transformation:
\begin{equation} \label{upper tight}
act^*_{\bC^l_1}\circ (\on{Id}_\bA\times F)_*\to
F_*\circ act^*_{\bC^l_2}:\bD((\bA\times\bC_2^l)^{op}\mod) \rightrightarrows
\bD(\bC_1^{l,op}\mod).
\end{equation}

\medskip

We shall say that the functor $F$ is {\it tight} if \eqref{upper tight}
is an isomorphism. 

\ssec{}

Let $\bA,\bC^r,\bC^l_1,\bC^l_2,F:\bC^l_1\to \bC^l_2$ be as above.
The following diagram of functors tautologically commutes:
$$
\CD
\bD((\bC^r\times \bC_1^l)^{op}\mod)  @>{m_{\bC^r,\bC_1^l}^*}>> 
\bD((\bC^r\underset{\bA}\otimes \bC_1^l)^{op}\mod) \\
@V{(\on{Id}_{\bC^r}\times F)^*}VV    
@V{(\on{Id}_{\bC^r}\underset{\bA}\otimes F)^*}VV   \\
\bD((\bC^r\times \bC_2^l)^{op}\mod)   @>{m_{\bC^r,\bC_2^l}^*}>>
\bD((\bC^r\underset{\bA}\otimes \bC_2^l)^{op}\mod).
\endCD
$$

However, the diagram
\begin{equation} \label{upper tightness diagram}
\CD
\bD((\bC^r\times \bC_1^l)^{op}\mod)  @>{m_{\bC^r,\bC_1^l}^*}>> 
\bD((\bC^r\underset{\bA}\otimes \bC_1^l)^{op}\mod) \\
@A{(\on{Id}_{\bC^r}\times F)_*}AA    
@A{(\on{Id}_{\bC^r}\underset{\bA}\otimes F)_*}AA  \\
\bD((\bC^r\times \bC_2^l)^{op}\mod)   @>{m_{\bC^r,\bC_2^l}^*}>>
\bD((\bC^r\underset{\bA}\otimes \bC_2^l)^{op}\mod)
\endCD
\end{equation}
{\it does not} a priori commute. However, we have a natural transformation:

\begin{equation} \label{upper proj form ten}
m_{\bC^r,\bC_1^l}^*\circ (\on{Id}_{\bC^r}\times F)_*\to 
(\on{Id}_{\bC^r}\underset{\bA}\otimes F)_*\circ m_{\bC^r,\bC_2^l}^*.
\end{equation}

\begin{prop}  \label{tight prop}
The following conditions are equivalent:

\smallskip

\noindent(a) $F$ is tight.

\smallskip

\noindent(b) For any $\bC^r$ the natural transformation
\eqref{upper proj form ten} is an isomorphism.

\end{prop}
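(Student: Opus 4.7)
The plan is to prove the two implications separately.

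For (b) $\Rightarrow$ (a) I would specialize to $\bC^r = \bA$ equipped with its standard right self-action. By \propref{ten with free}, there are canonical quasi-equivalences $\bA \underset{\bA}\otimes \bC^l_i \simeq \bC^l_i$ for $i = 1, 2$; under these, the multiplication map $m_{\bA, \bC^l_i}\colon \bA \otimes \bC^l_i \to \bA \underset{\bA}\otimes \bC^l_i$ is identified with the action functor $act_{\bC^l_i}$, and $\on{Id}_\bA \underset{\bA}\otimes F$ is identified with $F$. Unwinding the construction of the natural transformations (both arise from the same adjunction applied to manifestly commutative squares of $*$-pullbacks), one sees that under these identifications the natural transformation \eqref{upper proj form ten} becomes exactly \eqref{upper tight}, so isomorphism in (b) specializes to isomorphism in (a).

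For (a) $\Rightarrow$ (b), the strategy is to use the bar-construction presentation of $\bC^r \underset{\bA}\otimes \bC^l$ and apply tightness level by level. By the very definition of $\bC^r \underset{\bA}\otimes \bC^l_i$ as a quotient of $\Bar(\bC^r,\bA,\bC^l_i)$, every object in $\bD((\bC^r \underset{\bA}\otimes \bC^l_i)^{op}\mod)$ can be tested against the representables coming from tuples $(Y^r, X^1, \ldots, X^n, Y^l_i)\in \Bar^n(\bC^r,\bA,\bC^l_i)$; moreover $m_{\bC^r, \bC^l_i}^*$ of a free $(\bC^r\times \bC^l_i)^{op}$-module is explicitly such a colimit. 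Testing \eqref{upper proj form ten} on a free generator $(Y^r, Y^l_2)$ and pairing with a tuple $(Y^r, X^1, \ldots, X^n, Y^l_1)$, the $\Hom$ computation on both sides unfolds, by the definition of morphisms in $\Bar$, to a comparison between iterated expressions involving the right $\bA$-action on $\bC^r$ (using $X^1, \ldots, X^{n-1}$) and, in the very last slot, the action of $X^n$ on $Y^l_i$ together with $F$. The first $n-1$ steps are purely right-sided and do not see $F$; only the very last step, which involves passing an element of $\bA$ across $F$, requires input, and this is precisely the content of tightness \eqref{upper tight}.

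The main technical obstacle will be cleanly packaging this level-by-level comparison as a single chain of canonical isomorphisms in the DG/homotopy setting. I would formalize it by invoking the associativity of the bar-tensor product \eqref{ass ten} to rewrite $\bC^r \underset{\bA}\otimes \bC^l_i \simeq \bC^r \underset{\bA}\otimes \bA \underset{\bA}\otimes \bC^l_i$, and more generally as an iterated construction in which each elementary step involves a single copy of $\bA$ acting on $\bC^l_i$ through $act_{\bC^l_i}$. Tightness of $F$, stated as an isomorphism between the $*$-adjoints in the square \eqref{def up tight}, then propagates through each elementary step and assembles (using that the relevant $*$-functors between module categories in our setup commute with the colimits defining the bar complex) into the desired isomorphism \eqref{upper proj form ten}. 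Once set up properly, this reduction is formal; the delicate point is ensuring that the DG pseudo-functorial data is tracked correctly through the bar complex, for which the framework of \secref{homotopy monoidal categories and actions} and \secref{tensor products of categories} provides the necessary language.
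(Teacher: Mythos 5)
Your proposal takes essentially the same approach as the paper: the direction (b) $\Rightarrow$ (a) by specializing to $\bC^r = \bA$ (via \propref{ten with free}) is exactly what the paper does, and for (a) $\Rightarrow$ (b) both proofs pass through the Bar-complex presentation of $\bC^r\underset{\bA}\otimes\bC^l$. The only real difference is in how the Bar-level reduction is packaged: rather than the ``level-by-level via associativity'' framing you sketch, the paper factors \eqref{upper proj form ten} through two squares --- first a strictly commutative one relating $\bD\left((\bC^r\otimes\bC^l_i)^{op}\mod\right)$ to $\bD\left(\Bar(\bC^r,\bA,\bC^l_i)^{op}\mod\right)$ via the inclusion $\iota_{\bC^r,\bC^l_i}\colon\bC^r\otimes\bC^l_i\to\Bar^0$, then the square comparing the Bar categories with their quotients --- and the single place tightness enters is in verifying that the direct-image pseudo-functor $\Bar(\on{Id}_{\bC^r},\on{Id}_\bA,F)_*$ interacts correctly with the defining ideal $\ua\bI_{\bC^r,\bA,\bC^l_i}$ so that the second square commutes; this is a cleaner way to isolate where tightness is used than the iterated-assembly argument you gesture at.
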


\begin{proof}

For (b)$\Rightarrow$(a) let us take $\bC^r=\bA$. Then the
diagram \eqref{upper tightness diagram} coincides with
\eqref{def up tight}. 

\medskip

For (b)$\Rightarrow$(a) consider first the diagram
$$
\CD
\bD((\bC^r\times \bC_1^l)^{op}\mod)  @>{\iota_{\bC^r,\bC^l_1}^*}>> 
\bD(\Bar(\bC^r,\bA,\bC_1^l)^{op}\mod) \\
@A{(\on{Id}_{\bC^r}\times F)_*}AA    
@A{\Bar(\on{Id}_{\bC^r},\on{Id}_\bA,F)_*}AA  \\
\bD((\bC^r\times \bC_2^l)^{op}\mod)   @>{\iota_{\bC^r,\bC^l_2}^*}^*>>
\bD(\Bar(\bC^r,\bA,\bC_2^l)^{op}\mod),
\endCD
$$
where $\iota_{\bC^r,\bC^l}$ denotes the canonical functor
$\bC^r\otimes \bC^l\to \Bar(\bC^r,\bA,\bC_1^l)$. It is easy to see
that the above diagram commutes. 

\medskip

Hence, it is enough to show that the following diagram
$$
\CD
\bD(\Bar(\bC^r,\bA,\bC_1^l)^{op}\mod) @>>> 
\bD((\bC^r\underset{\bA}\otimes \bC_1^l)^{op}\mod) \\
@A{\Bar(\on{Id}_{\bC^r},\bA,F)_*}AA  @A{(\on{Id}_{\bC^r}\underset{\bA}\times F)_*}AA \\
\bD(\Bar(\bC^r,\bA,\bC_1^2)^{op}\mod) @>>>
\bD((\bC^r\underset{\bA}\otimes \bC_2^l)^{op}\mod) 
\endCD
$$
is commutative, where the horizontal arrows are the ind-limits of the
tautological projections of the "Bar" categories to the tensor products.

\medskip

The latter is equivalent to the fact that the functor 
$(\on{Id}_{\bC^r}\underset{\bA}\times F)_*$ sends 
$\ua\bI{}_{\bC^r,\bA,\bC^l_1}$ to $\ua\bI{}_{\bC^r,\bA,\bC^l_2}$,
which in turn follows from the tightness condition.

\end{proof} 

\begin{cor} \label{total tightness}
Let $\bC^r_i$, $\bC^l_i$, $i=1,2$ be two pairs of categories as above,
and $F^r:\bC^r_1\to \bC^r_2$, $F^l:\bC^l_1\to \bC^l_2$ be homotopy
functors. Assume that the functor $F^r$ is tight.
Then the following diagram of functors commutes:
$$
\CD
\bD\left((\bC_1^r\underset{\bA}\otimes \bC^l_1)^{op}\mod\right) 
@>{(\on{Id}_{\bC_1^r}\underset{\bA}\otimes F^l)^*}>>
\bD\left((\bC_1^r\underset{\bA}\otimes \bC^2_1)^{op}\mod\right) \\
@A{(F^r\underset{\bA}\otimes \on{Id}_{\bC_1^l})_*}AA   
@A{(F^r\underset{\bA}\otimes \on{Id}_{\bC_2^l})_*}AA  \\
\bD\left((\bC_2^r\underset{\bA}\otimes \bC^l_1)^{op}\mod\right)  
@>{(\on{Id}_{\bC_2^r}\underset{\bA}\otimes F^l)^*}>>
\bD\left((\bC_2^r\underset{\bA}\otimes \bC^l_2)^{op}\mod\right).
\endCD
$$
\end{cor}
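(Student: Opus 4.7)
The plan is to reduce the desired commutativity to three already-established squares by inserting the plain-product categories $\bD((\bC^r_i\times \bC^l_j)^{op}\mod)$ between each pair of corners of the corollary's square via the comparison functors $m^*_{\bC^r_i,\bC^l_j}$. In this way, the interaction of the $F^r$-pushforward and the $F^l$-pullback over $\bA$ is reduced to their interaction over plain products, where it is automatic.

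First I would invoke \propref{tight prop}, (a)$\Leftrightarrow$(b), in its mirror form applied to $F^r:\bC^r_1\to \bC^r_2$ viewed as a right $\bA$-module functor (this mirror is obtained by exchanging left and right actions, equivalently by passing to the opposite homotopy monoidal structure on $\bA$; the proof of the proposition works verbatim). Since $F^r$ is tight by hypothesis, this yields, for each $j=1,2$, commutativity of
\begin{equation*}
\CD
\bD((\bC_1^r\times \bC_j^l)^{op}\mod)  @>{m^*_{\bC^r_1,\bC^l_j}}>>
\bD((\bC^r_1\underset{\bA}\otimes \bC_j^l)^{op}\mod) \\
@A{(F^r\times \on{Id}_{\bC^l_j})_*}AA
@A{(F^r\underset{\bA}\otimes \on{Id}_{\bC^l_j})_*}AA  \\
\bD((\bC_2^r\times \bC_j^l)^{op}\mod)   @>{m^*_{\bC^r_2,\bC^l_j}}>>
\bD((\bC^r_2\underset{\bA}\otimes \bC_j^l)^{op}\mod),
\endCD
\end{equation*}
which takes care of the two vertical edges of the corollary's square. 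Next, for each $i=1,2$ the first (tautologically commuting) diagram displayed just before \propref{tight prop} gives
$$m^*_{\bC^r_i,\bC^l_2}\circ (\on{Id}_{\bC^r_i}\times F^l)^* \simeq (\on{Id}_{\bC^r_i}\underset{\bA}\otimes F^l)^*\circ m^*_{\bC^r_i,\bC^l_1},$$
handling the two horizontal edges. Finally, on plain products $\bC^r\otimes \bC^l$ the functors $F^r\times \on{Id}$ and $\on{Id}\times F^l$ act on independent tensor factors, so their upper-$*$ in one variable commutes with lower-$*$ in the other; concretely, on bi-modules over $\bC^r\otimes \bC^l$, extension of scalars in the $\bC^r$-factor commutes with restriction of scalars in the $\bC^l$-factor.

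Pasting the three squares together produces an isomorphism between the two compositions of the corollary's square, first on the essential image of $m^*_{\bC^r_2,\bC^l_1}$. Since the Bar-model construction of \secref{def ten prod} realizes $\bD((\bC^r_2\underset{\bA}\otimes \bC^l_1)^{op}\mod)$ as generated under homotopy colimits by objects in the image of $m^*_{\bC^r_2,\bC^l_1}$ (using \lemref{generate colimit}), and all four functors in question commute with direct sums, the isomorphism extends to the entire category, finishing the argument.

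The main obstacle will be the coherent identification of the natural transformation produced by pasting the three squares with the canonical base-change map implicit in the corollary: one must trace through the Bar-model description used in the proof of \propref{tight prop} to verify that the resulting coherence data agrees. A secondary technicality lies in Step 3, where one has to unpack the bi-module picture for $\bC^r\otimes \bC^l$ to certify that lower-$*$ in one factor and upper-$*$ in another genuinely commute; this is routine once the Yoneda model for modules over a tensor product of DG categories is written down.
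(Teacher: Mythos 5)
The paper states this corollary without proof, leaving it as a formal consequence of \propref{tight prop}, and your decomposition is the natural way to fill in the argument: insert the plain-product categories via $m^*_{\bC^r_i,\bC^l_j}$, use the tautological commutativity of the upper-star square (the first diagram before \propref{tight prop}) for the horizontal edges, use the mirror form of \propref{tight prop}(b) for $F^r$ tight on the vertical edges, and the (genuinely trivial) commutation of restriction and extension on independent factors of a plain tensor product for the bottom face. The resulting identification of the two composites on objects in the image of $m^*_{\bC^r_2,\bC^l_1}$, together with the observation that both composites are colimit-preserving (lower-star is tensoring along the bimodule $M^\bullet_{F^r\otimes\on{Id}}$, so it does commute with direct sums in this DG framework) and that the image of $m^*$ generates under colimits, gives the desired isomorphism.

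The caveat you raise in the last paragraph is the genuine delicate point: to pass from an isomorphism on the generating subcategory to a global natural isomorphism, one needs either to identify the pasted isomorphism with the canonical base-change 2-morphism built from the $(F^r\underset{\bA}\otimes\on{Id})^*\dashv(F^r\underset{\bA}\otimes\on{Id})_*$ adjunction and the tautological upper-star square, or to carry the DG model of the identification along so that its ind-extension is automatic. A slightly more economical alternative to the coherence chase you flag would be to evaluate the canonical base-change transformation directly on an object $Y^r\underset{\bA}\otimes Y^l$ with $Y^r\in\bC^r_2$, $Y^l\in\bC^l_1$, and use \propref{tight prop}(b) (once at the level of $\ua\bC^l_2$ for the "up-then-right" side) to recognize both sides as $F^r_*(Y^r)\underset{\bA}\otimes F^l{}^*(Y^l)$; this avoids ever comparing the pasted transformation with the canonical one. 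Either way, your proof is sound.
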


\ssec{}

Let us consider some examples of the above situation.

\sssec{}

Let $\bA_1$ and $\bA_2$ be two homotopy monoidal categories
and $F_\bA:\bA_1\to \bA_2$ a homotopy monoidal functor between them.
We say that $F_\bA$ is tight if the canonical 1-morphism $\bA_1\to
\Res^{\bA^o_2}_{\bA^o_1}(\bA^o_2)$ is tight as a functor between categories,
acted on the {\it right} by $\bA_1$. Recall also that as plain DG categories 
$\Res^{\bA^o_2}_{\bA^o_1}(\bA^o_2)\simeq \bA_2^o$.

\medskip

Consider the functors 
$$F_\bA^*:\bD(\bA_1^{op}\mod)\to \bD(\bA_2^{op}\mod) \text{ and }
F_\bA{}_*:\bD(\bA_2^{op}\mod)\to \bD(\bA_1^{op}\mod).$$

From \propref{tight prop}
we obtain the following:

\begin{cor}  \label{tight mon}
Let $\bC_1^l$ be a category equipped with a homotopy action of $\bA_1$
on the left. Assume that $F_\bA$ is tight. 

\smallskip

\noindent(1) The following diagram of functors
is commutative:
$$
\CD
\bD((\bA_1\otimes \bC_1^l)^{op}\mod)   @>{act^*_{\bC^l}}>>
\bD(\bC_1^{l,op}\mod)   \\
@A{(F_\bA\times \on{Id}_{\bC_1^l})_*}AA 
@A{(F_{\bC_1^l})_*}AA     \\
\bD((\bA_2\otimes \bC_1^l)^{op}\mod)
@>{act^*_{\Ind^{\bA_2}_{\bA_1}(\bC_1^l)}\circ 
(\on{Id}_{\bA_2}\otimes F_{\bC_1^l})}>>
\bD((\Ind^{\bA_2}_{\bA_1}(\bC_1^l))^{op}\mod),
\endCD
$$
where $F_{\bC_1^l}$ denotes the canonical 1-morphism $\bC_1^l\to 
\Ind^{\bA_2}_{\bA_1}(\bC_1^l)$.

\smallskip

\noindent(2)
For $Y_1,Y_2\in \bC_1^l$, $X\in \bA_2$,
$$\Hom_{\Ho(\Ind^{\bA_2}_{\bA_1}(\bC^l_1))}
(F_{\bC_1^l}^*(Y_1),X\underset{\bA_2}\otimes F_{\bC_1^l}^*(Y_2))
\simeq
\Hom_{\bD(\bC_1^{l,op}\mod)}(Y_1,F_\bA{}_*(X)\underset{\bA_1}\otimes Y_2).$$
\end{cor}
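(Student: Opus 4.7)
The plan is to derive both parts from \propref{tight prop}, with part (1) being a direct application in its right-module formulation and part (2) following from part (1) by adjunction.

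For part (1), I would invoke the right-module analog of \propref{tight prop}, obtained by symmetry by replacing $\bA$ with $\bA_1^o$: a right $\bA_1$-module functor $F^r:\bC^r_1\to \bC^r_2$ is tight if and only if, for every left $\bA_1$-module $\bC^l$, the square obtained by swapping $F^r$ into \eqref{upper tightness diagram} commutes. I apply this with $\bC^r_1=\bA_1$ (the right-regular module), $\bC^r_2=\bA_2$ (with right $\bA_1$-action induced by $F_\bA$), $F^r=F_\bA$, and $\bC^l=\bC^l_1$; the tightness hypothesis on $F_\bA$ is, by definition in \secref{intr tightness}, precisely this right-module tightness. Under the identifications $\bA_1\underset{\bA_1}\otimes \bC^l_1\simeq \bC^l_1$ from \propref{ten with free} and $\bA_2\underset{\bA_1}\otimes \bC^l_1\simeq \Ind^{\bA_2}_{\bA_1}(\bC^l_1)$ from the definition in \secref{def induction}, the multiplication morphism $m_{\bA_1,\bC^l_1}$ is identified with $act^*_{\bC^l_1}$, while $m_{\bA_2,\bC^l_1}$ factors as $act^*_{\Ind^{\bA_2}_{\bA_1}(\bC^l_1)}\circ(\on{Id}_{\bA_2}\otimes F_{\bC^l_1})^*$, since the $\bA_2$-action on the induction, when restricted along $F_{\bC^l_1}:\bC^l_1\to \Ind^{\bA_2}_{\bA_1}(\bC^l_1)$, recovers $m$. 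This yields exactly the diagram of part (1).

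For part (2), I use the adjunction $F_{\bC^l_1}^*\dashv F_{\bC^l_1\,*}$ from \secref{lower star}, which gives
$$\Hom\left(F^*_{\bC^l_1}(Y_1),\, X\underset{\bA_2}\otimes F^*_{\bC^l_1}(Y_2)\right)\simeq \Hom\left(Y_1,\, F_{\bC^l_1\,*}\left(X\underset{\bA_2}\otimes F^*_{\bC^l_1}(Y_2)\right)\right).$$
Then I evaluate the commutative diagram of part (1) on the object $X\boxtimes Y_2\in \bD((\bA_2\otimes \bC^l_1)^{op}\mod)$: the composition bottom-left $\to$ bottom-right $\to$ top-right produces $F_{\bC^l_1\,*}(X\underset{\bA_2}\otimes F^*_{\bC^l_1}(Y_2))$, whereas bottom-left $\to$ top-left $\to$ top-right yields $(F_\bA)_*(X)\underset{\bA_1}\otimes Y_2$. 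Combining these gives the claimed isomorphism.

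The only substantive point requiring verification is the identification in part (1) of the bottom horizontal arrow of the corollary's diagram with the universal multiplication map $m_{\bA_2,\bC^l_1}$ appearing in \propref{tight prop}(b). This requires unwinding the definition $\Ind^{\bA_2}_{\bA_1}(\bC^l_1):=\Res^{\bA_2\otimes \bA_2^o}_{\bA_2\otimes \bA_1^o}(\bA_2)\underset{\bA_1}\otimes \bC^l_1$ from \secref{def induction} and noting that the residual left $\bA_2$-action on the induction factors through the inclusion $F_{\bC^l_1}$ of $\bC^l_1$; once this compatibility is established, the rest is a purely formal consequence of the tightness criterion \propref{tight prop}.
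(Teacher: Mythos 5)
Your proof is correct and spells out exactly what the paper leaves implicit when it asserts that the corollary "follows from \propref{tight prop}." Part (1) is indeed the right-module version of \propref{tight prop}(b) applied to $F^r=F_\bA:\bA_1\to \bA_2$ (as right $\bA_1$-modules) and $\bC^l=\bC^l_1$, combined with the identifications $\bA_1\underset{\bA_1}\otimes \bC^l_1\simeq \bC^l_1$ (from \propref{ten with free}) and $\bA_2\underset{\bA_1}\otimes \bC^l_1 = \Ind^{\bA_2}_{\bA_1}(\bC^l_1)$, under which $m_{\bA_1,\bC^l_1}^*$ becomes $act^*_{\bC^l_1}$, $(F_\bA\underset{\bA_1}\otimes \on{Id})_*$ becomes $(F_{\bC^l_1})_*$, and $m_{\bA_2,\bC^l_1}^*$ becomes the composite on the bottom edge of the corollary's diagram. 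Part (2) then follows by the $(F_{\bC^l_1}^*, F_{\bC^l_1\,*})$-adjunction combined with the isomorphism obtained by evaluating part (1) on $X\boxtimes Y_2$, which is the intended derivation. Everything checks out, including the small-category versus ind-completion point you tacitly handle by noting that all the objects involved land in $\Ho(\Ind^{\bA_2}_{\bA_1}(\bC^l_1))$, so the $\Hom$ computed in $\bD(\cdot^{op}\mod)$ agrees with the one asserted.
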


\begin{cor}   \label{ind low tight}
Let $F_\bA:\bA_1\to \bA_2$, be as in \corref{tight mon}. 
Let $'\bC_1^l,{}''\bC_1^l$ be DG categories equipped with
a homotopy action of $\bA_1$.
Let $F:{}'\bC_1^l\to {}''\bC_1^l$ be a homotopy functor. Then the
diagram of functors
$$
\CD
\bD((\Ind^{\bA_2}_{\bA_1}({}'\bC^l_1))^{op}\mod)  
@>{(\Ind^{\bA_2}_{\bA_1}(F))^*}>>  
\bD((\Ind^{\bA_2}_{\bA_1}({}''\bC^l_1))^{op}\mod)  \\
@V{(F_{{}'\bC^l_1})_*}VV    @V{(F_{{}''\bC^l_2})_*}VV  \\
\bD(({}'\bC^l_1)^{op}\mod) @>{F^*}>>   \bD(({}''\bC^l_1)^{op}\mod) 
\endCD
$$
commutes.
\end{cor}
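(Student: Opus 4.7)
The strategy is to deduce \corref{ind low tight} as a direct application of \corref{total tightness}, with the tightness hypothesis on $F_\bA$ providing the tightness input required there; no new structural input is needed beyond unwinding the construction of induction and the canonical $1$-morphism $F_{\bC^l}: \bC^l \to \Ind^{\bA_2}_{\bA_1}(\bC^l)$.

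First I would specialize \corref{total tightness} to the following data. Take the monoidal category there to be $\bA := \bA_1$; take $\bC_1^r := \bA_1$ with its tautological right $\bA_1$-module structure and $\bC_2^r := \Res^{\bA_2\otimes \bA_2^o}_{\bA_2\otimes \bA_1^o}(\bA_2)$ regarded only as a right $\bA_1$-module; take $F^r$ to be the canonical $1$-morphism $\bA_1 \to \bC_2^r$ of categories with right $\bA_1$-action. By the definition of tightness of a monoidal functor, given in the paragraph immediately preceding \corref{tight mon}, the hypothesis that $F_\bA$ is tight means precisely that this $F^r$ is tight in the sense of \corref{total tightness}. On the other side take $\bC_1^l := {}'\bC_1^l$, $\bC_2^l := {}''\bC_1^l$, and $F^l := F$. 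Corollary~\ref{total tightness} then yields the commutative square
$$
\CD
\bD((\bA_1\underset{\bA_1}\otimes {}'\bC^l_1)^{op}\mod)
@>{(\on{Id}\underset{\bA_1}\otimes F)^*}>>
\bD((\bA_1\underset{\bA_1}\otimes {}''\bC^l_1)^{op}\mod) \\
@A{(F^r\underset{\bA_1}\otimes \on{Id})_*}AA
@A{(F^r\underset{\bA_1}\otimes \on{Id})_*}AA \\
\bD((\bC_2^r \underset{\bA_1}\otimes {}'\bC^l_1)^{op}\mod)
@>{(\on{Id}\underset{\bA_1}\otimes F)^*}>>
\bD((\bC_2^r \underset{\bA_1}\otimes {}''\bC^l_1)^{op}\mod).
\endCD
$$

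To finish, I would identify this square with the target one, reading the vertical arrows downward from the induction to the original (which is just a change of picture, not of content). By \propref{ten with free} together with the remark of \secref{triv ind} that $\iota$ is a $1$-isomorphism in $\DGMod(\bA_1)$, the top row is canonically identified with $\bD(({}'\bC^l_1)^{op}\mod) \to \bD(({}''\bC^l_1)^{op}\mod)$ with horizontal arrow $F^*$. By the very definition of induction in \secref{def induction}, the bottom row is the top row of the target square, with horizontal arrow $(\Ind^{\bA_2}_{\bA_1}(F))^*$. Finally, under these identifications the vertical arrow $(F^r \underset{\bA_1}\otimes \on{Id}_{\bC^l})_*$ becomes the lower-star $(F_{\bC^l})_*$ of the canonical $1$-morphism $F_{\bC^l}: \bC^l \to \Ind^{\bA_2}_{\bA_1}(\bC^l)$. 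I do not expect any substantive obstacle: what remains is the bookkeeping check that the identifications produced by \propref{ten with free} and by the definition of induction respect the $\bA_1$-module structure on each factor, which is already implicit in the fact that those constructions are performed inside $\DGMod(\bA_1)$ rather than merely in $\DGCat$.
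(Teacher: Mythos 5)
Your proof is correct, and it is precisely the argument the paper is implicitly invoking by presenting this result as a corollary of \corref{total tightness} immediately after \corref{tight mon}. You correctly identify $\bC_2^r$ as $\bA_2$ with its right $\bA_1$-module structure, observe that the tightness hypothesis on $F_\bA$ is, by the paper's definition, exactly tightness of the canonical map $F^r:\bA_1\to \bC_2^r$ of right $\bA_1$-modules, and then unwind the identifications $\bA_1\underset{\bA_1}\otimes{}-\simeq\on{Id}$ (via $\iota$) and $\Res^{\bA_2\otimes \bA_2^o}_{\bA_2\otimes \bA_1^o}(\bA_2)\underset{\bA_1}\otimes{}-\simeq\Ind^{\bA_2}_{\bA_1}$ together with $F_{\bC^l}=(F^r\underset{\bA_1}\otimes\on{Id})\circ\iota$, which is how the unit of the induction–restriction adjunction is constructed in \propref{induction adj}.
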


\sssec{}

We will say that $\bA$ {\it has a tight diagonal}
if the tensor product homotopy functor
$m_\bA:\bA\otimes \bA\to \bA$, considered as a homotopy
functor between categories endowed with a homotopy action of
$\bA\otimes \bA^o$ is tight.

\medskip

Assume that $\bA$ has a tight diagonal. We obtain that for
any $\bC^l$ and $\bC^r$ as above the following 
diagram of functors commutes:
$$
\CD
\bD\left((\bC^r\otimes \bC^l\otimes \bA\otimes \bA)^{op}\mod\right)   
@>{(act_{\bC^r}\otimes act_{\bC^l})^*}>>  \bD((\bC^r\otimes \bC^l)^{op}\mod) \\
@A{(\on{Id}_{\bC^r\times \bC^l}\otimes m_\bA)_*}AA   
@A{(m_{\bC^r,\bC^l})_*}AA   \\
\bD((\bC^r\otimes \bC^l\otimes \bA)^{op}\mod) @>>>
\bD((\bC^r\underset{\bA}\otimes \bC^l)^{op}\mod),
\endCD
$$
where the lower horizontal arrow is induced by either of the two
compositions
$$
\CD
\bC^r\otimes \bA\otimes  \bC^l  @>{\on{Id}_{\bC^r}\otimes act_{\bC^l}}>>
\bC^r\otimes \bC^l \\
@V{act_{\bC^r}\otimes\on{Id}_{\bC^l}}VV    @V{m_{\bC^r,\bC^l}}VV   \\
\bC^r\otimes \bC^l  @>{m_{\bC^r,\bC^l}}>>  \bC^r\underset{\bA}\otimes \bC^l.
\endCD
$$

Let $\on{Diag}_{\bA}$ denote the object of $\bD((\bA\otimes \bA)^{op}\mod)$
equal to $(m_\bA)_*({\bf 1}_\bA)$. We obtain:
\begin{cor}  \label{tight diagonal}
If $\bA$ has a tight diagonal, for $Y^l_1,Y^l_2\in \bC^l$, $Y^r_1,Y^r_2\in \bC^r$
we have:
\begin{multline*}
\Hom_{\Ho(\bC^r\underset{\bA}\otimes \bC^l)}
\left(Y^r_1\underset{\bA}\otimes Y^l_1,Y^r_2\underset{\bA}\otimes Y^l_2\right)
\simeq \\
\simeq \Hom_{\bD((\bC^r\times \bC^l)^{op}\mod)}\left((Y^r_1,Y^l_1),
(\on{Diag}_{\bA})\underset{\bA\otimes \bA}
\otimes (Y^r_2,Y^l_2)\right).
\end{multline*}
\end{cor}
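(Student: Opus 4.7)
The plan is to combine the commutative diagram immediately preceding the corollary with the adjunction between $m^*_{\bC^r,\bC^l}$ and $(m_{\bC^r,\bC^l})_*$.

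First I would apply the adjunction to rewrite the left-hand side as
$$\Hom_{\Ho(\bC^r\underset{\bA}\otimes \bC^l)}\!\left(m^*_{\bC^r,\bC^l}(Y^r_1,Y^l_1),\,m^*_{\bC^r,\bC^l}(Y^r_2,Y^l_2)\right)\simeq \Hom_{\bD((\bC^r\times\bC^l)^{op}\mod)}\!\left((Y^r_1,Y^l_1),\,(m_{\bC^r,\bC^l})_*\,m^*_{\bC^r,\bC^l}(Y^r_2,Y^l_2)\right),$$
thereby reducing the problem to exhibiting a canonical isomorphism
$$(m_{\bC^r,\bC^l})_*\,m^*_{\bC^r,\bC^l}(Y^r_2,Y^l_2)\;\simeq\;\on{Diag}_\bA\underset{\bA\otimes \bA}\otimes(Y^r_2,Y^l_2).$$

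The key observation is that $m^*_{\bC^r,\bC^l}(Y^r_2,Y^l_2)$ lies in the image of the bottom horizontal arrow of the displayed square. Indeed, consider the object $(Y^r_2,Y^l_2)\boxtimes {\bf 1}_\bA\in \bD((\bC^r\otimes \bC^l\otimes \bA)^{op}\mod)$, where ${\bf 1}_\bA$ is regarded via its Yoneda module. Using the canonical isomorphism $Y^r_2\underset{\bA}\otimes {\bf 1}_\bA\simeq Y^r_2$ (i.e.\ unitality of the action), either composition defining the bottom arrow sends this object to $m^*_{\bC^r,\bC^l}(Y^r_2,Y^l_2)$.

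Next I would invoke the commutativity of the square to rewrite
$$(m_{\bC^r,\bC^l})_*\,m^*_{\bC^r,\bC^l}(Y^r_2,Y^l_2)\;\simeq\;(act_{\bC^r}\otimes act_{\bC^l})^*\circ (\on{Id}_{\bC^r\times\bC^l}\otimes m_\bA)_*\bigl((Y^r_2,Y^l_2)\boxtimes {\bf 1}_\bA\bigr).$$
Since $(\on{Id}_{\bC^r\times\bC^l}\otimes m_\bA)_*$ acts only on the $\bA$ factor, the right-hand side is $(act_{\bC^r}\otimes act_{\bC^l})^*\bigl((Y^r_2,Y^l_2)\boxtimes(m_\bA)_*({\bf 1}_\bA)\bigr)$, and by definition $(m_\bA)_*({\bf 1}_\bA)=\on{Diag}_\bA$. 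Unwinding the notational convention of \secref{upper and lower star}, this is exactly $\on{Diag}_\bA\underset{\bA\otimes \bA}\otimes(Y^r_2,Y^l_2)$, which yields the desired isomorphism.

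The main point of the proof is therefore the commutative square, which itself is a direct consequence of the tight-diagonal hypothesis via \propref{tight prop}; once that square is available, the corollary is a purely formal consequence of the adjunction together with the $\on{Diag}_\bA$-notation. The only mild bookkeeping issue is to verify that inserting and later integrating out the unit ${\bf 1}_\bA$ genuinely produces $m^*_{\bC^r,\bC^l}(Y^r_2,Y^l_2)$; this is where one uses the homotopy-unitality of the $\bA$-actions on $\bC^r$ and $\bC^l$ recorded in \secref{homotopy monoidal categories and actions}.
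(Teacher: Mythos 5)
Your proof is correct and gives exactly the formal derivation the paper has in mind: the paper states the corollary without proof, signalling that it follows at once from the commutative square above it, and you supply precisely that derivation — adjunction of $m^*_{\bC^r,\bC^l}$ and $(m_{\bC^r,\bC^l})_*$, insertion of $\bf 1_\bA$ via unitality to realize $m^*_{\bC^r,\bC^l}(Y^r_2,Y^l_2)$ as the image of $(Y^r_2,Y^l_2)\boxtimes{\bf 1}_\bA$, then chasing the square, using that $(\on{Id}\otimes m_\bA)_*$ acts only on the $\bA$ factor and $\on{Diag}_\bA=(m_\bA)_*({\bf 1}_\bA)$.
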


\sssec{}   \label{dir image}

Let us assume now that in the situation of \propref{tight prop}, the functor
$F_*$ sends $\Ho(\bC^l_2)$, regarded a full subcategory of $\bD(\bC^{l,op}_2\mod)$ to
$\Ho(\bC^l_1)$, regarded as a subcategory of $\bD(\bC^{l,op}_1\mod)$.
By \lemref{action on quotient}, the resulting functor, denoted $G:
\Ho(\bC^l_2)\to \Ho(\bC^l_1)$ naturally lifts to a 1-morphism in $\DGMod(\bA)$.

We can then consider the functor
$$(\on{Id}_{\bC^r}\underset{\bA}\otimes G):\bC^r\underset{\bA}\otimes \bC_2^l\to
\bC^r\underset{\bA}\otimes \bC_1^l,$$
and its ind-extension
$$(\on{Id}_{\bC^r}\underset{\bA}\otimes G)^*:
\bD((\bC^r\underset{\bA}\otimes \bC_2^l)^{op}\mod)\to
\bD((\bC^r\underset{\bA}\otimes \bC_1^l)^{op}\mod).$$

\begin{prop}
Suppose $F$ is tight. Then at the triangulated level,
the functor $(\on{Id}_{\bC^r}\underset{\bA}\otimes G)^*$ is the right adjoint of
$(\on{Id}_{\bC^r}\underset{\bA}\otimes F)^*$, i.e., we have an isomorphism
of functors at the triangulated level:
$$(\on{Id}_{\bC^r}\underset{\bA}\otimes G)^*\simeq (\on{Id}_{\bC^r}\underset{\bA}\otimes F)_*.$$. 
\end{prop}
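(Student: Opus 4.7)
The plan is to identify the right adjoint $(\on{Id}_{\bC^r}\underset{\bA}\otimes F)_*$ of $(\on{Id}_{\bC^r}\underset{\bA}\otimes F)^*$ with $(\on{Id}_{\bC^r}\underset{\bA}\otimes G)^*$ by checking agreement on a set of compact generators together with direct-sum compatibility. First I would record the absolute (non-tensor) version: since $F^*$ sends the compact generators $Y^l_1\in \Ho(\bC^l_1)$ to $F(Y^l_1)\in \Ho(\bC^l_2)$, it preserves compactness, so $F_*$ commutes with direct sums; and both $F_*$ and $G^*$ restrict on $\Ho(\bC^l_2)$ to the common functor $G$ (the former by hypothesis of the proposition, the latter by construction of the ind-extension). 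Hence $G^*\simeq F_*$ on $\bD(\bC^{l,op}_2\mod)$, and this equivalence provides in particular a counit $(\on{Id}\underset{\bA}\otimes F)^*\circ (\on{Id}\underset{\bA}\otimes G)^*\to \on{Id}$ obtained by tensoring the counit $F^*\circ G^*\simeq F^*\circ F_*\to \on{Id}$ with $\on{Id}_{\bC^r}$ via the functoriality of the tensor product. By the $((\on{Id}\underset{\bA}\otimes F)^*,(\on{Id}\underset{\bA}\otimes F)_*)$ adjunction this produces the candidate natural transformation
$$\eta:\;(\on{Id}_{\bC^r}\underset{\bA}\otimes G)^*\to (\on{Id}_{\bC^r}\underset{\bA}\otimes F)_*.$$

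Next I would check that both sides of $\eta$ commute with arbitrary direct sums. The left side is an ind-extension, so this is automatic. For the right side it suffices that $(\on{Id}_{\bC^r}\underset{\bA}\otimes F)^*$ preserves compactness, which holds because on the generators $Y^r\underset{\bA}\otimes Y^l_1$ of $\Ho(\bC^r\underset{\bA}\otimes \bC^l_1)$ it acts by $Y^r\underset{\bA}\otimes F(Y^l_1)$, again compact in $\Ho(\bC^r\underset{\bA}\otimes \bC^l_2)$. Since $\bD((\bC^r\underset{\bA}\otimes \bC^l_2)^{op}\mod)$ is generated under direct sums by objects of the form $m_{\bC^r,\bC^l_2}^*(Y^r,Y^l_2)=Y^r\underset{\bA}\otimes Y^l_2$, it suffices to verify $\eta$ is an isomorphism on these generators.

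For such a generator the tightness of $F$ feeds directly into \propref{tight prop}(b), yielding the natural isomorphism
$$m_{\bC^r,\bC^l_1}^*\circ (\on{Id}_{\bC^r}\times F)_*\;\xrightarrow{\sim}\; (\on{Id}_{\bC^r}\underset{\bA}\otimes F)_*\circ m_{\bC^r,\bC^l_2}^*.$$
Evaluating the source on $(Y^r,Y^l_2)$, the outer $(\on{Id}_{\bC^r}\times F)_*$ reduces to $(Y^r,F_*(Y^l_2))=(Y^r,G(Y^l_2))$ by the absolute identification, and then $m_{\bC^r,\bC^l_1}^*$ delivers $Y^r\underset{\bA}\otimes G(Y^l_2)$, which is the value of $(\on{Id}_{\bC^r}\underset{\bA}\otimes G)^*$ on $Y^r\underset{\bA}\otimes Y^l_2$. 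The main technical obstacle will be to check that the isomorphism supplied by \propref{tight prop}(b) coincides with $\eta$ on these generators, rather than merely exhibiting both sides as abstractly isomorphic; this is a diagram chase unpacking the bar-complex definition of $\bC^r\underset{\bA}\otimes \bC^l_i$ together with the counit of the absolute adjunction $F^*\dashv F_*$, which I expect to go through routinely but is the most delicate point.
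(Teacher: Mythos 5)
Your proposal is correct and takes essentially the same route as the paper: the paper constructs the adjunction unit $\on{Id}_{\bC^l_1}\to G\circ F$ as a 2-morphism in $\DGMod(\bA)$, tensors it with $\on{Id}_{\bC^r}$ to get a candidate unit at the base-changed level, and then checks the resulting map on $\Hom$ spaces is an isomorphism by reducing to generators $\wt{X}_2=X^r\underset{\bA}\otimes X_2$ and invoking \propref{tight prop}, exactly as you do (you merely construct the counit and record the absolute identification $G^*\simeq F_*$ up front rather than implicitly). The coherence concern you flag at the end --- that the isomorphism furnished by \propref{tight prop}(b) should agree with the constructed natural transformation, not merely be abstractly parallel to it --- is also glossed over in the paper's proof; it is indeed a routine diagram chase.
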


\begin{proof}

We have an evidently defined 2-morphism $\on{Id}_{\ua\bC{}^l_1}\to
F_*\circ F^*$ in $\DGMod(\bA)$, and by \secref{action on subcat},
also a 2-morphism $\on{Id}_{\bC^l_1}\to G\circ F$. The latter gives
rise to a 2-morphism
$$\on{Id}_{\bC^r\underset{\bA}\otimes \bC_1^l}\to
(\on{Id}_{\bC^r}\underset{\bA}\otimes G)\circ (\on{Id}_{\bC^r}\underset{\bA}\otimes F),$$
and to a morphism
$$\on{Id}_{\bD((\bC^r\underset{\bA}\otimes \bC_1^l)^{op}\mod)}\to
(\on{Id}_{\bC^r}\underset{\bA}\otimes G)^*\circ (\on{Id}_{\bC^r}\underset{\bA}\otimes F)^*,$$

Thus, for $\wt{X_1}\in \bD((\bC^r\underset{\bA}\otimes \bC_1^l)^{op}\mod)$ and
$\wt{X_2}\in \bD((\bC^r\underset{\bA}\otimes \bC_1^l)^{op}\mod)$ we obtain a map
$$\Hom((\on{Id}_{\bC^r}\underset{\bA}\otimes F)^*(\wt{X_1}),\wt{X_2})\to
\Hom(\wt{X_1}, (\on{Id}_{\bC^r}\underset{\bA}\otimes G)^*(\wt{X_2}),$$
and we have to show that the latter is an isomorphism. 

\medskip

With no restriction of generality we can assume that $$\wt{X_1}\in \Ho(\bC^r\underset{\bA}\otimes \bC_1^l)
\text{ and } \wt{X_2}\in \Ho(\bC^r\underset{\bA}\otimes \bC_2^l).$$ Further, we can assume that
$\wt{X_2}$ is of the form $X^r\underset{\bA}\otimes X_2$ with $X^r\in \Ho(\bC^r)$ and
$X_2\in \Ho(\bC^l_2)$. 

\medskip

In the latter case the desired isomorphism follows from \propref{tight prop}.

\end{proof}

\ssec{Rigidity}   \label{rigidity}

Here is a way to insure that any functor $F$ is tight.
We shall say that
$\bA$ is rigid of the triangulated monoidal category $\Ho(\bA)$ has this
property. I.e., if there exists a self anti-equivalence of $\Ho(\bA):X\mapsto X^\vee$,
and maps ${\bf 1}_\bA\to X\otimes X^\vee$ and $X^\vee\otimes X\to {\bf 1}_\bA$
such that the two compositions
$$X\to X\underset{\bA}\otimes X^\vee\underset{\bA}\otimes X\to X \text{ and }
X^\vee\to X^\vee\underset{\bA}\otimes X\underset{\bA}\otimes X^\vee\to X^\vee$$
are the identity maps in $\Ho(\bA)$.

\begin{lem}  \label{rigid adjunction}
For $\bC$ endowed with a homotopy action of $\bA$ and $X\in \bA$
as above the functor $act_\bC^*(X,?)$ is the right adjoint of $act_\bC^*(X^\vee,?)$.
\end{lem}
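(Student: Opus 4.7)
The plan is to construct the unit and counit of the desired adjunction directly from the rigidity data of $\bA$, and then verify the triangle identities using precisely the two compositions that the rigidity hypothesis requires to be identities. Since the rigidity data $\mathbf{1}_\bA \to X \otimes X^\vee$ and $X^\vee \otimes X \to \mathbf{1}_\bA$ is given at the level of $\Ho(\bA)$, and since $\Ho(\bA)$ acts on $\Ho(\bC)$ as a genuine (not merely homotopy) monoidal category, the proof reduces to a routine diagram chase in the monoidal category $\Ho(\bA)$ acting on $\Ho(\bC)$, so we do not need to worry about higher coherences.

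Concretely, I would first produce the unit natural transformation $\eta_Y\colon Y \to X \underset{\bA}\otimes (X^\vee \underset{\bA}\otimes Y)$ as the composition
\[
Y \;\simeq\; \mathbf{1}_\bA \underset{\bA}\otimes Y \;\longrightarrow\; (X \otimes X^\vee) \underset{\bA}\otimes Y \;\simeq\; X \underset{\bA}\otimes (X^\vee \underset{\bA}\otimes Y),
\]
where the middle arrow is induced by the rigidity unit $\mathbf{1}_\bA \to X \otimes X^\vee$ and the last isomorphism is the associativity constraint for the action. Dually, I would produce the counit $\varepsilon_Y\colon X^\vee \underset{\bA}\otimes (X \underset{\bA}\otimes Y) \to Y$ as
\[
X^\vee \underset{\bA}\otimes (X \underset{\bA}\otimes Y) \;\simeq\; (X^\vee \otimes X)\underset{\bA}\otimes Y \;\longrightarrow\; \mathbf{1}_\bA \underset{\bA}\otimes Y \;\simeq\; Y,
\]
using the rigidity counit $X^\vee \otimes X \to \mathbf{1}_\bA$.

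Next, I would verify the two triangle identities. The composition
\[
X^\vee \underset{\bA}\otimes Y \xrightarrow{\on{id}\otimes \eta_Y} X^\vee \underset{\bA}\otimes X \underset{\bA}\otimes (X^\vee \underset{\bA}\otimes Y) \xrightarrow{\varepsilon_{X^\vee \otimes Y}} X^\vee \underset{\bA}\otimes Y
\]
unravels, after associativity, to tensoring $Y$ with the composition $X^\vee \to X^\vee \otimes X \otimes X^\vee \to X^\vee$ of the unit and counit, which is the identity by the second of the two rigidity compositions. Similarly, the triangle identity for $X \underset{\bA}\otimes Y$ reduces to tensoring $Y$ with the composition $X \to X \otimes X^\vee \otimes X \to X$, which is the identity by the first of the two rigidity compositions.

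There is essentially no obstacle here: the entire content is the observation that the rigidity axioms are precisely the triangle identities for the candidate adjunction, once one interprets them after applying the action functor. The only thing to be a little careful about is that the associativity and unit isomorphisms for the action of $\Ho(\bA)$ on $\Ho(\bC)$ make the rewriting between $X \underset{\bA}\otimes (X^\vee \underset{\bA}\otimes Y)$ and $(X \otimes X^\vee) \underset{\bA}\otimes Y$ compatible with the two rigidity compositions, but this is automatic from the coherence of the monoidal action at the homotopy level.
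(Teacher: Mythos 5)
Your proof is correct, and it fills in a step the paper leaves implicit: the paper states Lemma~\ref{rigid adjunction} without proof (it is invoked immediately afterward in the proof of Proposition~\ref{rigid is tight}, but never demonstrated). You correctly observe that the asserted adjunction lives at the level of the triangulated (homotopy) categories, so only the genuine monoidal action of $\Ho(\bA)$ on $\Ho(\bC)$ matters; you then produce the unit and counit by applying the action functor to the rigidity (co)evaluation maps, and the two triangle identities for the adjunction come out, after the associativity/unit constraints, as exactly the two rigidity compositions $X\to X\otimes X^\vee\otimes X\to X$ and $X^\vee\to X^\vee\otimes X\otimes X^\vee\to X^\vee$ required to be identities. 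This is the standard argument and is the intended (omitted) proof. One tiny notational slip: in your first triangle identity the counit should be $\varepsilon_{X^\vee\underset{\bA}\otimes Y}$ rather than $\varepsilon_{X^\vee\otimes Y}$; the argument itself is unaffected.
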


We have:

\begin{prop}   \label{rigid is tight}
If $\bA$ is rigid, then any functor $F:\bC^l_1\to \bC^l_2$ is 
tight.
\end{prop}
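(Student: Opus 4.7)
The strategy is to verify condition (b) of \propref{tight prop}, which in turn reduces to showing that the natural transformation \eqref{upper tight} is an isomorphism. Since all four functors in \eqref{upper tight} commute with direct sums and distinguished triangles, it suffices to verify the statement on the compact generators of $\bD((\bA\times\bC^l_2)^{op}\mod)$, namely on representables $(X,Y_2)$ with $X\in\bA$ and $Y_2\in\bC^l_2$. A direct unwinding of the definitions identifies, for such a pair, the map \eqref{upper tight} with the canonical morphism
$$X\underset{\bA}\otimes F_*(Y_2)\;\longrightarrow\;F_*\bigl(X\underset{\bA}\otimes Y_2\bigr),$$
assembled from the unit of $F^*\dashv F_*$, the compatibility isomorphism $F^*\circ act^*(X,?)\simeq act^*(X,?)\circ F^*$ coming from $F$ respecting the $\bA$-action, and the counit of $F^*\dashv F_*$.

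The plan is then to show that this map is an isomorphism by exploiting rigidity. \lemref{rigid adjunction} asserts that $act^*(X,?)$ is a right adjoint of $act^*(X^\vee,?)$; applying the same lemma with $X^\vee$ in place of $X$, and using $(X^\vee)^\vee\simeq X$, shows that $act^*(X,?)$ is simultaneously a left adjoint of $act^*(X^\vee,?)$. Hence $\{act^*(X,?),\,act^*(X^\vee,?)\}$ is an ambidextrous adjoint pair, on the ind-completions of both $\bC^l_1$ and $\bC^l_2$. Passing to right adjoints on both sides of the compatibility isomorphism $F^*\circ act^*(X,?)\simeq act^*(X,?)\circ F^*$, and using the contravariance of the right-adjoint assignment on compositions, yields
$$act^*(X^\vee,?)\circ F_*\;\simeq\;F_*\circ act^*(X^\vee,?),$$
whereupon substituting $X\leftrightarrow X^\vee$ produces the desired natural isomorphism $act^*(X,?)\circ F_*\simeq F_*\circ act^*(X,?)$; applied to $Y_2$, this exhibits the two sides of the displayed morphism above as isomorphic.

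The main point requiring care is to verify that the isomorphism produced by this formal adjunction manipulation coincides with the canonical natural transformation \eqref{upper tight}. My plan is to argue this by Yoneda at the level of $\Hom$ groups: the chain of isomorphisms
$$\Hom(Y_1,X\underset{\bA}\otimes F_*(Y_2))\;\simeq\;\Hom(X^\vee\underset{\bA}\otimes Y_1,F_*(Y_2))\;\simeq\;\Hom(F^*(X^\vee\underset{\bA}\otimes Y_1),Y_2),$$
followed by the $\bA$-equivariance of $F^*$ and the reverse applications of the same two adjunctions, produces a natural isomorphism $\Hom(Y_1,X\underset{\bA}\otimes F_*(Y_2))\simeq \Hom(Y_1,F_*(X\underset{\bA}\otimes Y_2))$. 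Checking that this Yoneda isomorphism is induced by the canonical map \eqref{upper tight} reduces to a diagram chase involving the naturality of the counit of $F^*\dashv F_*$ together with the triangle identities of the rigidity datum, i.e.\ the defining compatibilities of the evaluation and coevaluation maps ${\bf 1}_\bA\to X\otimes X^\vee$ and $X^\vee\otimes X\to{\bf 1}_\bA$ from \secref{rigidity}; this compatibility is essentially formal, and once verified, completes the proof.
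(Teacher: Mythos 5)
Your proposal is correct and rests on the same key ingredients as the paper: reduction of the natural transformation \eqref{upper tight} to the canonical map $X\underset{\bA}\otimes F_*(Y_2)\to F_*(X\underset{\bA}\otimes Y_2)$ on representables, followed by an appeal to \lemref{rigid adjunction} to play $X$ off against $X^\vee$. The paper packages this more economically by constructing an explicit inverse map — applying \lemref{rigid adjunction} once to reduce to a map $X^\vee\underset{\bA}\otimes F_*(X\underset{\bA}\otimes Y)\to F_*(Y)$, which it builds from the canonical map at $X^\vee$, associativity, and the evaluation $X^\vee\otimes X\to{\bf 1}_\bA$ — whereas you derive the isomorphism by passing to right adjoints across the ambidextrous adjunction $act^*(X,?)\dashv act^*(X^\vee,?)\dashv act^*(X,?)$ and then must separately identify this mate with the canonical map. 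The residual verification you defer (that the mate agrees with \eqref{upper tight}) is exactly parallel to the verification the paper leaves implicit (that the constructed map is a two-sided inverse); both are formal consequences of the triangle identities, so the two writeups carry the same burden of unchecked diagram-chasing.
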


\begin{proof}
It is sufficient to show that the map
$$X\underset{\bA}\otimes F_*(Y)\to F_*(X \underset{\bA}\otimes Y)$$
is an isomorphism for any $X\in \Ho(\bA)$, $Y\in \bD(\bC_2^{l,op}\mod)$.

\medskip

In this case, we we will construct the inverse map to the one above.
By \lemref{rigid adjunction}, constructing a map
$$F_*(X\underset{\bA}\otimes Y)\to X \underset{\bA}\otimes F_*(Y)$$
is equivalent to constructing a map 
$$X^\vee \underset{\bA}\otimes F_*(X\underset{\bA}\otimes Y)\to
F_*(Y).$$
The latter equals the composition
$$
X^\vee \underset{\bA}\otimes F_*(X\underset{\bA}\otimes Y)\to
F_*\left(X^\vee \underset{\bA}\otimes (X\underset{\bA}\otimes Y)\right)\simeq
F_*\left((X^\vee\underset{\bA}\otimes X)\underset{\bA}\otimes Y\right)\to
F_*(Y).$$

\end{proof}

\section{$\bbt$-structures: a reminder}   \label{t-structures: a reminder}

\ssec{}  \label{bdd subcat}

Recall the notion of t-structure on a triangulated category.  Given
a t-structure on $\bD$ we will use the standard notations:
$$\bD^+:=\underset{k}\cup\, \bD^{\geq k},\,\,
\bD^-:=\underset{k}\cup\, \bD^{\leq k},\,\, \bD^b:=\bD^+\cap \bD^-.$$

\sssec{}

If $\bD$ is a triangulated category equipped with a t-structure,
and $\bD'\subset \bD$ is a full triangulated subcategory, we
will say that $\bD'$ is compatible with the t-structure, if it is
preserved by the truncation functors. In this case, $\bD'$
inherits a t-structure: it is the unique t-structure for which the
inclusion functor is exact.

\sssec{}    \label{t colim}

Let $\bD$ be a co-complete triangulated category equipped
with a DG model and a t-structure. We say that the t-structure is 
compatible with colimits if for for every homotopy $I$-object $X_I$ with
$X_i\in \bD^{\leq 0}$ (resp., $X_i\in \bD^{\geq 0}$) for all $i\in I$, we have
$hocolim(X_I)\in \bD^{\leq 0}$ (resp., $hocolim(X_I)\in \bD^{\geq 0}$).

\sssec{}

The following assertion generalizes \lemref{generate colimit}. 
Suppose that $\bD$ is co-complete, is equipped with a DG
model and a t-structure.  Assume that the t-structure is
compatible with colimits.

\medskip

Let $\bD'\subset \bD$ be triangulated
subcategory that generates $\bD$. Note that we are not 
assuming that $\bD'$ is compatible with the t-structure.

\begin{lem}  \label{colimit generate t}
Under the above circumstances every object of 
$\bD^{\leq 0}$ (resp., $\bD^{\geq 0}$) can be represented as
a homotopy colimit of a homotopy $I$-object $X_I$ such that the image
of every $X_i$ in $\bD$ is of the form $\tau^{\leq 0}(X'_i)$
(resp., $\tau^{\geq 0}(X'_i)$) for $X'_i\in \bD'$.
\end{lem}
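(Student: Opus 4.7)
The plan is to combine \lemref{generate colimit} with the observation that the truncation functors commute with filtered homotopy colimits, a consequence of the assumed compatibility of the $t$-structure with colimits. I would treat the case $X\in \bD^{\leq 0}$; the case of $\bD^{\geq 0}$ is entirely analogous.

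First I would establish that filtered $hocolim$ commutes with $\tau^{\leq 0}$. Given a filtered homotopy $I$-object $X_I$, the truncation triangles $\tau^{\leq 0}(X_i)\to X_i\to \tau^{\geq 1}(X_i)$ should lift to a distinguished triangle of homotopy $I$-objects. This uses that $\bD^{\leq 0}$ and $\bD^{\geq 1}$ carry DG models inherited from that of $\bD$ (as in \secref{rigid quotients}), together with the fact that the truncation functors, being adjoints to the inclusions of these subcategories, admit homotopy-functorial lifts via the hut construction of \secref{huts}; composition with such a lift then produces the termwise truncated homotopy $I$-objects $\tau^{\leq 0}(X_I)$ and $\tau^{\geq 1}(X_I)$. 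Taking $hocolim$, which is exact, yields the triangle
$$hocolim(\tau^{\leq 0}(X_I))\to hocolim(X_I)\to hocolim(\tau^{\geq 1}(X_I)).$$
Compatibility of the $t$-structure with colimits places the outer terms in $\bD^{\leq 0}$ and $\bD^{\geq 1}$ respectively; by uniqueness of the truncation triangle, one deduces $\tau^{\leq 0}(hocolim(X_I))\simeq hocolim(\tau^{\leq 0}(X_I))$.

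Given $X\in \bD^{\leq 0}$, I would then apply \lemref{generate colimit} to obtain a filtered homotopy $I$-object $X'_I$ with every $X'_i\in \bD'$ and $X\simeq hocolim(X'_I)$. The commutation just established gives
$$X\simeq \tau^{\leq 0}(X)\simeq \tau^{\leq 0}(hocolim(X'_I))\simeq hocolim(\tau^{\leq 0}(X'_I)),$$
which is the required representation: the homotopy $I$-object $\tau^{\leq 0}(X'_I)$ has each entry of the form $\tau^{\leq 0}(X'_i)$ for $X'_i\in \bD'$.

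The main technical obstacle lies in the DG-level step of interpreting ``$\tau^{\leq 0}$ applied termwise'' as an actual operation on homotopy $I$-objects, so that the truncation triangle lives genuinely in $\on{HOb}(I,\bC)$ and not merely pointwise in $\bD$. The resolution is that the full triangulated subcategories $\bD^{\leq 0}$ and $\bD^{\geq 1}$ of $\bD$ inherit DG models, and the corresponding truncation functors admit homotopy functor realizations to which the constructions of \secref{hocolim} apply; once this is in place, the argument above proceeds formally.
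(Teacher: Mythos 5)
Your argument is correct and is the natural (and almost certainly intended) route, given that the paper states \lemref{colimit generate t} without proof and the surrounding framework supplies exactly the tools you invoke. The key steps — termwise truncation of the homotopy $I$-object produced by \lemref{generate colimit}, exactness of $hocolim$, the compatibility of the $t$-structure with colimits to place the two extreme terms in $\bD^{\leq 0}$ and $\bD^{\geq 1}$, and uniqueness of the truncation triangle — are assembled correctly, and the specialization $\tau^{\leq 0}(X)\simeq X$ for $X\in\bD^{\leq 0}$ closes the argument. One small remark on the technical point you flag: a cleaner way to guarantee that the termwise truncation lives in $\on{HOb}(I,\bC)$ is to observe that the pointwise conditions $X_i\in \bD^{\leq 0}$ (resp.\ $\bD^{\geq 0}$) define a $t$-structure on $\Ho(\ua{I^{DG}\otimes\bC^{op}})$ for which each evaluation functor $X_I\mapsto X_i$ is exact, and then truncate $X'_I$ in that ambient category; this sidesteps the need to lift the ambient truncation functor on $\bD$ to a homotopy functor and then compose, but either route works, and the paper's treatment of truncation filtrations at the DG level in \secref{functor der to tr} shows the authors accept such lifts as routine.
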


\ssec{}   \label{compact gener t}

Let us recall a general construction of t-structures on a 
co-complete triangulated category. This construction was explained
to us by J.~Lurie.

\medskip

Let $\bD$ be a co-complete triangulated category. Let $X_a\in \bD$ be a 
collection of compact objects, indexed by some set $A$. \footnote{As was 
explained to us by J.~Lurie, for what follows one does not 
in fact need to require that $X_a$ be compact, if some general
set-theoretic assumption on $\bD$ is satisfied.}

\begin{lem}  \label{t general}  
Under the above circumstances, there exists a unique t-structure
on $\bD$, such that $\bD^{>0}$ consists of all objects $Y$ such that
$\Hom(X_a[k],Y)=0$ for all $k\geq 0$. In this case $\bD^{\leq 0}$
is the minimal subcategory of $\bD$, stable under extensions and
direct sums that contains the objects $X_a[k]$, $k\geq 0$.
\end{lem}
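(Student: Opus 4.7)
The plan is to take $\bD^{\leq 0}$ to be the full subcategory described in the second sentence of the lemma — the minimal subcategory of $\bD$ containing every $X_a[k]$ with $k \geq 0$ and closed under direct sums and extensions — and to take $\bD^{> 0}$ to be the full subcategory of $Y$ satisfying the vanishing condition. Note that $\bD^{\leq 0}$ is then automatically closed under $[1]$, since $X_a[k][1]=X_a[k+1]$ is again among the generators; this gives the axiom $\bD^{\leq 0}[1]\subset \bD^{\leq 0}$. The vanishing $\Hom(\bD^{\leq 0},\bD^{>0})=0$ is immediate, because the property $\Hom(-,Y)=0$ is preserved by the operations defining $\bD^{\leq 0}$. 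Uniqueness is standard: a t-structure is determined by either aisle together with the truncation triangle.

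The only substantive point is the existence of truncation triangles. For a given $Y\in \bD$, I will inductively construct a tower $0=Y_0\to Y_1\to Y_2\to\cdots$ with compatible maps $f_n:Y_n\to Y$, so that the cones $C_n:=\on{cone}(f_n)$ eventually admit no non-zero maps from $X_a[k]$, $k\geq 0$. At stage $n$, set $Z_n := \bigoplus_{(a,k,\phi)} X_a[k]$, where the sum is indexed by all triples $(a,k,\phi)$ with $k\geq 0$ and $\phi:X_a[k]\to C_n$, and let $g_n:Z_n\to C_n$ be the tautological map. The composition $Z_n\overset{g_n}\to C_n\to Y_n[1]$ defines a triangle $Z_n[-1]\to Y_n\to Y_{n+1}\to Z_n$, and the triangulated axioms produce a compatible map $f_{n+1}:Y_{n+1}\to Y$; the octahedron identifies $C_{n+1}:=\on{cone}(f_{n+1})$ with $\on{cone}(g_n)$.

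I then take $Y':=\on{hocolim}_n Y_n$ with its induced map to $Y$, and $Y'':=\on{cone}(Y'\to Y)$; by the standard presentation of a sequential homotopy colimit, $Y''\simeq \on{hocolim}_n C_n$. To see $Y'\in \bD^{\leq 0}$: each $Y_n$ lies in $\bD^{\leq 0}$ by induction via the triangle $Y_n\to Y_{n+1}\to Z_n$, and $Y'$ fits in $\bigoplus_n Y_n\overset{\text{shift}-\on{id}}\to\bigoplus_n Y_n\to Y'$, so $Y'\in \bD^{\leq 0}$ by stability under direct sums and extensions. To see $Y''\in \bD^{>0}$: for $k\geq 0$ the map $\Hom(X_a[k],Z_n)\to \Hom(X_a[k],C_n)$ is surjective by construction (the summand indexed by $\phi$ provides a preimage of $\phi$), so from the triangle $Z_n\to C_n\to C_{n+1}$ the transition map $\Hom(X_a[k],C_n)\to \Hom(X_a[k],C_{n+1})$ is zero. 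Hence $\on{colim}_n \Hom(X_a[k],C_n)=0$, and this colimit agrees with $\Hom(X_a[k],Y'')$ by compactness of $X_a$.

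The crux of the argument is precisely this last step: the interplay between compactness and the vanishing of the colimit of $\Hom$-groups through the sequential cell-attachment. This is where the compactness hypothesis on the $X_a$ is essential, and what the footnote identifies as replaceable by a set-theoretic accessibility condition on $\bD$. Once this is in hand, the identification of $\bD^{\leq 0}$ with $\{Y:\tau^{>0}Y=0\}$ is formal: an object $Y$ with $\tau^{>0}Y=0$ is isomorphic to the $Y'$ produced above, and this $Y'$ lies in the minimal subcategory by its explicit construction as an iterated extension of direct sums of $X_a[k]$ with $k\geq 0$.
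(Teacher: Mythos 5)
The paper does not actually supply a proof of \lemref{t general}; it is stated without argument, attributed to J.~Lurie, and immediately used in the sequel. Your proof fills this gap with the standard "small object" (cellular approximation) argument for compactly generated t-structures, and it is correct. Let me flag two small expository points. First, the octahedron step is stated in a slightly awkward order: it is cleaner to \emph{define} $C_{n+1}:=\on{cone}(g_n)$, let $Y_{n+1}$ be the fiber of the composite $Y\to C_n\to C_{n+1}$, and then read off the triangle $Y_n\to Y_{n+1}\to Z_n$ from the octahedron applied to $Y\to C_n\to C_{n+1}$; as written, you build $Y_{n+1}$ first and must then argue that some choice of factorization $f_{n+1}$ exists with $\on{cone}(f_{n+1})\simeq\on{cone}(g_n)$, which is true but requires choosing $f_{n+1}$ compatibly. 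Second, the identification $\on{cone}(Y'\to Y)\simeq\on{hocolim}_n C_n$ of the cone with the colimit of the cones does require choosing the map $Y'\to Y$ compatibly with the cocone $(f_n)$ and with the triangles $Y_n\to Y\to C_n$; this is unproblematic here since $\bD$ carries a DG model (as assumed throughout this section of the paper), but in a bare triangulated category this step would need more care. Your observation that closure of the generating set under $[1]$ forces $\bD^{\leq 0}[1]\subset\bD^{\leq 0}$ is correct and worth making explicit, as it is the one place where the specific form $\{X_a[k]:k\geq 0\}$ of the generators (as opposed to an arbitrary set of compact objects) enters. The remaining steps — vanishing of the transition maps on $\Hom(X_a[k],C_n)$, compactness passing $\Hom$ through the sequential hocolim (this is \lemref{compact and limit} in the paper), and the characterization of $\bD^{\leq 0}$ as the left orthogonal — are all fine.
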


We will call t-structures that arise by the procedure of the above
lemma {\it compactly generated}.
Tautologically, we have:

\begin{lem} \label{comp gen criter}
Let $\bD$ be a co-complete triangulated category equipped with a t-structure,
and with $\bD^c$ (the subcategory of compact objects) essentially small.
Then the t-structure is compactly generated if and only if 
$$X\in \bD^{>0} \Leftrightarrow  \Hom(X',X)=0,\,\,\forall X\in \bD^c\cap \bD^{\leq 0}.$$
\end{lem}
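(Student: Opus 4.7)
My plan is to exploit the fact that the lemma is essentially a reformulation of \lemref{t general} with a canonical choice of generating set, namely $\bD^c \cap \bD^{\leq 0}$ itself (which is essentially small by the hypothesis on $\bD^c$). The one direction of the biconditional $X \in \bD^{>0} \Rightarrow \Hom(X',X) = 0$ for $X' \in \bD^c\cap \bD^{\leq 0}$ is automatic from the t-structure axioms; the content lies in the converse.

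For the "only if" direction, I would assume the t-structure is compactly generated in the sense of \lemref{t general}, arising from some collection $\{X_a\}$ of compact objects. Any $X_a[k]$ with $k \geq 0$ then belongs to $\bD^c \cap \bD^{\leq 0}$ (since by the explicit description in \lemref{t general}, $X_a \in \bD^{\leq 0}$, and shift by $k \geq 0$ preserves both compactness and the property of being $\leq 0$). So if $\Hom(X',X)=0$ for all $X' \in \bD^c\cap \bD^{\leq 0}$, then in particular $\Hom(X_a[k],X)=0$ for all $a$ and all $k \geq 0$, which by the defining property of the t-structure in \lemref{t general} forces $X \in \bD^{>0}$.

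For the "if" direction, assuming the characterization holds, I would apply \lemref{t general} to the collection $\{X_a\} := \bD^c \cap \bD^{\leq 0}$ (choosing a small skeleton, using the essential smallness hypothesis). This produces a t-structure whose $>0$-part consists of those $Y$ with $\Hom(X'[k],Y)=0$ for all $k \geq 0$ and all $X' \in \bD^c \cap \bD^{\leq 0}$. Since shifting $X'$ by $k \geq 0$ keeps it inside $\bD^c \cap \bD^{\leq 0}$, this new $>0$-part equals $\{Y : \Hom(X',Y)=0 \,\forall X' \in \bD^c \cap \bD^{\leq 0}\}$, which by hypothesis coincides with the original $\bD^{>0}$. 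A t-structure on a triangulated category is determined by either half, so the given t-structure agrees with the one produced by \lemref{t general} and is therefore compactly generated.

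I do not anticipate a genuine obstacle here; the only mildly subtle point is to verify that the generating set can be taken to be the full intersection $\bD^c \cap \bD^{\leq 0}$ (rather than some \emph{a priori} smaller collection), which is exactly what makes the characterization intrinsic. Essential smallness of $\bD^c$ is needed precisely so that \lemref{t general} is applicable with this choice.
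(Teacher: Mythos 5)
Your proof is correct, and it matches what the paper evidently has in mind: the paper offers no argument at all, introducing the lemma with ``Tautologically, we have,'' and your proof simply spells out that tautology by combining the two halves of \lemref{t general} with the canonical choice $\{X_a\} = \bD^c\cap\bD^{\leq 0}$ (essential smallness being used exactly to make this a legitimate indexing set).
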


\medskip

The following results immediately from \lemref{compact and limit}:

\begin{lem}
Let $\bD$ be a co-complete triangulated category equipped with
a DG model and a compactly generated t-structure. Then this
t-structure is compatible with colimits.
\end{lem}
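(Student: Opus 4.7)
The plan is to prove the two halves of compatibility with colimits — preservation of $\bD^{\leq 0}$ and of $\bD^{\geq 0}$ under $hocolim$ — by separate arguments, handling the $\bD^{\geq 0}$ half with \lemref{compact and limit} (this is the content of ``immediately'') and reducing the $\bD^{\leq 0}$ half to the defining closure properties of a compactly generated t-structure from \lemref{t general}.

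For the $\bD^{\geq 0}$ half, first I would use \lemref{comp gen criter} (equivalently, the explicit description in \lemref{t general}) to reformulate the statement $hocolim(X_I) \in \bD^{\geq 0}$ as the vanishing
\[
\Hom\bigl(X_a[m], hocolim(X_I)\bigr) = 0 \quad \text{for all } a \in A, \; m \geq 1,
\]
where $\{X_a\}$ is the chosen family of compact generators (so $X_a[m] \in \bD^{\leq -m}$ for $m \geq 1$). Since each $X_a[m]$ is compact in $\bD$ (shifts of compact objects are compact), \lemref{compact and limit} rewrites this as $\underset{i}{colim}\, \Hom(X_a[m], X_i)$. Each individual $\Hom$ term vanishes by the standard t-structure orthogonality, because $X_a[m] \in \bD^{\leq -1}$ while $X_i \in \bD^{\geq 0}$. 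Hence the colimit vanishes, yielding $hocolim(X_I) \in \bD^{\geq 0}$.

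For the $\bD^{\leq 0}$ half, I would not use \lemref{compact and limit} at all (it probes $\Hom$ out of compact objects, which is the wrong direction), but rather appeal directly to the defining characterization in \lemref{t general}: $\bD^{\leq 0}$ is the smallest full subcategory of $\bD$ that contains all $X_a[k]$ for $k \geq 0$ and is stable under extensions and arbitrary direct sums. In particular, $\bD^{\leq 0}$ is stable under direct sums, and it is stable under taking cones of morphisms (the triangle $A \to B \to \on{cone}(f) \to A[1]$ exhibits $\on{cone}(f)$ as an extension of $A[1] \in \bD^{\leq -1} \subseteq \bD^{\leq 0}$ by $B \in \bD^{\leq 0}$). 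Since $I$ is filtered (as stipulated at the end of \secref{hocolim}), $hocolim(X_I)$ admits a presentation as the cone of a morphism between direct sums of the $X_i$ (the telescope/Bar construction, as in \secref{simple colim} in the simplest case); both direct sums lie in $\bD^{\leq 0}$ by assumption, so the cone does as well.

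The only real subtlety — and the reason the proof splits into two asymmetric halves — is that compactness of the generators is essential precisely in the $\bD^{\geq 0}$ half, where one interchanges $\Hom$ with the filtered colimit via \lemref{compact and limit}; the $\bD^{\leq 0}$ half is purely formal from the closure properties built into the definition in \lemref{t general}.
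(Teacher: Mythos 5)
Your proof is correct and takes the same approach the paper intends: the paper simply asserts that the result follows "immediately from \lemref{compact and limit}", and your $\bD^{\geq 0}$ half is precisely the argument being alluded to (reformulate via \lemref{t general}, use that shifts of compacts are compact, interchange $\Hom$ with the filtered colimit, conclude by orthogonality). Your observation that \lemref{compact and limit} plays no role in the $\bD^{\leq 0}$ half — that half being a formal consequence of the closure properties of $\bD^{\leq 0}$ under direct sums and extensions supplied by \lemref{t general}, together with the construction of $hocolim$ out of such operations within $\ua\bC$ — is exactly the content the paper leaves implicit, and it is worth having spelled out.

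One small caveat on wording: for a general filtered $I$ the object $hocolim(X_I)$ is not literally a single cone of a morphism between two direct sums of the $X_i$ (that presentation is specific to the sequential case of \secref{simple colim}); in general one uses the bar/telescope construction, whose partial totalizations are iterated extensions of shifted direct sums $\bigoplus X_{i_0}[n]$ with $n\geq 0$, and then passes to their sequential $hocolim$. Since each step stays inside $\bD^{\leq 0}$, the conclusion is unaffected, but it is a two-stage reduction rather than a single cone.
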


\ssec{}

Let us recall that whenever we have a triangulated category $\bD$ equipped with
a t-structure and a DG model, we have an exact functor:
\begin{equation} \label{from der to tr bdd}
\bD^b(\obC)\to \bD^b,
\end{equation}
where $\obC:=\Heart(\bD)$, equipped with a DG model.

\sssec{}  \label{functor der to tr}

Let us recall the construction. Let $\bC$ be a DG category such that
$\bD=\Ho(\bC)$. The DG model for $\bD^b(\Heart(\bD))$ is the standard
one, resulting from the identification
$\Ho(\bC^b(\Heart(\bD)))/\Ho(\bC^b_{acylc}(\Heart(\bD)))$ (see
\secref{rigid quotients}).

\medskip

Consider the following DG category, denoted $\bC^{double}$. Its objects are
finite diagrams
$$\{X^{-\infty}=X^{\geq -n}\supset X^{\geq -n+1}\supset...\supset
X^{\geq m}\supset X^{\geq m+1}=0\}$$
for some $m,n\in \BZ^{\geq 0}$, $X^{\geq i}\in \bC$, such that for each $i$ we are 
given a splitting
$$X^{\geq i}\simeq X^{\geq i+1}\oplus X^i,\,\, X^i\in \bC$$
as functors $\bC^{op}\to \Vect_k^{\BZ}$ (where $\Vect^{\BZ}$ denotes
the category of $\BZ$-graded vector spaces). We require that the image 
of each $X^i$ in $\Ho(\bC)$ belong to $\Heart(\bD)[-i]$. Morphisms 
between $X^\bullet$ and $Y^\bullet$ are compatible families of maps
$X^{\geq i}\to Y^{\geq i}$.

\medskip

We have the evident forgetful functor $\bC^{double}\to \bC$. In addition,
the boundary map for the t-structure defines a DG functor 
$\bC^{double}\to \bC^b(\Heart(\bD))$.

\medskip

Let $\bC^{double}_{acycl}$ be the preimage of $\bC_{acycl}(\Heart(\bD))$
under $\bC^{double}\to \bC^b(\Heart(\bD))$.

\begin{lem}  \hfill  \label{d acycl to acycl}

\smallskip

\noindent{\em(a)}
The functor $$\Ho\left(\bC^{double}\right)/
\Ho\left(\bC_{acycl}^{double}\right)\to \Ho\left(\bC^b(\Heart(\bD))\right)/
\Ho\left(\bC^b_{acycl}(\Heart(\bD))\right)\simeq \bD^b(\Heart(\bD))$$
is an equivalence.

\smallskip

\noindent{\em(b)}
The functor $\Ho\left(\bC^{double}\right)\to \Ho(\bC)$ factors
as 
$$\Ho\left(\bC^{double}\right)\to \Ho\left(\bC^{double}\right)/
\Ho\left(\bC_{acycl}^{double}\right)\to
\Ho(\bC).$$

\end{lem}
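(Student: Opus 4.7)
\smallskip

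\noindent\textbf{Plan of proof.} The category $\bC^{double}$ realizes finitely filtered objects of $\bC$ whose successive subquotients lie in shifts of $\Heart(\bD)$; the two functors out of it remember, respectively, the underlying object and the associated graded complex (equipped with the t-structural boundary as differential). The lemma is then a DG-enhanced version of the classical \emph{realization functor} theorem in the style of BBD. I will treat (b) first, since it is the ingredient used to conclude (a).

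\smallskip

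\noindent\emph{Part (b).} The goal is to show that the forgetful functor $\bC^{double}\to\bC$ annihilates $\bC^{double}_{acycl}$ on the homotopy level, after which the factorization follows from the universal property of Verdier quotients. Let $X^\bullet\in\bC^{double}_{acycl}$ with length-$n$ filtration $X^{\geq -n}\supset\cdots\supset X^{\geq m+1}=0$. I would argue by induction on $n$ using the distinguished triangles
$$X^{\geq i+1}\to X^{\geq i}\to X^i$$
in $\bD$, where $X^i\in\Heart(\bD)[-i]$. Assembling the associated spectral sequence of the filtration: the $E_1$-page is concentrated on a single anti-diagonal with entries $X^i[i]\in\Heart(\bD)$, and the $d_1$ differentials are precisely the t-structural boundaries that define the differential of the complex $X^\bullet\in\bC^b(\Heart(\bD))$. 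By hypothesis this complex is acyclic, so $E_2=0$, the spectral sequence degenerates, and $X^{\geq -n}$ has vanishing cohomology in $\bD$, i.e.\ is zero in $\Ho(\bC)$. (Alternatively, one may prove this directly by induction on $n$: break the acyclic complex into its cocycles using the heart's short exact sequences and use the two-step case, where acyclicity forces the boundary map to be an isomorphism.)

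\smallskip

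\noindent\emph{Part (a).} Given (b), the induced functor
$$R:\Ho(\bC^{double})/\Ho(\bC^{double}_{acycl})\longrightarrow\bD^b(\Heart(\bD))$$
is well-defined; I need to prove it is essentially surjective and fully faithful. For essential surjectivity, given a bounded complex $C^\bullet\in\bC^b(\Heart(\bD))$, I would build a preimage $X^\bullet\in\bC^{double}$ by induction on the length of $C^\bullet$: assuming $X^{\geq i+1}$ has already been constructed with associated graded equal to $\tau^{\geq i+1}C^\bullet$, I lift the differential $C^i\to C^{i+1}$ (viewed as a morphism $C^i[-i]\to C^{i+1}[-i-1][1]$ in $\bD$, i.e.\ a morphism in $\bD$ between shifted heart objects) to the boundary of a distinguished triangle $X^{\geq i+1}\to X^{\geq i}\to X^i$. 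The choice of cone provides a lift to $\bC$, and on $\Heart(\bD)$-objects one can moreover split the underlying graded object as required by the definition of $\bC^{double}$.

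\smallskip

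\noindent\emph{Fully faithfulness} is the main obstacle and is the DG analog of the BBD realization theorem. I would reduce it as follows. Unwinding the Verdier quotient, a morphism $X^\bullet\to Y^\bullet$ in the quotient is represented by a roof with acyclic cone. Any such roof in $\bC^{double}$ determines, via the associated-graded functor, a morphism in $\bD^b(\Heart(\bD))$; conversely, given a morphism in $\bD^b(\Heart(\bD))$ I would lift it step-by-step along the filtration using the existence of lifts of chain maps between heart complexes to maps of filtered objects, with the obstructions living in $\Hom$-groups that vanish by induction. Injectivity is reduced to the statement that a morphism $X^\bullet\to Y^\bullet$ in $\bC^{double}$ whose associated-graded map is null-homotopic factors (up to an $\bC^{double}_{acycl}$-equivalence) through an object representing a cone construction; this again uses part~(b) to kill the obstructions. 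The hardest bookkeeping is the compatibility of the chosen splittings with the inductive lifting, and this is where I expect the main technical work to lie; but it is a familiar argument once one writes it out step-by-step along the filtration.
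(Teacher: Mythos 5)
The paper does not prove this lemma; it states it and immediately uses it to construct the realization functor $\bD^b(\Heart(\bD))\to\bD^b$, so there is no argument of theirs to compare yours against. What you are reconstructing is a DG form of the BBD realization theorem, and your plan is the standard one.

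Part (b): your spectral sequence argument is correct and complete. Each $X^i$ lies in $\Heart(\bD)[-i]$, hence is concentrated in a single cohomological degree, and the filtration is finite, so the total object $X^{-\infty}$ is bounded. The filtration spectral sequence has $E_1$ concentrated on one line, its $d_1$ is the differential of the associated $\Heart(\bD)$-complex, so the acyclicity hypothesis gives $E_2=0$; the spectral sequence collapses, and a bounded object of $\bD$ with vanishing cohomology is zero.

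Part (a): the plan is right, but the assertion that ``the obstructions vanish by induction'' deserves to be pinned down, since it is essentially the whole content. For essential surjectivity, when you attach $X^i$ you need a map $\alpha\colon X^i\to X^{\geq i+1}[1]$ in $\bD$ projecting to $\delta^i$; the obstruction lives in $\Hom_\bD(X^i,X^{\geq i+2}[2])$, which is \emph{not} zero in general. It does vanish here, but for two reasons working together: the relation $\delta^{i+1}\circ\delta^i=0$ kills its image in $\Hom_\bD(X^i,X^{i+2}[2])$, and the kernel of that projection, namely the image of $\Hom_\bD(X^i,X^{\geq i+3}[2])$, is zero by a t-structure degree count ($X^i$ sits in cohomological degree $i$, while $X^{\geq i+3}[2]\in\bD^{\geq i+1}$). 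Full faithfulness is the harder half and your sketch does not yet carry it; it requires a genuine induction on filtration length comparing $\Hom$-spectral sequences on both sides, driven by the same degree count. This is where a complete write-up would spend most of its effort, so it should not be waved away as familiar bookkeeping, though I see no obstruction of principle.
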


The desired 1-morphism is obtained from the diagram
$$\bC^b(\Heart(\bD))/\bC^b_{acycl}(\Heart(\bD)) \overset{\sim}\leftarrow
\bC^{double}/\bC_{acycl}^{double}\to \bC.$$




\sssec{}

Let us recall the necessary and sufficient conditions for the functor
of \eqref{from der to tr bdd} to be fully faithful:

\begin{lem}  \label{der vs tr}
The following conditions are equivalent:

\smallskip

\noindent(1) The functor \eqref{from der to tr bdd} is fully faithful.

\smallskip

\noindent(2) The functor \eqref{from der to tr bdd} is equivalence.

\smallskip

\noindent(3) The functor \eqref{from der to tr bdd}  induces an
isomorphism $\on{Ext}^i_{\obC}(X',X)\to
\Hom_{\bD}(X',X[i])$ for any $X,X'\in \obC$.

\smallskip

\noindent(4) For any $X,X'$ as above, $i>0$ and an element $\alpha\in
\Hom_{\bD}(X',X[i])$ there exists a surjection $X'_1\twoheadrightarrow X'$
in $\obC$, such that the image of $\alpha$ in 
$\Hom_{\bD}(X'_1,X[i])$ vanishes.

\noindent(5) For any $X,X'$ as above, $i>0$ and an element $\alpha\in
\Hom_{\bD}(X',X[i])$ there exists an injection $X\hookrightarrow X_1$
in $\obC$, such that the image of $\alpha$ in 
$\Hom_{\bD}(X,X_1[i])$ vanishes.

\end{lem}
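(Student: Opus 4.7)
The plan is to establish the cycle $(2) \Rightarrow (1) \Rightarrow (3) \Rightarrow (4) \Rightarrow (3)$, then $(3) \Leftrightarrow (5)$ by a symmetric argument, and finally $(1) \Rightarrow (2)$ by devissage. Throughout, I shall use that the functor \eqref{from der to tr bdd} is exact, restricts to the identity on $\obC = \Heart(\bD)$, and hence that $\Hom_{\bD^b(\obC)}(X', X[i]) \simeq \on{Ext}^i_{\obC}(X', X)$ for $X, X' \in \obC$. Note also that for $X, X' \in \obC$, negative $\Hom$'s vanish on both sides (since $X[i]\in \bD^{>0}$ for $i<0$ and $X'\in \bD^{\leq 0}$), and both sides equal $\Hom_{\obC}(X', X)$ for $i=0$, so the content of $(3)$ lies in positive degrees.

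For $(1)\Leftrightarrow (3)$, one direction is immediate from the identification above. For $(3)\Rightarrow (1)$, note that any object of $\bD^b(\obC)$ lies in $\bD^{[a,b]}(\obC)$ for some $a\leq b$, and using the truncation triangles one reduces checking full faithfulness on $\Hom(U,V)$ to the case $U, V\in \obC$, where $(3)$ applies. The implication $(1)\Rightarrow (2)$ is a devissage: given $Y\in \bD^b$ living in $\bD^{[a,b]}$, the truncation triangle $\tau^{\leq b-1}(Y)\to Y\to H^b(Y)[-b]\to \tau^{\leq b-1}(Y)[1]$ expresses $Y$ as an extension, and by induction on $b-a$ combined with full faithfulness, the connecting map lies in the essential image, producing a preimage of $Y$ in $\bD^b(\obC)$.

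The heart of the matter is $(3) \Leftrightarrow (4)$. For $(3)\Rightarrow (4)$: an element $\alpha\in \Hom_\bD(X', X[i])$ corresponds under $(3)$ to a Yoneda $i$-fold extension
$$0\to X\to Y_1\to \cdots\to Y_i\to X'\to 0$$
in $\obC$; taking $X'_1 := Y_i \twoheadrightarrow X'$ gives a surjection whose pullback represents a shorter Yoneda class that vanishes for length reasons. For $(4)\Rightarrow (3)$, I would argue by induction on $i>0$: given $\alpha\in \Hom_\bD(X', X[i])$, choose by $(4)$ a surjection $f: X'_1\twoheadrightarrow X'$ in $\obC$ with $f^*\alpha = 0$, set $K := \ker(f)\in \obC$, and use the exact triangle $K\to X'_1\to X'\to K[1]$ to lift $\alpha$ to some $\alpha'\in \Hom_\bD(K, X[i-1])$. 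By the induction hypothesis $\alpha'$ is represented by an element of $\on{Ext}^{i-1}_{\obC}(K, X)$; splicing this with the short exact sequence $0\to K\to X'_1\to X'\to 0$ yields an $i$-fold Yoneda extension whose class in $\Hom_\bD(X', X[i])$, after identifying the two constructions via the boundary map, recovers $\alpha$. This proves surjectivity of the comparison map in $(3)$; injectivity is handled analogously: a class in $\on{Ext}^i_{\obC}$ mapping to zero in $\Hom_\bD$ can be killed on a further surjection, yielding triviality as a Yoneda extension. The equivalence $(3)\Leftrightarrow (5)$ is obtained by the dual argument using the triangle $X_1/X\to X[1]\to X_1[1]$ arising from an injection $X\hookrightarrow X_1$.

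The main obstacle will be verifying the compatibility in the inductive step of $(4)\Rightarrow (3)$: one must check that the boundary isomorphism $\Hom_\bD(K, X[i-1])\simeq \{\alpha\in \Hom_\bD(X', X[i]) : f^*\alpha = 0\}$ coming from the triangle of $f$ matches the boundary in the Yoneda $\on{Ext}$-group of the short exact sequence $0\to K\to X'_1\to X'\to 0$. This is essentially a comparison between the triangulated and abelian-categorical connecting maps on the heart, and requires unwinding the construction of the functor \eqref{from der to tr bdd} via the DG category $\bC^{double}$ from \secref{functor der to tr}: an object of $\bC^{double}$ is precisely a stupid filtration that encodes a Yoneda splice, and the comparison map sends such data to its totalization in $\bC$, matching the boundary maps on both sides by \lemref{d acycl to acycl}(b). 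Once this compatibility is in place, the induction closes and all five conditions become equivalent.
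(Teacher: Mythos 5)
The paper states this lemma without proof (it is a ``recall''), so there is no argument to compare against; I will assess your proposal on its own terms. The overall cycle of implications is the right structure, and most of the individual steps are correct: $(2)\Rightarrow(1)$, $(1)\Leftrightarrow(3)$ by truncation devissage, $(1)\Rightarrow(2)$ by cone devissage, $(3)\Rightarrow(4)$ via Yoneda and the vanishing $\delta\circ r^*=0$ from the long exact sequence of $\on{Ext}$'s associated to $0\to \ker(Y_i\to X')\to Y_i\to X'\to 0$, and $(3)\Leftrightarrow(5)$ by the dual argument. The ``main obstacle'' you flag at the end is, however, a non-issue: the compatibility of the connecting map $\Hom_\bD(K,X[i-1])\to\Hom_\bD(X',X[i])$ with the Yoneda boundary $\on{Ext}^{i-1}_\obC(K,X)\to\on{Ext}^i_\obC(X',X)$ is an immediate formal consequence of the fact that the functor \eqref{from der to tr bdd} is exact and sends short exact sequences in $\obC$ to distinguished triangles in $\bD$; there is no need to unwind $\bC^{double}$ from \secref{functor der to tr}. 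Exact functors commute with all resulting boundary maps, and both $\delta$-functor structures on $\obC$ arise from the same triangles.

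Where the write-up has an actual gap is the injectivity half of $(4)\Rightarrow(3)$: the phrase ``can be killed on a further surjection, yielding triviality as a Yoneda extension'' is not valid as stated, since a class in $\on{Ext}^i_\obC$ is \emph{always} killed by pulling back along the last term of any Yoneda representative --- this holds with no hypothesis and so cannot by itself conclude $\beta=0$. The repair uses the full strength of the induction hypothesis in degree $i-1$: write $\beta=\delta^{\on{Ext}}(\beta')$ with $\beta'\in\on{Ext}^{i-1}_\obC(K',X)$ for $K'=\ker(Y_i\to X')$; from $F(\beta)=\delta^\bD(F(\beta'))=0$ the long exact sequence gives $F(\beta')=r^*\gamma$ for some $\gamma\in\Hom_\bD(Y_i,X[i-1])$ with $r:K'\hookrightarrow Y_i$; by surjectivity in degree $i-1$ write $\gamma=F(\gamma')$, and by injectivity in degree $i-1$ conclude $\beta'=r^*\gamma'$, whence $\beta=\delta^{\on{Ext}}(r^*\gamma')=0$ since $\delta^{\on{Ext}}\circ r^*=0$. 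The base case $i=1$ is handled directly: a short exact sequence in $\obC$ whose connecting map in $\bD$ vanishes splits, because $\Hom_\bD$ between objects of the heart equals $\Hom_\obC$. Alternatively and more cleanly, one may observe that both $\on{Ext}^\bullet_\obC(-,X)$ and $\Hom_\bD(-,X[\bullet])$ are contravariant $\delta$-functors on $\obC$ agreeing in degree zero, the former always effaceable by Yoneda and the latter effaceable in positive degrees precisely when $(4)$ holds, and invoke the uniqueness of universal $\delta$-functors; this avoids the explicit induction entirely.
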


\ssec{}   \label{proof gen equiv}

In this subsection we will prove \propref{gen equiv}. First, we note that 
conditions Cat(i,ii) and Funct(i,ii) imply that $\sT$ is fully faithful.

\sssec{$\sT$ is left-exact}

Let $X$ be an object of $\bD_1^{\geq 0}$. Using conditions 
Cat(i,ii) for $\bD_1$, Cat(a) for $\bD_1$ and $\bD_2$, 
\lemref{colimit generate t} for $\bD_1$, Funct(i) and 
\lemref{functor and limit} we conclude that it is sufficient to show
that $\sT(\tau^{\geq 0}(X))$ is in $\bD_2^{\geq 0}$ for
$X\in \bD_1^c$.

\medskip

By condition Funct(c), we know that for $X$ as above,
$\sT(\tau^{\geq 0}(X))\in \bD_2^+$. Let $k$ be the minimal
integer such that $H^k\left(\sT(\tau^{\geq 0}(X))\right)\neq 0$.
Assume by contradiction that $k<0$.

\medskip

By Funct(d), we can find $X'\in \bD_1^{\leq 0}$ with a non-zero map
$\sT(X')\to H^k\left(\sT(\tau^{\geq 0}(X))\right)$. I.e., we obtain
a non-zero map
$$\sT(X'[-k])\to \sT(\tau^{\geq 0}(X)).$$

However, this contradicts the fact that $\sT$ is fully faithful. 

\sssec{$\sT$ induces an equivalence $\bD_1^+\to \bD_2^+$}

It is easy to see that $\sT$ admits a right adjoint. We shall denote it
by $\sS$. Fully faithfulness of $\sT$ means that the composition
$$\on{Id}_{\bD_2}\to \sS\circ\sT$$
is an isomorphism. Hence, it is sufficient to show that for
$Y\in \bD_2^+$, the adjunction map
$\sT(\sS(Y))\to Y$ is an isomorphism.

\medskip

Being a right adjoint of a right-exact functor (condition Funct(b)),
$\sS$ is left-exact. In particular, it maps $\bD_2^+$ to $\bD_1^+$. 
We have: $\sT(\sS(Y))\in \bD_2^+$, by the already established
left-exactness of $\sT$. Hence, 
$Y'=\on{Cone}(\sT(\sS(Y))\to Y)\in \bD_2^+$. By the fully faithfulness of $\sT$,
we have $\sS(Y')=0$. However, condition Funct(d) implies that
$\sS$ is conservative on $\bD_2^+$. Hence, $Y'=0$.

\sssec{End of the proof}

It remains to show that $\sT$ is essentially surjective. This
is equivalent to the fact that the image of $\sT$ generates
$\bD_2$. Using Cat(b), it is enough to show
that $\bD_2^+$ is in the image of $\Psi$. However,
this has been established above.

\section{Tensor products and $\bbt$-structures}   \label{tensor products and t-structures}

\ssec{$\bbt$-structure on the tensor product}  \label{intr t structure on ten}

Let $\bA$ be a DG category equipped with a homotopy
monoidal structure. Set $\bD_\bA:=\Ho(\ua\bA)$. We assume that 
$\bD_\bA$ is equipped with a t-structure. We assume that this
t-structure is compactly generated (see \secref{compact gener t}), and  
the tensor product functor
$$\bD_\bA\times \bD_\bA\to \bD_\bA$$
is right-exact, and ${\bf 1}_{\bA}\in \Heart(\bD_\bA)$.

\sssec{}

Let $\bC^l$ and $\bC^r$ be DG categories, equipped with homotopy
actions of $\bA$ on the left and on the right, respectively. 

\medskip

Let $\bD^l:=\Ho(\ua\bC^l)$ and $\bD^r:=\Ho(\ua\bC^r)$ be
equipped with t-structures. We are assume that these t-structures 
are compactly generated. 

\medskip

We assume also that the action functors
$$act_{\bC^l}^*:\bD_\bA\times \bD^l\to \bD^l \text{ and }
act_{\bC^r}^*:\bD^r\times \bD_\bA\to \bD^r$$
are right-exact.

\sssec{}   \label{t structure ten prod}

Consider the DG category $\bC^r\underset{\bA}\otimes \bC^l$, and the
triangulated category
$$\bD^t:=\bD\left((\bC^r\underset{\bA}\otimes \bC^l)^{op}\mod\right)=:
\Ho\left(\underset{\longrightarrow}{\bC^r\underset{\bA}\otimes \bC^l}\right).$$

We define a t-structure on $\bD^t$ 
by the procedure of \lemref{t general} with $\bD^{t,\leq 0}$ being
generated by objects of the form 
$Y^r\underset{\bA}\otimes Y^l\in \Ho(\bC^r\underset{\bA}\otimes \bC^l)$
with $Y^r\in \Ho(\bC^r)\cap \bD^{r,\leq 0}$ and 
$Y^l\in \Ho(\bC^l)\cap \bD^{l,\leq 0}$. 

\sssec{}

Consider the particular case when $\bC^r=\bA$. By \propref{ten with free},
we have an equivalence $\bD^t\simeq \bD^l$, and by construction 
the t-structure on the LHS equals the given one on the RHS.

\sssec{}

The following question appears to be natural, but we do not know
how to answer it:

\medskip

Assume that the t-structures on $\bD_\bA$, $\bD^l$ and $\bD^r$ are 
compatible with the subcategories $\bD^{l,c}\simeq (\Ho(\bC^l))^{Kar}$ and 
$\bD^{r,c}\simeq (\Ho(\bC^r))^{Kar}$.
Under what conditions is the above t-structure on $\bD^t$ 
compatible with $\bD^{t,c}\simeq 
\Ho(\bC^r\underset{\bA}\otimes \bC^l)^{Kar}$?

\ssec{Flatness}

Let $\bA$, $\bC^l$, $\bC^r$ be as above. We will say that an object
$Y^l\in \Heart(\bD^l)$ is $\bA$-flat if the functor
\begin{equation} \label{mult by X}
\bD_\bA\to \bD^l:X\mapsto X\underset{\bA}\otimes Y
\end{equation}
is exact. (A priori, this functor is right-exact.)

\begin{prop}   \label{flat exact}
Assume that $\bA$ has a tight diagonal. Let $Y^l\in \Heart(\bD^l)$
be flat. Then the functor
$$\bD^r\to \bD^t:
Y^r\mapsto Y^r\underset{\bA}\otimes Y^l$$
is exact.
\end{prop}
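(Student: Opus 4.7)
Right-exactness of $F := -\underset{\bA}\otimes Y^l \colon \bD^r \to \bD^t$ is immediate from the definition of the t-structure on $\bD^t$ given in \secref{t structure ten prod}: the subcategory $\bD^{t,\leq 0}$ is generated by objects $Y^r \underset{\bA}\otimes Y^l$ with $Y^r, Y^l$ compact and $\leq 0$. Since $Y^l \in \Heart(\bD^l) \subset \bD^{l,\leq 0}$ by hypothesis and $F$ commutes with colimits, $F$ sends $\bD^{r,\leq 0}$ into $\bD^{t,\leq 0}$.

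For left-exactness, fix $Y^r \in \bD^{r,\geq 0}$; the task is to show that $F(Y^r) \in \bD^{t,\geq 0}$. By compact generation and \lemref{comp gen criter}, this reduces to the vanishing
\[
\Hom_{\bD^t}\bigl(Y^r_0 \underset{\bA}\otimes Y^l_0,\, F(Y^r)[j]\bigr) = 0 \quad \text{for all } j < 0
\]
and all compact $Y^r_0 \in \Ho(\bC^r) \cap \bD^{r,\leq 0}$, $Y^l_0 \in \Ho(\bC^l) \cap \bD^{l,\leq 0}$. The tight-diagonal hypothesis on $\bA$ licenses \corref{tight diagonal}, which rewrites this Hom as
\[
\Hom_{\bD((\bC^r \otimes \bC^l)^{op}\mod)}\bigl((Y^r_0, Y^l_0),\, \on{Diag}_\bA \underset{\bA \otimes \bA}\otimes (Y^r, Y^l)[j]\bigr).
\]
Under the natural product-type t-structure on $\bD((\bC^r \otimes \bC^l)^{op}\mod)$, for which $(Y^r_0, Y^l_0) \in \bD^{\leq 0}$, the desired vanishing is implied by the assertion that $\on{Diag}_\bA \underset{\bA \otimes \bA}\otimes (Y^r, Y^l) \in \bD^{\geq 0}$.

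The plan for this final step is to decompose the tensor product, using the bimodule structure on $\on{Diag}_\bA = (m_\bA)_*(\mathbf{1}_\bA)$, as an iterated contraction $\on{Diag}_\bA \underset{\bA \otimes \bA}\otimes (Y^r, Y^l) \simeq (\on{Diag}_\bA \underset{\bA}\otimes Y^l) \underset{\bA}\otimes Y^r$, following the associativity pattern underlying \lemref{ten with diag}. Two ingredients then combine: first, $\on{Diag}_\bA \in \bD^{\geq 0}((\bA \otimes \bA)^{op}\mod)$, because $(m_\bA)_*$ is left-exact as the right adjoint of the right-exact tensor-product functor $(m_\bA)^*$, and $\mathbf{1}_\bA \in \Heart$; second, flatness of $Y^l$ renders the inner contraction $\on{Diag}_\bA \underset{\bA}\otimes Y^l$ an exact operation, so it lands in $\bD^{\geq 0}((\bA \otimes \bC^l)^{op}\mod)$. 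The outer contraction with $Y^r \in \bD^{r,\geq 0}$ will then produce the required object in $\bD^{\geq 0}$ of the target. The main obstacle is precisely this last step: $- \underset{\bA}\otimes Y^r$ is only right-exact in general, so one must exploit the fact that flatness of $Y^l$ has already collapsed the would-be $\bA$-derived contributions along one factor, leaving the remaining contraction to behave exactly on the intermediate object at hand. Once this is verified, the required Hom vanishes and exactness of $F$ follows.
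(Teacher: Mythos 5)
Your first three paragraphs match the paper's proof exactly: right-exactness is a priori, and by \corref{tight mon} (via \corref{tight diagonal}) the left-exactness reduces to showing $\on{Diag}_\bA\underset{\bA\otimes\bA}\otimes(Y^r,Y^l)\in\bD^{\geq 0}((\bC^r\times\bC^l)^{op}\mod)$. So far so good, and you have correctly located the crux.

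Your final paragraph, however, contains a genuine gap -- one that you yourself flag. You propose to compute the iterated contraction by first absorbing $Y^l$ and then contracting with $Y^r$, and you acknowledge that the remaining contraction $-\underset{\bA}\otimes Y^r$ is only right-exact; the rationalization that flatness of $Y^l$ has ``already collapsed the would-be derived contributions'' along the other factor does not address the problem, since those derived contributions sit on the $\bA$-factor being contracted against $Y^r$, and $Y^r$ is an arbitrary object of $\bD^{r,\geq 0}$ with no flatness hypothesis. The step as written would fail for general $Y^r$. The paper avoids this by doing the contraction in a different way: one observes the functorial isomorphism
\[
\on{Diag}_\bA\underset{\bA\otimes\bA}\otimes(Y^r,Y^l)\simeq
(\on{Id}\times act(?,Y^l))^*\left(\on{Diag}_\bA\underset{\bA\otimes\bA}\otimes(Y^r,{\bf 1}_\bA)\right),
\]
where flatness of $Y^l$ makes $(\on{Id}\times act(?,Y^l))^*\colon\bD((\bC^r\times\bA)^{op}\mod)\to\bD((\bC^r\times\bC^l)^{op}\mod)$ exact, so it suffices to prove $\on{Diag}_\bA\underset{\bA\otimes\bA}\otimes(Y^r,{\bf 1}_\bA)\in\bD^{\geq 0}$. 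The key move you are missing is then to invoke tightness of the diagonal \emph{a second time} to identify this object with $(act_{\bC^r})_*(Y^r)$, which is $\geq 0$ because $(act_{\bC^r})_*$ is the right adjoint of a right-exact functor, hence left-exact. In short: rather than trying to argue that a right-exact contraction is ``secretly'' exact on the relevant objects, the paper trades the tensor-product expression for a pushforward, which is left-exact by adjunction. This is the ingredient your outline lacks.
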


\begin{proof}

A priori, the functor in question is right-exact. To prove the left-exactness,
we have to show that for $Y^r\in \bD^{r,\geq 0}$,
$Y^r_1\in \bD^{r,<0}$ and
$Y^l_1\in \bD^{l,<0}$, we have
$$\Hom_{\bD^t}
(Y^r_1\underset{\bA}\otimes Y^l_1,Y^r\underset{\bA}\otimes Y^l)=0.$$

By \corref{tight diagonal}, the LHS of the above expression can be rewritten
as
$$\Hom_{\bD((\bC^r\times \bC^l)^{op}\mod)}
\left((Y^r_1,Y^l_1),\on{Diag}_\bA\underset{\bA\otimes \bA}\otimes
(Y^r,Y^l)\right).$$
Thus, it is sufficient to show that the object
$$\on{Diag}_\bA\underset{\bA\otimes \bA}\otimes
(Y^r,Y^l)\in \bD((\bC^r\times \bC^l)^{op}\mod)$$
is $\geq 0$.

\medskip

By the flatness assumption on $Y^l$, the functor
$$(\on{Id}\times act(?,Y^l))^*:
\bD((\bC^{r}\times \bA)^{op}\mod)\to \bD((\bC^r\times \bC^l)^{op}\mod),$$
given by $(Y^r,X)\mapsto (Y^r,X\underset{\bA}\otimes Y^l)$, is exact.
We have
$$\on{Diag}_\bA\underset{\bA\otimes \bA}\otimes
(Y^r,Y^l)\simeq (\on{Id}\times act(?,Y^l))^*\left(
\on{Diag}_\bA\underset{\bA\otimes \bA}\otimes  (Y^r,{\bf 1}_\bA)\right).$$
Thus, it remains to see that
$$\on{Diag}_\bA\underset{\bA\otimes \bA}\otimes  (Y^r,{\bf 1}_\bA)\in
\bD((\bC^{r}\times \bA)^{op}\mod)$$
is $\geq 0$.

\medskip

However, since $\bA$ has a tight diagonal, the latter object is 
isomorphic to $(act_{\bC^r})_*(Y^r)$, and the assertion follows.

\end{proof}

\ssec{Base change}

We will mostly consider a particular case of the above situation, where
$\bC^r=\Res^{\bA_1}_{\bA}(\bA_1)$, where $\bA_1$ is another homotopy monoidal
category, equipped with a homotopy monoidal functor
$F:\bA\to \bA_1$. We will assume that the t-structure on $\bD_{\bA_1}$
satisfies the assumptions of
\secref{intr t structure on ten}. In particular, the functor $F$ is right-exact.

\sssec{}

Denote $\bC^l_1:=\Ind^{\bA_1}_\bA(\bC^l)$. We will call this category the base 
change of $\bC^l$ with respect to $F$. We will call the above t-structure
on $\bD^l_1:=\bD(\bC_1^{l,op}\mod)$ the base-changed t-structure. 

\medskip

The following results from the definitions:

\begin{lem}
The functor
$$(F_{\bC^l})^*:\bD^l\to \bD^l_1$$
is right-exact. 
\end{lem}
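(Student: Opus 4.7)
The plan is to reduce to checking right-exactness on a set of generators of $\bD^{l,\leq 0}$ and then to apply the very definition of the base-changed t-structure.

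First, I would unwind what the functor looks like. By construction, $F_{\bC^l}$ sends $Y^l\in \bC^l$ to the object $({\bf 1}_{\bA_1},Y^l)=:{\bf 1}_{\bA_1}\underset{\bA}\otimes Y^l$ of $\bC^l_1=\bC^r\underset{\bA}\otimes \bC^l$, where we identify the plain DG category underlying $\bC^r:=\Res^{\bA_1\otimes\bA_1^o}_{\bA_1\otimes\bA^o}(\bA_1)$ with $\bA_1$, and ${\bf 1}_{\bA_1}$ is viewed as an object of $\bC^r$. The functor $(F_{\bC^l})^*$ is then the ind-extension of this assignment, and in particular it commutes with arbitrary direct sums (and hence with homotopy colimits that exist).

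Second, I would use the compact generation of the t-structures to reduce to a generating set. Since the t-structure on $\bD^l$ is compactly generated, the subcategory $\bD^{l,\leq 0}$ is the smallest full subcategory stable under extensions and direct sums that contains the compact $\leq 0$ objects, i.e.\ objects in $(\Ho(\bC^l))^{Kar}\cap \bD^{l,\leq 0}$ (cf.\ \lemref{t general}). The subcategory $\bD^{t,\leq 0}$ is similarly closed under extensions and direct sums, and $(F_{\bC^l})^*$ preserves both. Hence it suffices to show that $(F_{\bC^l})^*(Y^l)\in \bD^{t,\leq 0}$ for any $Y^l\in \Ho(\bC^l)\cap \bD^{l,\leq 0}$ (passing to Karoubian summands is harmless since $\bD^{t,\leq 0}$ is closed under retracts).

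Third, for such a compact $Y^l$, the object $(F_{\bC^l})^*(Y^l)$ is literally ${\bf 1}_{\bA_1}\underset{\bA}\otimes Y^l$, already sitting inside $\Ho(\bC^r\underset{\bA}\otimes \bC^l)$. By hypothesis on the t-structure on $\bD_{\bA_1}$ in \secref{intr t structure on ten}, ${\bf 1}_{\bA_1}\in \Heart(\bD_{\bA_1})\subset \bD^{r,\leq 0}$, and $Y^l\in \Ho(\bC^l)\cap \bD^{l,\leq 0}$ by assumption. Thus ${\bf 1}_{\bA_1}\underset{\bA}\otimes Y^l$ is one of the tautological generators of $\bD^{t,\leq 0}$ used to \emph{define} the t-structure on $\bD^l_1$ in \secref{t structure ten prod}; in particular it lies in $\bD^{t,\leq 0}$, completing the proof.

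There is no real obstacle here: the statement is essentially a tautology once the definitions are combined. The only small subtlety is the passage from compact generators to arbitrary $\leq 0$ objects, but this is handled by the observation that $(F_{\bC^l})^*$, being an ind-extension, commutes with the colimits along which $\bD^{l,\leq 0}$ is generated from its compact $\leq 0$ part.
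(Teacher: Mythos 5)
Your proof is correct, and in fact more detailed than the paper's, which simply asserts that the lemma ``results from the definitions'' without giving an argument. The essential point---that $F_{\bC^l}$ sends $Y^l\in \Ho(\bC^l)\cap \bD^{l,\leq 0}$ to $\mathbf{1}_{\bA_1}\underset{\bA}\otimes Y^l$, which is one of the tautological generators of $\bD^{t,\leq 0}$ in \secref{t structure ten prod}---together with the reduction to compact generators using stability of $\bD^{t,\leq 0}$ under extensions and direct sums and the fact that $(F_{\bC^l})^*$, being an ind-extension, commutes with colimits, is precisely the intended argument.

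One point where your reduction is a bit quick: you pass from ``compact $\leq 0$ objects'' to ``objects of $\Ho(\bC^l)$ that are $\leq 0$'' by appealing to Karoubian summands, but a compact $\leq 0$ object is a retract of some $Z^l\in \Ho(\bC^l)$ that need not itself be $\leq 0$; so one cannot directly place its image among the defining generators of $\bD^{t,\leq 0}$ by retract-closure alone. This is harmless in the paper's actual applications because there the DG model is arranged so that $\Ho(\bC^l)$ is already Karoubian (e.g.\ in \secref{renorm without t} the category $\bD^f$ is defined as a Karoubian envelope, and the DG model has $\Ho(\bC^l)=\bD^f$), so compact $\leq 0$ objects already lie in $\Ho(\bC^l)$. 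At the level of generality of Part III one should either assume the generating compact objects for the t-structure on $\bD^l$ come from $\Ho(\bC^l)$ itself, or pass to a Karoubian DG model, to make your reduction airtight.
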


Hence, by adjunction, the functor
$$(F_{\bC^l})_*:\bD^l_1\to \bD^l$$
is left-exact.

\medskip

We shall say that $F$ is flat if it is exact. From \propref{flat exact} we obtain:
\begin{cor}   \label{flat base change}
Assume that $F$ is flat. Assume also that $\bA$ has a tight diagonal.
Then the functor $(F_{\bC^l})^*$ is exact.
\end{cor}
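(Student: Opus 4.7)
The plan is to identify the functor $(F_{\bC^l})^*: \bD^l \to \bD^l_1$ with tensoring against the unit ${\bf 1}_{\bA_1}$ and then apply (a mirror of) \propref{flat exact}. By definition $\bC^l_1 = \Ind_\bA^{\bA_1}(\bC^l) = \Res^{\bA_1 \otimes \bA_1^o}_{\bA_1 \otimes \bA^o}(\bA_1) \underset{\bA}\otimes \bC^l$, which fits the framework of \secref{intr t structure on ten} with the right-action category $\bC^r := \Res^{\bA_1 \otimes \bA_1^o}_{\bA_1 \otimes \bA^o}(\bA_1)$; its ind-completion is $\bD^r = \bD_{\bA_1}$, which, by hypothesis on $\bA_1$, carries a compactly generated t-structure with unit in the heart and with right-exact right multiplication. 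Tracing through the construction of the canonical map $F_{\bC^l}:\bC^l \to \bC^l_1$, i.e., the composite
\[
\bC^l \to \Bar^0(\bC^r,\bA,\bC^l) \hookrightarrow \Bar(\bC^r,\bA,\bC^l) \to \bC^r \underset{\bA}\otimes \bC^l
\]
of \secref{iota funct} applied to the induction setup of \secref{def induction}, one sees that $(F_{\bC^l})^*(Y^l) \simeq {\bf 1}_{\bA_1} \underset{\bA}\otimes Y^l$ for $Y^l \in \bD^l$.

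Next, I would invoke the left-right mirror of \propref{flat exact}: \emph{if $Y^r \in \Heart(\bD^r)$ is $\bA$-flat in the sense that the functor $X \mapsto Y^r \underset{\bA}\otimes X : \bD_\bA \to \bD^r$ is exact, then $Y^l \mapsto Y^r \underset{\bA}\otimes Y^l : \bD^l \to \bD^t$ is exact.} The proof is identical to that of \propref{flat exact}, mutatis mutandis: for $Y^r_1 \in \bD^{r,<0}$, $Y^l_1 \in \bD^{l,<0}$ and $Y^l \in \bD^{l,\geq 0}$, the tight-diagonal hypothesis together with \corref{tight diagonal} rewrites the relevant $\Hom_{\bD^t}$ as a $\Hom$ in $\bD((\bC^r \times \bC^l)^{op}\mod)$ involving $\on{Diag}_\bA \underset{\bA \otimes \bA}\otimes (Y^r,Y^l)$; one then factors through $({\bf 1}_\bA, Y^l)$ (rather than through $(Y^r, {\bf 1}_\bA)$ as in the original), using now the flatness of $Y^r$ to preserve exactness, and identifies the remaining object with $(act_{\bC^l})_*(Y^l)$, which is $\geq 0$.

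Finally, I would verify that $Y^r := {\bf 1}_{\bA_1} \in \Heart(\bD^r)$ is $\bA$-flat. By \propref{ten with free} and \secref{triv ind}, the functor $X \mapsto {\bf 1}_{\bA_1} \underset{\bA}\otimes X$ from $\bD_\bA$ to $\bD_{\bA_1}$ coincides with the ind-extension $F^*$ of the homotopy monoidal functor $F : \bA \to \bA_1$, so its exactness is precisely the flatness of $F$. Combining with the mirror of \propref{flat exact} yields exactness of $(F_{\bC^l})^*$. The main (and only) technical nuisance is the left-right asymmetry in the statement of \propref{flat exact}; the expected main obstacle is therefore merely checking that the proof given there is symmetric in $\bC^l$ and $\bC^r$, which it is, so no new ideas are needed.
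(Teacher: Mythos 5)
Your proposal is correct and follows essentially the same route the paper intends: the paper derives \corref{flat base change} directly ``from \propref{flat exact}'' without further comment, and your reconstruction fills in exactly the three implicit steps --- identifying $(F_{\bC^l})^*$ with $Y^l\mapsto {\bf 1}_{\bA_1}\underset{\bA}\otimes Y^l$, noting that the flatness of $F$ is precisely the $\bA$-flatness of ${\bf 1}_{\bA_1}\in\Heart(\bD^r)$, and applying the left-right mirror of \propref{flat exact} (whose proof is manifestly symmetric in $\bC^r$ and $\bC^l$, as you correctly observe: one factors through $({\bf 1}_\bA,Y^l)$ and lands on $(act_{\bC^l})_*(Y^l)\in\bD^{l,\geq 0}$). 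No gap.
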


\ssec{Affiness}   \label{affine functor}

Let $F$ be as above. We will
say that $F$ is affine if the functor
$$F_*:\bD_{\bA_1}\to \bD_\bA$$
is exact and conservative.

\begin{prop}    \label{affine exact}
Assume that $F$ is affine and tight. Then  for $\bC^l$ as above, the functor
$(F_{\bC^l})_*$ is also exact and conservative.
\end{prop}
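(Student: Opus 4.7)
The plan is to deduce both exactness and conservativity of $(F_{\bC^l})_*$ from the corresponding properties of $F_*$, using tightness of $F$ to transport those properties to the base-change. The key structural input is an identification of $(F_{\bC^l})_*$ on the compact generators of $\bC^l_1$.

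Applying the commutative square of \corref{tight mon}(1) (with its $(\bA_1,\bA_2,\bC_1^l)$ specialized to our $(\bA,\bA_1,\bC^l)$) to a representable object $(X_1,Y)\in \bA_1 \otimes \bC^l$ and tracing through, one obtains
\[
(F_{\bC^l})_*\bigl(X_1 \underset{\bA}\otimes Y\bigr) \simeq F_*(X_1) \underset{\bA}\otimes Y
\]
for $X_1 \in \Ho(\bA_1)$ and $Y \in \Ho(\bC^l)$. With this in hand, right-exactness of $(F_{\bC^l})_*$ follows as follows. The functor $F_{\bC^l}^*$ sends the compact object $Y\in\Ho(\bC^l)$ to the compact object $\one_{\bA_1}\underset{\bA}\otimes Y\in\Ho(\bC^l_1)$, so its right adjoint $(F_{\bC^l})_*$ commutes with direct sums, and being triangulated, commutes with all homotopy colimits. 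By the construction of \lemref{t general}, $\bD^{l,\leq 0}_1$ is built out of the compact generators $X_1\underset{\bA}\otimes Y$ with $X_1\in \bD_{\bA_1}^{\leq 0}\cap\Ho(\bA_1)$ and $Y\in \bD^{l,\leq 0}\cap\Ho(\bC^l)$ by direct sums and extensions. On such a generator, the displayed formula, combined with exactness of $F_*$ (from affineness) and right-exactness of the $\bA$-action on $\bC^l$, places the image in $\bD^{l,\leq 0}$. Since $\bD^{l,\leq 0}$ is closed under direct sums and extensions, right-exactness follows. Left-exactness is already recorded in the discussion preceding \corref{flat base change}.

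For conservativity, suppose $(F_{\bC^l})_*(Z)=0$ for some $Z\in \bD^l_1$; we must conclude $Z=0$. The plan is to exhibit $\bD^l_1$ as the category of modules in $\bD^l$ for the monad
\[
T = (F_{\bC^l})_*\circ F_{\bC^l}^* \;\simeq\; \sO_F \underset{\bA}\otimes (-), \qquad \sO_F := F_*(\one_{\bA_1}),
\]
where $\sO_F$ carries an algebra structure in $\bD_\bA$ inherited from the monad structure of the adjunction, coherently lifted by means of the homotopy-monoidal structure of $F$ and the tightness hypothesis. Under this identification, $(F_{\bC^l})_*$ becomes the forgetful functor from $T$-modules to $\bD^l$, which is automatically conservative. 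The Barr--Beck--Lurie hypotheses needed to establish the equivalence --- preservation by $(F_{\bC^l})_*$ of colimits of $(F_{\bC^l})_*$-split simplicial objects, and full faithfulness of the resulting comparison functor --- reduce to the exactness just proved together with the affineness and tightness of $F$ (the latter supplying the Beck--Chevalley isomorphisms).

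The principal obstacle lies in the conservativity step, specifically in making the monadic reconstruction rigorous within the DG-pseudo-monoidal framework of Part III. Extracting the algebra structure on $\sO_F$ and verifying the Barr--Beck--Lurie conditions both require upgrading the triangulated commutative diagram of \corref{tight mon} to a coherent DG-level identity in the sense of \secref{DG cat of hom}, rather than working with triangulated categories alone. An alternative and possibly more elementary route, which fits well with the conventions of Part III, is to exploit the established exactness to reduce conservativity to the abelian heart: identify $\Heart(\bD^l_1)$ as an abelian tensor product in the spirit of \propref{heart of base change}, under which $(F_{\bC^l})_*$ on hearts becomes the restriction along $F_*:\Heart(\bD_{\bA_1})\to\Heart(\bD_\bA)$, and then invoke the conservativity of the latter supplied by affineness.
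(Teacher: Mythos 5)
Your right-exactness argument is correct and coincides with the paper's: both use \corref{tight mon} to produce the identification $(F_{\bC^l})_*(X_1\underset{\bA}\otimes Y)\simeq F_*(X_1)\underset{\bA}\otimes Y$ on the generators of $\bD^{l,\leq 0}_1$, then use affiness of $F$ and right-exactness of the $\bA$-action on $\bC^l$. Your observation that left-exactness was already noted is also fine.

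The conservativity step is where you have a gap, and where the paper does something much lighter. Your primary route, via Barr--Beck--Lurie, is in danger of circularity: the monadicity theorem needs conservativity of the right adjoint as an \emph{input}, not an output, so you cannot invoke it to conclude that $(F_{\bC^l})_*$ is conservative. (One can split off statements about full faithfulness of the comparison functor that do not presuppose conservativity, but essential surjectivity --- which you need for the ``forgetful functor'' interpretation of $(F_{\bC^l})_*$ --- again requires conservativity or an independent substitute.) Your fallback via the abelian heart is also not sufficient as stated: the t-structures in this paper are in general degenerate (there can be acyclic objects, as the renormalized categories of \secref{a renormalization procedure} show), so conservativity on hearts, together with exactness, only controls the subcategory of objects with nonzero cohomology, not the whole of $\bD^l_1$.

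The paper's actual argument for conservativity avoids monads and hearts entirely and is essentially formal: for an adjoint pair $L\dashv R$ between cocomplete triangulated categories with $R$ commuting with direct sums, $R$ is conservative if and only if the essential image of $L$ generates the source of $R$ (equivalently, its right orthogonal vanishes). Affiness of $F$ gives that $F_*$ is conservative, hence that $F^*(\Ho(\bA))$ generates $\bD_{\bA_1}$. By construction of the induced category, $\bD^l_1$ is generated by the objects $X_1\underset{\bA}\otimes Y$ with $X_1\in\Ho(\bA_1)$ and $Y\in\Ho(\bC^l)$; since each $X_1$ is in the localizing subcategory generated by $F^*(\Ho(\bA))$, the objects $X_1\underset{\bA}\otimes Y$ lie in the localizing subcategory generated by $F^*(X)\underset{\bA}\otimes Y = (F_{\bC^l})^*(X\underset{\bA}\otimes Y)$. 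Thus the image of $(F_{\bC^l})^*$ generates $\bD^l_1$, which is exactly the statement that $(F_{\bC^l})_*$ is conservative. You should replace your monadic and abelian-heart strategies with this generation argument.
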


\begin{proof}

The fact that $F_*$ is conservative is equivalent to the fact that
$F(\Ho(\bA))$ generates $\bD_{\bA_1}$. By
construction, this implies that the image of $(F_{\bC^l})^*$ generates 
$\bD^l_1$. The latter is equivalent to $(F_{\bC^l})_*$ 
being conservative.

\medskip

It remains to show that $(F_{\bC^l})_*$ is right-exact. By the definition
of the t-structure, we have to show that 
$$(F_{\bC^l})_*(X_1\underset{\bA}\otimes Y^l)\in \bD^{l,\leq 0}$$
for $X_1\in \bD_{\bA_1}^{\leq 0}$ and $Y^l\in \bD^{l,\leq 0}$. 

\medskip

By \corref{tight mon}, we have:
$$(F_{\bC^l})_*(X_1\underset{\bA_1}\otimes Y^l)\simeq F_*(X_1)
\underset{\bA}\otimes Y^l.$$
However, by assumption, $F_*(X_1)\in \bD_\bA^{\leq 0}$,
implying our assertion.

\end{proof}

\ssec{Tensor product of abelian categories}   \label{ten prod abelian}

In this subsection we will be concerned with the following situation:

\medskip

Let $\obA$ be a abelian category equipped with a monoidal structure.
We will assume that the following conditions hold:

\noindent(A*) $\obA$ is a Grothendieck category,
and that the action functor $\obA\times \obA\to \obA$ is right-exact, 
and commutes with direct sums.

\medskip

Let $\obC^l$ be another abelian category, equipped with a monoidal action 
$\obA$. We will assume that the following conditions hold:

\medskip

\noindent(C*) 
$\obC^l$ is also a Grothendieck category,
and that the action functor $\obA\times \obC^l\to \obC^l$ is right-exact, 
and commutes with direct sums.

\medskip

Let $\obC^r$ be another abelian category with a right action of $\obA$, satisfying
(C*).

\sssec{}

We shall say that a Grothendieck abelian category $\obC^t$ is the tensor product
$\obC^r\underset{\obA}\otimes \obC^l$ if it has the following universal property:

\medskip

\noindent For any Grothendieck abelian category $\obC'$, the category of 
right-exact functors $Q^t:\obC^t\to \obC'$ that commute with direct sums
is equivalent to the category of bi-additive functors
$$Q^{r,l}:\obC^r\times \obC^l\to \obC'$$
that are right-exact and commute with direct sums in both arguments,
and equipped with functorial isomorphisms
$$Q^{r,l}(Y^r\underset{\obA}\otimes X,Y^l)\simeq
Q^{r,l}(Y^r,X\underset{\obA}\otimes Y^l),\,\, X\in \obA,Y^r\in \obC^r,Y^l\in \obC^l.$$

\sssec{}

Let us consider a special case of the above situation, when $\obC^r$ is itself
an abelian monoidal category $\obA_1$ satisfying (A*),
and the action of $\obA$ on $\obA_1$ comes from a right-exact monoidal
functor $\obA\to \obA_1$.

\medskip

Assume that for $\obC^l$ as above, the category 
$\Ind^{\obA_1}_{\obA}(\obC^l):=\obA_1\underset{\obA}\otimes \obC^l$
exists. By the universal property, $\Ind^{\obA_1}_{\obA}(\obC^l)$
carries an action of $\obA_1$, satisfying (C*).

\medskip

Let now $\obC^l_1$ be another abelian category endowed with an action of
$\obA_1$, satisfying (C*). We have:

\begin{lem}
The category of functors $\obC^l\to \obC^l_1$ that are compatible with
the action of $\obA$, and are right-exact and commute with direct sums
is equivalent to the category of functors $\Ind^{\obA_1}_{\obA}(\obC^l)\to \obC^l_1$
that are compatible with the action of $\obA_1$, and are right 
exact and commute with direct sums.
\end{lem}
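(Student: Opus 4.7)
The plan is to prove this lemma by constructing mutually inverse functors between the two categories, using the universal property of the abelian tensor product $\Ind^{\obA_1}_{\obA}(\obC^l) := \obA_1 \underset{\obA}{\otimes} \obC^l$ that was defined immediately above the statement.

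First I would construct the functor from right to left by restriction along the canonical functor $\iota : \obC^l \to \Ind^{\obA_1}_{\obA}(\obC^l)$. Given $F : \Ind^{\obA_1}_{\obA}(\obC^l) \to \obC^l_1$ compatible with the $\obA_1$-action, right-exact, and commuting with direct sums, the composition $F \circ \iota$ is automatically right-exact and commutes with direct sums (since $\iota$ itself has these properties, being a component of the universal bi-additive functor). Compatibility with the $\obA$-action on $F \circ \iota$ follows from the fact that $\iota$ intertwines the $\obA$-action on $\obC^l$ with the restricted $\obA$-action on $\Ind^{\obA_1}_{\obA}(\obC^l)$ (via $\obA \to \obA_1$).

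The essential content is the opposite direction. Given $Q : \obC^l \to \obC^l_1$ which is $\obA$-linear, right-exact and direct-sum-preserving, I would produce a functor $\wt{Q} : \Ind^{\obA_1}_{\obA}(\obC^l) \to \obC^l_1$ by invoking the universal property. Namely, consider the bi-additive functor
$$Q^{r,l} : \obA_1 \times \obC^l \to \obC^l_1, \quad (X_1, Y) \mapsto X_1 \underset{\obA_1}{\otimes} Q(Y),$$
where the $\obA_1$-action is taken on the target $\obC^l_1$. This is right-exact and commutes with direct sums in each argument by the hypotheses on the $\obA_1$-action on $\obC^l_1$ (condition (C*)) and on $Q$. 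The balance isomorphism
$$X_1 \underset{\obA_1}{\otimes} Q(X \underset{\obA}{\otimes} Y) \simeq X_1 \underset{\obA_1}{\otimes} (X \underset{\obA}{\otimes} Q(Y)) \simeq (X_1 \underset{\obA}{\otimes} X) \underset{\obA_1}{\otimes} Q(Y),$$
for $X \in \obA$, $Y \in \obC^l$, $X_1 \in \obA_1$, is furnished by the $\obA$-linearity of $Q$ together with the compatibility of the $\obA$- and $\obA_1$-actions through $\obA \to \obA_1$. By the universal property of $\obA_1 \underset{\obA}{\otimes} \obC^l$, this bi-additive functor descends to the desired $\wt{Q}$.

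The remaining verifications are formal: that $\wt{Q}$ is $\obA_1$-linear (because the recipe $X_1 \otimes_{\obA_1} Q(-)$ is manifestly $\obA_1$-linear in the $X_1$-slot), and that the two assignments $F \mapsto F \circ \iota$ and $Q \mapsto \wt{Q}$ are mutually inverse up to canonical natural isomorphism. For the latter, one uses $\wt{Q} \circ \iota(Y) \simeq \mathbf{1}_{\obA_1} \underset{\obA_1}{\otimes} Q(Y) \simeq Q(Y)$, and conversely $\widetilde{F \circ \iota}(X_1 \underset{\obA}{\otimes} Y) \simeq X_1 \underset{\obA_1}{\otimes} F(\iota(Y)) \simeq F(X_1 \underset{\obA}{\otimes} Y)$, extended to all of $\Ind^{\obA_1}_{\obA}(\obC^l)$ by right-exactness and direct-sum-preservation, since objects of the form $X_1 \underset{\obA}{\otimes} Y$ generate under these operations. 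The only mild subtlety to track is ensuring that $\mathbf{1}_{\obA_1} \underset{\obA_1}{\otimes} - \simeq \on{id}$, which is guaranteed by the assumption that $\obA_1$ is unital and acts on $\obC^l_1$ with unit, built into condition (C*).
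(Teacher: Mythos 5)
Your proof is correct, and since the paper states this lemma without supplying a proof (it is presented as a formal consequence of the universal property of $\obA_1\underset{\obA}\otimes \obC^l$), your argument is precisely the natural one the authors leave implicit: restriction along $\iota$ one way, and in the other direction the recipe $Q^{r,l}(X_1,Y)=X_1\underset{\obA_1}\otimes Q(Y)$ together with the universal property, with the adjunction identities checked on generators $X_1\underset{\obA}\otimes Y$ using the $\obA_1$-linearity data of $F$. No gaps.
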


\sssec{}
Consider a specific example of the above situation. Let
$\obC$ be a Grothenieck abelian category, and $A$ a commutative
algebra that maps to the center of $\obC$. Then the abelian
monoidal category $\obA:=A\mod$ acts on $\obC$.

\medskip

Let $A\to A'$ be a homomorphism of commutative algebras, and set
$\obA_1:=A_1\mod$. 

\medskip

Under the above circumstances, the category $\Ind^{\obA_1}_{\obA}(\obC)$
exists and can be described as follows. Its objects are objects $X\in \obC$,
endowed with an additional action of $A_1$, such that the two actions of
$A$ on $X$ coincide. Morphisms are $\obC$-morphisms $X_1\to X_2$
that intertwine the $A_1$-actions.

\ssec{}   \label{affine diag}

We shall now study the compatibility of the notions of tensor products 
in the DG and abelian settings.

\medskip

Let now $\bA$, $\bC^l$, $\bC^r$ be as in \secref{intr t structure on ten}. Then the 
abelian categories $\obA=\Heart(\ua\bA)$, $\obC^l=\Heart(\ua\bC^l)$ and $\obC^r=\Heart(\ua\bC^r)$
satisfy the conditions of \secref{ten prod abelian}.

\medskip

We will assume that $\bA$ has a tight diagonal. In addition, we will assume
that has $\bA$ {\it has an affine diagonal}, by which we mean that the
functor
$$(m_\bA)_*:\Ho(\ua\bA)\times \Ho(\ua\bA)\to \Ho(\ua\bA)$$
is exact (a priori, it is only left-exact).

\begin{prop}   \label{heart of base change}
Under the above circumstances, we have:
$$\Heart(\bD^t)\simeq \obC^r\underset{\obA}\otimes \obC^r.$$
\end{prop}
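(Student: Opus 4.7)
The plan is to verify that $\Heart(\bD^t)$ satisfies the universal property of the abelian tensor product $\obC^r \underset{\obA}\otimes \obC^l$ recalled in \secref{ten prod abelian}. (The right-hand side of the proposition is intended to be $\obC^r \underset{\obA}\otimes \obC^l$.) Concretely, I would build a canonical right-exact, colimit-preserving functor $\Phi : \obC^r \underset{\obA}\otimes \obC^l \to \Heart(\bD^t)$ and then show it is an equivalence.

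The first step is to produce the defining bi-additive functor
$$\Phi_0 : \obC^r \times \obC^l \to \Heart(\bD^t), \quad (Y^r, Y^l) \mapsto Y^r \underset{\bA}\otimes Y^l.$$
Membership of $Y^r \underset{\bA}\otimes Y^l$ in $\bD^{t,\leq 0}$ is immediate from the construction of the t-structure in \secref{t structure ten prod}. To check membership in $\bD^{t,\geq 0}$, one has to verify that
$$\Hom_{\bD^t}(Y^{r\prime} \underset{\bA}\otimes Y^{l\prime}, Y^r \underset{\bA}\otimes Y^l[k]) = 0 \text{ for } k > 0, \ Y^{r\prime} \in \obC^r,\ Y^{l\prime} \in \obC^l.$$
Applying \corref{tight diagonal} (valid by the tight diagonal hypothesis), this Hom equals
$$\Hom\bigl((Y^{r\prime}, Y^{l\prime}),\, \on{Diag}_\bA \underset{\bA\otimes \bA}\otimes (Y^r, Y^l)[k]\bigr),$$
and the affine diagonal hypothesis places $\on{Diag}_\bA$ in the heart of $\bD((\bA\otimes\bA)^{op}\mod)$, so combined with right-exactness of the $\bA\otimes\bA$-action on $\bC^r\otimes\bC^l$ we get that the target sits in $\bD^{\leq 0}$, giving the required vanishing. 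The second step is to verify that $\Phi_0$ is right-exact, commutes with colimits, and carries the associativity isomorphism $\Phi_0(Y^r \underset{\obA}\otimes X, Y^l) \simeq \Phi_0(Y^r, X \underset{\obA}\otimes Y^l)$ for $X \in \obA$; all of these propagate from the analogous statements at the triangulated level, once we have confirmed that the relevant derived tensor products of heart objects remain in the heart by the same \corref{tight diagonal}/affine diagonal argument. By the universal property of the abelian tensor product, $\Phi_0$ then factors through $\Phi$.

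The third step is to show $\Phi$ is an equivalence. For essential surjectivity, every object of $\bD^{t,\leq 0}$ is (by \lemref{t general}) built from objects of the form $Y^r \underset{\bA}\otimes Y^l$ with $Y^r, Y^l$ in the hearts, so applying $H^0$ and \lemref{colimit generate t} shows that $\Heart(\bD^t)$ is generated under colimits and extensions by the image of $\Phi_0$, which matches the way $\obC^r \underset{\obA}\otimes \obC^l$ is generated from its defining bi-functor. For full faithfulness, I would compare $\Hom$-spaces on both sides: morphism spaces in $\obC^r \underset{\obA}\otimes \obC^l$ are presented by imposing the relations $(Y^r \underset{\obA}\otimes X) \otimes Y^l \sim Y^r \otimes (X \underset{\obA}\otimes Y^l)$ on Homs in $\obC^r \otimes \obC^l$, while Homs in $\Heart(\bD^t)$ between objects in the image of $\Phi_0$ are computed by \corref{tight diagonal} as $\Hom\bigl((Y^{r\prime}, Y^{l\prime}),\, \on{Diag}_\bA \underset{\bA\otimes \bA}\otimes (Y^r, Y^l)\bigr)$, which by the affine diagonal identification of $\on{Diag}_\bA$ with the $\bA$-bimodule $\bA$ unfolds into the same quotient. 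The main obstacle will be exactly this heart-level identification: without the affine diagonal hypothesis, $\on{Diag}_\bA$ would pick up cohomology in negative degrees and the derived tensor product would only agree with the abelian one up to higher $\on{Tor}$'s, preventing the identification of hearts; the two diagonal conditions together are what squeeze the derived tensor product of heart-objects to lie in the heart and what make the Hom computation literally match the defining presentation of $\obC^r \underset{\obA}\otimes \obC^l$.
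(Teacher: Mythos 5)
Your opening sentence identifies the right strategy (verify the universal property of \secref{ten prod abelian}), but the proposal then pivots to constructing a functor $\Phi$ out of the abstract tensor product and proving it is an equivalence, which requires more than the universal property supplies and is \emph{not} what the paper does. The paper works entirely on the ``universal property'' side: it produces the bi-functor $\obC^r\times\obC^l\to\Heart(\bD^t)$, observes that every $Z\in\bD^{t,\leq 0}$ admits a map $Y^r\underset{\bA}\otimes Y^l\to Z$ that is surjective on $H^0$, and then, given an arbitrary balanced right-exact $Q^{r,l}:\obC^r\times\obC^l\to\obC'$, directly manufactures $Q^t:\Heart(\bD^t)\to\obC'$ by using \corref{tight diagonal} together with the affine-diagonal bound $\on{Diag}_\bA\in\bD^{\leq 0}$. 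That construction of $Q^t$ is the content of the proof, and it has no counterpart in your proposal.

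Beyond the structural mismatch, your Step 1 contains a concrete error. You claim $Y^r\underset{\bA}\otimes Y^l\in\Heart(\bD^t)$ for $Y^r\in\obC^r$, $Y^l\in\obC^l$, via the vanishing $\Hom(Y'^r\underset{\bA}\otimes Y'^l,\,Y^r\underset{\bA}\otimes Y^l[k])=0$ for $k>0$. But membership in $\bD^{t,\geq 0}$ requires the vanishing for \emph{negative} shifts of the target (equivalently $\Hom(X_a[k],\,Y^r\underset{\bA}\otimes Y^l)=0$ for $k\geq 1$), and with the correct sign your argument no longer closes: after applying \corref{tight diagonal} the object $\on{Diag}_\bA\underset{\bA\otimes\bA}\otimes(Y^r,Y^l)$ is only in $\bD^{\leq 0}$ (right-exactness plus $\on{Diag}_\bA$ in the heart), and the needed vanishing would require it to lie in $\bD^{\geq 0}$ as well. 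This fails: take $\bA=\bD^{perf}(\BA^1)$, $\bC^r=\bC^l=\bA$, $Y^r=Y^l=k_0$; then $Y^r\underset{\bA}\otimes Y^l\simeq k_0\oplus k_0[1]$, which is not in the heart, even though both diagonal hypotheses are satisfied. The paper never claims that $Y^r\underset{\bA}\otimes Y^l$ lies in the heart; the bi-functor is its $H^0$. Finally, your Step 3 full-faithfulness argument invokes a ``presentation'' of $\obC^r\underset{\obA}\otimes\obC^l$ by imposing balancing relations on $\Hom$'s, and an ``affine diagonal identification of $\on{Diag}_\bA$ with the $\bA$-bimodule $\bA$'' --- neither of these is available: the abelian tensor product is defined only by a universal property, and $\on{Diag}_\bA=(m_\bA)_*({\bf 1}_\bA)$ is the direct image of the unit along the diagonal, not $\bA$ itself. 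The paper avoids both issues by never trying to describe $\obC^r\underset{\obA}\otimes\obC^l$ concretely; it just produces $Q^t$ from $Q^{r,l}$.
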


\begin{proof}

The universal right-exact functor
$$\obC^r\times \obC^l\to \obC^t:=\Heart(\bD^t)$$
comes from the functor
$$(m_{\bC^r,\bC^l})_*:\bD^r\times \bD^l\to \bD^t.$$

Conversely, given a right-exact functor $Q^{r,l}:
\obC^r\times \obC^l\to \obC'$ we produce the
functor $Q^t:\obC^t\to \obC'$ as follows. First,
we tautologically extend $Q^{r,l}$ to a functor
$$'Q^{r,l}:\Ho^{\leq 0}(\underset{\longrightarrow}{\bC^r\otimes \bC^l})\simeq
\bD^{\leq 0}((\bC^r\times \bC^l)^{op}\mod)
\to \obC'$$
that vanishes on $\bD^{\leq -1}((\bC^r\times \bC^l)^{op}\mod)$.

\medskip

Note that, by construction, for every object $Z\in\bD^{t,\leq 0}$
there exist $Y^r\in \obC^r$, $Y^l\in \obC^l$ and maps
$Y^r\underset{\bA}\otimes Y^l\to Z$ that induces a surjection
$$H^0\left(Y^r\underset{\bA}\otimes Y^l\right)\to H^0(Z).$$

Hence, in order to construct $Q^t$ it suffices to define a functorial
map
\begin{equation} \label{bilinear}
\Hom_{\bD^{t,\leq 0}}(Y_1^r\underset{\bA}\otimes Y^l_1,
Y^r_2\underset{\bA}\otimes Y^l_2)\to
\Hom_{\obC'}({}'Q^{r,l}(Y_1^r,Y^l_1),{}'Q^{r,l}(Y_2^r,Y^l_2)).
\end{equation}

We rewrite the LHS of \eqref{bilinear} as
$$\Hom_{\Ho(\underset{\longrightarrow}{\bC^r\otimes \bC^l})}
((Y_1^r,Y_1^l),\on{Diag}_\bA\underset{\bA\otimes \bA}\otimes (Y_2^r,Y_2^l)).$$
By assumption, the object $\on{Diag}_\bA$ belongs to
$\bD^{\leq 0}((\bA\otimes \bA)^{op}\mod)$. Hence, it suffices to construct a 
functorial isomorphism
$${}'Q^{r,l}\left(\on{Diag}_\bA\underset{\bA\otimes \bA}\otimes (Y^r,Y^l)\right)\to
{}'Q^{r,l}(Y^r,Y^l).$$

However, the latter follows from the fact that
for any $X^{r,l}\in \Ho^{\leq 0}(\bA\times \bA)$ we have:
$${}'Q^{r,l}\left(X^{r,l}\underset{\bA\otimes \bA}\otimes (Y^r,Y^l)\right)\simeq
{}'Q^{r,l}(Y^r,m_\bA^*(X^{r,l})\underset{\bA}\otimes Y^l)\simeq
{}'Q^{r,l}(m_\bA^*(X^{r,l})\underset{\bA}\otimes Y^r,Y^l).$$

\end{proof}

\ssec{}

Let $\bA$, $\bC^r$, $\bC^l$ be as in \secref{affine diag}.
We will assume that the triangulated
categories $\bD^l$, $\bD^r$ and $\bD_\bA$ satisfy the equivalent conditions of
\lemref{der vs tr}. We are interested in the following question: under what 
circumstances will $\bD^t$ also satisfy these conditions?

\medskip

\begin{lem}  \label{abelian vs derived base change}
Assume that the following condition holds: for every $Y^l\in \obC^l, Y^r\in \obC^r$
and $i>0$ there exist surjections $Y_1^l\twoheadrightarrow Y^l$, $Y_1^r\twoheadrightarrow Y^r$
in $\obC^l$ and $\obC^r$, respectively, such that the map
$$H^{-i}\left(Y^r_1\underset{\bA}\otimes Y^l_1\right)\to
H^{-i}\left(Y^r\underset{\bA}\otimes Y^r\right)$$
is zero. Then $\bD^t$ satisfies the equivalent conditions
of \lemref{der vs tr}.
\end{lem}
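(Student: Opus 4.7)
Plan. I verify condition (4) of \lemref{der vs tr} for $\bD^t$: given $X, X' \in \obC^t := \Heart(\bD^t)$, an integer $i > 0$, and a class $\alpha \in \Hom_{\bD^t}(X', X[i])$, I need to exhibit a surjection $X'_1 \twoheadrightarrow X'$ in $\obC^t$ on which $\alpha$ pulls back to zero. By \propref{heart of base change} every object of $\obC^t$ is a quotient of some $H^0(Y^r \underset{\bA}\otimes Y^l)$, so upon pulling $\alpha$ back along such a quotient I may and do replace $X'$ by $H^0(Z)$ with $Z := Y^r \underset{\bA}\otimes Y^l \in \bD^{t,\leq 0}$; it then suffices to kill $\alpha : H^0(Z) \to X[i]$ by a further surjection of the form $H^0(Z_1) \twoheadrightarrow H^0(Z)$ induced by compatible surjections $Y^r_1 \twoheadrightarrow Y^r$ and $Y^l_1 \twoheadrightarrow Y^l$ in $\obC^r$ and $\obC^l$ (the composed map is still surjective onto $X'$).

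Next I exploit the hypothesis in an iterated form: applying it successively $N$ times, and observing that the vanishing on $H^{-j}$ already achieved at one step is preserved under further composition with surjections, I arrange $Y_N^r \twoheadrightarrow Y^r$, $Y_N^l \twoheadrightarrow Y^l$ such that, setting $Z_N := Y_N^r \underset{\bA}\otimes Y_N^l$, the induced maps $H^{-j}(Z_N) \to H^{-j}(Z)$ are zero for every $j = 1,\dots,N$. The task reduces to showing that the pulled-back class $\alpha_N : H^0(Z_N) \to X[i]$ is zero for $N$ sufficiently large in terms of $i$.

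To carry out this vanishing I compose with the truncation $Z_N \to H^0(Z_N)$ to view $\alpha_N$ inside $\Hom_{\bD^t}(Z_N, X[i])$, and exploit the truncation triangle $\tau^{<0}(Z_N) \to Z_N \to H^0(Z_N)$, together with the induced filtration of $\tau^{<0}(Z_N)$ whose associated graded pieces are the $H^{-j}(Z_N)[j]$. By our choice each composite $H^{-j}(Z_N)[j] \to H^{-j}(Z)[j]$ is zero, and via \corref{tight diagonal} one may rewrite $\Hom_{\bD^t}$ from a derived tensor product into a $\Hom$ over $\bA \otimes \bA$ after inserting $\on{Diag}_\bA$; combining this with the hypothesis that $\bD^r, \bD^l$ and $\bD_\bA$ themselves satisfy the equivalent conditions of \lemref{der vs tr} lets me reduce the relevant obstructions to $\Ext$ groups computed in $\obC^r, \obC^l$ and $\obA$, which are killed by our successive surjections. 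Iterating this analysis, and noting that only finitely many cohomological degrees contribute since $X[i]$ sits in a single degree, yields the vanishing of $\alpha_N$ for $N$ large enough.

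The main obstacle will be controlling the interplay between the truncation-triangle filtration of $Z_N$ and the abelianisation of $\Hom$ computations afforded by \corref{tight diagonal}. Concretely, after arranging that individual cohomology maps $H^{-j}(Z_N) \to H^{-j}(Z)$ vanish, one still has to propagate that vanishing from the associated graded pieces of $\tau^{<0}(Z_N)$ through the successive extensions assembling $\tau^{<0}(Z_N)$ itself, and then lift it via the triangle $\tau^{<0}(Z_N) \to Z_N \to H^0(Z_N)$ to a genuine vanishing of $\alpha_N$ rather than only of its image in $\Hom_{\bD^t}(Z_N, X[i])$. Managing these extensions requires a careful downward induction on $i$ that interweaves the successive applications of the hypothesis with the given validity of \lemref{der vs tr} for $\bD^r, \bD^l, \bD_\bA$; getting the quantifiers and the order of the iterations right (so that each auxiliary $\Ext^{i'}$ that appears with $i' < i$ can in turn be killed by surjections coming from the hypothesis) is the delicate technical point.
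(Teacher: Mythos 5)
Your proposal correctly identifies the relevant tools (condition (4) of Lemma \ref{der vs tr}, Proposition \ref{heart of base change}, the tight-diagonal rewriting via Corollary \ref{tight diagonal}, and truncation triangles), and you correctly reduce to the case $X' = H^0(Z)$ with $Z = Y^r\underset\bA\otimes Y^l$. But you do not actually close the argument, and the obstacle you flag in your last paragraph is precisely the content of the proof rather than a loose end.

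There are two specific gaps. First, your plan is to "kill all $H^{-j}(Z_N)\to H^{-j}(Z)$ in one shot for $j=1,\dots,N$", but this is not how the paper handles the $H^0$ piece. The crucial first move in the paper is to consider $\beta$, the image of $\alpha$ under precomposition with the whole complex $Z\to H^0(Z)$, inside $\Hom_{\bD^t}(Z,X[i])$. One rewrites this Hom via Corollary \ref{tight diagonal} as a Hom in $\bD((\bC^r\times\bC^l)^{op}\mod)$ into $\on{Diag}_\bA\underset{\bA\otimes\bA}\otimes X[i]$, which lies in degrees $\leq 0$, and then kills $\beta$ by applying condition (4) for $\bC^r$ and $\bC^l$ separately. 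Note this step does not use the hypothesis of the lemma at all; it is a distinct argument that you never make explicit. Second, once $\beta$ is killed, the class $\alpha''$ lifts to a class $\gamma$ on $\tau^{\geq -i+1,\leq -1}(Y''^r\underset\bA\otimes Y''^l)$, and the paper kills $\gamma$ by a precise downward induction of exactly $i-1$ steps, invoking the lemma's hypothesis once per step to remove the top remaining cohomology $H^{-i+j}$ and replace $\gamma$ by a class on a shorter truncation $\tau^{\geq -i+j+1,\leq -1}$. Your strategy of arranging all $H^{-j}$ maps to vanish simultaneously and then hoping to conclude does not work: vanishing of the maps on associated graded pieces does not imply vanishing of the map $\tau^{<0}(Z_N)\to\tau^{<0}(Z)$, because of nonzero extension classes. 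You acknowledge this, but you present it as a technical subtlety to be managed rather than as the unresolved core of the problem; resolving it is exactly the stepwise lifting the paper performs, and your "for $N$ sufficiently large" framing also misstates the structure (the iteration has length exactly $i-1$, not asymptotic).
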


\begin{proof}

We will show that $\bD^t$ satisfies condition (4) of \lemref{der vs tr}. Since every object
of $\obC^t$ receives a surjection from some $H^0\left(Y^r\underset{\bA}\otimes Y^l\right)$
for $Y^l,Y^r$ as above, it is enough to show that for any $Z\in \obC^t$, $i>0$
and $\alpha\in \Hom_{\bD^t}\left(H^0\left(Y^r\underset{\bA}\otimes Y^l\right),Z[i]\right)$
there exist $Y'^l,Y'^r$ such that the image $\alpha'$ of $\alpha$ in
$$\Hom_{\bD^t}\left(H^0\left(Y'^r\underset{\bA}\otimes Y'^l\right),Z[i]\right)$$
vanishes.

\medskip

Consider the map 
$Y^r\underset{\bA}\otimes Y^l\to H^0(Y^r\underset{\bA}\otimes Y^l)$,
and let $\beta$ be the image of $\alpha$ in
$$\Hom_{\bD^t}(Y^r\underset{\bA}\otimes Y^l,Z[i])\simeq
\Hom_{\bD((\bC^r\times \bC^l)^{op}\mod)}\left((Y^r,Y^l),
\on{Diag}_\bA\underset{\bA\otimes \bA}\otimes Z[i]\right).$$
Since $\bC^l$ and $\bC^r$ satisfy condition (4) of \lemref{der vs tr},
and 
$$\on{Diag}_\bA\underset{\bA\otimes \bA}\otimes Z\simeq
(m_{\bC^r,\bC^l})_*(Z)\in \bD^{\leq 0}((\bC^r\times \bC^l)^{op}\mod),$$
there exist surjections $Y''^r\to Y^r$ and $Y''^l\to Y^l$, such 
that the image $\beta''$ of $\beta$ in
$$\Hom_{\bD^t}(Y''^r\underset{\bA}\otimes Y''^l,Z[i])\simeq
\Hom_{\bD((\bC^r\times \bC^l)^{op}\mod)}\left((Y''^r,Y''^l),
\on{Diag}_\bA\underset{\bA\otimes \bA}\otimes Z[i]\right)$$
vanishes.

\medskip

Let $\alpha''$ be the image of $\alpha$ in
$\Hom_{\bD^t}\left(H^0(Y''^r\underset{\bA}\otimes Y''^l),Z[i]\right)$.
By construction its image in 
$$\Hom_{\bD^t}(Y''^r\underset{\bA}\otimes Y''^l,Z[i])$$
vanishes; so $\alpha''$
is the image of some class 
$$\gamma\in \Hom_{\bD^t}\left(\tau^{\leq -1}(Y''^r\underset{\bA}\otimes Y''^l),Z[i-1]\right)\simeq
\Hom_{\bD^t}\left(\tau^{\geq -i+1,\leq -1}(Y''^r\underset{\bA}\otimes Y''^l),Z[i-1]\right).$$

By the assumption of the lemma, there exist surjections
$Y_1^r\to Y''^r$ and $Y_1^r\to Y''^l$, so that the restriction of $\gamma$ to
$\Hom_{\bD^t}\left(\tau^{\geq -i+1,\leq -1}(Y_1^r\underset{\bA}
\otimes Y_1^l),Z[i-1]\right)$
is such that its further restriction to
$\Hom_{\bD^t}\left(H^{-i+1}(Y_1^r\underset{\bA}\otimes Y_1^l),Z[i-1]\right)$
vanishes. So, the above restriction comes from a class 
$$\gamma_1\in 
\Hom_{\bD^t}\left(\tau^{\geq -i+2,\leq -1}(Y_1^r\underset{\bA}\otimes Y_1^l),Z[i-1]\right).$$
By induction, we find sequences of surjections
$$Y^r_{i-1}\to...\to Y^r_1 \text{ and }
Y^l_{i-1}\to...\to Y^l_1$$
so that the restriction of $\gamma$ to
$\Hom_{\bD^t}\left(\tau^{\geq -i+j,\leq -1}(Y_j^r\underset{\bA}\otimes Y_j^l),Z[i-1]\right)$
vanishes. This implies that the restriction of $\alpha''$ to
$\Hom_{\bD^t}\left(H^0(Y_{i-1}^r\underset{\bA}\otimes Y_{i-1}^l),Z[i]\right)$
vanishes. I.e., the objects $Y'^r=Y_{i-1}^r$, $Y'^l=Y_{i-1}^l$ have the desired properties.

\end{proof}

\section{Categories over stacks}  \label{categories over stacks}

\ssec{}

Let $\CY$ be an Artin stack. We assume that $\CY$ is of finite type, i.e., can
be covered with a smooth map by an affine scheme of finite type.
In addition, we shall assume that the diagonal morphism $\CY\to \CY\times \CY$
is affine. 

\medskip

An example of such a stack is when $\CY=Z/G$, where $Z$ is a scheme of finite type
and $G$ an affine algebraic group, acting on it. In fact, all algebraic
stacks in this paper will be of this form.

\sssec{}

The abelian categories $\Coh(\CY)$ and $\QCoh(\CY)$ have an evident meaning.
We consider the DG categories of complexes
$\bC^b(\Coh(\CY))$, $\bC(\QCoh(\CY))$, and the corresponding subcategories
of acyclic complexes
$$\bC^b_{acycl}(\Coh(\CY))\subset \bC^b(\Coh(\CY)),\,\,
\bC_{acycl}(\QCoh(\CY))\subset \bC(\QCoh(\CY)).$$

Denote the corresponding quotients, regarded as 0-objects in $\DGCat$ by
$\bC^b(\CY)$ and $\bC(\CY)$, respectively (see \secref{rigid quotients}). 
Their homotopy categories identify with
\begin{multline*}
\bD^b(\Coh(\CY)):=\Ho\left(\bC^b(\Coh(\CY))\right)/
\Ho\left(\bC^b_{acycl}(\Coh(\CY))\right), \\
\bD(\QCoh(\CY)):=\Ho\left(\bC(\QCoh(\CY))\right)/
\Ho\left(\bC_{acycl}(\QCoh(\CY))\right),
\end{multline*}
respectively.

\medskip

As in the case of schemes, one shows that the natural functor
$$\bD^b(\Coh(\CY))\to \bD(\QCoh(\CY))$$
is fully faithful, and its essential image consists of objects
with coherent cohomologies.

\sssec{}   \label{ass *}

We shall now make the following additional assumptions on $\CY$: 

\medskip

\noindent(*a) \hskip0.5cm {\it The functor of global sections on $\bD(\QCoh(\CY))$ is of finite cohomological
dimension.}

\medskip

\noindent(*b) \hskip0.5cm {\it Every coherent sheaf on $\CY$ admits a non-zero map from a locally
free coherent sheaf.} 

\medskip

In the example $\CY=Z/G$, assumption (*a) is satisfied e.g. if the ground field is of characteristic $0$.
Assumption (*b) is satisfied e.g. if $Z$ admits
a G-equivariant locally closed embedding into the projective space.

\medskip

\noindent{\it Remark.} Assumption (*a) implies that the structure sheaf $\CO_\CY$
is a compact object in $\bD(\QCoh(\CY))$.

\medskip

Assumption (*b) expresses the fact
that $\CY$ carries enough vector bundles. It 
implies that every object $\CM\in \bD^-(\Coh(\CY))$ can be represented
by a (possible infinite to the left) complex whose terms are locally free. This complex
can be chosen to be finite if $\CM\in \bD^b(\Coh(\CY))$, and $H^i(\CM)$ are 
of finite Tor-dimension. 

\sssec{}

Let $\bC^b(\Coh^{loc.free}(\CY))$ be the DG category of bounded
complexes of locally free sheaves,
and let $\bC_{acycl}^b(\Coh^{loc.free}(\CY))\subset \bC^b(\Coh^{loc.free}(\CY))$ be the
subcategory of acyclic complexes. Let $\bC^{perf}(\CY)$ be the corresponding
quotient, regarded as a 0-object of $\DGCat$; denote 
$\bD^{perf}(\Coh(\CY)):=\Ho(\bC^{perf}(\CY))$.

\medskip

We will use the following general assertion:

\begin{lem}  \label{adapted replacement}
Let $\bD_1\subset \bD\supset \bD'$ be triangulated categories, and set
$\bD'_1:=\bD_1\cap \bD'$. Consider the functor
\begin{equation} \label{prime and one}
\bD_1/\bD'_1\to \bD/\bD'.
\end{equation}
Assume that in the above situation one of the following two conditions holds:

\smallskip

\noindent{\em(1)}
For every object $X\in \bD$ there exists an
object $X_1\in \bD_1$ and an arrow $X_1\to X$ with
$\on{Cone}(X_1\to X)\in \bD'$.

\smallskip

\noindent{\em(2)}
For every object $X\in \bD$ there exists an
object $X_1\in \bD_1$ and an arrow $X\to X_1$ with
$\on{Cone}(X\to X_1)\in \bD'$.

\medskip

Then the functor \eqref{prime and one} is an equivalence.
\end{lem}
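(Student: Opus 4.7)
The plan is as follows. First observe that conditions (1) and (2) are interchanged by passing to opposite categories, and Verdier localization commutes with $(-)^{op}$; so it suffices to treat case (1). I will verify that the functor $\bD_1/\bD'_1 \to \bD/\bD'$ is essentially surjective and fully faithful, using the calculus of left fractions to represent morphisms in the quotient categories.

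For essential surjectivity, given any $X\in\bD$, condition (1) produces $X_1\in \bD_1$ and a morphism $f\colon X_1\to X$ with $\on{Cone}(f)\in \bD'$. By construction $f$ becomes an isomorphism in $\bD/\bD'$, so $X$ is isomorphic in the quotient to the image of $X_1$, and essential surjectivity follows. For fully faithfulness, fix $A,B\in \bD_1$ and consider morphisms $A\to B$ in $\bD/\bD'$, which are represented as equivalence classes of left fractions $A\xleftarrow{s} Y \to B$ with $\on{Cone}(s)\in \bD'$. To show surjectivity of the $\Hom$ map, apply (1) to $Y$ to obtain $\varphi\colon Y_1\to Y$ with $Y_1\in\bD_1$ and $\on{Cone}(\varphi)\in\bD'$. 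Composing gives a new roof $A\xleftarrow{s\circ\varphi} Y_1 \to B$ entirely in $\bD_1$. The octahedral axiom fits $\on{Cone}(s\circ\varphi)$ into a triangle with $\on{Cone}(\varphi)$ and $\on{Cone}(s)$, both in $\bD'$, so $\on{Cone}(s\circ\varphi)\in \bD'$; and since it is already in $\bD_1$ (as $A,Y_1\in \bD_1$), it lies in $\bD'_1$. The new roof is equivalent to the original in $\bD/\bD'$, establishing surjectivity. For injectivity, if two roofs $A\xleftarrow{s_i} Y_i\to B$ in $\bD_1$ become equivalent in $\bD/\bD'$, they are dominated by some roof $A\xleftarrow{t} Z\to B$ in $\bD$ together with morphisms $Z\to Y_i$ making the obvious diagrams commute. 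Apply (1) to $Z$ to get $Z_1\in\bD_1$ with $\psi\colon Z_1\to Z$ of cone in $\bD'$; the composite roof $A\xleftarrow{t\circ\psi} Z_1\to B$ with morphisms $Z_1\to Z\to Y_i$ then lies in $\bD_1$ (with cone of $t\circ\psi$ in $\bD'_1$ by the same octahedral argument) and dominates both $s_i$ in $\bD_1/\bD'_1$.

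The main subtlety to watch is bookkeeping of cones: every time we pull back an intermediate object via condition (1), we must confirm not only that the relevant composed arrow has cone in $\bD'$, but also that this cone actually lies in $\bD'_1$ rather than merely in $\bD'$. This is where it is essential that $\bD_1$ is a triangulated subcategory: the cone of any morphism between two objects of $\bD_1$ is automatically in $\bD_1$, so membership in $\bD'$ upgrades to membership in $\bD'_1 = \bD_1\cap \bD'$ for free. The rest is routine manipulation of left fractions, together with repeated invocations of the octahedral axiom to propagate the condition ``cone lies in $\bD'$'' through compositions.
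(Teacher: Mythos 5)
Your proof is correct, and it is the standard calculus-of-fractions argument: case (2) reduces to case (1) by passing to opposite categories; essential surjectivity is immediate from condition (1); and for full faithfulness you replace the apex of a roof (or of a witness of equivalence of roofs) by an object of $\bD_1$ via condition (1), and check via the octahedron and fullness of $\bD_1\subset\bD$ that the resulting cone lies in $\bD'_1=\bD_1\cap\bD'$ rather than merely in $\bD'$. The paper itself states this lemma without proof, as a general fact about Verdier quotients, so there is no alternative argument to compare against; your write-up supplies precisely the routine verification that the authors left to the reader.
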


\medskip

Assumption (*b) and \lemref{adapted replacement} imply that the natural functor
$$\bD^{perf}(\Coh(\CY))\to \bD^b(\Coh(\CY))$$ is
fully faithful, and its essential image consists of objects, whose cohomologies have 
a finite Tor-dimension. In particular, the above functor is an equivalence if $\CY$
is smooth.

\medskip

As in the case of schemes which have enough locally free sheaves, one shows:

\begin{lem}
The functor 
$$\bD^{perf}(\Coh(\CY))\to \bD(\QCoh(\CY))$$
extends to an equivalence between the ind-completion of $\bD^{perf}(\Coh(\CY))$
and $\bD(\QCoh(\CY))$.
\end{lem}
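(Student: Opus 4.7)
The plan is to verify the two classical sufficient conditions for an equivalence of this type: that the objects of $\bD^{perf}(\Coh(\CY))$ are compact in $\bD(\QCoh(\CY))$, and that they generate $\bD(\QCoh(\CY))$ as a localizing subcategory. Once both are established, the tautological functor $\bD^{perf}(\Coh(\CY)) \to \bD(\QCoh(\CY))$ (which is fully faithful since $\Hom$ between bounded complexes of locally free coherent sheaves is computed naively) extends to a functor on ind-completions $\ua\bD^{perf}(\Coh(\CY)) \to \bD(\QCoh(\CY))$, and this extension is an equivalence by a standard formal argument: fully faithfulness comes from compactness together with fully faithfulness on generators, and essential surjectivity comes from generation together with closure under arbitrary direct sums.

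For compactness, it suffices to verify that $\CO_\CY$ is compact, since for any $\CM \in \bD^{perf}(\Coh(\CY))$ one has $\Hom_{\bD(\QCoh(\CY))}(\CM, -) \simeq R\Gamma(\CY, \CM^\vee \otimes^L -)$, and tensoring with the bounded complex of locally free coherent sheaves $\CM^\vee$ preserves direct sums. That $\CO_\CY$ is compact amounts to the assertion that $R\Gamma(\CY, -)$ commutes with arbitrary direct sums on $\bD(\QCoh(\CY))$: assumption (*a) supplies the bounded cohomological dimension of $R\Gamma$, quasi-compactness of $\CY$ (and computability of cohomology via a smooth atlas) ensures that $R\Gamma$ commutes with filtered colimits on the heart $\QCoh(\CY)$, and a standard spectral sequence argument upgrades this to compatibility with arbitrary direct sums.

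For generation, let $\bD' \subset \bD(\QCoh(\CY))$ denote the localizing subcategory generated by perfect complexes. Since $\bD'$ is compactly generated in itself, Brown representability furnishes a right adjoint to the embedding, producing for each $\CN \in \bD(\QCoh(\CY))$ a functorial triangle $\CN' \to \CN \to \CN''$ with $\CN' \in \bD'$ and $\CN''$ in the right orthogonal $\bD'^\perp$. The task thus reduces to showing $\bD'^\perp = 0$, i.e., that vanishing of $\Hom_{\bD(\QCoh(\CY))}(\CM, \CN[k])$ for every perfect $\CM$ and every $k$ forces $\CN = 0$. Given $\CN \ne 0$, one picks $j$ with $H^j(\CN) \ne 0$ and uses (*b) together with the Noetherian property (every nonzero quasi-coherent sheaf has a nonzero coherent subsheaf) to produce a locally free coherent $\CL$ with a nonzero map $\CL \to H^j(\CN)$. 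The hypercohomology spectral sequence $E_2^{p, q} = H^p(\CY, \CL^\vee \otimes H^q(\CN)) \Rightarrow \on{Ext}^{p+q}(\CL, \CN)$ encodes this map as a nonzero class in $E_2^{0, j}$, which one argues must survive to $E_\infty$ and produce a nonzero morphism $\CL[-j] \to \CN$ from a perfect complex, yielding the contradiction.

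The main obstacle is the survival-to-$E_\infty$ argument, especially when $\CN$ is unbounded below. When $\CN \in \bD^+(\QCoh(\CY))$, one chooses $j$ minimally, so that outgoing differentials $E_r^{0, j} \to E_r^{r, j-r+1}$ for $r \geq 2$ land in groups built from $H^{j-r+1}(\CN) = 0$, and incoming differentials originate from $E_r^{-r, *} = 0$, forcing $\on{Ext}^j(\CL, \CN) \ne 0$. For unbounded $\CN$, one reduces to the bounded-below case by writing $\CN \simeq \operatorname{hocolim}_n \tau^{\geq -n}\CN$, exploiting closure of $\bD'$ under homotopy colimits together with the observation that, for any fixed perfect $\CM$ and degree $k$, the interplay between the cohomological dimension bound from (*a) and the amplitude bound of $\CM$ makes $\Hom(\CM, \tau^{<-n}\CN[k+1])$ vanish for $n$ sufficiently large, so that vanishing propagates to the truncations. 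Making this last reduction precise is the most technical step of the argument.
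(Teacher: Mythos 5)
The paper itself gives no proof here---it simply asserts the lemma follows ``as in the case of schemes which have enough locally free sheaves''---so you cannot be compared against the paper's own argument; you are reconstructing the standard one. The overall skeleton you lay out (compactness from (*a), generation from (*b) plus the hypercohomology spectral sequence, Brown representability to reduce to showing the right orthogonal of perfects vanishes) is correct, and both the compactness verification and the bounded-below generation argument are sound as written.

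The reduction from unbounded $\CN$ to the bounded-below case, however, has a genuine gap rather than a mere technicality. First a small slip: the maps go $\tau^{\geq -n-1}\CN \to \tau^{\geq -n}\CN$, so $\CN$ is the homotopy \emph{limit} of this tower, not the homotopy colimit (the colimit presentation would be $\CN \simeq \operatorname{hocolim}_n \tau^{\leq n}\CN$). What you actually use is the triangle $\tau^{<-n}\CN \to \CN \to \tau^{\geq -n}\CN$ and the observation that, for a fixed perfect $\CM$ of amplitude $[a,b]$ and fixed $k$, one has $\Hom(\CM, \tau^{<-n}\CN[k+1]) = 0$ once $n \geq d - a - k - 1$ (with $d$ the cohomological dimension from (*a)); this part is fine and yields: for each pair $(\CM, k)$, vanishing of $\Hom(\CM, \CN[k])$ forces vanishing of $\Hom(\CM, \tau^{\geq -n}\CN[k])$ for $n$ \emph{sufficiently large depending on $(\CM,k)$}. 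The problem is that the bounded-below argument applied to $\CN' := \tau^{\geq -n}\CN$ detects nonvanishing at the \emph{minimal} nonzero cohomological degree $j'$ of $\CN'$, with $\CL$ a locally free sheaf in degree $0$. To transfer this back to $\CN$ you need $\on{Ext}^{j'+1}(\CL, \tau^{<-n}\CN) = 0$, i.e., $n \geq d - j' - 1$. But the only a priori constraint is $j' \geq -n$, and when $\CN$ is unbounded below the minimal degree can sit at the bottom edge $j' = -n$, turning the required inequality into $n \geq d + n - 1$, i.e., $d \leq 1$. So for $d \geq 2$ (which certainly occurs for the stacks in this paper) the argument does not close: the obstruction class in $\on{Ext}^{j'+1}(\CL, \tau^{<-n}\CN)$ need not vanish, and no choice of $n$ escapes this because the problematic degree $j'$ slides down with $n$. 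This is exactly the hard kernel of Neeman's theorem on compact generation of unbounded derived categories of quasi-coherent sheaves; one needs a genuinely different idea (e.g.\ a descent/\v{C}ech argument from an atlas, or the Hall--Rydh approach for stacks) to handle objects that are unbounded below.
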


\ssec{}  \label{perf on stack}

The category $\bC^b(\Coh^{loc.free}(\CY))$ is naturally
a DG monoidal category, and $$\Ho\left(\bC_{acycl}^b(\Coh^{loc.free}(\CY))\right)\subset
\Ho\left(\bC^b(\Coh^{loc.free}(\CY))\right)$$ is an ideal. Therefore, by \secref{action on subcat}, 
$\bC^{perf}(\CY)$ is a 0-object of $\DGMonCat$, which is a DG model for the triangulated
monoidal category $\bD^{perf}(\Coh(\CY))$.

\medskip

The category $\bD^{perf}(\Coh(\CY))$ is rigid (see \secref{rigidity}). The
category $\bD(\QCoh(\CY))$ has a natural t-structure, and the functor
of tensor product is right-exact. I.e., the above DG model of $\bD^{perf}(\Coh(\CY))$
satisfies the assumptions of \secref{intr t structure on ten}.

\medskip

In addition, the assumption that $\CY$ has an affine diagonal implies
that $\bD^{perf}(\Coh(\CY))$ has an affine diagonal, see \secref{affine diag}.

\sssec{}   \label{over}

By a triangulated category {\it over} the stack $\CY$ we will understand
a triangulated category, equipped with a DG model, which is a 0-object of 
\begin{equation} \label{DG Y}
\DGMod\left(\bC^{perf}(\CY)\right).
\end{equation}

For two triangulated categories over
$\CY$, by a functor (resp., natural transformation) {\it over} $\CY$ we will mean 
a 1-morphism (resp., 2-morphism) between the corresponding objects in
the 2-category \eqref{DG Y}.

\sssec{}   \label{base change}

Let $f:\CY_1\to \CY_2$ be a morphism between stacks. It defines an evident
DG monoidal functor $f^*:\bC(\Coh^{loc.free}(\CY_2))\to \bC(\Coh^{loc.free}(\CY_1))$,
and by \lemref{action on quotient}, it gives rise to a 1-morphism in $\DGMonCat$,
$$f^*:\bC^{perf}(\CY_2)\to \bC^{perf}(\CY_1),$$
providing a model for the pull-back functor $f^*:\bD^{perf}(\QCoh(\CY_2))\to \bD^{perf}(\QCoh(\CY_1))$.

\medskip

If $\bD_2$ is a triangulated category over $\CY_2$, we will denote by
$\CY_1\underset{\CY_2}\times \bD_2$ the induction 
$$\on{Ind}^{\bC^{perf}(\CY_1)}_{\bC^{perf}(\CY_2)}(\bC_2),$$
(see \secref{def induction}), where $\bC_2$ is a DG model of $\bD_2$.
We will refer to $\CY_1\underset{\CY_2}\times \bD_2$ as the base change
of $\bD_2$ with respect to $f$. We have a tautological functor
$$(f^*\underset{\CY}\times \on{Id}_{\bD_2}):
\bD_2\to \CY_1\underset{\CY_2}\times \bD_2.$$

\medskip

Since $\CY_1\underset{\CY_2}\times \bD_2$ is defined as an object of $\DGCat$,
its ind-completion (see \secref{big and small, triang}) is well-defined; we will denote it
by $\CY_1\underset{\CY_2}\arrowtimes \bD_2$. By a slight abuse of notation,
we will denote by the same character $(f^*\underset{\CY}\times \on{Id}_{\bD_2})$ 
the functor
$$\ua\bD{}_2\to \CY_1\underset{\CY_2}\arrowtimes \bD_2.$$

\sssec{}

Assume that $\ua\bD{}_2$ is equipped with a t-structure, satisfying the
conditions of \secref{intr t structure on ten}. Then the category 
$\CY_1\underset{\CY_2}\arrowtimes \bD_2$ acquires a t-structure, such that
the functor $(f^*\underset{\CY}\times \on{Id}_{\bD_2})$ is right-exact.

\ssec{}

Consider now the following situation. Let $f:\CY_1\to \CY_2$ be a map of stacks
as before, and let $g:\CY'_2\to \CY_2$ be another map. We assume that all three
are Artin stacks of finite type that satisfy assumptions (*a) and (*b) of \secref{ass *}.

\medskip

Assume also that the following holds: 

\medskip

\noindent(**) \hskip1cm{\it The tensor product 
$\CO_{\CY_1}\overset{L}{\underset{\CO_{\CY_2}}\otimes}\CO_{\CY'_2}$ is acyclic
off cohomologically degree $0$.}

\medskip

This ensures that the naive fiber product
$\CY_1\underset{\CY_2}\times \CY'_2$ coincides with the derived one.

\sssec{}

Set $\CY'_1:=\CY_1\underset{\CY_2}\times \CY'_2$. Let $f'$ denote the map $\CY_1'\to \CY'_2$. 
The pull-back functor $f'{}^*:\bD^{perf}(\Coh(\CY'_2))\to \bD^{perf}(\Coh(\CY'_1))$,
which coincides with the derived pull-back by virtue of assumption (**), is 
naturally a 1-morphism between categories over $\CY_2$. Hence, by \secref{univ ppty base change}
we can consider the induced 1-morphism
$$\Ind^{\CY_1}_{\CY_2}(f'{}^*):\CY_1\underset{\CY_2}\times \bD^{perf}(\Coh(\CY'_2))\to 
\bD^{perf}(\Coh(\CY'_1))$$
of categories over $\CY_1$. By ind-extension we obtain a functor
\begin{equation} \label{fiber product}
\Ind^{\CY_1}_{\CY_2}(f'{}^*):\CY_1\underset{\CY_2}\arrowtimes \bD^{perf}(\Coh(\CY'_2))\to 
\bD(\QCoh(\CY'_1)).
\end{equation}

\medskip

We shall now make the following additional assumption: 

\medskip

\noindent(***) \hskip1cm{\it  Every quasi-coherent sheaf 
on $\CY'_1$ admits a non-zero map from a sheaf of the form
\begin{equation} \label{ext product}
\CM_1\underset{\CO_{\CY_2}}\otimes \CM'_2,
\end{equation}
where $\CM_1$ (resp., $\CM'_2$) is a locally free coherent sheaf on $\CY_1$
(resp., $\CY'_2$).}

\medskip

\begin{prop}  \label{base change stacks}
Under the above circumstances the functor $\Ind^{\CY_1}_{\CY_2}(f'{}^*)$ of
\eqref{fiber product} is an exact equivalence of categories.
\end{prop}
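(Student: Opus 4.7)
The plan is to verify that $\Ind^{\CY_1}_{\CY_2}(f'{}^*)$ is an equivalence by (i) establishing that it sends a set of compact generators of the source fully faithfully to a set of compact generators of the target, (ii) verifying it commutes with direct sums (which is automatic from the ind-extension), and (iii) checking exactness. Once these are in place, the equivalence follows either from Proposition \ref{gen equiv} or, more elementarily, from the fact that a cocontinuous exact functor between compactly generated triangulated categories that induces an equivalence between their subcategories of compact objects is itself an equivalence.

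The source $\CY_1\underset{\CY_2}\arrowtimes \bD^{perf}(\Coh(\CY'_2))$ is by construction compactly generated by objects of the form $\CM_1\underset{\CO_{\CY_2}}\otimes \CM'_2$ with $\CM_1\in \bD^{perf}(\Coh(\CY_1))$ and $\CM'_2\in \bD^{perf}(\Coh(\CY'_2))$, and such an object is sent by $\Ind^{\CY_1}_{\CY_2}(f'{}^*)$ to $f^*(\CM_1)\overset{L}{\underset{\CO_{\CY'_2}}\otimes} g^*(\CM'_2)$, which under assumption (**) is quasi-isomorphic to the underived pullback-tensor and, when $\CM_1$ and $\CM'_2$ are (complexes of) locally free sheaves, is of the form \eqref{ext product}. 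Assumption (***) together with \lemref{colimit generate t} then implies that these objects generate $\bD(\QCoh(\CY'_1))$ as a co-complete triangulated category, which gives essential surjectivity once fully faithfulness is known.

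For the fully faithfulness on compact objects I would invoke the rigidity of $\bD^{perf}(\Coh(\CY_2))$ (\secref{rigidity}), which by \propref{rigid is tight} implies that the monoidal functor $f^*:\bC^{perf}(\CY_2)\to \bC^{perf}(\CY_1)$ is tight. Hence \corref{tight mon}(2) rewrites
$$\Hom\!\left(\CM_1\underset{\CO_{\CY_2}}\otimes \CM'_2,\,\CN_1\underset{\CO_{\CY_2}}\otimes \CN'_2\right)$$
in the base-changed category as a $\Hom$ over $\CY_2$ involving $f_*(\CO_{\CY_1})$ and the projection formula for the affine morphism $\CY_1\to\CY_2$ (whose use is legitimate because $f$ is affine as $\CY_2$ has affine diagonal). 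On the other hand, the corresponding $\Hom$ on $\CY'_1$ is computed by the analogous projection formula along $g':\CY'_1\to \CY'_2$ and $f'$, and assumption (**) guarantees that these two expressions coincide; this is the content of flat base change, which applies here via \corref{flat base change} because (**) makes the relevant morphism flat in the derived sense. Exactness of the functor follows from the same flat base change statement combined with \propref{flat exact}, using again that $\bC^{perf}(\CY_2)$ has tight (and affine) diagonal.

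The main obstacle I expect is bookkeeping: one must carefully identify $\Hom$-computed-in-the-tensor-product-category with $\Hom$-computed-on-$\CY'_1$, which requires chasing through the identifications given by \lemref{ten with diag}, \corref{tight diagonal}, and the projection formula simultaneously on both sides. The key technical point, namely the compatibility between the derived and underived fiber products (so that $f^*(\CM_1)\otimes^L g^*(\CM'_2)$ is computed by the naive formula), is ensured by assumption (**); without it one would have to enlarge $\CY'_1$ to a DG scheme and the statement would need to be modified accordingly, exactly as in the application to $\nOp\underset{\tN/\cG}\times \check{\on{St}}/\cG$ used in the proof of \propref{comparison on MOp}.
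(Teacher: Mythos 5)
Your overall architecture matches the paper's: both invoke \propref{gen equiv}, both reduce fully faithfulness to generators of the form $\CM_1\underset{\CO_{\CY_2}}\otimes\CM'_2$, both use rigidity of $\bD^{perf}(\Coh(\CY_2))$ via \propref{rigid is tight} to reduce to $\CM_1=\CO_{\CY_1}$ and then apply \corref{tight mon}(2), and both use (***) to secure condition Funct(d).

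However, your exactness argument has a genuine gap. You assert that exactness ``follows from \corref{flat base change} combined with \propref{flat exact},'' and justify this by saying that assumption (**) ``makes the relevant morphism flat in the derived sense.'' But \propref{flat exact} and \corref{flat base change} require that the base change object (here $\CO_{\CY_1}$ as a module over $\bD^{perf}(\Coh(\CY_2))$, or equivalently the morphism $f$) be \emph{flat}, i.e.\ that $X\mapsto X\underset{\bA}\otimes\CO_{\CY_1}$ be exact. Assumption (**) is only a Tor-independence condition between $\CO_{\CY_1}$ and $\CO_{\CY'_2}$ over $\CO_{\CY_2}$; it does not imply that $f$ (or $g$) is flat, and neither flatness is assumed in the proposition. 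The paper circumvents this by a different route: having established fully faithfulness (your step (i)) and noting that (***) implies every object of $\bC^-(\QCoh(\CY'_1))$ is quasi-isomorphic to a complex with terms of the form \eqref{ext product}, it concludes that the functor restricts to an equivalence $\bD_1^-\to\bD_2^-$; left-exactness then follows formally (if $\tau^{\leq 0}(\sT(X))\neq 0$ for $X\in\bD_1^{\geq 0}$, it would equal $\sT(X')$ for some $X'\in\bD_1^{<0}$, contradicting fully faithfulness). You should replace the flat-base-change step with this bootstrap argument, as the flatness you invoke is not available.

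A secondary imprecision: in the fully faithfulness step, the identification of the two $\Hom$-expressions rests on the projection formula along $f'$ together with the base change isomorphism $f'_*\circ p_1^*\simeq g^*\circ f_*$, which is what (**) (Tor-independence) actually buys you; calling this ``flat base change'' is misleading, and the computation should be spelled out rather than outsourced to a vague compatibility claim.
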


To prove the proposition, we are going to check that the conditions of \propref{gen equiv} 
hold. Conditions Cat(i,ii), Cat(a,b) and Funct(i,ii) evidently hold. The functor
in question is by construction right-exact, so Funct(b) holds as well.

\sssec{Condition Funct(a)}

Let is verify Funct(a), i.e., the fact that the functor in question is fully
faithful. Since objects of the form 
$$\CM_1\underset{\bD^{perf}(\Coh(\CY_2))}\otimes\CM'_2,\,\,
\CM_1\in \bD^{perf}(\Coh(\CY_1)),\,\, \CM'_2\in
\bD^{perf}(\Coh(\CY'_2))$$
generate $\CY_1\underset{\CY_2}\times \bD^{perf}(\Coh(\CY'_2))$, it suffices to show
that the functor $\Ind^{\CY_1}_{\CY_2}(f'{}^*)$ induces an isomorphism on $\Hom$'s
between such objects.

\medskip

Note that assumption (**) is equivalent to the fact that
$$\Ind^{\CY_1}_{\CY_2}(f'{}^*)
\left(\CM_1\underset{\bD^{perf}(\Coh(\CY_2))}\otimes\CM'_2\right)
\simeq \CM_1\underset{\CO_{\CY_2}}\otimes \CM'_2.$$

Thus, we have to verify that 
\begin{multline}   \label{compare hom}
\Hom_{\CY_1\underset{\CY_2}\times \bD^{perf}(\Coh(\CY'_2))}
\left(\CM_1\underset{\bD^{perf}(\Coh(\CY_2))}\otimes\CM'_2, 
\wt{\CM_1}\underset{\bD^{perf}(\Coh(\CY_2))}\otimes \wt{\CM}'_2\right)\to  \\
\to \Hom_{\bD^{perf}(\Coh(\CY'_1))}\left(\CM_1\underset{\CO_{\CY_2}}\otimes \CM'_2,
\wt{\CM}_1\underset{\CO_{\CY_2}}\otimes \wt{\CM}'_2\right)
\end{multline}
is an isomorphism. Using \lemref{rigid adjunction}, we can assume that $\CM_1\simeq
\CO_{\CY_1}$.

\medskip

By \corref{tight mon}, the LHS of \eqref{compare hom} identifies with
$$\Hom_{\bD(\QCoh(\CY'_2))}\left(\CM'_2,f_*(\wt{\CM}_1)\underset{\CO_{\CY_2}}
\otimes 
\wt{\CM}'_2\right).$$

\medskip

By adjunction and the projection formula,
\begin{multline*}
\Hom_{\bD^{perf}(\Coh(\CY'_1))}\left(f'{}^*(\CM'_2),
\wt{\CM}_1\underset{\CO_{\CY_2}}\otimes \wt{\CM}'_2\right)\simeq
\Hom_{\bD^{perf}(\Coh(\CY'_2))}
\left(\CM'_2,f'_*(\wt{\CM}_1\underset{\CO_{\CY_2}}\otimes \wt{\CM}'_2)\right)\simeq \\
\simeq
\Hom_{\bD^{perf}(\Coh(\CY'_2))}
\left(\CM'_2,f_*(\wt{\CM}_1)\underset{\CO_{\CY_2}}\otimes 
\wt{\CM}'_2\right),
\end{multline*}
implying the desired isomorphism.

\sssec{End of proof of \propref{base change stacks}}

Note that assumption (***) implies condition Funct(d). Moreover,  
it implies that every object in $\bC^-(\QCoh(\CY'_1))$
is quasi-isomorphic to a complex whose terms are of the form \eqref{ext product}.
This observation, combined with the fully-faithfulness of $\sT:=\Ind^{\CY_1}_{\CY_2}(f'{}^*)$,
imply that the above functor induces an equivalence $\bD_1^-\to \bD_2^-$. 

\medskip

We claim that this implies the left-exactness of $\sT$, and in 
particular, condition Funct(c). Indeed, for $X\in \bD_1^{\geq 0}$ consider 
$\tau^{\leq 0}(\sT(X))$.
If this object is non-zero, it is of the form $\sT(X')$ for some $X'\in \bD_1^{<0}$,
and by fully faithfulness $\Hom_{\bD_1}(X',X)\neq 0$, which is a contradiction.

\qed

\ssec{}  \label{over affine}

Let $A$ be an associative algebra over the ground field. Consider the DG category 
of finite complexes of free $A$-modules of finite rank, denoted 
$\bC^b(A-mod^{free,fin.rk.})$. 
Note that $\Ho\left(\ua\bC^b(A\mod^{free,fin.rk.})\right)$
identifies with $\bD(A\mod)$--the usual derived category of $A$-modules.

\medskip

The Karoubian envelope of $\Ho(\bC^b(A\mod^{free,fin.rk.}))$ is
the full subcategory of $\bD(A\mod)$ consisting of complexes quasi-isomorphic
to finite complexes of projective finitely generated $A$-modules; we will denote
this category by $\bD^{perf}(A\mod)$; it comes equipped with a natural DG model.

\sssec{}

Assume now that $A$ is commutative. Let $Z=\Spec(A)$; this is an affine scheme, 
possibly of infinite type. 

\medskip

In this case $\bC^b(A-mod^{free,fin.rk.})$ has a naturally a DG monoidal category structure. 
Then $\bD^{perf}(\Coh(Z)):=\bD^{perf}(A\mod)$ admits a natural lifting to a 0-object of 
$\DGMonCat$, denoted $\bC^{perf}(Z)$.

\medskip

By a triangulated category $\bD$ over $Z$ we will mean an object of
$\DGMod\left(\bC^{perf}(Z)\right)$.

\medskip

For example, if $\bC$ is a DG category with a map $Z\to \End(\on{Id}_{\bC})$
and such that $\Ho(\bC)$ is Karoubian, it gives rise to a well-define object
of $\DGMod\left(\bC^{perf}(Z)\right)$. 

\sssec{}

Let $\CY$ be an Artin stack as in \secref{ass *}, and let us be given a map
$g:Z\to \CY$. We will assume that this map factors $Z\to Z'\to \CY$, where
$Z'$ is a scheme of finite type. The the functor $g^*$ defines a 1-morphism 
$\bC^{perf}(\CY)\to \bC^{perf}(Z)$ in $\DGMonCat$.

\medskip

For a triangulated category $\bD_\CY$ over $\CY$, we shall
denote by $$Z\underset{\CY}\times \bD_\CY$$
the base change of $\CY$ with respect to the 1-morphism 
$g^*$.

\sssec{}  \label{cart prod inf type}

We conclude this subsection by the following observation used in the main body 
of the paper.  

\medskip  

Let $\CY_1\to \CY_2$ be a representable map of stacks, and let $Z$ be an affine
scheme of infinite type, endowed with a flat morphism $g$ to $\CY_2$. We will
assume that $Z$ can be represented as $\underset{\longleftarrow}{lim}\, Z_i$,
with the transition maps $Z_j\to Z_i$ flat, and such that $g$ factors through
a flat morphism $g_i:Z_I\to \CY_2$ for some $i$.
Consider the scheme $\CY_1\underset{\CY_2}\times Z$. 

\begin{prop}  \label{base change inf type}
Under the above circumstances there exists an exact equivalence
$$Z\underset{\CY_2}\arrowtimes \bD^{perf}(\Coh(\CY_1))\to
\bD(\QCoh(\CY_1\underset{\CY_2}\times Z)).$$
\end{prop}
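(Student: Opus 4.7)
\medskip

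The plan is to reduce to \propref{base change stacks}, which handles the finite-type case, and then pass to the limit along the index set $\{Z_i\}$. I would structure the argument in three steps and verify the conditions of \propref{gen equiv} along the way.

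\medskip

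\emph{Step 1 (finite-type pieces).} For each index $i$ for which $g$ factors through a flat morphism $g_i:Z_i\to\CY_2$, I apply \propref{base change stacks} to the pair $(f:\CY_1\to\CY_2,\; g_i:Z_i\to\CY_2)$. Assumption (**) is immediate from flatness of $g_i$. For (***), write $\CY_{1,i}:=\CY_1\underset{\CY_2}\times Z_i$ and $f'_i,g'_i$ for the two projections; since $\CY_1$ satisfies (*b) and $Z_i$ is affine of finite type, every quasi-coherent sheaf on $\CY_{1,i}$ admits a non-zero map from $(f'_i)^*(\CM_1)\underset{\CO_{\CY_2}}\otimes (g'_i)^*(\CM'_2)$ with $\CM_1$ locally free coherent on $\CY_1$ and $\CM'_2$ a free module over $\CO_{Z_i}$. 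This yields exact equivalences
$$
\alpha_i:\; Z_i\underset{\CY_2}\arrowtimes \bD^{perf}(\Coh(\CY_1)) \xrightarrow{\sim} \bD(\QCoh(\CY_{1,i})).
$$

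\medskip

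\emph{Step 2 (compatibility).} For $j\geq i$ the transition map $Z_j\to Z_i$ is flat, so by the transitivity of induction of \secref{transitivity of unduction} we have a 1-isomorphism
$$
Z_j\underset{\CY_2}\arrowtimes \bD^{perf}(\Coh(\CY_1)) \simeq Z_j\underset{Z_i}\arrowtimes\Bigl(Z_i\underset{\CY_2}\arrowtimes \bD^{perf}(\Coh(\CY_1))\Bigr),
$$
and on the other side $\CY_{1,j}=Z_j\underset{Z_i}\times \CY_{1,i}$. A further application of \propref{base change stacks} to the flat morphism $Z_j\to Z_i$ (again verifying (**) and (***)) shows that the $\alpha_j$'s are compatible with the pullback functors induced by $Z_j\to Z_i$, on both sides.

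\medskip

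\emph{Step 3 (passage to the limit).} I apply \propref{gen equiv} to the natural functor
$$
\sT:\; Z\underset{\CY_2}\arrowtimes \bD^{perf}(\Coh(\CY_1))\to \bD(\QCoh(\CY_1\underset{\CY_2}\times Z)).
$$
Conditions Cat(i,ii), Cat(a,b) and Funct(i,ii,b) hold by construction of the t-structure on the base change (\secref{t structure ten prod}) and of $\bD(\QCoh)$. For Funct(a) --- fully faithfulness on compact objects --- I copy the calculation in the proof of \propref{base change stacks}: it reduces via \corref{tight mon}(2) to the projection formula $g_*(\wt\CM_1)\underset{\CO_{\CY_2}}\otimes\wt\CM'\xrightarrow{\sim} g'_*((f')^*(\wt\CM_1)\underset{\CO_{\CY_2\times Z}}\otimes\wt\CM')$ for $g:Z\to\CY_2$, and this follows from the corresponding fact for each $g_i$ (already available by Step 1) together with the factorization $g=g_i\circ p_i$ through a flat $g_i$. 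For Funct(c) and Funct(d) I use that $\QCoh(\CY_1\underset{\CY_2}\times Z)$ is the filtered colimit, along pullback, of the abelian categories $\QCoh(\CY_{1,i})$: every quasi-coherent sheaf on $\CY_1\underset{\CY_2}\times Z$ is a filtered colimit of pullbacks from finite stages, and receives a surjection from some $(f')^*(\CM_1)\otimes (g')^*(\CM'_2)$ with $\CM_1$ locally free coherent on $\CY_1$ and $\CM'_2$ a free $\CO_Z$-module; such sheaves are precisely the images under $\sT$ of objects in the heart of $\bD_1^{\leq 0}$.

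\medskip

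The main obstacle is the rigorous passage to the limit in Step 3: the tensor-product-of-categories construction is not formally known to commute with the filtered colimit $\bC^{perf}(Z)\simeq ``\mathrm{colim}_i"\,\bC^{perf}(Z_i)$, and the interplay of this colimit with the t-structure on the base change requires verification. Going through \propref{gen equiv} with an explicit description of generators on both sides --- rather than trying to commute colimits on the nose --- bypasses this difficulty, because both Funct(c) and Funct(d) only need to be checked on objects coming from some finite stage $Z_i$, where Step 1 already gives the answer.
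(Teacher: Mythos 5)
Your proposal follows the same route as the paper: apply \propref{gen equiv} by mirroring the verification carried out for \propref{base change stacks}, with the finite-type approximations $Z_i$ and the flatness of $g$ used to verify the projection-formula input for Funct(a) and the generator condition Funct(d). The scaffolding in Steps 1--2 and the filtered-colimit-of-abelian-categories assertion at the end of Step 3 are stronger than what is actually needed (for Funct(d) it suffices to observe that $Z$ is affine, so $p:\CY_1\underset{\CY_2}\times Z\to\CY_1$ is affine and $p_*$ is exact and faithful, whence assumption (*b) for $\CY_1$ produces a non-zero map from some $p^*(\CM_1)$ into any non-zero quasi-coherent sheaf), but the substance of your verification is correct and matches the paper's intended argument.
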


The proof is obtained by applying \propref{gen equiv}, repeating the
argument proving \propref{base change stacks}.

\ssec{}

In this subsection we will assume that $f:\CY_1\to \CY_2$ is a regular closed immersion.

\sssec{}   \label{closed imm 1}

By \lemref{action on quotient}, we obtain that the direct image functor $f_*$
gives rise to a well-defined 1-morphism in $\DGMod(\bC^{perf}(\CY_2))$:
$$f_*:\bC^{perf}(\CY_1)\to \bC^{perf}(\CY_2).$$

\medskip

Let now $\bD_2$ be a category over $\CY_2$. We obtain a
1-morphism over $\CY_2$
$$(f_*\underset{\CY_2}\times \on{Id}_{\bD_2}):
\CY_1\underset{\CY_2}\times \bD_2\to \bD_2.$$ 

From \secref{dir image} we obtain:

\begin{lem}   \label{dir im dir im}
The functor $(f_*\underset{\CY_2}\times \on{Id}_{\bD_2})$ is the right adjoint of
$$(f^*\underset{\CY_2}\times \on{Id}_{\bD_2}):
\bD_2\to \CY_1\underset{\CY_2}\times \bD_2$$
at the triangulated level.
\end{lem}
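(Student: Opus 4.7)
The plan is to recognize this lemma as a direct instance of the adjunction statement at the end of \secref{dir image}. Set $\bA := \bC^{perf}(\CY_2)$, let $\bC^l_1 := \bC^{perf}(\CY_2)$ with its tautological left action, and let $\bC^l_2 := \bC^{perf}(\CY_1)$ with the action of $\bA$ via the monoidal pull-back $F := f^*$ constructed in \secref{base change}. Taking $\bC^r$ to be a DG model of $\bD_2$, \propref{ten with free} (and its mirror version for right actions) identifies $\bC^r \underset{\bA}\otimes \bC^l_1$ with $\bD_2$, while by definition $\bC^r \underset{\bA}\otimes \bC^l_2 = \CY_1\underset{\CY_2}\times \bD_2$. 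Under these identifications, $\on{Id}_{\bC^r}\underset{\bA}\otimes F$ is exactly $(f^*\underset{\CY_2}\times \on{Id}_{\bD_2})$, so the lemma reduces to producing a right adjoint of the shape $\on{Id}_{\bC^r}\underset{\bA}\otimes G$ with $G := f_*$.

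To apply the proposition of \secref{dir image}, three things must be verified. First, $f_*$ must send $\Ho(\bC^l_2) = \bD^{perf}(\Coh(\CY_1))$ to $\Ho(\bC^l_1) = \bD^{perf}(\Coh(\CY_2))$. This uses the regularity of $f$ in an essential way: $f_*\CO_{\CY_1}$ is perfect on $\CY_2$ because it is resolved locally by the finite Koszul complex of the regular ideal of definition, and for general $\CM\in \bD^{perf}(\Coh(\CY_1))$ the projection formula
\[
f_*(\CM)\overset{L}{\underset{\CO_{\CY_2}}\otimes} \CN \simeq f_*\left(\CM\overset{L}{\underset{\CO_{\CY_1}}\otimes} f^*\CN\right),
\]
combined with the fact that $f$ regular implies $f^*$ preserves finite Tor-dimension, shows that $f_*(\CM)$ also has finite Tor-dimension on $\CY_2$. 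Second, the resulting functor $f_*$ must admit a natural lift to a 1-morphism in $\DGMod(\bA)$: this is precisely what \secref{closed imm 1} asserts via \lemref{action on quotient}, the content of the lift being the DG-enhanced projection formula.

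The third hypothesis is tightness of $F = f^*$ in the sense of \secref{intr tightness}, and one might initially fear this to be the main obstacle. However, it is automatic here: $\bC^{perf}(\CY_2)$ is a rigid monoidal category (as recorded in \secref{perf on stack}), and by \propref{rigid is tight} every homotopy functor between categories over a rigid monoidal category is tight. With all three hypotheses in hand, the proposition of \secref{dir image} yields
\[
(f_*\underset{\CY_2}\times \on{Id}_{\bD_2}) \simeq (f^*\underset{\CY_2}\times \on{Id}_{\bD_2})_{*}
\]
at the triangulated level, which is the assertion of the lemma. The single piece of input specific to the hypothesis on $f$ is thus the preservation of perfection by $f_*$; everything else follows formally from rigidity of coherent sheaves and the tensor-product formalism already developed.
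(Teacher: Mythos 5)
Your proposal is correct and follows essentially the same route the paper takes: the paper's proof of \lemref{dir im dir im} is literally "From \secref{dir image} we obtain," and you unfold exactly that reference, spelling out the three hypotheses to be verified — that $f_*$ preserves $\bD^{perf}(\Coh(\cdot))$ (via the Koszul resolution and the projection formula, using regularity of $f$), that the resulting restricted functor lifts to a $1$-morphism in $\DGMod(\bC^{perf}(\CY_2))$ via \lemref{action on quotient} (as recorded in \secref{closed imm 1}), and that $f^*$ is tight because $\bC^{perf}(\CY_2)$ is rigid (\secref{perf on stack}) and rigidity implies tightness (\propref{rigid is tight}). The only thing you add that the paper elides is the explicit justification of the preservation-of-perfection step; the orientation you choose for the tensor product ($\bC^r$ fixed, $\bC^l$ varying) is the one that literally matches the statement of the proposition in \secref{dir image}, and is equivalent to the orientation appearing in the paper's definition of $\Ind^{\bA_2}_{\bA_1}$ since $\bC^{perf}(\CY_2)$ is symmetric monoidal.
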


\sssec{}  \label{closed imm 3}

Note that $f_!:=f_*$ admits a right adjoint at the triangulated level,
which by \lemref{action on quotient} give rise to a 1-morphism in 
$\DGMod(\bC^{perf}(\CY_2))$. Moreover, by \secref{dir image}, for any category $\bD_2$ 
over $\CY_2$,
we obtain a pair of functors
$$(f_*\underset{\CY_2}\times \on{Id}_{\bD_2}):\CY_1\underset{\CY_2}\times \bD\leftrightarrows 
\bD:(f^!\underset{\CY_2}\times \on{Id}_{\bD_2}),$$
which are mutually adjoint at the triangulated level. 

\medskip

Note that at the triangulated level we have an isomorphism
$$f^!(\CO_{\CY_2})\simeq f^*(\CO_{\CY_2})\underset{\CO_{\CY_2}}
\otimes \Lambda^n(\on{Norm}_{\CY_1/\CY_2})[-n]\in \bC^{perf}(\CY_1).$$

However, since $\bC^{perf}(\CY_2)$ is a free 0-object of $\DGMod(\bC^{perf}(\CY_2))$,
generated by $\CO_{\CY_2}$, by \propref{induction adj}, we obtain that this isomorphism
takes place also at the level of 1-morphisms in $\DGMod(\bC^{perf}(\CY_2))$.

Hence, we obtain:

\begin{cor} \label{! and * on D}
For a category $\bD_2$ over $\CY_2$, we have an isomorphism:
$$(f^!\underset{\CY_2}\times \on{Id}_{\bD_2})\simeq
(f^*\underset{\CY_2}\times \on{Id}_{\bD_2})\circ (\Lambda^n(\on{Norm}_{\CY_1/\CY_2})
\underset{\CO_{\CY_2}} \otimes ?)[-n].$$
\end{cor}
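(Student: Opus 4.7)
The plan is to view the statement as a direct base-change consequence of the isomorphism
$$f^!\simeq f^*(?)\underset{\CO_{\CY_2}}\otimes \Lambda^n(\on{Norm}_{\CY_1/\CY_2})[-n],$$
already established in the paragraph preceding the corollary as an isomorphism of $1$-morphisms $\bC^{perf}(\CY_2)\to \bC^{perf}(\CY_1)$ in $\DGMod(\bC^{perf}(\CY_2))$. The content of the corollary is merely that this isomorphism persists after base-changing the target category along the homotopy monoidal functor $\bC^{perf}(\CY_2)\to \bC^{perf}(\CY_2)$ (i.e., just tensoring in the module direction with a general $\bD_2$).

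First, I would recall that both sides of the desired isomorphism arise as images of the corresponding $1$-morphisms in $\DGMod(\bC^{perf}(\CY_2))$ under the $2$-functor
$$?\underset{\bC^{perf}(\CY_2)}\otimes \bD_2\colon \DGMod(\bC^{perf}(\CY_2))\to \DGMod(\bC^{perf}(\CY_2)),$$
applied to $\bC^{perf}(\CY_1)$ and $\bC^{perf}(\CY_2)$. For the left-hand side this is by definition of $(f^!\underset{\CY_2}\times \on{Id}_{\bD_2})$ as the base-change of $f^!$. For the right-hand side this requires identifying the composition $(f^*\underset{\CY_2}\times \on{Id}_{\bD_2})\circ(\Lambda^n(\on{Norm}_{\CY_1/\CY_2})\underset{\CO_{\CY_2}}\otimes ?)[-n]$ with the base-change of the composition $f^*(?)\underset{\CO_{\CY_2}}\otimes \Lambda^n(\on{Norm}_{\CY_1/\CY_2})[-n]$; this identification is essentially formal given the functoriality of the tensor product construction laid out in \secref{ten and funct} and \secref{def induction}.

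Next, I would invoke the functoriality of the base change 2-functor with respect to 2-morphisms: any isomorphism of $1$-morphisms in $\DGMod(\bC^{perf}(\CY_2))$ between two $1$-morphisms $\bC^{perf}(\CY_2)\rightrightarrows \bC^{perf}(\CY_1)$ induces, upon tensoring with $\bD_2$ over $\bC^{perf}(\CY_2)$, an isomorphism between the base-changed $1$-morphisms $\bD_2\rightrightarrows \CY_1\underset{\CY_2}\times \bD_2$. Applying this to the established isomorphism yields the desired conclusion.

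There is no real obstacle here, as the entire argument is formal once one unwinds the definitions; the only care required is bookkeeping, namely making sure the order of composition in the formula $(f^*\underset{\CY_2}\times \on{Id}_{\bD_2})\circ(\Lambda^n(\on{Norm}_{\CY_1/\CY_2})\underset{\CO_{\CY_2}}\otimes ?)[-n]$ matches the base-change of $f^*(?)\underset{\CO_{\CY_2}}\otimes \Lambda^n(\on{Norm}_{\CY_1/\CY_2})[-n]$ under the identification of $\bC^{perf}(\CY_1)\otimes_{\bC^{perf}(\CY_2)}\bD_2$ with $\CY_1\underset{\CY_2}\times \bD_2$, together with the associated action of $\bC^{perf}(\CY_1)$ — in particular of its object $\Lambda^n(\on{Norm}_{\CY_1/\CY_2})$ — on the latter via \propref{induction adj}.
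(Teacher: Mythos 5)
Your proposal is correct and matches the paper's own derivation: the preceding paragraph upgrades the pointwise triangulated isomorphism $f^!(\CO_{\CY_2})\simeq f^*(\CO_{\CY_2})\otimes\Lambda^n(\on{Norm}_{\CY_1/\CY_2})[-n]$ to a $2$-isomorphism of $1$-morphisms in $\DGMod(\bC^{perf}(\CY_2))$ via freeness of $\bC^{perf}(\CY_2)$ and \propref{induction adj}, and the corollary is then exactly the image of that $2$-isomorphism under the base-change $2$-functor $?\underset{\bC^{perf}(\CY_2)}\otimes\bD_2$, which is all the paper's ``Hence, we obtain'' signals.
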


\sssec{}   \label{closed imm 2}

Let $\bD_2$ be as above, and let us assume that it is equipped with a
t-structure, satisfying the conditions of 
\secref{intr t structure on ten}, and consider the corresponding t-structure on
$\CY_1\underset{\CY_2}\arrowtimes \bD_2$.

\medskip

Let us assume now that $f$ is a regular closed immersion of codimension $k$.
The following assertion is used in the main body of the paper, and follows
immediately from \corref{! and * on D}.

\begin{lem}   \label{closed imm bdd}
Under the above circumstances the functor 
$$(f^*\underset{\CY_2}\times \on{Id}_{\bD_2}): 
\ua\bD{}_2\to \CY_1\underset{\CY_2}\arrowtimes \bD_2$$
has a cohomological amplitude bounded by $k$.
\end{lem}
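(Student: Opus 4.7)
The plan is to read the lemma as the combination of two statements: right-exactness of $(f^*\underset{\CY_2}\times \on{Id}_{\bD_2})$, and a shifted left-exactness that follows formally from \corref{! and * on D}. Right-exactness is immediate from the very definition of the t-structure on $\CY_1\underset{\CY_2}\arrowtimes \bD_2$ (see \secref{t structure ten prod}): the $\leq 0$ subcategory is generated by objects of the form $\CM\underset{\CO_{\CY_2}}\otimes Y$ with $\CM\in\bD^{perf,\leq 0}(\on{Coh}(\CY_1))$ and $Y\in\bD_2^{\leq 0}$, and for $Y\in\ua\bD{}_2^{\leq 0}$ the image $(f^*\underset{\CY_2}\times\on{Id}_{\bD_2})(Y)$ is manifestly of this form with $\CM=\CO_{\CY_1}$.

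For the left-exactness up to a shift, I would first note that $f_!=f_*$ is well-defined as a 1-morphism in $\DGMod(\bC^{perf}(\CY_2))$ with a right adjoint $(f^!\underset{\CY_2}\times\on{Id}_{\bD_2})$, by Sections \ref{closed imm 1} and \ref{closed imm 3}. Since $\bC^{perf}(\CY_2)$ is rigid, \propref{rigid is tight} forces the monoidal functor $f_*:\bC^{perf}(\CY_1)\to\bC^{perf}(\CY_2)$ to be tight; and as $f$ is a closed immersion it is affine in the sense of \secref{affine functor}. By \propref{affine exact} the induced functor $(f_*\underset{\CY_2}\times\on{Id}_{\bD_2}):\CY_1\underset{\CY_2}\arrowtimes \bD_2\to \ua\bD{}_2$ is exact and conservative. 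Its right adjoint $(f^!\underset{\CY_2}\times\on{Id}_{\bD_2})$ is therefore left-exact.

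Finally I would invoke \corref{! and * on D} which, since $\on{Norm}_{\CY_1/\CY_2}$ has rank $k$, gives an isomorphism
\[
(f^*\underset{\CY_2}\times\on{Id}_{\bD_2})\;\simeq\;\Bigl(\Lambda^k(\on{Norm}_{\CY_1/\CY_2})^{\vee}\underset{\CO_{\CY_2}}\otimes ?\Bigr)\circ(f^!\underset{\CY_2}\times\on{Id}_{\bD_2})[k].
\]
The top exterior power $\Lambda^k(\on{Norm}_{\CY_1/\CY_2})$ is a line bundle, so tensoring with its inverse is an exact auto-equivalence. Combining this with the left-exactness of $(f^!\underset{\CY_2}\times\on{Id}_{\bD_2})$ already shown, one concludes that $(f^*\underset{\CY_2}\times\on{Id}_{\bD_2})$ sends $\ua\bD{}_2^{\geq 0}$ into $(\CY_1\underset{\CY_2}\arrowtimes \bD_2)^{\geq -k}$, which together with right-exactness yields the amplitude bound of $k$. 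There is no real obstacle in the argument; the only point requiring care is the verification that the hypotheses of \propref{affine exact} are met, and this rests on the rigidity of $\bC^{perf}(\CY_2)$ and the affineness of closed immersions.
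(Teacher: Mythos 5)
Your argument is correct and is precisely what the paper has in mind when it says the lemma ``follows immediately from Corollary~\ref{! and * on D}'': you unpack the word ``immediately'' by first observing right-exactness of $(f^*\underset{\CY_2}\times\on{Id}_{\bD_2})$ from the definition of the t-structure in \secref{t structure ten prod}, then obtaining left-exactness of $(f^!\underset{\CY_2}\times\on{Id}_{\bD_2})$ as the right adjoint of the exact functor $(f_*\underset{\CY_2}\times\on{Id}_{\bD_2})$ (whose exactness is \propref{affine exact}), and finally applying \corref{! and * on D} to trade $f^!$ for $f^*$ at the cost of a twist by an invertible sheaf and a shift by $k$.

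One small notational imprecision: you write that \propref{rigid is tight} forces ``the monoidal functor $f_*:\bC^{perf}(\CY_1)\to\bC^{perf}(\CY_2)$'' to be tight. The direct image $f_*$ is not the monoidal functor here; the hypothesis of \propref{affine exact} concerns the monoidal pullback $f^*:\bC^{perf}(\CY_2)\to\bC^{perf}(\CY_1)$, and it is tightness of $f^*$ (equivalently, of the canonical module-category map out of $\bC^{perf}(\CY_2)$, which is automatic by rigidity of $\bC^{perf}(\CY_2)$ via \propref{rigid is tight}) together with affineness of $f^*$ (i.e., exactness and conservativity of $f_*$ on $\bD(\QCoh)$) that you need. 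Your conclusion is unaffected, but the phrasing should attribute tightness to $f^*$ rather than to $f_*$.
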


\ssec{}

Returning to the general context of Artin stacks, assume that 
a morphism $f:\CY_1\to \CY_2$ is representable.

\sssec{}  \label{coherent direct image}

Let us call a quasi-coherent sheaf on 
$\CY_1$ {\it adapted} if the higher cohomologies
of its direct image on $\CY_2$ vanish. Let $\bC_{adapt}(\QCoh(\CY_1))$ denote
the full DG subcategory of $\bC(\QCoh(\CY_1))$ consisting of complexes of
adapted quasi-coherent sheaves. 

\medskip

We will make the following assumption on the morphism $f$:
any coherent sheaf $\CY_2$ can be embedded into
an adapted  quasi-coherent sheaf.

\medskip

We claim that under the above circumstances, we can lift the direct image functor
$f_*:\bD(\QCoh(\CY_1))\to \bD(\QCoh(\CY_2))$ to
a 1-morphism in $\DGMod(\bC^{perf}(\CY_2))$.

\medskip

Indeed $f$ is of finite type, the above assumption implies that any object of
$\bC(\QCoh(\CY_1))$ admits a quasi-isomorphism into an object of
$\bC_{adapt}(\QCoh(\CY_1))$. Hence, by \lemref{adapted replacement}, 
the functor
\begin{multline*}
\Ho\left(\bC_{adapt}(\QCoh(\CY_1))\right)/
\Ho\left(\bC_{adapt,acycl}(\QCoh(\CY_1))\right)\to \\
\Ho\left(\bC(\QCoh(\CY_1))\right)/
\Ho\left(\bC(\QCoh_{acycl}(\CY_1))\right)=:\bD(\QCoh(\CY_1))
\end{multline*}
is an equivalence, where
$$\bC_{adapt,acycl}(\QCoh(\CY_1)):=
\bC_{adapt}(\QCoh(\CY_1))\cap \bC(\QCoh_{acycl}(\CY_1)).$$
Since, 
$$f_*(\bC_{adapt,acycl}(\QCoh(\CY_1)))\subset \bC_{acycl}(\QCoh(\CY_2)),$$
the lifting of $f_*$ to the DG level follows from \lemref{action on quotient}.

\medskip

Moreover, we claim that the adjunction morphism
$\on{Id}_{\bD(\QCoh(\CY_2))}\to f_*\circ f^*$
at the level of triangulated categories can also be lifted
to a 2-morphism in $\DGMod(\bC^{perf}(\CY_2))$.
This follows again from the fact that $\bC^{perf}(\CY_2)$ is free
as a 1-object of $\DGMod(\bC^{perf}(\CY_2))$, so by 
\lemref{induction adj}, this 2-morphism is determined by the
map
$$\CO_{\CY_2}\to f_*\circ f^*(\CO_{\CY_2})\in \bD(\QCoh(\CY_2)).$$

\sssec{} \label{proper morphism adjunctions}

Finally, let us consider the following situation. Assume that the map $f$ is proper and smooth.
We define the functor $f_?:\bD^{perf}(\Coh(\CY_1))\to \bD^{perf}(\Coh(\CY_2))$ by
$$f_?(\CN):=f_*(\CN\otimes \Omega^n_{\CY_1/\CY_2})[n],$$
where $n$ is the relative dimension. 

\medskip

By \secref{coherent direct image}, $f_?$
lifts to a 1-morphism in $\DGMod(\bC^{perf}(\CY_2))$.

\medskip

Note that the assumptions on $f$ imply that $f_?$ admits a right 
adjoint at the triangulated level. As in \secref{closed imm 3}, 
we obtain that there exists a 1-morphism
$f^?:\DGMod(\bC^{perf}(\CY_2))\to \DGMod(\bC^{perf}(\CY_1))$
in $\DGMod(\bC^{perf}(\CY_2))$, such that
$(f_?,f^?)$ form a pair of mutually adjoint functors at the triangulated
level. As in \secref{coherent direct image}, the adjunction morphism
$$f_?\circ f^?\to \on{Id}_{\bD^{perf}(\Coh(\CY_2))}$$
lifts to a 2-morphism in $\DGMod(\bC^{perf}(\CY_2))$.

\medskip

Finally, we note that, as in {\it loc. cit.}, from the usual Serre duality, 
we obtain a 2-isomorphism in $\DGMod(\bC^{perf}(\CY_2))$:
\begin{equation} \label{Serre expected}
f^?\simeq f^*.
\end{equation}

\newpage

\centerline{\bf \large Part IV: Triangulated categories arising in representation theory}

\bigskip

\section{A renormalization procedure}  \label{a renormalization procedure}

\ssec{}      \label{renorm without t}

Let $\bD$ be a co-complete triangulated category, equipped
with a DG model. Let $X_a,a\in A$ be a collection of objects in $\bD$.
Starting with this data we will construct a pair of new triangulated 
categories, both equipped with models.

\sssec{}

Let $\bD^f$ be the Karoubian envelope of the triangulated subcategory of
$\bD$ {\it strongly generated} by the objects $X_a$.
I.e., this is the smallest full triangulated subcategory of
$\bD$, containing all these objects and closed under direct summands. 
By \secref{rigid quotients}, $\bD^f$, and its embedding into $\bD$, come equipped
with models.

\medskip

Set $\bD_{ren}$ be the ind-completion of $\bD^f$, i.e.,
$\ua\bD^f$, see \secref{big and small, triang}.
This is a co-complete triangulated category,
which is also equipped with a model. When thinking of
$\bD^f$ as a subcategory of $\bD_{ren}$, we will sometimes
denote it also by $\bD^f_{ren}$.

\sssec{}

We claim that there exists
a pair of functors 
$$\Psi:\bD_{ren}\to \bD \text{ and }
\Phi:\bD\to \bD_{ren},$$
such that $\Phi$ is the right adjoint of $\Psi$. 

\medskip

Indeed, let $\bD=\Ho(\bC)$,
we set $\bC^f\subset \bC$ be the preimage of $\bD^f\subset \bD$.
Then $\bD_{ren}\simeq\Ho(\ua\bC^f)$.

\medskip

The functor 
$$\Phi: \bD\simeq \Ho(\bC)
\to \bD((\bC^f)^{op}\mod)\simeq \bD_{ren}$$ 
is defined tautologically. Namely, an object $Y\in \Ho(\bC)$
maps to the $(\bC^f)^{op}$-module 
$$X\mapsto \Hom^\bullet_{\bC}(X,Y).$$

\medskip

The restriction of $\Psi$ to $\bD^f_{ren}\subset \bD_{ren}$
is by definition the tautological embedding $\bD^f_{ren}=\bD^f\hookrightarrow \bD$.
For $X\in \bD^f$, the adjunction
$$\Hom_{\bD}(\Psi(X),Y)\simeq \Hom_{\bD_{ren}}(X,\Phi(Y))$$
is evident.

\medskip

Since $\bD^f_{ren}$ generates $\bD_{ren}$, the functor
$\Psi$ extends canonically onto the entire $\bD_{ren}$ by
the adjunction property. 

\sssec{}  \label{functor Xi}

Suppose for a moment that the category $\bD$ was itself generated by its
subcategory $\bD^c$ of compact objects. \footnote{This assumption is {\it not} satisfied
in the examples for which the notion of $\bD_{ren}$ is developed here, i.e.,
D-modules on infinite-dimensional schemes, or Kac-Moody representations.
However, it is satisfied in the example of the derived category of quasi-coherent
sheaves on a singular algebraic variety.}

\medskip

Assume that $\bD^c\subset \bD^f$. The ind-extension of the above embedding
defines a functor, that we shall denote $\Xi:\bD\to \bD_{ren}$. It is easy to see
that $\Xi$ is the left adjoint of $\Psi$. In addition, $\Xi$ is fully faithful, which implies
that the adjunction map $\on{Id}_\bD\to \Psi\circ \Xi$ is an isomorphism.

\medskip

It is easy to see that $\Psi$ is an equivalence if and only if the objects 
$X_a$, $a\in A$ generate $\bD$ and are compact.

\sssec{}   \label{criter for renorm plus}
Assume that in the set-up of \secref{renorm without t}, $\bD$ is
equipped with a t-structure.   
As usual, let us denote $\bD^+:=\underset{n}\cup\, \bD^{\geq -n}$,
$\bD^-:=\underset{n}\cup\, \bD^{\leq n}$, and $\bD^b:=\bD^+\cap \bD^-$.
Assume that $\bD^f\subset \bD^+$.

\medskip

Assume that the following condition holds:

\medskip

\noindent(*)  {\it There exists an exact and conservative functor $\sF:\bD\to \Vect$
(conservative means $\sF(Y[n])=0\,\, \forall n\in \BZ\,\Rightarrow\, Y=0$), which
commutes with colimits, and a filtered inverse system 
$\{\sZ_\sk\},\sk\in \sK$ of objects from $\bD^f$ such that 
for $Y\in \bD^+$ we have a functorial isomorphism
$$\sF(Y)\simeq \underset{k}{colim}\, \Hom_\bD(\sZ_\sk,Y)=0.$$}

\medskip

\begin{prop} \label{modifying plus}
Under the above circumstances, the adjunction map
$\Psi\circ \Phi|_{\bD^+}\to \on{Id}_{\bD^+}$ is an isomorphism.
\end{prop}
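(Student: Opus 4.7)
The plan is to verify the claim by reducing, via the conservativity of $\sF$, to the statement that $\sF$ applied to the cone of $\Psi\circ \Phi(Y) \to Y$ vanishes, which we will prove by an explicit computation of both sides as a double colimit and exchanging the order of colimits. Since $\bD^+$ is stable under arbitrary shifts and $\Phi$, $\Psi$ commute with shifts, it suffices to check that $\sF(\Psi\Phi(Y)) \to \sF(Y)$ is an isomorphism for every $Y \in \bD^+$.

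First, I would use \lemref{generate colimit} to write $\Phi(Y) \in \bD_{ren}$ as a homotopy colimit $\hocolim_{i\in I}\, X_i$ with $X_i \in \bD^f \subset \bD^+$ and $I$ filtered. Since $\Psi$ is a left adjoint it commutes with homotopy colimits (\lemref{functor and limit}), and since its restriction to $\bD^f_{ren}$ is the tautological inclusion, $\Psi(\Phi(Y)) \simeq \hocolim_{i}\, X_i$ computed inside $\bD$. The counit $\Psi\Phi(Y) \to Y$ is then the canonical map on the homotopy colimit induced by the family of maps $X_i \to Y$ in $\bD$ which correspond under the adjunction $\Hom_{\bD_{ren}}(X_i, \Phi(Y)) \simeq \Hom_\bD(X_i, Y)$ to the structure maps into $\hocolim$. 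For each $\sZ_\sk$ in the given inverse system (which lies in $\bD^f$ and is hence compact in $\bD_{ren}$ by \secref{big and small}), compactness combined with the same adjunction yields
\[
\underset{i}{\colim}\, \Hom_\bD(\sZ_\sk, X_i) \;\simeq\; \Hom_{\bD_{ren}}(\sZ_\sk, \Phi(Y)) \;\simeq\; \Hom_\bD(\sZ_\sk, Y).
\]

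Now I apply $\sF$. Since $\sF$ commutes with colimits and is exact, and since $X_i \in \bD^+$ so that hypothesis (*) applies, we compute
\[
\sF(\Psi\Phi(Y)) \simeq \underset{i}{\colim}\, \sF(X_i) \simeq \underset{i}{\colim}\, \underset{\sk}{\colim}\, \Hom_\bD(\sZ_\sk, X_i),
\]
while $\sF(Y) \simeq \colim_\sk\, \Hom_\bD(\sZ_\sk, Y)$. Both colimits are filtered, so they commute, and applying the identification from the previous paragraph term-by-term in $\sk$ yields $\sF(\Psi\Phi(Y)) \simeq \sF(Y)$, with the isomorphism identified with $\sF$ of the counit by tracing the construction. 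Conservativity of $\sF$ (applied to the cone and all its shifts, which remain in $\bD^+$) then forces the counit to be an isomorphism.

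The main subtlety, rather than an obstacle, is ensuring that the two identifications of the map $\sF(\Psi\Phi(Y)) \to \sF(Y)$ -- one from the abstract adjunction unit, and one from the explicit filtered double colimit -- really agree; this is a diagram-chase using the description of the counit via the universal property of $\hocolim$ together with compactness of each $\sZ_\sk$ in $\bD_{ren}$. A secondary point that must be checked is that $\Psi$'s compatibility with homotopy colimits passes through the DG model as in \secref{hocolim}, so that $\Psi(\hocolim X_i)$ is genuinely computed as $\hocolim_i \Psi(X_i)$ in $\bD$ with the expected structure map to $Y$.
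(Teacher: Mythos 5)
Your proof is correct and follows essentially the same route as the paper: decompose $\Phi(Y)$ as a filtered homotopy colimit of objects of $\bD^f$ via \lemref{generate colimit}, push through $\Psi$ and $\sF$, express both $\sF(\Psi\Phi(Y))$ and $\sF(Y)$ as the same double colimit over $I\times\sK$ using compactness of the $\sZ_\sk$'s and assumption (*), and conclude by conservativity of $\sF$. The only cosmetic difference is that you spell out the filtered-colimit interchange and the compatibility of the two identifications with the counit, which the paper leaves as "manifest."
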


\begin{proof}

For $Y\in \bD^{\geq 0}$, let $X$ denote $\Phi(Y)$. By \lemref{generate colimit}, 
$X\simeq \underset{I}{hocolim}(X_I)$ for some set $I$ and a homotopy $I$-object $X_I$ 
of a DG model of $\bD_{ren}$, with all $X_i$ being in $\bD^f_{ren}$. 
Denote $Y_I=\Psi(X_I)$, so $\Psi\circ \Phi(Y)\simeq \underset{I}{hocolim}(Y_I)$.

\medskip

It suffices to show that the arrow
$$\sF(\underset{I}{hocolim}(Y_I))\to \sF(Y)$$ is 
an isomorphism.

\medskip

We have
$$\sF(\underset{I}{hocolim}(Y_I))\simeq \underset{i}{colim}\, \sF(Y_i)\simeq
\underset{i,k}{colim}\, \Hom_\bD(\sZ_\sk,Y_i),$$
since $\sF$ commutes with colimits and $Y_i\in \bD^+$, and
$$\sF(Y)\simeq \underset{k}{colim}\, \Hom_\bD(\sZ_\sk,Y)\simeq
\underset{k}{colim}\, \Hom_{\bD_{ren}}(\sZ_\sk,X)\simeq
\underset{k,i}{colim}\, \Hom_{\bD_{ren}}(\sZ_\sk,X_i),$$
and the isomorphism is manifest.

\end{proof}

\ssec{}   \label{renorm with t}

Assume again that in the set-up of \secref{renorm without t}, $\bD$ is
equipped with a t-structure. 
\footnote{We are grateful to Jacob Lurie and Amnon Neeman
for help with the material in this subsection.} 

\begin{prop}  \label{D plus}
Assume that the adjunction map $\Psi\circ \Phi(Y)\to Y$
is an isomorphism for $Y\in \bD^+$ (in particular,
the functor $\Phi$ restricted to $\bD^+$ is fully faithful).

\smallskip

\noindent{\em(a)} 
There exists a unique t-structure on $\bD_{ren}$ such that the
functor $\Phi$ induces an exact equivalence $\bD^+\to \bD_{ren}^+$.

\smallskip

\noindent{\em(b)} The functor $\Psi$ is exact with respect the t-structure
of point (a).

\end{prop}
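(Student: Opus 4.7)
The plan is to define the candidate t-structure on $\bD_{ren}$ directly via the adjoint pair by setting
\[
\bD_{ren}^{\geq 0} := \Phi(\bD^{\geq 0}) \quad \text{and} \quad \bD_{ren}^{\leq 0} := \{X \in \bD_{ren} : \Psi(X) \in \bD^{\leq 0}\}.
\]
The shift-compatibility axioms are formal. Orthogonality is immediate from the $(\Psi,\Phi)$-adjunction: for $X \in \bD_{ren}^{\leq 0}$ and $Y \in \bD^{\geq 1}$, we have $\Hom_{\bD_{ren}}(X, \Phi(Y)) \simeq \Hom_{\bD}(\Psi(X), Y) = 0$.

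The key step, and the main obstacle, is constructing a truncation triangle for an arbitrary $X \in \bD_{ren}$. My plan is to use the composite $X \to \Phi\Psi(X) \to \Phi(\tau^{\geq 1}\Psi X)$ of the unit with the canonical truncation, set $X^{\geq 1} := \Phi(\tau^{\geq 1}\Psi X)$, and take $X^{\leq 0}$ to be the fiber. The crux is that $\tau^{\geq 1}\Psi X \in \bD^{\geq 1} \subset \bD^+$, so the hypothesis $\Psi\Phi \simeq \on{Id}$ on $\bD^+$ applies: applying $\Psi$ to the resulting triangle identifies $\Psi(X^{\leq 0})$ with $\tau^{\leq 0}\Psi X \in \bD^{\leq 0}$, placing $X^{\leq 0}$ in $\bD_{ren}^{\leq 0}$. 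This is the only place where the hypothesis enters non-formally. I would also need to verify closure of $\Phi(\bD^{\geq 1})$ under extensions and the relevant shifts in $\bD_{ren}$; this works because $\Hom_{\bD_{ren}}(\Phi(A'), \Phi(B')) \simeq \Hom_{\bD}(A', B')$ for $A',B' \in \bD^+$, so all extensions in $\bD_{ren}$ of objects from $\Phi(\bD^{\geq 1})$ are realized as images of extensions in $\bD^{\geq 1}$.

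With the t-structure in place, all remaining assertions are formal consequences of the hypothesis. For exactness and full faithfulness of $\Phi: \bD^+ \to \bD_{ren}^+$: the inclusion $\Phi(\bD^{\geq 0}) \subset \bD_{ren}^{\geq 0}$ is by definition; for $Y \in \bD^{\leq 0} \cap \bD^+$, the identity $\Psi\Phi(Y) = Y \in \bD^{\leq 0}$ gives $\Phi(Y) \in \bD_{ren}^{\leq 0}$; and essential surjectivity onto $\bD_{ren}^+$ is built into the definition, since any $X \in \bD_{ren}^{\geq -n}$ is literally $\Phi(Y)$ for some $Y \in \bD^{\geq -n}$. For part (b), right-exactness of $\Psi$ is the defining property of $\bD_{ren}^{\leq 0}$, while left-exactness reduces to $\Psi\Phi(Y) \in \bD^{\geq 0}$ for $Y \in \bD^{\geq 0}$, which is again the hypothesis.

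Uniqueness is standard. Any second such t-structure must have $\bD_{ren}^{\geq 0,\prime} = \Phi(\bD^{\geq 0})$, since $\bD_{ren}^{\geq -n,\prime} \subset \bD_{ren}^{+,\prime} = \Phi(\bD^+)$ and the restriction of the t-structure to $\bD_{ren}^+$ is determined by the equivalence with $\bD^+$; then $\bD_{ren}^{\leq 0,\prime}$ is forced by the general identity $\bD_{ren}^{\leq 0} = {}^{\perp}\bD_{ren}^{\geq 1}$ valid in any t-structure.
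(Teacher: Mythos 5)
Your proposal is correct and follows essentially the same route as the paper: both define $\bD_{ren}^{\geq 1}$ as the essential image of $\Phi|_{\bD^{\geq 1}}$, take $\bD_{ren}^{\leq 0}$ to be its left orthogonal (characterized as $\{X : \Psi(X) \in \bD^{\leq 0}\}$), and build the truncation triangle by applying $\Phi$ to $\tau^{>0}\Psi(X)$ and using $\Psi\circ\Phi|_{\bD^+}\simeq\on{Id}$ to identify the fiber as lying in $\bD_{ren}^{\leq 0}$. The only additions on your side — the direct check of closure under extensions and the explicit uniqueness argument — are harmless but redundant, since closure under extensions follows automatically once the orthogonality and truncation axioms are verified, and the paper treats uniqueness implicitly by noting that the requirement on the t-structure forces the definition of $\bD_{ren}^{>0}$.
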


\begin{proof}

The requirement on the t-structure implies that $\bD_{ren}^{> 0}$
equals the essential image of $\Phi|_{\bD^{> 0}}$. Hence, 
$\bD_{ren}^{\leq 0}$, being its left orthogonal, consists of
$$\{X\in \bD_{ren},\,|\, \Psi(X)\in \bD^{\leq 0}\}.$$

To prove (a) we need to show that every $X\in \bD_{ren}$ admits
a truncation triangle. Consider the map $X\to \Phi(\tau^{>0}(\Psi(X)))$.
We have $\Phi(\tau^{>0}(\Psi(X)))\in \bD_{ren}^{> 0}$, and it remains
to see that 
$$\on{Cone}\left(X\to \Phi(\tau^{>0}(\Psi(X)))\right)[-1]\in \bD_{ren}^{\leq 0}.$$
We have
$$\Psi\left(\on{Cone}\left(X\to \Phi(\tau^{>0}(\Psi(X)))[-1]\right)\right)\simeq
\on{Cone}\left(\Psi(X)\to \tau^{>0}(\Psi(X))\right)[-1]\simeq
\tau^{\leq 0}(\Psi(X)),$$
as required.

\medskip

Point (b) of the proposition follows from the construction. 

\end{proof}

\noindent{\it Remark.} Assume for a moment that the t-structure on $\bD$
has the property that $\underset{k\geq 0}\cap\, \bD^{\leq -k}=0$, and
$\Phi$ is fully faithful. Then we obtain $\bD$ is the triangulated quotient of 
$\bD_{ren}$ by the subcategory of acyclic objects, i.e., 
$\underset{k\geq 0}\cap\, \bD_{ren}^{\leq -k}$.

\ssec{}   \label{almost compact}

Let $\bD$ be a co-complete triangulated category equipped
with a DG model and a t-structure. Assume that the t-structure is
compatible with colimits. 

\medskip

We say that an object $Y\in \bD$ is almost compact if for any $k$
and a homotopy $I$-object $X_I$, the map
$$\underset{i\in I}{colim}\, \Hom_\bD(Y[k],X_i)\to
\Hom_\bD(Y[k],hocolim(X_i))$$
is an isomorphism, {\it provided that} $X_i\in \bD^{\geq 0}$ for all $i\in I$.

\sssec{}

Let $\bD$ be a triangulated category as in \secref{renorm with t}, satisfying
the assumption of \propref{D plus}.

\begin{prop}  \label{D plus almost compact}
Assume that the t-structure on $\bD$ is compatible with colimits, and 
that the objects $X_a\in \bD$ are almost compact. Then the t-structure on
$\bD_{ren}$ is compatible with colimits.
\end{prop}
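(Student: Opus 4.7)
The plan is to handle the two closure conditions of the t-structure on $\bD_{ren}$ separately, as they are of quite different flavor.

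For the $\bD_{ren}^{\leq 0}$ direction, suppose $X_I$ is a homotopy $I$-object in $\bD_{ren}$ with all $X_i \in \bD_{ren}^{\leq 0}$. By the construction of the t-structure in \propref{D plus}, $\Psi(X_i) \in \bD^{\leq 0}$; since $\Psi$ is a left adjoint it commutes with direct sums, and thus, being triangulated, with homotopy colimits (by \lemref{functor and limit}). Hence $\Psi(hocolim(X_I)) \simeq hocolim(\Psi(X_I))$, and the hypothesis that the t-structure on $\bD$ is compatible with colimits yields $hocolim(\Psi(X_I)) \in \bD^{\leq 0}$, whence $hocolim(X_I) \in \bD_{ren}^{\leq 0}$.

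For the $\bD_{ren}^{\geq 0}$ direction, which is the substantive part, let $X_i \in \bD_{ren}^{\geq 0}$. By the exactness of $\Psi$ (\propref{D plus}(b)) together with colimit-compatibility of the t-structure on $\bD$, we have $\Psi(hocolim(X_I)) \in \bD^{\geq 0}$, so the target of the unit map
$$hocolim(X_I) \to \Phi\Psi(hocolim(X_I)) \simeq \Phi(hocolim(\Psi(X_I)))$$
already lies in $\bD_{ren}^{\geq 0}$. It therefore suffices to prove this unit map is an isomorphism. Since $\bD^f_{ren}$ consists of compact objects generating $\bD_{ren}$, this can be tested against $\sZ \in \bD^f_{ren}$: using compactness of $\sZ$, the identification $X_i \simeq \Phi\Psi(X_i)$ valid on $\bD_{ren}^{\geq 0}$, and the $\Psi \dashv \Phi$ adjunction, the source computes as $\underset{I}{colim}\, \Hom_{\bD}(\Psi(\sZ), \Psi(X_i))$, while adjunction directly computes the target as $\Hom_{\bD}(\Psi(\sZ), hocolim(\Psi(X_I)))$. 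The two sides agree precisely when $\Psi(\sZ)$ is almost compact in $\bD$ relative to the diagram $\{\Psi(X_i)\} \subset \bD^{\geq 0}$.

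The main step---and the only place almost compactness is used---is thus to verify that $\Psi(\sZ)$ is almost compact in $\bD$ for every $\sZ \in \bD^f_{ren}$. The class of almost compact objects evidently forms a thick triangulated subcategory of $\bD$ (closed under shifts, cones, and direct summands, by a direct inspection of the definition in \secref{almost compact}). Since by hypothesis it contains each $X_a$, and $\bD^f_{ren}$ is by construction the Karoubian envelope of the triangulated subcategory of $\bD$ strongly generated by $\{X_a\}$---with $\Psi$ restricting to the tautological embedding $\bD^f_{ren} \hookrightarrow \bD$---every $\Psi(\sZ)$ automatically lies in this thick subcategory. The main obstacle is precisely the $\geq 0$ direction; the $\leq 0$ half uses only that $\Psi$ preserves colimits and is exact, whereas on the $\geq 0$ side the t-structure on $\bD_{ren}$ is defined implicitly via the essential image of $\Phi$, and without almost compactness one has no way to compare the $\bD_{ren}$-valued hocolim to $\Phi$ applied to a $\bD$-valued hocolim.
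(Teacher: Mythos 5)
Your proof is correct and follows essentially the same route as the paper: reduce the $\geq 0$ direction to the isomorphism $hocolim(X_I)\simeq \Phi(hocolim(\Psi(X_I)))$, then establish it by testing against the compact generators of $\bD_{ren}$ using almost compactness of their images in $\bD$. The one point you spell out that the paper leaves implicit (it encapsulates the key step in \lemref{ind lim on plus}, stated to "result from the definitions") is that almost compact objects form a thick subcategory of $\bD$, which is exactly what is needed to pass from almost compactness of the $X_a$ to almost compactness of arbitrary $\Psi(\sZ)$, $\sZ\in\bD^f_{ren}$.
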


\begin{proof}

It follows from \lemref{functor and limit} that if $X_I$ is a homotopy $I$-object
of (a DG model of) $\bD_{ren}$ with $X_i\in \bD_{ren}^{\leq 0}$,
then $hocolim(X_I)\in \bD_{ren}^{\leq 0}$. This does not require the
objects $X_a$ to be almost compact.

\medskip

Assume now that $X_i\in \bD_{ren}^{\geq 0}$. We have 
$\Psi(X_i)\in \bD^{\geq 0}$ and $hocolim(\Psi(X_I))\in \bD^{\geq 0}$.
Thus, it suffices to show that the map
\begin{equation} \label{Phi colim}
hocolim(X_I) \to \Phi(hocolim(\Psi(X_I)))
\end{equation}
is an isomorphism.

\medskip

The next assertion results from the definitions.

\begin{lem}  \label{ind lim on plus}
Assume in the circumstances of \propref{D plus} that the objects
$X_a$ are almost compact. Then, if $Y_I$ is a homotopy $I$-object
of (a DG model of) $\bD$ with $Y_i\in \bD^{\geq 0}$, the natural map
$$hocolim(\Phi(Y_I))\to \Phi(hocolim(Y_I))$$
is an isomorphism.
\end{lem}

Applying the lemma to $Y_I=\Psi(X_I)$, we obtain
$$\Phi(hocolim(\Psi(X_I)))\simeq hocolim (\Phi(\Psi(X_I)))\simeq hocolim(X_I).$$

\end{proof}

\noindent{\it Remark.} It is easy to see that under the above circumstances,
the t-structure on $\bD_{ren}$ is compatible with colimits if {\it and only if} each $X_a$
is almost compact in $\bD$.

\ssec{}   \label{criter for t renorm}

Let $\bD$ be again a category as in \secref{renorm with t}. We would
like now to give a criterion for when the t-structure on it is compactly
generated. 

\medskip

Assume that there exists an inverse family $\{\sZ_k\}$ as in \secref{criter for renorm plus},
with the following additional properties: 

\begin{itemize}

\item(i) $\sZ_\sk\in \bD^{\leq 0}$ and for $\sk_2\geq \sk_1$,  
$\on{Cone}(Z_{\sk_2}\to Z_{\sk_1})\in \bD^{< 0}$.

\item(ii) Any $Y\in \bD^f$ belongs to $\bD^b$ and if
$Y\in \bD^{\leq 0}\cap \bD^f$, there exists an object $\sZ$, equal
to a finite direct sum of $\sZ_k$'s, and a map
$\sZ\to Y$ with $\on{Cone}(\sZ\to Y)\in \bD^{< 0}$.

\end{itemize}

\begin{prop}  \label{renorm comp gen}
Under the above circumstances, the t-structure on $\bD_{ren}$
is compactly generated.
\end{prop}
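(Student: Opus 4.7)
The plan is to verify the criterion of \lemref{comp gen criter}, which characterizes when the t-structure on $\bD_{ren}$ is compactly generated: I must show that $X\in \bD_{ren}^{>0}$ if and only if $\Hom_{\bD_{ren}}(X', X) = 0$ for every $X'\in \bD_{ren}^c\cap \bD_{ren}^{\leq 0}$. Using that compacts in an ind-completion coincide with the original category one has $\bD_{ren}^c\simeq \bD^f$, while \propref{D plus} characterizes $\bD_{ren}^{\leq 0}$ as $\Psi^{-1}(\bD^{\leq 0})$, so $\bD_{ren}^c\cap \bD_{ren}^{\leq 0}\simeq \bD^f\cap \bD^{\leq 0}$. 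The forward direction is immediate; the reverse is where conditions (i) and (ii) enter.

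The central step is a key lemma: every $Y\in \bD^f\cap \bD^{\leq 0}$ can be written inside $\bD_{ren}$ as a homotopy colimit of a tower $\sZ^0 = Y_0 \to Y_1 \to \cdots$ of iterated extensions with $\on{Cone}(Y_{j-1}\to Y_j)\simeq \sZ^j[j]$, where each $\sZ^j$ is a finite direct sum of $\sZ_k$'s. I would construct this by induction using condition (ii): starting from $\sZ^0\to Y$ with cone $C^0\in \bD^{<0}\cap \bD^f$, applying (ii) to $C^0[-1]$ yields $\sZ^1\to C^0[-1]$ with cone $C^1\in \bD^{<0}\cap \bD^f$, and so on, gluing via the octahedral axiom so that $\on{Cone}(Y_j\to Y)\simeq C^j[j]\in \bD^{\leq -j-1}$. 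Convergence follows because for any compact test object $Z\in \bD^f\subset \bD^b$ (the first half of (ii)) the group $\Hom(Z, C^j[j])$ vanishes for $j$ large, and compacts generate $\bD_{ren}$. The payoff: if $\Hom_{\bD_{ren}}(\sZ_k[n], X) = 0$ for all $k$ and $n\geq 0$, then $\Hom(\sZ^j[j], X) = 0$, and inductive use of the tower triangles gives $\Hom(Y_j, X) = 0$ for all $j$, hence $\Hom(Y, X) = 0$ for every $Y\in \bD^f\cap \bD^{\leq 0}$.

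The last step is to conclude from this compact Hom-vanishing that $X\in \bD_{ren}^{>0,\on{old}}$, and this is the main obstacle I anticipate: $\bD_{ren}^{\leq 0,\on{old}}$ contains non-compact objects, so one must extend the Hom-vanishing beyond compacts. My plan is to exploit that the old t-structure on $\bD_{ren}$ is compatible with colimits (since $\Psi$ is left adjoint, direct sums preserve $\bD^{\leq 0}$, and $\bD_{ren}^{\leq 0}$ is closed under extensions) so that \lemref{colimit generate t} represents any $X'\in \bD_{ren}^{\leq 0,\on{old}}$ as $\underrightarrow{\mathrm{ho}\lim}\,\tau^{\leq 0,\on{old}}(X'_i)$ for $X'_i\in \bD^f$, and then to show that each $\tau^{\leq 0,\on{old}}(X'_i)$ admits a presentation as a homotopy colimit of objects in $\bD^f\cap \bD^{\leq 0}$. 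The delicate point is that truncation need not preserve $\bD^f$, so the key-lemma construction does not apply verbatim; this is precisely where assumption $(*)$ intervenes, as the conservativity of $\sF$ together with the identification $\sF(W)\simeq \underrightarrow{\mathrm{co}\lim}_k\Hom(\sZ_k, W)$ on $\bD^+$ (via \propref{modifying plus}) detects any residual obstruction and forces the candidate object on which compact Hom-vanishing holds but which fails to lie in $\bD_{ren}^{>0,\on{old}}$ to be zero.
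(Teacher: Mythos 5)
Your proposal takes a genuinely different route from the paper's, and in its current form has a gap in the last step.

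Two structural observations. First, your "key lemma" is a detour: since each $\sZ_\sk[n]$ for $n\geq 0$ already lies in $\bD^f\cap\bD^{\leq 0}$, the hypothesis $\Hom(Y,X)=0$ for all $Y\in\bD^f\cap\bD^{\leq 0}$ already contains the vanishing against the $\sZ_\sk$'s, so building $Y$ up from the $\sZ_\sk$'s by a tower reformulates the hypothesis rather than advancing the argument. (It would be relevant if you were trying to \emph{weaken} the generating set, but that is not what is needed here.) Second, you correctly identify the real difficulty — passing from the compact Hom-vanishing to $X\in\bD_{ren}^{>0}$ — but your last paragraph does not close it. You gesture at "conservativity of $\sF$ detects any residual obstruction," but this is not an argument: the delicate point is precisely the objects $X'$ with $\Psi(X')=0$, for which the formula $\sF(W)\simeq\underset{k}{colim}\,\Hom(\sZ_k,W)$ is of no direct use since $\sF$ factors through $\Psi$.

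The paper's proof proceeds differently and does close the gap. It first uses $(*)$ and the colimit formula, exactly as in the proof of \propref{modifying plus}, to deduce from the compact Hom-vanishing that $\Psi(X)\in\bD^{>0}$ (conservativity and exactness of $\sF$ enter here). It then forms $X':=\on{Cone}(X\to\Phi\Psi(X))[-1]$, observes $\Psi(X')=0$, and reduces to showing that any such $X'$ satisfying the compact Hom-vanishing must be zero; then $X\simeq\Phi\Psi(X)\in\bD_{ren}^{>0}$ follows from \propref{D plus}. The vanishing of $X'$ is proved by a minimality argument: let $n\geq 0$ be minimal with $\Hom_{\bD_{ren}}(Y[-n],X')\neq 0$ for some $Y\in\bD^f\cap\bD^{\leq 0}$; condition (ii) gives a map $\sZ\to Y$ whose cone lies in $\bD^{<0}$, so minimality forces $\Hom(Y[-n],X')\hookrightarrow\Hom(\sZ[-n],X')$ to be injective; condition (i) makes the directed system $\Hom(\sZ_\sk[-n],X')$ have injective transition maps, so its colimit is nonzero; but by $(*)$ this colimit computes $\sF\circ\Psi$, which vanishes since $\Psi(X')=0$. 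This step — and in particular the use of condition (i) to get injective transition maps in the colimit — is entirely absent from your sketch and is the crux of the proof.
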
 

\begin{proof}

By \lemref{comp gen criter}, we have to show that if $X\in \bD_{ren}$ satisfies
$\Hom_{\bD_{ren}}(Y,X)$ for all $Y\in \bD^f_{ren}\cap \bD^{\leq 0}_{ren}$,
then $X\in \bD_{ren}^{>0}$.

\medskip

The proof of \propref{modifying plus} shows that $\sF\circ \Psi(X[n])=0$ for $n>0$,
and hence $\Psi(X)\in \bD^{>0}$. Consider the object 
$X':=\on{Cone}(X\to \Phi\circ \Psi(X))[-1]$. It satisfies the same assumption as $X$, 
and also $\Psi(X')=0$. We claim that any such
object equals $0$. 

\medskip

Indeed, suppose, by contradiction, that $X'\neq 0$.
Let $n$ be the minimal integer such that $\Hom_{\bD_{ren}}(Y[-n],X')\neq 0$
for $Y\in \bD^f_{ren}\cap \bD^{\leq 0}_{ren}$. By assumption, $n\geq 0$,
and it exists since $\bD^f_{ren}$ generates $\bD_{ren}$. Consider
a map $\sZ\to Y$ as in (ii). By the minimality assumption on $n$,
the map $\Hom_{\bD_{ren}}(Y[-n],X')\to \Hom_{\bD_{ren}}(\sZ[-n],X')$ is 
injective. So, there exists an index $k'$ and a non-zero element in
$\Hom_{\bD_{ren}}(\sZ_{k'}[-n],X')$. By (i), for $k_2\geq k_1$, he map
$$\Hom_{\bD_{ren}}(\sZ_{k_1}[-n],X')\to \Hom_{\bD_{ren}}(\sZ_{k_2}[-n],X')$$
is injective. Hence, we obtain that $$\underset{\sk\in \sK}{colim}\, 
\Hom_{\bD_{ren}}(\sZ_\sk[-n],X')\neq 0.$$
However, by assumption (*),
$$\underset{\sk\in \sK}{colim}\, 
\Hom_{\bD_{ren}}(\sZ_\sk[-n],X')\simeq \underset{\sk\in \sK}{colim}\, 
\Hom_{\bD}(\sZ_\sk[-n],\Psi(X'))\simeq \sF\circ \Psi(X[n]),$$
which is a contradiction.

\end{proof}

\ssec{}

Let us consider an example of the situation described in \secref{renorm with t}.
(Another example relevant to representations of Kac-Moody algebras will
be considered in \secref{rep theory}).

\sssec{}   \label{good abelian}

Let $\obC$ be a Grothendieck abelian category, i.e., it is closed
under inductive limits, and the functor of the inductive limit
over a filtered index category is exact. We take $\bD:=\bD(\obC)$ be the 
usual derived category of $\obC$. 

\medskip

Let $\obC^f\subset \obC$
be a small abelian subcategory, satisfying the following two conditions:

\begin{itemize}

\item(a) Every object of $\obC$ can be presented as an inductive limit of objects
the $\obC^f$. 

\item(b) Every object $X\in \obC^f$ is almost compact as an object of $\bD$,
i.e., the functors $\on{Ext}^i(X_a,?)$ commute with filtered inductive limits
for $i=0,1,2,...$

\end{itemize}

Note that condition (a), combined with (b) for just $i=0,1$ imply 
that $\obC$ identifies with the category $\on{Ind}(\obC^f)$.

\sssec{}

We take the objects $X_a\in \bD$ to be the objects from $\obC^f$,
and let us form the corresponding categories 
$\bD^f=\bD^f_{ren}\subset \bD_{ren}$.
Note that we have a natural equivalence:
$$\bD^f_{ren}\simeq \bD^b(\obC^f).$$

\medskip

We claim that the conditions of \propref{D plus} hold. Indeed, we only have
to check that the adjunction
\begin{equation} \label{plus adjunction}
\Psi\circ \Phi(Y)\to Y
\end{equation}
is an isomorphism for $Y\in \bD^{\geq 0}$.

\begin{lem}
Every object $Y\in \bD^+$ can be represented as $hocolim(Y_I)$,
where $Y_i\in \bD^b(\obC^f)$.
\end{lem}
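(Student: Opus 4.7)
The plan is to reduce first to a bounded situation and then to induct on cohomological amplitude, using condition (a) for the base case and condition (b) in the inductive step to splice the resulting homotopy colimits through the canonical truncation triangle.

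After shifting we may assume $Y\in \bD^{\geq 0}$. The standard observation that $Y\simeq hocolim_n\,\tau^{\leq n}(Y)$ (the homotopy colimit of a sequence of canonical truncation maps, cf.\ \secref{simple colim}) realizes $Y$ as a homotopy colimit of objects of $\bD^b(\obC)$. Since homotopy colimits are transitive, it suffices to establish the lemma for $Y\in \bD^b(\obC)$, which we now do by induction on the amplitude $n$ with $Y\in \bD^{[0,n]}(\obC)$.

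The base case $n=0$ means $Y$ is a single object of $\obC$. Condition~(a) gives a presentation $Y\simeq \colim_{\alpha} Y_\alpha$ as a filtered colimit (in $\obC$) with $Y_\alpha\in \obC^f$. Because $\obC$ is Grothendieck the formation of filtered colimits is exact, so this abelian colimit is equally the homotopy colimit in $\bD(\obC)$, and each $Y_\alpha\in \obC^f\subset \bD^b(\obC^f)$ as required. For the inductive step, consider the canonical distinguished triangle
$$\tau^{\leq n-1}(Y)\to Y\to H^n(Y)[-n]\overset{\delta}\to \tau^{\leq n-1}(Y)[1],$$
equivalently $Y\simeq \on{Cone}\bigl(H^n(Y)[-n-1]\overset{\delta[-1]}\longrightarrow \tau^{\leq n-1}(Y)\bigr)$. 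By induction, $\tau^{\leq n-1}(Y)\simeq hocolim_{i\in I} Z_i$ with $Z_i\in \bD^b(\obC^f)$; since $\tau^{\leq n-1}(Y)\in \bD^{[0,n-1]}$ we may further arrange, by shifting the index set and applying truncations, that all $Z_i$ lie uniformly in $\bD^{\geq 0}$. By the base case $H^n(Y)\simeq hocolim_{j\in J} W_j$ with $W_j\in \obC^f$.

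The heart of the argument, and the main obstacle, is the fourth step: to realize $Y$ itself as a homotopy colimit of objects in $\bD^b(\obC^f)$ by lifting $\delta[-1]$ diagrammatically. For each $j\in J$ the composition $W_j[-n-1]\to H^n(Y)[-n-1]\to \tau^{\leq n-1}(Y)=hocolim_i Z_i$ is a morphism in $\bD$ out of $W_j\in \obC^f$, shifted. Invoking the almost compactness hypothesis~(b) on $W_j$ together with the uniform bound $Z_i\in \bD^{\geq 0}$, we get
$$\Hom_{\bD}\bigl(W_j[-n-1],\,hocolim_i Z_i\bigr)\simeq \underset{i\in I}{\colim}\,\Hom_{\bD}(W_j[-n-1],Z_i),$$
so every such morphism factors, up to homotopy, through some $Z_{i(j)}$. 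A standard cofinality/straightening argument on the filtered index categories—passing if necessary to the filtered category of pairs $(j,i)$ together with a factorization—allows one to choose these lifts functorially in $j$ and to build a filtered diagram of morphisms $\phi_{(j,i)}\colon W_j[-n-1]\to Z_i$ in $\bD^b(\obC^f)$ whose homotopy colimit reproduces $\delta[-1]$. Taking cones termwise and using that filtered homotopy colimits are exact (hence commute with cones), we obtain
$$Y\;\simeq\; \on{Cone}(\delta[-1])\;\simeq\; hocolim_{(j,i)} \on{Cone}(\phi_{(j,i)}),$$
and each $\on{Cone}(\phi_{(j,i)})\in \bD^b(\obC^f)$ because $\obC^f$ is a full abelian subcategory of $\obC$ and $\bD^b(\obC^f)$ is closed under cones. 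This completes the induction. The delicate point, and where care is genuinely needed, is the coherent lifting of $\delta$: one must verify that the system of factorizations can be chosen compatibly as $j$ varies (not merely object-by-object), which is where the full strength of condition~(b)—almost compactness rather than mere compactness of each $W_j$—enters.
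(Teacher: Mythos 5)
The paper states this lemma without proof, so there is no "paper's argument" to compare to; I'll assess your argument on its own merits. Your reduction to bounded amplitude and the base case for a single object of $\obC$ are both correct (the base case uses exactness of filtered colimits in a Grothendieck category to identify the abelian colimit with the homotopy colimit). The gap is exactly where you flag it, and it is a real one, not merely "delicate." You need to lift the connecting map $\delta[-1]\colon H^n(Y)[-n-1]\to \tau^{\leq n-1}(Y)$ to a coherent morphism of filtered homotopy $I$-diagrams and then take cones termwise. At the triangulated level cones are not functorial, so even if you choose a factorization $W_j[-n-1]\to Z_{i(j)}$ for every $j$ via almost compactness, the induced "maps of cones" for $j\to j'$ are only defined up to a non-canonical choice, and there is no mechanism in your argument to make these choices coherent so that $\{\on{Cone}(\phi_{(j,i)})\}$ is an actual homotopy $K$-object. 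In the paper's DG framework (\secref{hocolim}) a homotopy $I$-object carries full higher-homotopy data, and a map of homotopy colimits does \emph{not} automatically come from a map of homotopy diagrams after refining indices; establishing such straightening for filtered DG diagrams is a nontrivial theorem that your proof neither cites nor proves. The same issue, in milder form, affects the opening "since homotopy colimits are transitive" step: combining $Y\simeq hocolim_n\,\tau^{\leq n}(Y)$ with presentations of each $\tau^{\leq n}(Y)$ requires those presentations to vary coherently with $n$.

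A cleaner route, which sidesteps coherence entirely by working at the chain level where cones (mapping cones) and colimits are strictly functorial, is the following: after shifting assume $Y\in\bD^{\geq 0}$, represent $Y$ by an honest complex $Y^\bullet$ concentrated in degrees $\geq 0$, and consider the filtered poset $P$ of \emph{subcomplexes} $Y'^\bullet\subset Y^\bullet$ that are bounded and have each term $Y'^k\in\obC^f$. Using condition (a) (together with the fact, true in the paper's examples and needed here, that $\obC^f$ is closed under subobjects and quotients in $\obC$), one shows $P$ is filtered and $\colim_P Y'^\bullet = Y^\bullet$ as complexes; exactness of filtered colimits in the Grothendieck category $\obC$ then identifies this with $hocolim_P$ in $\bD$, and each $Y'^\bullet\in\bD^b(\obC^f)$. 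This works uniformly for $Y\in\bD^+$ with no induction, no truncation triangle, and no lifting of connecting maps, and the diagram over $P$ is an actual functor into the DG model of $\bD$, so it genuinely defines a homotopy $P$-object. If you want to rescue your approach, you would need to perform the inductive step inside a DG model, constructing the lift of $\delta[-1]$ as a morphism in $\on{HOb}(K,\bC)$ for a suitable filtered $K$, which is substantially more work than the sentence you devote to it.
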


This lemma, combined with \lemref{ind lim on plus} reduces 
\eqref{plus adjunction} to the case of $Y\in \bD^f$, for which
it follows from the definitions.

\medskip

In addition, we claim that the t-structure on $\bD_{ren}$ is
compactly generated. Indeed,
by \lemref{colimit generate t}, it is enough to show that for any
$X\in \bD_{ren}^f$, the object $\tau^{\leq 0}(X)$ belongs to the subcategory
generated by extensions, direct sums and non-positive shifts by $\obC^f$.
However, $\tau^{\leq 0}(X)\in \bD^{b,\leq 0}(\obC^f)$.

\medskip

\noindent{\it Remark.} As was explained to us by A.~Neeman, the
above construction reproduces one of \cite{Kr}. Namely, one can show 
that $\bD_{ren}$ is equivalent to the homotopy category of complexes 
of injective objects in $\obC$. 

\sssec{}

Let us consider two specific examples of the situation described
in \secref{good abelian}. Let $\CY$ be a strict ind-scheme of 
ind-finite type. I.e., $\CY$ is a union $\underset{i\geq 0}\cup\, \CY_i$,
where $\CY_i$ are schemes of finite type, and the maps
$\CY_i\to \CY_{i+1}$ are closed embeddings. We let
$\obC$ be either 
$$\QCoh(\CY) \text{ or } \fD(\CY)\mod.$$
The corresponding categories $\obC^f$ identify with
$$2\on{-}colim\, \Coh(\CY_i) \text{ and }
2\on{-}colim\, \fD^f(\CY_i)\mod,$$
respectively, where $\fD^f(\cdot)\mod$ denotes the category of finitely generated
(i.e., coherent) D-modules over a scheme of finite type. We shall denote the
corresponding categories by $\bD_{ren}(\QCoh(\CY)$ and
$\bD_{ren}(\fD(\CY)\mod)$, respectively.

\medskip

In both cases, the derived functor of global sections 
$\Gamma:\bD^f\to \bD(\Vect_k)$ admits a DG model and gives rise to
a functor
$$\Gamma:\bD_{ren}\to \bD(\Vect_k).$$

\medskip

Note that the latter functor does not, in general, factor through $\bD_{ren}
\twoheadrightarrow \bD$. 

\medskip

For example, let assume that each is $\CY_i$ smooth and projective,
and $\dim(\CY_{i+1})>\dim(\CY_i)$. Let $\obC=\QCoh(\CY)$. We take $X\in \bD_{ren}$
to be the dualizing complex $K_\CY$, which identifies with $\underset{i}{colim}\, K_{\CY_i}$.
The image of $K_\CY$ in $\bD(\QCoh(\CY))$ is zero, however,
$\Gamma(\CY,K_\CY)\simeq k$.

\section{The derived category of Kac-Moody modules}   \label{rep theory}

\ssec{}

Let $\hg_\kappa\mod$ be the abelian category of modules over the Kac-Moody
algebra $\hg$ at level $\kappa$. Let $\bD(\hg_\kappa\mod)$ be the usual
derived category of $\hg_\kappa\mod$, i.e., the triangulated quotient of the
homotopy category of complexes of objects $\hg_\kappa\mod$ by the subcategory
of acyclic ones. By \secref{rigid quotients}, $\bD(\hg_\kappa\mod)$ naturally
comes equipped with a DG model. 

\sssec{}

By construction, $\bD(\hg_\kappa\mod)$ is co-complete, and is equipped
with a t-structure compatible with colimits. The difficulty in working with $\bD(\hg_\kappa\mod)$ is 
that it is not generated by compact objects.

\medskip

For $i\geq 0$ let us denote by $\BV_{\kappa,i}\in \hg_\kappa\mod$
the induced module $\Ind^{\hg_\kappa}_{\fg\otimes (t^i\cdot \BC[[t]])}(\BC)$.
By \cite{FG2}, Proposition 23.12, these objects are almost compact
(see \secref{renorm with t}) in $\bD(\hg_\kappa\mod)$, but they are not compact.

\sssec{}

We are now going to apply a renormalization procedure described
in \secref{renorm without t} and obtain a better behaved triangulated category:

\bigskip

\noindent{\it 
We take $\bD^f(\hg_\kappa\mod)$ to be the Karoubian envelope of 
the subcategory of $\bD(\hg_\kappa\mod)$ {\it strongly generated} by the objects $\BV_{\kappa,i}$, $i=0,1,...$.}

\bigskip

By \secref{renorm without t}, we obtain a triangulated 
category that we shall denote
$\bD_{ren}(\hg_\kappa\mod)$, equipped with a DG model, which is co-complete,
and endowed with a pair of mutually adjoint functors
$$\Psi:\bD_{ren}(\hg_\kappa\mod)\leftrightarrows 
\bD(\hg_\kappa\mod):\Phi.$$

We claim that the conditions of Sections \ref{criter for renorm plus} and 
\ref{criter for t renorm} are satisfied. Indeed, we take $\sF$ to be the usual 
forgetful functor, and $\sZ_k:=\BV_{\kappa,k}$. Thus we obtain that 
$$\Psi\circ \Phi|_{\bD^+(\hg_\kappa\mod)}\simeq \on{Id}_{\bD^+(\hg_\kappa\mod)}.$$
Moreover, $\bD_{ren}(\hg_\kappa\mod)$ has a compactly generated t-structure, 
in which $\Psi$ exact and $\Phi|_{\bD^+(\hg_\kappa\mod)}$ exact, and we have an equivalence
$$\bD^+_{ren}(\hg_\kappa\mod)\leftrightarrows 
\bD^+(\hg_\kappa\mod).$$
The kernel of $\Psi$ is the subcategory
of $\bD_{ren}(\hg_\kappa\mod)$, consisting of acyclic objects with
respect to this t-structure.

\medskip

\noindent{\it Remark.} We do not know whether the t-structure on 
$\bD(\hg_\kappa\mod)$ (or, equivalently, on $\bD_{ren}(\hg_\kappa\mod)$) induces a
t-structure on the subcategory $\bD^f(\hg_\kappa\mod)$. \footnote{Most probably, it does not.}
I.e., the formalism of \secref{good abelian} is a priori not applicable in this case.

\ssec{The critical level case}

Let us specialize to the case $\kappa=\kappa_\crit$. 
Let $\fZ_\fg$ denote the center of $\hg_\crit\mod$. This is a topological
commutative algebra isomorphic to the inverse limit of $\fZ^i_\fg$
(see \cite{FG2}, Sect. 7.1), where each $\fZ^i_\fg$ is isomorphic
to a polynomial algebra on infinitely many variables, and the ideals
$\on{ker}(\fZ^{i_2}_\fg\to \fZ^{i_1}_\fg)$ are finitely presented and
regular. By \cite{BD}, Theorem 3.7.9, the action of $\fZ_\fg$
on $\BV_{\crit,i}$ factors through $\fZ^i_\fg$, and $\BV_{\crit,i}$ 
is flat as a $\fZ^i_\fg$-module.

\sssec{}   \label{sZ}

Let $\sZ$ be a discrete quotient algebra of $\fZ_\fg$, such that for
some (=any) $i$ the map $\fZ_\fg\to \sZ$ factors through 
$\fZ^i_\fg\to \sZ$, with the ideal $\on{ker}(\fZ^{i}_\fg\to \sZ)$ being
finitely presented and regular. 

\medskip

Let $\hg_\crit\mod_\sZ$ be the full abelian subcategory of
$\hg_\crit\mod$, consisting of modules on which the action
of the center $\fZ_\fg$ factors through $\fZ$. Let $\bD(\hg_\crit\mod_\sZ)$
be the usual derived category of this abelian category; it is co-complete
and has a t-structure compatible with colimits, and naturally
comes equipped with a DG model. 

\medskip

We have a tautological functor
$$(\iota^\sZ_\hg)_*:\bD(\hg_\crit\mod_\sZ)\to \bD(\hg_\crit\mod).$$

\sssec{}   \label{good category sZ}

We shall now define a renormalized version of the category $\bD(\hg_\crit\mod_\sZ)$,
denoted $\bD_{ren}(\hg_\crit\mod_\sZ)$:

\bigskip

\noindent{\it We define
$\bD^f(\hg_\crit\mod_\sZ)$ to consist of those objects of $\CM\in \bD(\hg_\crit\mod_\sZ)$,
for which $(\iota^{\sZ}_\hg)_*(\CM)\in  \bD^f(\hg_\crit\mod)$.}

\bigskip

By \secref{renorm without t}, we obtain a category which we shall denote $\bD_{ren}(\hg_\crit\mod_\sZ)$,
and a pair of mutually adjoint functors
$$\Psi^\sZ:\bD_{ren}(\hg_\crit\mod_\sZ)\leftrightarrows
\bD(\hg_\crit\mod_\sZ):\Phi^\sZ.$$

\medskip 

We claim that conditions of \secref{criter for renorm plus} and \secref{criter for t renorm}
are satisfied. Indeed, we take $\sF$ to be again the forgetful functor, and we take $\sZ_k$
to be the objects
$$\BV^\sZ_{\crit,i}:=\BV_{\crit,i}\underset{\fZ_\fg^i}\otimes \sZ$$
for $i$ such that $\sZ$ is a quotient of $\fZ^i_\fg$. (Note that
$\BV^\sZ_{\crit,i}$ is an object of the abelian category 
$\hg_\crit\mod_\sZ$.) 

\medskip

Thus, we obtain that 
$\Psi^\sZ\circ \Phi^\sZ|_{\bD^+(\hg_\crit\mod_\sZ)}\to \on{Id}_{\bD^+(\hg_\crit\mod_\sZ)}$,
and that $\bD_{ren}(\hg_\crit\mod_\sZ)$ acquires a compactly generated t-structure,
such that the functors $\Psi^\sZ$ and $\Phi^\sZ_{\bD^+(\hg_\crit\mod_\sZ)}$ are exact, and
induce mutually quasi-inverse equivalences
$$\Psi^\sZ:\bD^+_{ren}(\hg_\crit\mod)\leftrightarrows 
\bD^+(\hg_\crit\mod):\Phi^\sZ.$$

\sssec{}

By construction, the category $\bD(\hg_\crit\mod_\sZ)$ is realized
as a quotient
$$\Ho\left(\bC(\hg_\crit\mod_\sZ)\right)/\Ho\left(\bC_{acycl}(\hg_\crit\mod_\sZ)\right),$$
so by \secref{action on quotient}, it lifts to an object of
$\DGMod\left(\bC^b(\sZ\mod^{free,fin.rk.})\right)$, see \secref{over affine}.
Since $\bD(\hg_\crit\mod_\sZ)$ is Karoubian, this structure extends to that
of triangulated category over $\Spec(\sZ)$.

\medskip

Hence, the category $\bD^f(\hg_\crit\mod_\sZ)$, which is also Karoubian, inherits this
structure.

\ssec{Changing the central character}

Let $\sZ'$ be another discrete and regular quotient of $\fZ_\fg$, such that the
projection $\fZ_\fg \twoheadrightarrow \sZ'$ factors through $\sZ$. We have
a regular closed immersion
$$\Spec(\sZ')\hookrightarrow \Spec(\sZ)$$
that we shall denote by $\iota^{\sZ',\sZ}$.

\sssec{}

The functor 
$$(\iota^{\sZ',\sZ})^*:\bD^{perf}(\sZ\mod)\to \bD^{perf}(\sZ'\mod)$$
lifts naturally to a 1-morphism in $\DGMonCat$, which we denote by the same
character. Since $\iota^{\sZ',\sZ}$ is a regular embedding, the adjoint functor
$$\iota^{\sZ',\sZ}_*:\bD(Z'\mod)\to \bD(Z\mod)$$
sends $\bD^{perf}(\sZ'\mod)$ to $\bD^{perf}(\sZ\mod)$.

\medskip

Consider the base-changed category
\begin{equation} \label{base change Z}
\Spec(\sZ')\underset{\Spec(\sZ)}\times \bD^f(\hg_\crit\mod_\sZ).
\end{equation}
This is a triangulated category over $\Spec(\sZ')$. Consider also
its ind-completion
$$\Spec(\sZ')\underset{\Spec(\sZ)}\arrowtimes \bD^f(\hg_\crit\mod_\sZ).$$

\medskip

By \secref{dir image}, the functors $(\iota^{\sZ',\sZ})^*$ and $(\iota^{\sZ',\sZ})_*$
induce a pair of mutually adjoint functors, denoted
$$(\iota^{\sZ',\sZ}\underset{\Spec(\sZ)}\otimes \on{Id})^*,\,\,
(\iota^{\sZ',\sZ}\underset{\Spec(\sZ)}\otimes \on{Id})_*,$$
respectively:
$$\bD^f(\hg_\crit\mod_\sZ)
\leftrightarrows 
\Spec(\sZ')\underset{\Spec(\sZ)}\times \bD^f(\hg_\crit\mod_\sZ)$$
and
$$\bD_{ren}(\hg_\crit\mod_\sZ)
\leftrightarrows 
\Spec(\sZ')\underset{\Spec(\sZ)}\arrowtimes \bD^f(\hg_\crit\mod_\sZ)$$

The functor $(\iota^{\sZ',\sZ}\underset{\Spec(\sZ)}\otimes \on{Id})^*$ is right-exact. 
By \propref{affine exact}, the functor $(\iota^{\sZ',\sZ}\underset{\Spec(\sZ)}\otimes \on{Id})_*$
is exact and conservative.

\sssec{}

Consider now the tautological functor
\begin{equation} \label{naive i g}
(\iota^{\sZ',\sZ}_\hg)_*:\bD(\hg_\crit\mod_{\sZ'})\to \bD(\hg_\crit\mod_{\sZ}).
\end{equation}
It is naturally equipped with a DG model, and as such is
compatible with the action of $\bC^b(\sZ\mod^{free,fin.rk.})$, and hence
is a functor between categories over $\Spec(\sZ)$.

\medskip

By definition, the above functor sends $\bD^f(\hg_\crit\mod_{\sZ'})$ to
$\bD^f(\hg_\crit\mod_{\sZ})$. Hence, by \secref{action on subcat}, we obtain
a functor 
$$(\iota^{\sZ',\sZ}_\hg)_*:\bD^f(\hg_\crit\mod_{\sZ'})\to \bD^f(\hg_\crit\mod_{\sZ})$$
between triangulated categories over $\Spec(\sZ)$. We shall denote by
$$(\iota^{\sZ',\sZ}_{\hg,ren})_*:\bD_{ren}(\hg_\crit\mod_{\sZ'})\to 
\bD_{ren}(\hg_\crit\mod_{\sZ})$$
its ind-extension.

\sssec{}  \label{restr central char 1}

The functor \eqref{naive i g} admits a right adjoint, denoted $(\iota^{\sZ',\sZ}_\hg)^*$. 
The following
assertion is established in \cite{FG2}, Lemma 7.5:

\begin{lem}  \label{i g well behaved}
For $\sZ'$, $\sZ$ as above we have a commutative diagram of functors
$$
\CD
\bD(\hg_\crit\mod_{\sZ})   @>{(\iota^{\sZ',\sZ}_\hg)^*}>>   \bD(\hg_\crit\mod_{\sZ'})  \\
@VVV   @VVV   \\
\bD(\sZ\mod)  @>{(\iota^{\sZ',\sZ})^*}>> \bD(\sZ'\mod).
\endCD
$$ 
\end{lem}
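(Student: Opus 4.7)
\textbf{Proof plan for Lemma \ref{i g well behaved}.}

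The plan is to realize both functors $(\iota^{\sZ',\sZ}_\hg)^*$ and $(\iota^{\sZ',\sZ})^*$ as (derived) tensor product with $\sZ'$ over $\sZ$, and then exhibit a single resolution that computes both simultaneously. Concretely, the functor $(\iota^{\sZ',\sZ}_\hg)^*$, although described above via an adjunction, is the left derived functor of the right-exact functor $\CM \mapsto \sZ'\otimes_\sZ \CM$ from $\hg_\crit\mod_\sZ$ to $\hg_\crit\mod_{\sZ'}$; this is precisely the form in which it appears in the application (see the construction of $\iota^*_\hg$ in \secref{compat with sect}). Similarly, $(\iota^{\sZ',\sZ})^*$ is the derived functor of $\sZ'\otimes_\sZ ?$ on $\sZ$-modules. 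So the lemma asks that these two derived tensor products are intertwined by the forgetful functors to plain $\sZ$- and $\sZ'$-modules.

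The key geometric input is that $\iota^{\sZ',\sZ}$ is a regular closed immersion (by assumption on $\sZ,\sZ'$ in \secref{sZ}), so the ideal $I = \ker(\sZ\twoheadrightarrow \sZ')$ is finitely generated by a regular sequence $(f_1,\dots,f_n)$. First I would form the Koszul complex $K^\bullet = K^\bullet(f_1,\dots,f_n;\sZ)$, which is a finite resolution of $\sZ'$ by free $\sZ$-modules of finite rank. For any $\CM\in \hg_\crit\mod_\sZ$ (or more generally a complex representing an object of $\bD(\hg_\crit\mod_\sZ)$), the complex $K^\bullet\otimes_\sZ \CM$ carries a natural $\hg_\crit$-action via the second factor, and the central action evidently factors through $\sZ'$ after passing to cohomology; moreover, since each $K^i$ is $\sZ$-free, the complex $K^\bullet \otimes_\sZ \CM$ computes $(\iota^{\sZ',\sZ}_\hg)^*(\CM)$.

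Next I would observe that forgetting the $\hg_\crit$-structure and remembering only the $\sZ$-module structure is a tautological operation on $K^\bullet\otimes_\sZ \CM$: the underlying complex of $\sZ$-modules is literally $K^\bullet\otimes_\sZ (\text{underlying }\sZ\text{-module of }\CM)$. Since $K^\bullet$ is also a free resolution on the $\sZ$-module side, this same complex computes $(\iota^{\sZ',\sZ})^*$ applied to the forgetful image of $\CM$. This gives the desired functorial isomorphism at the level of DG models, hence at the triangulated level, proving commutativity of the diagram.

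The only real point requiring care — and arguably the mild ``obstacle'' — is checking that both derived functors can indeed be computed by the same resolution of $\sZ'$. On the $\sZ$-module side this is standard. On the $\hg_\crit\mod_\sZ$ side one needs to verify that for $P$ a free $\sZ$-module of finite rank, the functor $? \mapsto P\otimes_\sZ ?$ sends acyclic complexes in $\hg_\crit\mod_\sZ$ to acyclic ones, and that the Koszul terms form a class of objects adapted to computing the derived tensor product; both statements reduce to the corresponding (trivial) statements for the forgetful image, by the same tautology used above. Once these checks are in place the commutative diagram follows formally.
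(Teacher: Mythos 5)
The paper does not actually prove this lemma: it cites \cite{FG2}, Lemma 7.5, so there is no in-paper argument to compare yours against. Your proposed argument — realize both pullbacks as $\sZ'\overset{L}{\underset{\sZ}\otimes}(-)$, resolve $\sZ'$ once and for all by the Koszul complex of a finite regular sequence generating $\ker(\sZ\twoheadrightarrow\sZ')$, and observe that termwise tensoring with a bounded complex of free finite-rank $\sZ$-modules is tautologically compatible with the forgetful functor — is the natural and, I believe, correct route, and is almost certainly the content of the cited FG2 lemma.

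Two remarks. First, you correctly read past a likely typo in the paper: the text in \secref{restr central char 1} calls $(\iota^{\sZ',\sZ}_\hg)^*$ a \emph{right} adjoint of $(\iota^{\sZ',\sZ}_\hg)_*$, but the way the lemma is actually used in \propref{prop base change central character} ($\Hom(\iota^*\CM_1,\iota^*\CM_2)\simeq\Hom(\CM_1,\iota_*\iota^*\CM_2)$, then $\iota_*\iota^*\CM_2\simeq\sZ'\otimes_\sZ\CM_2$) forces $\iota^*$ to be the \emph{left} adjoint, i.e. the derived tensor product; with the genuine right adjoint $\iota^!$ the diagram could only commute up to a shift by the codimension and a line bundle twist. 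You should say this explicitly rather than just glossing with ``although described above via an adjunction.''

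Second, the ``mild obstacle'' you flag is the right one, but the way you dispatch it is slightly loose. The statement that ``the Koszul terms form a class of objects adapted to computing the derived tensor product'' is not the issue — $K^\bullet$ resolves $\sZ'$, not $\CM$, so no adaptedness condition on $\CM$ is in play. What actually needs a sentence is the balance argument identifying $K^\bullet\otimes_\sZ(-)$ with the left adjoint of $(\iota^{\sZ',\sZ}_\hg)_*$ on the $\hg$-module side: the functor $K^\bullet\otimes_\sZ(-)$ is exact on $\bD(\hg_\crit\mod_\sZ)$ because each $K^i$ is $\sZ$-free, it comes with a natural transformation from the identity (inclusion of the degree-$0$ term $K^0\otimes_\sZ\CM=\CM$), and this exhibits it as left adjoint to the inclusion because on cohomology the unit is the projection $\CM\to\sZ'\otimes_\sZ\CM$ and the counit is an isomorphism on $\hg_\crit\mod_{\sZ'}$. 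That is a cleaner way to close the gap than appealing to adapted classes, which would require knowing $\hg_\crit\mod_\sZ$ has enough $\sZ$-flat objects.
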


The functor $(\iota^{\sZ,\sZ'}_\hg)^*$ is naturally equipped with a DG model, 
and as such is compatible with the action of $\bC^b(\sZ\mod^{free,fin.rk.})$, and hence
is a functor between categories over $\Spec(\sZ)$.

\medskip

\begin{lem}
The functor 
$$(\iota^{\sZ',\sZ}_\hg)^*:\bD(\hg_\crit\mod_\sZ)\to \bD(\hg_\crit\mod_{\sZ'})$$
sends $\bD^f(\hg_\crit\mod_\sZ)$ to $\bD^f(\hg_\crit\mod)$.
\end{lem}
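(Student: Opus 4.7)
My approach is to reduce the statement to the fact that a regular closed immersion of affine schemes $\iota^{\sZ',\sZ}:\Spec(\sZ')\hookrightarrow\Spec(\sZ)$ makes $\sZ'$ into a perfect $\sZ$-module. By the definition of $\bD^f(\hg_\crit\mod_{\sZ'})$, given $\CM\in\bD^f(\hg_\crit\mod_\sZ)$ I must show that $(\iota^{\sZ'}_\hg)_*\circ(\iota^{\sZ',\sZ}_\hg)^*(\CM)$ lies in $\bD^f(\hg_\crit\mod)$. The key input is that the subcategory $\bD^f(\hg_\crit\mod)\subset\bD(\hg_\crit\mod)$ is by construction a Karoubian triangulated subcategory, so it is closed under shifts, cones, finite direct sums, and direct summands.

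First, I would observe that, since $\on{ker}(\fZ^i_\fg\to\sZ)$ is finitely presented and regular for some $i$, and the map $\sZ\to\sZ'$ has analogously regular finitely presented kernel, the object $\sZ'$ is perfect as a $\sZ$-module. Concretely, $\sZ'$ admits a finite Koszul-type resolution $P^\bullet\to\sZ'$ by finitely generated free $\sZ$-modules $P^{-n},\dots,P^0$, each of the form $\sZ^{r_k}$.

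Second, by \lemref{i g well behaved}, the functor $(\iota^{\sZ',\sZ}_\hg)^*$ intertwines with the forgetful functor to $\bD(\sZ\mod)$ and is computed there by $\CM\mapsto\CM\overset{L}{\underset{\sZ}\otimes}\sZ'$. Using the resolution $P^\bullet$, at the level of underlying $\hg_\crit$-modules the object $(\iota^{\sZ'}_\hg)_*\circ(\iota^{\sZ',\sZ}_\hg)^*(\CM)$ is represented by a finite complex whose terms are $(\iota^\sZ_\hg)_*(\CM)\underset{\sZ}\otimes P^k\simeq(\iota^\sZ_\hg)_*(\CM)^{\oplus r_k}$, i.e., finite direct sums of copies of $(\iota^\sZ_\hg)_*(\CM)\in\bD^f(\hg_\crit\mod)$.

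Third, I would conclude by noting that $\bD^f(\hg_\crit\mod)$, being a Karoubian triangulated subcategory of $\bD(\hg_\crit\mod)$, is closed under finite direct sums and under finite iterated cones with shifts. Hence a finite complex in $\bD(\hg_\crit\mod)$ whose terms all lie in $\bD^f(\hg_\crit\mod)$ itself belongs to $\bD^f(\hg_\crit\mod)$, which is what we needed. The only non-trivial step is the first one, namely the perfectness of $\sZ'$ over $\sZ$; but this is exactly the condition built into the regularity assumptions on $\sZ$ and $\sZ'$ imposed in \secref{sZ}, so there is no real obstacle. The argument is essentially formal once one has identified $(\iota^{\sZ',\sZ}_\hg)^*$ with derived tensor product via \lemref{i g well behaved}.
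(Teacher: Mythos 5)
Your proof is correct and takes essentially the same route as the paper's: after identifying $(\iota^{\sZ'}_\hg)_*\circ(\iota^{\sZ',\sZ}_\hg)^*(\CM)$ with $(\iota^\sZ_\hg)_*\bigl(\sZ'\overset{L}{\underset{\sZ}\otimes}\CM\bigr)$, one invokes the finite (Koszul) resolution of $\sZ'$ by finitely generated free $\sZ$-modules arising from the regularity hypothesis of \secref{sZ} and concludes by the closure of $\bD^f(\hg_\crit\mod)$ under shifts, cones, finite direct sums, and direct summands. The paper simply states "$\sZ'$ admits a finite resolution by locally free $\sZ$-modules"; your spelling out of the Koszul complex and of the role of \lemref{i g well behaved} is the same argument with a bit more detail.
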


\begin{proof}

For $\CM\in \bD(\hg_\crit\mod_\sZ)$,
$$(\iota^{\sZ'}_\hg)_*\circ (\iota^{\sZ',\sZ}_\hg)^*(\CM)\simeq
(\iota^{\sZ}_\hg)_*\left((\iota^{\sZ',\sZ}_\hg)_*\circ (\iota^{\sZ',\sZ}_\hg)^*(\CM)\right)
\simeq (\iota^{\sZ}_\hg)_*\left(\sZ'\underset{\sZ}\otimes \CM\right),$$
and the assertion follows from the fact that $\sZ'$ admits a finite resolution 
by locally free $\sZ$-modules.

\end{proof}

Thus, we obtain a 1-morphism
$$(\iota^{\sZ',\sZ}_\hg)^*:\bD^f(\hg_\crit\mod_\sZ)\to \bD^f(\hg_\crit\mod_{\sZ'})$$
over $\Spec(\sZ)$. 

\sssec{} \label{restr central char 2}

By \secref{univ ppty base change}, the functor $(\iota^{\sZ',\sZ}_\hg)^*$ gives
rise to a functor
\begin{equation} \label{Z to Z'}
(\iota^{\sZ',\sZ}\underset{\Spec(\sZ)}\times
\iota^{\sZ,\sZ'}_\hg)^*:
\Spec(\sZ')\underset{\Spec(\sZ)}\times \bD^f(\hg_\crit\mod_\sZ) \to
\bD^f(\hg_\crit\mod_{\sZ'})
\end{equation}
and by ind-extension a functor
\begin{equation} \label{Z to Z' ind}
(\iota^{\sZ',\sZ}\underset{\Spec(\sZ)}\times
\iota^{\sZ,\sZ'}_\hg)^*:
\Spec(\sZ')\underset{\Spec(\sZ)}\arrowtimes \bD^f(\hg_\crit\mod_\sZ) \to
\bD_{ren}(\hg_\crit\mod_{\sZ'})
\end{equation}

\begin{prop}   \label{prop restr central char}
The functors \eqref{Z to Z'} and \eqref{Z to Z' ind} are fully faithful.
\end{prop}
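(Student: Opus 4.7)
The plan is to reduce fully-faithfulness to an identification of Hom spaces between generating objects, and then match the two sides using the tight-monoidal machinery of \corref{tight mon} on one side and a projection formula for the regular closed immersion $\iota^{\sZ',\sZ}$ on the other.

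First, fully-faithfulness of \eqref{Z to Z' ind} follows from that of \eqref{Z to Z'} on compact objects by a standard ind-completion argument, so I focus on \eqref{Z to Z'}. The base-changed category $\Spec(\sZ') \underset{\Spec(\sZ)}\times \bD^f(\hg_\crit\mod_\sZ)$ is generated, as a triangulated category over $\Spec(\sZ')$, by the essential image of the canonical functor $F_{\bC^l_1}^* \colon \bD^f(\hg_\crit\mod_\sZ) \to \Spec(\sZ') \underset{\Spec(\sZ)}\times \bD^f(\hg_\crit\mod_\sZ)$, and \eqref{Z to Z'} intertwines the action of $\bD^{perf}(\sZ'\mod)$. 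Thus it suffices to check that for $\CM_1, \CM_2 \in \bD^f(\hg_\crit\mod_\sZ)$ and $X \in \bD^{perf}(\sZ'\mod)$, the induced map
\[
\Hom\bigl(F_{\bC^l_1}^*(\CM_1),\, X \underset{\sZ'}\otimes F_{\bC^l_1}^*(\CM_2)\bigr) \longrightarrow \Hom\bigl((\iota^{\sZ',\sZ}_\hg)^*(\CM_1),\, X \underset{\sZ'}\otimes (\iota^{\sZ',\sZ}_\hg)^*(\CM_2)\bigr)
\]
is an isomorphism.

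To identify the two sides, note that $\bC^{perf}(\Spec(\sZ))$ is rigid, since perfect complexes over the commutative ring $\sZ$ carry $\sZ$-linear duals; hence by \propref{rigid is tight} the monoidal functor $(\iota^{\sZ',\sZ})^* \colon \bC^{perf}(\Spec(\sZ)) \to \bC^{perf}(\Spec(\sZ'))$ is tight, and \corref{tight mon}(2) rewrites the left-hand side as
\[
\Hom_{\bD_{ren}(\hg_\crit\mod_\sZ)}\bigl(\CM_1,\, (\iota^{\sZ',\sZ})_*(X) \underset{\sZ}\otimes \CM_2\bigr),
\]
with $(\iota^{\sZ',\sZ})_*(X)$ denoting $X$ viewed as a perfect complex of $\sZ$-modules. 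Dually, the adjunction between $(\iota^{\sZ',\sZ}_\hg)^*$ and $(\iota^{\sZ',\sZ}_\hg)_*$ from \secref{restr central char 1} rewrites the right-hand side as $\Hom_{\bD_{ren}(\hg_\crit\mod_\sZ)}\bigl(\CM_1,\, (\iota^{\sZ',\sZ}_\hg)_*\bigl(X \underset{\sZ'}\otimes (\iota^{\sZ',\sZ}_\hg)^*(\CM_2)\bigr)\bigr)$.

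The two Hom groups then agree by the projection formula
\[
(\iota^{\sZ',\sZ}_\hg)_*\bigl(X \underset{\sZ'}\otimes (\iota^{\sZ',\sZ}_\hg)^*(\CM_2)\bigr) \simeq (\iota^{\sZ',\sZ})_*(X) \underset{\sZ}\otimes \CM_2,
\]
which at the level of underlying complexes reduces to the associativity identity $X \otimes_{\sZ'}(\sZ' \otimes^L_\sZ \CM_2) \simeq X \otimes^L_\sZ \CM_2$; the finite Koszul resolution of $\sZ'$ over $\sZ$, available because $\ker(\sZ \twoheadrightarrow \sZ')$ is finitely generated and regular, provides a DG model computing the derived tensor product, so this isomorphism holds as a natural transformation between functors of categories over $\Spec(\sZ)$. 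I expect the main obstacle to be not the Hom identification in itself but the careful bookkeeping required to lift the tightness argument, the adjunction of \secref{restr central char 1}, and the projection formula to compatible DG-enhanced statements in the framework of Sections \ref{homotopy monoidal categories and actions}--\ref{adjunctions and tightness}; once this is in place, the two chains of isomorphisms match by inspection and yield the required fully-faithfulness.
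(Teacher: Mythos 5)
Your proposal is correct and follows essentially the same route as the paper's own argument: reduce to compact objects, apply \corref{tight mon}(2) (with tightness coming from rigidity of $\bD^{perf}(\sZ\mod)$) to rewrite the left-hand side, use the $(\iota^{\sZ',\sZ}_\hg)^*,(\iota^{\sZ',\sZ}_\hg)_*$ adjunction plus the compatibility of $(\iota^{\sZ',\sZ}_\hg)^*$ with the forgetful functor (\lemref{i g well behaved}) for the right-hand side, and observe that both compute $\Hom_{\bD^f(\hg_\crit\mod_\sZ)}(\CM_1,\sZ'\otimes_\sZ \CM_2)$. Your extra bookkeeping with $X\in\bD^{perf}(\sZ'\mod)$ is fine but unnecessary, since $\Spec(\sZ')$ is affine so one may immediately take $X=\sZ'$, as the paper implicitly does.
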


\begin{proof}

It suffices to prove the assertion concerning the functor \eqref{Z to Z'}. Moreover,
it is easy to see that it suffices to show that for $\CM_1,\CM_2\in 
\bD^f(\hg_\crit\mod_\sZ)$, the functor $(\iota^{\sZ',\sZ}\underset{\Spec(\sZ)}\times
\iota^{\sZ,\sZ'}_\hg)^*$ induces an isomorphism
\begin{multline*}
\Hom_{\Spec(\sZ')\underset{\Spec(\sZ)}\times \bD^f(\hg_\crit\mod_\sZ)}
\left((\iota^{\sZ',\sZ}\underset{\Spec(\sZ)}\otimes \on{Id})^*(\CM_1),
(\iota^{\sZ',\sZ}\underset{\Spec(\sZ)}\otimes \on{Id})^*(\CM_2)\right)\to \\
\to
\Hom_{\bD_{ren}(\hg_\crit\mod_{\sZ'})}\left(\iota^{\sZ,\sZ'}_\hg)^*(\CM_1),
\iota^{\sZ,\sZ'}_\hg)^*(\CM_2)\right).
\end{multline*}

We rewrite the LHS using \corref{tight mon}(2) as
\begin{multline*}
\Hom_{\bD^f(\hg_\crit\mod_\sZ)}\left(\CM_1,
(\iota^{\sZ',\sZ}\underset{\Spec(\sZ)}\otimes \on{Id})_*\circ 
(\iota^{\sZ',\sZ}\underset{\Spec(\sZ)}\otimes \on{Id})^*(\CM_2)\right)\simeq \\
\simeq \Hom_{\bD^f(\hg_\crit\mod_\sZ)}(\CM_1,\sZ'\underset{\sZ}\otimes \CM_2),
\end{multline*}
and the RHS using \lemref{i g well behaved} as
$$\Hom_{\bD^f(\hg_\crit\mod_\sZ)}(\CM_1,(\iota^{\sZ,\sZ'}_\hg)_*\circ 
(\iota^{\sZ,\sZ'}_\hg)^*(\CM_2))\simeq 
\Hom_{\bD^f(\hg_\crit\mod_\sZ)}(\CM_1,\sZ'\underset{\sZ}\otimes \CM_2),$$
implying the desired isomorphism.

\end{proof}

\ssec{}

The functor \eqref{Z to Z' ind} that appears in \propref{prop restr central char}
is not an equivalence of categories. We shall now repeat the manipulation 
of \secref{turning Ups} and turn it into an equivalence by modifying the LHS.

\sssec{}

Consider the functor 
$$(\iota^\sZ_\hg)_*:\bD^f(\hg_\crit\mod_\sZ)\to \bD^f(\hg_\crit\mod)$$
and its ind-extension 
$$(\iota^{\sZ',\sZ}_{\hg,ren})_*:\bD_{ren}(\hg_\crit\mod_\sZ)\to 
\bD_{ren}(\hg_\crit\mod).$$

\begin{prop}   \label{i inf cons}
The functor $(\iota^{\sZ}_{\hg,ren})_*$ is exact, and is conservative 
when restricted to $\bD^+_{ren}(\hg_\crit\mod_\sZ)$.
\end{prop}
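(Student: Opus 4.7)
\medskip

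\noindent\textbf{Proof plan for Proposition \ref{i inf cons}.}
The plan is to deduce both exactness and conservativity by comparing $(\iota^{\sZ}_{\hg,ren})_*$ with the underlying derived functor $(\iota^{\sZ}_{\hg})_*: \bD(\hg_\crit\mod_\sZ) \to \bD(\hg_\crit\mod)$, which at the abelian level is just the (exact, fully faithful) inclusion of $\hg_\crit\mod_\sZ$ as a full subcategory of $\hg_\crit\mod$. The main input is the natural isomorphism
\begin{equation}  \label{plan comm}
\Psi \circ (\iota^{\sZ}_{\hg,ren})_* \;\simeq\; (\iota^{\sZ}_{\hg})_* \circ \Psi^\sZ,
\end{equation}
as functors $\bD_{ren}(\hg_\crit\mod_\sZ) \to \bD(\hg_\crit\mod)$. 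To establish \eqref{plan comm}, I would note that both sides commute with direct sums (and more generally with homotopy colimits), while on the subcategory $\bD^f(\hg_\crit\mod_\sZ)$ of compact objects they both reduce to the tautological inclusion $\bD^f(\hg_\crit\mod_\sZ) \hookrightarrow \bD^f(\hg_\crit\mod) \subset \bD(\hg_\crit\mod)$, by the very definition of $(\iota^{\sZ}_{\hg,ren})_*$ as an ind-extension and of $\Psi$, $\Psi^\sZ$. Since $\bD^f(\hg_\crit\mod_\sZ)$ generates $\bD_{ren}(\hg_\crit\mod_\sZ)$, both functors in \eqref{plan comm} are determined by their restriction to this subcategory.

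\medskip

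For the exactness assertion, I would invoke the construction of the t-structure on $\bD_{ren}(\hg_\crit\mod)$ given in the proof of \propref{D plus}: an object $X \in \bD_{ren}(\hg_\crit\mod)$ is $\leq 0$ if and only if $\Psi(X) \in \bD^{\leq 0}(\hg_\crit\mod)$, while $\bD^{>0}_{ren}(\hg_\crit\mod)$ is the essential image of $\Phi|_{\bD^{>0}(\hg_\crit\mod)}$. Given $\CM \in \bD^{\leq 0}_{ren}(\hg_\crit\mod_\sZ)$, apply $\Psi$ and use \eqref{plan comm}: $\Psi \circ (\iota^{\sZ}_{\hg,ren})_*(\CM) \simeq (\iota^{\sZ}_{\hg})_* \circ \Psi^\sZ(\CM)$. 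Now $\Psi^\sZ$ is exact, so $\Psi^\sZ(\CM) \in \bD^{\leq 0}(\hg_\crit\mod_\sZ)$, and $(\iota^{\sZ}_{\hg})_*$ is the derived functor of an exact inclusion of abelian categories, hence is itself exact on $\bD(\hg_\crit\mod_\sZ)$. This gives right-exactness. For left-exactness, if $\CM \in \bD^{\geq 0}_{ren}(\hg_\crit\mod_\sZ)$ then $\CM \in \bD^+_{ren}(\hg_\crit\mod_\sZ)$, so we may write $\CM \simeq \Phi^\sZ(\CM')$ with $\CM' \in \bD^{\geq 0}(\hg_\crit\mod_\sZ)$; by \eqref{plan comm} together with the equivalence $\Phi \simeq (\Psi|_{\bD^+_{ren}})^{-1}$, we identify $(\iota^{\sZ}_{\hg,ren})_*(\CM) \simeq \Phi\left((\iota^{\sZ}_{\hg})_*(\CM')\right)$, which lies in $\bD^{\geq 0}_{ren}(\hg_\crit\mod)$ by exactness of the inclusion at the abelian level.

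\medskip

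For conservativity on $\bD^+_{ren}$, I would use the equivalence $\bD^+_{ren}(\hg_\crit\mod_\sZ) \simeq \bD^+(\hg_\crit\mod_\sZ)$ (and its counterpart for $\hg_\crit\mod$) to reduce to conservativity of $(\iota^{\sZ}_{\hg})_*: \bD^+(\hg_\crit\mod_\sZ) \to \bD^+(\hg_\crit\mod)$. If $\CM \in \bD^+(\hg_\crit\mod_\sZ)$ has $(\iota^{\sZ}_{\hg})_*(\CM) = 0$, then all cohomology modules $H^i(\CM) \in \hg_\crit\mod_\sZ$ vanish as objects of $\hg_\crit\mod$; since $\hg_\crit\mod_\sZ \hookrightarrow \hg_\crit\mod$ is fully faithful, $H^i(\CM) = 0$ for every $i$, hence $\CM = 0$.

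\medskip

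The only subtle step is the identification \eqref{plan comm} --- everything else is a formal consequence once this is in hand. The identification itself is essentially built into our definitions, but to make it rigorous one must verify it at the DG level (so that $\Psi$, $\Psi^\sZ$ and the ind-extension of $(\iota^{\sZ}_{\hg})_*$ can be composed) and then appeal to the general principle (cf. \lemref{functor and limit}, \lemref{generate colimit}) that colimit-preserving functors agreeing on compact generators agree everywhere. The extra care required here is to keep track of the DG models provided in \secref{good category sZ}, but no new geometric input is needed.
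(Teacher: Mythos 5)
Your isomorphism \eqref{plan comm} (the commutativity of the $\Psi$-diagram) is indeed easy and formal, exactly as you say: both sides commute with colimits and agree on the compact generators $\bD^f(\hg_\crit\mod_\sZ)$. Your conservativity argument also works, and can be phrased directly from \eqref{plan comm}: if $\CM\in\bD^+_{ren}(\hg_\crit\mod_\sZ)$ and $(\iota^\sZ_{\hg,ren})_*(\CM)=0$, then $(\iota^\sZ_\hg)_*(\Psi^\sZ(\CM))=0$, hence $\Psi^\sZ(\CM)=0$ since the inclusion $\hg_\crit\mod_\sZ\hookrightarrow\hg_\crit\mod$ is fully faithful and exact, and then $\CM=0$ since $\Psi^\sZ$ is an equivalence on $\bD^+_{ren}$.

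However, your left-exactness argument has a genuine gap, and it is precisely the hard step of the proof. You write that ``by \eqref{plan comm} together with the equivalence $\Phi\simeq(\Psi|_{\bD^+_{ren}})^{-1}$, we identify $(\iota^{\sZ}_{\hg,ren})_*(\CM)\simeq\Phi\left((\iota^{\sZ}_{\hg})_*(\CM')\right)$.'' But this identification is \emph{not} a formal consequence of \eqref{plan comm}. From \eqref{plan comm} one only learns that $\Psi\left((\iota^{\sZ}_{\hg,ren})_*(\CM)\right)\simeq(\iota^\sZ_\hg)_*(\CM')$; to upgrade this to an isomorphism $(\iota^{\sZ}_{\hg,ren})_*(\CM)\simeq\Phi\left((\iota^\sZ_\hg)_*(\CM')\right)$ by applying $\Phi$, one must already know that $(\iota^{\sZ}_{\hg,ren})_*(\CM)$ lies in $\bD^+_{ren}(\hg_\crit\mod)$ (the subcategory on which $\Phi\circ\Psi\simeq\on{Id}$) --- but that is exactly the left-exactness one is trying to prove. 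The functor $\Psi$ has a nontrivial kernel (the acyclic objects), so knowing $\Psi(X)\in\bD^+$ does not force $X\in\bD^+_{ren}$, and the characterization of $\bD^{\geq 0}_{ren}$ in \propref{D plus} is via the essential image of $\Phi|_{\bD^{>0}}$, not via the condition on $\Psi$.

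The paper establishes the commutativity of the $\Phi$-diagram --- i.e., that if $\Phi^\sZ(\CN)$ is presented as a hocolim of compacts $\CM_a\in\bD^f(\hg_\crit\mod_\sZ)$ in a way that $\underset{a}{colim}\,\Hom_{\bD(\hg_\crit\mod_\sZ)}(\CM',\CM_a)\to\Hom_{\bD(\hg_\crit\mod_\sZ)}(\CM',\CN)$ is an isomorphism for all $\CM'\in\bD^f(\hg_\crit\mod_\sZ)$, then the analogous statement holds after applying $(\iota^\sZ_\hg)_*$, with test objects $\CM''\in\bD^f(\hg_\crit\mod)$ --- via a genuinely non-formal argument. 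The subtle point is that a compact object $\CM''\in\bD^f(\hg_\crit\mod)$ is only \emph{almost compact} in $\bD(\hg_\crit\mod)$, not compact, so one cannot just commute $\Hom(\CM'',-)$ with the hocolim. The paper's proof first invokes \lemref{ext as dir lim} to pass from $\bD(\hg_\crit\mod)$ to the intermediate quotients $\bD(\hg_\crit\mod_{\fZ^i_\fg})$, and then uses the fact (established in \secref{restr central char 1}) that $(\iota^{\sZ,\fZ^i_\fg}_\hg)^*(\CM'')\in\bD^f(\hg_\crit\mod_\sZ)$ to reduce to the hypothesis on $\Phi^\sZ(\CN)$. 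This input about the behaviour of the central character filtration $\{\fZ^i_\fg\}$ has no formal substitute, and your proposal misses it.
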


The proof will be based on the following lemma, established in 
\cite{FG2}, Proposition 23.11:

\begin{lem}  \label{ext as dir lim} 
For $\CM_1,\CM_2\in \bD^+(\hg_\crit\mod_\sZ)$ with $\CM_1$ almost compact,
the natural map
\begin{multline*}
\underset{i}{colim}\, \Hom_{\bD(\hg_\crit\mod_{\fZ^i_\fg})}
\left((\iota^{\sZ,\fZ_\fg^i}_\hg)_*(\CM_1),(\iota^{\sZ,\fZ_\fg^i}_\hg)_*(\CM_2)\right)
\to \\
\to \Hom_{\bD(\hg_\crit\mod)}\left((\iota^{\sZ}_\hg)_*(\CM_1),
(\iota^{\sZ}_\hg)_*(\CM_2)\right)
\end{multline*} 
is an isomorphism.
\end{lem}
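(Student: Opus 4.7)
The plan is to reduce the comparison of the two Ext groups to a construction with an explicit injective resolution in the ambient category $\hg_\crit\mod$, combined with a filtration coming from the continuity of the action of $\fZ_\fg$.

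First, I would represent $(\iota^\sZ_\hg)_*(\CM_2)$ by a bounded-below complex of injectives $I^\bullet$ in $\hg_\crit\mod$, so that the RHS of the comparison map is the cohomology of $\Hom^\bullet_{\hg_\crit\mod}((\iota^\sZ_\hg)_*(\CM_1), I^\bullet)$. For each $\CM\in \hg_\crit\mod$, set $\CM^{[i]} := \{m\in \CM\,|\,\ker(\fZ_\fg \to \fZ^i_\fg)\cdot m=0\}$; this lies in $\hg_\crit\mod_{\fZ^i_\fg}$, and continuity of the center action yields $\CM=\varinjlim_i \CM^{[i]}$. Applied termwise to $I^\bullet$, this produces a filtration $I^\bullet = \varinjlim_i I^{\bullet,[i]}$ by subcomplexes in $\hg_\crit\mod_{\fZ^i_\fg}$.

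Next, the functor $(-)^{[i]}$ is right adjoint to the fully faithful inclusion $\hg_\crit\mod_{\fZ^i_\fg}\hookrightarrow \hg_\crit\mod$, so preserves injectives; thus each $I^{j,[i]}$ is injective in $\hg_\crit\mod_{\fZ^i_\fg}$. For $i$ large enough that $\sZ$ is a quotient of $\fZ^i_\fg$ we have $\CM_2 = \CM_2^{[i]}$, and assuming the subcomplex $I^{\bullet,[i]}$ is still a resolution of $\CM_2$ (see below), it computes $\Ext^*$ in $\hg_\crit\mod_{\fZ^i_\fg}$. Full faithfulness of the abelian inclusion gives
$$\Hom^\bullet_{\hg_\crit\mod_{\fZ^i_\fg}}((\iota^{\sZ,\fZ^i_\fg}_\hg)_*(\CM_1), I^{\bullet,[i]}) = \Hom^\bullet_{\hg_\crit\mod}((\iota^\sZ_\hg)_*(\CM_1), I^{\bullet,[i]}).$$
Taking the colimit over $i$ and invoking almost compactness of $(\iota^\sZ_\hg)_*(\CM_1)$---which permits commuting $\Hom$ with filtered colimits of bounded-below complexes, cf.\ \propref{modifying plus}---yields $\Hom^\bullet_{\hg_\crit\mod}((\iota^\sZ_\hg)_*(\CM_1), I^\bullet)$, which is precisely the RHS of the comparison map. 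This shows both that the colimit on the LHS stabilizes the correct values and that the resulting map to the RHS is an isomorphism.

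The main obstacle is verifying that $I^{\bullet,[i]}$ really resolves $\CM_2$ in $\hg_\crit\mod_{\fZ^i_\fg}$. Since $(-)^{[i]}$ is only left exact, termwise application to the exact complex $0\to \CM_2\to I^0\to I^1\to\cdots$ produces a spectral sequence whose $E_2$-page involves the derived functors $R^p(-)^{[i]}(\CM_2)$; we need these to vanish for $p>0$. The required vanishing should follow from the assumption that the ideals $\ker(\fZ^j_\fg\to \fZ^i_\fg)$ for $j\geq i$ are finitely presented and regular, which allows one to compute $R^p(-)^{[i]}$ as a limit of Koszul cohomologies that vanish on a module like $\CM_2$ whose support has already shrunk to $\Spec(\sZ)\subset \Spec(\fZ^i_\fg)$. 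Establishing this vanishing rigorously---and ensuring that the Mittag-Leffler conditions on the filtered system $\{I^{\bullet,[j]}\}$ are satisfied so that the colimit preserves cohomology---is the technical heart of the proof.
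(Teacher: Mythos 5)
The paper does not actually prove this lemma: it quotes it from \cite{FG2}, Proposition 23.11, so there is no in-text argument to compare yours against, and I evaluate the proposal on its own terms. The step you flag as ``the technical heart'' --- that the subcomplex $I^{\bullet,[i]}$ still resolves $\CM_2$, i.e.\ that $R^p(-)^{[i]}(\CM_2)=0$ for $p>0$ --- is not merely unverified; it is false, and its failure is fatal to the strategy. Since $(-)^{[i]}$ is right adjoint to the exact inclusion $\hg_\crit\mod_{\fZ^i_\fg}\hookrightarrow \hg_\crit\mod$, the complex $I^{\bullet,[i]}$ is a bounded-below complex of injectives representing $R(\iota^{\fZ^i_\fg}_\hg)^!\bigl((\iota^{\sZ}_\hg)_*(\CM_2)\bigr)$, so by adjunction
$$H^\bullet\Hom^\bullet\left((\iota^{\sZ,\fZ^i_\fg}_\hg)_*(\CM_1),I^{\bullet,[i]}\right)\simeq
\Hom_{\bD(\hg_\crit\mod)}\left((\iota^{\sZ}_\hg)_*(\CM_1),(\iota^{\sZ}_\hg)_*(\CM_2)\right)$$
for \emph{every} $i$, with no hypothesis on $\CM_1$. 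In other words, your construction computes the constant system whose value is the right-hand side of the lemma; it only agrees with the left-hand side if $I^{\bullet,[i]}\simeq \CM_2$, which is exactly the assertion that $(\iota^{\fZ^i_\fg}_\hg)_*$ is fully faithful on the relevant part of $\bD^+$. Were that true, the colimit and the almost-compactness hypothesis in the statement would be vacuous. They are not: for a regular ideal $I\subset A$ and an $A/I$-module $M$ one has $\on{Ext}^p_A(A/I,M)\simeq M\otimes \Lambda^p\bigl((I/I^2)^\vee\bigr)$, nonzero in positive degrees, so regularity of the ideals $\ker(\fZ^j_\fg\to\fZ^i_\fg)$ produces exactly the opposite of the vanishing you hope for. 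Concretely, an extension of $(\iota^{\sZ}_\hg)_*(\CM_1)$ by $(\iota^{\sZ}_\hg)_*(\CM_2)$ in $\hg_\crit\mod$ is in general killed only by $\ker(\fZ_\fg\to\sZ)^2$ and not by $\ker(\fZ_\fg\to\fZ^i_\fg)$, hence does not define a class in $\on{Ext}^1$ of $\hg_\crit\mod_{\fZ^i_\fg}$ for the given $i$.

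The true mechanism must instead explain why the Koszul discrepancy between $\on{Ext}$ in $\hg_\crit\mod_{\fZ^i_\fg}$ and in $\hg_\crit\mod$ disappears in the colimit over $i$, and this is precisely where almost compactness of $\CM_1$ has to enter. For instance, for surjectivity on $\on{Ext}^1$: if $\CM_1$ is suitably finitely generated, the continuity of the $\fZ_\fg$-action forces the middle term $E$ of any extension $0\to(\iota^{\sZ}_\hg)_*(\CM_2)\to E\to(\iota^{\sZ}_\hg)_*(\CM_1)\to 0$ to be annihilated by $\ker(\fZ_\fg\to\fZ^j_\fg)$ for some $j$ depending on $E$, so the class is hit from the $j$-th term of the system; injectivity requires a separate argument. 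Your outer framework (injective resolutions, the exhaustion $I^\bullet=\varinjlim_i I^{\bullet,[i]}$, almost compactness to commute $\Hom$ with the filtered colimit) is sound, but as written the identification of the individual terms of your colimit with those of the lemma's colimit begs the question at the decisive step.
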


\begin{proof}

The right-exatness of $(\iota^{\sZ}_{\hg,ren})_*$ follows by the definition
of the t-structures on both sides from the following diagram
$$
\CD
\bD_{ren}(\hg_\crit\mod_\sZ)    @>(\iota^{\sZ}_{\hg,ren})_*>>
\bD_{ren}(\hg_\crit\mod)  \\
@V{\Psi^\sZ}VV    @V{\Psi}VV  \\
\bD(\hg_\crit\mod_\sZ)    @>(\iota^{\sZ}_{\hg})_*>>
\bD(\hg_\crit\mod),
\endCD
$$
which is commutative by construction.

\medskip

To prove that $(\iota^{\sZ}_{\hg,ren})_*$ is left-exact and conservative on 
$\bD^+_{ren}(\hg_\crit\mod_\sZ)$, it suffices to show
that the following diagram also commutes:
$$
\CD
\bD_{ren}(\hg_\crit\mod_\sZ)    @>(\iota^{\sZ}_{\hg,ren})_*>>
\bD_{ren}(\hg_\crit\mod)  \\
@A{\Phi^\sZ}AA    @A{\Phi}AA  \\
\bD^+(\hg_\crit\mod_\sZ)    @>(\iota^{\sZ}_{\hg})_*>>
\bD^+(\hg_\crit\mod).
\endCD
$$

\medskip

In other words, we have to show the following: 

\medskip 

\noindent Let $\CN$ be an object
of $\bD^+(\hg_\crit\mod_\sZ)$, represented as $\underset{A}{hocolim}\, \CM_A$
for some set $A$, such that for $a\in A$, $\CM_a\in \bD^f(\hg_\crit\mod_\sZ)$, 
and so that the arrow
$$\underset{a}{colim}\, \Hom_{\bD(\hg_\crit\mod_\sZ)}(\CM',\CM_a)\to
\Hom_{\bD(\hg_\crit\mod_\sZ)}(\CM',\CN)$$
is an isomorphism for any $\CM'\in \bD^f(\hg_\crit\mod_\sZ)$.
We have to show that in this case the arrow
$$\underset{a}{colim}\, \Hom_{\bD(\hg_\crit\mod)}\left(\CM'',
(\iota^{\sZ}_{\hg})_*(\CM_a)\right)\to
\Hom_{\bD(\hg_\crit\mod)}\left(\CM'',
(\iota^{\sZ}_{\hg})_*(\CN)\right)$$
is also an isomorphism for any $\CM''\in \bD^f(\hg_\crit\mod)$.

\medskip

By \lemref{ext as dir lim}, the latter would follow once we show that 
\begin{multline*}
\underset{a}{colim}\, \Hom_{\bD(\hg_\crit\mod_{\fZ^i_\fg})}
\left(\CM'',
(\iota^{\sZ,\fZ^i_\fg}_{\hg})_*(\CM_a)\right)\to \\
\to \Hom_{\bD(\hg_\crit\mod_{\fZ^i_\fg})}\left(\CM'',
(\iota^{\sZ,\fZ^i_\fg}_{\hg})_*(\CN)\right)
\end{multline*}
is an isomorphism for any $i \gg 0$; in particular $i$ is such that we
can consider $\CM''$ as an object of $\bD^f(\hg_\crit\mod_{\fZ^i_\fg})$.

\medskip

We rewrite both sides of the above expression as

\begin{multline*}
\underset{a}{colim}\, \Hom_{\bD(\hg_\crit\mod_{\sZ})}
\left((\iota^{\sZ,\fZ^i_\fg}_{\hg})^*(\CM''),\CM_a\right)
\to  \\
\to \Hom_{\bD(\hg_\crit\mod_{\sZ})}
\left((\iota^{\sZ,\fZ^i_\fg}_{\hg})^*(\CM''),\CN\right).
\end{multline*}

The assertion follows now from the fact that 
$(\iota^{\sZ,\fZ^i_\fg}_{\fg})^*(\CM'')\in \bD^f(\hg_\crit\mod_{\sZ})$,
established above.

\end{proof}

\sssec{}   \label{restr central char 3}

We define a new functor

\begin{equation} \label{Z to Z' plus}
(\iota^{\sZ',\sZ}\underset{\Spec(\sZ)}\times
\iota^{\sZ,\sZ'}_\hg)^{*,+}:
\left(\Spec(\sZ')\underset{\Spec(\sZ)}\arrowtimes \bD^f(\hg_\crit\mod_\sZ)\right)^+ \to
\bD^+_{ren}(\hg_\crit\mod_{\sZ'})
\end{equation}
as follows.

For $\CM\in \Spec(\sZ')\underset{\Spec(\sZ)}\arrowtimes \bD^f(\hg_\crit\mod_\sZ)$,
which is $\geq i$, we set
$$(\iota^{\sZ',\sZ}\underset{\Spec(\sZ)}\times
\iota^{\sZ,\sZ'}_\hg)^{*,+}(\CM):=\tau^{\geq j}\left((\iota^{\sZ',\sZ}\underset{\Spec(\sZ)}\times
\iota^{\sZ,\sZ'}_\hg)^*(\CM)\right)$$
for some/any $j<i$. The independence of the choice of $j$ is assured by \propref{i inf cons},
since we have an isomorphism of functors:
\begin{multline} \label{two iotas}
(\iota^{\sZ'}_\hg)_* \circ (\iota^{\sZ',\sZ}\underset{\Spec(\sZ)}\times
\iota^{\sZ,\sZ'}_\hg)^*\simeq 
(\iota^{\sZ}_\hg)_*\circ (\iota^{\sZ',\sZ}\underset{\Spec(\sZ)}\otimes \on{Id})_*: \\
\Spec(\sZ')\underset{\Spec(\sZ)}\arrowtimes \bD^f(\hg_\crit\mod_\sZ)
\to \bD_{ren}(\hg_\crit\mod),
\end{multline}
and the latter functor is exact.

\medskip

As in \thmref{Upsilon +} one shows that the functor $(\iota^{\sZ',\sZ}\underset{\Spec(\sZ)}\times
\iota^{\sZ,\sZ'}_\hg)^{*,+}$ is an equivalence of categories. 

\medskip

Let $\bD^f(\hg_\crit\mod_{\sZ',\sZ})$ be the full subcategory of 
$\Spec(\sZ')\underset{\Spec(\sZ)}\arrowtimes \bD^f(\hg_\crit\mod_\sZ)$
consisting of objects $\CM$, such that 
$$(\iota^{\sZ',\sZ}\underset{\Spec(\sZ)}\otimes \on{Id})_*(\CM)\in 
\bD^f(\hg_\crit\mod_{\sZ}).$$
Let $\bD_{ren}(\hg_\crit\mod_{\sZ',\sZ})$ denote its ind-completion.

\medskip

Consider the restriction of the functor $(\iota^{\sZ',\sZ}\underset{\Spec(\sZ)}\times
\iota^{\sZ,\sZ'}_\hg)^{*,+}$ to $\bD^f(\hg_\crit\mod_{\sZ',\sZ})$. The isomorphism
\eqref{two iotas} implies that the image of this functor belongs to 
$\bD^f(\hg_\crit\mod_{\sZ'})$. Let $(\iota^{\sZ',\sZ}\underset{\Spec(\sZ)}\times
\iota^{\sZ,\sZ'}_\hg)^*_{ren}$ denote the ind-extension
$$\bD_{ren}(\hg_\crit\mod_{\sZ',\sZ})\to \bD^f(\hg_\crit\mod_{\sZ'}).$$

\begin{prop}
The functor $(\iota^{\sZ',\sZ}\underset{\Spec(\sZ)}\times
\iota^{\sZ,\sZ'}_\hg)^*_{ren}$ is an exact equivalence of categories.
\end{prop}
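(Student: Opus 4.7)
The plan is to adapt the strategy used to prove \thmref{equiv flags vs grass} (turning $\Upsilon$ into an equivalence) to the present setting, with the role of $\Upsilon_{ren}$ played by $(\iota^{\sZ',\sZ}\underset{\Spec(\sZ)}\times\iota^{\sZ,\sZ'}_\hg)^*_{ren}$ and with the role of $\Upsilon^+$ played by the already-established equivalence $(\iota^{\sZ',\sZ}\underset{\Spec(\sZ)}\times\iota^{\sZ,\sZ'}_\hg)^{*,+}$ between the plus-parts. The key structural fact is that, by \propref{affine exact}, the functor $(\iota^{\sZ',\sZ}\underset{\Spec(\sZ)}\otimes \on{Id})_*$ is exact and conservative, so that objects of $\bD^f(\hg_\crit\mod_{\sZ',\sZ})$ automatically lie in the plus-part $\left(\Spec(\sZ')\underset{\Spec(\sZ)}\arrowtimes \bD^f(\hg_\crit\mod_\sZ)\right)^+$, which is where the plus-version of the functor is defined.

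First I would establish fully faithfulness. Since the compact objects $\bD^f(\hg_\crit\mod_{\sZ',\sZ})$ lie in the plus-part, and since $(\iota^{\sZ',\sZ}\underset{\Spec(\sZ)}\times\iota^{\sZ,\sZ'}_\hg)^{*,+}$ is an equivalence on plus-parts, the map is fully faithful on compact objects, and hence on the ind-completion. Next, for essential surjectivity, it suffices to show every $\CM \in \bD^f(\hg_\crit\mod_{\sZ'})$ lies in the image. Applying the equivalence $(\iota^{\sZ',\sZ}\underset{\Spec(\sZ)}\times\iota^{\sZ,\sZ'}_\hg)^{*,+}$ backwards, we obtain a preimage $\CN$ in the plus-part, and the identification
\[
(\iota^{\sZ}_\hg)_*\circ(\iota^{\sZ',\sZ}\underset{\Spec(\sZ)}\otimes\on{Id})_*(\CN)\simeq (\iota^{\sZ'}_\hg)_*(\CM)
\]
from \eqref{two iotas}, combined with the definition of $\bD^f(\hg_\crit\mod_{\sZ'})$, forces $(\iota^{\sZ',\sZ}\underset{\Spec(\sZ)}\otimes\on{Id})_*(\CN) \in \bD^f(\hg_\crit\mod_\sZ)$, so $\CN \in \bD^f(\hg_\crit\mod_{\sZ',\sZ})$, as required.

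It remains to check exactness. Right-exactness is automatic: the source t-structure is defined by compact generators that map to compact generators of $\bD^f(\hg_\crit\mod_{\sZ'}) \cap \bD^{\leq 0}$, since $(\iota^{\sZ,\sZ'}_\hg)^*$ is a derived tensor product with $\sZ'$ and hence right-exact, and $(\iota^{\sZ',\sZ}\underset{\Spec(\sZ)}\otimes \on{Id})^*$ is tautologically right-exact. For left-exactness, one argues via the inverse functor: on compact objects, the inverse assigns to $\CM\in \bD^f(\hg_\crit\mod_{\sZ'})$ the unique preimage $\CN$ under $(\iota^{\sZ',\sZ}\underset{\Spec(\sZ)}\times\iota^{\sZ,\sZ'}_\hg)^{*,+}$; one needs to verify this inverse is right-exact when $\CM$ lies in the heart. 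This reduces via the isomorphism \eqref{two iotas} and the exactness and conservativity of $(\iota^{\sZ',\sZ}\underset{\Spec(\sZ)}\otimes\on{Id})_*$ to a statement about the corresponding inverse of $(\iota^{\sZ,\sZ'}_\hg)^*$ at the level of $\hg$-modules, where the regularity of the ideal defining $\sZ' \hookrightarrow \sZ$ provides a bounded Koszul-type resolution that controls cohomological amplitude on both sides.

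The main obstacle I anticipate is the last step in verifying exactness, specifically the compatibility between the base-changed t-structure on $\Spec(\sZ')\underset{\Spec(\sZ)}\arrowtimes \bD^f(\hg_\crit\mod_\sZ)$ (induced from the t-structure on $\bD_{ren}(\hg_\crit\mod_\sZ)$ via \secref{t structure ten prod}) and the natural t-structure on $\bD_{ren}(\hg_\crit\mod_{\sZ'})$. One has to check that the plus-equivalence is \emph{t-exact}, not merely preserving $\bD^+$. This is analogous to the verification carried out in \secref{proof of Miura} for the equivalence $\bD_{ren}(\QCoh(\check{\on{St}}/\cG))\simeq \bD_{ren}(\fD(\Fl^{\on{aff}}_G)_\crit\mod)^{I^0}$, and should follow from the flatness of $\BV_{\crit,i}$ over $\fZ^i_\fg$ together with the regularity of the embedding $\Spec(\sZ')\hookrightarrow \Spec(\sZ)$.
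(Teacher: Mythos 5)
Your proposal takes exactly the route the paper indicates: the authors omit the proof, stating that it ``essentially repeats the proof of Theorem~\ref{equiv flags vs grass}'', and that is precisely the template you adapt, with the compactness step (the analog of \propref{Ups finite}, which was the hard part for $\Upsilon_{ren}$) essentially built into the definition of $\bD^f(\hg_\crit\mod_{\sZ',\sZ})$ via \eqref{two iotas}. The obstacle you flag at the end is not a real one: the analog of \propref{Ups nearly exact} follows by combining \eqref{two iotas} (which shows that $(\iota^{\sZ'}_{\hg,ren})_*$ post-composed with your functor equals the exact functor $(\iota^{\sZ}_{\hg,ren})_*\circ(\iota^{\sZ',\sZ}\underset{\Spec(\sZ)}\otimes\on{Id})_*$) with the conservativity of $(\iota^{\sZ'}_{\hg,ren})_*$ on $\bD^+$ from \propref{i inf cons}, and the same pair of facts closes the small gap in your essential-surjectivity step where the identity you attribute directly to \eqref{two iotas} actually requires the observation that the discrepancy between $(\iota^{\sZ',\sZ}\underset{\Spec(\sZ)}\times\iota^{\sZ,\sZ'}_\hg)^*(\CN)$ and $\CM$ is acyclic and trapped in $\bD^+$, hence zero.
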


We omit the proof as it essentially repeats the proof of 
\thmref{equiv flags vs grass}.

\ssec{From D-modules to $\hg$-modules}

\sssec{}   \label{cat on Fl}

For any level $\kappa$ we consider the abelian category 
$\fD(\Fl^{\on{aff}}_G)_\kappa\mod$, and its derived category 
$\bD(\fD(\Fl^{\on{aff}}_G)_\kappa\mod)$.

Let $\fD^{fg}(\Fl^{\on{aff}}_G)_\kappa\mod\subset \fD(\Fl^{\on{aff}}_G)_\kappa\mod$ be the 
abelian subcategory of finitely generated D-modules. This pair of
categories satisfies the conditions of \secref{good abelian}. We obtain
the renormalized category $\bD_{ren}(\fD(\Fl^{\on{aff}}_G)_\kappa\mod)$,
$$\bD^f_{ren}(\fD(\Fl^{\on{aff}}_G)_\kappa\mod):= 
\bD^f(\fD(\Fl^{\on{aff}}_G)_\kappa\mod)\simeq \bD^b(\fD^{fg}(\Fl^{\on{aff}}_G)_\kappa\mod)$$ and
$$\bD_{ren}(\fD(\Fl^{\on{aff}}_G)_\kappa\mod)\simeq 
\Ind\left(\bD^f(\fD(\Fl^{\on{aff}}_G)_\kappa\mod)\right).$$

\medskip

Our present goal is to construct a functor
$$\Gamma_\Fl:\bD^f(\fD(\Fl^{\on{aff}}_G)_\kappa\mod)\to \bD^f(\hg_\kappa\mod),$$
and its ind-extension
$$\Gamma_\Fl:\bD_{ren}(\fD(\Fl^{\on{aff}}_G)_\kappa\mod)\to 
\bD_{ren}(\hg_\kappa\mod).$$

In order to do this we will use a particular DG model for the
category $\bD^f(\fD(\Fl^{\on{aff}}_G)_\kappa\mod)$.

\sssec{}   \label{initial DG}

Let $\bC^b(\fD^{fg}(\Fl^{\on{aff}}_G)_\kappa\mod)$ be the category consisting of finite
complexes of objects from $\fD^f(\Fl^{\on{aff}}_G)_\kappa\mod$. Let
$\bC_{acycl}^b(\fD^{fg}(\Fl^{\on{aff}}_G)_\kappa\mod)\subset \bC^b(\fD^{fg}(\Fl^{\on{aff}}_G)_\kappa\mod)$
be the DG subcategory of acyclic complexes. By definition, $\bD^f(\fD(\Fl^{\on{aff}}_G)_\kappa\mod)$ is the triangulated quotient
$$\Ho\left(\bC^b(\fD^{fg}(\Fl^{\on{aff}}_G)_\kappa\mod)\right)/
\Ho\left(\bC^b_{acycl}(\fD^{fg}(\Fl^{\on{aff}}_G)_\kappa\mod)\right),$$
which by \secref{rigid quotients} endows $\bD^f(\fD(\Fl^{\on{aff}}_G)_\kappa\mod)$ 
with a DG model. This is the standard DG model for 
$\bD^f(\fD(\Fl^{\on{aff}}_G)_\kappa\mod)$. 

\medskip

Let now $'\bC^b(\fD^{fg}(\Fl^{\on{aff}}_G)_\kappa\mod)$ be a full DG subcategory
of $\bC(\fD(\Fl^{\on{aff}}_G)_\kappa\mod)$ that consists of finite complexes
$\CF^\bullet$, with cohomologies belonging to $\bD^{fg}(\fD(\Fl^{\on{aff}}_G)_\kappa\mod)$, 
and such that each $\CF^k$ is supported on a finite-dimensional subscheme of $\Fl^{\on{aff}}_G$.
Let $'\bC^b_{acycl}(\fD^{fg}(\Fl^{\on{aff}}_G)_\kappa\mod)\subset {}
'\bC^b(\fD^{fg}(\Fl^{\on{aff}}_G)_\kappa\mod)$ be the DG subcategory of acyclic 
complexes.

\medskip

We have a canonical 1-morphism in $\DGCat$:
\begin{multline*}
\bC^b(\fD^{fg}(\Fl^{\on{aff}}_G)_\kappa\mod)/\bC^b_{acycl}(\fD^{fg}(\Fl^{\on{aff}}_G)_\kappa\mod)\to \\
\to {}'\bC^b(\fD^{fg}(\Fl^{\on{aff}}_G)_\kappa\mod)/{}'\bC^b_{acycl}(\fD^{fg}(\Fl^{\on{aff}}_G)_\kappa\mod).
\end{multline*}
It is easy to see that it induces an isomorphism on the level of
homotopy categories. We obtain an equivalence
$$\bD^f(\fD(\Fl^{\on{aff}}_G)_\kappa\mod)\simeq 
\Ho\left({}'\bC^b(\fD^{fg}(\Fl^{\on{aff}}_G)_\kappa\mod)\right)/
\Ho\left({}'\bC^b_{acycl}(\fD^{fg}(\Fl^{\on{aff}}_G)_\kappa\mod)\right),$$
which equips $\bD^f(\fD(\Fl^{\on{aff}}_G)_\kappa\mod)$ with a DG model within
the same equivalence class.

\sssec{}

Let now
$$'\bC^b_{adapt}(\fD(\Fl^{\on{aff}}_G)_\kappa\mod)\subset {}'\bC^b(\fD^{fg}(\Fl^{\on{aff}}_G)_\kappa\mod)$$
be a DG subcategory, whose objects are complexes $\CF^\bullet$
such that for each $k$
$$R^i\Gamma_\Fl(\CF^k)=0,\,\, \forall i>0.$$

Set
$$'\bC^b_{adapt,acycl}(\fD^{fg}(\Fl^{\on{aff}}_G)_\kappa\mod):='\bC^b_{adapt}(\fD^{fg}(\Fl^{\on{aff}}_G)_\kappa\mod)
\cap {}'\bC^b_{acycl}(\fD^{fg}(\Fl^{\on{aff}}_G)_\kappa\mod).$$
We have a canonical 1-morphism:
\begin{multline*}
'\bC^b_{adapt}(\fD^{fg}(\Fl^{\on{aff}}_G)_\kappa\mod)/{}
'\bC^b_{adapt,acycl}(\fD^{fg}(\Fl^{\on{aff}}_G)_\kappa\mod) \to \\
\to {}'\bC^b(\fD^{fg}(\Fl^{\on{aff}}_G)_\kappa\mod)/
{}'\bC^b_{acycl}(\fD^{fg}(\Fl^{\on{aff}}_G)_\kappa\mod).
\end{multline*}

\begin{lem}  \label{adapt is equiv}
The functor
\begin{multline*}
\Ho\left({}'\bC^b_{adapt}(\fD^{fg}(\Fl^{\on{aff}}_G)_\kappa\mod)\right)/{}
\Ho\left({}'\bC^b_{adapt,acycl}(\fD^{fg}(\Fl^{\on{aff}}_G)_\kappa\mod)\right) \to \\
\to 
\Ho\left({}'\bC^b(\fD^{fg}(\Fl^{\on{aff}}_G)_\kappa\mod)\right)/
\Ho\left({}'\bC^b_{acycl}(\fD^{fg}(\Fl^{\on{aff}}_G)_\kappa\mod)\right)
\end{multline*}
is an equivalence.
\end{lem}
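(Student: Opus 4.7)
The plan is to apply \lemref{adapted replacement}, specifically its condition (2), with
$$\bD := \Ho\left({}'\bC^b(\fD^{fg}(\Fl^{\on{aff}}_G)_\kappa\mod)\right),\quad \bD_1 := \Ho\left({}'\bC^b_{adapt}(\fD(\Fl^{\on{aff}}_G)_\kappa\mod)\right),$$
and $\bD' := \Ho\left({}'\bC^b_{acycl}(\fD^{fg}(\Fl^{\on{aff}}_G)_\kappa\mod)\right)$. This reduces the lemma to producing, for every bounded complex $\CF^\bullet$ in $'\bC^b(\fD^{fg}(\Fl^{\on{aff}}_G)_\kappa\mod)$, a quasi-isomorphism $\CF^\bullet \to \CG^\bullet$ with $\CG^\bullet$ a bounded complex of $\Gamma_\Fl$-acyclic twisted D-modules, each term of which is still supported on a finite-dimensional subscheme of $\Fl^{\on{aff}}_G$.

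The first step is to localize the problem to a finite-dimensional scheme. Since $\CF^\bullet$ has only finitely many non-zero terms and each $\CF^k$ is supported on some finite-dimensional subscheme $Y_{n_k}$ in the ind-scheme presentation $\Fl^{\on{aff}}_G = \underset{n}\cup\, Y_n$, taking $N := \max_k n_k$ we may regard $\CF^\bullet$ as a bounded complex in the abelian category $\fD(Y_N)_\kappa\mod$ of quasi-coherent $\kappa$-twisted D-modules on the finite-dimensional projective scheme $Y_N$.

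The second step is to construct $\CG^\bullet$ by a Cartan--Eilenberg type argument. The category $\fD(Y_N)_\kappa\mod$ is a Grothendieck abelian category, so it has enough injectives; moreover, the functor $\Gamma_{Y_N}:\fD(Y_N)_\kappa\mod \to \Vect_k$ has finite cohomological dimension because $Y_N$ is a projective scheme of finite dimension (the bound on cohomological dimension coming from the analogous fact for quasi-coherent $\CO_{Y_N}$-modules via the good filtration on D-modules). Hence a standard Cartan--Eilenberg resolution of $\CF^\bullet$ by injectives, truncated beyond degree $M + \dim Y_N$ where $M$ is the top non-vanishing index of $\CF^\bullet$, produces a bounded complex $\CG^\bullet$ of injective (hence $\Gamma_{Y_N}$-acyclic, equivalently $\Gamma_\Fl$-acyclic) D-modules on $Y_N$, together with a quasi-isomorphism $\CF^\bullet \to \CG^\bullet$. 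Each $\CG^k$ is by construction supported on $Y_N$, and since $\CF^\bullet \to \CG^\bullet$ is a quasi-isomorphism the cohomologies of $\CG^\bullet$ coincide with those of $\CF^\bullet$, hence remain in $\fD^{fg}(\Fl^{\on{aff}}_G)_\kappa\mod$. This shows $\CG^\bullet \in {}'\bC^b_{adapt}$, verifying condition (2) of \lemref{adapted replacement}.

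The main potential obstacle is the claim that $\Gamma$ on $\kappa$-twisted D-modules on a finite-dimensional projective scheme has finite cohomological dimension, but this is standard: any coherent twisted D-module admits a good filtration whose associated graded is a coherent $\CO$-module, and cohomology on a projective variety of dimension $d$ vanishes above degree $d$; the functor $\Gamma$ commutes with filtered colimits, which extends the bound to arbitrary quasi-coherent twisted D-modules. Modulo this point the rest of the construction is entirely formal.
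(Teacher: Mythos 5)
Your overall strategy is the same as the paper's: both reduce the statement to condition (2) of \lemref{adapted replacement} and then produce, for each bounded complex in ${}'\bC^b(\fD^{fg}(\Fl^{\on{aff}}_G)_\kappa\mod)$, a quasi-isomorphism into an object of ${}'\bC^b_{adapt}$. What differs is the construction of the adapted replacement. The paper simply takes a \v{C}ech resolution of each term with respect to a finite affine cover of the supporting finite-dimensional subscheme $\CY$: the totalization is a bounded complex (the cover is finite), its terms are direct images from affine opens of $\CY$ (hence $\Gamma_\Fl$-acyclic and still supported on $\CY$), and it is quasi-isomorphic to the original complex. You instead take a Cartan--Eilenberg resolution by injectives of the Grothendieck category $\fD(Y_N)_\kappa\mod$ and truncate it, invoking finite cohomological dimension of $\Gamma$ to make the truncation both bounded and termwise acyclic.

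Your route is correct, but note that it is not really more elementary: to conclude that injectives of the \emph{subcategory} $\fD(Y_N)_\kappa\mod$ are $R\Gamma_\Fl$-acyclic one needs to know that the derived functor of global sections computed with injectives of $\fD(Y_N)_\kappa\mod$ agrees with $R\Gamma_\Fl$ as defined on the ambient category (the closed embedding $\fD(Y_N)_\kappa\mod \hookrightarrow \fD(\Fl^{\on{aff}}_G)_\kappa\mod$ need not preserve injectives, since it has no exact left adjoint at the abelian level), and the standard way to verify this agreement is precisely by comparing both to \v{C}ech cohomology. Likewise the finite-cohomological-dimension input you invoke is typically established by the same \v{C}ech computation. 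So your argument is correct but implicitly depends on the \v{C}ech fact anyway, plus two extra ingredients (existence of enough injectives; dimension shifting in the truncation of the Cartan--Eilenberg total complex), while the paper's \v{C}ech replacement achieves the same in one step.
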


\begin{proof}

The assertion of the lemma follows from \lemref{adapted replacement}(2),
since every D-module $\CF$ over a finite-dimensional subscheme $\CY$ of $\Fl^{\on{aff}}_G$
admits a Cech resolution with respect to an affine cover of $\CY$.

\end{proof}

Thus, we have an equivalence
\begin{multline*}
\bD^f(\fD(\Fl^{\on{aff}}_G)_\kappa\mod)\simeq \\
\Ho\left({}'\bC^b_{adapt}(\fD^{fg}(\Fl^{\on{aff}}_G)_\kappa\mod)\right)/{}
\Ho\left({}'\bC^b_{adapt,acycl}(\fD^{fg}(\Fl^{\on{aff}}_G)_\kappa\mod)\right),
\end{multline*}
which equips $\bD^f(\fD(\Fl^{\on{aff}}_G)_\kappa\mod)$ with yet another DG
model within the same equivalence class. It is that latter
DG model that we will use to construct the functor of sections.

\sssec{}  \label{construct functor}

We have a DG functor
$$\Gamma_\Fl:{}'\bC^b_{adapt}(\fD^{fg}(\Fl^{\on{aff}}_G)_\kappa\mod)\to
\bC(\hg_\kappa\mod),$$
obtained by restriction from the evident functor
$$\Gamma_\Fl:\bC(\fD(\Fl^{\on{aff}}_G)_\kappa\mod)\to \bC(\hg_\kappa\mod).$$

By construction, 
$$\Gamma\left({}'\bC^b_{adapt,acycl}(\fD^{fg}(\Fl^{\on{aff}}_G)_\kappa\mod)\right)\subset
\bC_{acycl}(\hg_\kappa\mod),$$
which endows the (usual) derived functor
\begin{equation} \label{Gamma on f}
\Gamma_\Fl:\bD^f(\fD(\Fl^{\on{aff}}_G)_\kappa\mod)\to \bD(\hg_\kappa\mod)
\end{equation}
with a DG model.

\begin{prop}  \label{prop Gamma on f}
The image of the functor \eqref{Gamma on f} belongs to
$\bD^f(\hg_\kappa\mod)$.
\end{prop}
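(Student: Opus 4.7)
\smallskip

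The plan is to reduce the statement, via dévissage, to a geometric input about sections of single D-modules on finite-dimensional Schubert subvarieties, and then use the smoothness of the resulting $\hg_\kappa$-modules to fit them into the strongly triangulated subcategory generated by the $\BV_{\kappa,i}$.

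\smallskip

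First I would use the identification $\bD^f(\fD(\Fl^{\on{aff}}_G)_\kappa\mod) \simeq \bD^b(\fD^{fg}(\Fl^{\on{aff}}_G)_\kappa\mod)$ together with the fact that $\bD^f(\hg_\kappa\mod)$ is a full triangulated subcategory closed under direct summands, to reduce to verifying that $\Gamma_\Fl(\CF) \in \bD^f(\hg_\kappa\mod)$ for a single finitely generated $\CF \in \fD^{fg}(\Fl^{\on{aff}}_G)_\kappa\mod$. Any such $\CF$ is supported on a finite-dimensional closed Schubert subvariety $\ol{\Fl^{\on{aff}}_{G,w_0}}$, so the adapted DG model from \secref{initial DG}, together with \lemref{adapt is equiv}, computes $\Gamma_\Fl(\CF)$ by a \emph{bounded} complex of $\hg_\kappa$-modules. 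Since $\ol{\Fl^{\on{aff}}_{G,w_0}}$ is projective and finite-dimensional, each cohomology $H^i(\Gamma_\Fl(\CF))$ is a finitely generated $\hg_\kappa$-module, and thus the task becomes: show each such $H^i$, viewed as a single object of $\hg_\kappa\mod$, lies in $\bD^f(\hg_\kappa\mod)$.

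\smallskip

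Next, I would exploit smoothness. Because $\ol{\Fl^{\on{aff}}_{G,w_0}}$ is stabilized by a congruence subgroup $K_N \subset G[[t]]$, the ideal $\fg \otimes t^N\BC[[t]]$ acts by zero on $H^i(\Gamma_\Fl(\CF))$. By Frobenius reciprocity this yields a surjection $\BV_{\kappa,N}^{\oplus k} \twoheadrightarrow H^i(\Gamma_\Fl(\CF))$ for some finite $k$. Iterating this construction on the kernel, and using that the kernel remains annihilated by $\fg \otimes t^N\BC[[t]]$, would give a resolution whose terms are finite direct sums of $\BV_{\kappa,N}$'s. If this resolution can be truncated to a bounded one within the derived category, the result is an object in the strongly triangulated subcategory generated by the $\BV_{\kappa,i}$, and we are done. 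A complementary reduction uses the convolution identity $\Gamma_\Fl(\CF) \simeq \CF \star \BM_{\kappa,-2\rho}$ (for the critical level, and its analogues in general): combined with a further dévissage along the Schubert stratification of $\ol{\Fl^{\on{aff}}_{G,w_0}}$, this reduces the problem to the case of a delta-function, where the sections are an explicit Verma-type module.

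\smallskip

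The hard part will be the finiteness of the resolution in step three: the category of finitely generated smooth $\hg_\kappa$-modules is not Noetherian in an obvious sense, so there is no automatic way to truncate. The way I expect to overcome this is to use the finer structure inherited from $\CF$, namely Iwahori-monodromicity and a PBW-type filtration on $\BV_{\kappa,N}$ coming from its presentation $U(\fg \otimes t^{-1}\BC[t^{-1}]) \otimes U(\fg[t]/t^N)$. Under the monodromicity assumption one reduces to modules carrying a compatible grading by a lattice shift of $\cLambda$, and within each graded piece finite-generation becomes a statement about a finite-dimensional Lie algebra, where Noetherianness is automatic. This localizes the remaining work to an explicit calculation on Schubert cells, which reduces inductively to the single case $\CF = \delta_{1,\Fl^{\on{aff}}_G}$ handled by the identification $\Gamma_\Fl(\delta_{1,\Fl^{\on{aff}}_G}) \simeq \BM_{\kappa,-2\rho}$.
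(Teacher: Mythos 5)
Your reduction to a single finitely generated D-module supported on a closed finite-dimensional $\CY\subset \Fl^{\on{aff}}_G$ matches the paper's first step, but from there the argument has two problems, one local and one global.

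The local problem: your claim that $\fg\otimes t^N\BC[[t]]$ \emph{acts by zero} on $H^i(\Gamma_\Fl(\CF))$ is not correct. What holds is that the modules are smooth, i.e.\ every vector is annihilated by $\fg\otimes t^m\BC[[t]]$ for $m\gg 0$, and for $n$ large the subalgebra $\fg\otimes t^n\BC[[t]]$ kills the image of the coherent-sheaf generators; but the induced $\hg_\kappa$-modules (which are of the form $\Ind^{\hg_\kappa}_{\fg\otimes t^n\BC[[t]]}(-)$) are of course not annihilated by that ideal. This does not invalidate the next step (Frobenius reciprocity still gives a finite surjection $\BV_{\kappa,N}^{\oplus k}\twoheadrightarrow H^i$ for finitely generated smooth modules), but it shows the situation is less rigid than you are assuming.

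The global problem, which you yourself flag but do not resolve, is decisive. Membership in $\bD^f(\hg_\kappa\mod)$ is not a per-cohomology finiteness condition: the category is by definition the Karoubian envelope of the subcategory \emph{strongly generated} by the $\BV_{\kappa,i}$, so you must exhibit $\Gamma_\Fl(\CF)$ as (a summand of) a finite iterated cone of shifts of these objects. Iterating the Frobenius-reciprocity surjection produces a potentially infinite resolution, and your fallback --- Iwahori-monodromicity plus a PBW-type grading plus graded Noetherianness --- is both insufficient (graded Noetherianness would give a finitely generated resolution, not a finite-length one) and too restrictive, since the proposition is asserted for all finitely generated D-modules, not only monodromic ones. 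The convolution identity $\Gamma_\Fl(\CF)\simeq \CF\star\BM_{\kappa,-2\rho}$ that you invoke as a complementary route requires an $I$-module structure on $\BM$ and Schubert dévissage on $\CF$, neither of which covers the general finitely generated case.

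The paper avoids all of this by being more geometric. After reducing to a single D-module on $\CY$, it further resolves $\CF$ (by a \emph{finite} resolution, since $\CY$ is of finite dimension) by D-modules of the form $\Ind^{\fD(\Fl^{\on{aff}}_G)_\kappa}_{\CO(\Fl^{\on{aff}}_G)}(\CM)$ with $\CM\in\Coh(\CY)$. For such an induced D-module, $\Gamma_\Fl$ is computed by an explicit relative Chevalley complex $\BV^\bullet_{\kappa,n}(\CY,\CM)$, whose $i$-th term is induced from the $G(t^n\BC[[t]])$-module $\Gamma(\CY,\Lambda^i(I_\CY/\fg\otimes t^n\BC[[t]])\otimes_{\CO_\CY}\CM)$. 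This complex is bounded (the exterior algebra is finite since the bundle $I_\CY/\fg\otimes t^n\BC[[t]]$ has finite rank), and --- this is the point you were missing --- each term carries a finite filtration with subquotients $\BV_{\kappa,n}$, because $\CY$ is proper, so the $G(t^n\BC[[t]])$-module of global sections is finite-dimensional and hence has a finite filtration by trivial representations. Properness of $\CY$, not any Noetherianness, is what supplies the required finiteness.
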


Since the DG model on $\bD^f(\hg_\kappa\mod)$ is inherited from
that on $\bD(\hg_\kappa\mod)$, we obtain that the desired functor
$$\Gamma^f:\bD^f(\fD(\Fl^{\on{aff}}_G)_\kappa\mod)\to \bD^f(\hg_\kappa\mod),$$
equipped with a DG model.

\sssec{Proof of \propref{prop Gamma on f}}

It is enough to show that for a finitely generated D-module
$\CF$ on $\Fl$, the object $\Gamma_\Fl(\CF)\in \bD(\hg_\kappa\mod)$
belongs to $\bD^f(\hg_\kappa\mod)$. 

\medskip

Let $\CF$ be supported over a closed finite-dimensional subscheme 
$\CY\subset \Fl^{\on{aff}}_G$. Then $\CF$ admits a finite resolution by D-modules of 
the form $\Ind^{\fD(\Fl^{\on{aff}}_G)_\kappa}_{\CO(\Fl^{\on{aff}}_G)}(\CM)$, where 
$\CM$ is a coherent sheaf on $\CY$. Thus, we can assume that $\CF$
has this form.

\medskip

Let $n>0$ be sufficiently large so that 
$$\fg\otimes (t^n\cdot \BC[[t]])\subset \on{Lie}(I_y)$$
for all $y\in \CY$, where $I_y$ denotes the conjugate of
the Iwahori subgroup $I$ corresponding to a point
$y\in \Fl^{\on{aff}}_G\simeq G\ppart/I$. Consider the vector bundle
$I_\CY/\fg\otimes (t^n\cdot \BC[[t]])$ over $\CY$, whose
fiber at $y\in \CY$ is $\on{Lie}(I_y)/\fg\otimes (t^n\cdot \BC[[t]])$.
This vector bundle carries an action of the group $G(t^n\cdot \BC[[t]])$.
For an integer $i$ consider its $i$-th exterior power
$\Lambda^i(I_\CY/\fg\otimes (t^n\cdot \BC[[t]]))$, and the 
$G(t^n\cdot \BC[[t]])$-module
\begin{equation} \label{before ind}
\Gamma\left(\CY,\Lambda^i(I_\CY/\fg\otimes (t^n\cdot \BC[[t]]))\underset{\CO_\CY}
\otimes \CM\right).
\end{equation}
Consider the $\hg_\kappa$-module
$$\BV_{\kappa,n,i}(\CY,\CM):=\Ind^{\hg}_{\fg\otimes (t^n\cdot \BC[[t]])}
\left(\Gamma\left(\CY,\Lambda^i(I_\CY/\fg\otimes (t^n\cdot \BC[[t]]))\underset{\CO_\CY}
\otimes \CM\right)\right).$$

\medskip

The relative version of the Chevalley complex construction gives rise to a map
of $\hg_\kappa$-modules
$$\BV^i_{\kappa,n}(\CY,\CM)\to \BV^{i-1}_{\kappa,n}(\CY,\CM),$$
such that the composition
$$\BV^i_{\kappa,n}(\CY,\CM)\to \BV^{i-1}_{\kappa,n}(\CY,\CM)\to
\BV^{i-2}_{\kappa,n}(\CY,\CM)$$
vanishes. Denote the resulting complex
$\BV^\bullet_{\kappa,n}(\CY,\CM)$.

\begin{lem}  
Assume that for all $i$ and $j>0$ 
$$R^j\Gamma\left(\CY,\Lambda^i(I_\CY/\fg\otimes (t^n\cdot \BC[[t]]))\underset{\CO_\CY}
\otimes \CM\right)=0.$$
Then $\BV^\bullet_{\kappa,n}(\CY,\CM)$ is quasi-isomorphic to 
$\Gamma_\Fl\left(\Ind^{\fD(\Fl^{\on{aff}}_G)_\kappa}_{\CO(\Fl^{\on{aff}}_G)}(\CM)\right)$.
\end{lem}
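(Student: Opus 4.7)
The plan is to recognize both sides as variants of ``induction from a Lie subalgebra'' applied to $\CM$, and to relate them via the standard Chevalley--Eilenberg resolution. First, I would analyze the right-hand side $\Gamma_\Fl\left(\Ind^{\fD(\Fl^{\on{aff}}_G)_\kappa}_{\CO(\Fl^{\on{aff}}_G)}(\CM)\right)$. Since $\CY$ is $K_n := G(t^n\cdot \BC[[t]])$-stable and $K_n$ acts trivially on $\CY$ (by the choice of $n$), the Lie algebra $\fg\otimes (t^n\cdot \BC[[t]])$ sits as a constant Lie ideal inside the Lie algebroid $I_\CY$ whose fiber at $y$ is $\on{Lie}(I_y)$. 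Using the filtration on $\fD(\Fl^{\on{aff}}_G)_\kappa$ along the normal direction to $\CY$, I would establish a canonical isomorphism of $\hg_\kappa$-modules
$$\Gamma_\Fl\left(\Ind^{\fD_\kappa}_\CO(\CM)\right) \;\simeq\; \Ind^{\hg_\kappa}_{I_\CY}\bigl(\Gamma(\CY,\CM)\bigr),$$
where the right-hand side denotes induction along the Lie algebroid $I_\CY$ (well-defined because the quotient $I_\CY/\fg\otimes(t^n\cdot\BC[[t]])$ is of finite rank), with the critical twisting on $\fD_\kappa$ matching the critical cocycle on $\hg$.

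Next, I would construct the quasi-isomorphism. Locally on $\CY$, the relative Chevalley--Eilenberg resolution for the pair $\fg\otimes(t^n\cdot\BC[[t]]) \subset I_\CY$ gives a complex of sheaves of $\hg_\kappa$-modules on $\CY$,
$$U(\hg_\kappa) \otimes_{U(\fg\otimes(t^n\cdot\BC[[t]]))} \Lambda^\bullet\bigl(I_\CY/\fg\otimes(t^n\cdot\BC[[t]])\bigr) \otimes_{\CO_\CY} \CM \;\overset{\sim}\to\; \Ind^{\hg_\kappa}_{I_\CY}(\CM),$$
resolving the target by induced modules from the smaller subalgebra. Applying $\Gamma(\CY, -)$ term-by-term produces precisely $\BV^i_{\kappa,n}(\CY,\CM)$ (since induction from $\fg\otimes(t^n\cdot\BC[[t]])$ commutes with taking global sections on $\CY$), with the Chevalley differentials inherited from the resolution; composing with the isomorphism of Step~1 produces the canonical map $\BV^\bullet_{\kappa,n}(\CY,\CM) \to \Gamma_\Fl\left(\Ind^{\fD_\kappa}_\CO(\CM)\right)$. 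The cohomological vanishing hypothesis $R^j\Gamma\bigl(\CY, \Lambda^i(I_\CY/\fg\otimes(t^n\cdot\BC[[t]])) \otimes_{\CO_\CY} \CM\bigr) = 0$ for $j>0$ now guarantees that the hypercohomology spectral sequence for the above local resolution degenerates, so passing to global sections preserves the quasi-isomorphism property, yielding the desired statement.

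The subtlest step is Step~1. One must carefully track the critical twisting to check that the identification of $\Gamma_\Fl(\Ind^{\fD_\kappa}_\CO(\CM))$ with an induction along the Lie algebroid $I_\CY$ holds at the twisted level: the Kac--Moody cocycle defining $\hg_\kappa$ must be matched with the cocycle defining the critical twisting on $\fD(\Fl^{\on{aff}}_G)_\kappa$, restricted to $I_\CY$. This is ultimately a cocycle computation, but it requires care because $I_\CY$ is only a Lie algebroid (varying over $\CY$), not an ordinary Lie algebra, and because the two cocycles are a priori defined in different ways. Once this matching is in place, the remaining ingredients --- the Chevalley--Eilenberg resolution and the cohomological vanishing --- are formal.
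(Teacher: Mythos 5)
The paper itself does not supply a proof of this lemma; it is stated after the phrase "The relative version of the Chevalley complex construction gives rise to..." and is then used as a black box. Your overall strategy --- resolve the induced D-module, restricted to $\CY$, by the sheafy relative Chevalley complex for the pair $\fg\otimes t^n\BC[[t]]\subset I_\CY$, then apply derived global sections and use the vanishing hypothesis to degenerate the hypercohomology spectral sequence --- is the one the paper is gesturing at, and your Step~2 resolution together with the closing spectral-sequence remark is in substance the right argument.

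That said, Step~1 as you write it is not correct: the asserted isomorphism $\Gamma_\Fl\bigl(\Ind^{\fD_\kappa}_\CO(\CM)\bigr)\simeq\Ind^{\hg_\kappa}_{I_\CY}\bigl(\Gamma(\CY,\CM)\bigr)$ commutes $\Gamma(\CY,-)$ past the Lie-algebroid induction, which is false in general, and it equates a derived functor with an honest module without invoking any vanishing. The statement you actually need --- and the one your Step~2 implicitly uses --- is sheaf-theoretic on $\CY$: the sheaf $\Ind^{\fD_\kappa}_\CO(\CM)$, regarded as a sheaf of $\hg_\kappa$-modules supported on $\CY$, is resolved by the complex of sheaves $\Ind^{\hg}_{\fg\otimes t^n\BC[[t]]}\bigl(\Lambda^\bullet(I_\CY/\fg\otimes t^n\BC[[t]])\otimes_{\CO_\CY}\CM\bigr)$, a claim one verifies locally (fiberwise it is the relative Koszul resolution of the Verma module $\Ind^{\hg_\kappa}_{\on{Lie}(I_y)\oplus\BC}(\BC)\otimes\CM|_y$). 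Once this is in place, each term is as an $\CO_\CY$-module a direct sum of copies of $\Lambda^i(I_\CY/\fg\otimes t^n\BC[[t]])\otimes_{\CO_\CY}\CM$ by PBW freeness, so the hypothesis makes every term $\Gamma$-acyclic, and the hypercohomology spectral sequence degenerates to give $\Gamma_\Fl\bigl(\Ind^{\fD_\kappa}_\CO(\CM)\bigr)\simeq\BV^\bullet_{\kappa,n}(\CY,\CM)$. Two small remarks: the claim that $\fg\otimes t^n\BC[[t]]$ is a Lie \emph{ideal} in each $\on{Lie}(I_y)$ is unnecessary (and questionable for $y$ not in $G[[t]]/I$) --- a Lie subalgebra suffices for the relative Chevalley complex, since the adjoint action of $\fk$ on $\fh/\fk$ is always defined; and your flag about matching the $\kappa$-twisting on $\fD(\Fl^{\on{aff}}_G)_\kappa$ with the Kac--Moody cocycle is warranted, as it is what makes the Chevalley differentials $\hg_\kappa$-linear rather than merely $\hg$-linear.
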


This implies the assertion of the proposition: 

\medskip

Indeed, resolving $\CM$ by coherent sheaves, we can assume that the vanishing 
condition of the above lemma holds. Hence, it suffices to see that the $\hg_\kappa$-modules
$\BV^i_{\kappa,n}(\CY,\CM)$ belong to $\bD^f(\hg_\kappa\mod)$. 

\medskip

However, since $\CY$ is proper, the $G(t^n\cdot \BC[[t]])$-module
\eqref{before ind} is finite-dimensional, and hence admits a finite
filtration with trivial quotients. Hence, $\BV^i_{\kappa,n}(\CY,\CM)$ 
admits a finite filtration with quotients isomorphic to
$\BV_{\kappa,n}$.

\qed

\ssec{Sections at the critical level}   \label{sections to finite}

Let us take now $\kappa=\kappa_\crit$. Our goal is to show that the functor
$\Gamma_\Fl$ of \secref{cat on Fl} factors through a functor
$$\Gamma_\Fl:\bD^f(\fD(\Fl^{\on{aff}}_G)_\crit\mod)\to 
\bD^f(\hg_\crit\mod_{\fZ^\nilp_\fg}).$$

\medskip

First, we recall that by \cite{FG2}, Sect. 7.19, for any
$\CF\in \fD(\Fl^{\on{aff}}_G)_\crit\mod$, the individual cohomologies
$R^i\Gamma_\Fl(\CF)$ belong to the full subcategory
$$\hg_\crit\mod_{\fZ^\nilp_\fg}\subset \hg_\crit\mod.$$

\medskip

Hence, the procedure of \secref{construct functor} defines
a functor
\begin{multline*}
\bD^f(\fD(\Fl^{\on{aff}}_G)_\crit\mod)\simeq \\
\simeq\Ho\left({}'\bC^b_{adapt}(\fD^{fg}(\Fl^{\on{aff}}_G)_\crit\mod)\right)/{}
\Ho\left({}'\bC^b_{adapt,acycl}(\fD^{fg}(\Fl^{\on{aff}}_G)_\crit\mod)\right)\to \\
\to 
\Ho\left(\bC(\hg_\kappa\mod_{\fZ^\nilp_\fg})\right)/
\Ho\left(\bC_{acycl}(\hg_\kappa\mod_{\fZ^\nilp_\fg})\right)\simeq
\bD(\hg_\crit\mod_{\fZ^\nilp_\fg}),
\end{multline*}
equipped with a DG model.  

\medskip

Thus, it remains to see that on the triangulated level, its essential
image is contained in $\bD^f(\hg_\crit\mod_{\fZ^\nilp_\fg})$. The
latter results from \propref{prop Gamma on f} and the definition
of the latter category (see \secref{good category sZ}).

\section{DG model for the \cite{AB} action}  \label{DG model for the AB action}

\ssec{}

In this subsection we will discuss several different, but equivalent, DG models
for the category $\bD^f(\fD(\Fl^{\on{aff}}_G)_\crit\mod)$ that are needed to upgrade
to the DG level various triangulated functors from the main body of
the paper.

\sssec{}

Recall the category $'\bC(\fD(\Fl^{\on{aff}}_G)_\crit\mod)$
and its subcategory of acyclic objects, denoted 
$'\bC_{acycl}(\fD(\Fl^{\on{aff}}_G)_\crit\mod)$.

\medskip

Let 
$$\bC^b_{aff}(\fD^{fg}(\Fl^{\on{aff}}_G)_\crit\mod)\subset {}'\bC^b(\fD^{fg}(\Fl^{\on{aff}}_G)_\crit\mod)$$
be the DG subcategory consisting of complexes $\CF^\bullet$ with
the additional condition that each $\CF^i$ is a direct sum
of D-modules, each being the direct image of a D-module on a
finite-dimensional locally closed {\it affine} sub-scheme of $\Fl^{\on{aff}}_G$.
(Note that $\bC^b_{aff}(\fD^{fg}(\Fl^{\on{aff}}_G)_\crit\mod)$ is contained is the
subcategory $\bC_{adapt}(\fD(\Fl^{\on{aff}}_G)_\crit\mod)$, introduced
earlier.)

\medskip

Set
$$\bC^b_{aff,acycl}(\fD^{fg}(\Fl^{\on{aff}}_G)_\crit\mod):=
\bC^b_{aff}(\fD^{fg}(\Fl^{\on{aff}}_G)_\crit\mod)\cap {}'\bC_{acycl}(\fD(\Fl^{\on{aff}}_G)_\crit\mod).$$

The proof of \lemref{adapt is equiv} shows that the natural functor
\begin{multline*}
\Ho\left(\bC^b_{aff}(\fD^{fg}(\Fl^{\on{aff}}_G)_\crit\mod)\right)/
\Ho\left(\bC^b_{aff,acycl}(\fD^{fg}(\Fl^{\on{aff}}_G)_\crit\mod)\right)\to \\
\to \Ho\left({}'\bC(\fD(\Fl^{\on{aff}}_G)_\crit\mod)\right)/
\Ho\left({}'\bC_{acycl}(\fD(\Fl^{\on{aff}}_G)_\crit\mod)\right)
\end{multline*}
is an equivalence. 

\medskip

Hence, 
$$\bC^b_{aff}(\fD^{fg}(\Fl^{\on{aff}}_G)_\crit\mod)/\bC^b_{aff,acycl}(\fD^{fg}(\Fl^{\on{aff}}_G)_\crit\mod)$$
defines a different, but equivalent, DG model for 
$\bD^f(\fD(\Fl^{\on{aff}}_G)_\crit\mod)$. 

\medskip

The above DG model gives a different, but again equivalent,
DG model for the functor $\Gamma_\Fl:\bD^f(\fD(\Fl^{\on{aff}}_G)_\crit\mod)\to
\bD^f(\hg_\crit\mod_\nilp)$.

\sssec{}    \label{cat on Fl: fine DG model}

Let
$$\bC^b_{aff,ind}(\fD^{fg}(\Fl^{\on{aff}}_G)_\crit\mod)\subset
\bC^b_{aff}(\fD^{fg}(\Fl^{\on{aff}}_G)_\crit\mod)$$
be the DG subcategory, consisting of complexes $\CF^\bullet$, where
we impose the additional condition that each $\CF^i$ is of the form
$\Ind^{\fD(\Fl^{\on{aff}}_G)_\crit}_{\CO_{\Fl^{\on{aff}}_G}}(\CM)$ for a quasi-coherent
sheaf $\CM$ on $\Fl^{\on{aff}}_G$.

\medskip

Set
\begin{multline*}
\bC^b_{aff,ind,acycl}(\fD^{fg}(\Fl^{\on{aff}}_G)_\crit\mod):=\\
\bC^b_{aff,ind}(\fD^{fg}(\Fl^{\on{aff}}_G)_\crit\mod)\cap 
\bC^b_{aff,acycl}(\fD^{fg}(\Fl^{\on{aff}}_G)_\crit\mod).
\end{multline*}

\begin{lem}
The natural functor
\begin{multline*}
\Ho\left(\bC^b_{aff,ind}(\fD^{fg}(\Fl^{\on{aff}}_G)_\crit\mod)\right)/
\Ho\left(\bC^b_{aff,ind,acycl}(\fD^{fg}(\Fl^{\on{aff}}_G)_\crit\mod)\right)\to \\
\to
\Ho\left(\bC^b_{aff}(\fD^{fg}(\Fl^{\on{aff}}_G)_\crit\mod)\right)/
\Ho\left(\bC^b_{aff,acycl}(\fD^{fg}(\Fl^{\on{aff}}_G)_\crit\mod)\right)
\end{multline*}
is an equivalence.
\end{lem}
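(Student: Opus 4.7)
The plan is to apply \lemref{adapted replacement}(1) to the inclusion
\[
\bC^b_{aff,ind}(\fD^{fg}(\Fl^{\on{aff}}_G)_\crit\mod) \hookrightarrow \bC^b_{aff}(\fD^{fg}(\Fl^{\on{aff}}_G)_\crit\mod),
\]
with the respective subcategories of acyclic complexes playing the roles of $\bD'$ and $\bD'_1$. Concretely, the task is: given $X^\bullet\in \bC^b_{aff}$, construct a quasi-isomorphism $X_1^\bullet\to X^\bullet$ from an object $X_1^\bullet\in \bC^b_{aff,ind}$.

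A standard bicomplex/horseshoe-lemma construction, assembling partial resolutions via iterated mapping cones degree by degree, reduces the problem to the single-term case: given a finitely generated D-module of the form $X=j_*(\CG)$, where $j:\CY\hookrightarrow \Fl^{\on{aff}}_G$ is an affine locally closed embedding of a finite-dimensional subscheme and $\CG\in \fD^{fg}(\CY)\mod$, produce a finite left resolution of $X$ by D-modules $\Ind^{\fD(\Fl^{\on{aff}}_G)_\crit}_{\CO_{\Fl^{\on{aff}}_G}}(\CM_k)$ with each term lying in $\bC^b_{aff,ind}$. For such an $X$, first note the canonical surjection $\Ind(X)\twoheadrightarrow X$, obtained by viewing $X$ as a coherent $\CO$-module and forgetting the $\CD$-structure; iterating this surjection and taking the coherent $\CD$-module kernel at each stage yields a left resolution by induced D-modules, all supported on the closure $\ol\CY$. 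To arrange that each $\Ind(\CM_k)$ lies in $\bC^b_{aff,ind}$ (not merely in the ambient category of induced D-modules), I would construct the coherent $\CO$-modules $\CM_k$ from an affine open cover of $\ol\CY$ compatible with $\CY$, via a Cech-type construction, so that each $\CM_k$ is a direct sum of coherent sheaves supported on affine closed subschemes.

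The main obstacle is showing that this resolution terminates after finitely many steps. Although $\Fl^{\on{aff}}_G$ is an infinite-dimensional ind-scheme---so that a naive Koszul resolution in the normal directions of $\ol\CY\subset \Fl^{\on{aff}}_G$ would have infinite length---the key point is that the finitely generated $\CD$-module $X$ is supported on the finite-dimensional subscheme $\ol\CY$ and is therefore annihilated by a differential ideal that cuts out a formally smooth finite-dimensional pro-subscheme carrying the full action. On such a finite-dimensional smooth formal neighborhood, coherent $\CD$-modules have projective dimension bounded by its dimension, providing a uniform bound on the resolution length. Since the original $X^\bullet$ is bounded, assembling the uniformly bounded resolutions of its terms via a total complex produces the required bounded $X_1^\bullet\in \bC^b_{aff,ind}$, completing the application of \lemref{adapted replacement}(1).
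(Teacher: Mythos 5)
Your reduction via \lemref{adapted replacement}(1) to producing, for each term $X=j_*(\CG)$, a finite left resolution by induced D-modules matches the paper's opening move, but the termination argument is a genuine gap. You propose to build the resolution directly on $\Fl^{\on{aff}}_G$ by iterating the surjection $\Ind^{\fD(\Fl^{\on{aff}}_G)_\crit}_{\CO_{\Fl^{\on{aff}}_G}}(-)\twoheadrightarrow -$ and taking kernels, and then assert that $X$, being supported on $\ol\CY$, is ``annihilated by a differential ideal that cuts out a formally smooth finite-dimensional pro-subscheme carrying the full action,'' from which you infer a bound on projective dimension. This does not hold up: $\ol\CY$ is singular in general, its formal neighborhood in $\Fl^{\on{aff}}_G$ is infinite-dimensional, and there is no Kashiwara-type equivalence identifying D-modules supported on a singular subscheme of an ind-scheme with coherent D-modules on a finite-dimensional smooth object. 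Moreover, even granting such a smooth ambient space, your kernels are formed inside $\Ind^{\fD(\Fl^{\on{aff}}_G)_\crit}_{\CO_{\Fl^{\on{aff}}_G}}$, which at every step re-introduces the infinitely many normal directions to $\ol\CY$ inside $\Fl^{\on{aff}}_G$, so there is no a priori reason for the iteration to terminate.

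The paper terminates by a different device, and this is what is missing from your proposal: \emph{induction commutes with direct image under a locally closed map}. Hence it suffices to construct the resolution in the category of D-modules on $\CY$ itself, and then push forward. Choosing $\CY$ small enough to lie in an open ind-subscheme of $\Fl^{\on{aff}}_G$ isomorphic to $\BA^\infty$ (the big cell), one has $\CY\subset\BA^n\subset\BA^\infty$; on $\BA^n$ the de Rham (Koszul) complex gives an explicit length-$n$ resolution of any D-module by induced modules. Pushing forward, every term is the direct image from the finite-dimensional locally closed affine $\BA^n$ and is induced, hence lies in $\bC^b_{aff,ind}$, and the length bound $n=\dim\BA^n$ is manifest. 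Replacing your formal-neighborhood claim by this commutation-of-induction reduction would turn your argument into the paper's.
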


\begin{proof}

It suffices to show that any D-module equal to the direct
image from a (sufficiently small) finite-dimensional
affine subscheme $\CY$ of $\Fl^{\on{aff}}_G$ admits a finite
left resolution consisting of induced D-modules. 
Since the operation of induction commutes with that of direct 
image under a locally closed map, it suffices to construct such 
a resolution in the category of D-modules on $\CY$ itself.

\medskip

We can assume that $\CY$ is small enough so that it is
contained in an open ind--subscheme of $\Fl^{\on{aff}}_G$, isomorphic
to the ind-affine space $\BA^\infty$; then $\CY\subset 
\BA^n\subset \BA^\infty$. The required resolution is
given by the De Rham complex on $\BA^n$.

\end{proof}

Thus, we can use
$$\bC^b_{aff,ind}(\fD^{fg}(\Fl^{\on{aff}}_G)_\crit\mod)/\bC^b_{aff,ind,acycl}(\fD^{fg}(\Fl^{\on{aff}}_G)_\crit\mod)$$
as yet another DG model for the category $\bD^f(\fD(\Fl^{\on{aff}}_G)_\crit\mod)$
and the functor $\Gamma_\Fl$. In what follows, it will be this model
that we will use to perform our constructions.

\ssec{}   \label{AB action: model}

We shall now define a homotopy action of 
$$\bC^{free}(\Coh(\tN/\cG))/\bC^b_{acycl}(\Coh^{free}(\tN/\cG))$$
on 
$$\bC^b_{aff,ind}(\fD^{fg}(\Fl^{\on{aff}}_G)_\crit\mod)/\bC^b_{aff,ind,acycl}(\fD^{fg}(\Fl^{\on{aff}}_G)_\crit\mod).$$

\medskip

We will use the paradigm of \secref{weak quotient tensor}. First, we note:

\begin{lem}    \label{conv acyclic}
Assume that $\CF$ is a D-module on $\Fl^{\on{aff}}_G$, which is induced
from a quasi-coherent sheaf, and isomorphic to the direct
image of a D-module from a finite-dimensional affine subscheme. Then
$$H^j(\CF\star J_\cla)=0,\,\, \forall j\neq 0 \text{ and } \cla\in \cLambda.$$
\end{lem}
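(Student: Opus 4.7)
The strategy is to reduce to the cases $\cla\in\cLambda^+$ and $-\cla\in\cLambda^+$ via the monoidal identity $J_{\cla_1+\cla_2}\simeq J_{\cla_1}\star J_{\cla_2}$, and then to prove the vanishing in each base case by analyzing the convolution as a pushforward along a locally closed immersion of affine schemes, where the $\CO$-induced hypothesis on $\CF$ forces the pushforward to land in a single cohomological degree.

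For the reduction, I would decompose $\cla=\cmu-\cnu$ with $\cmu,\cnu\in\cLambda^+$ both chosen sufficiently deep in the dominant cone --- specifically, deep enough that for every affine-Weyl-group element $\wt{w}$ whose Iwahori cell meets the (finite-dimensional) support of $\CF$, the length additivity $\ell(\wt{w}\cdot t^\cmu)=\ell(\wt{w})+\ell(t^\cmu)$ holds, and similarly for $\cnu$. Associativity of $\star$ then rewrites $\CF\star J_\cla\simeq(\CF\star j_{\cmu,*})\star j_{-\cnu,!}$, so it is enough to prove the assertion separately for convolution with $j_{\cmu,*}$ and with $j_{-\cnu,!}$, together with the stability property that $\CF\star j_{\cmu,*}$ is again of the form demanded by the lemma, so the second convolution can be analyzed on the same footing.

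For the dominant case, I would identify $\CF\star j_{\cmu,*}$ with the D-module pushforward along the convolution map
\[
m\colon\CY\,\wt\times\,\Fl^{\on{aff}}_G{}^\cmu\longrightarrow\Fl^{\on{aff}}_G,
\]
where $\CY$ is a finite-dimensional affine subscheme carrying $\CF$. The source is an $\BA^{\ell(\cmu)}$-bundle over the affine scheme $\CY$, hence itself a finite-dimensional affine scheme. By the length-additivity of $\cmu$, $m$ restricted to each $(\CY\cap\Fl^{\on{aff}}_G{}^{\wt{w}})\,\wt\times\,\Fl^{\on{aff}}_G{}^\cmu$ is a closed immersion into the cell $\Fl^{\on{aff}}_G{}^{\wt{w}\cdot t^\cmu}$, with images for different $\wt{w}$ pairwise disjoint; thus $m$ is a locally closed immersion onto a finite-dimensional affine subscheme. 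The $\CO$-induced nature of $\CF$, combined with the fact that $j_{\cmu,*}$ restricted to $\Fl^{\on{aff}}_G{}^\cmu$ is itself $\CO$-induced (being the critically twisted structure sheaf of an affine space), makes the relevant twisted product an $\CO$-induced D-module on the affine source; pushforward along the immersion $m$ therefore yields an object concentrated in degree $0$ which is again of the form required by the lemma. The anti-dominant case for $j_{-\cnu,!}$ is entirely parallel --- the $!$-extension agrees with the $*$-extension along the locally closed immersion of the orbit, and the same pushforward argument applies.

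The main obstacle is the geometric claim about $m$: verifying that for $\cmu$ sufficiently deep in the dominant chamber relative to $\on{supp}(\CF)$, the convolution map is a locally closed immersion onto an affine finite-dimensional subscheme. This is a combination of the standard combinatorial length-additivity in the affine Weyl group (applied to the finite set of cells meeting $\on{supp}(\CF)$) with the elementary geometry of the twisted product $\wt\times$. A secondary point is verifying that pushforward of an $\CO$-induced D-module along such an immersion preserves the $\CO$-induced structure; this is immediate for locally closed immersions between affine schemes of finite type, where the D-module direct image reduces to an $\CO$-module computation with no higher cohomology. Together with the two base cases and the reduction above, these deliver the lemma for all $\cla\in\cLambda$.
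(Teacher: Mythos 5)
Your proposal has a genuine gap in the central geometric claim: the length additivity $\ell(\wt{w}\cdot t^\cmu)=\ell(\wt{w})+\ell(t^\cmu)$ does \emph{not} hold for $\cmu$ sufficiently deep in the dominant cone. The obstruction does not disappear as $\cmu$ grows; it is a structural feature of $\wt{w}$. Already for $\wt{w}$ a non-trivial element of the finite Weyl group $W\subset\wt{W}$ and $\cmu$ dominant regular, one has $R(\wt{w}^{-1})\cap R(t^\cmu)\neq\emptyset$, so $\ell(\wt{w}t^\cmu)<\ell(\wt{w})+\ell(t^\cmu)$. Concretely, in type $A_1$: with $\wt{w}=s_1$ (the finite simple reflection) and $\cmu=n\alpha^\vee$, $\ell(s_1 t^{n\alpha^\vee})=2n-1$ while $\ell(s_1)+\ell(t^{n\alpha^\vee})=1+2n$, so the deficit is constant equal to $2$ for every $n\geq 1$. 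Since a D-module on $G/B\subset\Fl^{\on{aff}}_G$ is a perfectly admissible $\CF$ for this lemma, cells indexed by such $\wt{w}$ can certainly meet $\on{supp}(\CF)$, and your map $m$ is then not an isomorphism onto the target cell. A secondary problem: even when additivity does hold on each stratum, the total image $\bigsqcup_{\wt{w}} V_{\wt{w}}$ is a disjoint union of pieces spread across many Iwahori cells, and there is no reason for it to be affine, so the claim that $\CF\star j_{\cmu,*}$ is again ``of the form required'' is not justified either.

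The paper's argument sidesteps all geometric analysis of the convolution map by using the two hypotheses on $\CF$ once each, in complementary directions, via general exactness of convolution with $J_{\pm\cmu}$. Writing $\cla=\cla_1-\cla_2$ with $\cla_i\in\cLambda^+$: since $?\star J_{\cla_1}$ is right-exact (as $J_{\cla_1}=j_{\cla_1,*}$), the vanishing of $H^j$ for $j>0$ reduces to $\CF\star J_{-\cla_2}\in\bD^{\leq 0}$, which holds for $\CF\star\CF'$ with any $\CF'$, precisely because $\CF$ is the direct image from a finite-dimensional \emph{affine} subscheme (making the relevant convolution map affine, hence pushforward right-exact). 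Dually, since $?\star J_{-\cla_2}$ is left-exact (being adjoint to $?\star J_{\cla_2}$), the vanishing for $j<0$ reduces to $\CF\star J_{\cla_1}\in\bD^{\geq 0}$, which holds because $\CF$ is \emph{induced}. No length computations, no stratification, and no claim about the image of the convolution map are needed. If you want to rescue a stratification argument, you would need the right-maximality condition on the affine Weyl group elements indexing the relevant cells (as the paper uses elsewhere, in the proof that $j_{w,!}$ for right-maximal $w$ is $\leq 0$ in the new t-structure), not a depth condition on $\cmu$; but the exactness argument is both simpler and applies without restriction on $\on{supp}(\CF)$.
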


\begin{proof}

Write $\cla=\cla_1-\cla_2$ with $\cla_i\in \cLambda^*$. 

\medskip

Recall that
the functor $?\star J_{\cmu}$ is right-exact for $\cmu\in \cLambda^+$,
since in this case $J_{\cmu}=j_{\cmu,*}$. The functor $?\star J_{-\cmu}$,
being the right (but in fact also left) adjoint of $J_\cmu$, is left-exact.

\medskip

Let us first show that $H^j(\CF\star J_\cla)=0$ for $j>0$. For that
it suffices to see that $\CF\star J_{-\cla_2}\in 
\bD^{\leq 0}(\fD(\Fl^{\on{aff}}_G)_\crit\mod)$. However, this is true for
any convolution $\CF\star \CF'$ for $\CF'\in \fD(\CY)\mod^I$,
where $\CY$ is an ind-scheme with an action of $G\ppart$,
provided that $\CF$ is somorphic to the direct
image of a D-module from a finite-dimensional affine subscheme.

\medskip

Let us now show that $H^j(\CF\star J_\cla)=0$ for $j<0$. For that
it suffices to see that $\CF\star J_{\cla_1}\in 
\bD^{\geq 0}(\fD(\Fl^{\on{aff}}_G)_\crit\mod)$. But this is again true
for any convolution $\CF\star \CF'$ as above, provided that
$\CF$ is induced.

\end{proof}

\sssec{}

The sought-for pseudo-action of $\bC^b(\Coh^{free}(\tN/\cG))$ on 
$\bC^b_{aff,ind}(\fD^{fg}(\Fl^{\on{aff}}_G)_\crit\mod)$ is defined as follows: for a collection
of objects $\CM^\bullet_1,...,\CM^\bullet_n\in \bC^b(\Coh^{free}(\tN/\cG))$,
$\CF^\bullet_1,\CF^\bullet_2\in \bC^b_{aff,ind}(\fD^{fg}(\Fl^{\on{aff}}_G)_\crit\mod)$ we set
\begin{multline} \label{defn of action}
\Hom^\bullet
_{\bC^b(\Coh^{free}(\tN/\cG)),\bC^b_{aff,ind}(\fD^{fg}(\Fl^{\on{aff}}_G)_\crit\mod)}
("\CM^\bullet_1\otimes...\otimes \CM^\bullet_n\otimes 
\CF^\bullet_1",\CF^\bullet_2):=\\
\Hom_{\bC(\fD(\Fl^{\on{aff}}_G)_\crit\mod)}\left(\CF^\bullet_1\star 
\sF(\CM^\bullet_1\otimes...\otimes\CM^\bullet_n),\CF^\bullet_2\right),
\end{multline}
where 
\begin{equation} \label{repr complex}
\CF^\bullet_1\star 
\sF(\CM^\bullet_1\otimes...\otimes\CM^\bullet_n) 
\end{equation}
is regarded as
a complex of D-modules on $\Fl^{\on{aff}}_G$ (which does not in general belong to
$\bC^b_{aff,ind}(\fD^{fg}(\Fl^{\on{aff}}_G)_\crit\mod)$), obtained by {\it term-wise}
application of the convolution functor.

\medskip

We claim that the the required co-representability and vanishing conditions of
\secref{weak quotient tensor} hold with respect to the subcategories
$$\bC^b_{acycl}(\Coh^{free}(\tN/\cG))\subset
\bC^b(\Coh^{free}(\tN/\cG))$$
and
$$\bC^b_{aff,ind,acycl}(\fD^{fg}(\Fl^{\on{aff}}_G)_\crit\mod)\subset 
\bC^b_{aff,ind}(\fD^{fg}(\Fl^{\on{aff}}_G)_\crit\mod).$$

Indeed, by \lemref{conv acyclic}, for fixed
$\CM^\bullet_1,...,\CM^\bullet_n$ and $\CF^\bullet_1$ the 
co-representing object in
\begin{multline*}
\Ho\left(\bC^b_{aff,ind}(\fD^{fg}(\Fl^{\on{aff}}_G)_\crit\mod)\right)/
\Ho\left(\bC^b_{aff,ind,acycl}(\fD^{fg}(\Fl^{\on{aff}}_G)_\crit\mod)\right)\simeq \\
\simeq \bD^f(\fD(\Fl^{\on{aff}}_G)_\crit\mod)
\end{multline*}
is represented by the complex \eqref{repr complex}.

\medskip

Thus, we obtain a required homotopy action of $$\bC^b(\Coh^{free}(\tN/\cG))/\bC^b_{acycl}(\Coh^{free}(\tN/\cG))$$
on 
$$\bC^b_{aff,ind}(\fD^{fg}(\Fl^{\on{aff}}_G)_\crit\mod)/\bC^b_{aff,ind,acycl}(\fD^{fg}(\Fl^{\on{aff}}_G)_\crit\mod).$$

\ssec{}   \label{sections: model}

We shall now upgrade the functor $\Gamma_\Fl$ to a DG functor,
compatible with the action of $$\bC^b(\Coh^{free}(\tN/\cG))/\bC^b_{acycl}(\Coh^{free}(\tN/\cG)).$$

\medskip

To do so, it is sufficient to construct the corresponding structure 
on $\Gamma_\Fl$ as a homotopy functor between the DG categories
$$\bC^b_{aff,ind}(\fD^{fg}(\Fl^{\on{aff}}_G)_\crit\mod)/
\bC^b_{aff,ind,acycl}(\fD^{fg}(\Fl^{\on{aff}}_G)_\crit\mod)$$
and
$$\bC(\hg_\crit\mod_\nilp)/\bC_{acycl}(\hg_\crit\mod_\nilp).$$

\sssec{}

We first consider the pseudo-functor
$$\bC^b_{aff,ind}(\fD^{fg}(\Fl^{\on{aff}}_G)_\crit\mod)\to \bC(\hg_\crit\mod_\nilp)$$
as categories with a pseudo-action of $\bC^b(\Coh^{free}(\tN/\cG))$.
For $\CM^\bullet_1,...,\CM^\bullet_n\in \bC^b(\Coh^{free}(\tN/\cG))$,
$\CF^\bullet\in \bC^b_{aff,ind}(\fD^{fg}(\Fl^{\on{aff}}_G)_\crit\mod)$ and
$\CV^\bullet\in \bC(\hg_\crit\mod_\nilp)$, we set 
\begin{multline} \label{defn of DG Gamma}
\Hom_{\bC^b(\Coh^{free}(\tN/\cG)),
\bC^b_{aff,ind}(\fD^{fg}(\Fl^{\on{aff}}_G)_\crit\mod),\bC(\hg_\crit\mod_\nilp)} \\
\left("\CM_1^\bullet\otimes...\otimes \CM_n^\bullet\otimes 
\Gamma_\Fl(\CF^\bullet)",
\CV^\bullet\right):= 
\Hom_{\bC(\hg_\crit\mod_\nilp)}
\left(\Gamma_\Fl(\CF^\bullet\star 
\sF(\CM^\bullet_1\otimes...\otimes\CM^\bullet_n)),\CV^\bullet\right),
\end{multline}
where again 
\begin{equation} \label{repr complex of g-mod}
\Gamma_\Fl(\CF^\bullet\star 
\sF(\CM^\bullet_1\otimes...\otimes\CM^\bullet_n))
\end{equation}
is by definition obtained by applying the functor $\Gamma_\Fl$
term-wise to \eqref{repr complex} (with $\CF_1^\bullet$ replaced
by $\CF_2^\bullet$) as a complex of D-modules on $\Fl^{\on{aff}}_G$.

\medskip

The construction of the required natural transformations has
been carried out in \secref{a-e}.

\sssec{}

Now we claim that the co-representability conditions of 
\secref{weak quotient tensor} hold with respect to the 
subcategories:

$$\bC^b_{acycl}(\Coh^{free}(\tN/\cG))\subset
\bC^b(\Coh^{free}(\tN/\cG)),$$
$$\bC^b_{aff,ind,acycl}(\fD^{fg}(\Fl^{\on{aff}}_G)_\crit\mod)\subset 
\bC^b_{aff,ind}(\fD^{fg}(\Fl^{\on{aff}}_G)_\crit\mod),$$
and
$$\bC_{acycl}(\hg_\crit\mod_\nilp)\subset \bC(\hg_\crit\mod_\nilp).$$

Indeed, for $\CM^\bullet_1,...,\CM^\bullet_n$,
$\CF^\bullet$ as above, the required object of $\bD(\hg_\crit\mod_\nilp)$ is the image of the
complex \eqref{repr complex of g-mod}. 

\medskip

Thus, we have constructed the required 1-morphism
\begin{multline*}
\bC^b_{aff,ind}(\fD^{fg}(\Fl^{\on{aff}}_G)_\crit\mod)/
\bC^b_{aff,ind,acycl}(\fD^{fg}(\Fl^{\on{aff}}_G)_\crit\mod)\to \\
\to 
\bC(\hg_\crit\mod_\nilp)/\bC_{acycl}(\hg_\crit\mod_\nilp),
\end{multline*}
in $\DGMod\left(\bC^b(\Coh^{free}(\tN/\cG))/
\bC^b_{acycl}(\Coh^{free}(\tN/\cG))\right)$.

\ssec{}   \label{ups: model}

Recall the functor $$\wt\Upsilon:\bD^f(\fD(\Fl^{\on{aff}}_G)_\crit\mod)\to
\on{pt}/\cB\underset{\on{pt}/\cG}\times \bD^f(\fD(\Gr^{\on{aff}}_G)_\crit\mod).$$

We will now upgrade it to the DG level, in a way compatible with
the homotopy action of 
$\bC^b(\Coh^{free}(\tN/\cG))/\bC^b_{acycl}(\Coh^{free}(\tN/\cG))$
on both sides. This amounts to constructing a 1-morphism
\begin{multline}  \label{model for Ups}
\bC^b_{aff,ind}(\fD^{fg}(\Fl^{\on{aff}}_G)_\crit\mod)/
\bC^b_{aff,ind,acycl}(\fD^{fg}(\Fl^{\on{aff}}_G)_\crit\mod)\to \\
\to
\bC(\on{pt}/\cB\underset{\on{pt}/\cG}\times \fD(\Gr^{\on{aff}}_G)_\crit\mod)/
\bC_{acycl}(\on{pt}/\cB\underset{\on{pt}/\cG}\times \fD(\Gr^{\on{aff}}_G)_\crit\mod).
\end{multline}
in $\DGMod\left(\bC^b(\Coh^{free}(\tN/\cG))/
\bC^b_{acycl}(\Coh^{free}(\tN/\cG))\right)$.

\medskip

Proceeding as above, we first define a pseudo-functor
$$\bC^b_{aff,ind}(\fD^{fg}(\Fl^{\on{aff}}_G)_\crit\mod)\to
\bC(\on{pt}/\cB\underset{\on{pt}/\cG}\times \fD(\Gr^{\on{aff}}_G)_\crit\mod)$$
with a compatibility data with respect to the action of
$\bC^b(\Coh^{free}(\tN/\cG))$.

\medskip

For $\CM^\bullet_1,...,\CM^\bullet_n\in \bC^b(\Coh^{free}(\tN/\cG))$,
$\CF^\bullet\in \bC^b_{aff,ind}(\fD^{fg}(\Fl^{\on{aff}}_G)_\crit\mod)$ and
$$'\CF^\bullet\in \bC(\on{pt}/\cB\underset{\on{pt}/\cG}\times \fD(\Gr^{\on{aff}}_G)_\crit\mod),$$ 
we set
\begin{multline} \label{defn of Ups}
\Hom^\bullet_{\bC^b(\Coh^{free}(\tN/\cG)),
\bC^b_{aff,ind}(\fD^{fg}(\Fl^{\on{aff}}_G)_\crit\mod),
\bC(\on{pt}/\cB\underset{\on{pt}/\cG}\times \fD(\Gr^{\on{aff}}_G)_\crit\mod)} \\
\left("\CM_1^\bullet\otimes...\otimes \CM_n^\bullet\otimes 
\wt\Upsilon(\CF^\bullet)",
{}'\CF^\bullet\right):= \\
\Hom_{\bC(\on{pt}/\cB\underset{\on{pt}/\cG}\times \fD(\Gr^{\on{aff}}_G)_\crit\mod)}
\left(\CF^\bullet\star 
\sF(\CM^\bullet_1\otimes...\otimes\CM^\bullet_n)\star J_{2\rho}\star \CW,
{}'\CF^\bullet\right),
\end{multline}
where
\begin{equation}
\CF^\bullet\star 
\sF(\CM^\bullet_1\otimes...\otimes\CM^\bullet_n)\star J_{2\rho}\star \CW\in 
\bC(\on{pt}/\cB\underset{\on{pt}/\cG}\times \fD(\Gr^{\on{aff}}_G)_\crit\mod)
\end{equation}
is given by term-wise convolution.

\medskip

The co-representability condition of \secref{weak quotient tensor}
is satisfied because for any D-module $\CF$ appearing
as a term of an object of $\bC^b_{aff,ind}(\fD^{fg}(\Fl^{\on{aff}}_G)_\crit\mod)$
and $\CM\in \Coh^{free}(\tN/\cG)$, the convolution
$$\CF\star \sF(\CM)\star \CF'\in 
\bD(\on{pt}/\cB\underset{\on{pt}/\cG}\times \fD(\Gr^{\on{aff}}_G)_\crit\mod)$$
is acyclic off cohomological degree $0$ for any
$\CF'\in \on{pt}/\cB\underset{\on{pt}/\cG}\times \fD(\Gr^{\on{aff}}_G)_\crit\mod^I$.

\medskip

This defines the desired 1-morphism \eqref{model for Ups}.

\sssec{}   \label{wt i p: DG model}

Finally, we remark that the above construction defines also the lifting
of the natural transformation of \lemref{wt i p} to a 2-morphism over
$\on{pt}/\cG$, once we lift the functor
$$\on{co-Ind}:\bD(\on{pt}/\cB\underset{\on{pt}/\cG}\times \fD(\Gr^{\on{aff}}_G)_\crit\mod)
\to \bD(\fD(\Gr^{\on{aff}}_G)_\crit\mod)$$
to a 1-morphism over $\on{pt}/\cG$, by the procedure of 
\secref{proper morphism adjunctions}.

\section{The $I^0$-equivariant situation}  \label{the I-equivariant situation}

\ssec{} \label{situation}

Let $\bD$ be either of the categories $\bD(\hg_\kappa\mod)$, 
or $\bD(\fD(\Fl^{\on{aff}}_G)_\crit\mod)$
(the latter being considered with the {\it old} t-structure).

\medskip

We define $\bD^{I^0,+}\subset \bD^+$ to be the
full triangulated subcategory, consisting of complexes whose cohomologies
are strongly $I^0$-equivariant objects of the corresponding abelian category,
see \cite{FG2}, Sect. 20.11.

\medskip

The goal of this subsection is to define the $I^0$-equivariant version of
the corresponding category $\bD_{ren}$. This will be done in the following
abstract set-up.

\sssec{}   \label{cond for I}

Let $\bD^f\subset \bD$ be as in \secref{renorm with t}, so that the conditions
of \propref{D plus} hold. 

\medskip

Let $\bD_1^+\subset\bD^+$ be a full triangulated 
subcategory. We assume that the following conditions hold:

\begin{itemize}

\item(1) The tautological functor $\on{emb}:\bD_1^+\to \bD^+$ admits a right
adjoint, denoted $\on{Av}:\bD^+\to \bD_1^+$, such that the composition
$\on{emb}\circ \on{Av}:\bD^+\to \bD^+$ is left-exact.

\item(2) For every $X\in \bD$ belonging to a strongly generating set of
objects of $\bD^f$ there exists an inverse
family $X\to \{...\to X^2\to X^1\}$ with $X^k\in \bD_1^+\cap \bD^f$ such that
for any $Z\in \bD_1^+$, the arrow
$$colim\, \Hom_{\bD_1^+}(X^k,Z)\to \Hom_{\bD^+}(X,Z)$$ 
is an isomorphism.

\end{itemize}

Denote $\bD^f_1:=\bD^f_{1,ren}:=\bD_1^+\cap \bD^f$; being a triangulated
subcategory of $\bD$, it acquires a DG model. Hence, its ind-completion,
denoted $\bD_{1,ren}$, is well-defined. It comes equipped with a 
functor (which is also equipped with a DG model) 
$$\on{emb}_{ren}:\bD_{1,ren}\to \bD_{ren},$$
which is fully faithful, sends compact objects to compact ones, and
commutes with direct sums. Hence, $\on{emb}_{ren}$ admits
a right adjoint, denoted $\on{Av}_{ren}$, which also
commutes with direct sums.

\begin{prop}   \label{abstract equivariant}
Under the above circumstances we have:

\medskip

\noindent(a) The functor $\on{emb}_{ren}\circ \on{Av}_{ren}:\bD_{ren}\to \bD_{ren}$
is left-exact.

\medskip

\noindent(b) The category $\bD_{1,ren}$ acquires a unique t-structure, for which
the functor $\on{emb}_{ren}$ is exact. (The functor $\on{Av}_{ren}$ is then
automatically left-exact.)

\medskip

\noindent(c) The (mutually adjoint) functors $\Psi:\bD^+_{ren}\leftrightarrows \bD^+:\Phi$
send the categories $\bD_{1,ren}^+\subset \bD^+_{ren}$ and $\bD_1^+\subset \bD^+$
to one another. (We shall denote the resulting pair of mutually adjoint functors by 
$\Psi_1,\Phi_1$, respectively.)

\medskip

\noindent(d) We have the isomorphisms of functors
$$\on{Av}_{ren}\circ \Phi\simeq \Phi_1\circ \on{Av}:\bD^+\to 
\bD_{1,ren}^+ \text{ and }
\on{Av}\circ \Psi\simeq \Psi_1\circ \on{Av}_{ren}:\bD^+_{ren}\to \bD^+_1.$$

\end{prop}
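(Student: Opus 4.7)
The plan is to recognize the pair $(\bD_1^+, \bD^f_1)$ as itself fitting into the renormalization framework of Sections \ref{renorm without t}--\ref{criter for t renorm}, establish (b) as a direct analogue of \propref{D plus} applied to $\bD_1^+$, and then read off (c), (d), and finally (a) by comparison with the original setup on $\bD^+$.

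Concretely, I would first apply condition (2) to promote the pro-system $\{\sZ_\sk\}$ (representing the forgetful-style functor $\sF:\bD\to\Vect$ from \secref{criter for renorm plus}) into a pro-system in $\bD^f_1$. Assuming the $\sZ_\sk$ lie in the strong generating set of $\bD^f$ invoked by (2), the inverse families $\{\sZ_\sk^{k'}\}$ give objects of $\bD^f_1$, and the iterated colimit formula
\[
\sF(\on{emb}(Z))\simeq \underset{\sk}{colim}\,\Hom(\sZ_\sk,\on{emb}(Z))\simeq
\underset{(\sk,k')}{colim}\,\Hom_{\bD^+_1}(\sZ_\sk^{k'},Z),\qquad Z\in \bD_1^+,
\]
shows that $\sF\circ\on{emb}$ on $\bD_1^+$ is computed by the combined pro-system $\{\sZ_\sk^{k'}\}_{(\sk,k')}$. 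Together with exactness and conservativity of $\sF\circ\on{emb}$, this verifies the hypotheses of \propref{modifying plus} and \propref{renorm comp gen} for $(\bD_1^+,\bD^f_1)$, yielding a compactly generated t-structure on $\bD_{1,ren}$ with $\bD_{1,ren}^+\simeq \bD_1^+$ via adjoint functors $\Psi_1,\Phi_1$, satisfying $\Psi_1\circ \Phi_1\simeq \on{Id}_{\bD_1^+}$. This gives (b), and identifying $\Psi_1$ (resp.\ $\Phi_1$) with the restriction of $\Psi$ (resp.\ $\Phi$) -- which holds because $\Psi|_{\bD^f_1}$ is just the tautological inclusion $\bD^f_1\hookrightarrow \bD_1^+$, and the functor $\Phi$ restricted to $\bD_1^+$ represents the same presheaf on $\bD^f_1$ as $\on{emb}_{ren}\circ \Phi_1$ -- yields (c), and also shows $\on{emb}_{ren}$ is exact in the new t-structure.

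For (d), the two candidate functors $\on{Av}_{ren}\circ \Phi$ and $\Phi_1\circ \on{Av}$ from $\bD^+$ to $\bD_{1,ren}^+\simeq \bD_1^+$ both represent the same presheaf on $\bD^f_1$, namely $X\mapsto \Hom_{\bD^+}(\on{emb}(-),X)$, by a direct chase through the four adjunctions $(\Psi,\Phi),(\Psi_1,\Phi_1),(\on{emb},\on{Av}),(\on{emb}_{ren},\on{Av}_{ren})$; Yoneda in $\bD_{1,ren}^+$ forces them to agree. The second identity $\on{Av}\circ\Psi\simeq \Psi_1\circ\on{Av}_{ren}$ then follows by post-composing with $\Psi_1$ (using $\Psi_1\circ\Phi_1\simeq\on{Id}$ on $\bD_1^+$) and restricting to $\bD_{ren}^+$. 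Finally, (a) reduces via (d) and the identification $\bD_{ren}^+\simeq \bD^+$ to the left-exactness of $\on{emb}\circ\on{Av}$ on $\bD^+$, which is condition (1); since $\on{emb}_{ren}\circ\on{Av}_{ren}$ commutes with direct sums and the t-structure on $\bD_{ren}$ is compatible with colimits (being compactly generated), \lemref{colimit generate t} propagates left-exactness from $\bD_{ren}^+$ to all of $\bD_{ren}$.

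The main obstacle is the first step: extracting from condition (2) a pro-system in $\bD^f_1$ that fully verifies the technical cone and generation hypotheses of \propref{renorm comp gen} (the $\leq 0$-placement of $\sZ_\sk^{k'}$ and the existence of suitable finite-sum surjections as in \secref{criter for t renorm}(ii)). The iterated colimit is clearly filtered, but the t-structure compatibility of the $\sZ_\sk^{k'}$ requires that the inverse families in condition (2) can be chosen in $\bD^{\leq 0}\cap \bD^f_1$ when $\sZ_\sk\in \bD^{\leq 0}\cap\bD^f$; this is a mild strengthening that needs to be checked in each application, and indeed in the examples of \secref{rep theory} it is exactly the content of the $I^0$-approximation lemmas of \cite{FG2}.
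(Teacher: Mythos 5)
Your plan has the right heart — using condition (2) to compute Hom-spaces out of objects in $\bD^f_1$ via iterated filtered colimits is indeed the technical engine of the argument, and it overlaps with what the paper actually does. But the scaffolding you build around that idea doesn't hold, for several reasons.

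First, the framework of \secref{renorm without t}--\ref{criter for t renorm} takes as input a \emph{co-complete} triangulated category $\bD$ with a DG model; only then can one form $\bD_{ren}=\ua\bD^f$, construct $\Psi$ and $\Phi$ as an adjoint pair between them, and invoke \propref{D plus} or \propref{modifying plus}. Your plan to "verify the hypotheses of \propref{modifying plus} and \propref{renorm comp gen} for $(\bD_1^+,\bD_1^f)$" has no ambient category to apply them to: the setup of \secref{cond for I} only hands you $\bD_1^+\subset\bD^+$, not a co-complete $\bD_1$ (and in the intended applications — $I^0$-equivariant objects — there genuinely is none available a priori). Without a co-complete $\bD_1$, there is no candidate $\Phi_1$ produced by the machinery, so the phrase "yielding ... adjoint functors $\Psi_1,\Phi_1$ satisfying $\Psi_1\circ\Phi_1\simeq\on{Id}_{\bD_1^+}$" is not something \propref{D plus} delivers. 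Your subsequent sentence, "identifying $\Psi_1$ (resp.\ $\Phi_1$) with the restriction of $\Psi$ (resp.\ $\Phi$)", is then circular: the content of part (c) is precisely that $\Phi$ carries $\bD_1^+$ into $\bD_{1,ren}^+$, and you are using the restricted functor to mean $\Phi_1$ before that has been shown.

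Second, your proof quietly upgrades the hypotheses. Proposition~\ref{abstract equivariant} assumes only that the conclusion of \propref{D plus} holds together with conditions (1) and (2) of \secref{cond for I}; it does \emph{not} assume the functor $\sF$ and the pro-system $\{\sZ_\sk\}$ of \secref{criter for renorm plus}. You invoke them explicitly (and further assume the $\sZ_\sk$ lie in the chosen strong generating set, and that the $\sZ_\sk^{k'}$ can be placed in $\bD^{\leq 0}$), and you acknowledge this as the "main obstacle", but since the paper's proposition is meant to be applied exactly as stated, this is a real mismatch rather than a minor technicality. Finally, compact generation of the t-structure on $\bD_{ren}$ (which you use to propagate left-exactness beyond $\bD_{ren}^+$) is also not among the hypotheses; fortunately it is not needed, since left-exactness is a statement about $\bD_{ren}^{\geq 0}$, which already sits inside $\bD_{ren}^+$.

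What the paper does instead is bypass any attempt to build a renormalization framework for $\bD_1$: it constructs a natural transformation $\on{emb}_{ren}\circ\on{Av}_{ren}\circ\Phi\to\Phi\circ\on{emb}\circ\on{Av}$ and proves it is an isomorphism by exactly the kind of iterated-colimit computation you have in mind, but staged entirely through the existing adjunctions (using only that $\on{Av}_{ren}\circ\Phi(Y)$ is a filtered colimit of objects of $\bD^f_1$ and condition (2) applied to compact generators). Part (a) falls out immediately, part (b) follows by a direct-summand comparison of truncation triangles, and (c), (d) are then formal. I would encourage you to reframe your argument around that single commutation isomorphism rather than around an auxiliary renormalized category for $\bD_1$, which does not exist in the generality of the statement.
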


\sssec{Proof of \propref{abstract equivariant}}

First, we claim that there exists a natural transformation:
\begin{equation} \label{Loc and Phi}
\on{emb}_{ren}\circ \on{Av}_{ren}\circ \Phi\to \Phi\circ \on{emb}\circ \on{Av}
\end{equation}
between functors $\bD^+\to \bD_{ren}$. To construct it, it is sufficient to
construct a natural transformation 
\begin{equation} \label{Loc and Phi,Psi}
\Psi\circ \on{emb}_{ren}\circ \on{Av}_{ren}\circ \Phi\to \on{emb}\circ \on{Av}:\bD^+\rightrightarrows \bD.
\end{equation}

For $Y\in \bD^+$ consider the maps from the LHS of \eqref{Loc and Phi,Psi} to the distinguished
triangle
$$\on{emb}\circ \on{Av}(Y)\to Y\to \on{Cone}(\on{emb}\circ \on{Av}(Y)\to Y).$$
We have a canonical map $$\Psi\circ \on{emb}_{ren}\circ \on{Av}_{ren}\circ \Phi(Y)\to Y.$$
Hence, in order to construct the morphism in \eqref{Loc and Phi,Psi}, it suffices to show
that for any $Y'\in \on{ker}(\on{Av})$ and any $X\in \bD_{1,ren}$, we have
$$\Hom_{\bD}(\Psi\circ \on{emb}_{ren}(X),Y')=0,$$
which follows from the definitions.

\medskip

Now, we claim that the natural transformation \eqref{Loc and Phi} is an isomorphism. To check it,
it suffices to show that for any $Y\in \bD^+$ and $X\in  \bD^f$ as in Condition (2) of \secref{cond for I}, the map
\begin{equation} \label{Loc and Phi, Hom}
\Hom_{\bD_{ren}}(X,\on{emb}_{ren}\circ \on{Av}_{ren}\circ \Phi(Y))\to
\Hom_{\bD_{ren}}(X,\Phi\circ \on{emb}\circ \on{Av}(Y))
\end{equation}
is an isomorphism. Let us write $\on{Av}_{ren}\circ \Phi(Y)$ as $colim\, Z_i$, $Z_i\in \bD_1^f$, i.e.,
for every $X_1\in \bD_1^f$ the arrow
$$\underset{i}{colim}\, \Hom_{\bD_1^f}(X_1,Z_i)\to \Hom_{\bD}(X_1,Y)$$
is an isomorphism. Let $X^k$ be the corresponding inverse system for $X$. Then the LHS of
\eqref{Loc and Phi, Hom} identifies with
\begin{multline*}
\underset{i}{colim}\, \Hom_{\bD^f}(X,Z_i)\simeq \underset{i}{colim}\,\underset{k}{colim}\,
\Hom_{\bD^f_1}(X^k,Z_i)\simeq \\
\simeq \underset{k}{colim}\,\underset{i}{colim}\,
\Hom_{\bD^f_1}(X^k,Z_i)\simeq 
\underset{k}{colim}\, \Hom_{\bD}(X^k,Y).
\end{multline*}
The RHS of \eqref{Loc and Phi, Hom} identifies with
$$\Hom_\bD(X,\on{emb}\circ \on{Av}(Y))\simeq \underset{k}{colim}\, 
\Hom_{\bD_1}(X^k,\on{emb}\circ \on{Av}(Y))\simeq \underset{k}{colim}\, \Hom_{\bD}(X^k,Y),$$
implying our assertion.

\medskip

The isomorphism \eqref{Loc and Phi}  readily implies point (a) of the proposition. Indeed, for 
$X\in \bD_{ren}^{\geq 0}$, write $X=\Phi(Y)$ for $Y\in \bD^{\geq 0}$. We have:
$$\on{emb}_{ren}\circ \on{Av}_{ren}(X)\simeq \Phi\left(\on{emb}\circ \on{Av}(Y)\right),$$
and as $\on{emb}\circ \on{Av}(Y)\in \bD^{\geq 0}$ (by assumption), the assertion 
follows.

\medskip

Point (b) is a formal corollary of point (a). Indeed, we claim that for $Z\in \bD_{1,ren}$,
the terms of the distinguished triangle
\begin{equation} \label{truncate emb}
\tau^{\leq 0}(\on{emb}_{ren}(Z))\to \on{emb}_{ren}(Z)\to \tau^{>0}(\on{emb}_{ren}(Z))
\end{equation}
belong to the essential image of $\on{emb}_{ren}$, which implies our assertion. To prove
the claim we compare the distinguished triangle \eqref{truncate emb} with
\begin{equation} \label{truncate emb Av}
\on{emb}_{ren}\circ \on{Av}_{ren}
\left(\tau^{\leq 0}(\on{emb}_{ren}(Z))\right)\to \on{emb}_{ren}(Z)\to 
\on{emb}_{ren}\circ 
\on{Av}_{ren}\left(\tau^{>0}(\on{emb}_{ren}(Z))\right).
\end{equation}

\medskip

The adjunction map $\on{emb}_{ren}\circ \on{Av}_{ren}\to \on{Id}$ gives rise to 
a map of triangles \eqref{truncate emb Av}  $\to$ \eqref{truncate emb}. On the other hand,
since $\on{emb}_{ren}\circ \on{Av}_{ren}\left(\tau^{>0}(\on{emb}_{ren}(Z))\right)\in \bD_{ren}^{>0}$
and $\tau^{\leq 0}(\on{emb}_{ren}(Z))\in \bD_{ren}^{\leq 0}$, we have a unique map
\eqref{truncate emb}  $\to$ \eqref{truncate emb Av}. Moreover, the composition
\eqref{truncate emb}  $\to$ \eqref{truncate emb Av} $\to$ \eqref{truncate emb} equals the
identity map. This implies that the terms of \eqref{truncate emb} 
are direct summands of the terms of \eqref{truncate emb Av}. However, a direct summand
of an object in the essential image of $\on{emb}_{ren}$ itself belongs to the
essential image of $\on{emb}_{ren}$.

\medskip

Points (c) and (d) of the proposition follow formally from (a) and (b)
and \eqref{Loc and Phi}. \qed








\ssec{}   \label{proof I lemma b}

We are going to apply \propref{abstract equivariant} 
to $\bD$ being one of the categories
of \secref{situation} and $\bD_1=\bD^{I^0}$. 

Note that in the case of $\bD=\bD(\fD(\Fl^{\on{aff}}_G)_\crit\mod)$, this
would establish \lemref{I^0 monodromic D-modules nice} and \propref{prop Iw and t}(b).

\sssec{}

Let us show that conditions (1) and (2) of \secref{cond for I} 
hold. This will be done in the following context:

\medskip

Let $\bD$ be the derived category of an abelian category $\CC$ 
acted on by $G\ppart$ of \cite{FG2}, Sect. 22.1. Note that
condition (1) of \secref{cond for I} is given by \cite{FG2}, Sect. 20.10. Let $H$ be a group
sub-scheme of $G[[t]]$. 

\medskip

Let $\bD^+(\CC)^{w,H}$ and $\bD^+(\CC)^{s,H}:=\bD^+(\CC)^{H}$ be the 
corresponding weak and strong equivariant categories, respectively. Let 
$$\bD^+(\CC)^{s,H}\overset{\on{emb}^{s,w}}\longrightarrow \bD^+(\CC)^{w,H} 
\overset{\on{emb}^{w}}\longrightarrow \bD^+(\CC)$$
be the corresponding functors, and let $\on{emb}:=\on{emb}^s:=\on{emb}^{w}\circ \on{emb}^{s,w}$.

\medskip

Let $\bD^f(\CC)\subset \bD(\CC)$ be a full subcategory, contained in $\bD^b(\CC)$,
such that every object of $\bD^f(\CC)$ is strongly equivariant with respect
to some congruence subgroup $G(t^n\cdot \BC[[t]])$. 

\medskip

We will make the following additional assumption, satisfied for the categories 
appearing in \secref{situation}:

\medskip

\noindent$(\star)$\hskip0.5cm {\it The category $\bD^f(\CC)$ is strongly generated
by objects that belong to the essential image of the functor 
$\bD^+(\CC)^{w,H}\to \bD^+(\CC)$.}

\medskip

This assumption is satisfied for both examples under consideration.

\medskip

For any $X$ as above we will construct a family of objects $X_k$ that satisfy
condition (2) of \secref{cond for I}. 

\sssec{}

Indeed, let $X=\on{emb}^{w}(X_1)$ with $X_1\in \bD^{\leq 0}(\CC)^{w,H}$. Let $X_1$
be strongly equivariant with respect to $G(t^n\cdot \BC[[t]])$. 

\medskip

Let $H_n$ be the group $H/H\cap G(t^n\cdot \BC[[t]])$, $\fh_n:=\on{Lie}(H_n)$.
Consider the $H_n$-module $\on{Fun}(H_n)$ and let us represent it as a union of
finite-dimensional modules $\on{Fun}(H_n)^k$. Let $(\on{Fun}(H_n)^k)^*$
be the dual representations.

\medskip

Then the desired objects $X^k$ are given by
$$X^k:=\Lambda^\bullet(\fh_n)\otimes \left(X_1\otimes (\on{Fun}(H_n)^k)^*\right),$$
where $X_1\otimes (\on{Fun}(H_n)^k)^*$ is regarded as an object of 
$\bD(\CC)^{w,H}$, and for $Y\in \bD(\CC)^{w,H}$
$$Y\mapsto \Lambda^\bullet(\fh_n)\otimes Y$$
is the (homological) Chevalley complex of $\fh_n$ with coefficients in $Y$,
which by \cite{FG2}, Sect. 20.10 is the left adjoint functor to $\on{emb}^{s,w}$,
restricted to the strongly $G(t^n\cdot \BC[[t]])$-equivariant subcategory.

\ssec{}   \label{proof I lemma c}

In this subsection we will prove point (c) of \propref{prop Iw and t}, i.e.,
that the subcategory $\bD_{ren}(\fD(\Fl^{\on{aff}}_G)_\crit\mod)^{I^0}$
is compatible with the new t-structure on
$\bD_{ren}(\fD(\Fl^{\on{aff}}_G)_\crit\mod)$.

\sssec{}

Let us consider a t-structure on $\bD_{ren}(\fD(\Fl^{\on{aff}}_G)_\crit\mod)^{I^0}$, 
denoted $new'$ by letting $\bD^{\leq 0_{new'}}_{ren}(\fD(\Fl^{\on{aff}}_G)_\crit\mod)^{I^0}$
be generated by
\begin{equation} \label{D minus I}
\bD_{ren}(\fD(\Fl^{\on{aff}}_G)_\crit\mod)^{I^0}\cap 
\bD^{\leq 0_{new}}_{ren}(\fD(\Fl^{\on{aff}}_G)_\crit\mod)\cap 
\bD^f_{ren}(\fD(\Fl^{\on{aff}}_G)_\crit\mod).
\end{equation}

It is sufficient to show that the functor
$$\on{emb}_{ren}:\bD_{ren}(\fD(\Fl^{\on{aff}}_G)_\crit\mod)^{I^0}\to
\bD_{ren}(\fD(\Fl^{\on{aff}}_G)_\crit\mod)$$
is exact in the new t-structures. 
The above functor is evidently right-exact. Hence, it remains to show the following:

\medskip

\noindent(*) If $\CF$ is an object of $\bD_{ren}(\fD(\Fl^{\on{aff}}_G)_\crit\mod)^{I^0}$
such that $\Hom(\CF',\CF)=0$ for
any $\CF'$ belonging to \eqref{D minus I}, then $\Hom(\CF_1,\CF)=0$ for any
$$\CF_1\in \bD^{\leq 0_{new}}_{ren}(\fD(\Fl^{\on{aff}}_G)_\crit\mod)\cap 
\bD^f_{ren}(\fD(\Fl^{\on{aff}}_G)_\crit\mod).$$

\sssec{}

For an element $w$ of the affine Weyl group, let $j_w$ denote the embedding 
of the corresponding $I$-orbit $\Fl^{\on{aff}}_{w,G}\subset \Fl^{\on{aff}}_G$. We shall say that $w$ is right-maximal
if $w$ is the element of maximal length in its right coset with respect to the
finite Weyl group. We have the following assertion (\cite{AB}, Lemma 15):

\begin{lem} 
For any fixed $\CF_1\in \bD^f(\fD(\Fl^{\on{aff}}_G)_\crit\mod)$, for all $\cla$ sufficiently
deep inside the dominant cone, $j_w^!\left(\CF_1\star j_{-\cla,*}\right)=0$
unless $w$ is right-maximal.
\end{lem}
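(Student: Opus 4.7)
The plan is to first reduce to the case where $\CF_1$ is a costandard sheaf supported on a single $I$-orbit, and then exploit the combinatorics of the affine Weyl group. Since $\CF_1\in \bD^f(\fD(\Fl^{\on{aff}}_G)_\crit\mod)$ has bounded support, the filtration by $I$-orbit closures shows that $\CF_1$ lies in the triangulated subcategory generated by shifts of $j_{u,*}(\CG_u)$ for $u$ ranging over a finite subset $S\subset W_{\on{aff}}$ and $\CG_u\in \fD(\Fl^{\on{aff}}_{u,G})_\crit\mod$. Since $j_w^!$ and $?\star j_{-\cla,*}$ are triangulated functors, it suffices to show that $j_w^!\bigl(j_{u,*}(\CG_u)\star j_{-\cla,*}\bigr) = 0$ for every $u\in S$, every $w$ not right-maximal, and $\cla$ deep in $\cLambda^+$.

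The crux is a combinatorial fact about the affine Weyl group: for fixed $u\in W_{\on{aff}}$ and $\cla$ sufficiently deep dominant (depending on $\ell(u)$), every finite simple reflection $s_\alpha$ lies in the left descent set of $u\cdot t^{-\cla}$, which in the paper's convention (maximal length in the coset $W_{\on{fin}}\cdot w$) is precisely the statement that $ut^{-\cla}$ is right-maximal. This follows from the standard length formula $\ell(t^\mu) = \sum_{\beta\in \Phi^+}|\langle \mu, \beta\rangle|$ together with the observation that deep antidominant translations dominate any bounded contribution from left-multiplication by $u$, so that the left-descent structure of $ut^{-\cla}$ coincides with that of $t^{-\cla}$ once $\cla$ is deep enough.

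For such $\cla$, when the length is additive, $\ell(ut^{-\cla}) = \ell(u)+\ell(t^{-\cla})$, convolution of costandards along a length-additive product yields a costandard:
$$j_{u,*}(\CG_u)\star j_{-\cla,*}\simeq j_{ut^{-\cla},*}(\CG')$$
for some $\CG'$ on $\Fl^{\on{aff}}_{ut^{-\cla},G}$. The defining property of costandard sheaves then gives $j_w^!(j_{ut^{-\cla},*}(\CG')) = 0$ for $w\neq ut^{-\cla}$, and by the previous paragraph the unique stratum $ut^{-\cla}$ appearing in the support is right-maximal, proving the vanishing.

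The main obstacle is the case when $\ell(ut^{-\cla}) < \ell(u)+\ell(t^{-\cla})$: the convolution then acquires lower-order summands of the form $j_{v,*}$ with $v<ut^{-\cla}$, and one must verify that each such $v$ is still right-maximal. The argument proceeds by induction on $\ell(u)$, reducing via the Hecke-algebra type identities for convolution with $j_{s,*}$ (where $s$ is a simple reflection) to the cases already handled; the key input is once again that $\cla$ is taken sufficiently deep so that the left-descent structure of every surviving stratum is governed by the $t^{-\cla}$ factor, as in the proof of Lemma 15 of \cite{AB}.
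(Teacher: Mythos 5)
Your proof hinges on the combinatorial claim that once $\cla$ is deep enough, every finite simple reflection $s_\alpha$ is a left descent of $u\,t^{-\cla}$, so that every $u\,t^{-\cla}$ (for $u$ in a fixed finite set) is right-maximal in the sense of ``maximal length in $W_{\on{fin}}\cdot w$''. That claim is self-contradictory. Take $u=e$ and $u'=s_\alpha$: since $s_\alpha\cdot (s_\alpha t^{-\cla})=t^{-\cla}$ and the lengths of $v$ and $s_\alpha v$ always differ by exactly one, $s_\alpha$ is a left descent of $s_\alpha t^{-\cla}$ if and only if it is \emph{not} a left descent of $t^{-\cla}$ --- it cannot be a left descent of both, however deep $\cla$ is. Equivalently, $t^{-\cla}$ and $s_\alpha t^{-\cla}$ lie in the same coset $W_{\on{fin}}\cdot t^{-\cla}$, and at most one element of a coset can be length-maximal in it. The intuition that the deep translation ``dominates'' the left-descent set is wrong: writing $u=\bar u\,t^\nu$ with $\bar u$ the finite Weyl group component, the left-descent set of $u\,t^{-\cla}$ for deep $\cla$ is governed by $\bar u$ (concretely, $s_\alpha$ is a left descent iff $\bar u^{-1}(\alpha)$ is a positive root), and the right-multiplication by $t^{-\cla}$ does not override this.

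Two further points. First, ``right-maximal'' in the paper should be read as maximal length in $w\cdot W_{\on{fin}}$ --- equivalently, $\Fl^{\on{aff}}_{w,G}$ is the \emph{open} $I$-orbit in the fiber of $p:\Fl^{\on{aff}}_G\to\Gr^{\on{aff}}_G$ through it --- not in $W_{\on{fin}}\cdot w$; this is the reading under which the length-additivity $\ell(w\,t^{-\cla})=\ell(w)+\ell(t^{-\cla})$ for $w$ right-maximal and $\cla$ dominant, used in the next lemma of the paper, actually holds. With this reading, your ``dominating translation'' argument does go through, but for the \emph{right} descent set of $u\,t^{-\cla}$, and it does show that $u\,t^{-\cla}$ is right-maximal once $\cla$ is deep. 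Second, even after this correction, the additivity $\ell(u\,t^{-\cla})=\ell(u)+\ell(t^{-\cla})$ genuinely fails for $\cla$ arbitrarily deep whenever the inversion set of $u$ contains a root with negative finite part --- the basic example is $u=s_0$, the affine simple reflection, whose unique inversion is $\delta-\theta$. So your ``main obstacle'' is not a degenerate boundary case; in that regime $j_{u,*}(\CG_u)\star j_{-\cla,*}$ is a genuine complex whose $!$-restrictions to the lower strata are not controlled by the costandard property alone, and the inductive step you sketch rests once more on the false left-descent dominance. The argument in \cite{AB} for the lemma cited here is not a length-combinatorial devissage of this kind: it uses the geometry of $p$ and the relation of the deep Wakimoto sheaves to the semi-infinite and spherical structure on $\Gr^{\on{aff}}_G$, and I do not see how your approach can be completed without that geometric input.
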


Let $\CF$ and $\CF_1$ be as in (*). Suppose by contradiction that we have a non-zero morphism
$\CF_1\to \CF$. Let $\cla$ be as in the above lemma. Since $-\star j_{-\cla,!}$
is fully faithful, then the morphism 
\begin{equation} \label{bad morphism}
\CF_1\star j_{-\cla,!}\to \CF\star j_{-\cla,!}
\end{equation}
is non-zero either. Then there exists an element $w$ in the affine Weyl group, such that
the morphism
\begin{equation} \label{bad morphism w}
\CF_1\star j_{-\cla,!}\to j_w{}_*\left(j_w^!(\CF\star j_{-\cla,!})\right)
\end{equation}
is non-zero. By the choice of $\cla$, we obtain that $w$ must be right-maximal.
By the assumption on $\CF$, the LHS in \eqref{bad morphism w} is $\leq 0$ 
in the old t-structure. Hence, to obtain a contradiction, it suffices to show that
the RHS in\eqref{bad morphism w} is $>0$ in the old t-structure, or, which is
the same, that $j_w^!(\CF\star j_{-\cla,!})$ is $>0$ as an object of the
derived category of twisted D-modules on $\Fl^{\on{aff}}_{w,G}$.

\medskip

Since $\CF$ is $I^0$-equivariant, the object $j_w^!(\CF\star j_{-\cla,!})$ is 
an extension (in fact, a direct sum) of copies of the shifted constant D-module
on $\Fl^{\on{aff}}_{w,G}$; let $j_{w,!}$ denote the !-extension of the latter
onto $\Fl^{\on{aff}}_G$. Thus, it is enough to show that
$\Hom(j_{w,!}[k],\CF\star j_{-\cla,!})=0$ for $k\geq 0$. However, this follows
from the assumption on $\CF$ and the next assertion:

\begin{lem}
If $w$ is right-maximal, then the object $j_{w,!}\in \bD^f(\fD(\Fl^{\on{aff}}_G)_\crit\mod)^{I^0}$
belongs to $\bD^{\leq 0_{new}}_{ren}(\fD(\Fl^{\on{aff}}_G)_\crit\mod)$.
\end{lem}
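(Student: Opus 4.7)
The plan is to reduce the claim to a length identity in the affine Weyl group, and then invoke the standard convolution formula for standard (!-extended) objects on $\Fl^{\on{aff}}_G$.

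First, by the very definition of the new t-structure on $\bD_{ren}(\fD(\Fl^{\on{aff}}_G)_\crit\mod)$ together with the remark following it (which uses the right-exactness of $?\star J_\cmu$ for $\cmu\in\cLambda^+$), I can test membership of $j_{w,!}$ in $\bD^{\leq 0_{new}}_{ren}(\fD(\Fl^{\on{aff}}_G)_\crit\mod)$ by checking that $j_{w,!}\star J_\cla$ is $\leq 0$ in the old t-structure for $\cla$ in a cofinal subfamily of $-\cLambda^+$. For any such $\cla$, one has $J_\cla=j_{\cla,!}$ by the characterization of the Mirkovic--Wakimoto objects recalled in \secref{notation}, so the problem becomes that of analyzing $j_{w,!}\star j_{\cla,!}$.

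The next step is the combinatorial input. Recall that $j_{\wt{w}_1,!}\star j_{\wt{w}_2,!}\simeq j_{\wt{w}_1\wt{w}_2,!}$ whenever $\ell(\wt{w}_1\wt{w}_2)=\ell(\wt{w}_1)+\ell(\wt{w}_2)$, and that the right-hand side is a single (perverse) D-module placed in cohomological degree $0$, hence a fortiori an object of $\bD^{\leq 0_{old}}$. Thus it suffices to verify the length identity
\[
\ell(w\cdot t^\cla)=\ell(w)+\ell(t^\cla)
\]
for $\cla$ sufficiently deep in $-\cLambda^+$, under the assumption that $w$ is right-maximal. Right-maximality is equivalent to the condition that $\ell(ws_\alpha)<\ell(w)$ for every finite simple reflection $s_\alpha$, or equivalently that $w$ sends every finite positive root to a negative (necessarily affine) root. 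Applying the standard length formula for elements of the form $v\, t^\cmu$, one sees that when all the pairings $\langle\cla,\alpha\rangle$ are sufficiently negative, the absolute values in the formula open with the correct signs, and the finite-type correction coming from $w$ assembles exactly into $\ell(w)$ rather than subtracting from $\ell(t^\cla)$; this is where right-maximality enters decisively.

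Once the length identity is established, we obtain $j_{w,!}\star j_{\cla,!}\simeq j_{w\cdot t^\cla,!}\in\Heart^{old}$, completing the argument. The principal obstacle is precisely the affine Weyl combinatorics in the previous paragraph: verifying that right-maximality of $w$ translates exactly into the additivity of lengths against a cofinal family of anti-dominant translations. Beyond this, everything is formal: the reduction to $\cla\in-\cLambda^+$ uses only the right-exactness of $?\star J_\cmu$ for $\cmu\in\cLambda^+$ and the characterization of the new t-structure, and the conclusion relies only on the convolution formula and the fact that a single D-module sits in degree $0$ of the old t-structure.
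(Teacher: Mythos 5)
Your proof is correct and takes the same route as the paper: reduce the membership test in $\bD^{\leq 0_{new}}_{ren}$ to checking $j_{w,!}\star J_\cla\in\bD^{\leq 0_{old}}_{ren}$ for $\cla$ in a cofinal subset of $-\cLambda^+$, identify $J_\cla=j_{\cla,!}$ there, and conclude from the length-additivity $\ell(w\cdot t^\cla)=\ell(w)+\ell(t^\cla)$ that $j_{w,!}\star j_{\cla,!}\simeq j_{w\cdot t^\cla,!}$ is a single perverse D-module. The paper's proof is precisely the last convolution isomorphism stated in one line; you merely supply the surrounding reduction and the affine-Weyl combinatorics that the paper leaves implicit.
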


\begin{proof}

Indeed, for $w$, which is right-maximal, we have $j_{w,!}\star j_{-\cla,!}\simeq j_{w\cdot (-\cla),!}$.

\end{proof}

\ssec{}   \label{proof I lemma d}

In order to prove \propref{Iw fully faithful} we will consider the following general context.

\medskip

Let $F:\bA\to \bA'$ be a 1-morphism in $\DGMonCat$. Let $G:\bC_1\to \bC$ be 
a 1-morphism in $\DGMod(\bA)$. Assume that the functor
$\Ho(G):\Ho(\bC_1)\to \Ho(\bC)$ is fully faithful. 

\medskip

Set $\bC'_1=\Ind^{\bA'}_\bA(\bC_1)$ and $\bC'=\Ind^{\bA'}_\bA(\bC)$.
Let $G'$ denote the resulting 1-morphism $\bC'_1\to \bC'$ in
$\DGMod(\bA')$, and 
$$F_{\bC_1}:\bC_1\to \bC'_1 \text{ and } F_{\bC}:\bC\to \bC'$$
be the corresponding morphisms in $\DGCat$.

\begin{prop}  \label{Iw fully faithful abs}
Assume that $\Ho(\bA)$ is rigid. We have:

\smallskip

\noindent{\em(1)} The functor $\Ho(G'):\Ho(\bC'_1)\to \Ho(\bC')$ is
fully faithful.

\smallskip

\noindent{\em(2)} The natural transformation $G^*\circ F_{\bC_1}{}_*\to
F_{\bC}{}_*\circ G'{}^*$ is an isomorphism.

\smallskip

\noindent{\em(3)} If $F_*:\Ho(\bA')\to \Ho(\bA)$ is conservative, then
the essential image of $\Ho(\ua\bC{}'_1)$ in $\Ho(\ua\bC')$
under $G'{}^*$ equals the pre-image of $G^*(\Ho(\ua\bC{}_1))\subset \Ho(\ua\bC)$
under $F_\bC{}_*$.

\end{prop}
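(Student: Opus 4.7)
The plan is to prove (2) first, as it follows directly from the tightness machinery of Section \ref{adjunctions and tightness}, then use it together with \corref{tight mon}(2) to obtain (1), and finally to derive (3) via a semi-orthogonal decomposition argument.

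For (2), the rigidity of $\Ho(\bA)$ combined with \propref{rigid is tight} ensures that the homotopy monoidal functor $F$ is tight. Applying \corref{ind low tight} with $F_\bA = F$ and with the 1-morphism $G:\bC_1 \to \bC$ over $\bA$ directly yields the commutative diagram that constitutes the natural isomorphism of (2). For (1), I would use that $\bC'_1 = \Ind^{\bA'}_\bA(\bC_1)$ is generated, as a category carrying an action of $\bA'$, by the essential image of $F_{\bC_1}^*$, so fully-faithfulness of $\Ho(G')$ reduces to Hom computations between objects of the form $X \underset{\bA'}\otimes F_{\bC_1}^*(Y_i)$ with $X \in \bA'$ and $Y_i \in \bC_1$. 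Applying \corref{tight mon}(2) (available by the tightness of $F$) to both sides rewrites the source as $\Hom_{\bC_1}(Y_1, F_\bA{}_*(X) \underset{\bA}\otimes Y_2)$ and the target as $\Hom_\bC(G(Y_1), F_\bA{}_*(X) \underset{\bA}\otimes G(Y_2))$; the map induced by $G'$ becomes the one induced by $G$, which is an isomorphism by fully-faithfulness of $G$ combined with its compatibility with the action of $\bA$.

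Part (3) is the main technical obstacle. The inclusion $G'^*(\Ho(\ua\bC'_1)) \subset (F_\bC{}_*)^{-1}(G^*(\Ho(\ua\bC_1)))$ is immediate from (2). For the opposite inclusion, I would use that $G'^*:\ua\bC'_1 \to \ua\bC'$ is fully faithful (by the ind-extension of (1)) and admits a right adjoint $G'{}_*$, so that any $X' \in \ua\bC'$ fits into a triangle $G'^* G'{}_*(X') \to X' \to C$ with $G'{}_*(C) = 0$. The task then reduces to showing that any $C$ satisfying $G'{}_*(C) = 0$ and $F_\bC{}_*(C) = G^*(D)$ for some $D \in \ua\bC_1$ must vanish. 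I would first argue $D = 0$: for every $W_1 \in \ua\bC_1$, the naturality $G'^* \circ F_{\bC_1}^* \simeq F_\bC^* \circ G^*$ and the adjunctions $(G'^*, G'{}_*)$ and $(F_\bC^*, F_\bC{}_*)$ give the chain
$$0 = \Hom(G'^* F_{\bC_1}^*(W_1), C) \simeq \Hom(G^*(W_1), F_\bC{}_*(C)) \simeq \Hom(G^*(W_1), G^*(D)),$$
and fully-faithfulness of $G^*$ forces $\Hom(W_1, D) = 0$ for all $W_1$, hence $D = 0$. This gives $F_\bC{}_*(C) = 0$, and then conservativity of $F_\bC{}_*$ (deduced from the conservativity of $F_*$ by the same argument as in \propref{affine exact}, using that $F^*$-images generate $\ua\bA'$, which in turn makes $F_\bC^*$-images generate $\ua\bC'$) yields $C = 0$. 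The main difficulty is to carefully assemble the naturality and adjunction identities that power the orthogonality argument and to verify that conservativity propagates from $F_*$ to $F_\bC{}_*$ through the induction procedure.
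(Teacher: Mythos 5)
Your proofs of (1) and (2) are essentially identical to the paper's, which invokes \corref{tight mon}(2) for (1) and \corref{ind low tight} for (2) in exactly this way (you simply spell out the generating-object reduction that the paper leaves implicit for (1)).

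For (3), you take a genuinely different route from the paper. The paper's argument is a one-liner at the level of functors: by \propref{affine exact}, conservativity of $F_*$ propagates to $(F_\bC)_*$, and then it suffices to show that $G^*\circ G_*\circ F_{\bC}{}_*\to F_{\bC}{}_*\circ G'{}^*\circ G'_*$ is an isomorphism, which they deduce directly from (2) (combined with the tautological commutativity $G'^*\circ F_{\bC_1}^*\simeq F_\bC^*\circ G^*$ passed to right adjoints, $G_*\circ F_\bC{}_*\simeq F_{\bC_1}{}_*\circ G'_*$). Given $X'$ with $F_\bC{}_*(X')$ in the image of $G^*$, one then has $G^*G_*F_\bC{}_*(X')\simeq F_\bC{}_*(X')$ by full faithfulness of $G^*$, hence $F_\bC{}_*(G'^*G'_*(X')\to X')$ is an isomorphism, and conservativity finishes. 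Your argument instead decomposes an individual object $X'$ via the counit triangle $G'^*G'_*(X')\to X'\to C$, notes $G'_*(C)=0$ and that $F_\bC{}_*(C)$ still lies in the (triangulated) essential image of $G^*$, and kills $C$ by an explicit orthogonality computation followed by conservativity of $F_\bC{}_*$. The two arguments rest on the same ingredients --- (2), the tautological upper-star commutativity, and the propagation of conservativity to $(F_\bC)_*$ --- and are logically equivalent; the paper's version is marginally tighter because it never needs to argue that the essential image of $G^*$ is triangulated or to perform the pointwise Hom computation, while yours has the modest advantage of making the role of the semi-orthogonal decomposition explicit. Both are correct.
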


\begin{proof}

Point (1) follows from \corref{tight mon}(2). Point (2) follows from
\corref{ind low tight}. 

To prove point (3), we note that the functor $F_\bC$ is also conservative
(see \propref{affine exact}), hence, it suffices to show that the natural transformation
$$G^*\circ G_*\circ F_{\bC}{}_*\to F_{\bC}{}_*\circ G'{}^*\circ G'_*$$
is an isomorphism. However, this follows from point (2). 

\end{proof}

\sssec{}  \label{Iw fully faithful, proof bis}

To prove \lemref{Iw fully faithful} we apply \propref{Iw fully faithful abs}
to $\bA$ and $\bA'$ being DG models of
$\bD^{perf}(\Coh(\tN/\cG))$ and $\bD^{perf}(\Coh(\nOp))$, respectively,
and $\bC_1\subset \bC$ being DG models of
$$\bD^f(\fD(\Gr^{\on{aff}}_G)_\crit\mod)^{I^0}\subset
\bD^f(\fD(\Gr^{\on{aff}}_G)_\crit\mod).$$

\sssec{Proof of \propref{prop Iw and t}(d)}   \label{proof prop Iw and t d}

Assume now that in the context of \secref{proof I lemma d},
the categories $\Ho(\ua\bC), \Ho(\ua\bA), \Ho(\ua\bA')$ are equipped with 
t-structures, satisfying the assumptions of \secref{intr t structure on ten}. 
Consider the resulting t-structures on $\Ho(\ua\bC')$.

\medskip

The next assertion follows immediately from \propref{Iw fully faithful abs}(2)
and \propref{affine exact}:

\begin{cor}  \label{Iw fully faithful abs t}
Assume that the subcategory $\Ho(\ua\bC{}_1)\subset \Ho(\ua\bC)$
is compatible with the t-structure. Assume also that the functor $F$
is affine. Then the subcategory $\Ho(\ua\bC'{}_1)\subset \Ho(\ua\bC')$
is also compatible with the t-structure.
\end{cor}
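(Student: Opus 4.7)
The plan is to derive the corollary as a direct formal consequence of parts (2) and (3) of Proposition~\ref{Iw fully faithful abs} combined with the exactness supplied by Proposition~\ref{affine exact}. Since part~(1) of Proposition~\ref{Iw fully faithful abs} asserts that $\Ho(G')$ is fully faithful, the inclusion $\Ho(\ua\bC'{}_1)\subset \Ho(\ua\bC')$ makes sense, and compatibility with the t-structure reduces to showing that this subcategory is preserved by the truncation functors $\tau^{\leq 0}$ and $\tau^{\geq 0}$ on $\Ho(\ua\bC')$.

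First, I would observe that under the affineness hypothesis on $F$, Proposition~\ref{affine exact} guarantees that the ind-extended direct image functor
$$F_{\bC}{}_*:\Ho(\ua\bC')\to \Ho(\ua\bC)$$
is exact and conservative with respect to the t-structures introduced via the tensor product construction (here tightness of $F$ is automatic in the cases of interest, as the monoidal category $\Ho(\bA)$ is rigid, by Proposition~\ref{rigid is tight}). Next, I would invoke part~(3) of Proposition~\ref{Iw fully faithful abs}, which characterizes $G'{}^*(\Ho(\ua\bC'{}_1))$ as precisely the preimage under $F_{\bC}{}_*$ of $G^*(\Ho(\ua\bC{}_1))\subset \Ho(\ua\bC)$. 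This characterization is the key technical input, because it reduces membership in the subcategory $\Ho(\ua\bC'{}_1)$ to a condition that can be tested after applying $F_{\bC}{}_*$.

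Given these two ingredients, the proof is immediate: for $X\in \Ho(\ua\bC'{}_1)$, we have $F_{\bC}{}_*(X)\in G^*(\Ho(\ua\bC{}_1))$; by exactness of $F_{\bC}{}_*$, one has
$$F_{\bC}{}_*(\tau^{\leq 0}(X))\simeq \tau^{\leq 0}(F_{\bC}{}_*(X)),$$
and the latter belongs to $G^*(\Ho(\ua\bC{}_1))$ because $\Ho(\ua\bC{}_1)$ is compatible with the t-structure on $\Ho(\ua\bC)$ by assumption. Applying part~(3) in reverse, we conclude $\tau^{\leq 0}(X)\in \Ho(\ua\bC'{}_1)$, and identically for $\tau^{\geq 0}(X)$.

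There is essentially no serious obstacle in this argument; the content has been packaged into Proposition~\ref{Iw fully faithful abs} and Proposition~\ref{affine exact}. The only subtle point to verify is that $F$ is tight so that Proposition~\ref{affine exact} applies in full strength, but this is covered by the rigidity of the ambient monoidal category in the situations where the corollary is invoked (notably in the proof of Proposition~\ref{prop Iw and t}(d) via \secref{Iw fully faithful, proof bis}).
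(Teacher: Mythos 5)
Your proposal is correct and follows the route the paper intends: the paper's one-line justification (invoking Proposition~\ref{Iw fully faithful abs}(2) together with Proposition~\ref{affine exact}) unpacks into exactly your argument, namely using exactness of $F_{\bC}{}_*$ to commute it with truncation, and the image characterization (most conveniently phrased as part~(3), which is itself derived from part~(2) plus conservativity of $F_*$) to transfer truncations into and out of $G'{}^*(\Ho(\ua\bC'_1))$. The only small framing quibble: the rigidity of $\Ho(\bA)$ is a standing hypothesis of Proposition~\ref{Iw fully faithful abs} (and hence of this corollary), so tightness of $F$ is automatic here rather than something to be checked in applications as you suggest at the end.
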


We apply the above corollary to the same choice of the categories
as in \secref{Iw fully faithful, proof bis}, and the new t-structure on
$\bD_{ren}(\fD(\Gr^{\on{aff}}_G)_\crit\mod)$. The required compatibility
with the t-structure is insured by \propref{prop Iw and t}(c).

\newpage

\end{document}